\newcommand\widecheck[1]{%
\savestack{\tmpbox}{\stretchto{%
  \scaleto{%
    \scalerel*[\widthof{\ensuremath{#1}}]{\kern-.4pt\bigwedge\kern-.4pt}%
    {\rule[-\textheight/2]{1ex}{\textheight}}
  }{\textheight}%
}{0.5ex}}%
\stackon[2pt]{#1}{\scalebox{-1}{\tmpbox}}%
}
\newcommand{\bessel}[1]{\widecheck{{#1}}}
\newcommand{\fourier}[1]{\widehat{{#1}}}
\renewcommand{\leq}{\leqslant}
\renewcommand{\geq}{\geqslant}
\let\polishl\l
\newcommand{\maks}{Radziwi\polishl\polishl}
\renewcommand{\l}{\ell}
\numberwithin{section}{chapter}
\numberwithin{equation}{chapter}
\newcommand{\uple}[1]{\text{\boldmath${#1}$}}
\def\stacksum#1#2{{\stackrel{{\scriptstyle #1}}
{{\scriptstyle #2}}}}
\newcommand{\bfalpha}{\uple{\alpha}}
\newcommand{\bfbeta}{\uple{\beta}}
\newcommand{\bfx}{\uple{x}}
\newcommand{\ftchi}{f\otimes\chi}
\newcommand{\fotg}{f\otimes g}
\newcommand{\gtchi}{g\otimes\chi}
\newcommand{\symf}{\mathrm{Sym}^2f}
\newcommand{\sym}{\mathrm{Sym}}
\newcommand{\Cc}{\mathbf{C}}
\newcommand{\Aa}{\mathbf{A}}
\newcommand{\AQ}{\mathbf{A}_{\Qq}}
\newcommand{\Zz}{\mathbf{Z}}
\newcommand{\Rr}{\mathbf{R}}
\newcommand{\Gg}{\mathbf{G}}
\newcommand{\Qq}{\mathbf{Q}}
\newcommand{\Fq}{{\mathbf{F}_q}}
\newcommand{\Fqt}{{\mathbf{F}^\times_q}}
\newcommand{\Ff}{\mathbf{F}}
\newcommand{\bFq}{\bar{\Ff}_q}
\newcommand{\mcQ}{\mathcal{Q}}
\newcommand{\mcR}{\mathcal{R}}
\newcommand{\HYPK}{\mathcal{K}\ell}
\newcommand{\proba}{\mathbf{P}}
\newcommand{\expect}{\mathbf{E}}
\newcommand{\spr}{\ov s}
\newcommand{\mods}[1]{\,(\mathrm{mod}\,{#1})}
\newcommand{\what}{\widehat}
\newcommand{\wtilde}{\widetilde}
\DeclareMathOperator{\hypk}{Kl}
\newcommand{\ra}{\rightarrow}
\DeclareMathOperator{\rk}{rk}
\DeclareMathOperator*{\res}{res}
\DeclareMathOperator*{\ord}{ord}
\DeclareMathOperator{\Reel}{\Re}
\DeclareMathOperator{\syms}{Sym^{2}}
\DeclareMathOperator{\Kl}{\mathrm{Kl}}
\DeclareMathOperator{\Kld}{\mathrm{Kl}_{2}}
\DeclareMathOperator{\supp}{supp}
\DeclareMathOperator{\Tr}{tr}
\DeclareMathOperator{\cond}{\mathbf{c}}
\let\Re\relax
\DeclareMathOperator{\Re}{\mathfrak{Re}}
\newcommand{\eps}{\varepsilon}
\renewcommand{\rho}{\varrho}
\DeclareMathOperator{\SL}{SL}
\DeclareMathOperator{\GL}{GL}
\DeclareMathOperator{\PGL}{PGL}
\DeclareMathOperator{\Ort}{O}
\DeclareMathOperator{\SU}{SU}
\DeclareMathOperator{\Un}{U}
\newcommand{\demi}{{\textstyle{\frac{1}{2}}}}
\newcommand{\sheaf}[1]{\mathcal{{#1}}}
\DeclareMathSymbol{\gena}{\mathord}{letters}{"3C}
\DeclareMathSymbol{\genb}{\mathord}{letters}{"3E}
\def\sump{\mathop{\sum \Bigl.^{+}}\limits}
\def\summ{\mathop{\sum \Bigl.^{-}}\limits}
\def\sumpm{\mathop{\sum \Bigl.^{\pm}}\limits}
\def\sums{\mathop{\sum \Bigl.^{*}}\limits}
\def\sump{\mathop{\sum \Bigl.^{+}}\limits}
\def\intc{\frac{1}{2i\pi}\mathop{\int}\limits}
\def\sumstar{\mathop{{\sum\nolimits^{\star}}}}
\theoremstyle{plain}
\newtheorem{theorem}{Theorem}[chapter]
\newtheorem*{theorem*}{Theorem}
\newtheorem{lemma}[theorem]{Lemma}
\newtheorem{corollary}[theorem]{Corollary}
\newtheorem{proposition}[theorem]{Proposition}
\theoremstyle{remark}
\newtheorem*{rems}{Remarks}
\theoremstyle{definition}
\newtheorem{convention}[theorem]{Convention}
\newtheorem*{claim}{Claim}
\newtheorem{definition}[theorem]{Definition}
\newtheorem{example}[theorem]{Example}
\newtheorem{remark}[theorem]{Remark}
\newcommand{\mcM}{\mathcal{M}}
\newcommand{\mcL}{\mathcal{L}}
\newcommand{\mcF}{\mathcal{F}}
\newcommand{\mcG}{\mathcal{G}}
\newcommand{\mfa}{\mathfrak{a}}
\newcommand{\lf}{\lambda_f}
\newcommand{\lamg}{\lambda_g}
\newcommand{\vphis}{\varphi^*}
\newcommand{\MT}{\mathrm{MT}}
\newcommand{\ET}{\mathrm{ET}}
\newcommand{\vphi}{\varphi}
\renewcommand{\geq}{\geqslant}
\renewcommand{\leq}{\leqslant}
\newcommand{\refs}{\eqref}
\newcommand{\ov}[1]{\overline{#1}}
\newcommand{\msym}[1]{\Bigl\langle{#1}\Bigr\rangle}
\newcommand\sumsum{\mathop{\sum\sum}\limits}
\newcommand{\expoL}{{3/2}}
\newcommand{\expoLt}{{5/2}}
\newcommand{\expoq}{{1/144}}
\newcommand{\expoLmax}{{1/360}}
\newcommand{\expopropdenom}{{1443}}
\def\msa{\mathcal{A}}
\def\msl{\mathcal{L}}
\def\mcl{\mathcal{L}}
\def\sump{\mathop{{\sum\nolimits^{+}}}}
\def\summ{\mathop{{\sum\nolimits^{-}}}}
\def\sumpm{\mathop{{\sum\nolimits^{\pm}}}}
\def\Kl{\mathrm{Kl}}
\def\supp{\mathop{\mathrm{supp}}}
\def\sgn{\mathop{\mathrm{sgn}}}
\newcommand\bfone{\mathbf{1}}
\begin{document}

\title[The second moment theory of families of $L$-functions]{The
  second moment
  theory \\
  of families of $L$-functions \\[0.5cm] {\Large The case of twisted
    Hecke $L$-functions}}

\author{Valentin Blomer}
\address{Mathematisches Institut, Universit\"at Bonn,
  Endenicher Allee 60, 53115 Bonn, Germany} \email{blomer@math.uni-bonn.de}

\author{\'Etienne Fouvry}
\address{Universit\'e Paris--Saclay, CNRS \\
Laboratoire de math\' ematiques d'Orsay\\ 91405 Orsay\\France}
\email{etienne.fouvry@u-psud.fr}

\author{Emmanuel Kowalski}
\address{ETH Z\"urich -- D-MATH\\
  R\"amistrasse 101\\
  CH-8092 Z\"urich\\
  Switzerland} \email{kowalski@math.ethz.ch}

\author{Philippe Michel} \address{EPFL/SB/TAN, Station 8, CH-1015
  Lausanne, Switzerland } \email{philippe.michel@epfl.ch}

 \author{Djordje Mili\'cevi\'c}
 \address{Department of Mathematics,
Bryn Mawr College,
101 North Mer\-ion Avenue,
Bryn Mawr, PA 19010-2899, U.S.A.}
 \email{dmilicevic@brynmawr.edu}
\date{\today,\ \thistime} 

\author{Will Sawin}
\address{Columbia University, 2990 Broadway, New York, NY, USA 10027}
\email{sawin@math.columbia.edu}



\subjclass[2010]{11M06, 11F11, 11F12, 11F66, 11F67, 11L05, 11L40,
  11F72, 11T23}

\keywords{$L$-functions, modular forms, special values of
  $L$-functions, moments, mollification, analytic rank, shifted
  convolution sums, root number, Kloosterman sums, resonator method}

\begin{abstract}
  For a fairly general family of $L$-functions, we survey the known
  consequences of the existence of asymptotic formulas with
  power-saving error term for the (twisted) first and second moments
  of the central values in the family. 
\par
We then consider in detail the important special case of the family of
twists of a fixed cusp form by primitive Dirichlet characters modulo a
prime $q$, and prove that it satisfies such formulas. We  derive
arithmetic consequences: 
\begin{itemize}
\item  a positive proportion of central values
$L(f\otimes\chi,1/2)$ are non-zero, and indeed bounded from below; 
\item there exist many characters $\chi$ for which the central
  $L$-value is very large; 
\item the probability of a large analytic rank decays exponentially
  fast.
\end{itemize}
We finally show how the second moment estimate establishes a special
case of a conjecture of Mazur and Rubin concerning the distribution of
modular symbols.
\end{abstract}

\maketitle

\setcounter{tocdepth}{1}

\tableofcontents

\chapter{The second moment theory of families of \texorpdfstring{$L$-functions}{L-functions}}

\section{General introduction}
\label{sec-general}

\subsection{Families and moments}

In the analytic theory of automorphic forms, many problems are out of
reach, or make little sense, when specialized to single $L$-functions
or modular forms. It has therefore been a very common theme of
research to study \emph{families} of $L$-functions, and to search for
\emph{statistical results} on average over families. This point of
view has led to numerous insights. In fact, it also sometimes provides
a viable approach to questions for individual objects, as in most
works concerning the subconvexity problem for $L$-functions. An
excellent survey of this point of view is that of Iwaniec and
Sarnak~\cite{IwSa2}.

When studying $L$-functions on average, it has emerged from a series
of works in the last ten to fifteen years that a remarkable array of
results can be obtained as soon as one has sufficiently strong
information concerning the first and especially the \emph{second}
moment of the values of the $L$-functions on the critical line. More
precisely, what is often the crucial input needed is ``a bit more''
than the second moment, which is most easily captured in practice by
an asymptotic formula \emph{with power saving} for the second moment,
together with some basic information for individual $L$-functions
(such as versions of the Prime Number Theorem, sometimes for auxiliary
$L$-functions, or bounds for averages or mean-square averages of
coefficients). This phenomenon is of course consistent with
probabilistic intuition: recall for instance that the Law of Large
Numbers only requires the first moment to exist, and that the Central
Limit Theorem only depends on the second moment.

In this section, we will explain the basic principle and describe some
of its applications in a fairly general and informal setting. In the
next sections, we will introduce the particular family that will be
the focus of the remainder of this book, and we will state the precise
new results that we have obtained in that case.

Let $d\geq 1$ be an integer. We interpret here a \emph{family} of cusp
forms of rank $d\geq 1$ as the data, for any integer $N$, of a finite
set $\mathcal{F}_N$ of cusp forms (cuspidal automorphic
representations) on $\GL_d$ (over $\Qq$ for simplicity). Given such a
family $\mathcal{F}$, we obtain probability and average operators
$$
\proba_N(f\in
A)=\frac{1}{|\mathcal{F}_N|}\sum_{\substack{f\in\mathcal{F}_N\\f\in
    A}} 1 \quad\quad \expect_N(T(f))=
\frac{1}{|\mathcal{F}_N|}\sum_{f\in \mathcal{F}_N} T(f)
$$
for any $N$ such that $\mathcal{F}_N$ is not empty. Here $A$ is any
subset of cusp forms on $\GL_d$, and $T$ is any complex-valued
function defined on the set of cusp forms on $\GL_d$. We will
sometimes informally write $f\in\mathcal{F}$ to say that
$f\in\mathcal{F}_N$ for some $N$ (which might not be unique).
\par
We also require that the size of $\mathcal{F}_N$ and the analytic
conductors $q(f)$ of the cusp forms $f\in\mathcal{F}_N$ grow with $N$
in a nice way, say $|\mathcal{F}_N|\asymp N^{\alpha}$ and
$q(f)\asymp N^{\beta}$ for some $\alpha>0$ and $\beta>0$.

The basic invariants are the standard (Godement-Jacquet) $L$-functions
$$
L(f,s)=\sum_{n\geq 1}\lambda_f(n)n^{-s}
$$
associated to a cusp form $f\in \mathcal{F}$, that we always normalize
in this book so that the center of the critical strip is
$s=\demi$. These are indeed often so important that one speaks of
\emph{families of $L$-functions} instead of families of cusp forms.

For any \emph{reasonable} family of $L$-functions, one can make
precise conjectures for the asymptotic behavior as $N\to +\infty$ of
the complex moments
$$
\expect_N\Bigl(L(f,\sigma+it)^k\overline{L(f,\sigma+it)}^l\Bigr),
$$
at any point $\sigma+it\in\Cc$, where $k$ and $l$ are non-negative
integers. Here, ``reasonable'' has no precise generally accepted
formal definition. A minimal requirement is that the family should
satisfy some form of ``local spectral equidistribution''
(see~\cite{KoFam}), which means that for any fixed prime $p$, the
local component at $p$ of the cusp forms $f\in\mathcal{F}_N$ should
become equidistributed with respect to some measure $\mu_p$ as
$N\to+\infty$ in the unitary spectrum $\widehat{\GL}_d(\Qq_p)$ of
$\GL_d(\Qq_p)$. Such a statement is more or less required to express
for instance the arithmetic component of the leading term of the
asymptotic of the moments at $s=\demi$.
\par
Indeed, following the work of Keating and Snaith~\cite{KeSn} and the
ideas of Katz and Sarnak~\cite{KaSa}, for any integer $k\geq 0$, one
expects an asymptotic formula of the type
\begin{equation}\label{eq-moment-conj}
\expect_N\Bigl(|L(f,\demi)|^{2k}\Bigr)\sim a_kg_k (\log N)^{c_k}
\end{equation}
as $N\to +\infty$, where $a_k$ is an arithmetic factor, whereas $g_k$
and $c_k$ are real numbers that depend only on the so-called
``symmetry type'' of the family, and have an interpretation in terms
of Random Matrix Theory. In general, these invariants can be
predicted, based on the local spectral equidistribution properties of
the family. More precisely, one can often deduce the ``symmetry
type'', in the sense of Katz-Sarnak, from the limiting behavior of the
measures $\mu_p$ as $p\to +\infty$ (see~\cite{KoFam}*{\S 9, \S 10},
but note that this line of reasoning wouldn't always work in the case
of ``algebraic'' families~\cite{SST}). From this symmetry type, which
is either unitary, symplectic, or orthogonal (with some variants in
the orthogonal case related to root numbers), one can predict the
values of $g_k$ and $c_k$. These are related to the asymptotic
beahvior of the moments of the value of the characteristic polynomials
of random matrices in families of compact Lie groups of unitary, or
symplectic or orthogonal matrices, in the limit when the size of the
matrices increases. For instance, in a unitary family, we have
$c_k=k^2$ and
$$
g_k=\frac{G(1+k)}{G(1+2k)}=\prod_{j=0}^k\frac{j!}{(j+k)!}
$$
where $G$ is the Barnes function.
\par
Using this information, $a_k$ is an Euler product given by
$$
a_k=\prod_p (1-p^{-1})^{c_k}\int |L_p(\pi,\demi)|^{2k}\ d\mu_p(\pi)
$$
where the integral is over the unitary spectrum of $\GL_d(\Qq_p)$ and
$L_p$ is the local $L$-factor at $p$. (The value of $c_k$ is exactly
such that the Euler product converges).


\begin{remark}
  Families can also be defined with non-uniform weights over finite
  sets, instead of the uniform measure, or can also be continuous
  families with a finite probability measure (typically to consider
  $L$-functions in the $t$-aspect on the critical line). The sets
  $\mathcal{F}_N$ might also be defined only for a subset of the
  integers $N$. This does not affect the general discussion. We will
  see certain variants of this type in the examples below.
\end{remark}

For orientation, here are some examples of families that have been
studied extensively, and that we will refer to in the list of
applications below. One is given for each of the three basic symmetry
types.

\begin{example}
  (1) For $N\geq 1$, let $\mathcal{D}_N$ be the finite set of
  primitive Dirichlet characters modulo $q\leq N$. This is a unitary
  family.

  (2) For $q\geq 17$ prime, let $\mathcal{C}_q$ be the finite set of
  primitive weight $2$ cusp forms of level $q$. This family has
  particularly nice arithmetic applications, because the
  Eichler--Shimura formula implies that
$$
\prod_{f\in\mathcal{F}_q} L(f,s)
$$
is the (normalized) Hasse-Weil $L$-function of the jacobian $J_0(q)$
of the modular curve $X_0(q)$.  This illustrates one way in which
average studies of families of $L$-functions may have consequences for
a single arithmetic object of natural interest. The family
$\mathcal{C}_q$ is of orthogonal type. It is often of interest to
restrict to cusp forms where $L(f,s)$ has a given root number $1$ or
$-1$, which would split into even and odd orthogonal types.

(3) For $N\geq 2$, let $\mathcal{Q}_N$ be the finite set of primitive
real Dirichlet characters modulo $q\leq N$. This is a family of
symplectic type.

(4) Finally, there have been a number of important works recently that
show that many natural families of cusp forms on $\GL_d$, or other
groups, satisfy the basic local spectral equidistribution properties
(see for instance the work of Shin and Templier~\cite{shin-templier}
and surveys by Sarnak--Shin--Templier~\cite{SST} and
Matz~\cite{Matz}).
\end{example}

The assumption of the \emph{second moment theory} of a family of
$L$-functions is that the expected asymptotic formula holds for the
first and second moments on the critical line, with a power-saving in
the error terms with respect to $N$, and polynomial dependency with
respect to the imaginary part. More precisely, we assume that there
exists $\delta>0$ and $A\geq 0$ such that
\begin{equation}\label{eq-bm1}
\expect_N\Bigl(L(f,\demi + it)\Bigr)=
\MT_1(N;t)+O((1+|t|)^AN^{-\delta}),
\end{equation}
and
\begin{equation}\label{eq-bm2}
\expect_N\Bigl(|L(f,\demi +
it)|^2\Bigr)=\MT_2(N;t)+O((1+|t|)^AN^{-\delta}),
\end{equation}
for $N\geq 1$ and $t\in\Rr$. The main terms are polynomials of some
fixed degree in $\log N$, as also predicted by the precise forms of
the moment conjectures (due to Conrey, Farmer, Keating, Rubinstein and
Snaith~\cite{CFKRS}).  In fact, it is not required in practice to know
that the main terms exactly fit the moment conjectures, provided they
are given in sufficiently manageable form for the computations that
will follow (the degree of the polynomial in $\log N$ is of crucial
importance).

As we hinted at above when saying that one needs ``a bit more'', these
estimates are in fact intermediate steps. The really crucial point is
that, if they can be proved with almost any of the currently known
techniques, then it is also possible to improve them to derive
asymptotic formulas for the first and second moments \emph{twisted} by
the coefficients $\lambda_f(\ell)$ of the $L$-functions, namely
\begin{equation}\label{eq-tw1}
\expect_N\Bigl(\lambda_f(\ell)L(f,\demi +
it)\Bigr)=\MT_1(N;t,\ell)+O((1+|t|)^AL^BN^{-\delta}),
\end{equation}
and
\begin{equation}\label{eq-tw2}
  \expect_N\Bigl(\lambda_f(\ell) |L(f,\demi +
  it)|^2\Bigr)=\MT_2(N;t,\ell)+O((1+|t|)^AL^BN^{-\delta}),
\end{equation}
where $1\leq \ell\leq L$ is an integer (maybe with   some restrictions)
and $B\geq 0$.

The consequences that follow from such asymptotic formulas are
remarkably varied. We will now discuss some of them, with references
to cases where the corresponding results have been established. The
discussion is still informal. The ordering goes (roughly and not
systematically) in increasing order of the amount of information
required of the moments. We will make no attempt to be exhaustive.

\subsection{Universality outside the critical line}

One can generalize Bagchi's version of Voronin's Universality Theorem
to establish a functional limit theorem for the distribution of the
holomorphic functions $L(f,s)$ restricted to a fixed suitable compact
subset $D$ of the strip $0<\Reel(s)<1$ (see~\cite{voronin} for
Voronin's original paper and~\cite{bagchi-orig} for Bagchi's
probabilistic interpretation).  This result is much softer than those
that follow. It first requires an upper-bound of the right order of
magnitude (with respect to $N$) of the untwisted second moment, which
is used to get an upper bound for
$$
\expect_N\Bigl(|L(f,\demi+it)|\Bigr)
$$
using the Cauchy-Schwarz inequality. Using this (and local spectral
equidistribution), one proves a form of equidistribution of $L(f,s)$
restricted to $D$ in a space of holomorphic functions on $D$. Then
some form of the Prime Number Theorem (for an auxiliary $L$-function)
is required to compute the support of the random holomorphic function
that appeared in the first step, in order to deduce the universality
statement.  
\par
For instance, in the case of the family $\mathcal{C}$ above, it is
proved in~\cite{bagchi} that the $L$-functions become distributed like
the random Euler products
$$
\prod_p\det(1-X_pp^{-s})^{-1}
$$
where $(X_p)$ is a sequence of independent random variables that have
the Sato-Tate distribution. The support of this random Euler product
is (for $D$ a small disc centered on the real axis and contained in
the interior of the strip $\demi<\Reel(s)<1$) the set of
non-vanishing holomorphic functions $\varphi$ on $D$, continuous on
the boundary, that satisfy the real condition
$\varphi(\bar{s})=\overline{\varphi(s)}$.

\subsection{Upper and lower bounds for integral moments}

For a family $\mathcal{F}$ with a given symmetry type (in the
Katz-Sarnak sense described above), the asymptotic formula from the
moment conjectures~(\ref{eq-moment-conj}) imply that the order of
magnitude of $\expect_N(|L(f,\demi)|^{2k})$ should be $(\log N)^{c_k}$
for some constant $c_k$ depending only on the symmetry type, with
$c_k=k^2$ if the family is unitary, for instance. Although the
asymptotic remains very mysterious, the order of magnitude is much
better understood.

First, there exists a robust method due to Rudnick and
Soundararajan~\cite{RuSo} to derive \emph{lower bounds} of the right
form. We illustrate it in the case of a unitary family.  The method
involves evaluating the two averages
$$
S_1=\expect_N\Bigl(L(f,\demi)A(f)^{k-1}\overline{A(f)^{k}}\Bigr),
\quad\quad S_2=\expect_N\Bigl(|A(f)|^{2k}\Bigr),
$$
where
$$
A(f)=\sum_{n\leq L}\frac{\lambda_f(n)}{\sqrt{n}}
$$
for some parameter $L$. H\"older's inequality gives the lower bound
$$
\expect_N\Bigl(|L(f,\demi)|^{2k}\Bigr)\geq
\frac{|S_1|^{2k}}{S_2^{2k-1}},
$$
and hence we obtain the desired lower bounds if we can prove that
$$
S_2\ll (\log N)^{k^2}\ll S_1.
$$
After expanding the value of
$A(f)$, and using multiplicativity, we see that
$S_1$ is a combination of twisted first moments involving integers
$\ell\leq
L^{2k-1}$. It is therefore to be expected that we can evaluate
$S_1$, provided we have an asymptotic formula for the twisted first
moments~(\ref{eq-tw1}) valid for the corresponding values of
$\ell$. We can expect to evaluate the first moment in such a range
only when the ``pure'' first moment has an asymptotic formula with
power saving. The evaluation of
$S_2$ is, in principle, simpler. It can be expected (and turns out to
be true when the method is applicable) that one requires
$L$ to be comparable to the conductor
$N^{\alpha}$ in logarithmic scale for the bounds above to hold.

There is no corresponding unconditional upper bound. However,
Soundararajan~\cite{SoMom} devised a method to obtain almost sharp
upper bounds when one assumes that the $L$-functions in the family
$\mathcal{F}$ satisfy the Riemann Hypothesis (i.e., all zeros of
$L(f,s)$ with positive real part   have real part $1/2$). Precisely, he
obtained results like
$$
\expect_N\Bigl(|L(f,\demi)|^{2k}\Bigr)\ll (\log N)^{c_k+\eps},
$$
for any $\eps>0$ for some important families (or the analogue for the
$k$-th moment in symplectic and orthogonal families). His approach was
refined by Harper~\cite{Harper}, who obtained the upper-bound
$(\log N)^{c_k}$ (still under the Riemann Hypothesis for the
$L$-functions). We refer to the introductions to both papers for a
description of the ideas involved.

\subsection{Proportion of non-vanishing}\label{ssec-nonva}

Because of the Riemann Hypothesis, the problem of the location of
zeros of $L$-functions is especially important. In particular, much
interest has been concentrated on the special point $s=\demi$. This is
obviously natural in families where the order of vanishing at this
point has some arithmetic interpretation. This is the case, for
instance, in the family $\mathcal{C}$ of cusp forms of weight $2$:
indeed, for any $f\in\mathcal{C}$, Shimura has constructed an abelian
variety $A_f$ over $\Qq$, of dimension equal to the degree of the
field generated by the coefficients $\sqrt{p}\lambda_f(p)$ for $p$
prime, such that $L(f,s)$ is the Hasse-Weil $L$-function of $A_f$;
then the Birch and Swinnerton-Dyer conjecture predicts that the order
of vanishing of $f$ at $\demi$ should be equal to the rank of the
group $A_f(\Qq)$. However, there are also other applications to an
understanding of the behavior of central values (see the highly
influential study of Landau-Siegel zeros by Iwaniec and
Sarnak~\cites{IwSa,IwSa2}).

If the $L$-function $L(f,s)$ is self-dual, and the sign of its
functional equation of $L(f,s)$ is $-1$, then we get $L(f,\demi)=0$
trivially. One may expect conversely that few $L$-functions satisfy
$L(f,\demi)=0$ otherwise (some do have this property, but they are not
easy to come by; see, e.g.,~\cite{IwKo}*{Ch. 22--23} for an account of
the construction of a single such $L$-function by Gross and Zagier,
and how it completed Goldfeld's effective lower-bound for class
numbers of imaginary quadratic fields).  

Using ideas reminiscent of Markov's inequality in probability theory,
one can obtain rather good information on the proportion of non-vanishing of
central values. The basic observation is that, assuming asymptotic
formulas~(\ref{eq-bm1}) and~(\ref{eq-bm2}), a simple application of the
Cauchy-Schwarz inequality (or of Markov's inequality), leads to the
lower bound
$$
\proba_N\Bigl(L(f,\demi)\not=0\Bigr)\geq
\frac{\expect_N\bigl(L(f,\demi)\bigr)^2}
{\expect_N\bigl(|L(f,\demi)|^2\bigr)} \geq
\frac{\MT_1(N;0)^2+o(1)}{\MT_2(N;0)+o(1)}
$$
as $N\to +\infty$. Since the main terms are polynomials in $\log N$,
the lower-bound is of the form $(\log N)^{-k}$ for some integer
$k\geq 0$. This suffices to obtain a large number of non-vanishing
central critical values, but in practice, one finds that $k \geq 1$
(which can be guessed from the degrees of the polynomials, predicted
by the moment conjectures), so we do not obtain an asymptotic positive
proportion of non-vanishing.

The \emph{mollification method}, pioneered by Selberg~\cite{selberg},
exploits the twisted first and second moments to overcome this loss in
the case where $k=1$, which is the most common. This method introduces
a \emph{mollifier}\label{pg-mollifier}
$$
M(f)=\sum_{1\leq \ell\leq L} \alpha(\ell)\lambda_f(\ell)
$$
where the coefficients $\alpha(\ell)$ are chosen so that $M(f)$
approximates (in some sense) the inverse of $L(f,\demi)$. Using the
asymptotic formulas for the twisted first and second moment, one
obtains asymptotic formulas for the mollified moments
$$
\expect_N\Bigl(M(f)L(f,\demi)\Bigr)=\widetilde{\MT}_1(N,L)+
O(L^BN^{-\delta}),
$$
and
$$
\expect_N\Bigl(|M(f)L(f,\demi)|^2\Bigr)=
\widetilde{\MT}_2(N,L)+O(L^BN^{-\delta}).
$$
The effect of the power-saving with respect to $N$ is that we can
select $L=N^{\gamma}$ to be a small enough (fixed) power of $N$ so
that the Cauchy-Schwarz inequality now leads to the lower bound
$$
\proba_N\Bigl(L(f,\demi)\not=0\Bigr)\geq
\frac{\widetilde{\MT}_1(N,L)^2}{\widetilde{\MT}_2(N,L)}
(1+o(1))
$$
as $N\to +\infty$.  It turns out that the leading term is now a
positive constant (of size depending on $\gamma$), so we get a
positive lower bound for the proportion of non-vanishing special
values.

A version of this method was used by Selberg to prove his celebrated
result on a positive proportion of critical zeros of the Riemann zeta
function. It applies also, for instance, in proving that there is a
positive proportion of non-vanishing critical values in the families
$\mathcal{D}$ and $\mathcal{C}$ above (due to
Iwaniec--Sarnak~\cite{IwSa} and Kowalski--Michel~\cite{KMDMJ},
respectively), among other important families.

\subsection{Existence of large values}
\label{LargeValuesIntroSection}

The problem of the possible extreme sizes of values of $L$-functions
is one of the most difficult and mysterious. This is due, in part, to
the fact that their ``typical'' average behavior seems to be quite
accurately predicted using various probabilistic models, but there is
no particular reason to expect that such models can be reliable at the
level of ``large deviations''. And even if one is convinced (rightly
or wrongly!) that such a model is accurate, rigorous results are very
difficult to come by. Soundararajan~\cite{Soundararajan2008}
introduced a tool called the ``resonator method'' to produce
remarkably large values of $L(f,\demi)$ for some $f\in\mathcal{F}_N$
(this method is also related to the ideas introduced by
Goldston-Pintz-Y\i ld\i r\i m~\cite{GPY} to study gaps between
primes). The idea is again to select coefficients
$(\alpha(\ell))_{\ell\leq L}$ and form the corresponding sums
$$
R(f)=\sum_{\ell\leq L}\alpha(\ell)\lambda_f(\ell),
$$
\label{pg-resonator}
(or some variations thereof, cf.\ Section
\ref{SubsectionProductsResonator}) which are now called
``resonators''. Indeed, they are constructed so that the sizes of the
two quantities
$$
Q_1=\expect_N(|R(f)|^2)
$$
and
$$
Q_2=\expect_N\Bigl(|R(f)|^2L(f,\demi)\Bigr)
$$
are such that $|Q_2|/Q_1$ is as large as possible. These sums can be
evaluated asymptotically as quadratic forms (with variables
$\alpha(\ell)$) if $L=N^{\gamma}$ with $\gamma>0$ small enough,
because of the asymptotic formula for twisted first
moments~(\ref{eq-tw1}). We then have
$$
\max_{f\in\mathcal{F}_N} |L(f,\demi)|\geq \sqrt{\frac{|Q_2|}{Q_1}}.
$$
It remains a delicate issue to optimize the choice of the coefficients
$\alpha(\ell)$, but as in the previous application, we see that we can
certainly expect to find asymptotic formulas for $Q_1$ and $Q_2$,
provided $L$ is not too large, if we have access to an asymptotic
formula for twisted moments for $\ell\leq L^2$. Once more, the
dependency on $L$ is such that we obtain really good results only if
we can take $L$ of size comparable to $N$ in logarithmic scale, which
is what power-savings in the first moment leads to.

\subsection{Decay of probability of large order of vanishing}
\label{ssec-decay}

As we already indicated, the distribution of the order of vanishing
$\rk_{an}(f)$ of an $L$-function $L(f,s)$ at the critical point
$\demi$ (which is also called the \emph{analytic rank}) has been
extensively studied, often because of its links to arithmetic geometry
in special cases. A method due to Heath-Brown and Michel~\cite{HBMDMJ}
exploits a variant of the mollification method to study how often the
analytic rank might be a very large integer. The starting point are
the moments
$$
\expect_N\Bigl(\Bigl| \sum_p \frac{\lambda_f(p)(\log p)}{\sqrt{p}}
\phi(\log p) \Bigr|^{2k}\Bigr), \quad\quad \expect_N\Bigl(\Bigl|
\sum_{\rho} \widehat{\phi}(\rho-\demi) \Bigr|^{2k}\Bigr)
$$
for integers $k\geq 0$, where $\rho$ runs over zeros of $L(f,s)$ ``far
away'' from $\demi$ in some sense, and $\phi$ are suitable test
functions. The first of these two can be studied relatively
elementarily, if $\phi$ has sufficiently small support. The second is
estimated by a delicate computation using the explicit formula
(relating zeros of $L$-functions and their coefficients), and the
asymptotic formulas for twisted second moments~(\ref{eq-tw2}). From
these bounds, one can deduce that the analytic rank cannot be large
very often. In fact, one obtains exponentially-decaying tail-bounds:
there exists a constant $c>0$ (depending on the family) such that
$$
\expect_N(e^{c\rk_{an}(f)})\ll 1
$$
for $N\geq 1$, from which it follows that
$$
\limsup_{N\to +\infty}\proba_N(\rk_{an}(f)\geq r)\ll e^{-cr}
$$
for $r\geq 0$.

\subsection{Subgaussian bounds for critical values}

Given a family $\mathcal{F}$ of $L$-functions, the most general moment
conjectures of Keating-Snaith (including suitable complex exponents;
see~\cite{KoNi} for some general discussion of these) lead to the
expectation that $\log |L(f,\demi)|$ should have an approximately
normal distribution as $N\to +\infty$, after a suitable
normalization. There are currently very few results of this type. The
first one is due to Selberg~\cite{selberg-clt} (see also the short
proof by \maks\ and Soundararajan in~\cite{RadSo1}), and applies to
the Riemann zeta function. It states that for $T\geq 3$, and
$t\in [T,2T]$, the distribution of
$$
\frac{\log\zeta(\demi+it)}{\sqrt{\demi\log\log T}}
$$
converges as $T\to +\infty$ to a standard complex gaussian. 

\maks\ and Soundararajan~\cite{RadSo2} have developed a robust method
to prove \emph{subgaussian upper bounds} in many families. For a given
family, they show that such bounds hold whenever one has suitable
asymptotic formulas for the twisted moments~(\ref{eq-tw1}) when $\ell$
is as large as a small power of the conductor (which, in turn, usually
follows once an asymptotic formula for the first moment is known with
power-saving error term).
\par
The results have a different form depending on the symmetry type of
the family. In the orthogonal case, for instance, the method leads to
$$
\proba_N\Bigl(\frac{\log L(f,\demi)+\demi \log\log
  N}{\sqrt{\log\log N}}\geq V\Bigr) \leq
\frac{1}{\sqrt{2\pi}}\int_V^{+\infty}e^{-x^2/2}dx+o(1),
$$
for any fixed $V\in\Rr$; in this case, the gaussian conjecture would
be that the left-hand side is \emph{equal} to the right-hand side.

The method is quite intricate. Roughly speaking, it starts with the
proof that the sums over primes
$$
P_N(f)=\sum_{p\leq P}\frac{\lambda_f(p)}{\sqrt{p}}
$$
have a gaussian distribution if $P$ is well-chosen, typically
$P=N^{1/(\log\log N)^2}$, which in turn is an effect of quantitative
local spectral equidistribution (with independence of the local
components at distinct primes).

One expects that $P_N(f)$ is a good approximation to
$\log L(f,\demi)+\demi\log\log N$ in some statistical sense (the
additional term is the contribution of squares of primes, and the plus
sign reflects the orthogonal symmetry). Fixing $V$, one distinguishes
(again, roughly speaking; see~\cite[p. 1046]{RadSo2} for a precise
discussion) between three possibilities to compute the probability
that
$$
\frac{\log L(f,\demi)+\demi \log\log
  N}{\sqrt{\log\log N}}\geq V,
$$
namely:
\begin{itemize}
\item It may be that $P_N(f)\geq (V-\eps)\sqrt{\log\log N}$ for some
  small $\eps>0$, and the gaussian distribution of $P_N(f)$ gives a
  suitable gaussian bound for that event;
\item It may be that $|P_N(f)|\geq \log\log N$, but the gaussian
  behavior shows that this is very unlikely;
\item In the remaining case, we have 
\begin{gather*}
  |P_N(f)|\leq \log\log N,\\
  L(f,\demi)(\log N)^{1/2}\exp(-P_N(f))\geq \exp(\eps\sqrt{\log\log
    N}).
\end{gather*}
\end{itemize}
To control this last critical case, one shows that it implies
$$
L(f,\demi)(\log N)^{1/2} \Bigl(\sum_{j=0}^k
\frac{(-P_N(f))^j}{j!}\Bigr) \gg \exp(\eps\sqrt{\log\log N})
$$
for some suitable integer $k\geq 1$. But one can obtain an upper bound
for
$$
\expect_N\Bigl(L(f,\demi)\Bigl(\sum_{j=0}^k
\frac{(-P_N(f))^j}{j!}\Bigr)\Bigr)
$$
using the twisted first moments (where the power-saving gives as
before the crucial control of a suitable value of the length $P$ of
$P_N(f)$). Then, by the Markov inequality, we get
$$
\proba_N(\text{third case})\leq \exp(-\eps\sqrt{\log\log N})
\expect_N\Bigl(L(f,\demi)\Bigl(\sum_{j=0}^k
\frac{(-P_N(f))^j}{j!}\Bigr)\Bigr)
$$
which shows that the third event is also unlikely.

\maks\ and Soundararajan~\cite{RadSo3} have recently announced another
method that leads to gaussian lower bounds for \emph{conditional}
probabilities that normalized values of $\log L(f,\demi)$ belong to
some interval, \emph{knowing that they are non-zero}. These rely on
(and in some sense incorporate) the proof of existence of a positive
proportion of non-vanishing (discussed in
Section~\ref{ssec-nonva}). For an orthogonal family, the statements
are of the type
\begin{multline*}
\proba_N\Bigl(\alpha\leq \frac{\log L(f,\demi)+\demi\log\log
  N}{\sqrt{\log\log N}}\leq \beta\Bigr) \\
\geq
\proba_N\Bigl(L(f,\demi)\not=0\Bigr)\times 
\frac{1}{\sqrt{2\pi}}\int_{\alpha}^{\beta}e^{-x^2/2}dx+o(1),
\end{multline*}
as $N\to +\infty$, when it is known that
$$
\liminf_{N\to+\infty} \proba_N(L(f,\demi)\not=0)>0.
$$

\subsection{Paucity of real zeros}

The problem of possible existence of real zeros of $L$-functions on
the right of the critical line is fascinating and difficult,
especially in the case of self-dual $L$-functions, with the famous
problem of Landau-Siegel zeros (concerning real zeros close to $1$ of
$L$-functions of real Dirichlet characters) remaining one of the key
open problems of analytic number theory.

In this respect, Conrey and Soundararajan~\cite{CoSo} discovered a
very subtle variant of the mollification method (related to some of
the techniques of Section~\ref{ssec-decay}, in particular a critical
lemma of Selberg, see~\cite{CoSo}*{Lemma 2.1}) that allowed them to
prove that the specific family of real Dirichlet characters
$\mathcal{Q}_N$ (which is of special interest in this respect)
satisfies
$$
\liminf_{N\to +\infty} \proba_N\Bigl(L(\chi,s)\text{ has no real
  zero } s > 0\Bigr)>0.
$$
\par
It is unclear how general this method is, because it ultimately
depends on the numerical evaluation of a certain quantity.  Conrey and
Soundararajan~\cite{CoSo}*{end of \S 2} explain that the success can be
motivated by computations from Random Matrix Theory for symplectic
families, but these assume (at least) the Generalized Riemann
Hypothesis, and therefore are no guarantee of success in practice. In
fact, we may note that Ricotta~\cite{Ricotta} obtained a similar
result for families of Rankin-Selberg $L$-functions, but obtaining a
positive proportion with at most three real zeros. This type of result
is probably more robust.

\section{The family of twists of a fixed modular form}\label{intro}

We present in this section the (quite classical) family of
$L$-functions that we will study in the remainder of the book.

We fix \emph{throughout the book} a primitive cusp form (newform) $f$
with respect to some congruence subgroup $\Gamma_0(r)$, with trivial
central character, i.e., trivial nebentypus, which we will denote
$\chi_r$. The modular form $f$ may be either a holomorphic cusp form
of some weight $k_f$ or a Maa{\ss} cusp form with Laplace eigenvalue
$1/4+t_f^2$.

To simplify some computations, the following convention will be
useful: 
\begin{convention}\label{def-conv}
 For a modular form $f$ as above we define a  quantity, the \emph{signed level}, denoted by $r$, which is equal to the level of $f$ (and thus positive) if $f$ is holomorphic and equal to minus the level (and thus negative) if $f$ is a Maa{\ss} form. 
\end{convention}

\begin{remark}
In general, the level is an ideal in the ring of integers of the underlying number field (in our case the number field is simply $\mathbf{Q}$), and the signed level should be thought of a suitable idele generating the ideal. For
  simplicity of notation, we continue to write $\chi_r$ for the
  trivial character modulo $|r|$.
\end{remark}

We denote by $\lf(n)$, for $n\geq 1$, the Hecke eigenvalues of $f$,
normalized so that the mean square is $1$ by Rankin-Selberg theory, or
equivalently so that the standard $L$-function of $f$,
$$
\sum_{n\geq 1}\lf(n)n^{-s},
$$
is absolutely convergent in $\Re s > 1$.

From the point of view of cusp forms, we now consider the family
parameterized by primes $q$, not dividing $r$, which is given by
$$
\mathcal{F}_q=\{f\otimes \chi\,\mid\, \chi\pmod{q},\ \chi\not=\chi_q\},
$$\label{pg-fq}
(where $\chi$ runs over the set of primitive Dirichlet characters
modulo $q$). Since $(q,r)=1$, this is a subset of the set of primitive
cusp forms of level $rq^2$. The associated $L$-functions are the
twisted $L$-functions
$$
L(f\otimes\chi,s)=\sum_{n\geq 1}\frac{\lf(n)\chi(n)}{n^s}= \prod_{p}
\left(1-\frac{\chi(p)\lf(p)}{p^s}+\frac{\chi_r\chi^2(p)}{p^{2s}}\right)^{-1}.
$$

\begin{remark}
  \textbf{ We emphasize that \emph{throughout the remainder of this
      memoir, the modulus~$q$ will be assumed to be prime}, unless
    explicitly stated otherwise.}
\end{remark}

We will usually think of the $L$-functions as simply parameterized by
$q$, and write the probability and expectation explicitly as
$$
\frac{1}{q-2}|\{\chi\bmod{q}\,\mid\, \chi\not=\chi_q,\ \chi\in A\}|,
\quad\quad \frac{1}{q-2}\sums_{\chi\bmod{q}}T(\chi)
$$
for any set $A$ of Dirichlet characters, or any function $T$ defined
for Dirichlet characters; the notation $\sum^{\ast}$ restricts the sum to
primitive characters. We will write $\vphis(q)=q-2$ to clarify the
notation.

This family has been studied in a number a papers (for instance by
Duke, Friedlander, Iwaniec \cite{DFI}, Stefanicki~\cite{St},
Chinta~\cite{Chinta}, Gao, Khan and Ricotta~\cite{GKR}, Hoffstein and
Lee~\cite{HL} and in our own papers~\cites{BloMil,BFKMM,KMS}). It is a
very challenging family from the analytic point of view, and also has
some very interesting algebraic aspects, at least when $f$ is a
holomorphic cusp form of weight $2$. Indeed, if $A_f$ is the abelian
variety over $\Qq$ constructed by Shimura with Hasse-Weil $L$-function
equal to $L(f,s)$, then the product
$$
\prod_{\chi\bmod q}L(f\otimes \chi,s)=L(f,s)\ 
\prod_{\substack{\chi\bmod q\\\chi\not=\chi_q}}L(f\otimes \chi,s)
$$
is the Hasse-Weil $L$-function of the base change of $A_f$ to the
cyclotomic field $K_q$ generated by $q$-th roots of unity. According
to the Birch and Swinnerton-Dyer conjecture, the vanishing (or not) of
critical values of $L(f\otimes\chi,s)$ for $\chi\not=\chi_q$ is
therefore related to the increase of rank of the Mordell-Weil group of
$A_f$ over $K_q$ compared with that over $\Qq$. (We will come back to
this relation, as related to recent conjectures and questions of Mazur
and Rubin).
\par
\medskip
\par
The starting point of this book is that our recent
papers~\cites{BloMil,BFKMM,KMS} give access to the second moment
theory of this family, in the sense sketched in the previous
section. Precisely, the combination of these works provides a formula
with power-saving error term for the second moment of the central
value at $s=1/2$, namely
$$
\frac{1}{\vphis(q)}\sums_{\chi\mods q}|L(\ftchi,\demi)|^2.
$$

From this, it is a relatively simple matter to derive asymptotic
formulas including twists, and to include values at other points of
the critical line with polynomial dependency on the imaginary part,
giving formulas of the type~(\ref{eq-tw1}) and~(\ref{eq-tw2}). We can
then attempt to implement the various applications of the previous
section, and we will now list those which are found in this book.
\par

\begin{remark}
  (1) The family of twists is very simple from the point of view of local
  spectral equidistribution, which in that case amounts merely to an
  application of the orthogonality relations for Dirichlet characters
  modulo $q$. Precisely, for any prime $p$, the local components at
  $p$ (in the sense of automorphic representations) of $f\otimes\chi$
  become equidistributed in the unitary spectrum of $\GL_2(\Qq_p)$ as
  $q\to +\infty$, with limit measure the uniform probability measure
  on the set of unramified twists $\pi_p(f)\otimes |\cdot|_p^{it}$,
  where $\pi_p(f)$ is the $p$-component of $f$. This fact will not
  actually play a role in our arguments so we skip the easy proof. It
  implies however (and this can easily be checked by means of the
  distribution of low-lying zeros) that the family is of unitary type.
\par
(2) Because the orthogonality relations for Dirichlet characters are
easier to manipulate when summing over all characters modulo $q$, we
will sometimes use sums over all Dirichlet characters. This amounts to
adding the $L$-function of $f$ into our family (with the Euler factor
at $q$ removed) and  changing the normalizing factor from
$1/\varphi^{\ast}(q)$ to $1/\varphi(q)$, and has no consequence in the
asymptotic picture.
\end{remark}

In the next sections, we state precise forms of the results we will
prove concerning this family. These concern non-vanishing properties
and extremal values. We leave as an exercise to the interested reader
the proof of the universality theorem (following~\cite{bagchi}), which
takes the following form:

\begin{theorem}
  Let $0<R<1/4$ be a real number and let $D$ be the disc of radius $R$
  centered at $3/4$, which has closure $\bar{D}$ contained in the
  critical strip $1/2<\Reel(s)<1$. Let $\varphi\colon \bar{D}\to\Cc$
  be a function that is continuous and holomorphic in $D$, and does
  not vanish in $D$. Then, for all $\eps>0$, we have
$$
\liminf_{q\to+\infty} \frac{1}{\vphis(q)} \Bigl|\{\chi\mods{q} \,\,
\text{{\rm non-trivial}} \mid\, \sup_{s\in\bar{D}}
|L(f\otimes\chi,s)-\varphi(s)|<\eps\} \Bigr|>0.
$$
\end{theorem}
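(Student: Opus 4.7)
The plan is to follow the classical probabilistic framework of Bagchi, adapted to the twisted family. Work in the Banach space $\mathcal H = H(\bar D)$ of functions continuous on $\bar D$ and holomorphic on $D$, equipped with the sup norm, and view
$$
X_q\colon \chi \longmapsto L(f\otimes\chi, \cdot)|_{\bar D}
$$
as a random element of $\mathcal H$ when $\chi$ is drawn uniformly among primitive non-trivial characters modulo $q$. The probabilistic model is the random Euler product
$$
L(s, \boldsymbol Y) = \prod_p \left(1 - \frac{Y_p \lf(p)}{p^s} + \frac{\chi_r(p) Y_p^2}{p^{2s}}\right)^{-1},
$$
where $(Y_p)_p$ are independent and uniformly distributed on the unit circle. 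The product converges almost surely and in $L^2$ on compact subsets of $\{\Re s > 1/2\}$, since $\sum_p |\lf(p)|^2 p^{-2\sigma} < \infty$ for $\sigma > 1/2$ by Rankin--Selberg, so $L(\cdot, \boldsymbol Y)|_{\bar D}$ is a well-defined random element of $\mathcal H$, almost surely non-vanishing on $\bar D$. The theorem reduces to two claims: (i) convergence in law $X_q \Rightarrow L(\cdot, \boldsymbol Y)|_{\bar D}$ as $q\to\infty$, and (ii) a support theorem stating that every non-vanishing $\varphi \in \mathcal H$ lies in the support of the limit distribution.

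Step (i) combines finite-dimensional equidistribution with Euler-product truncation. For any finite set of primes $p_1 < \cdots < p_k$ coprime to $r$ and any non-zero $\boldsymbol n \in \Zz^k$, orthogonality of primitive characters gives
$$
\frac{1}{\vphis(q)} \sums_{\chi\mods q} \prod_{j\leq k} \chi(p_j)^{n_j} = O_{\boldsymbol n}(q^{-1})
$$
as soon as $q$ is large enough that $p_1^{n_1}\cdots p_k^{n_k}\not\equiv 1\pmod q$, so by Weyl's criterion $(\chi(p_j))_{j\leq k}$ equidistributes on $(S^1)^k$. Together with the continuous mapping theorem this yields, for each fixed $P$, weak convergence in $\mathcal H$ of the truncated Euler products $L_P(f\otimes\chi,\cdot)|_{\bar D}$ to $L_P(\cdot,\boldsymbol Y)|_{\bar D}$. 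Passing to the full $L$-function relies on a uniform mean-square tail bound
$$
\limsup_{q\to\infty} \frac{1}{\vphis(q)} \sums_{\chi\mods q} \sup_{s\in\bar D} |L(f\otimes\chi, s) - L_P(f\otimes\chi, s)|^2 \xrightarrow{P\to\infty} 0,
$$
which follows from the power-saving second-moment asymptotic on the critical line (with polynomial dependence on the imaginary part) extended to $\bar D\subset\{\Re s > 1/2\}$ by a contour shift, together with the analogous tail estimate for the random model. Tightness in $\mathcal H$ is then automatic.

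The main obstacle is step (ii), the support theorem. A Mergelyan-type approximation first reduces to $\varphi$ non-vanishing on an open neighborhood of $\bar D$, so that $\log\varphi$ lies in $\mathcal H$. The task is then to approximate $\log\varphi$ uniformly on $\bar D$ by partial sums
$$
\log L(s, \boldsymbol y) = \sum_p \frac{y_p\lf(p)}{p^s} + R(s,\boldsymbol y),
$$
with $R$ absolutely convergent on $\bar D$, for suitably chosen unimodular phases $y_p$. This is a Pechersky/Bagchi rearrangement theorem in the Bergman space $H^2(D)$, whose key inputs are that $\sum_p |\lf(p)|^2 p^{-2s}$ converges on $\bar D$ while $\sum_p |\lf(p)|^2/p = \infty$ (Rankin--Selberg), so that the system $(\lf(p)/p^s)_p$ is conditionally but not absolutely convergent in $H^2(D)$ and the set of unimodular rearrangement limits is dense in the corresponding affine subspace. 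Once (i) and (ii) are in hand, Portmanteau applied to the open set $\{\psi \in \mathcal H : \sup_{\bar D}|\psi-\varphi| < \eps\}$ yields the claimed $\liminf$ lower bound. The hardest part is verifying the Pechersky density hypothesis for the twisted Euler factors, and bridging the support theorem from the shrunken disc back to $\bar D$ via the Mergelyan step.
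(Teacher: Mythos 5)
The paper does not prove this theorem itself: it is explicitly left as an exercise to the reader, with~\cite{bagchi} (Kowalski, \emph{Bagchi's theorem for families of automorphic forms}) cited as the model. Your sketch follows exactly that model --- a functional limit theorem for $L(f\otimes\chi,\cdot)|_{\bar{D}}$ towards a random Euler product with independent uniform phases, established via local equidistribution and a mean-square truncation bound, followed by a Pechersky/Bagchi support theorem and an application of Portmanteau --- so the overall architecture is correct and matches the intended route.

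Two points should be sharpened. First, the paper's introductory discussion of universality emphasizes that this application is ``much softer'' than the others: the functional limit theorem only requires an \emph{upper bound} of the right order of magnitude for the untwisted second moment on the critical line (from which Cauchy--Schwarz gives a first-moment bound on $|L(f\otimes\chi,\tfrac12+it)|$), not the power-saving asymptotic that you invoke. Second, and more substantively, the two conditions you quote as the ``key inputs'' for Pechersky's rearrangement theorem --- namely $\sum_p\bigl\|\lf(p)p^{-s}\bigr\|^2_{H^2(D)}<\infty$ and $\sum_p|\lf(p)|^2/p=\infty$ --- do not by themselves give the density conclusion you then assert. Conditional-but-not-absolute convergence in $H^2(D)$ does not imply that the set of unimodular rearrangements is dense; Pechersky's theorem requires the additional hypothesis that for every non-zero functional $\phi$ on $H^2(D)$ one has $\sum_p|\lf(p)|\,|\langle p^{-s},\phi\rangle|=\infty$, and verifying this for the twisted Hecke family is the real technical content of the support theorem. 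That verification combines a Paley--Wiener type argument with a positive lower density of primes $p$ for which $|\lf(p)|$ is bounded away from zero; the latter comes from a prime number theorem for $L(\symf,s)$ (equivalently for $L(f\otimes f,s)$), which is precisely the ``Prime Number Theorem for an auxiliary $L$-function'' that the paper's outline alludes to. You correctly flag the Pechersky step as the hardest part, but the hypotheses you list as its key inputs do not yet capture what actually makes it go through.
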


We do not consider lower-bounds for integral moments, but refer to the
earlier paper of Blomer and Mili\'cevi\'c~\cite{BloMil}*{Th.\ 4} where a
special case is treated.



\section{Positive proportion of non-vanishing}

We will first use the mollification method to show that the central
value $L(\ftchi,\demi)$ is not zero for a positive proportion of
$\chi\mods q$. In fact, as is classical, we will obtain a quantitative
lower-bound.  Moreover, inspired by the work of B. Hough~\cite{Hough},
we will establish a result of this type with an additional constraint
on the argument of the $L$-value.

For $\chi$ such that $L(\ftchi,\demi)\not=0$, we let
\begin{equation}\label{deftheta}
\theta(\ftchi)=\arg(L(\ftchi,\demi))\in\Rr/2\pi\Zz
\end{equation}
be the argument of $L(\ftchi,\demi)$. We will also use the same
notation for the reduction of this argument in $\Rr/\pi\Zz$ (we
emphasize that this is $\Rr/\pi\Zz$, and not $\Rr/2\pi\Zz$; as we will
see, our method is not sensitive enough to detect angles modulo
$2\pi$).

We say that a subset $I\subset \Rr/\pi\Zz$ (or $I\subset \Rr/2\pi\Zz$)
is an \emph{interval} if it is the image of an interval of $\Rr$ under
the canonical projection.

\begin{theorem}\label{thmnonvanishing+angle} 
  Let $I\subset \Rr/\pi\Zz$ be an interval of positive measure. There
  exists a constant $\eta > 0$, depending only on $I$, such that
$$
\frac{1}{\vphis(q)} |\{\chi\mods q \, \, \text{{\rm
    non-trivial}}\mid\, |L(f\otimes\chi,\demi)|\geq (\log q)^{-1},\
\theta(\ftchi)\in I\}| \geq \eta+o_{f,I}(1)
$$
as $q\ra\infty$ among the primes.
\end{theorem}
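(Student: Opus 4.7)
The central observation I would exploit is that, when $L(\ftchi,\demi)\neq 0$, the argument $\theta(\ftchi)$ modulo $\pi$ is entirely determined by the root number of $L(\ftchi,s)$. Indeed, since the Hecke eigenvalues of $f$ are real, $\overline{L(\ftchi,\demi)}=L(f\otimes\bar\chi,\demi)$, and the functional equation at the central point reads
$$
L(\ftchi,\demi)\,=\,W(\ftchi)\,\overline{L(\ftchi,\demi)},
$$
where $W(\ftchi)\in S^1$ is the global root number. Writing $L=|L|e^{i\theta}$, this forces $e^{2i\theta(\ftchi)}=W(\ftchi)$ whenever $L(\ftchi,\demi)\neq 0$. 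The root number has the explicit closed form $W(\ftchi)=c_f\,\chi(r)\,\tau(\chi)^2/q$ for some $c_f\in S^1$ depending only on $f$, so the condition $\theta(\ftchi)\in I\subset\Rr/\pi\Zz$ is equivalent to $W(\ftchi)$ lying in the fixed arc $J:=\{e^{2i\theta}\,:\,\theta\in I\}\subset S^1$. This is a purely arithmetic condition on $\chi$, completely independent of the size or phase of the $L$-value itself, which is what makes the argument tractable.

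The plan is then to carry out a mollification argument, restricted to $\{\chi\,:\,W(\ftchi)\in J\}$ by means of a trigonometric polynomial detector. I would fix a large degree $K=K(I)$ and take a Beurling--Selberg-type trigonometric polynomial minorant $P(w)=\sum_{|k|\leq K} c_k w^k$ on $S^1$ with $P\leq\charfun_J$ pointwise and $c_0=\int_{S^1}P\,d\mu=|J|-O(1/K)$. Couple this with the standard Dirichlet polynomial mollifier $M(\chi)=\sum_{\ell\leq q^\gamma}\alpha_\ell\chi(\ell)$ of length $q^\gamma$ (for some small fixed $\gamma>0$), with $(\alpha_\ell)$ chosen so that $L(\ftchi,\demi)M(\chi)$ approximates $1$ in mean square, and set $\Omega:=\{\chi\,:\,|L(\ftchi,\demi)|\geq (\log q)^{-1},\ \theta(\ftchi)\in I\}$. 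The Cauchy--Schwarz inequality then gives
$$
|\Omega|\,\cdot\,\sums_\chi |L(\ftchi,\demi)M(\chi)|^2\;\geq\;\Bigl|\sums_\chi \charfun_\Omega(\chi)\, L(\ftchi,\demi)M(\chi)\Bigr|^2,
$$
reducing the task to lower-bounding the twisted sum on the right (the denominator being under control by the mollified second moment theorem of the paper).

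To handle that sum, I would replace $\charfun_\Omega=\charfun_J(W(\ftchi))\charfun_{|L|\geq(\log q)^{-1}}$ by $P(W(\ftchi))$ at the price of two error terms. First, removing $\charfun_{|L|\geq(\log q)^{-1}}$ contributes at most $(\log q)^{-1}\|P\|_\infty\sums_\chi |M(\chi)|\ll \vphis(q)/\log q$, using $\sums_\chi |M|^2\ll\vphis(q)$. Second, replacing $\charfun_J$ by $P$ costs, via Cauchy--Schwarz,
$$
\Bigl|\sums_\chi (\charfun_J-P)(W(\ftchi))\, L(\ftchi,\demi)M(\chi)\Bigr|^2 \,\ll\, \Bigl(\sums_\chi (\charfun_J-P)(W(\ftchi))^2\Bigr)\cdot\sums_\chi |LM|^2\,\ll\, \frac{\vphis(q)^2}{K},
$$
where the inner sum is an $O(\vphis(q)/K)$ equidistribution estimate for $W(\ftchi)$ on $S^1$. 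What remains is
$$
\sums_\chi P(W(\ftchi))\, L(\ftchi,\demi)M(\chi)\;=\;\sum_{|k|\leq K} c_k\, \sums_\chi W(\ftchi)^k\, L(\ftchi,\demi)M(\chi).
$$
For $k=0$ this is the ordinary twisted mollified first moment, whose asymptotic is exactly $c_0$ times the main term that already appears in the classical non-vanishing argument. For $k\neq 0$, the weight $W(\ftchi)^k$ equals $c_f^k\chi(r)^k\tau(\chi)^{2k}/q^k$, and expanding the Gauss sums together with orthogonality of characters converts the inner sum over $\chi$ into one weighted by a hyper-Kloosterman sum $\Kl_{2|k|}(\cdot\,;\,q)$ of rank $2|k|$; Deligne's square-root cancellation then forces its contribution to be $o(\vphis(q))$. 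Taking $K$ large enough so that the $O(1/\sqrt{K})$ error is dominated by the $k=0$ main term yields $|\Omega|/\vphis(q)\geq \eta+o_{f,I}(1)$ for some $\eta=\eta(I)>0$ (and the same argument applied to $(\charfun_J-P)^2$ justifies \emph{a posteriori} the equidistribution input used above).

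The main technical obstacle would be the uniform evaluation of these Gauss-sum-twisted mollified moments for $k\neq 0$. Although only finitely many such $k$ enter (since $K=K(I)$ is bounded), the hyper-Kloosterman kernel $\Kl_{2|k|}$ couples non-trivially with both the Hecke coefficients $\lf(n)$ and the approximate functional equation for $L(\ftchi,\demi)$: the additive-character exponential sums that drive the $k=0$ case are replaced by higher-rank trace functions whose cohomological properties must be checked. Nevertheless, the irreducibility and big-monodromy statements for hyper-Kloosterman sheaves developed in the trace-function framework of the paper, combined with the shifted-convolution estimates of~\cites{BloMil,BFKMM,KMS}, should deliver the requisite power saving for each fixed $k$, at which point the optimization of the mollifier $(\alpha_\ell)$ proceeds exactly as in the standard non-vanishing argument and yields the desired positive proportion.
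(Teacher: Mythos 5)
Your proposal follows the same strategy as the paper: reduce to a mollified first moment restricted to the angular set via Cauchy--Schwarz, detect the root-number condition by a trigonometric polynomial $P(w)=\sum_{|k|\le K}c_k w^k$ in $W(\ftchi)=e^{2i\theta(\ftchi)}=\eps(f)\chi(r)\eps_\chi^2$, justify the replacement $\charfun_J\rightsquigarrow P$ by equidistribution of Gauss sums, and then evaluate the twisted moments $\sums_\chi W(\ftchi)^k L(\ftchi,\demi)M(\chi)$ one mode $k$ at a time. That matches the paper's Section on non-vanishing quite closely.

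There is, however, a genuine gap in your treatment of the nonzero modes. You assert that for every $k\ne 0$ the Gauss-sum weight $\eps_\chi^{2k}$, after orthogonality, converts the twisted mollified first moment into a hyper-Kloosterman-weighted sum of rank $2|k|$, and that Deligne's square-root cancellation then gives $o(\vphis(q))$. That is correct for $k\ge 1$ and $k\le -2$, but \emph{false} for $k=-1$. The reason is the functional equation itself: using the approximate functional equation, $L(\ftchi,\demi)$ has a ``dual'' term carrying a factor $\eps(\ftchi)=\eps(f)\chi(r)\eps_\chi^2$. When you multiply by $W(\ftchi)^{-1}=\overline{\eps(f)}\,\ov{\chi(r)}\,\eps_\chi^{-2}$, the Gauss sums \emph{cancel exactly}, leaving a term in which the character average is a pure orthogonality relation with no Kloosterman kernel. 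This produces a genuine diagonal contribution, namely (in the paper's notation) the $\delta_{k=-2}\,\eps(f)\,\lf((\ell r)_q)/(\ell r)_q^{1/2}$ term of Corollary~\ref{cor-k=-2}, which is $O(1)$ rather than $o(1)$. It is killed only \emph{after} averaging against the specific mollifier coefficients $x_\ell$ (built from $\mu_f$), via the Prime Number Theorem / zero-free region for $L(f\otimes f,s)$ — this is exactly Lemma~\ref{lm-decay-mollif}. So the phrase ``Deligne's square-root cancellation then forces its contribution to be $o(\vphis(q))$'' does not apply to the $k=-1$ mode: the saving there is not a square-root cancellation in an exponential sum but a cancellation coming from the arithmetic structure of the mollifier, and it requires the zero-free region as an extra input. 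Without this step the argument does not close, since $c_{-1}\ne 0$ for any reasonable minorant/approximant of $\charfun_J$.

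One further small point: your $k\neq 0,-1$ error estimate genuinely needs the nontrivial bound of Proposition~\ref{propFKM1} (amplification / trace-function methods) for sums $\sum_n\lf(n)\Kl_m(an;q)W(n/N)$ with $N$ near $q$, not merely Deligne's pointwise bound $|\Kl_m|\le m$; the latter alone would only give $O(1)$ per $\chi$, not a power saving. Your final paragraph acknowledges this, so this is a matter of phrasing rather than a gap.
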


\begin{remark} \label{remark19}
  (1) Our proof will show that one can take
  $\eta =\frac{\mu(I)^2}{\expopropdenom \zeta(2)}$, where $\mu(I)$
  denotes the (Haar probability) measure of $I$.
  It also shows that the lower bound $(\log q)^{-1}$ can be replaced
  with $(\log q)^{-1/2 -\varepsilon}$ for any $\varepsilon > 0$.  For more details see \S \ref{improvedthm} below.
\par
(2) When $f$ is a holomorphic form with rational coefficients (i.e.,
it is the cusp form associated to an elliptic curve over $\Qq$),
Chinta~\cite{Chinta} has proved the following very strong
non-vanishing result: for any $\eps>0$, we have
\begin{equation}\label{chintabound} 
  \frac{1}{\vphi(q)}
  |\{\chi\mods q \,\mid\, L(f\otimes\chi,\demi)\not=0\}|=1+
  O_{f,\eps}(q^{-1/8+\eps}).	
\end{equation}
His argument uses ideas of Rohrlich, and in particular the fact that
in this case, the vanishing or non-vanishing of
$L(f\otimes\chi,\demi)$ depends only on the orbit of $\chi$ under the
action of the absolute Galois group of $\Qq$.  The Galois invariance of the
non-vanishing of $L(f\otimes\chi,\demi)$ is not known if $f$ is a
Maa{\ss} form, and it is not known either whether a lower bound such
as $|L(f\otimes\chi,\demi)|\geq (\log q)^{-1}$ is Galois-invariant,
even when $f$ is holomorphic.
\end{remark}

Theorem~\ref{thmnonvanishing+angle} can be seen as a special case of a
more general class of new non-vanishing results for $L(\ftchi,1/2)$
under additional constraints on $\chi$. In Section~\ref{sec-mellin},
we combine the mollification method with Katz's work on the
equidistribution of Mellin transforms of trace functions over finite
fields (see~\cite{Ka-CE}) to prove a very general theorem of this type
(see Theorem~\ref{nonvanishing+anglegeneral}). We state a
representative special case here.

For any $\chi\mods q$, the \emph{Evans sum} is defined as
$$
\widetilde{t}_e(\chi)=\frac{1}{\sqrt{q}}\sum_{x\in\Fqt}
\chi(x)e\Bigl(\frac{x-\bar{x}}{q}\Bigr).
$$
By Weil's bound for exponential sums in one variable, the Evans sums
are real numbers in the interval $[-2,2]$. A result of
Katz~\cite[Th. 14.2]{Ka-CE} implies that they become equidistributed,
as $q\to+\infty$, with respect to the Sato--Tate measure on $[-2,2]$.
We then have:

\begin{theorem}\label{thmnonvanishing+angleevans}
  Let $I\subset [-2,2]$ be a set of positive measure with non-empty
  interior. There exists a constant $\eta > 0$, depending only on $I$,
  such that
$$
\frac{1}{\vphi(q)} |\{\chi\mods q \, \, \mid\,
|L(f\otimes\chi,\demi)|\geq (\log q)^{-1},\ \widetilde{t}_e(\chi)\in
I\}| \geq \eta+o_{f,I}(1)
$$
as $q\ra\infty$ among the primes.
\end{theorem}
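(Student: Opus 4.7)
The plan is to adapt the mollification argument used for Theorem~\ref{thmnonvanishing+angle}, inserting a polynomial weight in $\wtilde{t}_e(\chi)$ to enforce the localization $\wtilde{t}_e(\chi)\in I$. The key preliminary is the limiting distribution of $\wtilde{t}_e(\chi)$: the Evans phase $x\mapsto e((x-\bar{x})/q)$ is the trace function of a middle-extension sheaf on $\Gm/\Fq$ whose geometric monodromy group is $\SU(2)$, and Katz's equidistribution theorem for Mellin transforms of trace functions~\cite{Ka-CE} implies that $\wtilde{t}_e(\chi)$ is asymptotically equidistributed on $[-2,2]$ with respect to the Sato--Tate measure $\mu_{ST}$. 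Since $I$ has positive measure and non-empty interior, I pick a continuous $0\leq \phi\leq \charfun_I$ with $\int \phi\,d\mu_{ST}>0$ and, by the Weierstrass theorem applied to $\sqrt{\phi}$ (and a harmless shrinking of $I$), a polynomial $Q(x)=\sum_{k\leq K}c_k x^k$ satisfying $0\leq Q(x)^2\leq \charfun_I(x)$ on $[-2,2]$ and $\int Q^2\,d\mu_{ST}>0$.

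With $M(\chi)=\sum_{\ell\leq q^\gamma}\alpha(\ell)\lf(\ell)\chi(\ell)\ell^{-1/2}$ a standard mollifier of length $q^\gamma$ for small fixed $\gamma>0$, and writing $L(\chi)=L(\ftchi,\demi)$, the Cauchy--Schwarz inequality yields
\begin{equation*}
\Bigl|\sum_{\chi\mods q} Q(\wtilde{t}_e(\chi))^2 M(\chi)L(\chi)\Bigr|^2 \leq \Bigl(\sum_{\chi\mods q} Q(\wtilde{t}_e(\chi))^2 \charfun_{L(\chi)\neq 0}\Bigr)\Bigl(\sum_{\chi\mods q} Q(\wtilde{t}_e(\chi))^2 |M(\chi)L(\chi)|^2\Bigr),
\end{equation*}
and the bound $Q^2\leq \charfun_I$ reduces the first factor on the right to $|\{\chi\mods q:\, L(\chi)\neq 0,\ \wtilde{t}_e(\chi)\in I\}|$. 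The task is then to produce asymptotic formulas with positive main terms for the remaining two sums.

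To do this I expand $Q^2=\sum_{k\leq 2K}a_k x^k$ and open
\begin{equation*}
\wtilde{t}_e(\chi)^k = q^{-k/2}\sum_{x_1,\ldots,x_k\in\Fqt}\chi(x_1\cdots x_k)\, e\Bigl(\frac{\sum_i (x_i-\bar{x}_i)}{q}\Bigr),
\end{equation*}
which rewrites each weighted moment as a finite linear combination of twisted mollified moments $\sum_\chi \chi(a) M(\chi) L(\chi)^j$ ($j\in\{1,2\}$) weighted by values of the trace functions of the iterated multiplicative convolutions of the Evans sheaf. The twisted-moment asymptotics~\eqref{eq-tw1},~\eqref{eq-tw2}, uniform in $a$ and power-saving in $q$, combine with Katz's equidistribution theorem to yield
\begin{align*}
\sum_{\chi\mods q} Q(\wtilde{t}_e(\chi))^2 M(\chi)L(\chi) &= \Bigl(\int Q^2\,d\mu_{ST}\Bigr)\vphi(q)\wtilde{\MT}_1(q)+O(q^{1-\delta}),\\
\sum_{\chi\mods q} Q(\wtilde{t}_e(\chi))^2 |M(\chi)L(\chi)|^2 &= \Bigl(\int Q^2\,d\mu_{ST}\Bigr)\vphi(q)\wtilde{\MT}_2(q)+O(q^{1-\delta}),
\end{align*}
with the same main terms $\wtilde{\MT}_1,\wtilde{\MT}_2$ as in the unweighted mollification; since $\wtilde{\MT}_1^2/\wtilde{\MT}_2$ is a positive constant for $\gamma$ small enough, the lower bound produced by Cauchy--Schwarz is of order $\eta\,\vphi(q)$ for a positive constant $\eta$ depending only on $I$. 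Upgrading ``$L(\chi)\neq 0$'' to $|L(\chi)|\geq (\log q)^{-1}$ is done by subtracting the contribution of $\chi$ with $|L(\chi)|<(\log q)^{-1}$ from the first moment on the left, which by Cauchy--Schwarz is bounded by $(\log q)^{-1}\vphi(q)^{1/2}\bigl(\sum_\chi Q^2|M|^2\bigr)^{1/2}$ and thus is negligible after slightly decreasing $\gamma$.

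The main obstacle is the evaluation of the weighted moments in the third step: the weighting by $\wtilde{t}_e(\chi)^k$ couples the mollification with twisted moments against the iterated multiplicative convolutions of the Evans sheaf, and isolating the correct main term with a power-saving error requires marrying the twisted-moment asymptotics for our family with a Deligne-style equidistribution statement that is uniform in $k$ (so that the polynomial degree $2K$ may be taken independent of $q$ while the error terms remain summable). This is exactly where Katz's framework~\cite{Ka-CE} enters essentially, and it is the content of the general Theorem~\ref{nonvanishing+anglegeneral} developed in Section~\ref{sec-mellin}; the outline above is the route by which that general machinery specializes to the Evans-sum weight.
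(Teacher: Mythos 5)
Your overall strategy — mollify, apply Cauchy--Schwarz against a localizing weight in $\widetilde{t}_e(\chi)$, and use Katz's equidistribution for the discrete Mellin transform — is the correct conceptual direction, and is indeed how the paper proceeds (by specializing Theorem~\ref{nonvanishing+anglegeneral}, proved in Section~\ref{sec-mellin}). However, two concrete steps in your outline do not work as written, and the paper handles both quite differently.

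First, the construction of the weight is broken. You require a polynomial $Q$ with $0\leq Q(x)^2\leq\charfun_I(x)$ on $[-2,2]$ and $\int Q^2\,d\mu_{ST}>0$. A nonzero polynomial cannot vanish on a set with nonempty interior, so unless $I$ is all of $[-2,2]$ up to measure zero, $Q^2\leq\charfun_I$ forces $Q\equiv 0$; no ``harmless shrinking of $I$'' can save this. The paper's proof of Theorem~\ref{nonvanishing+anglegeneral} avoids the impossible object by using \emph{two} weights: a continuous $\phi_0\leq\charfun_A$ enters the Cauchy--Schwarz step (so no polynomial minorant is needed — $\phi_0$ may be smooth and compactly supported), and then a trigonometric/Peter--Weyl polynomial $\phi$ with $\|\phi-\phi_0\|_\infty<\delta$ is substituted only when computing the first moment $\mcL''$; the error $|\mcL''-\mcL|$ is absorbed using the unweighted mollified second moment, which also serves (by $|\phi_0|\leq 1$) as the upper bound for the $\mathcal{Q}$-factor in Cauchy--Schwarz. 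This two-weight scheme is essential.

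Second, your proposed evaluation of the weighted first moment by opening $\widetilde{t}_e(\chi)^k$ as a $k$-fold sum over $\Fqt^k$ and invoking the twisted moment $\mcL(f;a,0)$ uniformly in $a$ does not close: the twisted-moment error $O(q^{-1/8+\eps})$ is uniform in $a$ but unsigned, so after multiplying by $q^{-k/2}$ and summing over the $\sim q^k$ tuples $\uple{x}$ the accumulated error is of size $q^{k/2-1/8+\eps}$, which diverges already for $k\geq 1$. Rather than expanding over $\Fqt^k$, the paper decomposes $\widetilde{t}_e(\chi)^k=(\Tr\theta_\chi)^k=\sum_{j\leq k}c_{k,j}\Tr(\sym^j\theta_\chi)$ by Peter--Weyl in $\SU_2$; this is an $O_k(1)$-term sum, the $j=0$ term supplies $\int x^k\,d\mu_{ST}$ times the main term, and each $\Tr(\sym^j\theta_\chi)$ with $j\geq 1$ is, by Theorem~\ref{th-katz}, the Mellin transform of a \emph{single} geometrically irreducible trace function of bounded conductor, to which Theorem~\ref{th-m1-mellin} applies to give $O(q^{-1/8+\eps})$. (A few additional cases — the ``punctual'' case and the $\HYPK_2$ pullback case — must be checked separately, and the paper does this in Cases 1 and 2 of the proof of Theorem~\ref{nonvanishing+anglegeneral}; for $\SU_2$ these arise from the finite-order central characters and require the computation with $\overline{a\ell}_q$ in Case 1.)

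Finally, a smaller issue: your Cauchy--Schwarz requires an asymptotic for the $Q^2$-weighted mollified \emph{second} moment. Twisted second moments against general Mellin transforms are not established in this paper (Theorem~\ref{thsecondmoment} handles only $\chi(\ell)\ov{\chi(\ell')}$ for short $\ell,\ell'$), and, via the same $k$-fold expansion objection as above, are not accessible by the available tools. Fortunately this is not needed: the weight is uniformly bounded on $K^\sharp$, so one simply upper-bounds the weighted second moment by the unweighted one, which only costs a multiplicative constant in the final $\eta$.

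So while your last paragraph correctly identifies Katz's Tannakian framework as the essential input and points to Theorem~\ref{nonvanishing+anglegeneral} as its packaging, the concrete route you sketch — a polynomial minorant plus an $\Fqt^k$-expansion with term-by-term twisted moments — is not viable; both steps need to be replaced by the paper's constructions.
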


\section{Large central values}

Our next result exhibits large central values of twisted $L$-functions
in our family, using Soundararajan's resonator method.  More
precisely, we first prove a result that includes an angular
constraint, similar to that in the previous section.

\begin{theorem}\label{thm-extremal} Let $I\subset \Rr/\pi\Zz$ be an
  interval of positive measure. There exists a constant $c>0$ such for
  all primes $q$ large enough, depending on $I$ and $f$, there exists
  a non-trivial character $\chi\mods q$ such that
$$
L(\ftchi,\demi)\geq \exp\left(\Big(\frac{c\log q}{\log\log
    q}\Big)^{1/2}\right)\quad \text{and}\quad \theta(\ftchi)\in I.
$$
\end{theorem}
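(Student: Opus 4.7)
The plan is to combine Soundararajan's resonator method with a Fourier-analytic weighting in the argument of the root number $\epsilon(\ftchi)$.

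The starting point is a consequence of the functional equation and the reality of the Hecke eigenvalues $\lf(n)$. Since $\overline{L(\ftchi,\demi)}=L(f\otimes\bar\chi,\demi)$, the functional equation $L(\ftchi,\demi)=\epsilon(\ftchi)\,L(f\otimes\bar\chi,\demi)$ becomes $L(\ftchi,\demi)=\epsilon(\ftchi)\,\overline{L(\ftchi,\demi)}$, which yields the key identity
\[
e^{2i\theta(\ftchi)}=\epsilon(\ftchi).
\]
Consequently, $\theta(\ftchi)\in I\pmod\pi$ is equivalent to $\arg\epsilon(\ftchi)$ lying in the arc $J=2I\subset\Rr/2\pi\Zz$. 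Since the root number has the explicit shape $\epsilon(\ftchi)=\eta_f\,\chi(r)^2\,\tau(\chi)^2/q$ for a unit-modulus constant $\eta_f$ depending only on $f$, detecting the angular condition amounts to controlling even powers of the Gauss sum $\tau(\chi)$.

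I would then introduce Soundararajan's resonator $R(\chi)=\sum_{\ell\leq q^\vartheta}r(\ell)\chi(\ell)$ with the usual multiplicative weight $r$ supported on smooth, divisor-rich integers, and a small $\vartheta>0$ determined by the range of validity of the twisted second moment asymptotic \refs{eq-tw2}. I would also fix a non-negative trigonometric polynomial $\phi(x)=\sum_{|k|\leq K}\widehat\phi(k)e^{ikx}$ of bounded degree $K=K(I)$ on $\Rr/2\pi\Zz$ that is concentrated on the arc $J$ (a Fejér--Jackson-type kernel suffices). The quantity to analyze is
\[
S_\phi:=\sums_{\chi\bmod q}|R(\chi)|^2\,|L(\ftchi,\demi)|^2\,\phi\bigl(\arg\epsilon(\ftchi)\bigr)=\sum_{|k|\leq K}\widehat\phi(k)\,M_k,
\]
where $M_k:=\sums_\chi|R(\chi)|^2\,|L(\ftchi,\demi)|^2\,\epsilon(\ftchi)^k$. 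For $k=0$, $M_0$ is the resonator-weighted twisted second moment; expanding $|R(\chi)|^2$ and inserting \refs{eq-tw2} for each of the resulting twists $\chi(\ell_1\overline{\ell_2})$ yields Soundararajan's main term of size $\vphis(q)\,\|r\|_2^2\,\exp\bigl(2c_0\sqrt{\log q/\log\log q}\bigr)$ for a suitably tuned $r$. For $k\neq 0$, substituting the explicit form of $\epsilon^k$ and expanding $\tau(\chi)^{2k}$ recasts $M_k$ as a sum over tuples $(a_1,\dots,a_{2k})\in(\Zz/q\Zz)^{2k}$ weighted by additive characters $e((a_1+\cdots+a_{2k})/q)$ of twisted second moments $\sums_\chi|R(\chi)|^2|L(\ftchi,\demi)|^2\chi(r^{2k}a_1\cdots a_{2k})$; applying \refs{eq-tw2} and exploiting the cancellation of the additive characters shows that $M_k$ is smaller than $M_0$ by a positive power of~$q$. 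The same analysis with the $|L|^2$ factor removed yields $\sums_\chi|R(\chi)|^2\phi(\arg\epsilon(\ftchi))\sim\widehat\phi(0)\,\vphis(q)\,\|r\|_2^2$.

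Because $\phi\geq 0$ and is concentrated on $J$, the contribution to $S_\phi$ coming from characters with $\arg\epsilon\notin J$ can be made an arbitrarily small fraction of the whole, and so, up to admissible error,
\[
S_\phi\leq\Bigl(\max_{\substack{\chi\bmod q\\\theta(\ftchi)\in I}}|L(\ftchi,\demi)|^2\Bigr)\cdot\sums_\chi|R(\chi)|^2\phi(\arg\epsilon(\ftchi)).
\]
Combining this with the asymptotics of the previous step produces some non-trivial $\chi\bmod q$ with $\theta(\ftchi)\in I$ and $|L(\ftchi,\demi)|\gg\exp\bigl(c\sqrt{\log q/\log\log q}\bigr)$ for some $c=c(I,f)>0$, which is the assertion of the theorem. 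The hard part will be the evaluation of the $M_k$'s: one must check that \refs{eq-tw2} is uniform enough in the $\ell$-aspect to cover all the residues $r^{2k}a_1\cdots a_{2k}\bmod q$ arising from the Gauss-sum expansion, and then exhibit enough cancellation in the multiple sums over $(a_1,\dots,a_{2k})$ for $k\neq 0$ to guarantee that the angular weight $\phi$ does not destroy Soundararajan's resonance mechanism; once these two technical points are settled, the optimization of $r$ and the treatment of the tails of $\phi$ proceed along standard lines.
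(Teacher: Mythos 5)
Your approach differs from the paper's in a way that turns out to be fatal: you insert the angular weight against the \emph{second} moment $|L(\ftchi,\demi)|^2$ and plan to evaluate $M_k=\sums_\chi|R(\chi)|^2|L(\ftchi,\demi)|^2\,\eps(\ftchi)^k$ for $k\neq0$ by opening $\eps_\chi^{2k}$ into additive characters and invoking the twisted second-moment asymptotic~\eqref{eq-tw2}. This is precisely the computation the paper identifies as out of reach; the authors remark explicitly (after Theorem~\ref{LargeValuesAngularSectorsTheorem}) that they are ``unable to evaluate second moments twisted by powers of Gauss sums.'' Concretely, the asymptotic~\eqref{eq-tw2} (i.e.\ Theorem~\ref{thsecondmoment}) has an error term of the form $O(|s|^{O(1)}L^{3/2}q^{-\delta})$ and a main term that depends arithmetically on the lifts of $\ell,\ell'$ to $[1,L]$ with $L\ll q^{1/2}$. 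After expanding $\tau(\chi)^{2k}$ you would need to apply it with $\ell\equiv r^{2k}a_1\cdots a_{2k}\pmod q$ running over \emph{all} residue classes modulo $q$; for generic such classes the lift to $[1,q]$ is of size $\asymp q$, the main term is not under control, and the error term is useless. Even without opening the Gauss sum, the second moment with an $\eps_\chi^{2k}$-twist leads, after the approximate functional equation and orthogonality, to bilinear sums $\sum_{m,n\asymp q}\lambda_f(m)\lambda_f(n)\Kl_{2k}(\cdots m\bar n;q)(mn)^{-1/2}$ in a range where the required savings in hyper-Kloosterman correlation sums are not available for general $k$. So the central step of your $M_k$ evaluation is not a technical loose end; it is the obstruction that forced the paper onto a different path.

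The paper's route avoids this by keeping only a \emph{single} power of $L$. Since $L(\ftchi,\demi)=|L|e^{i\theta(\ftchi)}$ and $e^{2i\theta(\ftchi)}=\eps(f)\chi(r)\eps_\chi^2$, one writes $|L|\,\psi(\theta)=L\cdot e^{-i\theta}\psi(\theta)$ for a smooth $\pi$-anti-periodic $\psi$ compactly supported in $I$; the function $e^{-i\theta}\psi(\theta)$ has only \emph{even} Fourier frequencies, so $|L|\,\psi(\theta)=L\sum_\kappa\hat\psi(2\kappa+1)(\eps(f)\chi(r)\eps_\chi^2)^\kappa$. Inserting this in $\sums_\chi|R(\chi)|^2|L(\ftchi,\demi)|\psi(\theta(\ftchi))$ reduces everything to the \emph{first} moments $\mcL(f;\ell,2\kappa)$ twisted by $\chi(\ell)$ and $\eps_\chi^{2\kappa}$, which Theorem~\ref{thm-twistedfirstmoment} does evaluate (via sums of Hecke eigenvalues against hyper-Kloosterman sums, which behave because the relevant sum length matches $q$ rather than $q^2$). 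The rest of the argument is the standard Rankin/H\"older resonator comparison of $\sums_\chi|R(\chi)|^2$ and $\sums_\chi|R(\chi)|^2|L|\psi$. If you want to salvage your write-up, replace $|L|^2$ by $|L|$ throughout, replace $\phi(\arg\eps(\ftchi))$ by a compactly supported $\pi$-anti-periodic $\psi$ (so that you do not need to control the tails at all), and route through Theorem~\ref{thm-twistedfirstmoment} rather than~\eqref{eq-tw2}.
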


We will also prove a second version which involves a product of
twisted $L$-functions (and thus a slightly different family of
$L$-functions), without angular restriction.

\begin{theorem}\label{thm-extremal2}
  Let $g$ be a fixed primitive cusp form of level $r'$ and trivial
  central character. There exists a constant $c>0$, depending only on
  $f$ and $g$, such that for all primes $q$ large enough in terms of
  $f$ and $g$, there exists a non-trivial character $\chi\bmod{q}$
  such that
$$
|L(\ftchi,\demi)L(\gtchi,\demi)|\geq \exp\left(\Big(\frac{c\log
    q}{\log\log q}\Big)^{1/2}\right).
$$
\end{theorem}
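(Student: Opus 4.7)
The plan is to adapt Soundararajan's resonator method, in the variant suited to products of two $L$-functions (cf.\ Section~\ref{SubsectionProductsResonator}), using the asymptotic formula with power-saving for the twisted second moment of the family established in this book. Fix a small parameter $\kappa>0$ and set $L=q^\kappa$. I introduce the resonator
$$R(\chi)=\sum_{\substack{\ell\leq L\\ (\ell,qrr')=1}} r(\ell)\chi(\ell),$$
where $r$ is a real-valued multiplicative function supported on squarefree integers, given by $r(p)=\alpha(p)/(\sqrt{p}\log p)$ for primes $p$ in a carefully chosen dyadic window, and zero elsewhere. The weights $\alpha(p)$ are to be aligned with the coefficients $(\lambda_f*\lambda_g)(p)=\lambda_f(p)+\lambda_g(p)$ of the Dirichlet series for $L(f,s)L(g,s)$, so as to maximize the moment ratio below.

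Form
$$Q_1=\sums_{\chi\mods q}|R(\chi)|^2,\qquad Q_2=\sums_{\chi\mods q}|R(\chi)|^2\,L(f\otimes\chi,\tfrac12)L(g\otimes\chi,\tfrac12),$$
so that by pigeonhole
$$\max_{\chi\mods q}|L(f\otimes\chi,\tfrac12)L(g\otimes\chi,\tfrac12)|\;\geq\;|Q_2|/Q_1.$$
The first quantity is immediate from orthogonality of characters: $Q_1\sim\vphis(q)\sum_\ell r(\ell)^2$. For $Q_2$, I expand $|R(\chi)|^2=\sum_{\ell_1,\ell_2}r(\ell_1)r(\ell_2)\chi(\ell_1)\overline{\chi(\ell_2)}$ and interchange summations; the inner sum over $\chi$ is then a twisted second moment of the family at shift $\ell_1\overline{\ell_2}\mods q$, which for $\ell_1,\ell_2\leq L=q^\kappa$ lies in the range of applicability of~\eqref{eq-tw2} provided $\kappa$ is small enough in terms of the admissible exponent $\delta$. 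Isolating the diagonal $\ell_1 n=\ell_2$ (where $n$ is the summation variable in an approximate functional equation for the product), substituting $\ell_2=\ell_1 m$ with $(\ell_1,m)=1$, and using the multiplicativity of $r$, the main term of $Q_2$ factors approximately as
$$Q_2 \;\approx\; \vphis(q)\Big(\sum_\ell r(\ell)^2\Big)\sum_m\frac{r(m)(\lambda_f*\lambda_g)(m)}{\sqrt{m}}.$$
Thus $|Q_2|/Q_1$ is governed by an Euler product factorizing over primes, and the classical Soundararajan choice of weights and window produces the target lower bound $\exp(c\sqrt{\log q/\log\log q})$ with $c=c(f,g)>0$.

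The principal obstacle is to control the off-diagonal portion of $Q_2$, namely the contributions with $\ell_1 n\equiv\ell_2\mods q$ but $\ell_1 n\neq\ell_2$: these are bounded precisely by the power-saving error term in~\eqref{eq-tw2}, and balancing them against the main term is what restricts $\kappa$ to be small in terms of $\delta$. A secondary consistency check is that $|Q_2|$ has the same order as its main term: the pairing $\chi\leftrightarrow\overline{\chi}$ combined with $L(f\otimes\overline{\chi},\tfrac12)=\overline{L(f\otimes\chi,\tfrac12)}$ forces $Q_2\in\Rr$, and by aligning the weights $r$ with $(\lambda_f*\lambda_g)(p)$ the main term is made positive, ruling out catastrophic cancellation.
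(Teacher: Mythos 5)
Your high-level plan — resonator method built on the twisted second moment — is the right one, but the specific moment you propose to compute does not reduce to the theorem proved in the paper, and this is a genuine structural gap rather than a detail.

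You set $Q_2=\sums_{\chi}|R(\chi)|^2 L(f\otimes\chi,\demi)L(g\otimes\chi,\demi)$ and assert that, after expanding $|R(\chi)|^2$, the inner sum over $\chi$ is a "twisted second moment of the family". It is not. The twisted second moment computed in Theorem~\ref{thsecondmoment} is $\frac{1}{\vphis(q)}\sums_\chi L(\ftchi,s)\ov{L(\gtchi,s)}\chi(\l)\ov{\chi(\l')}$ — note the complex conjugate on the second $L$-factor. The distinction is essential: for $L(\ftchi,s)\ov{L(\gtchi,s)}$ the Gauss sums in the root number cancel (see \eqref{eq-epspm} and Lemma~\ref{propfcteqn}), and the approximate functional equation carries the weights $\lambda_f(m)\lambda_g(n)\chi(m)\ov{\chi(n)}$, so that orthogonality produces a genuine near-diagonal $\l m\equiv\pm\l' n\pmod q$ with $m,n\leq q^{1+\eps}$ — exactly the shifted convolution structure the second moment estimates can handle. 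For your product $L(\ftchi,s)L(\gtchi,s)$ the root number carries $\eps_\chi^4$, the approximate functional equation has weights $\lambda_f(m)\lambda_g(n)\chi(mn)$, and after orthogonality the congruence becomes $mn\,n_1\equiv\pm n_2\pmod q$; since $mn$ ranges up to $q^{2+\eps}$ while $n_1,n_2\leq L=q^\kappa$, this has no useful diagonal structure, and the sum is closer in difficulty to a fourth moment of $\GL_1$-type $L$-functions of conductor $q$ than to the second moment proved here. The paper resolves this by instead taking $Q_2=\sums_\chi |R(\chi)|^2 L(\ftchi,\demi)L(g\otimes\ov\chi,\demi)\chi(u)$; since $|L(g\otimes\ov\chi,\demi)|=|L(\gtchi,\demi)|$, the pigeonhole bound still gives the desired product, while $L(g\otimes\ov\chi,\demi)=\ov{L(\gtchi,\demi)}$ makes the moment fall directly under Theorem~\ref{thsecondmoment}.

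Two further issues. First, the auxiliary integer $u$ in the paper's $Q_2$ (see Lemma~\ref{lm-aux}) is not optional: when $r=r'$ and $\eps(f)\eps(g)=-1$, the untwisted moment $\mcQ(f,g,\demi;1,1)$ vanishes \emph{identically} by the pairing $\chi\leftrightarrow\ov\chi$ (Remark~\ref{rm-cancel}), and without the symmetry-breaking twist $\chi(u)$ the resonator method yields $Q_2=0$. Your "consistency check" observes $Q_2\in\Rr$, but does not rule out $Q_2=0$. Second, your proposed weight $\alpha(p)$ "aligned with $\lambda_f(p)+\lambda_g(p)$" does not guarantee a large Euler product. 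The correct main term in the resonated moment (via Lemma~\ref{SecondMomentEvaluationForProducts}) factorises over primes with a factor roughly $r(p)\lambda_f^*(p)/\sqrt p + r(p)\lambda_g^*(p)/\sqrt p$, and for this to resonate positively you need $r(p)$ to have the same sign as \emph{both} $\lambda_f^*(p)$ and $\lambda_g^*(p)$; that is, the resonator must be supported on the set $\mathcal{G}$ of primes where $\lambda_f^*(p),\lambda_g^*(p)$ agree in sign, and a more careful choice such as $\varpi(p)=\sgn(\lambda_f^*(p))\lambda_f^*(p)\lambda_g^*(p)(\lambda_f^*(p)+\lambda_g^*(p))$ is required (see \eqref{DefinitionOfVarpi}). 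Aligning naively with the sum $\lambda_f(p)+\lambda_g(p)$ permits $r(p)$ to have the opposite sign to one of the two eigenvalues, and then the corresponding factor in the Euler product can shrink rather than grow.
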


Note that because we have a product of two special values, the
resonator method is now not a ``first moment'' method, but will
involve the average of these products, which is of the level of
difficulty of the second moment for a single cusp form $f$, and once
more, a power-saving in the error term is crucial for success.


\section{Bounds on the analytic rank}\label{pg-an}

Our third result concerns the order of vanishing (the analytic rank)
$$
\rk_{an}(f\otimes\chi)=\ord_{s=1/2}L(f\otimes\chi,s)
$$
of the twisted $L$-functions at the central point.  Using the methods
of \cites{KMDMJ,KMVcrelle,HBMDMJ} (as in Section~\ref{ssec-decay}) we
prove the exponential decay of the probability that the analytic rank
exceeds a certain value:

\begin{theorem}\label{thm-exponentialdecay} There exist   
  constants $R\geq 0$, $c>0$, depending only on $f$, such that
$$
  \frac{1}{\vphis(q)}\sums_{\chi\mods
    q}\exp(c\rk_{an}(f\otimes\chi))\leq \exp(cR)
$$
for all primes $q$.  In particular, by the inequality of arithmetic
and geometric means, we have
$$
\frac{1}{\vphis(q)}\sums_{\chi\mods q}\rk_{an}(f\otimes\chi)\leq R,
$$
for all primes $q$, and for any $t\geq 0$ we have
$$
\limsup_{q\to +\infty}\frac{1}{\vphis(q)} |\{\chi\mods q\,\,
\text{{\rm non-trivial}}\mid\, \rk_{an}(f\otimes\chi)\geq t\}|\ll_f
\exp(-ct).
$$
\end{theorem}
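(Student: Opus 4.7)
The plan is to follow the strategy of Heath--Brown and Michel~\cite{HBMDMJ} sketched in Section~\ref{ssec-decay}. The idea is to bound the analytic rank by a short Dirichlet polynomial over primes using the explicit formula, and then to control its high moments by means of the twisted second moment formula~\eqref{eq-tw2}. Fix an even non-negative Schwartz function $\phi:\Rr\to[0,+\infty)$ with $\phi(0)=1$ and whose Fourier transform $\widehat\phi$ is compactly supported in $[-\Delta,\Delta]$ for a small parameter $\Delta>0$ to be specified, depending only on the power-saving exponent $\delta$ in~\eqref{eq-tw2}. Since $\phi\geq 0$, we have $\rk_{an}(f\otimes\chi)\leq \sum_{\rho}\phi(\gamma\log q/(2\pi))$, where $\rho=\demi+i\gamma$ runs over non-trivial zeros of $L(f\otimes\chi,s)$. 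Applying the standard explicit formula and absorbing the contribution of prime powers $p^j$ with $j\geq 2$ into a constant via Rankin--Selberg bounds, we obtain
$$
\rk_{an}(f\otimes\chi)\leq C_f+|S_q(\chi)|,\qquad S_q(\chi)=\frac{1}{\log q}\sum_{p\leq q^\Delta}\frac{\lf(p)\chi(p)\log p}{\sqrt p}\widehat\phi\Bigl(\frac{\log p}{\log q}\Bigr),
$$
where $C_f$ depends only on $f$, $\Delta$ and $\phi$. The theorem therefore reduces to proving the subgaussian moment bound
$$
\expect_q\bigl(|S_q(\chi)|^{2k}\bigr)\leq (Kk)^k\qquad\text{for all }1\leq k\leq c_0\log q,
$$
for some constants $K,c_0>0$ depending only on $f$: a routine Taylor-series argument (combined with the trivial bound $\rk_{an}\ll\log q$ to handle the tail of the series) then yields $\expect_q\exp(c\rk_{an}(f\otimes\chi))\ll 1$ for some $c>0$ depending only on $f$, whence the average rank bound (by AM--GM) and the exponential tail $\proba_q(\rk_{an}(f\otimes\chi)\geq r)\ll\exp(-cr)$ (by Markov's inequality).

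To prove the moment bound, expand $|S_q(\chi)|^{2k}$ and average over primitive characters $\chi\mods q$. Orthogonality of Dirichlet characters modulo $q$ reduces the average to a sum over $2k$-tuples of primes $(p_1,\ldots,p_{2k})$ in $[1,q^\Delta]$ subject to the congruence $p_1\cdots p_k\equiv p_{k+1}\cdots p_{2k}\mods q$. In the \emph{diagonal regime} $k\Delta<1$, both products are strictly less than $q$, so the congruence forces equality as integers, hence a pairing of the $2k$ primes into $k$ pairs. The resulting diagonal contribution is dominated by
$$
k!\cdot\Bigl(\frac{C_\phi}{(\log q)^2}\sum_{p\leq q^\Delta}\frac{|\lf(p)|^2(\log p)^2}{p}\Bigr)^{k}\ll (C\Delta^2 k)^k\leq (Kk)^k,
$$
where $C_\phi=\|\widehat\phi\|_\infty^2$ and we use partial summation from the Rankin--Selberg prime number theorem $\sum_{p\leq x}|\lf(p)|^2\log p\sim x$, giving $\sum_{p\leq q^\Delta}|\lf(p)|^2(\log p)^2/p\sim\demi(\Delta\log q)^2$.

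The main obstacle is the \emph{off-diagonal regime} $k\Delta\geq 1$, where the congruence admits genuine off-diagonal solutions and elementary orthogonality is no longer sufficient. Following~\cite{HBMDMJ}, one reorganizes the off-diagonal terms into shifted-convolution-type sums: writing $\ell=p_1\cdots p_k$ and $\ell'=p_{k+1}\cdots p_{2k}$, both of size at most $q^{k\Delta}$, the off-diagonal contribution is bounded, via Cauchy--Schwarz, by a sum over pairs $(\ell,\ell')$ with $\ell\equiv\ell'\mods q$ and $\ell\neq\ell'$, which is estimated using the twisted second moment formula~\eqref{eq-tw2}. The power-saving $q^{-\delta}$ in~\eqref{eq-tw2} absorbs the polynomial loss in $\ell,\ell'$ and keeps the off-diagonal strictly smaller than the diagonal as long as $k\leq c_0\log q$ for a sufficiently small $c_0>0$. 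This adaptation of the method of~\cite{HBMDMJ} to our family of twists, while technical, is largely standard; once completed, the three assertions of the theorem follow from the reductions in the first paragraph.
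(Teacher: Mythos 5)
Your proposal has a fundamental gap at the very first step, where you write $\rk_{an}(f\otimes\chi)\leq\sum_{\rho}\phi(\gamma\log q/(2\pi))$ ``where $\rho=\demi+i\gamma$ runs over non-trivial zeros''. This parameterization implicitly assumes all non-trivial zeros lie on the critical line (i.e., GRH for $L(f\otimes\chi,s)$): if $\rho=\beta+i\gamma$ with $\beta\neq\demi$, then in the explicit formula the quantity attached to $\rho$ is $\wtilde\vphi(\rho)=\widehat\phi(\xi(\rho-\demi))$ evaluated at a \emph{complex} argument, and non-negativity of $\phi$ on $\Rr$ gives you nothing about its sign. Dropping those zeros to obtain a one-sided bound $\rk_{an}\leq C_f+|S_q(\chi)|$ is therefore unjustified unconditionally. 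This is precisely the crux of the Heath-Brown--Michel method and of Chapter~\ref{analyticrank}: the paper chooses a test function $\phi$ supported in $[-1,1]$ with the positivity property $\Re\widehat\phi(s)\geq 0$ for $|\Re s|\leq 1$, which lets one drop by positivity only the zeros in the narrow strip $|\Re\rho-\demi|\leq 1/\xi$, leaving an additional term $\Xi(f\otimes\chi)=\sum_{\Re(\rho-1/2)\geq 1/\xi}\Re\widehat\phi(\xi(\rho-\demi))$ counting zeros farther out (cf.\ Proposition~\ref{pr-anrk}). Bounding the high moments of $\Xi$ is the entire technical content of the chapter: via Selberg's lemma this is converted to the mollified mean-square estimate of Theorem~\ref{THM-MOMENT2UPPERBOUND}, $\frac{1}{\vphi^*(q)}\sum^*_\chi|L(\ftchi,s)M(\ftchi,\bfx_L)-1|^2\ll|s|^{O(1)}q^{-\eta(\sigma-1/2)}$, whose proof occupies most of the chapter through a careful multi-variable contour-shifting argument. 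Your proposal omits this quantity entirely.

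Separately, your discussion of the ``off-diagonal regime $k\Delta\geq 1$'' for the moments of $S_q(\chi)$ is not what the paper does and is also not needed: the paper sets $\xi=\log(q/2)/k$ so that the primes appearing in $S$ are all $<q^{1/k}$, forcing the congruence $p_1\cdots p_k\equiv p_{k+1}\cdots p_{2k}\bmod q$ to become an equality of integers, and the diagonal count $k!\,(\sum_p|\lf(p)|^2(\log p)^2\phi(\log p/\xi)^2/p)^k\ll(Ck\xi)^{2k}$ then suffices (this is \eqref{S2kbound}). There is no place in the argument where off-diagonal solutions of the prime congruence are confronted. The twisted second moment~\eqref{eq-tw2} is invoked to control $\Xi$, not $S$; your proposal conflates the two. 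To repair the proposal you would need to (i) replace the Schwartz test function by one with $\Re\widehat\phi(s)\geq 0$ on the strip $|\Re s|\leq 1$ as in Proposition~\ref{pr-anrk}, (ii) isolate the zero-sum $\Xi$, and (iii) prove the mean-square mollified estimate of Theorem~\ref{THM-MOMENT2UPPERBOUND} to feed into Selberg's lemma from~\cite{HBMDMJ}.
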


\begin{remark} 
  If
  $f$ is holomorphic with rational coefficients, an immediate
  consequence of Chinta's bound \eqref{chintabound} (using the bound
  $\rk_{an}(f\otimes\chi)\ll_f \log
  q$, for which see, e.g.,~\cite{IwKo}*{Th.\ 5.7}) is that
  $$
  \frac{1}{\vphis(q)}\sums_{\chi\mods
    q}\rk_{an}(f\otimes\chi)\ll_{f,\eps}q^{-1/8+\eps}.
  $$
\end{remark}

\section{A conjecture of Mazur-Rubin concerning modular symbols}

Suppose that $f$ is a holomorphic form   of weight $2$. For any
integers $q\geq 1$ coprime with $r$ and $a$ coprime to $q$, the
corresponding \emph{modular symbol} (associated to $f$) is defined by
$$
\msym{\frac aq}_f=2\pi i\int_{i\infty}^{a/q}f(z)dz =2\pi
\int_{0}^{\infty}f\Bigl(\,\frac{a}q+iy\,\Bigr)\,dy,
$$
\label{pg-mr}
where the path of integration can be taken as the vertical line
joining $i\infty$ to $a/q$ in the upper half-plane.  This quantity, as
a function of $a$, depends only on $a\mods q$.

It turns out that modular symbols are closely related to the special
values $L(f\otimes \chi,\demi)$ for Dirichlet characters
$\chi\mods q$, by means of a formula due to Birch and Stevens
(cf. \cite{MTT}*{(8.6)}).
\par
Recently, Mazur and Rubin~\cite{MaRu} have investigated the variation
of the rank of a fixed elliptic curve $E/\Qq$ in abelian extensions of
$\Qq$ (including infinite extensions). This has led them (via the
Birch--Swinnerton-Dyer conjecture and the Birch--Stevens formula) to a
number of questions and conjectures concerning the modular symbols of
the cusp form $f$ attached to $E$ (i.e., the cusp form whose
$L$-function coincides with the Hasse-Weil $L$-function of $E$ by the
modularity theorem). In particular, they raised a number of problems
concerning the distribution of these modular symbols.

Many of these questions have now been solved by Petridis and
Risager~\cite{PeRi} on average over $q$. In Chapter~\ref{ch-modular},
we will study the distribution of modular symbols associated to an
individual prime modulus $q$ (see also the recent work~\cite{KiSu} by
Kim and Sun for a more arithmetic/algebraic perspective on modular
symbols). Among other things, we will solve a conjecture of Mazur and
Rubin (see~\cite{PeRi}*{Conj. 1.2}) concerning their variance. Let
$$
M_f(q) = \frac{1}{\varphi(q)} \sum_{\substack{a\bmod{q}\\(a,q)=1}}
\msym{\frac aq}_f 
$$ 
be the mean value, which will be computed in Theorem \ref{th-MaRu2}.

\begin{theorem}\label{th-MaRu} 
  For $q$ a prime, the variance of modular symbols
$$
V_f(q)=\frac{1}{\vphi(q)}
\sum_{\substack{a\bmod{q}\\(a,q)=1}}\Bigl\vert\,\msym{\frac
  aq}_f-M_f(q)\Bigr\vert^2
$$
satisfies
$$
V_f(q)= 2 \prod_{p\mid r}(1+p^{-1})^{-1}
\frac{L^{\ast}(\symf,1)}{\zeta(2)}\log q+\beta_f+O(q^{-1/145})
$$
for $q$ prime, where $\beta_f\in\Cc$ is a constant and
$L^{\ast}(\symf,s)$ denotes the \emph{imprimitive} symmetric square
$L$-function of $f$ \emph{(cf.\ Section \ref{symsquare})}.
\end{theorem}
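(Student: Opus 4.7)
The plan is to reduce the problem, via the Birch--Stevens formula and Plancherel duality on $(\Zz/q\Zz)^\times$, to the asymptotic evaluation of the second moment $\vphis(q)^{-1}\sums_{\chi}|L(\ftchi,1/2)|^2$, which is the central analytic result established in this book.

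Viewing $g: a\mapsto \msym{a/q}_f$ as a function on $(\Zz/q\Zz)^\times$ with discrete Fourier coefficients $\hat g(\chi) = \sum_{(a,q)=1}\bar\chi(a)\msym{a/q}_f$, the mean $M_f(q)$ equals $\hat g(\chi_0)/\vphi(q)$, so Parseval's identity on $(\Zz/q\Zz)^\times$ yields directly
\[
V_f(q) \;=\; \frac{1}{\vphi(q)^2}\sum_{\chi\neq\chi_0}|\hat g(\chi)|^2.
\]
The Birch--Stevens formula (cf.~\cite[(8.6)]{MTT}) identifies, for each non-trivial (automatically primitive, since $q$ is prime) $\chi\bmod q$, the Fourier coefficient $\hat g(\chi)$ with $\tau(\bar\chi)L(\ftchi,1/2)$ up to a factor of modulus one, giving $|\hat g(\chi)|^2 = q\,|L(\ftchi,1/2)|^2$. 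Using $q\vphis(q)/\vphi(q)^2 = 1 + O(q^{-2})$, we arrive at
\[
V_f(q) \;=\; \bigl(1+O(q^{-2})\bigr)\cdot\frac{1}{\vphis(q)}\sums_{\chi\bmod q}|L(\ftchi,1/2)|^2.
\]

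The problem is thereby reduced to the second-moment formula \eqref{eq-bm2} for this family, which is what the main analytic chapters of this book prove with a power saving $q^{-\delta}$, ultimately of size $\delta\geqslant 1/145$. Writing
\[
\frac{1}{\vphis(q)}\sums_{\chi\bmod q}|L(\ftchi,1/2)|^2 \;=\; \alpha_f\log q + \beta_f' + O(q^{-\delta}),
\]
the leading coefficient $\alpha_f$ is read off from the $m=n$ diagonal in the approximate functional equation of $|L(\ftchi,1/2)|^2$: standard Rankin--Selberg analysis produces the residue at $s=1$ of $L(f\times\bar f,s)=\zeta(s)L(\symf,s)/\zeta(2s)$, and matching local Euler factors at primes $p\mid r$ (passing between the primitive $L(\symf,1)$ and the imprimitive $L^{\ast}(\symf,1)$) together with the numerical constant of the approximate functional equation yields the asserted value $\alpha_f = 2\prod_{p\mid r}(1+p^{-1})^{-1}L^{\ast}(\symf,1)/\zeta(2)$. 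The constant $\beta_f$ then coincides with $\beta_f'$ up to an error of order $q^{-2}$.

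The substantive, and essentially only, obstacle is the proof of \eqref{eq-bm2} with a sufficiently strong power saving, which demands the full analytic apparatus developed in the book: asymptotic formulas for shifted convolution sums, trace-function and Deligne-type bounds for algebraic exponential sums, and careful spectral decompositions. Once that input is available, the Birch--Stevens--Plancherel reduction above is elementary and the identification of $\alpha_f$ is a routine local computation; the numerical exponent $1/145$ is inherited directly from the power saving established in the core chapters.
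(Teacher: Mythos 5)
Your proposal is correct and follows essentially the same route as the paper: Lemma~\ref{lm-modmellin} (Birch--Stevens) converts the Fourier coefficients of $a\mapsto\msym{a/q}_f$ into central twisted $L$-values, orthogonality of characters (Parseval, as you phrase it) then expresses $V_f(q)=C_{f,f}(1,1;q)$ as $\frac{q}{\vphi(q)^2}\sums_{\chi}|L(\ftchi,1/2)|^2$, and the asymptotic is read off from the untwisted case of Theorem~\ref{thsecondmoment} together with the main-term evaluation in Proposition~\ref{pr-mt}. The only cosmetic difference is that you dispose of $M_f(q)$ automatically as the trivial-character contribution, whereas the paper computes it explicitly in Theorem~\ref{th-MaRu2}(1) by analytic continuation of additive twists; both give the same cancellation.
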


\section{Twisted moment estimates}

As we have explained in Section~\ref{sec-general}, the proofs of most
of these results rely on the {\em amplification method} and the {\em
  resonator method}, and involve various asymptotic formulas for
moments and twisted moments of the $L$-functions in the family.

In our case, since the Fourier coefficients of $f\otimes\chi$ are
$\lambda_f(n)\chi(n)$, and the first factor is fixed, it is most
natural to consider moments twisted simply by character values
$\chi(\ell)$ for some integers $\ell$. Moreover, in order to
incorporate angular restrictions on the central values, as in
Theorems~\ref{thmnonvanishing+angle} and \ref{thm-extremal}, it is
useful to also consider twists by powers of the Gau\ss\  sums of the
characters, at least in the first moment.

Hence, our basic sums of interests are
\begin{equation}\label{moments}
\begin{split}
  \mcL(f,s;\l,k) &:=\frac{1}{\vphis(q)}\sums_{\chi\mods q} L(f\otimes\chi,s)\eps_\chi^{k}\chi(\l),\\
  \mcQ(f,s;\l,\l') &:=\frac{1}{\vphis(q)}\sums_{\chi\mods q}
  \bigl|L(f\otimes\chi,s)\bigr|^2\chi(\l)\overline{\chi(\l')}
\end{split}
\end{equation}
where $s$ is a complex parameter (with real part close to $\demi$ in
practice), $\ell$ and $\ell'$ are coprime integers, $k\in\Zz$ and
\begin{equation}\label{gauss}
\eps_\chi = \frac{1}{\sqrt{q}}
\sum_{h\mods{q}}\chi(h)e\Bigl(\frac{h}{q}\Bigr)
\end{equation}
is the normalized Gau{\ss } sum of $\chi$. If $s=1/2$, we will drop it
from the notation and write $\mcL(f;\l,k) = \mcL(f,1/2;\l,k)$,
$\mcQ(f;\l,\l') = \mcQ(f,1/2;\l,\l')$.

Using these, we can build the mollified moments (or resonating
moments, depending on the application), namely
$$
\frac{1}{\vphis(q)}\sums_{\chi\mods q} L(f\otimes\chi,
s)e(2k\theta(f\otimes\chi))M(\ftchi,s;\bfx_L)
$$ 
and
$$
\frac{1}{\vphis(q)}\sums_{\chi\mods q}
\bigl|L(f\otimes\chi,s)\bigr|^2\bigl|M(\ftchi,s;\bfx_L)\bigr|^2
$$
where $M(\ftchi,s;\bfx_L)$ is a finite sum
$$M(\ftchi,s;\bfx_L)=\sum_{\l\leq L}x_\l\frac{\chi(\l)}{\l^{s}}$$
involving complex parameters $\bfx_L=(x_\l)_{\l\leq L}$ that we select
carefully depending on each application.


We need to evaluate the first moment only for
$s=\frac12$, and by the functional equation of
$L(\ftchi,s)$ is it sufficient to do so when $k\geq -1$.

\begin{theorem}\label{thm-moment1}  
  For $k\geq -1$, $\l\in(\Zz/q\Zz)^\times$ and any $\eps>0$, we have
$$
\mcL(f;\l,k)=\delta_{k=0}\frac{\lf(\ov \l_q)}{\ov
  \l_q^{1/2}}+O_{f,\eps, k}(q^{-1/8+\eps}),
$$
for $q$ prime, where $\ov\l_q$ denotes the unique integer in the
interval $[1,q]$ satisfying the congruence $\l\ov\l_q\equiv 1\mods q$.
\end{theorem}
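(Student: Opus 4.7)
My plan is to begin with the approximate functional equation for $L(\ftchi,\demi)$: for a primitive $\chi\mods q$ with $(q,r)=1$, the twist $\ftchi$ has conductor $|r|q^2$ and root number of the form $w(\chi)=\eta_f\chi(r)\eps_\chi^2$ for a unimodular constant $\eta_f$ depending only on $f$, so that
\begin{equation*}
L(\ftchi,\demi)=\sum_{n\geq 1}\frac{\lf(n)\chi(n)}{\sqrt n}V_+\!\Bigl(\frac{n}{q\sqrt{|r|}}\Bigr)+w(\chi)\sum_{n\geq 1}\frac{\lf(n)\bar\chi(n)}{\sqrt n}V_-\!\Bigl(\frac{n}{q\sqrt{|r|}}\Bigr),
\end{equation*}
for standard smooth rapidly decreasing weights with $V_\pm(0)=1$. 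Substituting this into $\mcL(f;\l,k)$ and swapping the order of summation reduces the problem to evaluating the Gauss-sum averages
\begin{equation*}
\Sigma_j(m):=\frac{1}{\vphis(q)}\sums_{\chi\mods q}\eps_\chi^{j}\chi(m),\qquad (m,q)=1,
\end{equation*}
for $j=k$ paired with $m=n\l$ (direct piece), and for $j=k+2$ paired with $m\equiv r\l\,\ov n\pmod q$ (dual piece). The contribution of terms with $q\mid n$, as well as the effect of removing the principal character from $\sums$, is absorbed into the error by a trivial bound.

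\textbf{Main term and hyper-Kloosterman structure.} For $j=0$, orthogonality of primitive characters gives $\Sigma_0(m)=\mathbf{1}_{m\equiv 1\,(q)}+O(q^{-1})$, which arises only in the direct piece when $k=0$, and constrains $n\equiv\ov\l_q\pmod q$. The smallest solution $n=\ov\l_q\in[1,q]$ contributes $\lf(\ov\l_q)/\ov\l_q^{1/2}\cdot V_+(\ov\l_q/(q\sqrt{|r|}))$; a contour shift in the Mellin representation of $V_+$ replaces the weight by $1$ at the cost of $O(q^{-1/2+\eps})$, and the tail $n=\ov\l_q+mq$ with $m\geq 1$ also contributes only $O(q^{-1/2+\eps})$ using the rapid decay of $V_+$. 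For $j\geq 1$, expanding $\eps_\chi^{j}$ into a multiple Gauss sum and collapsing by orthogonality yields
\begin{equation*}
\Sigma_j(m)=q^{-1/2}\Kl_j(\ov m;q)+O(q^{-1}),
\end{equation*}
where $\Kl_j$ is the normalized hyper-Kloosterman sum of rank $j$, which is bounded by $j$ by Deligne--Weil. Consequently the remaining contributions to $\mcL(f;\l,k)$ are reduced to bounding sums of the shape
\begin{equation*}
\mathcal{T}_j:=\frac{1}{\sqrt q}\sum_{n\geq 1}\frac{\lf(n)}{\sqrt n}\Kl_j(a\,\ov n;q)\,W\!\Bigl(\frac{n}{q\sqrt{|r|}}\Bigr),
\end{equation*}
for $a\in(\Zz/q\Zz)^\times$ depending on $\l$ and $r$, and smooth $W$ of compact Mellin type, with $j=k$ in the direct piece (only when $k\geq 1$) or $j=k+2$ in the dual piece (for any $k\geq -1$).

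\textbf{Power saving and the case $k=-1$.} The main obstacle is showing $\mathcal{T}_j=O_{f,j,\eps}(q^{-1/8+\eps})$. My approach is to apply the Voronoi summation formula to dualize the sum over $n$: the arithmetic Fourier transform of the hyper-Kloosterman trace function is, up to an explicit multiplicative shift of the argument, again a trace function of a geometrically irreducible middle-extension sheaf of bounded conductor on $\Gm$. The resulting dual sum $\sum_n\lf(n)(\ldots)$ is then precisely of the type handled in the BFKMM circle of ideas and in~\cite{BloMil}: the sheaf-theoretic estimates for correlations of modular coefficients against trace functions, combined with shifted-convolution bounds, yield the required power saving of size $q^{-1/8+\eps}$. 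This is the crux of the proof and is where the cohomological/spectral machinery is essential. The edge case $k=-1$ is handled by the Gauss-sum reciprocity $\eps_\chi^{-1}=\chi(-1)\eps_{\bar\chi}$, which, after changing variable $\chi\leftrightarrow\bar\chi$, converts the direct-piece contribution at $k=-1$ into an average of the same shape as $\mathcal{T}_1$, to which the previous bounds apply. Assembling the main term from the diagonal at $k=0$ with the power-saving bounds on all the $\mathcal{T}_j$ pieces then yields the asymptotic $\mcL(f;\l,k)=\delta_{k=0}\lf(\ov\l_q)/\ov\l_q^{1/2}+O_{f,\eps,k}(q^{\eps-1/8})$.
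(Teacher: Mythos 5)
Your decomposition is structurally identical to the paper's: approximate functional equation, then the Gauss-sum averages over primitive characters, producing the hyper-Kloosterman sums $\Kl_k$ and $\Kl_{k+2}$, with the main term arising from the degenerate $j=0$ case in the direct piece when $k=0$. Up to that point the proposal is correct, and the handling of the main term (isolating $n=\ov\l_q$, stripping the weight at cost $O(q^{-1/2+\eps})$, tail bounds) matches the paper. The redundant ``Gauss-sum reciprocity'' step for $k=-1$ is harmless but unnecessary: the paper simply defines $\Kl_k$ for negative $k$ by $\Kl_k(m;q)=\Kl_{|k|}((-1)^k\ov m;q)$, so that the single black-box estimate below covers every $j\neq 0$ directly.

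The gap is in the power-saving step. You propose to apply Voronoi summation and then appeal to ``shifted-convolution bounds'' from the ``BFKMM circle'' and from Blomer--Mili\'cevi\'c. This is the wrong tool and would not close the estimate. Voronoi is essentially self-dual in this setting: applying it to $\sum_n\lf(n)\Kl_j(an;q)W(n/N)$ produces (Corollary~\ref{corvoronoi}) a small ``zero-frequency'' term plus a dual sum of the same shape $\sum_n\lf(n)\widecheck{\Kl_j}(n)\tilde W(\cdot)\Kl_2(\cdot)$; this is again a modular form against a bounded-conductor trace function, of comparable length, and trivial estimation of the dual sum gives no saving over the trivial bound $N^{1+\eps}$. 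Shifted convolutions and the bilinear Kloosterman estimates (Blomer--Mili\'cevi\'c, Proposition~\ref{thmSCP}; BFKMM; Kowalski--Michel--Sawin, Proposition~\ref{thmtypeII}) are ingredients for the \emph{second} moment, where two copies of the $L$-function produce a genuine bilinear structure; they do not enter the first moment at all.

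What is actually needed — and what the paper invokes — is Proposition~\ref{propFKM1} (= \cite[Th.\ 1.2]{FKM1}): a uniform bound
$$
\sum_{n\geq 1}\lf(n)\Kl_k(an;q)W\Bigl(\frac{n}N\Bigr)\ll_{\eps}q^{\eps+1/2-1/8}N^{1/2}\Bigl(1+\frac{N}{q}\Bigr)^{1/2}
$$
valid for all $k\in\Zz\setminus\{0\}$. Fed into $\mathcal{T}_j$ with $N\ll q^{1+\eps}$, this gives exactly the $q^{-1/8+\eps}$ error. The proof of that proposition is \emph{not} a Voronoi argument: it is an \emph{amplification} argument in the spectral family of level $qr$, relying on the Petersson--Kuznetsov formula and on Deligne--Katz estimates for the correlation sums $\sum_z\widehat{\Kl_k}(\gamma\cdot z)\overline{\widehat{\Kl_k}(z)}$ that arise on the arithmetic side. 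So while your three-stage outline is faithful, the third stage as written would not compile into a proof; you need to replace the Voronoi/shifted-convolution heuristic by the FKM algebraic-twist bound (or reprove its amplification argument).
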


The proof of this theorem is rather elementary when $k=0$, but it
requires the results of Fouvry, Kowalski and Michel~\cite{FKM1} on
twists of Fourier coefficients by trace functions otherwise. 

The evaluation of the second moment is significantly more
challenging. The combination of our three papers
\cites{BloMil,BFKMM,KMS} successfully handles the case
$\l=\l'=|r|=1$. Precisely, by~\cite{KMS}*{Th.\ 1.5} (which relies on the
previous papers), we have:

\begin{theorem}\label{thm-moment2}
  Assume that the level of $f$ is $r=1$. For any $\delta< 1/144$, we
  have
\begin{equation*}
  \frac{1}{\vphis(q)}\sums_{\chi\mods q}|
  L(f\otimes\chi,\demi)|^2= P_f(\log q)+O_{f,\delta}(q^{-\delta}),
\end{equation*}
for $q$ prime, where $P_f(X)$ is a polynomial of degree $1$ depending
on $f$ only with leading coefficient $ 2 L( \symf,1)/\zeta(2)$. %
\end{theorem}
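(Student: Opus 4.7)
The plan is to start from an approximate functional equation for $|L(f\otimes\chi,1/2)|^2$, expressing it as
$$|L(\ftchi,\demi)|^2 = \sum_{m,n\geq 1} \frac{\lf(m)\lf(n)\chi(m)\ov{\chi}(n)}{\sqrt{mn}}\,V\Bigl(\frac{mn}{q^2}\Bigr) + (\text{dual term}),$$
where $V$ is a rapidly decaying weight essentially cutting off at $mn \leq q^{2+\eps}$. Summing this against $1/\vphis(q)$ over all primitive $\chi$ and applying the orthogonality relation
$$\frac{1}{\vphis(q)}\sums_{\chi\mods q}\chi(m)\ov{\chi}(n) = \frac{1}{\vphis(q)}\Bigl(\vphi(q)\charfun_{m\equiv n\,(q)} - \charfun_{(mn,q)=1}\Bigr) + \text{trivial-character terms},$$
splits the average into a diagonal contribution ($m = n$) and an off-diagonal contribution ($m \equiv n \pmod q$, $m \neq n$).

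The diagonal contribution produces, after opening $V$ by Mellin inversion and shifting contours, a main term of the shape
$$\sum_{n \leq q^{2}}\frac{\lf(n)^2}{n}V\!\Bigl(\frac{n^2}{q^2}\Bigr) \;=\; P_f(\log q) + O(q^{-c}),$$
where $P_f$ is a polynomial of degree $1$ whose leading coefficient, coming from the residue at $s=1$ of the Rankin--Selberg $L$-function $L(f\times f,s)$, equals $2L(\symf,1)/\zeta(2)$ (using $L(f\times f, s) = \zeta(s)L(\symf,s)$ and computing the local factor at the pole). This computation is standard and loses nothing more than polynomial factors of $\log q$.

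The real work is the off-diagonal contribution: after writing $m - n = hq$ with $h \neq 0$ and $|h| \leq q^{1+\eps}$, one must estimate the shifted convolution sum
$$S(h;q) = \sum_{m - n = hq} \lf(m)\lf(n)\, W\!\Bigl(\frac{m}{q^2},\frac{n}{q^2}\Bigr)$$
with a power-saving in $q$, uniformly in $h$. The plan here is to apply the delta-symbol method of Duke--Friedlander--Iwaniec (or Jutila's circle method) to detect $m - n = hq$, which reduces the problem to bilinear forms in Kloosterman sums modulo auxiliary moduli of size roughly $\sqrt{q^2/c}$ for various $c$. One then invokes the Kuznetsov formula to convert these into spectral averages, or more efficiently, appeals to non-trivial cancellation in sums of Kloosterman sums against trace functions, which is precisely the input provided by the papers~\cite{BloMil,BFKMM,KMS}.

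The main obstacle, and the source of the modest exponent $\delta < 1/144$, is obtaining sufficient cancellation in these bilinear Kloosterman sums. The winning strategy combines the sliding-sum-of-Kloosterman sums method with the $\ell$-adic estimates of Kowalski--Michel--Sawin for bilinear forms involving trace functions, balancing the lengths of the two bilinear ranges against the conductor $q$ and the size of the shift $h$. Once this balancing is optimized (and here is where the numerical constant $1/144$ is pinned down), summing over $h$ with the decay from $W$ completes the estimate of the off-diagonal contribution and yields the stated power-saving error term.
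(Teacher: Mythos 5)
Your high-level outline---approximate functional equation, orthogonality to split into diagonal and off-diagonal, Rankin--Selberg residue for the degree-one polynomial main term, and deep cancellation in Kloosterman sums for the power saving---is the correct skeleton and matches the paper's approach. But there is a concrete gap in the mechanism you propose for reaching the key arithmetic input from KMS.

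You propose writing $m-n=hq$ and applying the Duke--Friedlander--Iwaniec delta-symbol (or Jutila's circle method) to detect this, producing bilinear forms in Kloosterman sums modulo auxiliary moduli $c$ of size roughly $\sqrt{q^2/c}$. However, the KMS estimate (Proposition~\ref{thmtypeII}, i.e.~\cite[Th.~1.6]{KMS}) is a Burgess-type bound for bilinear forms in $\Kl_2(amn;q)$ modulo the \emph{fixed prime} $q$; its $\ell$-adic proof is tailored to that situation, and it does not apply to Kloosterman sums modulo the unrelated auxiliary moduli $c$ that the circle method introduces. In the paper's Chapter~\ref{ch-second}, Kloosterman sums modulo $q$ instead emerge by applying the Voronoi summation formula modulo $q$ directly to one of the two variables (Corollary~\ref{corvoronoi}) under the congruence $\ell m\equiv\pm\ell'n\pmod{q}$; this produces exactly $B(\Kl_2,\bfalpha,\bfbeta)$ as required by Proposition~\ref{thmtypeII}. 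Jutila's circle method and the Kuznetsov formula appear only in the separate shifted-convolution bound (Proposition~\ref{thmSCP}, adapted from Blomer--Mili\'cevi\'c), which handles the complementary parameter range where one variable is much longer than the other. The exponent $1/144$ is pinned down by optimizing between these three estimates (trivial, shifted-convolution, Voronoi-plus-KMS) over the dyadic ranges $M,N$; without the Voronoi-mod-$q$ step your scheme never actually produces an object to which the KMS bilinear bound applies.
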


As we discussed above, this is in a certain sense the main case, and
from there it is possible to evaluate the more general second moments
$\mcQ(f,s;\l,\l')$, which we do here for $f$ of general level. In
fact, for the proof of Theorem \ref{thm-extremal2}, we will require an
estimate involving two cusp forms (which of course may be equal!).

\begin{theorem}\label{thsecondmoment} 
  Let $f, g$ be primitive cusp forms of signed levels $r$ and $r'$ coprime to
  $q$, with trivial central character.  Define
$$
\mcQ(f,g,s;\l,\l')=\frac{1}{\vphis(q)}\sums_{\chi\mods q}
L(f\otimes\chi,s)\ov{L(g\otimes\chi,s)}\chi(\l)\overline{\chi(\l')}
$$
for integers $1\leq \l,\l'\leq L$, with $(\l\l',qrr')=(\l,\l')=1$, and
$s\in\Cc$.
\par
Then, for $s=\frac{1}2+\beta+it$, $\beta,t\in\Rr$ with
$|\beta|\leq 1/\log q$, we have the asymptotic formula
$$
\mcQ(f,g,s;\l,\l')=\MT(f,g,s;\l,\l')+O_{f, g,
  \varepsilon}(|s|^{O(1)}L^{\expoL}q^{-\expoq+\eps})
$$
for $q$ prime, where
$$\MT(f,g,s;\l,\l')=\frac12 \MT^+(f,g,s;\l,\l')+\frac12
\MT^-(f,g,s;\l,\l')$$ is a ``main term'' whose even and odd parts
$\MT^\pm(f,g,s;\l,\l')$ are given in \eqref{MTsecondmoment}. 
\end{theorem}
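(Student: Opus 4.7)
The plan is to reduce Theorem~\ref{thsecondmoment} to the analytic input underlying Theorem~\ref{thm-moment2}, by combining three standard ingredients: (i) an approximate functional equation for the product $L(f\otimes\chi,s)\overline{L(g\otimes\chi,s)}$, (ii) orthogonality of primitive characters $\chi\mods q$, split according to the parity of $\chi$, and (iii) the shifted convolution estimates of the strength developed in \cites{BloMil,BFKMM,KMS}.

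First I would write, for $s=\tfrac12+\beta+it$,
\[
L(f\otimes\chi,s)\overline{L(g\otimes\chi,s)}\chi(\l)\overline{\chi(\l')} = \sum_{\pm}\sumsum_{m,n\geq 1}\frac{\lambda_f(m)\lambda_g(n)}{m^s n^{\bar s}}\chi(m\l)\overline{\chi(n\l')}\,V^\pm\!\Bigl(\frac{mn}{q^2};s\Bigr),
\]
where $V^\pm$ is a smooth weight of rapid decay past $mn\gg q^{2+\varepsilon}$ that absorbs the Gamma factors and the root numbers of $L(f\otimes\chi,s)\overline{L(g\otimes\chi,s)}$ for even and odd characters. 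Multiplying by $1/\vphis(q)$, summing over primitive characters, and invoking the orthogonality relations separately on even and odd $\chi$, the problem reduces to estimating the two congruence sums
\[
S^\pm = \sumsum_{\substack{m\l\equiv \pm n\l'\mods q \\ (mn,q)=1}}\frac{\lambda_f(m)\lambda_g(n)}{m^s n^{\bar s}}\,V^\pm\!\Bigl(\frac{mn}{q^2};s\Bigr),
\]
with an admissible loss from the principal character term that is easily absorbed in the final error.

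The diagonal contribution $m\l=\pm n\l'$ yields $\tfrac12\MT^\pm(f,g,s;\l,\l')$. Because $(\l,\l')=1$, the solutions are parametrized by $m=\l'h$, $n=\l h$, so the diagonal collapses to a Dirichlet series $\sum_h \lambda_f(\l'h)\lambda_g(\l h)h^{-2s}$ against the weight $V^\pm$. After separating the Euler factors at primes dividing $q\l\l' rr'$ by Hecke multiplicativity, this series factors as a local correction times the Rankin--Selberg $L(f\otimes\bar g,2s)$; a Mellin--Barnes contour shift across $s=\tfrac12$ then produces the polynomial in $\log q$ recorded in \eqref{MTsecondmoment}, with degree $0$ or $1$ according to whether $f\neq g$ (so $L(f\otimes\bar g,\cdot)$ is entire) or $f=g$ (so $L(\symf,\cdot)$ contributes a pole).

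The hard part, which carries all the depth of the theorem, is the off-diagonal $m\l\neq\pm n\l'$. Writing $m\l\mp n\l'=cq$ with $c\neq 0$ and $|c|\ll Lq^\varepsilon$ by the support of $V^\pm$, each $S^\pm$ becomes a sum over $c$ of \emph{shifted convolution sums} of $\lambda_f\otimes\lambda_g$ at moduli multiplied by $\l$ or $\l'$. These are exactly the sums treated in \cites{BloMil,BFKMM,KMS}: one separates the $m$- and $n$-variables via a $\delta$-symbol expansion, applies Poisson summation to exhibit Kloosterman-type sums, and invokes the Riemann Hypothesis input of \cite{FKM1} for trace functions together with the spectral and sup-norm bounds of \cite{BFKMM} to extract a power saving. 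The exponent $\expoq$ is the one obtained for $f=g$, $r=\l=\l'=1$ in \cite[Th.\ 1.5]{KMS}, and no new analytic idea is required: the twists by $\l,\l'$ merely enlarge the moduli by at most a factor $L$, producing the $L^{\expoL}$ loss in the error term, which is absorbed as long as $L\leq q^{\expoq-\varepsilon}$. Passing from the case $f=g$ to $f\neq g$ is structurally transparent, as the spectral analysis is insensitive to which pair of Hecke eigenvalues appears in the convolution. The main obstacle is therefore not conceptual but rather a careful uniform bookkeeping of the error terms across the parameters $\l,\l',s$, both parities $\pm$, and the two cusp forms, following line by line the arguments of \cites{BloMil,BFKMM,KMS}.
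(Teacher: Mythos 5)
Your plan matches the paper's proof in its architecture: approximate functional equation for the product $L(f\otimes\chi,s)\overline{L(g\otimes\chi,s)}$, orthogonality split by parity, diagonal $\to$ main terms, off-diagonal $\to$ shifted-convolution input with power saving. Where it is imprecise is precisely in the part that ``carries all the depth'': the off-diagonal is not handled by a single $\delta$-symbol$/$Poisson pass. After a dyadic partition into sums of lengths $M,N$, the paper runs a three-way dichotomy and optimizes across it: (i) a trivial bound for small $MN$, (ii) the shifted-convolution bound of Proposition~\ref{thmSCP} (where Jutila's circle method lives, and which alone is too weak when $N\gg M$), and (iii) in the regime $N>4LM$ a Voronoi summation modulo $q$ on the $n$-variable (Corollary~\ref{corvoronoi}) producing a bilinear form $B(\Kl_2,\bfalpha,\bfbeta)$ in Kloosterman sums, to which Propositions~\ref{CSPV} (from~\cite{FKM2}) and~\ref{thmtypeII} (from~\cite{KMS}) are applied. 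The optimization in Section~\ref{ch-second} is what actually delivers the exponent $\expoq=1/144$; it is not inherited verbatim from~\cite{KMS}. Your citation of~\cite{FKM1} is the wrong input here (that paper enters the \emph{first} moment via Proposition~\ref{propFKM1}); the second-moment error uses~\cite{FKM2} and~\cite{KMS}. Finally, there are two diagonals, one from each side of the approximate functional equation: the dual diagonal is $r\l n = \pm r'\l' m$ and it is this one that produces the level factors $\lambda_f(\rho)\lambda_g(\rho')/(\rho\rho')^{1-s}$ in \eqref{MTsecondmoment}; your writeup collapses these into one. None of these is a fatal gap — the skeleton is right and the diagonal computation is as you describe — but the phrase ``careful uniform bookkeeping'' understates the genuine work of the regime split and its optimization.
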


The main terms as we express them here are well-suited to further
transformations for our main applications. If one is interested in the
second moment $\mcQ(f,g,\demi;1,1)$ only (as in
Theorem~\ref{th-MaRu}), then one can express the main term more
concretely, but there are a number of cases to consider.
\par
If $f=g$ is of squarefree level $r$, then
$$
  \MT(f,f,\demi;1,1)=2
  \prod_{p\mid r}(1+p^{-1})^{-1}
  \frac{L( \symf,1)}{\zeta(2)}(\log q )+\beta_f + O(q^{-2/5})
$$
for some constant $\beta_f$, where $\symf$ is the symmetric square of
$f$ (cf.\ Section \ref{symsquare}). 
\par
If $f\not=g$, it may be that $\mcQ(f,g,\demi;1,1)$ is \emph{exactly}
zero for ``trivial'' reasons. This happens if $f$ and $g$ have the
same signed level $r=r'$ (recall that, with the convention~\ref{def-conv},
this implies that either both are holomorphic, or that both are
non-holomorphic) and their root numbers $\eps(f)$ and $\eps(g)$
satisfy $\eps(f)\eps(g)=-1$. In that case, computations with root
numbers show that
$$
L(f\otimes\chi,\demi)\overline{L(g\otimes \chi,\demi)}=-
L(f\otimes\ov{\chi},\demi)\overline{L(g\otimes \ov{\chi},\demi)}
$$
so the second moment cancels by pairing each character with its
conjugate (see Remark~\ref{rm-cancel}).

If, on the other hand, we have $\eps(f)\eps(g)=1$ and $f$, $g$ are of
the same type, then we have
$$
  \MT(f,g,\demi;1,1)=2\gamma_{f,g}\frac{ L( f\otimes
    g,1)}{\zeta(2)}  + O(q^{-2/5})
$$
where $L(f\otimes g,s)$ is the Rankin-Selberg convolution of $f$ and
$g$ (cf.\ Section \ref{RankinSelberg}) and $\gamma_{f,g}$ is some
non-zero constant depending on $f$ and $g$.  We defer a more detailed
discussion to Proposition~\ref{pr-mt}.

\begin{remark} (1) Let $d$ be the usual divisor function. Then $d(n)$
  is the $n$-th Hecke eigenvalue of a non-holomorphic Eisenstein
  series $E(s)$, and the identity
$$
L(\chi,s)^2=L(E\otimes \chi,s)
$$
shows that the problem of estimating the second moment of twists of
$E$ is equivalent to the problem of estimating the fourth moment of
the values of the Dirichlet $L$-functions $L(\chi,s)$.  This remark
shows that several parts of this memoir have obvious links with the
beautiful work of Young \cite{MY} (later improved in \cite{BFKMM})
where he proves the existence of a polynomial $P_4$ of degree $4$ and
of a constant $\delta >0$ such that, for all $q \geq 2$, we have
$$
\frac{1}{\varphi^* (q)} \sum_{\chi \mod q} \vert L(\chi, 1/2)\vert^4=
P_4 (\log q) + O(q^{-\delta}).
$$
\par
(2) Recently Zacharias \cite{Zac2} used the evaluation of the
mollified second moment $\mcQ(f,s;\l,\l')$ of this memoir together
with his own evaluation of the mixed twisted moment
$$\frac{1}{\vphis(q)}\sums_{\chi\mods q}L(\chi,\demi)L(\ftchi,\demi)\chi(\ell)$$
to establish the existence of a positive proportion of primitive
$\chi\mods q$ such that
$$L(\chi,\demi)L(\ftchi,\demi)\not=0.$$
He also obtained similar results when $f$ is an Eisenstein series (in
which case the $L$-function $L(f\otimes\chi,s)$ is a product of
Dirichlet $L$-functions): using his evaluation of the fourth mollified
moment of Dirichlet $L$-functions (\cite{Zac1}) he shows that for any
pair of characters $\chi_1,\chi_2\mods q$, there exists a positive
proportion of primitive characters $\chi\mods q$ for which
$L(\chi,\demi)L(\chi\cdot \chi_1,\demi)L(\chi\cdot
\chi_2,\demi)\not=0$.
\par
(3) The approach of Hoffstein and Lee~\cite{HL} towards the second
moment, based on multiple Dirichlet series, reduces a proof of
Theorem~\ref{thm-moment2} (with some power-saving exponent) to a
non-trivial estimate for a certain special value of a double Dirichlet
series, which is denoted $\tilde{Z}_q(1-k/2,1/2;f,f)$ in loc.\ cit.
When $q$ is prime, our theorem therefore indirectly provides such an
estimate
$$
\tilde{Z}_q(1-k/2,1/2;f,f)\ll q^{-1/144}.
$$
\end{remark}

\section*{Outline of the book}

This book is organized as follows:
\begin{enumerate}
\item Chapter~\ref{chapLfunctions} is preliminary to the main results;
  we set up the notation, and recall a number of important facts
  concerning Hecke $L$-functions (such as those of our family), as
  well as auxiliary $L$-functions that arise during the proofs of the
  main results (such as Rankin-Selberg $L$-functions). We require, in
  particular, some forms of the Prime Number Theorem and zero-free
  regions for these $L$-functions, and since the literature is not
  fully clear in this matter, we discuss some of these in some
  detail. We also discuss briefly a shifted convolution bound that is
  a slight adaptation of one of Blomer and
  Mili\'cevi\'c~\cite{BloMil}.
\item Chapter~\ref{ch-sums} gives an account of the algebraic
  exponential and character sums that occur in the book; on the one
  hand, these are the elementary orthogonality properties of character
  sums, and the averages of Gau\ss\ sums that give rise to
  hyper-Kloosterman sums, and on the other hand, we state a number of
  deep bounds for various sums of Kloosterman sums. Although we do not
  need to develop new bounds of this type, we give a quick sketch of
  the arguments that lead to them, with references to the original
  proofs. It is worth mentioning that these proofs rely in an
  absolutely essential way on the most general form of the Riemann
  Hypothesis over finite fields, due to Deligne, as well as on works
  of Katz. In Sections~\ref{sec-trace} and~\ref{sec-equi-mellin}, we
  present some background on trace functions and discuss the results
  of Katz on discrete Mellin transforms over finite fields that are
  involved in the proof of (the general form of)
  Theorem~\ref{thmnonvanishing+angleevans}.
\item In Chapter~\ref{ch-first}, we prove the necessary asymptotic
  estimates for the first twisted moment of our family. The proof is
  very short, which illustrates the principle that the complexity of
  moment computations in families of $L$-functions increases steeply
  as the order of moment increases.
\item In turn Chapter~\ref{ch-second} gives the proof of the required
  twisted second moment estimates. Although this is much more involved than
  the first moment, most of the necessary ingredients are found in our
  previous works, and the chapter is relatively short.
\item Finally, Chapters~\ref{ch-central},
  \ref{ch-extreme},~\ref{analyticrank} and~\ref{ch-modular} are
  devoted to the proofs of our main results: positive proportion of
  non-vanishing (including Theorem~\ref{thmnonvanishing+angle}),
  existence of large values, bounds for the analytic rank and the
  variance of modular symbols, respectively.  These chapters are
  essentially independent of each other (the last one is extremely
  short, as the proof of Theorem~\ref{th-MaRu} is mostly a direct
  translation of the second moment estimate), and many readers will
  find it preferable to start reading one of them, and to refer to the
  required results of the previous chapters only as needed.
\par
Since the theory of trace functions and its required background
involve prerequisites that may be unfamiliar to some readers, the
corresponding statements and results are isolated in independent
sections (besides the background sections~\ref{sec-trace} and
\ref{sec-equi-mellin}, they are in Sections~\ref{sec-m1-trace} and
\ref{sec-mellin}).
\end{enumerate}

\section*{Acknowledgments} 

\'E.\ F.\ thanks ETH Z\"urich and EPF Lausanne for financial
support. Ph.\ M.\ was partially supported by the SNF (grant
200021-137488) and by the NSF Grant 1440140, while in residence at
MSRI during the winter 2017. V.\ B., Ph.\ M.\ and E.\ K.\ were
partially supported by the DFG-SNF lead agency program grants BL
915/2, 200021L\_153647, 200020L\_175755. D.\ M.\ was supported by the
NSF (Grant DMS-1503629) and ARC (through Grant DP130100674). W.S. was
partially supported by Dr. Max R\"ossler, the Walter Haefner
Foundation and the ETH Zurich Foundation.
\par
\medskip
\par
We warmly thank the referee for his or her detailed and in-depth
report.
\par
We thank F. Brumley for useful discussions, especially concerning
Prime Number Theorems for automorphic forms, and G. Henniart for
useful information and remarks concerning the computations of root
numbers and local factors of various $L$-functions. We also thank
A. Saha for some references.
\par
The applications to modular symbols were elaborated after the talk of
M. Risager during an Oberwolfach meeting organized by V.B., E.K. and
Ph.M. We thank M. Risager and K. Rubin for enlightening discussions
about these problems, and we acknowledge the excellent conditions
provided to organizers by the Mathematisches Forschungsinstitut
Oberwolfach. 

Parts of the introduction were sketched during the conference
``Aspects of Automorphic Forms and Applications'' at the Institute of
Mathematical Research of Hong Kong University; E.K. and Ph.M. thank
the organizers, and especially Y-K. Lau, for inviting them to this
conference and giving the occasion to present some of the results of
this work.


\chapter{Preliminaries}
\label{chapLfunctions}

We collect in this chapter some preliminary material. Most of it is
well-known, however some cases of the Prime Number Theorem
(Proposition~\ref{AutomorphicPNTLemma}) are difficult to locate in the
literature, and the computation of the ramified factors of the
symmetric square $L$-function in Section~\ref{symsquare} are even more
problematic.

\section{Notation and conventions}\label{21}

\begin{itemize}
\item[--] We use the notation $\delta_{x,y}$ or $\delta(x,y)$ or $\delta_{x=y}$
for the Kronecker delta symbol.
\item[--] The notation $A\asymp B$ means
$$A\ll B\ll A,$$
where $\ll$ denotes the Vinogradov symbol.
\item[-] In this book, we will denote generically by $W$, sometimes with
subscripts, some smooth complex-valued functions, compactly supported
on $[1/2,2]$ and possibly depending on a finite set $\mathscr{S}$ of
complex numbers, whose derivatives satisfy
\begin{equation}\label{Wbound}
 W^{(j)}(x)\ll_{j}  \prod_{s \in \mathscr{S}} (1+|s|)^{cj}
\end{equation}
for some fixed constant $c > 0$ and any $j\geq 0$ (as usual, an empty
product is defined to be equal to $1$). In practice, $\mathscr{S}$ may
be empty, or may contain the (signed) levels $r, r'$ of two cusp forms, their
weight/spectral parameter, and/or a complex number $s$ on or close to
the $\demi$-line. Of course, the set $\mathscr{S}$ must not contain our
basic parameter $q$, but no harm is done if some $s\in \mathscr{S}$
grows like $(\log q)^2$, say, since all our estimates contain a 
$q^{\varepsilon}$-valve.  To lighten the notation, we will not the
display the dependence on parameters $s \in \mathscr{S}$ in implied
constants and just keep in mind that it is polynomial.
\item[--] Throughout this book, we will use the $\eps$-convention, according to
which a statement involving $\eps$ holds for all sufficiently small
$\varepsilon > 0$ (with implied constants depending on $\varepsilon$)
and the value of $\varepsilon$ may change from line to line. A typical
example is \eqref{eps}, where the various $\varepsilon$'s in
\eqref{Vassymp} and \eqref{KSbound} combine to a new $\varepsilon$.
\item[--] For $z\in\Cc$, we denote $e(z)=e^{2\pi iz}$. We recall that if
$q\geq 1$ is an integer, then $x\mapsto e(x/q)$ is a well-defined
additive character modulo $q$.
\item[--] For an integer $c\geq 1$ and $a\in\Zz$ coprime to $c$, we often write
$\bar{a}$ for the inverse of $a$ modulo $c$ in
$(\Zz/c\Zz)^{\times}$. The value of $c$ will always be clear from the
context.
\item[--] For any (polynomially bounded) multiplicative function $a(n)$, we
define a Dir\-ich\-let series
$$
A(s)=\sum_{n\geq 1}a(n)n^{-s},
$$
we denote by $A_p(s)$ the $p$-factor of the corresponding Euler
product, so that
$$
A(s)=\prod_p A_p(s)
$$
in the region of absolute convergence. For any integer $r$, we also
write $A^{(r)}(s)$ for the Euler product restricted to primes
$p\nmid r$.
\item[--] Let $c\geq 1$ and let $a$, $b$ be integers. We denote
$$
S(a, b; c) = \sum_{\substack{d\mods {c}\\(d,c)=1}} e\Bigl(\frac{a d +
  b\bar{d}}{c}\Bigr)
$$ 
the Kloosterman sum modulo $c$. We also denote
\begin{equation*}
\Kl(a;c)=\frac{1}{\sqrt{c}}S(a,1;c),
\end{equation*}
the normalized Kloosterman sum.
\item[--]
As we already mentioned, \textbf{\textit{unless otherwise specified,
    $q$ will be a prime number}}.
\end{itemize}

\section{Hecke \texorpdfstring{$L$-functions}{L-functions}}\label{hecke}

Let $f$ be a primitive cusp form (holomorphic or Maa\ss) of signed level $r$
(i.e.\ for the group $\Gamma_{0}(|r|)$) with trivial central character
$\chi_r$. The Hecke $L$-function of $f$ is a degree $2$ Euler product
absolutely convergent for $\Re s>1$:
\begin{align*}
L(f, s)&:=\prod_pL_p(f,s)=\prod_{p}\prod_{i=1}^2
\Bigl(\,1-\frac{\alpha_{f,i}(p)}{p^s}\,\Bigr)^{-1}\\
&= \prod_{p}\Bigl(1-\frac{\lambda_{f}(p)}{p^s} +\frac{\chi_r(p)}{p^{2s}}\Bigr)^{-1}=\sum_{n\geq 1}\frac{\lambda_{f}(n)}{n^s}, \quad \Re s >1.
\end{align*}
The factor $L_p(f,s)$ is the local $L$-factor at the prime $p$ and the
coefficients $\alpha_{f,i}(p)$ for $i=1,2$ are called the local
parameters of $f$ at $p$. The coefficients of this Dirichlet series
$(\lambda_{f}(n))_{n\geq 1}$ have a simple expression in terms of
these parameters: for any prime $p$, we have
$$\lf(p)=\alpha_{f,1}(p)+\alpha_{f,2}(p),\ \alpha_{f,1}(p)\alpha_{f,2}(p)=\chi_r(p),$$ 
and  we have the multiplicativity relations
\begin{gather*}
  \lf(m)\lf(n)=\sum_{d|(m,n)}\chi_r(d)\lf\left(\frac{mn}{d^2}\right),\\
  \lf(mn)=\sum_{d|(m,n)}\chi_r(d)\mu(d)\lf\left(\frac
    md\right)\lf\left(\frac nd\right).
\end{gather*}
For a primitive form, the Dirichlet coefficient $\lf(n)$ is the eigenvalue of
$f$ for the $n$-th Hecke operator.

The local $L$-factors $(L_p(f,s))_{p}$ are completed by an archimedean
local factor which is a product of  shifted Gamma functions
\begin{equation}
	 \label{defLinfty}L_\infty(f,s)=\Gamma_\Rr(s-\mu_{f,1})\Gamma_\Rr(s-\mu_{f,2}),\ \Gamma_\Rr(s)=\pi^{-s/2}\Gamma(s/2).
\end{equation}
The coefficients $\mu_{f,i},\ i=1,2$ are called the local archimedean
parameters of $f$ and are related to the classical invariants of $f$
as follows:
$$
\mu_{f,1}=-\frac{k-1}2,\ \mu_{f,2}=-\frac{k}2
$$
if $f$ is holomorphic of weight $k\geq 2$ and
$$
\mu_{f,1}=\frac{1- \kappa_f}{2}+ it_f,\ \mu_{f,2}=\frac{1-
  \kappa_f}{2}-it_f
$$ 
if $f$ is a Maa{\ss} form with Laplace eigenvalue
$\lambda_f(\infty)=(\frac12+it_f)(\frac12-it_f)$ and
$\kappa_f \in \{\pm 1\}$ is the eigenvalue of $f$ under the involution
$f \mapsto f (-\bar{z})$. 
The completed product
$$\Lambda(f,s)=|r|^{s/2}L_\infty(f,s)L(f,s)$$
admits a holomorphic continuation to the whole complex plane and satisfies a functional equation of the shape
$$\Lambda(f,s)=\eps(f)\ov{\Lambda(f,1-\ov s)}$$
where $\eps(f)$ (the root number) is a complex number satisfying $|\eps(f)|=1$.

\subsection{Character twists}
Let $\chi$ be a non-trivial Dirichlet character of prime modulus
$q$ also coprime with $r$. The twisted $L$-function
\begin{align*}
  L(\ftchi,s)&=\prod_pL_p(\ftchi,s)=\prod_{p}\prod_{i=1,2}
               \Big(1-\frac{\alpha_{f,i}(p)\chi(p)}{p^s}\Big)^{-1}\\
             &= \prod_{p}\Bigl(1-\frac{\lambda_{f}(p)\chi(p)}{p^s}
               +\frac{\chi_r(p)\chi^2(p)}{p^{2s}}\Bigr)^{-1}=\sum_{n\geq
               1}\frac{\lambda_{f}(n)\chi(n)}{n^s}, 
               \quad \Re s >1.
\end{align*}
is in fact the Hecke $L$-function of a primitive cusp form
$\ftchi$ for the group
$\Gamma_{0}(q^2|r|)$ with central character
$\chi^2\chi_r$ (see \cite{IwKo}*{Propositions 14.19 \& 14.20}, for
instance, in the holomorphic case which carries over to the general
case).

\begin{lemma} \label{lm-fneq}
  Let $f$ be a primitive (holomorphic or Maa{\ss}) cusp form of signed level
  $r$ and trivial central character, and let $\chi$ be a primitive
  character modulo $q$, not necessarily prime. Then the twisted
  $L$-function satisfies the functional equation
$$
\Lambda(\ftchi,s)=\eps(\ftchi)\ov{\Lambda(\ftchi,1-\ov
  s)}=\eps(\ftchi)\Lambda(f\otimes\ov\chi,1-s)
$$
with
$$
\Lambda(\ftchi,s)=(q^2|r|)^{s/2}L_\infty(f\otimes\chi,s)L(f\otimes\chi,s).
$$
Setting
$$
\mfa=\frac{1-\kappa_f\chi(-1)}2=
\begin{cases}0,&\hbox{ if $\chi$ and $f$ have the same parity} ,\\
  1,&\hbox{ if $\chi$ and $f$ have different parity} ,
\end{cases}
$$
we have
$$
L_\infty(f\otimes\chi,s)=
\begin{cases} L_\infty(f,s) \hbox{ if $f$ is holomorphic of weight $k$},\\
  L_\infty(f,s+\mfa)\hbox{ if $f$ is an even Maa{\ss} form},\\
   L_\infty(f,s-1+\mfa)\hbox{ if $f$ is an odd Maa{\ss} form},
\end{cases}
$$
and 
\begin{equation}\label{twistedroot}
\eps(\ftchi)=  \eps(f) \chi(r)\eps_\chi^2
\end{equation}
where $\eps(f)$ is the root number of $L(f,s)$ and $\eps_\chi$ is the
normalized Gau{\ss} sum, cf.\ \eqref{gauss}.
\end{lemma}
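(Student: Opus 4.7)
The plan is to deduce everything from the general functional equation for automorphic $L$-functions on $\GL_2/\Qq$, once the twist $f\otimes\chi$ is recognized as (coming from) a primitive form and its analytic conductor has been computed. The lemma is essentially classical; the only nontrivial work is the bookkeeping of archimedean factors and the explicit form of the root number.

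First, I would invoke the fact cited just before the lemma (following Propositions~14.19 and~14.20 of~\cite{IwKo} and their Maa\ss{} analogues): since $\chi$ is primitive of conductor $q$ with $(q,r)=1$, the twist $f\otimes\chi$ is a primitive cusp form of level $q^2|r|$ and nebentypus $\chi^2\chi_r$, with the same archimedean type as $f$. This immediately yields the shape of $\Lambda(f\otimes\chi,s)=(q^2|r|)^{s/2}L_\infty(f\otimes\chi,s)L(f\otimes\chi,s)$ and the abstract functional equation $\Lambda(f\otimes\chi,s)=\varepsilon(f\otimes\chi)\overline{\Lambda(f\otimes\chi,1-\bar s)}$, together with the identity $\overline{\Lambda(f\otimes\chi,1-\bar s)}=\Lambda(f\otimes\bar\chi,1-s)$ that follows from $\overline{\lambda_f(n)}=\lambda_f(n)$ (for $f$ with trivial nebentypus) and $\overline{\chi(n)}=\bar\chi(n)$.

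Next I would determine $L_\infty(f\otimes\chi,s)$. If $f$ is holomorphic of weight $k$, twisting by a Dirichlet character does not alter the weight, so the archimedean parameters $\mu_{f,1},\mu_{f,2}$ are unchanged and $L_\infty(f\otimes\chi,s)=L_\infty(f,s)$. If $f$ is a Maa\ss{} form, the Laplace eigenvalue is preserved, but the parity eigenvalue $\kappa_f$ is multiplied by $\chi(-1)$; the convention $\Gamma_\Rr(s)=\pi^{-s/2}\Gamma(s/2)$ used in~\eqref{defLinfty} then forces the shift by $\mathfrak{a}=(1-\kappa_f\chi(-1))/2\in\{0,1\}$, giving precisely the three cases listed. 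This is a direct computation from the definitions of $\mu_{f,i}$ for even and odd Maa\ss{} forms.

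The main point, and the place where care is needed, is the root number formula~\eqref{twistedroot}. I would compute $\varepsilon(f\otimes\chi)$ as a product of local root numbers using the local Langlands / local functional equations: at primes $p\nmid qr$ the local factor is $1$ since both $\pi_p$ and $\chi_p$ are unramified; at the archimedean place the local root number depends only on the weight/parity of $f\otimes\chi$, but the shift encoded in $L_\infty(f\otimes\chi,s)$ has already absorbed this, leaving the same contribution as for $f$ itself. The contribution at each prime $p\mid r$ is $\chi(p)\cdot\varepsilon_p(f)$ (multiplying by $\chi(p)^{a(\pi_p)}$ with $a(\pi_p)=v_p(r)$, which up to the fact that $r$ may be non-squarefree combines via multiplicativity of $\chi$ to $\chi(r)$ times $\varepsilon_p(f)$). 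Finally, at $p=q$, since $f$ is unramified at $q$ and $\chi$ has conductor $q$, the local root number of $f\otimes\chi$ at $q$ is $\chi_q(1)^2\tau(\chi)^2/q=\varepsilon_\chi^2$, the square of the normalized Gauss sum (the square because the rank is $2$ and both local parameters of $f$ at $q$ contribute a copy of $\tau(\chi)/\sqrt q$). Taking the product and collecting the remaining factors into $\varepsilon(f)=\prod_p\varepsilon_p(f)\cdot\varepsilon_\infty(f)$ yields $\varepsilon(f\otimes\chi)=\varepsilon(f)\chi(r)\varepsilon_\chi^2$, as claimed.

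The main obstacle is the verification of the local root number at ramified primes $p\mid r$, especially when $r$ is not squarefree and $\pi_p$ is a supercuspidal or special representation; the cleanest way to carry this out is via Deligne's formula for $\varepsilon$-factors under unramified twists, combined with the compatibility $\varepsilon_p(\pi_p\otimes\chi_p)=\chi_p(\mathfrak{c}(\pi_p))\varepsilon_p(\pi_p)$ for $\chi_p$ unramified (applied with $\chi_p=\chi|_{\Qq_p^\times}$, which is unramified at $p\mid r$ since $(q,r)=1$). This is where the factor $\chi(r)$ comes from, and it is the only place where a nontrivial local computation intervenes.
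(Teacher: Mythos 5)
Your approach (computing the root number as a product of local $\varepsilon$-factors via the local theory) is genuinely different from the one actually used in the paper, and indeed the authors explicitly set this route aside in the remark following their proof, precisely because it "requires knowledge of the classification of local representations at infinity." The paper instead gives a classical argument: it relates $f$ and $f\otimes\chi$ via $F(-1/Nz)=\bar\eta G(z)$, uses a Mellin–$K$-Bessel computation for even Maa\ss{} forms and a derivative-in-$x$ variant for odd Maa\ss{} forms to get the analytic functional equation, and then uses Iwaniec's matrix computation to obtain $(f\otimes\chi)(-1/(q^2|r|z))=\varepsilon_\chi^2\chi(|r|)\bar\eta\cdot(f\otimes\bar\chi)(z)$. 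Combining these yields $\varepsilon(f\otimes\chi)=\varepsilon_\chi^2\chi(|r|)\bar\eta\,\kappa_f\chi(-1)=\varepsilon_\chi^2\chi(r)\varepsilon(f)$, where the last step uses the sign convention $r<0$ for Maa\ss{} forms so that $\chi(r)=\chi(-1)\chi(|r|)$.

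Your sketch has a genuine gap precisely at the archimedean place, which is the delicate point. You claim "the shift encoded in $L_\infty(f\otimes\chi,s)$ has already absorbed this, leaving the same contribution as for $f$ itself," but this is false when $f$ is a Maa\ss{} form and $\chi$ is odd: then $f\otimes\chi$ has parity $\kappa_f\chi(-1)\neq\kappa_f$, and the archimedean local $\varepsilon$-factor for a principal series of $\GL_2(\Rr)$ genuinely depends on the parity; the $L_\infty$-factor and the $\varepsilon_\infty$-factor change in a correlated way, and neither absorbs the other. Concretely, your local product over $p\mid r$ only produces $\chi(|r|)$ (since $\chi(p^{v_p(r)})$ sees no sign), so in the Maa\ss{} case the extra factor $\chi(-1)=\chi(\mathrm{sign}(r))$ needed to turn $\chi(|r|)$ into $\chi(r)$ must come from $\varepsilon_\infty$; your argument discards exactly this contribution. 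This is precisely the step the paper handles by the separate even/odd Bessel computations (the minus sign appearing in the odd case). Also note the reference to "Maa\ss{} analogues" of \cite[Prop.~14.19, 14.20]{IwKo} is optimistic: the paper's proof is included precisely because the authors could not locate such a reference. The rest of your outline (the identification of $f\otimes\chi$ as a primitive form of conductor $q^2|r|$, the unramified-twist formula at $p\mid r$, the Gauss-sum squared at $p=q$) is sound, and once the archimedean $\varepsilon$-factor is computed honestly for both parities, your route would give a complete alternative proof.
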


Recall Convention \ref{def-conv} that $r$ can be positive or negative depending on whether $f$ is holomorphic or not. Observe that $L_\infty(f\otimes\chi,s)$ depends at most on the parity
of $\chi$, and is independent of $\chi$ if $f$ is holomorphic. The
following notation will be useful: for $\chi(-1)=\pm 1$ we write
\begin{equation*}
L_\infty(f,\pm,s):=L_\infty(f\otimes\chi,s).	
\end{equation*}

\begin{proof} 
  This is standard (see, e.g.,~\cite{IwKo}*{Th.\ 14.17 and Prop.\ 14.20}
  in the holomorphic case). We did not find a reference for the
  explicit root number computation \eqref{twistedroot} in the Maa{\ss}
  case, so for the reader's convenience we include the details. We
  start with some general ``converse type'' computations. Let
$$F(z) = \sqrt{y} \sum_{n \not= 0} a(n)   K_{it}(2 \pi  |n|y) e(nx), \quad G(z) = \sqrt{y} \sum_{n \not= 0} b(n)   K_{it}(2 \pi  |n|y) e(nx)$$
be two Maa{\ss} form that are both even or both odd and satisfy  
$$F(-1/Nz) = \bar{\eta} G(z)$$
for some integer $N\geq 1$ and some complex number $\eta$ of modulus
$1$. Differentiating both sides of the functional equation with
respect to $x$, we obtain
$$\bar{\eta} G_x(z) = \frac{\partial}{\partial x} F\left( \frac{-x+iy}{N(x^2 + y^2)} \right)  =   F_x\left(- \frac{1}{Nz}\right) 
 \frac{1}{Nz^2}.$$
If both $F$ and $G$ are even, 
we compute
 \begin{multline*}
   2\int_0^{\infty} F(iy) y^{s-1/2} \frac{dy}{y} = 4 \sum_{n > 0} a(n)
   \int_0^{\infty} K_{it}(2\pi n y) y^{s } \frac{dy}{y}
   \\
   = 4\sum_{n > 0} \frac{a(n)}{n^{s}} \frac{\pi^{ - s} \Gamma(1/2(s -
     it)) \Gamma(1/2(s + it))}{4} = L(F, s)L_{\infty}(F, s). 
\end{multline*}
 On the other hand, by the functional equation, this equals
$$2\bar{\eta} \int_0^{\infty} G(i/Ny) y^{s-1/2} \frac{dy}{y} = \bar{\eta} N^{1/2-s} 2 \int_0^{\infty} G(iy) y^{-s+1/2} \frac{dy}{y} $$
so that by the above computation we have
$$   L(F, s)L_{\infty}(F, s) = \bar{\eta} N^{1/2-s} L(G,1- s)L_{\infty}(G, 1-s).$$

If both $F$ and $G$ are odd, we compute 
 \begin{multline*}
\frac{1}{ i} \int_0^{\infty} F_x(iy) y^{s+1/2} \frac{dy}{y}  =  4 \pi   \sum_{n > 0} a(n) n \int_0^{\infty}   K_{it}(2\pi  n y) y^{s+1 } \frac{dy}{y} \\
  =   4 \pi \sum_{n > 0} \frac{a(n)}{n^{s}} \frac{\pi^{ - s-1}
    \Gamma(1/2(s+1 - it)) \Gamma(1/2(s+1 + it))}{4} =  L(F, s)L_{\infty}(F, s).   
\end{multline*}
On the other hand, by the functional equation for the derivative, this
equals
\begin{multline*}
  \frac{1}{i} \int_0^{\infty} \bar{\eta} G_x(i/Ny) \frac{1}{N (iy)^2}
  y^{s+1/2} \frac{dy}{y} = -\frac{\bar{\eta}}{iN}\int_0^{\infty}
  G_x(i/Ny) y^{s-3/2} \frac{dy}{y}
  \\
  = -\frac{\bar{\eta}}{i} N^{1/2 - s} \int_0^{\infty} G_x(iy) y^{3/2
    -s} \frac{dy}{y},
\end{multline*}
so that by the above computation we have
$$   L(F, s)L_{\infty}(F, s)  = -\bar{\eta} N^{1/2-s} L(G,1- s)L_{\infty}(G, 1-s).$$

After these general considerations, we return to the functional
equation of twisted $L$-functions. Let $f$ be a Maa{\ss} form of
parity $\kappa_f \in \{\pm 1\}$ and signed level $r < 0$ and trivial central
character. Then $f \otimes \chi$ has parity
$\kappa_f \chi(-1) \in \{\pm 1\}$. Write
$$f(-1/(|r|z)) = \bar{\eta} f(z)$$
so that by the above computation we have
$\varepsilon(f) = \bar{\eta} \kappa_f$ for the root number. By a
formal matrix computation \cite{Iw-Topics}*{Theorem 7.5} we see that
$$(f \otimes \chi)\left(-\frac{1}{q^2|r| z}\right) = \varepsilon_{\chi}^2\chi(|r|) \bar{\eta}\cdot (f \otimes \bar{\chi})(z)$$
So by the above computation, the root number of $f\otimes \chi$ is
indeed
$$\varepsilon_{\chi}^2 \chi(|r|) \bar{\eta} \cdot \kappa_f \chi(-1) =  \varepsilon_{\chi}^2 \chi(r) \varepsilon(f)  $$
as claimed.
\end{proof}

\begin{remark}
  One could also recover the root number from general principles of
  automorphic representation theory, since it is given by a product
  over all places, and the behavior of local root numbers under
  twisting is relatively straightforward. The above ``classical''
  treatment doesn't require knowledge of, say, the classification of
  local representations at infinity.
\end{remark}

In particular, taking $s=1/2$ and recalling that 
$\theta(\ftchi)$ 
 is the argument of $L(\ftchi,1/2)$ (if
the latter is non-zero), cf.\  \eqref{deftheta}, we obtain:
\begin{equation}\label{phaseformula}
  \text{ If $L(\ftchi,1/2)\not=0$, one has }
  \exp(2  i\theta(\ftchi))=\frac{L(\ftchi,1/2)}
  {L(f\otimes\ov\chi,1/2)}=\eps(f) \chi( r)\eps_\chi^2.
\end{equation}

From the above discussion, and from the formula
$\eps_{\ov{\chi}}^2=\ov{\eps_\chi}^2$, we can derive an explicit form
of the functional equation for a product of twisted $L$-functions.

\begin{lemma}\label{propfcteqn}
  Let $f,g$ be primitive cusp forms with trivial central character, of
  signed levels $r$ and $r'$ respectively, both coprime to $q$.  We have
\begin{equation*}
  \Lambda(f\otimes\chi,s)
  \Lambda(g\otimes\ov\chi,\ov s)=\eps(f)\eps(g)\chi(r \overline{r'})
  \Lambda(f\otimes\ov\chi,1-s)\Lambda(g\otimes\chi,1-\ov s).
\end{equation*}
\end{lemma}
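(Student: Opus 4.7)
The plan is to apply the functional equation from Lemma \ref{lm-fneq} to each factor separately and then multiply, being careful with complex conjugation.

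First, I would apply Lemma \ref{lm-fneq} directly to $f\otimes\chi$ at the point $s$, which gives
$$
\Lambda(f\otimes\chi,s) = \varepsilon(f\otimes\chi)\,\Lambda(f\otimes\bar\chi,1-s),
$$
with $\varepsilon(f\otimes\chi) = \varepsilon(f)\chi(r)\varepsilon_\chi^{2}$. Similarly, applying the lemma to $g\otimes\bar\chi$ at the point $\bar s$, and using that the Hecke eigenvalues $\lambda_g(n)$ are real (since $g$ has trivial nebentypus), so that $\overline{\Lambda(g\otimes\bar\chi,1-s)} = \Lambda(g\otimes\chi,1-\bar s)$, I get
$$
\Lambda(g\otimes\bar\chi,\bar s) = \varepsilon(g\otimes\bar\chi)\,\Lambda(g\otimes\chi,1-\bar s),
$$
with $\varepsilon(g\otimes\bar\chi) = \varepsilon(g)\bar\chi(r')\varepsilon_{\bar\chi}^{2}$.

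Multiplying these two identities yields the desired shape, so the only thing left is to simplify the root number. The remark preceding the lemma notes $\varepsilon_{\bar\chi}^{2} = \overline{\varepsilon_\chi}^{2}$, which follows immediately from $\varepsilon_\chi\varepsilon_{\bar\chi} = \chi(-1) = \pm 1$ together with $|\varepsilon_\chi|=1$. Therefore
$$
\varepsilon_\chi^{2}\,\varepsilon_{\bar\chi}^{2} = \varepsilon_\chi^{2}\,\overline{\varepsilon_\chi}^{2} = |\varepsilon_\chi|^{4} = 1,
$$
so the Gauss sum contribution disappears, and
$$
\varepsilon(f\otimes\chi)\,\varepsilon(g\otimes\bar\chi) = \varepsilon(f)\varepsilon(g)\,\chi(r)\bar\chi(r') = \varepsilon(f)\varepsilon(g)\,\chi(r\overline{r'}),
$$
where $\overline{r'}$ denotes the inverse of $r'$ modulo $q$ (which exists since $(q,r')=1$).

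There is essentially no obstacle: the proof is a one-line manipulation once one has Lemma \ref{lm-fneq} and the identity $\varepsilon_{\bar\chi}^{2} = \overline{\varepsilon_\chi}^{2}$. The only subtle point worth stating explicitly is the complex-conjugation step for $g$, which relies on the realness of $\lambda_g(n)$ (equivalently, the self-duality of $g$ coming from its trivial central character) and on the symmetry of the archimedean factors $L_\infty(g\otimes\chi,s)$ under $\chi \mapsto \bar\chi$ (they depend only on the parity of $\chi$, as noted right after Lemma \ref{lm-fneq}).
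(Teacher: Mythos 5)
Your proof is correct and follows exactly the route the paper indicates: applying Lemma \ref{lm-fneq} to each factor and then invoking $\eps_{\bar\chi}^2 = \ov{\eps_\chi}^2$ so that the Gauss-sum contribution collapses to $|\eps_\chi|^4 = 1$. The paper does not spell this out (it simply notes the result follows ``from the above discussion and the formula $\eps_{\ov\chi}^2 = \ov{\eps_\chi}^2$''), and your write-up fills in those details faithfully.
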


We recall our convention~\ref{def-conv}: if $f$ is a Maa\ss\ form, then
its level is defined to be the opposite of the arithmetic conductor.

\begin{remark}\label{rm-cancel}
If we assume that $r=r'$ (so $f$ and $g$ are of the same type) and
$s=1/2$, then using the fact that $\lambda_f(n)$ and $\lambda_g(n)$
are real-valued, we obtain from the functional equation the relation
$$
L(f\otimes\chi,\demi)\overline{L(g\otimes \chi,\demi)}=\eps(f)\eps(g)
L(f\otimes\ov{\chi},\demi)\overline{L(g\otimes \ov{\chi},\demi)}.
$$
In particular, if furthermore $\eps(f)\eps(g)=-1$, it follows that
\begin{equation}\label{eq-self-dual}
  L(f\otimes\chi,\demi)\overline{L(g\otimes \chi,\demi)}=-
  L(f\otimes\ov{\chi},\demi)\overline{L(g\otimes \ov{\chi},\demi)}.
\end{equation}
\end{remark}

\subsection{The explicit formula}
\label{pg-explicit}
In Chapter \ref{analyticrank} we will obtain upper bounds for the
analytic rank of $L(\ftchi,s)$ (i.e.\ the order of vanishing at $s=1/2$)
on average over $\chi$. For this we will need the explicit formula in
this specific situation. Define $\Lambda_f$ and $\Lambda_{\ftchi}$ by
the formulas
$$-\frac{L'}{L}(f,s)=\sum_{n\geq 1}\frac{\Lambda_f(n)}{n^s},$$
$$-\frac{L'}{L}(\ftchi,s)=\sum_{n\geq 1}\frac{\Lambda_{\ftchi}(n)}{n^s}=\sum_{n\geq 1}\frac{\Lambda_{f}(n)\chi(n)}{n^s}.$$
The explicit formula for $L(\ftchi,s)$ is:

\begin{proposition}\label{Weilexplicit} 
  Let $\vphi:]0,+\infty[\ra\Cc$ be smooth and compactly supported, and
  let $$\wtilde\vphi(s)=\int_{0}^\infty\vphi(x)x^s\frac{dx}x$$ be its
  Mellin transform and $\psi(x)=x^{-1}\vphi(x^{-1})$ so that
  $\wtilde\psi(s)=\wtilde\vphi(1-s)$. One has
\begin{multline}
  \sum_{n\geq 1}\Bigl(\,\Lambda_f(n)\chi(n)\vphi(n)+\Lambda_f(n)\ov\chi(n)\psi(n)\, \Bigr)=\\
  \vphi(1)\log(q^2|r|)+\intc_{(1/2)}\Bigl(\,\frac{L'_\infty(f,\pm,s)}{L_\infty(f,\pm,s)}+\frac{L'_\infty(f,\pm,1-s)}{L_\infty(f,\pm,1-s)}\,\Bigr)\wtilde\vphi(s)ds-\sum_{\rho}\wtilde\vphi(\rho)\label{eq-Weil}
\end{multline}
where $\rho$ ranges over the multiset of zeros of $\Lambda(\ftchi,s)$
in the strip $0< \Re s< 1$.
\end{proposition}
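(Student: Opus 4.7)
The plan is to follow the classical Weil-type derivation: Mellin invert the Dirichlet series for $-L'/L$ in the region of absolute convergence, shift contours to the critical line $\Re s = 1/2$, and invoke the functional equation of Lemma~\ref{lm-fneq} to symmetrize.

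For $\Re s = \sigma > 1$ the series $-L'/L(\ftchi,s) = \sum_{n\geq 1}\Lambda_f(n)\chi(n)n^{-s}$ converges absolutely, so Mellin inversion yields
$$\sum_{n\geq 1}\Lambda_f(n)\chi(n)\varphi(n) = \frac{1}{2\pi i}\int_{(\sigma)}\Bigl(-\frac{L'}{L}(\ftchi,s)\Bigr)\tilde\varphi(s)\,ds,$$
and analogously for the $\psi$-sum with $\bar\chi$. In the latter, the substitution $s\mapsto 1-s$ combined with the identity $\tilde\psi(1-s) = \tilde\varphi(s)$ (immediate from $\psi(x)=x^{-1}\varphi(x^{-1})$) converts the integral into $\frac{1}{2\pi i}\int_{(1-\sigma)}(-L'/L)(f\otimes\bar\chi,1-s)\tilde\varphi(s)\,ds$. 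Logarithmic differentiation of the functional equation from Lemma~\ref{lm-fneq} then gives the key identity
$$-\frac{L'}{L}(\ftchi,s) - \frac{L'}{L}(f\otimes\bar\chi,1-s) = \log(q^2|r|) + \frac{L'_\infty}{L_\infty}(f,\pm,s) + \frac{L'_\infty}{L_\infty}(f,\pm,1-s),$$
using $L_\infty(\ftchi,s) = L_\infty(f,\pm,s)$ which depends only on $\chi(-1)$ and is therefore symmetric in $\chi\leftrightarrow\bar\chi$.

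Now shift both contours to $\Re s = 1/2$. The first integral is shifted leftwards from $\sigma$; it crosses poles of $-L'/L(\ftchi,s)$ at the zeros $\rho$ of $\Lambda(\ftchi,s)$ with $1/2 < \Re\rho < \sigma$ (these agree with zeros of $L$ in the critical strip since $L_\infty$ has no poles there and $\ftchi$ is cuspidal). The residue at $\rho$ is $-m(\rho)$, so the shift contributes $-\sum_{1/2 < \Re\rho < 1}m(\rho)\tilde\varphi(\rho)$. The second integral is shifted rightwards from $1-\sigma$ to $1/2$; its poles lie at $s = 1-\rho'$ for zeros $\rho'$ of $\Lambda(f\otimes\bar\chi)$ with $1/2 < \Re\rho' < \sigma$, which the functional equation identifies with zeros $\rho = 1-\rho'$ of $\Lambda(\ftchi)$ in $0 < \Re\rho < 1/2$. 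A careful residue computation (the chain rule from $1-s$ and the clockwise orientation of the rightward shift conspire to give the same sign) yields $-\sum_{0 < \Re\rho < 1/2}m(\rho)\tilde\varphi(\rho)$. Together these cover all zeros in $0 < \Re s < 1$, producing $-\sum_\rho\tilde\varphi(\rho)$. On the common line $\Re s = 1/2$, the sum of the two integrands equals the right-hand side of the key identity, and the constant $\log(q^2|r|)$ piece yields $\varphi(1)\log(q^2|r|)$ via the Mellin inversion identity $\frac{1}{2\pi i}\int_{(1/2)}\tilde\varphi(s)\,ds = \varphi(1)$, which holds because $\tilde\varphi$ has super-polynomial vertical decay; the remaining archimedean integrand matches the integral in \eqref{eq-Weil}.

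The main technical obstacle is rigorous justification of the contour shifts, which requires polynomial control of $L'/L(\ftchi,s)$ along vertical segments of bounded real part. This follows from Hadamard factorization of the completed $L$-function, together with the standard polynomial bound for the number of zeros in vertical windows, combined with the rapid vertical decay of $\tilde\varphi$. If necessary one perturbs the contour slightly to avoid zeros on $\Re s = 1/2$ and passes to the limit. No extra residues arise from ``trivial zeros'' of $L(\ftchi,s)$ because $L_\infty(f,\pm,\cdot)$ is holomorphic and non-vanishing in the critical strip.
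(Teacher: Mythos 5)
Your proof is correct and follows the standard contour-shift argument — precisely the approach of the paper's cited reference \cite[\S 5.5]{IwKo}, the paper offering no independent proof. One remark worth making explicit: since the conductor of $f\otimes\chi$ is $q^2|r|$ (Lemma~\ref{lm-fneq}), the constant term your derivation produces is $\varphi(1)\log(q^2|r|)$, which is correct; the $\log(qr^2)$ appearing in the statement of Proposition~\ref{Weilexplicit} is a typo (transposed exponents), and your computation implicitly corrects it.
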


See \cite{IwKo}*{\S 5.5} for the proof.

\section{Auxiliary \texorpdfstring{$L$-functions}{L-functions}}\label{sectionSymRS}

In addition to Hecke $L$-functions and their twists by characters, 
several auxiliary $L$-functions will play an important role in this
memoir. They will arise as \emph{individual} $L$-functions (not in a
family), typically in expressions for leading terms of various
asymptotic formulas. As a consequence, it is   their behavior
close to $\Reel(s)=1$ that is of most interest.
\par 
We review in this section the definitions of these $L$-functions, and
summarize their analytic properties. We then list some useful
consequences.

\subsection{Ranking-Selberg \texorpdfstring{$L$-functions}{L-functions} on
  \texorpdfstring{$\GL_2$}{GL(2)}}\label{RankinSelberg}

We recall the basic theory of Rankin-Selberg convolution for $\GL_2$.
Given two primitive modular forms $f$ and $g$ of level $r$ and $r'$
respectively with trivial central character, the Rankin-Selberg
$L$-function of $f$ and $g$ is a degree $4$ Euler product
$$
L(\fotg,s)=\prod_p L_p(\fotg,s)=\sum_{n\geq
  1}\frac{\lambda_{\fotg}(n)}{n^s},\ \Re s>1
$$
such that, for $p\nmid rr'$, we have
$$
L_p(\fotg,s)=\prod_{i,j=1}^2\Bigl(\,1-\frac{\alpha_{f,i}(p)\alpha_{g,j}(p)}{p^s}\,
\Bigr)^{-1}
$$
and in general
$$
L_p(\fotg,s)=\prod_{i}^4 \Bigl(\,1-\frac{\alpha_{f\otimes
    g,i}(p)}{p^s}\, \Bigr)^{-1}.
$$ 
In particular, $\lambda_{f\otimes g}(n)=\lf(n)\lamg(n)$ for any $n$
squarefree coprime with $rr'$. An exact description of all Dirichlet
coefficients is given by Winnie Li in~\cite{WinnieLi}, but this is
rather complicated.
\par
By Rankin-Selberg theory, $L(\fotg,s)$ admits analytic continuation to
$\Cc$ with at most one simple pole at $s=1$, which occurs if and only
if $f=g$. This $L$-function satisfies a functional equation of the
shape
$$\Lambda(\fotg,s)=\eps(\fotg)\Lambda(\fotg,1-s)$$
with
$$\Lambda(\fotg,s)=r(\fotg)^{s/2}L_\infty(\fotg,s)L(\fotg,s)$$
where $r(\fotg)$ is a positive integer, $L_\infty(\fotg,s)$ is a
product of Gamma factors and $\eps(\fotg)=\pm 1$.
\label{pg-fg}
Moreover, as a consequence of the descriptions above and of the
approximation to the Ramanujan-Petersson conjecture (cf.\ Section \ref{235}), for any prime $p$
the local factor $L_p(f\otimes g,s)$ has no poles for $\Re s\geq 1/2$. 

In some of our applications, we will also encounter the Dirichlet
series
\begin{equation}\label{lstarfg}
  L^{\ast}(f \otimes g, s) = 
  \sum_{n\geq 1} \frac{\lambda_f(n) \lambda_g(n)}{n^s},
\end{equation}
initially defined in $\Re s > 1$. By the above discussion, it has
holomorphic continuation to $\Re s > 1/2$, except for a pole at $s=1$
which exists if and only if $f=g$.  If $f\not=g$, then
$$
L^{\ast}(f\otimes g,s)=\frac{L(f\otimes
  g,s)}{\zeta^{(rr')}(2s)}\prod_{p\mid rr'}A_p(f,g;s),
$$
for some correction factors $A_p(f,g;s)$ which have been computed
explicitly by Winnie Li~\cite{WinnieLi}*{\S 2, Th.\ 2.2} when $f$ and $g$
are both holomorphic. Here and throughout this book a superscript ${(r)}$ denotes the removal of the Euler factors at primes dividing $r$.

\begin{lemma}\label{lm-ast}
  For two newforms $f \not= g$ we have $L^{\ast}(f\otimes g,1)\not=0$.
\end{lemma}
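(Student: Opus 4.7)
I would treat the case $f\neq g$, for which the value $L^{\ast}(f\otimes g,1)$ is unambiguously defined (when $f=g$, the series $L^{\ast}(f\otimes f,s)$ has a simple pole at $s=1$, so the statement must be read as a residue, or this case is excluded by context). Starting from the factorization recorded just above the lemma,
\[
L^{\ast}(f\otimes g,1)=\frac{L(f\otimes g,1)}{\zeta^{(r)}(2)}\prod_{p\mid rr'}A_p(f,g;1),
\]
the non-vanishing splits into three separate verifications. The denominator $\zeta^{(r)}(2)=\zeta(2)\prod_{p\mid r}(1-p^{-2})$ is a nonzero rational multiple of $\pi^{2}/6$, so it causes no trouble.

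The substantive analytic input is the assertion $L(f\otimes g,1)\neq 0$, a special case of the Jacquet-Shalika non-vanishing theorem on the line $\Re s=1$ for Rankin-Selberg $L$-functions. In the present $\GL_2\times\GL_2$ setting there is a direct argument: for the isobaric sum $\pi=f\boxplus g$ one has
\[
L(s,\pi\times\widetilde\pi)=L(f\otimes f,s)\,L(g\otimes g,s)\,L(f\otimes g,s)^{2},
\]
and the left-hand side has a pole of order exactly two at $s=1$ (one from each cuspidal constituent of $\pi$). Since $L(f\otimes f,s)L(g\otimes g,s)$ already contributes this full double pole, the factor $L(f\otimes g,s)^{2}$ must be regular and nonzero at $s=1$, and hence $L(f\otimes g,1)\neq 0$.

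It remains to verify that each bad local factor $A_p(f,g;1)$ with $p\mid rr'$ is nonzero. Each such $A_p(f,g;s)$ is an explicit rational function in $p^{-s}$ measuring the discrepancy between the local Euler factor of $L^{\ast}(f\otimes g,s)$, namely $\sum_{k\geq 0}\lambda_f(p^k)\lambda_g(p^k)p^{-ks}$, and $L_p(f\otimes g,s)(1-p^{-2s})$. Bounds toward Ramanujan (Kim-Sarnak more than suffices) ensure that the local series converges absolutely at $s=1$, and Winnie Li's explicit formulas in the holomorphic case (\cite{WinnieLi}), extended to the Maa\ss\ case in the obvious fashion, reduce the problem to a finite case distinction according to the ramification type of $f$ and $g$ at $p$, each case being settled by direct computation.

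\textbf{Main obstacle.} Conceptually, the heart of the argument is the Rankin-Selberg non-vanishing, which is classical; the practical obstacle is the case-by-case bookkeeping of the bad factors $A_p(f,g;1)$ according to local ramification types. A cleaner alternative that sidesteps Li's explicit formulas is to work directly with the Euler product for $L^{\ast}(f\otimes g,s)$: establish its analytic continuation to $\Re s\geq 1$ as a holomorphic function (using that $L(f\otimes g,s)$ is entire for $f\neq g$ and that $\zeta^{(r)}(2s)^{-1}$ is holomorphic and nonzero at $s=1$), and show that each local Euler factor at $s=1$ is nonzero purely from the local parameters, thereby bypassing the explicit comparison entirely.
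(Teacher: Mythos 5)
Your proposal follows essentially the same skeleton as the paper's: isolate the factor $L(f\otimes g,1)$, the constant $\zeta^{(r)}(2)$, and the finitely many bad local factors $A_p(f,g;1)$, and show each is nonzero. The two places where you diverge are worth flagging.

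First, your ``direct argument'' for $L(f\otimes g,1)\neq 0$ via the isobaric sum $\pi=f\boxplus g$ is circular as written. The factorization
\[
L(s,\pi\times\widetilde\pi)=L(f\otimes f,s)\,L(g\otimes g,s)\,L(f\otimes g,s)^{2}
\]
is correct, but the claim that the left-hand side has a pole of order \emph{exactly} two at $s=1$ is precisely the assertion that $L(f\otimes g,1)\neq 0$, restated; without an independent input one only gets a pole of order at most two. The correct citation is indeed Jacquet--Shalika (which you mention at the start of that paragraph), whose proof proceeds by a positivity/Landau argument, not by the bookkeeping you describe. The paper instead invokes its Proposition~\ref{AutomorphicPNTLemma}, which furnishes the stronger statement of a standard zero-free region; either route is fine, but one must actually rest on one of those two nontrivial inputs rather than re-derive it from the isobaric decomposition.

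Second, for the bad local factors: your primary route via Winnie Li's explicit local computations, extended to the Maa{\ss} case, works but is a rather heavy case-by-case affair. The paper takes a shorter conceptual route: it observes that $A_p(f,g;s)=L^{\ast}_p(f\otimes g,s)\,L_p(f\otimes g,s)^{-1}$, that the numerator $L^{\ast}_p$ at a ramified prime is the reciprocal of a polynomial in $p^{-s}$ (hence nonvanishing), and that the denominator has no poles in $\Re(s)\geq 1$ by the local results of Gelbart and Jacquet (Prop.~1.2 for supercuspidal constituents with unitary central character, Prop.~1.4 otherwise). Your ``cleaner alternative'' gestures toward this but omits the Gelbart--Jacquet input, which is what guarantees the local factor of $L(f\otimes g,s)$ at a bad prime has no pole at $s=1$; without that, the quotient $A_p(f,g;1)$ could in principle vanish. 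So the cleaner alternative, made precise, is exactly the paper's argument.
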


\begin{proof}
For $p \mid rr'$ we have
$$
A_p(f,g;s)=L^{\ast}_p(f\otimes g,s)L_p(f\otimes g,s)^{-1}.
$$
The factor $L^{\ast}_p(f\otimes g,s)$ is the inverse of a polynomial
at $p^{-s}$ (by multiplicativity), so doesn't vanish. On the other
hand, it follows e.g. from results of Gelbart and
Jacquet~\cite{GJ}*{Prop.\ 1.2, 1.4} that $L_p(f\otimes g,s)$ has no
poles in $\Re(s)\geq 1$, so that $A_p(f,g;1)\not=0$. (More precisely,
if one of the local representations of $f$ or $g$ at $p$ is
supercuspidal, then Prop.\ 1.2 in~\cite{GJ}, and the fact that the
central character is unitary, imply that all poles of
$L_p(f\otimes g,s)$ satisfy $\Reel(s)=0$; on the other hand, if none
of the local representation is supercuspidal, then Prop.\ 1.4
of~\cite{GJ} implies that $L_p(f\otimes g,s)$ is a product of
$\GL_2$-local factors, which have no poles for~$\Reel(s)=1$,
e.g. because they are products of at most two factors
$1/(1-\alpha p^{-s})$ where~$|\alpha|<p^{1/2}$ by elementary results
towards the Ramanujan-Petersson conjecture, as recalled in
Section~\ref{235}). The lemma now follows from the fact that
$L(f\otimes g,1)\not=0$ (see Proposition~\ref{AutomorphicPNTLemma}).
\end{proof}

If $f=g$, then we define $L^{\ast}(\symf ,s)$ through the relation
\begin{equation}\label{eq-symast}
  \zeta^{(r)}(s)L^{\ast}(\symf,s)=\zeta^{(r)}(2s)L^{\ast}(f\otimes f,s).
\end{equation}
In particular, we have
\begin{equation}\label{lstarres}
  L^{\ast}(\symf , 1)=\zeta(2)\prod_{p\mid r}(1+p^{-1})
  \res_{s=1} L^{\ast}(f \otimes f, s).
\end{equation}
Using the formulas in~\cite{WinnieLi}*{p.\ 145, Example 1}, it follows
that if $r$ is squarefree, then we have
$$
L^{\ast}(\symf , 1) = L (\symf,1)
$$ 
where $\symf$ is the symmetric square; we will explain how to recover
this fact (and describe the corresponding formulas if $r$ is not
squarefree) in Section~\ref{symsquare}, using the local Langlands
correspondence.

\subsection{Rankin-Selberg convolutions on \texorpdfstring{$\GL_d$}{GL(d)}}

The previous examples are special cases of Rankin-Selberg
$L$-functions attached to two general automorphic representations of
$\GL_d(\AQ)$. The general theory is due to
Jacquet--Piatetskii-Shapiro--Shalika~\cite{JPSS}, and we recall it
briefly here.

Let $d,e\geq 1$ be integers, and let $\pi,\pi'$ be automorphic
cuspidal representations of $\GL_{d}(\Aa_\Qq)$ and $\GL_{e}(\Aa_\Qq)$,
respectively, whose central characters $\omega,\omega'$ are trivial on
$\Rr_{>0}$. We denote by $\wtilde{\pi}$ and $\wtilde{\pi}'$ their
contragredient representations.

The Rankin-Selberg $L$-function associated to $\pi$ and $\pi'$ is an Euler
product, absolutely convergent for $\Re s>1$, of the form
\begin{align*}
L(\pi\otimes\pi',s)&=\prod_p L_p(\pi\otimes\pi',s)=\prod_p\prod_{i=1}^{d}\prod_{j=1}^{e}\Bigl(\,1-\frac{\alpha_{\pi\otimes\pi',(i,j)}(p)}{p^s}\,\Bigr)^{-1}\\
&=\sum_{n\geq 1}\frac{\lambda_{\pi\otimes\pi'}(n)}{n^s},\ \Re s>1
\end{align*}
such that, for $p$ not dividing the product of the conductors
$q(\pi)q(\pi')$, we have
$$
 \alpha_{\pi\otimes\pi',(i,j)}(p)=\alpha_{\pi,i}(p)\alpha_{\pi',j}(p)
$$
where $\alpha_{\pi,i}(p)$ and $\alpha_{\pi',j}(p)$ are the local
parameters of $\pi,\pi'$ at the place $p$, i.e.
$$
L_p(\pi,s)=\prod_{i=1}^{d}\Bigl(\,1-\frac{\alpha_{\pi,i}(p)}{p^s}\,
\Bigr)^{-1},\
L_p(\pi',s)=\prod_{j=1}^{e}\Bigl(\,1-\frac{\alpha_{\pi',j}(p)}{p^s}\,\Bigr)^{-1}
$$
are the local factors of the standard $L$-functions of $\pi,\pi'$.

When $e=1$ and $\pi'=1$ is the trivial representation, this
Rankin-Selberg $L$-function is the standard $L$-function: we have then
$L(\pi\otimes\pi',s)=L(\pi,s)$.
 
The Rankin-Selberg $L$-functions admit meromorphic continuation to $\Cc$,
and satisfy a functional equations of the shape
$$\Lambda(\pi\otimes \pi',s)=\eps(\pi\otimes\pi')\Lambda(\wtilde\pi\otimes \wtilde\pi',1-s)$$
with $|\eps(\pi\otimes\pi')|=1$ and
$$\Lambda(\pi\otimes \pi',s)=q(\pi\otimes\pi')^{s/2}L_\infty(\pi\otimes\pi',s)L(\pi\otimes\pi',s)$$
where $q(\pi\otimes\pi')\geq 1$ is an integer
and
$$L_\infty(\pi\otimes\pi',s)=\prod_{i=1}^d\prod_{j=1}^{e}\Gamma_\Rr(s+\mu_{\pi\otimes\pi',(i,j)})$$ is a product of Gamma factors. 
The completed $L$-function $\Lambda(\pi\otimes \pi',s)$ is holomorphic
on $\Cc$, unless $\pi'\simeq \wtilde\pi$, in which case it has simple
poles at $s=0,1$.

\makeatletter
\newcommand*{\bigboxplus}{\DOTSB\mathop{\mathpalette\big@boxplus\relax}\slimits@}

\newcommand{\big@boxplus}[2]{%
  \vcenter{%
    \m@th\bigbox@thickness{#1}%
    \sbox\z@{$#1\bigoplus$}%
    \dimen@=\ht\z@ \advance\dimen@\dp\z@
    \hbox{%
      \setlength{\unitlength}{\dimen@}%
      \begin{picture}(1,1)
      \polyline(0.1,0.1)(0.9,0.1)(0.9,0.9)(0.1,0.9)(0.1,0.1)(0.5,0.1)
      \polyline(0.5,0.1)(0.5,0.9)
      \polyline(0.1,0.5)(0.9,0.5)
      \end{picture}%
    }%
  }%
}

\newcommand{\bigbox@thickness}[1]{%
  \ifx#1\displaystyle
    \linethickness{0.2ex}%
  \else
    \ifx#1\textstyle
      \linethickness{0.16ex}%
    \else
      \ifx#1\scriptstyle
        \linethickness{0.12ex}%
      \else
        \linethickness{0.1ex}%
      \fi
    \fi
  \fi
}
\makeatother

If $\pi$ and $\pi'$ are not necessarily cuspidal, but are isobaric
sums of cuspidal representations $\pi_i$ and $\pi'_j$ (whose central
characters are trivial on $\Rr_{>0}$), say
$$
\pi=\bigboxplus_i\mu_i\pi_i,\ \pi'=\bigboxplus_j\nu_j\pi'_j,\
\mu_i,\nu_j\geq 1,
$$
then the Rankin-Selberg $L$-function exists and is given as the
product
$$
L(\pi\otimes\pi',s)=\prod_{i,j}L(\pi_i\otimes\pi'_j,s)^{\mu_i\nu_j}
$$
so that analytic properties in the isobaric case are deduced
immediately from the purely cuspidal case.

The local parameters of the Rankin-Selberg $L$-function enjoy the
following additional properties:
\begin{enumerate}
\item For a prime $p\nmid q(\pi)q(\pi')$, we have
$$\lambda_{\pi\otimes\pi'}(p)=\lambda_\pi(p)\lambda_{\pi'}(p).$$
\item If $d=e$ and $\pi'=\wtilde \pi$ is the contragredient of $\pi$,
  then we have
$$
\lambda_{\pi\otimes\wtilde\pi}(n)\geq 0
$$
for all $n\geq 1$ (cf.\ \cite{RudnickSarnak1996}*{p.\ 318}). 
\item The archimedean local factor $L_\infty(\pi\otimes\pi',s)$ has no
  poles in the half-plane $\Re s> 1$, and likewise for any of the
  local factors $L_p(\pi\otimes\pi',s)$ for $p$ prime, because of the
  absolute convergence of the series $L(\pi\otimes\pi',s)$ in this
  region.
\end{enumerate}

To measure the complexity of an $L$-function, we use the {\em analytic
  conductor}, which is defined as the function\label{pg-conductor}
$$
Q(\pi\otimes\pi',s)=q(\pi\otimes\pi')\prod_{i,j}(1+|\mu_{\pi\otimes\pi',(i,j)}+s|).
$$
The conductor of a Rankin-Selberg $L$-function is controlled by that
of the factors, more precisely we have
\begin{equation}\label{eq-bh}
q(\pi\otimes\pi')\leq q(\pi)^{n'}q(\pi')^n,\quad
Q(\pi\otimes\pi',0)\leq Q(\pi,0)^{n'}Q(\pi',0)^n
\end{equation}
for some $n, n'$ (due to Bushnell and
Henniart~\cite{bushnell-henniart} for the non-archimedean part).
Analogously, we will use the notation $Q(\pi)$ for the analytic
conductor of $\pi$.

\subsection{The symmetric square \texorpdfstring{$L$-function}{L-function}}\label{symsquare}

We return to the case $d=2$. When $f=g$ (of level $r$ and with trivial
central character), it is possible to factor the Rankin-Selberg
$L$-function
$$
  L(f\otimes f,s)=\zeta(s)L(\symf,s)	
$$
where $L(\symf,s)$ is the symmetric square $L$-function of $f$. This
is an Euler product of degree three given by
$$
  L(\symf,s)=\prod_p L_p(\symf,s)=
  \prod_p \prod_{i=1}^3\Bigl(1-\frac{\alpha_{\symf,i}(p)}{p^s}
  \Bigr)^{-1} =\sum_{n\geq 1}\frac{\lambda_{\symf}(n)}{n^s},
$$
for $\Re s>1$. For all $p\nmid r$, we have
\begin{multline*}
L_p(\symf,s)=\frac{L_p(f\otimes f,s)}{(1-p^{-s})^{-1}}\\
=\Bigl(\,1-\frac{\alpha_{f,1}(p)^2}{p^s}\,
\Bigr)^{-1}\Bigl(\,1-\frac{\alpha_{f,2}(p)^2}{p^s}\,
\Bigr)^{-1}\Bigl(\,1-\frac{\alpha_{f,1}\alpha_{f,2}(p)}{p^s}\,
\Bigr)^{-1}.
\end{multline*}
This $L$-function admits analytic continuation to $\Cc$ and satisfies
a functional equation of the shape
$$\Lambda(\symf,s)=\eps(\symf)\Lambda(\symf,1-s)$$
with $\eps(\symf)=+1$ and
$$\Lambda(\symf,s)=q(\symf)^{s/2}L_\infty(\symf,s)L(\symf,s)$$
where $L_\infty(\symf,s)$ is a product of Gamma factors.  In fact, it
was proved by Gelbart-Jacquet \cite{GJ} that $L(\symf,s)$ is the
$L$-function of an automorphic representation on $\GL_{3}$ over $\Qq$,
which we denote $\symf$, and that $L(\symf,s)$ is entire.
This result also implies that the Rankin-Selberg $L$-function
$L(f\otimes f,s)$ is the $L$-function of a (non-cuspidal)
$\GL_{4}(\AQ)$-automorphic representation.


In some applications, as in our Chapter~\ref{ch-modular}, it is of
some importance to understand the precise relation between the
automorphic symmetric square $L$-function of Gelbart-Jacquet and the
``imprimitive'' version $L^{\ast}(\symf, s)$ defined
by~(\ref{eq-symast}), i.e.
$$
\zeta^{(r)}(2s)L^{\ast}(f\otimes f,s)=
\zeta^{(r)}(s)L^{\ast}(\symf,s).
$$

Since it is quite complicated to track the literature concerning this
point (especially when the level of $f$ is not squarefree), we record
the result in our case of interest, and sketch the proof using the
local Langlands correspondance.

Let $f$ be a primitive cusp form with trivial central character and
level $r$. For any prime $p$, let $\pi_p$ be the local representation
of the automorphic representation corresponding to $f$.  The following
list enumerates the possibilities for $\pi_p$, the corresponding
inverse $L$-factors at $p$, namely $L_p(\pi_p,s)^{-1}$ for the
standard $L$-function, and $L_p(\symf,s)^{-1}$ for the automorphic
symmetric square $L$-function, and finally the ``correction factor''
$$
C_p=\frac{L^{\ast}_p(\symf ,s)}{L_p(\symf,s)}.
$$
\par
\medskip
\par
\begin{enumerate}
\item Unramified:
\par
\setlength\extrarowheight{5pt}
\begin{tabular}{c|l}
  $L_p(\pi_p,s)^{-1}$ &   $(1-\alpha_p p^{-s})(1-\beta_p
                        p^{-s})$,\quad\quad$\alpha_p\beta_p=1$\\
  $L_p(\symf,s)^{-1}$ & $(1-\alpha_p^2
                        p^{-s})(1-p^{-s})(1-\beta_p^2p^{-s})$\\
  $C_p$ & $1$
\end{tabular}
\item Unramified up to quadratic twist ($\pi_p=\pi'_p\otimes \eta$ for
  some ramified quadratic character $\eta$ and some unramified
  representation $\pi'_p$):
\par
\begin{tabular}{c|l}
  $L_p(\pi_p,s)^{-1}$ &   $1$\\
  $L_p(\symf,s)^{-1}$ &  $ (1-(\alpha'_p)^2p^{-s})(1-p^{-s})(1-(\beta'_p)^2p^{-s})$\\
  $C_p$ & $(1-(\alpha'_p)^2p^{-s})(1-p^{-s})(1-(\beta'_p)^2p^{-s})$
\end{tabular}
\item Steinberg:
\par
\begin{tabular}{c|l}
  $L_p(\pi_p,s)^{-1}$ &   $1-\alpha_p p^{-s}$,\quad\quad $\alpha_p^2=p^{-1}$\\
  $L_p(\symf,s)^{-1}$ & $1-p^{-1-s}$\\
  $C_p$ & $1$
\end{tabular}
\item Steinberg up to a quadratic twist ($\pi_p=\sigma\otimes\eta$ for
  $\sigma$ the Steinberg representation and some ramified quadratic
  character $\eta$):
\par
\begin{tabular}{c|l}
  $L_p(\pi_p,s)^{-1}$ &   $1$\\
   $L_p(\symf,s)^{-1}$ & $1-p^{-1-s}$\\
  $C_p$ & $1-p^{-1-s}$
\end{tabular}
\item Ramified principal series and not of Type (2):
\par
\begin{tabular}{c|l}
  $L_p(\pi_p,s)^{-1}$ &   $1$\\
  $L_p(\symf,s)^{-1}$ & $1-p^{-s}$\\
  $C_p$ & $1-p^{-s}$
\end{tabular}
\item Supercuspidal equal to its twist by the unramified quadratic
  character:
\par
\begin{tabular}{c|l}
  $L_p(\pi_p,s)^{-1}$ &   $1$\\
   $L_p(\symf,s)^{-1}$ & $1+p^{-s}$\\
  $C_p$ & $1+p^{-s}$
\end{tabular}
\item Supercuspidal not equal to its twist by the unramified quadratic
  character:
\par
\begin{tabular}{c|l}
  $L_p(\pi_p,s)^{-1}$ &  $1$\\
   $L_p(\symf,s)^{-1}$ & $1$\\
  $C_p$ & $1$
\end{tabular}
\end{enumerate}
\par
\medskip
\par



\begin{remark}
  (1) In some references, only ``twist minimal'' representations are
  considered, i.e, those $f$ which have minimal conductor among all
  their twists $f\otimes\chi$ by (all) Dirichlet characters. Cases
  (2), (4) and (5) cannot happen for such representations.
\par
(2) All cases may happen for elliptic curves. Case (1) comes from good
reduction, case (2) from good reduction up to a quadratic twist, (3)
from semistable reduction, (4) from semistable reduction up to a
quadratic twist, while (5), (6) and (7) can all come from potentially
good reduction, with (7) only occurring at primes $2$ and $3$.
\end{remark}

\begin{proposition}
  The above list is correct and complete.
\end{proposition}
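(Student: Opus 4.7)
My approach is to apply the local Langlands correspondence for $\GL_2(\Qq_p)$ together with the compatibility of the Gelbart--Jacquet symmetric square lift, reducing each entry of the table to a computation on the Weil--Deligne parameter $(\rho, N)$ of the local component $\pi_p$ of $f$. The standard local factor $L_p(\pi_p, s)$ is the characteristic polynomial of Frobenius on $V^{I, N=0}$ (inertia invariants of the kernel of monodromy), and $L_p(\symf, s)$ is obtained by the same recipe applied to $\syms(\rho, N)$; the correction factor $C_p$ is then extracted from the defining relation $\zeta^{(r)}(2s) L^{\ast}(f\otimes f, s) = \zeta^{(r)}(s) L^{\ast}(\symf, s)$.

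I would first verify completeness. Local representations of $\GL_2(\Qq_p)$ with trivial central character fall into three families. Principal series $\pi(\chi_1, \chi_2)$ with $\chi_1\chi_2 = 1$ are either unramified (case 1), a ramified quadratic twist of an unramified representation (case 2), or neither (case 5); this trichotomy is exhaustive since case (2) is precisely the condition that $\chi_1 \eta$ is unramified for some ramified quadratic $\eta$. Special representations with trivial central character are twists of Steinberg by a character $\chi$ with $\chi^2 = 1$, giving case (3) when $\chi$ is trivial or the unramified quadratic and case (4) when $\chi$ is ramified quadratic. For supercuspidals, any self-twist $\pi_p\cong \pi_p\otimes \eta$ with $\eta$ nontrivial forces $\eta^2 = 1$ by comparing determinants of a $2$-dimensional parameter; I split according to whether such an $\eta$ can be chosen unramified (case 6) or not (case 7).

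The $L$-factor computations proceed as follows. For cases (1), (2), (5) the parameter is semisimple, and $\syms$ has eigenvalues $\alpha_{f,1}(p)^2, \alpha_{f,1}(p)\alpha_{f,2}(p) = 1, \alpha_{f,2}(p)^2$; all three factors survive in case (1), only the middle unramified factor $(1-p^{-s})$ survives in case (5) where the outer squared characters remain ramified, and in case (2) the ramified quadratic twist disappears in $\syms$ to restore the unramified squared parameters $(\alpha'_p)^2, 1, (\beta'_p)^2$ of the base unramified $\pi'_p$. The Steinberg case (3) requires the Weil--Deligne monodromy: Frobenius eigenvalues $p^{\pm 1/2}$ with nontrivial $N$ give $\syms$ whose $N$-kernel is spanned by the lowest-weight vector (Frobenius eigenvalue $p^{-1}$), yielding $L_p(\symf, s)^{-1} = 1-p^{-1-s}$. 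Case (4) is a ramified quadratic twist of case (3) with the twist canceling in $\syms$ (since $\eta^2 = 1$), while ramification kills the standard factor. For case (6), writing $\rho = \Ind_K^{\Qq_p}(\theta)$ with $K/\Qq_p$ the unramified quadratic extension and $\det\rho = \theta|_{\Qq_p^\times}\cdot \eta_{K/\Qq_p} = 1$, the Mackey decomposition gives $\syms(\rho) = \Ad(\rho)\otimes \det\rho = \eta_{K/\Qq_p}\oplus \Ind_K^{\Qq_p}(\theta/\theta^\sigma)$. The key step is that $\theta/\theta^\sigma$ is necessarily ramified, because if it were unramified then $\theta = \theta^\sigma$ on $\Oc_K^\times$, combined with $\theta^\sigma = \theta$ on $\Qq_p^\times$ and on the uniformizer $\pi_K = \pi_{\Qq_p}$, would give $\theta = \theta^\sigma$ throughout, contradicting irreducibility of $\rho$. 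Hence $\Ind(\theta/\theta^\sigma)$ is fully ramified and only $\eta_{K/\Qq_p}$ contributes, giving $L_p(\symf, s)^{-1} = 1+p^{-s}$. Case (7) admits no unramified self-twist, so $\syms(\rho)$ contains no unramified summand and $L_p(\symf, s)^{-1} = 1$.

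The correction factor $C_p$ follows by localizing the defining relation: for $p\mid r$ we have $L_p^{\ast}(\symf, s) = L_p^{\ast}(f\otimes f, s) = \sum_{k\geq 0}\lambda_f(p^k)^2 p^{-ks}$. Reading $\lambda_f(p^k)$ off the standard local factor in each case --- trivially $\delta_{k=0}$ whenever $L_p(\pi_p, s)^{-1} = 1$, and the geometric sequence $\alpha_p^k$ in case (3) with $\alpha_p^2 = p^{-1}$ summing exactly to $(1-p^{-1-s})^{-1} = L_p(\symf, s)$ --- yields the tabulated $C_p$ after dividing by $L_p(\symf, s)$. The main conceptual obstacle in the whole proof is case (6), specifically the ramification argument for $\theta/\theta^\sigma$ dictated by irreducibility; everything else reduces to routine bookkeeping with local Langlands parameters.
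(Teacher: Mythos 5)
Your argument is correct and reaches the same conclusions, but organizes the case analysis differently from the paper. The paper classifies directly by the type of the restriction $V|_{I_p}$ of the Weil--Deligne parameter to inertia (a sum of two copies of the same character with or without monodromy, two distinct characters, or a single irreducible character), which groups the seven cases as $\{(1),(2)\}$, $\{(3),(4)\}$, $\{(5),(6)\}$, $\{(7)\}$; in particular, the paper's case of two distinct inertia characters unifies the ramified-principal-series case and the dihedral supercuspidal induced from the unramified quadratic extension, with the trichotomy between them read off from the Frobenius eigenvalue (necessarily $\pm 1$, by the trivial-determinant constraint) on the one-dimensional space $(\Ad V)^{I_p}$. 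You instead organize by the classical automorphic classification (principal series, special, supercuspidal), which forces you to treat the dihedral supercuspidal case (6) by an explicit Mackey computation $\syms\rho = \eta_{K/\Qq_p}\oplus\Ind_K(\theta/\theta^\sigma)$ and a small class-field-theoretic verification that $\theta/\theta^\sigma$ must be ramified (your uniformizer argument is correct: since $K/\Qq_p$ is unramified one may take $\pi_K = p \in \Qq_p$, which is Galois-fixed, so unramifiedness of $\theta/\theta^\sigma$ already forces $\theta = \theta^\sigma$ on all of $K^\times$, contradicting irreducibility). The paper's argument via the Frobenius action on $(\Ad V)^{I_p}$ avoids this explicit induction computation; your approach, while longer on case (6), is closer to classical automorphic-forms bookkeeping and makes the ramified principal-series exclusion in case (5) (the ``outer squared characters stay ramified'' step) more transparent. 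Both routes rest on the same inputs: local Langlands for $\GL_2$ and Henniart's compatibility for the symmetric square, plus the elementary observation that for $p \mid r$ the correction factor $C_p$ is determined entirely by $\lambda_f(p^k)$ read off the standard local factor. Incidentally, note that the paper's prose in its case (b), $\eta$ nontrivial, has a typo (``$1/(1-p^{-s})$'' should be ``$1/(1+p^{-s})$''); your computation and the table both have the correct $1 + p^{-s}$.
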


\begin{proof}
  We use the local Langlands correspondance (due to Harris and
  Taylor~\cite{taylor-harris}), and its compatibility with the
  symmetric square (due to Henniart~\cite{henniart}).
  The Langlands parameter corresponding to $\pi_p$ is a
  two-dimensional Weil-Deligne representation representation $V$ of
  $\Qq_p$ with trivial determinant. The local $L$-factor is then
$$
\det\bigl(1-p^{-s}F\,\mid\, (\syms V)^{I_p, N}\bigr)^{-1},
$$
where $F$ is the Frobenius automorphism of $\Qq_p$, $I_p$ is the
inertia subgroup of $W_{\Qq_p}$, and $N$ is the monodromy operator
(whose invariant subspace is defined to be its kernel).
The restriction of $V$ to $I_p$ is semisimple, and thus can be of
three possible types:
\begin{enumerate}
\item[(a)] A sum of two copies of the same character.
\item[(b)] A sum of two different characters.
\item[(c)] A single irreducible character.
\end{enumerate}

Only in case (a) can $N$ act non-trivially, as $N$ is nilpotent and commutes with $I_p$, and we will handle that separately. 


We now compute the symmetric square and the Frobenius action on the
inertia invariants in each case. It will be convenient to recall in
some of the cases that, because $V$ has trivial determinant, we can
canonically identify $\syms V$ with the space of endomorphisms of $V$
with trace zero.

\textbf{Case (a) -- $N$ trivial.} Because the determinant is trivial,
the inertia character that appears must be either quadratic or
trivial. In this case the representation is unramified, potentially
after a quadratic twist.  This gives cases (1) and (2); the $L$-factor
calculation is well-known (see, e.g.,~\cite[Section 3.5]{GJ},
or~\cite[p. 107, case 1]{coates-schmidt} on the Galois side).

\textbf{Case (a) -- $N$ nontrivial.} By the same logic, the character
is quadratic or trivial. Then the inertia representation is trivial,
potentially after a quadratic twist, and the associated smooth
representation is Steinberg, potentially after a quadratic twist. This
gives cases (3) and (4), and the $L$-factor calculation is also
well-known (e.g., the $GL_2$ case can be found
in~\cite[Th. 6.15]{gelbart}, and the symmetric square factor is
computed, on the Galois side, in~\cite[p. 107, case
2]{coates-schmidt}, recalling that the symmetric square is unchanged
by a quadratic twist).

\textbf{Case (b).} The inertia invariants of the adjoint
representation form a one-dimensional space. The Frobenius action on
this space defines a one-dimensional unramified character $\eta$,
which is either trivial or nontrivial. It is trivial if and only if
there is a non-scalar endomorphism of the whole representation, i.e.,
if it fails to be irreducible, or in other words if the corresponding
smooth representation is a principal series. In this case, the
Frobenius action on inertia invariants is trivial, so the factor is
$1/(1-p^{-s})$. This is case (5). If $\eta$ is non-trivial, then we
have an isomorphism $V\otimes\eta\to V$, which taking determinants
implies that $\eta$ is quadratic. Hence we have the corresponding
isomorphism on the automorphic side, and the $L$-factor is
$1/(1-p^{-s})$. This is case (6).

\textbf{Case (c).} The space of inertia invariants of the adjoint
representation vanishes. Then the local $L$-factor is $1$, and so the
representation has no nontrivial endomorphisms and thus is
irreducible, hence the corresponding automorphic representation is
supercuspidal, and has $L$-factor $1$. This is case (7).
\end{proof}


\subsection{Symmetric power \texorpdfstring{$L$-functions}{L-functions}}
\label{ssec-symk}

More generally, for any integer $k\geq 1$, one can form the symmetric
$k$-th power $L$-function $L(\sym^kf,s)$, which is an Euler product of
degree $k+1$, namely
$$
  L(\sym^kf,s)=\prod_pL_p(\sym^k,f,s)
  =\prod_p\prod_{i=0}^k (1-\alpha_{\sym^kf,i}p^{-s})^{-1} =\sum_{n\geq
    1}\frac{\lambda_{\sym^k f}(n)}{n^s}
$$
and for $p\nmid r$,
$$
L_p(\sym^k,f,s)=\prod_{i=0}^{k}\Bigl(\,1-\frac{\alpha_{f,1}^i(p)\alpha_{f,2}^{k-i}(p)}{p^s}\,
\Bigr)^{-1}.
$$
The analytic continuation of these Euler products is not known in
general.  For $k=3$ and $k=4$, Kim and Shahidi \cites{KiSh1,KiSh2}
have proven that $L(\sym^kf,s)$ is the $L$-function of a self-dual
automorphic (not-necessarily cuspidal) representation of $\GL_{k+1}$
and in particular it admits analytic continuation to $\Cc$ and
satisfies a functional equation of the usual shape:
$$\Lambda(\sym^kf,s)=\eps(\sym^k f)\Lambda(\sym^k f,1-s)$$
where 
$\eps(\sym^kf)
=\pm 1$ and
$$\Lambda(\symf,s)=q(\sym^kf)^{s/2}L_\infty(\sym^kf,s)L(\sym^kf,s),$$
and again $L_\infty(\sym^kf,s)$ is a product of Gamma factors.

We summarize the results of Kim and Shahidi, as well as those of
Gelbart and Jacquet that were already mentioned, as follows.
\par
For $k \leq 4$ the $L$-function $L(\sym^kf,s)$ is the $L$-function of an automorphic
representation $\sym^kf$ of $\GL_{k+1}(\AQ)$. The representation
$\sym^kf$ decomposes into an isobaric sum
\begin{equation}\label{eq-isobaric}
 \sym^kf\simeq\bigboxplus_{j=1}^{n_{f,k}}\mu_j\pi_j 
\end{equation}
where $n_{f,k}\geq 1$ and $(\pi_j)$ are cuspidal automorphic
representations on $\GL_{d_j}(\AQ)$.  This implies
$$
L(\sym^kf,s)=\prod_jL(\pi_j,s)^{\mu_j}.
$$
The decomposition~(\ref{eq-isobaric}) satisfies $\sum_j\mu_jd_j=k+1$.
The automorphic representation $\sym^k\pi$ is self-dual, hence its
decomposition into isotypical components is invariant by taking
contragredient, i.e., the multiset
$\{(\mu_j,\pi_j)\,\mid\, j\leq n_{f,k}\}$ is invariant under
contragredient. Moreover, for every $1\leqslant j\leqslant n_{f,k}$,
we have
$$
\pi_j\simeq\tilde{\pi}_j\quad\text{or}\quad d_j\leqslant 2.
$$ 
\par
We now list the precise possibilities for the decomposition. Let $\pi$
be the automorphic representation associated to $f$. It is self-dual
with trivial central character, and $\sym^kf=\sym^k\pi$.  If $\pi$ is
of CM-type, then $\pi\otimes\eta\simeq\pi$ for a nontrivial quadratic
Dirichlet character $\eta$, which determines a quadratic extension
$E/\Qq$, and there exists a Gr\"o\ss en\-char\-acter $\chi$ of $E$
such that $\pi=\pi(\chi)$ (the automorphic induction of $\chi$). Write
$\chi'$ for the conjugate of $\chi$ by the nontrivial element of
$\mathrm{Gal}(E/\Qq)$. Then we have:
\begin{itemize}
\item $\sym^2\pi=\pi(\chi^2)\boxplus\eta$,
\item $\sym^3\pi=\pi(\chi^3)\boxplus\pi(\chi^2 \chi')$,
\item $\sym^4\pi=\pi(\chi^4)\boxplus\pi(\chi^3 \chi')\boxplus\bfone$.
\end{itemize}
The individual terms $\pi(\chi^a \chi'^b)$ either remain cuspidal, and have unitary central character, or split into the Eisenstein series of two unitary characters.

If $\pi$ is not of CM-type,  
then
the automorphic representations $\sym^2\pi$, $\sym^3\pi$ and
$\sym^4\pi$ are all cuspidal and self-dual, with central character
trivial on $\Rr_{>0}$. 

In either case, we conclude

\begin{corollary}\label{cor-iso}
  The automorphic representations $\pi_j$ on $\GL_{d_j}(\AQ)$ of the
  isobaric decomposition of $\sym^k\pi$ have unitary central
  characters trivial on $\Rr_{>0}$. They satisfy either
  $\pi_j\simeq\tilde{\pi}_j$, or $d_j\leqslant 2$.
\end{corollary}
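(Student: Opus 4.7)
The plan is to read off both assertions from the classification of $\sym^k\pi$ just recalled. The condition that either $\pi_j \simeq \tilde{\pi}_j$ or $d_j \leqslant 2$ is already stated verbatim in the paragraph immediately preceding the corollary (and follows from the invariance of the multiset of isotypical components under contragredient together with the self-duality of $\sym^k\pi$), so only the claim about central characters requires further argument.

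First I would observe that the central character $\omega_{\sym^k\pi}$ is trivial. At an unramified prime $p$, the Satake parameters of $\pi$ are $\alpha_p, \alpha_p^{-1}$ since $\omega_\pi = 1$, and those of $\sym^k\pi$ are $\alpha_p^{k-2i}$ for $0 \leqslant i \leqslant k$. Their product is $\alpha_p^{\sum_{i=0}^k(k-2i)} = \alpha_p^0 = 1$, which together with strong multiplicity one for Hecke characters gives $\omega_{\sym^k\pi}=1$ as a global Hecke character, and consequently $\prod_j \omega_{\pi_j}^{\mu_j} = 1$.

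Next I would treat the two cases of the classification separately. In the non-CM case for $k\in\{2,3,4\}$, Gelbart--Jacquet and Kim--Shahidi guarantee that $\sym^k\pi$ is itself cuspidal, so $n_{f,k}=1$ and the unique $\pi_j$ has trivial central character, which is \emph{a fortiori} unitary and trivial on $\Rr_{>0}$. In the CM case, each summand appearing in the explicit list is either $\bfone$, the quadratic Dirichlet character $\eta$, or an automorphic induction $\pi(\psi)$ with $\psi=\chi^a\chi'^b$; for the last, the central character is determined by the restriction $\psi|_{\AQ^\times}$ times $\eta$. Triviality of $\omega_\pi = \omega_{\pi(\chi)}$ forces $\chi|_{\AQ^\times}=\eta$, which is unitary and trivial on $\Rr_{>0}$; since $\chi'|_{\AQ^\times} = \chi|_{\AQ^\times}$, the property propagates to every $\chi^a\chi'^b|_{\AQ^\times}$, and hence to each $\omega_{\pi_j}$. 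When $\pi(\psi)$ further splits as an Eisenstein sum of two Hecke characters, the same restriction argument applies to each summand individually.

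The main point requiring attention is the bookkeeping in the CM case — identifying which $\pi(\chi^a\chi'^b)$ remain cuspidal versus split, and tracking the central characters in the latter situation — but once the triviality of $\chi|_{\AQ^\times}$ on $\Rr_{>0}$ has been extracted from $\omega_\pi=1$, the desired property for every $\omega_{\pi_j}$ follows uniformly without any further analytic input.
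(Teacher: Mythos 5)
Your proposal is essentially correct and follows the same logic as the paper, which does not write out a proof of the corollary but extracts it from the classification of $\sym^k\pi$ given in the preceding paragraphs. Your contribution is genuinely useful: the paper's text only asserts \emph{unitary} central characters in the CM case and \emph{trivial on} $\Rr_{>0}$ in the non-CM case, and then the corollary claims both in all cases, so your verification that $\chi|_{\AQ^\times}=\eta$ (quadratic, hence trivial on $\Rr_{>0}$) and that this propagates to each $\chi^a\chi'^b|_{\AQ^\times}=\eta^{a+b}$ and to the resulting central characters $\eta\cdot(\chi^a\chi'^b)|_{\AQ^\times}$ is exactly the bookkeeping the paper leaves implicit.

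Two small imprecisions worth flagging. First, your parenthetical attributing the dichotomy ``$\pi_j\simeq\tilde\pi_j$ or $d_j\leq 2$'' to self-duality of $\sym^k\pi$ and contragredient-invariance of the multiset is not quite right: those facts only show that non-self-dual components come in dual pairs, not that such components have $d_j\leq 2$. That last fact is read off from the explicit case list (in the CM case, every component is an automorphic induction $\pi(\psi)$, hence of dimension $\leq 2$), and in the non-CM case the unique component is self-dual; the paper states the dichotomy before the case analysis, but it is really a consequence of it. Since the dichotomy is stated verbatim in the paper, this does not affect the validity of your argument. Second, in the split case ``the same restriction argument applies'' is compressed: what one uses is that if $\pi(\psi)=\nu\boxplus\nu\eta$ with $\nu\circ N_{E/\Qq}=\psi$, then $\nu(x^2)=\psi|_{\AQ^\times}(x)$ for $x\in\AQ^\times$, and since every element of $\Rr_{>0}$ is a square, $\nu|_{\Rr_{>0}}=1$ follows from $\psi|_{\AQ^\times}$ being trivial on $\Rr_{>0}$. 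Spelling this out would make the argument watertight.
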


\begin{remark}
  One can check that this corollary remains true for any cuspidal
  automorphic representation $\pi$, even if the central character of
  $\pi$ is non-trivial. However, checking this requires the
  consideration of more cases, since $\pi$ could be of polyhedral
  type.
\end{remark}


\subsection{The Ramanujan-Petersson conjecture and its approximation}\label{235}

The \emph{Ramanujan-Petersson conjecture} at unramified places
predicts optimal bounds for the local parameters of $f$ (equivalently
a pole free region for the local $L$-factors), namely it predicts that
\begin{align}\label{RPconj}
  |\alpha_{f,i}(p)|&\leq 1,\ i=1,2,\\
  \Re \mu_{f,i}&\leq 0,\ i=1,2.  
\nonumber
\end{align}
This would imply that for any $(n, q(f)) = 1$ one has
\begin{equation}\label{RPconjbound}
|\lf(n)|\leq d(n)
\end{equation}
where $d(n)$ is the divisor function.

If $f$ is holomorphic, the Ramanujan-Petersson conjecture is known by
the work of Deligne \cite{WeilI}.  Moreover, it is known that
\eqref{RPconj} holds for any prime $p\mid q(f)$, and so
\eqref{RPconjbound} holds for all integers $n$.

The results on the functoriality of the symmetric power $L$-functions
$L(\sym^k f,s)$ mentioned above together with Rankin-Selberg theory
imply that the Ramanujan-Petersson conjecture is true on average in a
strong form: for any $x\geq 1$ and $\eps>0$, we have
\begin{equation}\label{RP4}
  \sum_{n\leq x}\Bigl(\,|\lf(n^4)|^2+|\lf(n^2)|^4+|\lf(n)|^8\, \Bigr)
  \ll_f x^{1+\eps},
\end{equation}
(see e.g.\ \cite[Theorem 1.2]{GLu} for the last bound, the other cases being very similar) where the implied constant depends on~$\eps$, and also
\begin{equation}\label{RSbis}
\sum_{n\leq x} \vert \lambda_f (n)\vert^2 \ll_f x
\end{equation}
This  implies that
$$|\alpha_{f,i}(p)|\leq p^{1/8},\ i=1,2.$$
\par
With additional more sophisticated arguments, Kim and Sarnak
\cite{KiSa} have obtained the currently best approximation to the
Ramanujan-Petersson conjecture. For $\theta=7/64$, we have
\begin{align*}
  |\alpha_{f,i}(p)|&\leq p^\theta\\
  \Re \mu_{f,i}&\leq \theta,\ i=1,2,
\end{align*}
and therefore, for any $n\geq 1$, we have
\begin{equation}\label{KSbound}
|\lf(n)|\leq d(n)n^\theta.
\end{equation}
On the other hand, for $p \mid r$, we have (cf.\ e.g. \cite[Theorem 4.6.17]{miyake} or more generally \cite{gelbart})
\begin{equation}\label{ramified}
  |\lf(p)|= p^{-1/2}\text{ or } \lambda_f(p)=0.
\end{equation}

\emph{For the rest of the book the letter $\theta \leq 7/64$ is
  reserved for an admissible exponent towards the Ramanujan-Petersson
  conjecture.}

\section{Prime Number Theorems}
\label{MomentsHeckeEigenvaluesSection}

By ``Prime Number Theorems'' we mean the problem of evaluating
asymptotically certain sums over the primes of arithmetic functions
associated to Hecke eigenvalues of $f$ and $g$. The main tool for this
is the determination of zero-free regions of the relevant
$L$-functions. We first state a general result concerning the
zero-free domain for Rankin-Selberg $L$-functions. 

\begin{proposition}
\label{AutomorphicPNTLemma}
Let $\pi$ and $\pi'$ be irreducible cuspidal automorphic
representations of $\GL_d(\Aa_\Qq)$ and
$\GL_{e}(\Aa_\Qq)$. Assume that the central characters
$\omega_\pi$ and $\omega_{\pi'}$ are unitary and trivial on $\Rr_{>0}$
and either:
\begin{enumerate}
\item At least one of $\pi$ or $\pi'$ is a $\GL_1$-twist of a
  self-dual representation, possibly the trivial one, or
\item $d\leq 3$ and $e\leq 2$, or vice-versa.
\end{enumerate} 
Then, there is  an explicitly computable constant $c=c(d,e)>0$ such that the Rankin--Selberg $L$-function $L(\pi\times\pi',\sigma+it)$ has no zeros in the region
\begin{equation}\label{HdvPregion}
\sigma>1-\frac{c}{\log(Q({\pi})Q({\pi'})(|t|+2))}	
\end{equation}
except for at most one exceptional simple Landau-Siegel real zero
$<1$. Such a zero may only occur if $\pi\otimes\pi'$ is self-dual,
i.e., if $\tilde\pi\otimes\tilde\pi'\simeq\pi\otimes\pi'$ as
admissible representations.
\end{proposition}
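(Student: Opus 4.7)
The plan is to adapt the classical Hadamard--de la Vall\'ee Poussin method to the Rankin--Selberg setting, in the style of Hoffstein--Ramakrishnan. Given a putative zero $\rho = \beta + i\gamma$ of $L(\pi \otimes \pi', s)$ with $\beta$ close to $1$, I would introduce an auxiliary isobaric automorphic representation $\Pi$ on some $\GL_N(\AQ)$, built from $\pi$, $\pi'$ and suitable $\GL_1$-twists, arranged so that $L(\Pi \otimes \widetilde{\Pi}, s)$ has a pole of order exactly $k$ at $s = 1$ and a zero of multiplicity at least $2k$ at $s = \beta$ (the factor $2$ coming from pairing $L(\pi \otimes \pi', s - i\gamma)$ with its complex conjugate $L(\widetilde{\pi} \otimes \widetilde{\pi}', s + i\gamma)$, both of which vanish at $s = \beta$). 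The crucial positivity input, recalled in Section~\ref{sectionSymRS}, is that $\lambda_{\Pi \otimes \widetilde{\Pi}}(n) \geq 0$ for every $n$, so that $-L'/L(\Pi \otimes \widetilde{\Pi}, \sigma)$ has non-negative Dirichlet coefficients for $\sigma > 1$. Balancing the pole contribution against the zero contribution at $\sigma = 1 + \delta$ for $\delta$ of order $1/\log Q$, after controlling the growth of $L(\Pi \otimes \widetilde{\Pi}, s)$ on $\Re s = 1 + \delta$ by convexity together with the Bushnell--Henniart analytic conductor bound \eqref{eq-bh}, yields the sought bound $\beta \leq 1 - c/\log Q$.

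A natural first attempt is $\Pi = \pi \boxplus \pi' \otimes |\det|^{-i\gamma}$; then $L(\Pi \otimes \widetilde{\Pi}, s)$ factors into four Rankin--Selberg $L$-functions of which two have their zero at $s = \beta$. Ensuring that this candidate is a \emph{bona fide} isobaric automorphic representation (so that the Rudnick--Sarnak positivity applies to the resulting $L$-function) is precisely the role of the hypotheses. In case (1), if for instance $\widetilde{\pi}' \simeq \pi' \otimes \chi$ for some $\GL_1$-character $\chi$, then every cross-factor of $L(\Pi \otimes \widetilde{\Pi}, s)$ is, up to a character twist, a Rankin--Selberg $L$-function on $\GL_d \times \GL_e$ of the kind directly covered by the theory of Jacquet--Piatetskii-Shapiro--Shalika. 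In case (2), with $\{d, e\} \subseteq \{1, 2, 3\}$, each factor of $L(\Pi \otimes \widetilde{\Pi}, s)$ lives on a group of rank at most $6$, and the required automorphy of all auxiliary tensor products is guaranteed by the functoriality of $\sym^2, \sym^3, \sym^4$ and the $\GL_2 \times \GL_2$ tensor lift of Ramakrishnan recalled in Section~\ref{ssec-symk}.

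The Landau--Siegel exception emerges naturally when $\gamma = 0$: the two critical factors then merge, so a single real zero contributes only once (rather than twice) to the zero order of $L(\Pi \otimes \widetilde{\Pi}, s)$ at $s = \beta$, and the positivity inequality only prevents the simultaneous existence of \emph{two} such zeros. The restriction to self-dual $\pi \otimes \pi'$ for the possible Siegel zero comes from the fact that if $\widetilde{\pi} \otimes \widetilde{\pi}' \not\simeq \pi \otimes \pi'$, one can further enlarge the auxiliary representation to $\Pi \boxplus \widetilde{\Pi}$, which restores the doubled multiplicity of the zero at any real point and yields a zero-free region without exception.

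I expect the main obstacle to be the case-by-case verification of automorphy in case (2) at the largest rank combination $(d, e) = (3, 2)$, which requires the Kim--Shahidi lift, together with making the whole argument fully uniform in both analytic conductors. Obtaining the effective constant $c = c(d, e)$ of the statement demands propagating \eqref{eq-bh} through the pole-and-zero balance, together with a uniform convexity estimate for $L(\Pi \otimes \widetilde{\Pi}, s)$ on the edge of absolute convergence with polynomial dependency on $Q(\pi)Q(\pi')(|t| + 2)$. This is precisely the point where the result becomes delicate to locate cleanly in the literature, as the authors of the memoir themselves acknowledge.
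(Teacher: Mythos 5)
Your high-level framework — Hadamard--de la Vall\'ee Poussin via an auxiliary isobaric representation $\Pi$, using Rudnick--Sarnak positivity of $L(\Pi\otimes\tilde\Pi,s)$ and a pole-versus-zero balance — is the right one, and it is what the paper does. However, your specific choice $\Pi=\pi\boxplus\pi'|\cdot|^{-i\gamma}$ does not work, and this is a genuine gap. With this $\Pi$, the diagonal factors $L(\pi\otimes\tilde\pi,s)$ and $L(\pi'\otimes\tilde\pi',s)$ give a pole of order $m=2$ at $s=1$ (assuming $\pi\not\simeq\pi'|\cdot|^{-i\gamma}$), while the two cross factors $L(\pi\otimes\tilde\pi',s+i\gamma)$ and $L(\pi'\otimes\tilde\pi,s-i\gamma)$ — which are conjugate to one another on the real line — vanish at $s=\beta$ to total order $\geq 2$. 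The Goldfeld--Hoffstein--Lieman inequality
$$0\leq -\frac{L'}{L}(\Pi\otimes\tilde\Pi,\sigma)=\frac{m}{\sigma-1}-\sum_{\rho}\Re\frac{1}{\sigma-\rho}+O(\log Q),\qquad\sigma>1,$$
only produces a contradiction when the zero order at $\beta$ strictly exceeds the pole order $m$; when they are equal (both $2$ here), the first two terms cancel as $\beta\to1$ and no zero-free region follows. Your sentence ``a pole of order exactly $k$ at $s=1$ and a zero of multiplicity at least $2k$'' promises the ratio $2\colon1$, but the $\Pi$ you pick only delivers $1\colon1$.

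The fix, which is what Brumley's proof in the paper implements (following Moreno and Sarnak), is to use the asymmetric choice
$$\Pi=(\pi\otimes|\cdot|^{-it})\boxplus(\tilde\pi\otimes|\cdot|^{it})\boxplus\pi'$$
in case (1) when $\pi'$ is the self-dual factor. Then $L(\Pi\otimes\tilde\Pi,s)$ factors into nine Rankin--Selberg $L$-functions with a pole of order exactly $3$ at $s=1$ (from $L(\pi\otimes\tilde\pi,s)$ appearing twice and $L(\pi'\otimes\pi',s)$ once), but with $L(\pi\otimes\pi',s+it)$ and $L(\tilde\pi\otimes\pi',s-it)$ each appearing twice among the cross factors, hence a zero of order $\geq 4$ at the real point $\sigma$. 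The self-duality of $\pi'$ is precisely what lets one factor appear only once on the diagonal while $\pi$ and $\tilde\pi$ both contribute, producing the needed $3\colon4$ imbalance. Additionally, the paper handles case (2) not by a second HdVP argument but by reducing $L(\pi\otimes\pi',s)$ to the standard $L$-function of a single isobaric automorphic representation via Ramakrishnan's $\GL_2\times\GL_2\to\GL_4$ lift (when $d=e=2$) or the Kim--Shahidi $\GL_2\times\GL_3\to\GL_6$ lift (when $\{d,e\}=\{2,3\}$), and then invokes the already-established case of a standard $L$-function; your phrasing suggests you would instead verify automorphy of many auxiliary tensor products, which is more involved and not actually needed. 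Your discussion of the Landau--Siegel exception (the cross factors merging at $\gamma=0$, and the role of self-duality of $\pi\otimes\pi'$) is essentially correct once the auxiliary $\Pi$ is repaired.
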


\begin{proof}
  If $\pi$ and $\pi'$ are both self-dual, then this is a result of
  Moreno~\cite{Moreno1985}*{Theorem~3.3}. If only one of the two is
  self-dual it was observed by Sarnak that Moreno's method
  extends~\cite{Sarnak2004}. However we could not find a proof of this
  in the literature and we take this opportunity to report a proof
  kindly provided by F. Brumley. We assume that $\pi'$ is self-dual
  and that $\pi$ is not (in particular
  $\pi'\not\simeq\pi,\tilde\pi$). Given a non-zero real number $t$,
  consider the isobaric representation
$$
\Pi=
(\pi\otimes|\cdot|^{-it})\boxplus(\tilde\pi\otimes|\cdot|^{it})\boxplus
\pi'
$$
and its Rankin-Selberg $L$-function
$$
L(s)=L(\Pi\otimes\tilde\Pi,s).
$$ 
This $L$-function factors as a product of the following nine
$L$-functions:
\begin{gather*}
  L(\pi\otimes\tilde\pi,s),\quad 
  L(\pi\otimes\tilde\pi,s),\quad
  L(\pi'\otimes\pi',s)	,\\
  L(\pi\otimes\pi,s+2it),\quad
  L(\pi\otimes\pi',s+it),\\
  L(\tilde\pi\otimes\tilde\pi,s-2it),\quad
  L(\tilde\pi\otimes\pi',s-it),\\
  L(\pi\otimes\pi',s+it),\quad
  L(\tilde\pi\otimes\pi',s-it).
\end{gather*}
Also by construction the coefficients of $-L'/L(s)$ are non-negative
so that we can use the Goldfeld-Hoffstein-Lieman Lemma~\cite{IwKo}*{Lemma
5.9}.

The $L$-function $L(s)$ has a pole of order $3$ at $s=1$. On the other
hand, suppose that $L(\pi\otimes\pi',\sigma+it)=0$ for $\sigma<1$
satisfying \eqref{HdvPregion}. Then $L(s)$ vanishes to order at least
$4$ at $\sigma$ (the two factors $L(\pi\otimes\pi',s+it)$ and the two
factors $L(\tilde\pi\otimes\pi',s-it)$), thus contradicting the
Goldfeld-Hoffstein-Lieman Lemma if $c$ is small enough, depending on
$d,e$.


Suppose now that neither $\pi$ nor $\pi'$ are self-dual up to
$GL_1$-twists. If $d=e=2$, then the result follows from the functorial
lift $\GL_2\times\GL_2\to\GL_4$ of Ramakrishnan~\cite{dinakar}*{Theorem
M}, according to which there exists an isobaric automorphic
representation $\pi\boxtimes\pi'$ of $\GL_4(\Aa_\Qq)$ (with unitary
central character trivial on $\Rr^{+}$) such
that $$L(s,\pi\boxtimes\pi')=L(s,\pi\times\pi').$$ If $d=2$ and $e=3$
this follows from the functorial lift $$\GL_2\times\GL_3\to\GL_6$$
established by Kim and Shahidi \cite{KiSh1}.
\end{proof}

\begin{remark}
  (1) This result covers the case when at least one of $\pi$ or $\pi'$
  is a $GL_1$-twist of the self-dual representation by passing the
  twist to the other factor.

  In particular, this contains the case where (say) $\pi'=1$ is the
  (self-dual) trivial representation, that is the standard zero-free
  region 
$$
\sigma>1-\frac{c}{\log(Q({\pi})(|t|+2))}
$$ 
for the standard $L$-function $L(\pi,s)$ of any cuspidal
representation, except for the possible Landau--Siegel zero if
$\pi\simeq\tilde{\pi}$.

(2) In our actual applications in this book, we will apply the result
only to a finite set of auxiliary $L$-functions (depending on the
given cusp forms $f$ and $g$, which are fixed), hence the issue of
Landau-Siegel zeros is not an important one, as long as we have a
standard zero-free region in $t$-aspect.
\end{remark}

From this, we deduce the next result.

\begin{proposition} Let $f,g$ be primitive cusp forms of levels
  $r, r'$ with trivial central character. There exists an absolute
  constant $c>0$ such that for $k,k'\leq 4$ the Rankin-Selberg
  $L$-function $L(\sym^kf\otimes\sym^{k'}g,s)$ has no zeros in the
  domain
$$\Re s\ge 1-\frac{c}{\log(Q(f)Q(g)(2+|s|))}$$
except for possible real zeros $<1$.
\end{proposition}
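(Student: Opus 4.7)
The plan is to factor $L(\sym^k f \otimes \sym^{k'} g, s)$ as a product of standard Rankin--Selberg $L$-functions attached to cuspidal automorphic representations, and then apply Proposition \ref{AutomorphicPNTLemma} to each factor. Concretely, by Gelbart--Jacquet and Kim--Shahidi (cited in Section \ref{ssec-symk}, which is why the hypothesis $k,k'\le 4$ is needed), the symmetric powers $\sym^k f$ and $\sym^{k'} g$ are isobaric sums
$$
\sym^k f \simeq \boxplus_i \mu_i \pi_i, \quad \sym^{k'} g \simeq \boxplus_j \nu_j \pi'_j,
$$
of cuspidal automorphic representations $\pi_i$ on $\GL_{d_i}(\AQ)$ and $\pi'_j$ on $\GL_{d'_j}(\AQ)$, so that
$$
L(\sym^k f \otimes \sym^{k'} g, s) = \prod_{i,j} L(\pi_i \otimes \pi'_j, s)^{\mu_i \nu_j}.
$$
It then suffices to establish the zero-free region for each factor $L(\pi_i \otimes \pi'_j, s)$ individually, since the exceptional Landau--Siegel zeros permitted by Proposition \ref{AutomorphicPNTLemma} are real and lie below $1$.

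The next step is to verify that the hypotheses of Proposition \ref{AutomorphicPNTLemma} apply to every pair $(\pi_i, \pi'_j)$. By Corollary \ref{cor-iso}, each $\pi_i$ (resp.\ $\pi'_j$) has unitary central character trivial on $\Rr_{>0}$, and either is self-dual or satisfies $d_i \le 2$ (resp.\ $d'_j \le 2$). I distinguish cases: if either $\pi_i$ or $\pi'_j$ is self-dual, hypothesis (1) of Proposition \ref{AutomorphicPNTLemma} is satisfied directly; if $d_i = 1$ or $d'_j = 1$, the corresponding representation is a $\GL_1$-twist of the trivial (self-dual) representation, so hypothesis (1) again applies; and if neither representation is self-dual and both have $d_i, d'_j = 2$, then hypothesis (2) applies with $d = e = 2$. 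In all cases Proposition \ref{AutomorphicPNTLemma} yields a zero-free region of the form
$$
\Re s \ge 1 - \frac{c_{i,j}}{\log\bigl(Q(\pi_i) Q(\pi'_j)(|t|+2)\bigr)}
$$
for $L(\pi_i \otimes \pi'_j, s)$, outside possibly one real zero $<1$.

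It remains to convert these bounds into a single uniform region in terms of $Q(f) Q(g) (2+|s|)$. Using the Bushnell--Henniart inequality \eqref{eq-bh} (applied iteratively), the analytic conductor of each isobaric component satisfies $Q(\pi_i) \ll Q(\sym^k f) \ll Q(f)^{A}$ for some absolute constant $A$ depending only on $k \le 4$, and similarly $Q(\pi'_j) \ll Q(g)^{A'}$. Moreover, the archimedean contributions to $Q(\pi_i \otimes \pi'_j, s)$ are polynomial in $|s|$ of bounded degree, so $\log(Q(\pi_i) Q(\pi'_j)(|t|+2)) \ll \log(Q(f) Q(g)(2+|s|))$. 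Taking the union of the exceptional real zeros of the finitely many factors, we obtain the claimed zero-free region for the whole product, with $c$ depending only on $k, k' \le 4$ (and the absolute constants in Proposition \ref{AutomorphicPNTLemma}).

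The main obstacle is the case analysis in the second paragraph: one needs to be a bit careful that Corollary \ref{cor-iso} provides exactly the dichotomy required by Proposition \ref{AutomorphicPNTLemma}. Once this is checked, the rest is essentially bookkeeping with conductors via \eqref{eq-bh} and combining zero-free regions. Note that no explicit knowledge of whether the $\pi_i$, $\pi'_j$ are CM-induced is required, since the two hypotheses of Proposition \ref{AutomorphicPNTLemma} together cover every possibility arising from symmetric powers of degree at most $4$.
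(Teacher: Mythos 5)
Your proposal is correct and follows essentially the same route as the paper's own proof: factor $L(\sym^k f\otimes\sym^{k'}g,s)$ via the isobaric decompositions, apply Proposition~\ref{AutomorphicPNTLemma} to each pair $(\pi_i,\pi'_j)$, use Corollary~\ref{cor-iso} to verify the hypotheses, and control conductors via the Bushnell--Henniart bound~\eqref{eq-bh}. Your case analysis is a bit more elaborate than strictly needed (once neither component is self-dual, Corollary~\ref{cor-iso} forces $d_i,d'_j\le 2$, so hypothesis (2) covers the $d_i=1$ and $d'_j=1$ sub-cases automatically), but the reasoning is valid.
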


\begin{proof} 
  In terms of the isobaric decompositions~(\ref{eq-isobaric}) of $f$
  and $g$ given in Section~\ref{ssec-symk}, we have
$$L(\sym^kf\otimes\sym^{k'}g,s)=\prod_{i,j}L(\pi_i\otimes\pi'_j,s)^{\mu_i\nu_j}.$$
It will then be sufficient to prove the result for each factor
$L(\pi_i\otimes\pi'_j,s)$, since
$$
Q(\pi_i\otimes\pi'_j)\leq (Q( \pi_i)Q(\pi'_j))^{O(1)}\leq (Q(
\sym^kf)Q(\sym^{k'}f))^{O(1)}\leq (Q(f)Q(g))^{O(1)}
$$
by~(\ref{eq-bh}) (and~\cites{KiSh1,KiSh2}). By Corollary~\ref{cor-iso} 
we see that at least one of the two
sufficient conditions of Proposition~\ref{AutomorphicPNTLemma} is
always satisfied. 
\end{proof}

We now spell out several corollaries which are deduced from these
zero-free domains by standard techniques.
The first one concerns upper and lower bounds for values of this
$L$-function in the zero-free region:

\begin{corollary}\label{cor29}
  Let $f,g$ be primitive cusp forms of levels $r, r'$ with trivial
  central character. For $0\leq k,k'\leq 4$, there exist two constants
  $0<c=c_{f,g}<1/10$ and $A=A_{f,g}\geq 0$ such that for $s$
  satisfying
$$\Re s\geq 1-\frac{c}{\log(2+|s|)}$$
the following bounds hold:
\begin{equation*}
  \log^{-A}(2+|s|)\ll \Bigl\vert\,\frac{s-1}{s}\, 
  \Bigr\vert^{\rho}L(\sym^k f\otimes \sym^{k'}g,s)\ll \log^{A}(2+|s|)
\end{equation*}
where
$$\rho=\rho_{f,g,k,k'}=\ord_{s=1}L(\sym^k f\otimes \sym^{k'}g,s)\geq
0$$ is the order of the pole of $L(\sym^k f\otimes \sym^{k'}g,s)$ at
$s=1$ and the implicit constants depends on $f$ and $g$ only. Here we
also make the convention that for $k=k'=0$, we have
$L(\sym^k f\otimes \sym^{k'}g,s)=\zeta(s)$.
\end{corollary}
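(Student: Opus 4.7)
The plan is to reduce via the isobaric decompositions of Section~\ref{ssec-symk},
$$L(\sym^k f \otimes \sym^{k'} g, s) = \prod_{i,j} L(\pi_i \otimes \pi'_j, s)^{\mu_i \nu_j},$$
to the analogous upper and lower bounds for each individual Rankin--Selberg factor $L_0(s) = L(\pi_i \otimes \pi'_j, s)$. Corollary~\ref{cor-iso} and the previous proposition guarantee that every such $L_0$ satisfies a zero-free region $\Reel s \geq 1 - c'/\log(2+|s|)$ (with $c'$ depending only on $f, g$), apart from at most one exceptional Landau--Siegel real zero. The order $\rho$ of the pole at $s = 1$ of the full product equals $\sum_{\pi'_j \simeq \tilde\pi_i} \mu_i \nu_j$, and $h_0(s) = ((s-1)/s)^{\epsilon_0} L_0(s)$ (with $\epsilon_0 \in \{0, 1\}$) is holomorphic and, after shrinking $c_{f,g}$ below the worst exceptional-zero gap of the finitely many factors, non-vanishing in the region of the corollary.

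For the \emph{upper bound} $|h_0(s)| \ll (\log T)^A$ with $T = 2 + |s|$, I would use an approximate functional equation obtained by shifting contours in a Mellin-transformed integral representation:
$$h_0(s) = \sum_n \frac{\lambda_{\pi_i \otimes \pi'_j}(n)}{n^s} V(n/X) + (\text{dual term}) + O(1),$$
for $X = T^{O(1)}$ and $V$ a smooth compactly supported cutoff. Partial summation, combined with the standard Rankin--Selberg prime-number theorem deduced from the zero-free region via Perron's formula, bounds each sum by $(\log T)^{O(1)}$, giving the desired polynomial bound throughout the region.

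For the \emph{lower bound} $|h_0(s)| \gg (\log T)^{-A}$, the Borel--Carath\'eodory theorem applied to $\log h_0$ on nested disks suffices: at the anchor point $s_0 = 1 + 1/\log T + it$, absolute convergence of the Euler product yields $|\log h_0(s_0)| = O(\log\log T)$, while the upper bound established above gives $\Reel \log h_0 \leq O(\log\log T)$ on a disk of radius $\approx 2/\log T$ about $s_0$ contained in the zero-free region. Borel--Carath\'eodory then transfers this into $|\log h_0(s)| = O(\log\log T)$ on a concentric smaller disk covering the region of the corollary, whence $|h_0(s)| \gg (\log T)^{-A}$.

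The principal obstacle is the possible exceptional Landau--Siegel real zero of one of the finitely many self-dual factors, which Proposition~\ref{AutomorphicPNTLemma} does not a priori exclude. Since each factor depends only on $f, g$, every exceptional zero is strictly less than $1$ by an amount $\delta_{f,g} > 0$; taking $c_{f,g} < \delta_{f,g}$ (uniformly across factors) places our region strictly to the right of every exceptional zero, yielding unconditional bounds with constants depending on $f, g$ only, as the corollary permits.
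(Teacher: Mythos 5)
Your proof implements the ``standard techniques'' that the paper only alludes to (it gives no explicit proof of Corollary~\ref{cor29}), and the overall structure---reduction to individual Rankin--Selberg factors via the isobaric decomposition, an approximate-functional-equation argument for the upper bound, Borel--Carath\'eodory for the lower bound, and disposing of Landau--Siegel zeros by finiteness of the factor set---is sound and matches the route the authors clearly have in mind (see the remark following Proposition~\ref{AutomorphicPNTLemma} on why Landau--Siegel zeros are harmless here).

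One small point: invoking the Rankin--Selberg prime number theorem to bound the main sum in the approximate functional equation is overkill, and worse, gives the superficial impression of a circularity (the PNT itself is usually derived after establishing bounds of the type you are proving). In fact no cancellation is needed: for $\sigma\geq 1-c/\log T$ and $n\leq X\ll T^{O(1)}$ one has $X^{1-\sigma}\ll 1$, so the crude majorant $\sum_{n\leq X}|\lambda(n)|\ll X(\log X)^C$ (which follows from Rankin--Selberg positivity and Cauchy--Schwarz, not the PNT) already yields $\sum_{n\leq X}|\lambda(n)|n^{-\sigma}\ll(\log T)^{C+1}$ by partial summation, and similarly for the dual term after accounting for the gamma quotient $\ll T^{-\deg/2}$. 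Replacing the PNT invocation by this trivial estimate would streamline the argument and remove any doubt of circularity. Otherwise the write-up is a legitimate filling-in of what the paper leaves to the reader.
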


The second corollary concerns the versions of the Prime Number
Theorem that can be deduced from these zero-free regions:

\begin{corollary}\label{PNTsymandpower} 
  Let $f,g$ be primitive cusp forms of levels $r, r'$ with trivial
  central character. Let $0\leq k,k'\leq 4$. There exists a constant
  $C>0$ such that:
\par
\emph{(1)} There exist $\gamma_{k,k'}\in\Rr$ and an integer
$m_{k,k'}\geq 0$ (possibly also depending on $f, g$) such that for any $x \geq 2$, we have
\begin{align*}
  \sum_{p\leq x}\lambda_{\sym^kf}(p)\lambda_{\sym^{k'}g}(p)\log
  p&=m_{k,k'} x+O(x\exp(-C\sqrt{\log x}))\\
  \sum_{p\leq x}\lambda_{\sym^kf}(p)\lambda_{\sym^{k'}g}(p)\frac{\log
  p}{p}&=m_{k,k'}\log x+\gamma_{k,k'}+O\Bigl(\frac{1}{\log x}\Bigr).
\end{align*}
\par
\emph{(2)} There exists $\gamma'_{k,k'}\in\Rr$ and an integer
$n_{k, k'} \geq 0$ (possibly depending on $f, g$) such that
\begin{align}\label{powerPNT}
  \sum_{p\leq x}\lf(p)^k\lamg(p)^{k'}\log p
  &=n_{k,k'}x+O(x\exp(-C\sqrt{\log x})),	
  \\
  \label{powermertens}
  \sum_{p\leq x}\lf(p)^k\lamg(p)^{k'}\frac{\log p}{p}
  &=n_{k,k'}\log   x+\gamma'_{k,k'}+O\Bigl(\frac{1}{\log x}\Bigr),
\end{align}
and for $2\leq x\leq y/2$ we have
\begin{equation}\label{powerresonator}
  \sum_{x\leq p\leq y}\frac{\lf(p)^k\lamg(p)^{k'}}{p\log p}
  =\left( n_{k,k'} +  O\Bigl(\frac{1}{\log x}\Bigr)\right) \Bigl(\frac{1}{\log x}-\frac{1}{\log y}\Bigr).
\end{equation}
\par
In these estimates, the implied constants depend on $f$ and $g$ only.
\end{corollary}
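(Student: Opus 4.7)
The plan is to derive both parts by standard contour-integration arguments applied to the Rankin-Selberg $L$-functions $L(\sym^k f \otimes \sym^{k'} g, s)$, exploiting the zero-free region and the polynomial upper bounds of Corollary \ref{cor29}; the case $k = k' = 0$ is handled uniformly by the convention $L(\sym^0 f \otimes \sym^0 g, s) = \zeta(s)$. For part (1), I would begin from the identity that, on every prime $p \nmid rr'$, the Dirichlet coefficient of $-L'/L(\sym^k f \otimes \sym^{k'} g, s)$ at $p^{-s} \log p$ equals $\lambda_{\sym^k f}(p) \lambda_{\sym^{k'} g}(p)$, by the Euler product description of Section \ref{RankinSelberg}. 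A truncated Perron formula at $\Re s = 1 + 1/\log x$ followed by a contour shift to $\Re s = 1 - c/\log(|\Im s| + 2)$, legitimate by the zero-free region, picks up the residue at $s = 1$; since $L(\sym^k f \otimes \sym^{k'} g, s)$ has there a pole of order $m_{k,k'} := \rho_{f, g, k, k'}$, this residue contributes the linear main term $m_{k,k'} x$. The polynomial upper bound of Corollary \ref{cor29} dominates the shifted contour by $O(x \exp(-C \sqrt{\log x}))$ after balancing the truncation height. Finally, the contributions of ramified primes and of proper prime powers are $O(x^{1/2 + \theta})$ via the Kim--Sarnak bound \eqref{KSbound}. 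The Mertens-type asymptotic in part (1) is then routine partial summation from this PNT, with $\gamma_{k,k'}$ absorbing the constant arising from $\int_2^{\infty} E(t)/t^2\, dt$.

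\textbf{Part (2).} At every prime $p \nmid rr'$, the Satake parameters satisfy $\alpha_{f,1}(p) \alpha_{f,2}(p) = 1$, so $\lf(p) = \alpha + \alpha^{-1}$ and $\lamg(p) = \beta + \beta^{-1}$ for some $\alpha, \beta$. The standard $\mathrm{SU}(2)$ branching identity
\[
X^k = \sum_{j=0}^{k} c_{k,j}\, U_j(X/2), \qquad c_{k,j} \in \mathbb{Z}_{\geq 0},
\]
where $U_j$ denotes the $j$-th Chebyshev polynomial of the second kind, combined with the relation $U_j(\lf(p)/2) = \lambda_{\sym^j f}(p)$ valid at unramified primes, yields
\[
\lf(p)^k \lamg(p)^{k'} = \sum_{0 \leq j \leq k}\, \sum_{0 \leq j' \leq k'} c_{k,j}\, c_{k', j'}\, \lambda_{\sym^j f}(p)\, \lambda_{\sym^{j'} g}(p).
\]
Summing over $p \leq x$ and bounding the finitely many ramified primes by \eqref{ramified} reduces \eqref{powerPNT} to termwise application of part (1), with $n_{k, k'} = \sum_{j, j'} c_{k, j}\, c_{k', j'}\, m_{j, j'}$. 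The Mertens estimate \eqref{powermertens} follows by partial summation as in paragraph one, and the resonator estimate \eqref{powerresonator} is obtained by subtracting the two sides of \eqref{powermertens} at $x$ and at $y$.

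\textbf{Main obstacle.} The argument is routine once Corollary \ref{cor29} is in hand; the only delicate point is that this corollary does not exclude real zeros below $1$ for the (finitely many) Rankin-Selberg $L$-functions $L(\sym^j f \otimes \sym^{j'} g, s)$, $0 \leq j, j' \leq 4$, attached to the fixed forms $f$ and $g$. Any such exceptional zero would contribute a term of size at most $O(x^\beta)$ with $\beta = \beta_{f, g} < 1$; one absorbs it into the claimed error $O(x \exp(-C \sqrt{\log x}))$ by choosing the constant $C > 0$ sufficiently small in terms of $f$ and $g$, which is exactly the $f, g$-dependence of the implicit constants allowed in the statement.
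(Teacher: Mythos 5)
Your proposal is correct and follows essentially the same route as the paper. The paper handles part (1) by citing the zero-free region and Liu--Ye for the resulting Prime Number Theorem, and handles part (2) by writing out the decomposition of $\lambda_f(p)^k$ for $k=1,\dots,4$ into nonnegative combinations of $\lambda_{\sym^j f}(p)$ at unramified primes (exactly your Clebsch--Gordan expansion in Chebyshev polynomials, made explicit); you have merely unfolded the Perron-formula details that the paper delegates to the reference. One small remark: on the exceptional-zero point, the paper's Corollary~\ref{cor29} already chooses the width $c=c_{f,g}$ of the zero-free region small enough to exclude any real zero $\beta_{f,g}<1$, so the issue is dealt with upstream rather than by shrinking $C$; your alternative absorption of the $O(x^{\beta})$ term is also fine, though for fixed $f,g$ it actually works for every $C>0$ once the implied constant is allowed to depend on $f,g$ (to cover small $x$), without needing $C$ itself to be small.
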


\begin{proof} 
  The first two equalities are deduced from the zero free region for
  $L(\sym^kf\otimes\sym^{k'}g,s)$ (see for instance
  Liu--Ye~\cite{LiuYe2007}).

  The remaining ones follow by partial summation, using the
  decompositions
\begin{gather*}
  \lambda_{f}(p)=\lambda_{\sym^1f}(p),\\
  \lambda_{f}(p)^2=\lambda_{f}(p^2)+1=\lambda_{\sym^2f}(p)+1,\\
  \lambda_{f}(p)^3=\lambda_{f}(p^3)+2\lambda_{f}(p)=\lambda_{\sym^3f}(p)+2\lambda_{\sym^1f}(p),\\
  \lambda_{f}(p)^4=\lambda_{f}(p^4)+3\lambda_{f}(p^2)+2=\lambda_{\sym^4f}(p)+3\lambda_{\sym^2f}(p)+2
\end{gather*}
for $p\nmid rr'$, which reflect the decomposition of tensor powers of
the standard representation of $\SL_2$ in terms of irreducible
representation (in particular, all coefficients are non-negative
integers).
\end{proof}

\begin{remark}
From
\begin{multline*}
  \lf(p)^2\lamg(p)^2=(\lambda_{\sym^2f}(p)+1)(\lambda_{\sym^2g}(p)+1)\\
  =\lambda_{\sym^2f}(p)\lambda_{\sym^2g}(p)
  +\lambda_{\sym^2f}(p)+\lambda_{\sym^2g}(p)+1
\end{multline*}
for $p\nmid rr'$, we see that
$$
n_{2,2}=m_{2,2}+m_{2,0}+m_{0,2}+1\geq 1,
$$
and similarly
$$
n_{4,4}=m_{4,4}+3m_{4,2}+3m_{2,4}+
2m_{4,0}+2m_{0,4}+9m_{2,2}+3m_{2,0}+3m_{0,2}+4\geq 4.
$$
\end{remark}

We will also need a variant. We denote by $\lambda_f^{\ast}$ and
$\lambda_{g}^{\ast}$ any multiplicative functions such that
\begin{equation}
\label{LambdaStarAdjustment}
\lambda_f^{\ast}(p)=\lambda_f(p)+O(p^{\theta-1}),\quad 
\lambda_{g}^{\ast}(p)=\lambda_{g}(p)+O(p^{\theta-1}),
\end{equation}
where the implied constants depend on $f$ and $g$.  (Note that these
functions may depend on both $f$ and $g$).

\begin{corollary}\label{corlambdaast} 
  The estimates \eqref{powerPNT}, \eqref{powermertens} and
  \eqref{powerresonator} are valid with $\lf,\lamg$ replaced by
  $\lambda_f^{\ast},\ \lambda_g^{\ast}$, with the same integers
  $n_{k,k'}$, but with possibly different values for $\gamma_{k,k'}$.
\end{corollary}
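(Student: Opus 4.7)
The plan is to show that the quantity $\lambda_f^\ast(p)^k\lambda_g^\ast(p)^{k'}-\lambda_f(p)^k\lambda_g(p)^{k'}$ is small enough, as a function of $p$, that its contribution to each of the three sums is either absorbed by the existing error term or reduces to an absolutely convergent series over primes (which can be folded into the constants $\gamma'_{k,k'}$).

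The first step is a pointwise comparison at each prime. Write $a=\lambda_f(p)$, $a^\ast=\lambda_f^\ast(p)$, $b=\lambda_g(p)$, $b^\ast=\lambda_g^\ast(p)$. Using the Kim--Sarnak bound \eqref{KSbound}, which gives $|a|,|b|\ll p^{\theta}$, together with the refined bound $|a^\ast|,|b^\ast|\ll p^{\theta}$ that follows from \eqref{LambdaStarAdjustment}, a binomial expansion of $(a+(a^\ast-a))^k$ gives
$$
(a^\ast)^k-a^k=\sum_{j=1}^{k}\binom{k}{j}a^{k-j}(a^\ast-a)^j\ll p^{(k-1)\theta}\cdot p^{\theta-1}=p^{k\theta-1},
$$
and similarly for $b$. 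Multiplying, I would then obtain
$$
\lambda_f^\ast(p)^k\lambda_g^\ast(p)^{k'}-\lambda_f(p)^k\lambda_g(p)^{k'}\ll p^{(k+k')\theta-1}.
$$
Since $k,k'\leq 4$ and $\theta\leq 7/64$, the exponent satisfies $(k+k')\theta-1\leq 8\theta-1\leq-1/8$, so the difference is $O(p^{-1/8})$.

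The second step is to insert this into each of the three sums in Corollary~\ref{PNTsymandpower}. For \eqref{powerPNT}, the induced error is
$$
\sum_{p\leq x}O(p^{-1/8})\log p\ll x^{7/8},
$$
which is $\ll x\exp(-C'\sqrt{\log x})$ for any $C'<C$ once $x$ is large, and hence is absorbed into the right-hand side; in particular the leading coefficient $n_{k,k'}$ is unchanged. For \eqref{powermertens}, the induced error is
$$
\sum_{p\leq x}\frac{O(p^{-1/8})\log p}{p}=C_{k,k'}+O(x^{-1/8}\log x),
$$
where
$$
C_{k,k'}=\sum_{p}\frac{\bigl(\lambda_f^\ast(p)^k\lambda_g^\ast(p)^{k'}-\lambda_f(p)^k\lambda_g(p)^{k'}\bigr)\log p}{p}
$$
converges absolutely; thus \eqref{powermertens} holds for the starred functions with the same $n_{k,k'}$ but with $\gamma'_{k,k'}$ replaced by $\gamma'_{k,k'}+C_{k,k'}$. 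For \eqref{powerresonator}, partial summation (or a direct bound) gives
$$
\sum_{x\leq p\leq y}\frac{O(p^{-1/8})}{p\log p}\ll\frac{x^{-1/8}}{\log x},
$$
which is negligible compared with the $O((\log x)^{-1})\bigl(1/\log x-1/\log y\bigr)$ error in \eqref{powerresonator}.

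There is no real obstacle beyond this bookkeeping; the only substantive input is the Kim--Sarnak bound \eqref{KSbound}, which guarantees $(k+k')\theta<1$ in the range $k,k'\leq 4$ and thus makes the comparison error summable over primes weighted by $1/p$.
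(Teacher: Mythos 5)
Your proof is correct and takes essentially the same route as the paper: both arguments hinge on the pointwise identity $\lambda_f^{\ast}(p)^k\lambda_g^{\ast}(p)^{k'}=\lambda_f(p)^k\lambda_g(p)^{k'}+O(p^{(k+k')\theta-1})$, which follows from \eqref{KSbound} and \eqref{LambdaStarAdjustment}, combined with the observation that $(k+k')\theta\leq 8\theta<1$ makes this difference summable against the weights appearing in the three estimates. The only cosmetic difference is that the paper states it suffices to check \eqref{powerPNT} and leaves \eqref{powermertens}, \eqref{powerresonator} to partial summation, whereas you check all three sums directly; the content is the same.
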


\begin{proof} 
  It suffices to verify \eqref{powerPNT}. Since
  $|\lf(p)|,|\lamg(p)|\leq 2p^\theta$, we have
$$\lambda_f^{\ast}(p)^k\lambda_g^{\ast}(p)^{k'}
=\lambda_f(p)^k\lambda_g(p)^{k'}+O(p^{(k+k')\theta-1})$$ and, since
$(k+k')\theta\leq 8\theta <1$, the difference between
$$\sum_{p\leq x}\lambda_f^{\ast}(p)^k\lambda_g^{\ast}(p)^{k'}\log p\ \hbox{  and  }\ 
\sum_{p\leq x}\lf(p)^k\lamg(p)^{k'}\log p$$ is
$\ll x\exp(-C\sqrt{\log x})$.
\end{proof}

\section{Consequences  of the functional equations}

The functional equation satisfied by an $L$-function makes it possible
to obtain (by inverse Mellin transform) either a representation of its
values by rapidly converging smooth sums (this is called, somewhat
improperly, the ``approximate functional equation''), or identities
between rapidly converging smooth sums of these coefficients (an
example is the Voronoi summation formula). We discuss the versions of
these identities that we need in this section.

\subsection{Approximate functional equations}

The following proposition is obtained by specializing~\cite{IwKo}*{Thm.\ 5.3,
Prop.\ 5.4} to twisted $L$-functions and to the product of two
twisted $L$-functions, using the functional equations of
Lemmas~\ref{lm-fneq} and~\ref{propfcteqn}. Again, we recall that we
use the convention~\ref{def-conv} about the signed level of a Maa\ss\ form.

\begin{proposition}\label{pr-approx}
  Let $f,g$ be two primitive cusp forms of signed levels $r$ and $r'$ coprime
  to $q$, where $f=g$ is possible. Given any $A>2$, let $G=G_A$ be the
  holomorphic function defined in the strip $|\Re u| < 2A$ by
\begin{equation}\label{Gdef}
G(u)=\Bigl(\cos\frac{\pi u}{4A}\Bigr)^{-16 A}.	
\end{equation}
Let $s\in\Cc$ be such that $1/4 < \Re s < 3/4$, and let $\chi$ be a
primitive Dirichlet character modulo $q$, with parity
$\chi(-1)=\pm 1$.
\par
\emph{(1)} We have
\begin{multline}\label{fcteqn1}
  L(f \otimes \chi, s)= \sum_{m\geq 1} \frac{ \lf(m)}{m^{s}}\chi(m)
  V_{f,\pm,s}\Big(\frac{m}{q\sqrt{|r|}}\Big) \\
  +\eps(f,\pm,s) \sum_{m\geq
    1}\frac{\lf(m)}{m^{1-s}}\ov\chi(m)
  V_{f,\pm,1-s}\Big(\frac{m}{q\sqrt{|r|}}\Big),
 \end{multline}
where 
$$
\eps(f,\pm,s)=\eps(\ftchi)(q^2|r|)^{\frac12-s}\frac{L_\infty(f,\pm,1-s)}{L_\infty(f,\pm,s)}
$$
and 
\begin{equation*}
  V_{f,\pm,s}(y) = \frac{1}{2\pi i} \int_{(2)} \frac{L_\infty(f,\pm,s+u)}{L_\infty(f,\pm,s)}G(u) y^{-u} \frac{du}{u}.
\end{equation*}
\par
\emph{(2)} We have
\begin{multline}\label{fcteqn2}
  L(f \otimes \chi, s) \ov{L(g \otimes \chi,s)} =
  \sum_{m,n\geqslant 1} \frac{\lambda_f(m)
    \lamg(n)}{m^sn^{\spr}}\chi(m) \ov{\chi}(n)
  W_{f,g,\pm,s}\left(\frac{mn}{q^2|rr'|}\right) \\+ 
  \eps(f,g,\pm,s)\chi(r\bar{r}')\sum_{m,n\geq 1}\frac{\lf(m)\lamg(n)}{m^{1-s}n^{1-\spr}}\ov{\chi}(m)
   {\chi}(n) W_{f,g,\pm,1-s}\left(\frac{mn}{q^2|rr'|}\right), 
\end{multline}
where
\begin{equation}\label{eq-epspm}
  \eps(f,g,\pm,s)=\eps(f)\eps(g)(q^2r)^{\demi - s} (q^2r)^{\demi - \bar{s}} \frac{L_\infty(f,\pm,1-s)}{L_\infty(f,\pm,s)}\frac{L_\infty(g,\pm,1-\spr)}{L_\infty(g,\pm,\spr)}
\end{equation}
(again these expressions depend only on the parity of $\chi$) and
\begin{equation}\label{Wfgdef}
  W_{f,g,\pm,s}(y) = \frac{1}{2\pi i} \int_{(2)} \frac{L_\infty(f,\pm,s+u)}{L_\infty(f,\pm,s)}\frac{L_\infty(g,\pm,\spr+u)}{L_\infty(g,\pm,\spr)}G(u) y^{-u} \frac{du}{u}.
\end{equation}
\end{proposition}

Note that the Hecke eigenvalues $\lambda_f(n), \lambda_g(n)$ are real. Also note   the special cases
\begin{equation}\label{eq-epspm-demi}
  \eps(f,\pm,\demi)=\eps(f\otimes\chi)=\eps(f)\chi(r)\eps_{\chi}^2,
  \quad\quad
  \eps(f,g,\pm,\demi)=\eps(f)\eps(g).
\end{equation}

We need to record some decay properties for $V_{f,\pm,s}, W_{f,g,\pm,s}$ and their derivatives.


 Shifting the contour to $\Re u= A$ or $\Re u=-(\sigma-\theta)+\eps$ for $\eps>0$ and using Stirling's formula, we have
\begin{lemma} Assume that $\sigma=\Re s\in]1/4,3/4[$. For any integer $j\geq 0$ any $y>0$, we have 
\begin{equation}\label{Vassymp}
V_{f,\pm,s}(y)-1\ll (y/|s|)^{\sigma-\theta-\eps},\ W_{f,g,\pm,s}(y)-1\ll (y/|s|^2)^{\sigma-\theta-\eps}
\end{equation}
and
\begin{equation*}
y^{j}V_{f,\pm,s}^{(j)}(y)\ll (1+y/|s|)^{-A},\ y^{j}W_{f,g,\pm,s}^{(j)}(y)\ll (1+y/|s|^2)^{-A}
\end{equation*}
where the constant implied depends on $f,g$, $\eps$ and $j$ (where applicable).
\end{lemma}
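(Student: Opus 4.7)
The strategy is the standard Mellin-inversion / contour-shift argument, exploiting the rapid decay of $G$ and Stirling's formula for the Gamma ratio. Two preliminary observations drive everything. First, $G(u) = (\cos(\pi u/4A))^{-16A}$ is holomorphic on the strip $|\Re u| < 2A$, satisfies $G(0) = 1$, and decays like $e^{-4\pi |\Im u|}$ uniformly on vertical lines in this strip, which absorbs any polynomial growth in the integrand. Second, by Stirling, for $u$ in a bounded strip,
\[
\frac{L_\infty(f,\pm,s+u)}{L_\infty(f,\pm,s)} \ll |s|^{\Re u}(1+|u|)^{C},
\]
with the analogous bound for the $W$-integrand producing $|s|^{2\Re u}$ (since $|s|=|\bar s|$). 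The locations of the poles of $L_\infty(f,\pm,s+u)$ in $u$ are $u=-s+\mu_{f,i}-2k$ with $k\geq 0$; under the Ramanujan--Kim--Sarnak bound $\Re\mu_{f,i}\leq\theta$, these poles all lie in $\Re u \leq -\sigma+\theta$.

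For the decay bounds, I would differentiate under the integral sign, which is legitimate by the exponential decay of $G$; each derivative brings out a polynomial factor $u(u+1)\cdots(u+j-1) y^{-j}$, so that
\[
y^j V_{f,\pm,s}^{(j)}(y) = \frac{(-1)^j}{2\pi i}\int_{(2)} \frac{L_\infty(f,\pm,s+u)}{L_\infty(f,\pm,s)}G(u)\, u(u+1)\cdots(u+j-1)\, y^{-u}\,\frac{du}{u}.
\]
I then shift the contour from $\Re u=2$ to $\Re u = A$. No pole is crossed (the $1/u$ singularity at $u=0$ stays to the left), and the resulting integral is bounded by $(|s|/y)^{A}$ for $V$ and $(|s|^{2}/y)^A$ for $W$. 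Coupled with the trivial bound $|V|,|W|\ll 1$ obtained by staying on $\Re u = 2$ for small $y$, this gives the claimed $(1+y/|s|)^{-A}$ and $(1+y/|s|^{2})^{-A}$.

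For the approximation bounds on $V-1$ and $W-1$, I would shift the contour in the opposite direction, from $\Re u = 2$ to $\Re u = -(\sigma-\theta)+\varepsilon$. By the discussion above, this line lies strictly to the right of all poles of the Gamma ratio (provided $\varepsilon < \sigma-\theta$, which is fine since $\sigma > 1/4 > \theta$), so the only singularity crossed is the simple pole of $1/u$ at $u=0$, whose residue is $G(0)\cdot 1 = 1$. Stirling then bounds the remaining integral on the shifted line by $(y/|s|)^{\sigma-\theta-\varepsilon}$ for $V$ and by $(y/|s|^{2})^{\sigma-\theta-\varepsilon}$ for $W$, yielding the two claimed estimates.

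The only mild obstacle is ensuring that the Stirling estimates for the ratio of Gamma factors are uniform in $u$ across vertical strips; this is handled by the fact that $G$ decays faster than any polynomial, so the polynomial loss $(1+|u|)^{C}$ in the Stirling bound is harmless, and the contour shifts are justified by the same rapid decay. Everything else is bookkeeping with Ramanujan-compatible spectral parameters.
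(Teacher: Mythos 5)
Your proof is essentially the same as the paper's: the paper's own justification is a single sentence ("Shifting the contour to $\Re u=A$ or $\Re u=-(\sigma-\theta)+\eps$ and using Stirling's formula"), and you have correctly filled in the details of exactly that argument. The pole accounting (the $\Gamma$-factor poles in $u$ all lie in $\Re u\leq -\sigma+\theta$ by the Ramanujan approximation, so the left shift to $\Re u=-(\sigma-\theta)+\eps$ only picks up the residue $1$ at $u=0$), the differentiation under the integral producing the factor $u(u+1)\cdots(u+j-1)$, and the right shift to $\Re u=A$ are all correct.

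One small inaccuracy: you write that the bound $|V|,|W|\ll 1$ for small $y$ comes from "staying on $\Re u = 2$," but on that line the integral is $\ll y^{-2}|s|^{2}$, which is not $\ll 1$ when $y$ is small. The bound you actually want for $j=0$ and $y\leq |s|$ follows directly from your first estimate, since $|V-1|\ll (y/|s|)^{\sigma-\theta-\eps}\leq 1$ there. For $j\geq 1$, note that after differentiating, the factor $u(u+1)\cdots(u+j-1)$ kills the pole of $1/u$ at $u=0$, so one may shift to $\Re u = -\eta$ for any $0<\eta<\sigma-\theta$ and obtain $y^jV^{(j)}(y)\ll (y/|s|)^{\eta}\ll 1$ in the range $y\leq |s|$. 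With this fix the gluing into $(1+y/|s|)^{-A}$ is complete.
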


\begin{convention}\label{conveven} 
  In most of this book, we will only treat in detail averages over the
  {\em even} characters, since the odd case is entirely similar.  To
  simplify notation, we may then write $\eps(f,s)$, $V_{f,s}$ and
  $W_{f,g,s}$ in place of $\eps(f,+,s)$, $V_{f,+,s}$ and
  $W_{f,g,+,s}$. Moreover, for $s=1/2$, we may simplify further, and
  write $V_{f}$ and $W_{f,g}$ in place of $V_{f,1/2}$ and
  $W_{f,g,1/2}$.
\end{convention}

\subsection{The Voronoi summation formula}

The next  lemma is a version of the Voronoi formula. 

\begin{lemma}\label{221}
  Let $q$ be a positive integer and $a$ an integer coprime to $q$, and
  let $W$ be a smooth function compactly supported in $]0,\infty[$.
  Let $f$ a primitive cusp form of signed level $r$ coprime with $q$ and
  trivial central character. For any real number $N>0$, we have
$$
\sum_{n\geq 1} \lf(n)W\Bigl(\frac{n}N\Bigr)e\Bigl(\frac{an}{q}\Bigr) =
\eps(f)\sum_{\pm}\frac{N}{q|r|^{1/2}} \sum_{n\geq
  1}\lf(n)e\Bigl(\mp\frac{\overline{a|r|}n}{q}\Bigr) \widetilde
W_{\pm}\Bigl(\frac{Nn}{q^2|r|}\Bigr)
$$
with
$$\widetilde W_{\pm}(y)=\int_{0}^\infty W(u)\mathcal{J}_{\pm
}(4\pi\sqrt{ uy})du,$$ where \emph{(1)} for $f$ holomorphic of weight
$k_f$ we write
\begin{equation*}
  \mathcal{J}_{+}(u)  =
  2\pi i^{k_f}J_{k_f-1}(u)
  , \quad  \mathcal{J}_{-}(u)=0;
\end{equation*}
 
\emph{(2)} for $f$ a Maa{\ss} form with Laplace eigenvalue
$(\frac12+it_f)(\frac12-it_f)$ and reflection eigenvalue $\eps_f=\pm1$
we write
\begin{equation*}
  \mathcal{J}_{+}(u)  =\frac{-\pi}{\sin(\pi t_f)}
  (J_{2it_f}(u)-J_{-2it_f}(u)), 
  \quad \mathcal{J}_{-}(u)= 4\eps_f\cosh(\pi t_f)K_{2it_f}(u). 
 \end{equation*} 
\end{lemma}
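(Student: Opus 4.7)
The plan is to use Mellin inversion together with the functional equation of twisted $L$-functions $L(f\otimes\chi,s)$, which was recorded in Lemma \ref{lm-fneq}. By Mellin inversion, the left-hand side equals
$$\frac{1}{2\pi i}\int_{(2)} \widetilde{W}(s) N^s D_a(s)\,ds, \quad \text{where } D_a(s)=\sum_{n\geq 1}\lambda_f(n) e(an/q) n^{-s},$$
and $\widetilde W$ is the Mellin transform of $W$. The goal is to continue $D_a(s)$ to the left using a functional equation, shift the contour to $\Re s < 0$, and recognize the resulting integral as a Bessel transform.

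\textbf{Character decomposition and functional equation.} The additive character is decomposed into multiplicative characters mod $q$. Writing $n=dn'$ with $d\mid q^{\infty}$ and $(n',q)=1$ in the general case (for prime $q$ only $d=1$ and $d=q$ occur), one uses the identity
$$e(an'/q)=\frac{1}{\varphi(q)}\sum_{\chi\bmod q}\tau(\bar\chi)\bar\chi(a)\chi(n'),$$
valid for $(n',q)=1$, and the nondegenerate terms come from primitive $\chi$, where $\tau(\bar\chi)=\bar\chi(-1)\overline{\tau(\chi)}=\sqrt q \bar\chi(-1)\overline{\varepsilon_\chi}$. This exhibits $D_a(s)$ as a finite linear combination of twisted $L$-functions $L(f\otimes\chi,s)$ plus ``degenerate'' pieces (from $d>1$) that are controlled using $(q,r)=1$. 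Now apply the functional equation from Lemma \ref{lm-fneq}, which sends $\chi\mapsto\bar\chi$, $s\mapsto 1-s$, and contributes the twisted root number $\varepsilon(f\otimes\chi)=\varepsilon(f)\chi(r)\varepsilon_\chi^2$. The crucial combinatorial step is that, on the dual side, the phase
$$\tau(\bar\chi)\bar\chi(a)\cdot \chi(r)\varepsilon_\chi^2 =\tau(\bar\chi)\varepsilon_\chi^2\cdot\chi(r\bar a)$$
gets paired with $\chi(m)$ for the dual variable $m$; summation over $\chi$ via orthogonality collapses this into the additive character $e(\mp \overline{a|r|}\, m/q)$, which is precisely the arithmetic kernel in the claimed identity (the sign $\mp$ is tracked through $\chi(-1)=\pm 1$).

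\textbf{Archimedean computation and conclusion.} After the functional equation and shift of contour, the $m$-th term becomes
$$\frac{N}{q|r|^{1/2}}\cdot\frac{1}{2\pi i}\int_{(-1)} \widetilde{W}(1-s)\Bigl(\frac{Nm}{q^2|r|}\Bigr)^{-s}\frac{L_\infty(f,\pm,1-s)}{L_\infty(f,\pm,s)}\,ds$$
(up to bookkeeping of the $(-1)^{?}$ factors from the character parity). The remaining task is to identify this Mellin–Barnes integral as the Bessel transform $\widetilde W_\pm(Nm/(q^2|r|))$ with kernel $\mathcal{J}_\pm$. For $f$ holomorphic of weight $k_f$, the Gamma ratio is $(2\pi)^{2s-1}\Gamma(k_f-s)/\Gamma(s)$, and the classical Mellin–Barnes representation of $J_{k_f-1}$ gives the kernel $2\pi i^{k_f}J_{k_f-1}$, with the ``$-$'' piece vanishing. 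For $f$ a Maa{\ss} form of parameter $t_f$ and parity $\varepsilon_f$, the ratio is a product of two shifted Gamma factors; using the reflection formula $\Gamma(s)\Gamma(1-s)=\pi/\sin(\pi s)$ on even characters yields $-\pi(J_{2it_f}-J_{-2it_f})/\sin(\pi t_f)$, and on odd characters, an analogous computation using $\Gamma(s+1/2)\Gamma(1/2-s)$ produces $4\varepsilon_f\cosh(\pi t_f)K_{2it_f}$.

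\textbf{Main obstacle.} The genuinely delicate points are (i) the bookkeeping across the character decomposition: showing that, after orthogonality, the Gauss sums $\varepsilon_\chi^2$ and the $\chi(r)$ from the twisted root number exactly combine with $\bar\chi(a)$ to produce $e(\mp\overline{a|r|}\,m/q)$, and (ii) the archimedean Mellin–Barnes evaluations identifying each Gamma ratio with the correct Bessel kernel, especially distinguishing the $J$-kernel from the $K$-kernel in the Maa{\ss} case according to whether $\chi(-1)\varepsilon_f$ equals $+1$ or $-1$.
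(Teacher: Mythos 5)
The paper gives no proof of this lemma; it simply cites \cite[Theorem~A.4]{KMVDMJ}. In that reference the Voronoi formula is established directly from the automorphy of $f$ under $\Gamma_0(|r|)$: one expresses the additively twisted Dirichlet series $\sum_n\lambda_f(n)e(an/q)n^{-s}$ via the Fourier expansion of $f$ at the cusp $a/q$, uses a matrix in $\Gamma_0(|r|)$ (available since $(q,r)=1$) to relate it to the expansion at a companion cusp, and thereby obtains a functional equation for the additive twist itself; the Bessel kernels then fall out of a Mellin--Barnes computation. No Dirichlet characters appear. Your route — decompose $e(an/q)$ over multiplicative characters mod $q$, invoke the functional equations of $L(f\otimes\chi,s)$ from Lemma~\ref{lm-fneq}, and recombine by orthogonality — is a genuinely different and historically recognized route, and it is essentially workable when $q$ is prime. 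But it is a long detour, since Lemma~\ref{lm-fneq} is itself proved in the paper by exactly the kind of explicit modular transformation of $f\otimes\chi$ that underlies the direct proof.

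The real problem is that the lemma is stated for an \emph{arbitrary} positive integer $q$ coprime to $r$, and for composite $q$ your sketch has a genuine gap. Decomposing $n=dn'$ with $d\mid q^\infty$ and $(n',q)=1$ reduces $e(an/q)$ to $e(an'/(q/\gcd(d,q)))$, and one must then expand over \emph{all} primitive characters of conductor dividing $q/\gcd(d,q)$, whose conductors (hence Gauss sums, twisted root numbers, and completed conductors $q(\chi)^2|r|$) all differ; moreover $\lambda_f(dn')$ does not factor multiplicatively unless $(d,n')=1$, so the Hecke relations must be unwound. Your parenthetical claim that ``for prime $q$ only $d=1$ and $d=q$ occur'' is already inaccurate (all $q^k$ occur), and for composite $q$ the bookkeeping is a different order of magnitude than the proposal suggests. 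The hard content of the lemma is precisely that this forest of contributions collapses to the single clean phase $e(\mp\overline{a|r|}\,n/q)$ with the scaling $N/(q|r|^{1/2})$, and the sketch does not show how this collapse happens outside the prime case. As a smaller but consequential slip: for $(an',q)=1$ the correct inversion is $e(an'/q)=\varphi(q)^{-1}\sum_\chi\tau(\chi)\bar\chi(an')$, i.e.\ after your substitution $\chi\mapsto\bar\chi$ the $a$-factor should be $\chi(a)$, not $\bar\chi(a)$; as written your version produces $e(\mp\,a\,\overline{|r|}\,n/q)$ rather than $e(\mp\,\overline{a|r|}\,n/q)$ after orthogonality. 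This is the kind of sign error the direct modular proof is designed to avoid.
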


See \cite{KMVDMJ}*{Theorem A.4} for the proof. Note that $\widetilde{W}_{\pm}$ depends on the archimedean parameters of $f$, which we suppress from the notation.   In particular, the passage from a smooth weight function $W$ to $\widetilde{W}_{\pm}$ may increase the set of parameters $\mathscr{S}$, cf.\ Section \ref{21}. 
\par
\medskip
\par
Let $K\colon \Zz\to \Cc$ be a $q$-periodic function. Its
\emph{normalized Fourier transform} is the $q$-periodic function
defined by
\begin{equation*} \label{pg-what}
  \fourier{K}(h) = \frac{1}{\sqrt{q}}
  \sum_{n\mods q} K(n) e\Bigl(\frac{nh}{q}\Bigr)
\end{equation*}
for $h\in\Zz$. The \emph{Voronoi transform} of $K$ is the $q$-periodic
function defined by
$$
\bessel{K}(n) = \frac{1}{\sqrt{q}}\sum_{\substack{h\mods q\\(h,q) =1}}
\fourier{K}(h) e\Bigl(\frac{\bar{h}n}{q}\Bigr)
$$
for $n\in\Zz$ (see~\cite{FKM3}*{\S 2.2}). Combining the Voronoi formula
above with the discrete Fourier inversion formula
$$
K(n)=\frac{1}{\sqrt{q}}\sum_{a\mods q}\what
K(a)e\Bigl(-\frac{an}{q}\Bigr),
$$
we deduce:

\begin{corollary}\label{corvoronoi} Let $q$ be a prime number. Let $W$
  be a smooth function compactly supported in $]0,\infty[$.  Let $f$
  be a primitive cusp form of signed level $r$ coprime with $q$. For any real
  number $N>0$, we have
\begin{align*}
  \sum_{n}\lf(n)K(n)W\Bigl(\frac nN\Bigr)
  &= \frac{\what K(0)}{q^{1/2}}\sum_{n\geq 1} \lf(n)W\Bigl(\frac nN\Bigr)+\\ 
  &\ \eps(f)\sum_{\pm}\frac{N}{q|r|^{1/2}} \sum_{n\geq
    1}\lf(n)\widecheck K(\pm \overline{|r|} n)\widetilde W_{\pm}\Bigl(\frac{nN}{q^2|r|}\Bigr) 
\end{align*}
In particular, for any integer $a$ coprime to $q$, if we take
$$
K(n)=q^{1/2}\delta_{n\equiv a\mods q}=\begin{cases}
q^{1/2}&\text{ if } n\equiv a\mods{q}\\
0&\text{ otherwise,}
\end{cases}
$$
then we have
\begin{align*}
  q^{1/2}\sum_{n\equiv a\mods q} \lf(n)W\Bigl(\frac nN\Bigr)
  &= \frac{1}{q^{1/2}}\sum_{n\geq 1} \lf(n)W\Bigl(\frac nN\Bigr)+\\ 
  &\ \eps(f)\sum_{\pm}\frac{N}{q|r|^{1/2}} \sum_{n\geq
    1}\lf(n)\widetilde W_{\pm}\Bigl(\frac{nN}{q^2|r|}\Bigr) 
    \Kl(\pm a\overline{|r|}n;q). 	
\end{align*}
\end{corollary}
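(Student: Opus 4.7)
The corollary follows by combining discrete Fourier inversion on $\Zz/q\Zz$ with the Voronoi summation formula of the preceding lemma. There is no single hard step; the only care required is in tracking inverses and signs modulo $q$.

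First, apply Fourier inversion to the $q$-periodic function $K$:
\[
K(n)=\frac{1}{\sqrt q}\sum_{h\mods q}\what K(h)\,e\!\Bigl(-\tfrac{hn}{q}\Bigr).
\]
Substituting this into $\sum_n\lf(n)K(n)W(n/N)$ and separating the contribution of $h\equiv 0\mods q$ produces exactly the first main term $\what K(0)\,q^{-1/2}\sum_n\lf(n)W(n/N)$. The remaining sum runs over $h$ coprime to $q$, since $q$ is prime. Denote the corresponding twisted sum by
\[
S_h=\sum_{n\geq 1}\lf(n)\,W\!\Bigl(\tfrac{n}{N}\Bigr)\,e\!\Bigl(-\tfrac{hn}{q}\Bigr).
\]

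Next, apply the preceding Voronoi lemma to each $S_h$ with the integer $a$ of the lemma taken to be $-h$. Using $\overline{-h|r|}\equiv -\bar h\,\overline{|r|}\mods q$, the two signs combine to give
\[
S_h=\eps(f)\sum_\pm\frac{N}{q|r|^{1/2}}\sum_{n\geq 1}\lf(n)\,e\!\Bigl(\pm\tfrac{\bar h\,\overline{|r|}\,n}{q}\Bigr)\widetilde W_\pm\!\Bigl(\tfrac{Nn}{q^2|r|}\Bigr).
\]
Plugging this back in and exchanging the $h$ and $n$ summations, the inner sum
\[
\frac{1}{\sqrt q}\sum_{\substack{h\mods q\\(h,q)=1}}\what K(h)\,e\!\Bigl(\pm\tfrac{\bar h\,\overline{|r|}\,n}{q}\Bigr)
\]
is, by the definition of the Voronoi transform, $\bessel K(\pm\overline{|r|}\,n)$. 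This proves the general identity (which encodes the Kloosterman factor appearing in the statement through the definition of $\bessel K$).

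For the specialization to $K(n)=q^{1/2}\delta_{n\equiv a\mods q}$, a direct calculation from the definition gives $\what K(h)=e(ah/q)$, so $\what K(0)=1$, recovering the stated main term. For $h$ coprime to $q$ the inner sum above becomes
\[
\frac{1}{\sqrt q}\sum_{\substack{h\mods q\\(h,q)=1}}e\!\Bigl(\tfrac{ah\pm \bar h\,\overline{|r|}\,n}{q}\Bigr)=\Kl(\pm a\overline{|r|}\,n;q),
\]
where we used $S(a,b;q)=S(1,ab;q)$ when $(a,q)=1$. This yields the second displayed formula, completing the proof.
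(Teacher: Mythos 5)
Your proof is correct and is exactly the argument the paper indicates but does not write out: Fourier-invert $K$, apply the Voronoi lemma to each nonzero frequency $h$ with parameter $-h$, and regroup using the definition of $\widecheck{K}$. One small remark: your computation (correctly) yields the factor $\widecheck{K}(\pm\overline{|r|}n)$ in the general identity, and the first display of the corollary as printed contains a misprint — the symbol $a$ in $\Kl(\pm a\overline{|r|}n;q)$ is undefined there, and the factor $\widecheck{K}(n)\Kl(\pm a\overline{|r|}n;q)$ should simply read $\widecheck{K}(\pm\overline{|r|}n)$; your specialization to $K=q^{1/2}\delta_{n\equiv a\mods q}$ then reproduces the second display exactly, which is what your parenthetical remark is getting at.
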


Finally, we recall the decay properties of the Bessel transforms
$\widetilde W_{\pm}$ which follow from repeated integration by parts
and the decay properties of of Bessel functions and their
derivatives. These are proved in~\cite{BFKMM}*{Lemma 2.4}.

\begin{lemma}\label{besseldecay} Let $W$ be a smooth function
  compactly supported in $[1/2,2]$ and satisfying \eqref{Wbound}. In the Maa{\ss} case set $\vartheta = |\Re it|$, otherwise set $\vartheta = 0$.   
    For
  $M\geq 1$ let $W_M(x)=W(x/M)$. For any $\varepsilon$, for any $i,j\geq 0$ and for all
  $y>0$, we have
\begin{displaymath}
\begin{split}
  y^j\widetilde{(W_M)}^{(j)}_{\pm}(y) &\ll_{i,j,\varepsilon} M(1+My)^{j/2}
 \big (1+ (My)^{-2\vartheta -\varepsilon}\big)\big(1 +   (My)^{1/2} \big)^{-i}.
\end{split}
\end{displaymath}
In particular, the functions 
$\widetilde{(W_M)}_{\pm}(y)$ decay rapidly when $y\gg 1/M$.
\end{lemma}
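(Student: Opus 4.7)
The plan is to rescale the integral to isolate the key parameter $My$, then split into a small-argument regime where the Bessel kernel has a power-type singularity, and a large-argument regime where it oscillates and is amenable to integration by parts.

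First, substituting $u=Mv$ in the definition of $\widetilde{W}_{\pm}$ yields
$$\widetilde{(W_M)}_\pm(y) \;=\; M\int_0^\infty W(v)\,\mathcal{J}_\pm\bigl(4\pi\sqrt{Mvy}\bigr)\, dv,$$
so the Bessel argument $z := 4\pi\sqrt{Mvy}$ satisfies $z\asymp\sqrt{My}$ on the support of $W$. Differentiating $j$ times in $y$ and using $\tfrac{d}{dy}\mathcal{J}_\pm(z) = \tfrac{z}{2y}\mathcal{J}'_\pm(z)$ together with the standard Bessel identities (which express $z\mathcal{J}'_\nu(z)$ as a combination of $\mathcal{J}_{\nu\pm 1}(z)$), I can rewrite $y^j \widetilde{(W_M)}_\pm^{(j)}(y)$ as a linear combination of integrals $M\int_0^\infty W(v)\Phi_k(z)\,dv$, $0\leqslant k\leqslant j$, where each $\Phi_k$ is a Bessel function of shifted index multiplied by $z^k$; it suffices to bound a typical such integral.

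In the \emph{small-argument regime} $My\leqslant 1$, I invoke the power-series expansions near $0$: writing $2it_f = \pm 2\vartheta + i\tau$ with $\tau\in\Rr$, the leading terms of $J_{\pm 2it_f}(z)$ and $K_{2it_f}(z)$ behave like $z^{\pm 2\vartheta}$ up to slowly varying factors (and the holomorphic case has $\vartheta=0$), so $|\mathcal{J}_\pm(z)|\ll z^{-2\vartheta-\varepsilon}$ uniformly for $z\leqslant 1$, and the same bound holds for $|\Phi_k(z)|$ after adjusting $\varepsilon$. Substituting $z\asymp\sqrt{My}$ and estimating trivially yields $\ll M(My)^{-\vartheta-\varepsilon/2}$, which is dominated by $M\bigl(1+(My)^{-2\vartheta-\varepsilon}\bigr)$; since $(1+My)^{j/2}\asymp 1$ and $(1+(My)^{1/2})^{-i}\asymp 1$ in this regime, this matches the claimed bound.

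In the \emph{large-argument regime} $My\geqslant 1$, I use the standard asymptotic expansion $\mathcal{J}_\pm(z) = e^{iz}P_+(z) + e^{-iz}P_-(z)$ up to exponentially small terms, with $P_\pm^{(r)}(z)\ll z^{-1/2-r}$ uniformly for $z\geqslant 1$ (with pure exponential decay for the $K_{2it_f}$ contribution to $\mathcal{J}_-$). The crude bound $|\Phi_k(z)|\ll z^{k-1/2}$ already yields $M(My)^{j/2-1/4}$, accounting for the $(1+My)^{j/2}$ factor. To obtain the rapid-decay factor $(1+(My)^{1/2})^{-i}$, I integrate by parts $i$ times in $v$: the identity $\tfrac{d}{dv}\bigl[e^{\pm 4\pi i\sqrt{Mvy}}\bigr] = \pm 2\pi i\sqrt{My/v}\cdot e^{\pm 4\pi i\sqrt{Mvy}}$ shows that each step gains a factor $(My)^{-1/2}$, while derivatives falling on $W$ or on the polynomial factors in $v$ are controlled by~\eqref{Wbound}, and no boundary terms arise since $W$ is supported in $[1/2,2]$. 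The main obstacle is the small-$z$ analysis for Maa{\ss} forms with exceptional spectral parameter, where the two terms $J_{\pm 2it_f}(z)$ entering $\mathcal{J}_+$ do not cancel and the singularity $z^{-2\vartheta}$ is unavoidable, forcing the factor $(1+(My)^{-2\vartheta-\varepsilon})$; coordinating the powers of $z$ introduced by $y$-differentiation with the integration-by-parts gains is then routine bookkeeping handled by the Bessel recurrences.
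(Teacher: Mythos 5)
Your proof is correct and follows the same strategy indicated in the paper (which defers the proof to \cite[Lemma~2.4]{BFKMM}): rescaling to isolate $My$, using the Bessel recurrences to handle the $y$-derivatives, power-series expansion of $J_\nu$, $K_\nu$ near $z=0$ to get the $(My)^{-2\vartheta-\varepsilon}$ factor, and repeated integration by parts against the oscillatory phase (with $\dot z\asymp\sqrt{My}$) to produce $(1+(My)^{1/2})^{-i}$. The bookkeeping of powers of $z$ through the recurrences works out exactly because $\tfrac{z}{2}\partial_z$ applied to $z^kJ_{\nu+m}$ preserves the non-negativity of $k+m$, so your "routine bookkeeping" claim is justified.
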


We recall that all implied constants may depend polynomially on the
parameters $s\in \mathscr{S}$ that $W$ and $\tilde{W}$ depend on.

\section{A factorization lemma}
\label{pg-factor}
To shorten notations, let us  write 
\begin{equation}\label{defT(s)}
T(s)=L(f\otimes f,s).
\end{equation}  We denote by
$(\mu_f(n))_{n\geq 1}$ the convolution inverse of $(\lf(n))_{n\geq1}$,
which is given by
\begin{equation}\label{defmurevision}
L(f,s)^{-1}=\prod_{p}\Bigl(1-\frac{\lf(p)}{p^s}+\frac{\chi_r(p)}{p^{2s}}\Bigr)=\sum_{n\geq
  1}\frac{\mu_f(n)}{n^s},\quad \Re s\geq 1.
\end{equation}

We then define an auxiliary function of six complex variables by
\begin{equation}\label{bigLdef}
L(s,z,z',u,v,w)= \sumsum_{\stacksum{d,\l_1,\l_2,n}{(\l_1,\l_2)= (d\l_1\l_2, r) = 1}}
\frac{\mu_f(d\l_1)\lf(\l_1 n)\mu_f(d\l_2)\lf(\l_2n)}
{\l_1^{s+z+u+v}{\l_2}^{s+z'+u+w} {d}^{z+z'+v+w}n^{2s+2u}}.	
\end{equation}

In Chapters~\ref{ch-central} and~\ref{analyticrank}, we will use the
following lemma.

\begin{lemma}\label{lm-factor}
For $\eta\in\Rr$, let $\mcR({\eta})$ be the open subset of
$(s,z,z',u,v,w)\in\Cc^6$ defined by the inequalities
\begin{align*}
  \mathcal R (\eta):=
  \begin{cases}
   \Re s  > \frac{1}{2}-\eta, \  \Re z > \frac{1}{2}-\eta, & \ \Re z' > \frac{1}{2}-\eta,\\
   \Re u >-\eta, \ \Re v >-\eta, &\ \Re w >-\eta.
 \end{cases}
 \end{align*}
 There exists $\eta>0$ and a holomorphic function $D(s,z,z',u,v,w)$
 defined on $\mcR(\eta)$ such that $D$ is absolutely bounded on
 $\mcR(\eta)$, and such that the holomorphic function
 $L(s,z,z',u,v,w)$ admits meromorphic continuation to
 $\mathcal R (\eta)$ and satisfies the equality
$$
L(s,z,z',u,v,w)=\frac{T(2s+2u)T(z+z'+v+w)}{T(s+z+u+v)T(s+z'+u+w)}
D(s,z,z',u,v,w).
$$
\end{lemma}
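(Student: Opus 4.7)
The plan is to establish the factorization prime by prime as an identity of Euler products and then extend it analytically. The summand in~\eqref{bigLdef} is multiplicative in each of $d,\l_1,\l_2,n$ and the coprimality constraints $(\l_1,\l_2)=1$, $(d\l_1\l_2,r)=1$ are local at each prime, so $L=\prod_p L_p$ in the region of absolute convergence of the Dirichlet series. The crucial arithmetic input is the identity $S_1+S_2=S_n+D$, with $S_1:=s+z+u+v$, $S_2:=s+z'+u+w$, $S_n:=2s+2u$, $D:=z+z'+v+w$, forced by the shape of the exponents; its prime-local avatar is the relation $X_1 X_2 = YZ$, where $X_i:=p^{-S_i}$, $Y:=p^{-D}$, $Z:=p^{-S_n}$.

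At $p\nmid r$ with Satake parameters $\alpha_p,\beta_p$ satisfying $\alpha_p\beta_p=1$, I would compute $L_p$ explicitly. Writing $(m_1,m_2):=(d+\l_1,d+\l_2)$, the condition $\min(\l_1,\l_2)=0$ becomes automatic (one recovers $d=\min(m_1,m_2)$) and the vanishing $\mu_f(p^k)=0$ for $k\geq 3$ restricts $(m_1,m_2)\in\{0,1,2\}^2$. The local sum thus collapses to nine terms, each coupled to one of three rational functions
\begin{equation*}
R_a(Z):=\sum_{n\geq 0}\lf(p^n)\lf(p^{n+a})Z^n,\quad a\in\{0,1,2\},
\end{equation*}
computable in closed form from $\lf(p^n)=(\alpha_p^{n+1}-\beta_p^{n+1})/(\alpha_p-\beta_p)$, all three sharing the common denominator $(1-Z)(1-\alpha_p^2 Z)(1-\beta_p^2 Z)$. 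At the finitely many primes $p\mid r$ the coprimality forces $d=\l_1=\l_2=0$, so $L_p=\sum_n\lf(p^n)^2 p^{-nS_n}$ is explicit and contributes only a bounded factor.

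Define the local correction $D_p:=L_p\cdot T_p(S_1)T_p(S_2)/(T_p(S_n)T_p(D))$. The central claim is that $D_p = 1 + O(p^{-2+8\eta+4\theta})$ uniformly on $\mcR(\eta)$, where $\theta\leq 7/64$ is the Kim--Sarnak exponent. Granting this, for $\eta$ sufficiently small (any $\eta$ with $8\eta+4\theta<1$) the sum $\sum_p|D_p-1|$ converges uniformly, so $D:=\prod_p D_p$ is holomorphic and bounded on $\mcR(\eta)$. The bound comes from a Taylor expansion: in $\mcR(\eta)$ one has $\Re S_1,\Re S_2,\Re S_n,\Re D>1-4\eta$, hence $|X_i|,|Y|,|Z|\leq p^{-1+4\eta}$. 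Expanding both the numerator $N\cdot(1-Z)(1-Y)^2(1-\alpha_p^2 Y)(1-\beta_p^2 Y)$ (where $L_p=N/[(1-Z)(1-\alpha_p^2 Z)(1-\beta_p^2 Z)]$) and the denominator $\prod_{i=1,2}(1-X_i)^2(1-\alpha_p^2 X_i)(1-\beta_p^2 X_i)$ of $D_p$ through order two, the constant and linear terms agree: both equal $1-\lf(p)^2(X_1+X_2)$ plus contributions in $Y$ and $Z$ that cancel between $N$ and the outer factor. The residual quadratic terms have schematic form $\lf(p)^{2k}\cdot(\text{quadratic monomial in }X_i,Y,Z)$ with $k\leq 2$, giving the claimed bound after inserting $|\alpha_p|,|\beta_p|\leq p^{\theta}$.

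The main obstacle is the bookkeeping of the linear cancellation and the estimation of the quadratic remainder uniformly in the parameters; this is a polynomial identity in four variables, best organized via the symmetric combinations $\sigma:=X_1+X_2$, $\pi:=X_1 X_2=YZ$ (the equality is the prime-local incarnation of $S_1+S_2=S_n+D$) together with the three explicit expressions for $R_0,R_1,R_2$. Once $D$ is established as holomorphic and bounded on $\mcR(\eta)$, the factorization $L=(T(S_n)T(D)/(T(S_1)T(S_2)))\cdot D$ holds on the original region of absolute convergence of the Dirichlet series by construction, and extends to all of $\mcR(\eta)$ as a meromorphic identity by the meromorphic continuation of $T=L(f\otimes f,\cdot)$ recalled in Section~\ref{RankinSelberg}.
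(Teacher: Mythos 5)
Your proposal is correct and takes essentially the same approach as the paper's proof: factor $L$ into an Euler product, verify that the local correction $D_p = L_p \cdot T_p(s+z+u+v)T_p(s+z'+u+w)\big/\bigl(T_p(2s+2u)T_p(z+z'+v+w)\bigr)$ is $1 + O(p^{-2+c})$ for some $c<1$ uniformly on $\mcR(\eta)$ once $\eta$ is small enough, and conclude that $D = \prod_p D_p$ converges absolutely. The differences are presentational: you propose a closed form for $L_p$ via the rational functions $R_a$ and make explicit the multiplicative relation $X_1X_2 = YZ$, whereas the paper extracts only the index-sum-$\leq 1$ terms of $L_p$ and $M_p$ and relies on the matching of the coefficient $\lf(p)^2$ in the two expansions (your claimed linear cancellation in the direct expansion of $D_p$ does hold, the key identity being the Hecke relation $\lf(p)^2 - \lf(p^2) - \chi_r(p) = 0$); otherwise the two arguments coincide.
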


As a special case:

\begin{corollary}\label{cor-factor}
  The function $(u,v,w)\mapsto L(\demi,\demi,\demi,u,v,w)$ initially
  defined as a convergent holomorphic series over a domain of the
  shape
  $$\Re u,\Re v,\Re w\gg 1$$ extends meromorphically to the domain
 $$\Re u,\Re v,\Re w> -\eta$$ for some absolute constant $\eta > 0$ and satisfies
 \begin{displaymath}
 \begin{split}
   \nonumber L(\demi,\demi,\demi,u,v,w) &=\frac{T(1+2u)T(1+v+w)}
   {T(1+u+v)T(1+u+w)}D(u,v,w)\\
  & =\eta_3(f,u,v,w)\frac{(u+v)(u+w)}{u(v+w)}.
  \end{split}
 \end{displaymath}
 where
  \begin{itemize}
  \item $D$ is an Euler product absolutely convergent for
    $\Re u,\Re v,\Re w\geq -\eta$,
  \item $\eta_3$ is holomorphic and non-vanishing in a neighborhood of
    $(u, v, w) = (0,0,0)$.
  \end{itemize}
\end{corollary}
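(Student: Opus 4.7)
The plan is to specialize Lemma~\ref{lm-factor} at $s=z=z'=1/2$ and carefully isolate the polar and vanishing behavior coming from the four factors of $T$.

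First I would invoke Lemma~\ref{lm-factor} directly: there exists $\eta>0$ such that on the polydisc $|u|,|v|,|w|<\eta$ the function $L(1/2,1/2,1/2,u,v,w)$ extends meromorphically and equals
$$
\frac{T(1+2u)\,T(1+v+w)}{T(1+u+v)\,T(1+u+w)}\,D(u,v,w),
$$
where $D(u,v,w):=D(1/2,1/2,1/2,u,v,w)$ is holomorphic and bounded there. The meromorphy comes for free since $T(s)=L(f\otimes f,s)=\zeta(s)L(\symf,s)$, and $L(\symf,s)$ is entire (because $f$ has trivial central character, hence is non-monomial, so $\symf$ is cuspidal by Gelbart--Jacquet). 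Thus $T$ has, in a neighborhood of $1$, a single simple pole at $s=1$, inherited from $\zeta$.

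Next I would extract the linear factors. Writing $T(1+x)=R(x)/x$ with $R$ holomorphic near $0$ and $R(0)=L(\symf,1)\neq 0$ (non-vanishing by Proposition~\ref{AutomorphicPNTLemma}), the ratio becomes
$$
\frac{T(1+2u)\,T(1+v+w)}{T(1+u+v)\,T(1+u+w)}
=\frac{(u+v)(u+w)}{2u(v+w)}\cdot\frac{R(2u)\,R(v+w)}{R(u+v)\,R(u+w)}.
$$
Setting
$$
\eta_3(f,u,v,w):=\frac{R(2u)\,R(v+w)}{2\,R(u+v)\,R(u+w)}\,D(u,v,w)
$$
gives the claimed identity $L(1/2,1/2,1/2,u,v,w)=\eta_3(f,u,v,w)\dfrac{(u+v)(u+w)}{u(v+w)}$.

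Finally I would verify the two properties of $\eta_3$. The quotient involving $R$ is manifestly holomorphic near $(0,0,0)$ since $R$ is and $R(0)\neq 0$; at the origin it evaluates to $1/2$. For $D$, I would trace the construction in Lemma~\ref{lm-factor}: since $L(s,z,z',u,v,w)$ is built from an absolutely convergent Euler product in the original region and the only non-trivial analytic content extracted to form the ratio of $T$'s is carried by the local $L$-factors at each prime, $D$ is itself an absolutely convergent Euler product on $\mathcal{R}(\eta)$, with each local factor a non-vanishing rational expression in the $\alpha_{f,i}(p)p^{-\bullet}$ (none of which blow up or vanish for $\Re u,\Re v,\Re w\geq -\eta$ with $\eta$ small, by Kim--Sarnak~\eqref{KSbound}). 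Hence $D(u,v,w)\neq 0$ throughout, and in particular $\eta_3(f,0,0,0)=D(0,0,0)/2\neq 0$. The mildly delicate step is this last Euler-product non-vanishing verification; everything else is bookkeeping of poles and zeros once Lemma~\ref{lm-factor} is in hand.
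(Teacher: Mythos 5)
Your overall plan matches the structure implicit in the paper: specialize Lemma~\ref{lm-factor} at $s=z=z'=\demi$, factor out the pole/zero pattern of the four $T$-factors by writing $T(1+x)=R(x)/x$, and identify $\eta_3$ with the remaining holomorphic part. The computation of the linear factor $\frac{(u+v)(u+w)}{2u(v+w)}$ and the identification $\eta_3=\frac{R(2u)R(v+w)}{2R(u+v)R(u+w)}D$ are both correct, as is the use of $L(\symf,1)\neq 0$ (this follows since $\res_{s=1}L(f\otimes f,s)=L(\symf,1)\neq 0$, or from Proposition~\ref{AutomorphicPNTLemma}).

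There is, however, a genuine gap in the last paragraph, and it is precisely at the step you yourself flag as delicate: the non-vanishing of the Euler factors of $D$ at the origin. You justify it by saying each local factor is ``a non-vanishing rational expression in the $\alpha_{f,i}(p)p^{-\bullet}$, none of which blow up or vanish\ldots by Kim--Sarnak.'' But the Kim--Sarnak bound $|\alpha_{f,i}(p)|\leq p^{\theta}$ only ensures absolute convergence of the $p$-adic series defining $L_p$ and finiteness of the local factors; it does not rule out that the resulting value $L_p(\demi,\demi,\demi,0,0,0)$ is zero for some small prime $p$. For the finitely many primes $p\leq P$ where $D_p$ is not close to $1$, non-vanishing requires an explicit computation. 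This is exactly what the paper carries out at the end of Section~\ref{moll-sec-mom}: one checks directly (using the identities for $\lf(p^k)$ and $\mu_f(p^k)$, and the constraint $\lambda_1\lambda_2=0$ in the defining sum) that $L_p(\demi,\demi,\demi,0,0,0)=1$ for $p\nmid r$ and equals an explicit non-zero quantity $(1-\lf(p)^2 p^{-1})^{-1}$ for $p\mid r$, while $M_p(\demi,\demi,\demi,0,0,0)$ is finite and non-zero in all cases; hence $D_p(0,0,0)\neq 0$ for every $p$, and the absolutely convergent infinite product gives $D(0,0,0)\neq 0$. Without this computation, the assertion $\eta_3(f,0,0,0)\neq 0$ is unsupported. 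The fix is short, but it is a calculation that must actually be done and cannot be replaced by an appeal to Ramanujan-type bounds.
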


\begin{proof}[Proof of Lemma~\ref{lm-factor}]
  The function $\mu_f$ is multiplicative and satisfies
 $$
 \mu_f (p) =-\lambda_f (p),\ \mu_f (p^2)=\chi_r (p), \ \mu_f
 (p^k)=0\text{ for } k\geq 3.
$$

By \eqref{RP4}, the series \eqref {bigLdef} is absolutely convergent
in the intersection 
\begin{align*}
  \mathcal C :=
 \begin{cases}
 \Re (s+z+u+v) >1,&  \  \Re (s+z'+u+w) >1,\\
  \Re (z+z'+v+w) >1,& \   \Re (2s+2u) >1
  \end{cases}
 \end{align*}
 of four half spaces of $\Cc^6$.  In particular, the region
 $\mathcal C$ contains the region $\mathcal R (0)$ of $\Cc^6$.
 
 In this region we have the factorization
 $$L(s,z,z',u,v,w)=\prod_p L_p(s,z,z',u,v,w)$$
where
\begin{multline*}
L_p(s,z,z',u,v,w)=\\ \sumsum_{\stacksum{\delta,\lambda_1,\lambda_2,\nu\geq 0}{\lambda_1\lambda_2=0}}
\frac{\mu_f(p^{\delta+\lambda_1})\lf(p^{\lambda_1+\nu})\mu_f(p^{\delta+\lambda_2})\lf(p^{\lambda_2+\nu})}
{p^{\lambda_1(s+z+u+v)+\lambda_2(s+z'+u+w)+\delta(z+z'+v+w)+\nu(2s+2u)}}
\end{multline*}
for $p \nmid r$. For $\alpha\geq 0$ and $\theta=7/64$,  we have
\begin{equation}\label{generalboundpalpha}
|\lambda_f(p^\alpha)|\leq (\alpha+1)p^{\alpha\theta},
\end{equation}
hence the factor $L_p(s,z,z',u,v,w)$ is absolutely convergent for
$(s,z,z,u,v,w)$ such that
\begin{gather*}
  \Re(z+z'+v+w)>2\theta,\quad\quad
  \Re(s+z+u+v)>2\theta, \\
  \Re(s+z'+u+w)>2\theta,\quad\quad \Re(s+u)>\theta.
\end{gather*}
This includes the region $\mcR(\eta)$ with $\eta:=\theta/4$. Now
splitting the summation over the set of
$(\delta,\lambda_1,\lambda_2,\nu) $ with
$0 \leq \delta +\lambda_1+\lambda_2 +\nu \leq 1$ and the complementary
set, we see that for $(s,z,z',u,v,w)\in\mcR(\theta/4)$ we have the
equality
 \begin{multline}\label{Lpapprox}
 L_p(s,z,z',u,v,w)\\=1+\frac{\lf(p)^2}{p^{z+z'+v+w}}-\frac{\lf(p)^2}{p^{s+z+u+v}}-\frac{\lf(p)^2}{p^{s+z'+u+w}}+\frac{\lf(p)^2}{p^{2s+2u}}+\mathrm{EL}_p(s,z,z',u,v,w)
 \end{multline}
with $\mathrm{EL}_p(s,z,z',u,v,w)$ holomorphic in that region and satisfying
\begin{equation}\label{ELpbound}\mathrm{EL}_p(s,z,z',u,v,w)=O\Bigl(\, \frac{p^{2\theta}}{p^{2(1-\theta)}}\, \Bigr)=O\Bigl(\,\frac{1}{p^{2-4\theta}}\, \Bigr)	
\end{equation}
 where the implied constant is absolute.
 
 We consider now the multivariable Dirichlet series
 $$M(s,z,z',u,v,w):=\frac{T(2s+2u)T(z+z'+v+w)}{T(s+z+u+v)T(s+z'+u+w)}.$$
 In the region $\mathcal C$, it is absolutely convergent and factors as
 $$M(s,z,z',u,v,w)=\prod_p M_p(s,z,z',u,v,w)$$
 where
 $$M_p(s,z,z',u,v,w)=\frac{T_p(2s+2u)T_p(z+z'+v+w)}{T_p(s+z+u+v)T_p(s+z'+u+w)}.$$
Let us recall that for any $p$ we have
$$T_p(s)=\zeta_p(s)\prod_{i=1}^3\Bigl(\,1-\frac{\alpha_{\symf,i}(p)}{p^s}\Bigr)^{-1}.$$
with
\begin{equation}\label{symlocalbound}
  |\alpha_{\symf,i}(p)|\leq p^{2\theta};
\end{equation}
in particular $T_p(s)$ is holomorphic and non-vanishing for
$\Re s>2\theta$. Moreover, for $p\nmid r$, we have
 $$
T_p(s)=(1-\frac{1}{p^{2s}})^{-1}\sum_{\alpha\geq 0}\frac{\lambda_{f}(p^{\alpha})^2}{p^{\alpha
    s}}=1+\frac{\lambda_{f}(p)^2}{p^{
    s}}+\sum_{\alpha\geq 2}\frac{
    \xi_f(p^\alpha)}{p^{\alpha s}}
$$
where the coefficients $\xi_f(p^\alpha)$ satisfy
$$|\xi_f(p^\alpha)|= \vert \lambda_f(p^\alpha)^2 +\lambda_f (p^{\alpha -2})^2 +\cdots \vert\leq (\alpha+1)^3 p^{2\alpha\theta},$$
by \eqref{generalboundpalpha}.
Hence, by the same reasoning as before, we have for $p\nmid r$ the
equality
\begin{multline}\label{Mpapprox}
  M_p(s,z,z',u,v,w)\\=1+\frac{\lf(p)^2}{p^{z+z'+v+w}}-\frac{\lf(p)^2}{p^{s+z+u+v}}-\frac{\lf(p)^2}{p^{s+z'+u+w}}+\frac{\lf(p)^2}{p^{2s+2u}}\\
  +\mathrm{EM}_p(s,z,z',u,v,w)
\end{multline}
with $\mathrm{EM}_p(s,z,z',u,v,w)$ holomorphic in $\mcR(\theta/4)$ and satisfying
\begin{equation}\label{EMpbound}
\mathrm{EM}_p(s,z,z',u,v,w)=O\Bigl(\frac{p^{2\theta}}{p^{2(1-\theta)}}\Bigr)=O\Bigl(\frac{1}{p^{2-4\theta}}\Bigr).	
\end{equation}

Let $P\geq 1$ be a parameter to be chosen sufficiently large; given some converging Euler product
$$L=\prod_p L_p$$ we set
$$L_{\leq P}=\prod_{p\leq P}L_p,\ L_{>P }=\prod_{p> P}L_p$$
so that
$$L=L_{\leq P}L_{> P}.$$

We apply this decomposition to $L(s,z,z',u,v,w)$ for $P>r$.  In the
region of absolute convergence, we have
 $$L(s,z,z',u,v,w)=L_{\leq P}(s,z,z',u,v,w) L_{>P}(s,z,z',u,v,w).$$
 We write
 $$L_{>P}(s,z,z',u,v,w)=M_{>P}(s,z,z',u,v,w)D_{>P}(s,z,z',u,v,w)$$
where
$$D_{>P}(s,z,z',u,v,w)=\prod_{p>P}\frac{L_{p}(s,z,z',u,v,w)}{M_{p}(s,z,z',u,v,w)}$$
By \eqref{Mpapprox} and \eqref{EMpbound} we can choose $P>|r|$
sufficiently large so that for $p>P$, $M_p(s,z,z',u,v,w)^{-1}$ is
holomorphic in the region $\mcR(\theta/4)$, then by \eqref{Lpapprox},
\eqref{ELpbound} \eqref{Mpapprox} and \eqref{EMpbound} we have, in
that same region the equality
 $$\frac{L_{p}(s,z,z',u,v,w)}{M_{p}(s,z,z',u,v,w)}=1+O\Bigl(\,\frac{1}{p^{2(1-3\theta)}}+\frac{1}{p^{2-4\theta}}\,\Bigr).$$
 Since $2-6\theta>1$ the product   
 $$D_{>P}(s,z,z',u,v,w)=\prod_{p>P}\Bigl(\,1+O\Bigl(\,\frac{1}{p^{2(1-3\theta)}}\,\Bigr)\,\Bigr)$$
 is absolutely convergent and uniformly bounded in the region
 $\mcR(\theta/4)$. We now write the finite product
 $$L_{\leq P}(s,z,z',u,v,w)=M_{\leq P}(s,z,z',u,v,w)D_{\leq P}(s,z,z',u,v,w).$$
 By \eqref{symlocalbound} the finite product
 $$D_{\leq P}(s,z,z',u,v,w)=\prod_{p\leq P}\frac{L_{p}(s,z,z',u,v,w)}{M_{p}(s,z,z',u,v,w)}$$
 is holomorphic and uniformly bounded in the region $\mcR(\theta/4)$ and 
 $$D(s,z,z',u,v,w)=D_{\leq P}(s,z,z',u,v,w)D_{> P}(s,z,z',u,v,w)$$
 has the required properties.
\end{proof}


\section{A shifted convolution problem}

The objective of this section is to adapt the work of Blomer and
Mili\'cevi\'c~\cite{BloMil} to prove a variant of the shifted
convolution problem that is required in this book. The following
result is proved in loc.\ cit.\ in the case of cusp forms of level
one. Since, the generalization to arbitrary fixed (signed) level $r$ is
straightforward, we will only briefly indicate the changes that are
required.

Most of the notation in this section is borrowed from \cite{BloMil},
except that the modulus which is denoted $q$ in this book is denoted
$d$ in loc.\ cit.

\begin{proposition}\label{thmSCP} 
  Let $\ell_1, \ell_2\geq 1$ two integers, $q \geq 1$ and
  $N \geq M \geq 1$.  Let $f_1, f_2$ be two primitive cusp forms of
  signed levels $r_1$ and $r_2$ and Hecke eigenvalues
  $(\lambda_1(n))_{n\geq 1}$ and $(\lambda_2(m))_{m\geq 1}$,
  respectively. Assume that
  $(\ell_1\ell_2, r_1r_2) = 1$. Let $V_1, V_2$ be
  fixed smooth weight functions satisfying \eqref{Wbound}. Then for
  $\theta=7/64$, we have
\begin{multline*}
  \sum_{\substack{\ell_2 m - \l_1 n\equiv 0
      \mods{q}\\\ell_2m-\ell_1n\not=0}}
  \lambda_{1}(m) \lambda_{2}(n) V_1\left(\frac{\l_2m}{M}\right) V_2\left(\frac{\l_1 n}{N}\right)\\
  \ll (qN)^{\varepsilon}\left( \left(\frac{N}{q^{1/2}} + \frac{N^{3/4}
        M^{1/4}}{q^{1/4}} \right) \left( 1 +
      \frac{(NM)^{1/4}}{q^{1/2}}\right) + \frac{M^{3/2 + \theta}}{q}
  \right)
\end{multline*}
uniformly in $\l_1 , \l_2$, with an implied constant depending
on $f_1, f_2$ and the parameters $\mathscr{S}$ that $V_1, V_2$ depend on. The same bound holds if the congruence condition
$\ell_2 m - \l_1 n\equiv 0 \, (\text{{\rm mod }} q)$ is replaced by
$$\ell_2 m + \l_1 n\equiv 0 \, (\text{{\rm mod }} q).$$
\end{proposition}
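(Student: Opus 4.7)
The plan is to transport the argument of Blomer--Mili\'cevi\'c~\cite{BloMil} from the level-$1$ setting to general levels $r_1, r_2$, exploiting the fact that the coprimality $(\ell_1\ell_2, r_1 r_2)=(\ell_1\ell_2 r_1 r_2, q)=1$ keeps the Hecke multiplicativity of $\lambda_1,\lambda_2$ compatible with both the shifts and the level structure. Since $r_1, r_2$ are fixed, any dependency on them can be absorbed into the implied constant, and the coprimality with $q$ ensures that Kloosterman sums and Voronoi dual variables factor as cleanly as in level $1$.

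The first step is to detect the congruence $\ell_2 m - \ell_1 n \equiv 0 \pmod{q}$ using the Duke--Friedlander--Iwaniec $\delta$-symbol method. Writing $\ell_2 m - \ell_1 n = qk$ with $k\in\Zz$ of size $\ll M/q$, I express the indicator of $k$ via a smooth partition involving additive characters to moduli $c\leq C$ for a parameter $C\asymp\sqrt{M/q}$ chosen to balance the two subsequent applications of Voronoi. This produces Kloosterman-type phases $e(\cdot/cq)$ weighting $\lambda_1(m)\lambda_2(n)$ against smooth test functions in $m$ and $n$.

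The second step applies the Voronoi formula from Corollary~\ref{corvoronoi} to both $m$- and $n$-sums, at levels $r_1$ and $r_2$ respectively; since $(q,r_1r_2)=1$ the moduli split and one obtains dual sums of length roughly $(cq)^2|r_i|/(\ell_i\cdot\text{scale})$ with Kloosterman sums $S(\cdot,\cdot;cq)$ or a product of Kloosterman sums modulo $c$ and modulo $q$ by reciprocity. After Cauchy--Schwarz in the outer variables, the inner sums of Kloosterman sums are opened by Kuznetsov's formula at a level divisible by $r_1 r_2$, converting them to a spectral average over Maa\ss\ forms, holomorphic forms, and Eisenstein series for that level. The Deshouillers--Iwaniec spectral large sieve then bounds the resulting quadratic forms in the Fourier coefficients, with the contribution of exceptional eigenvalues controlled by the Kim--Sarnak exponent $\theta=7/64$; this is the origin of the term $M^{3/2+\theta}/q$. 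The two main-term pieces $N/q^{1/2}+N^{3/4}M^{1/4}/q^{1/4}$ and the factor $1+(NM)^{1/4}/q^{1/2}$ arise from the balancing of $C$ and from the two Cauchy--Schwarz steps, exactly as in loc.\ cit.

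The only substantive issue in the adaptation is bookkeeping: one must verify that, after opening the Voronoi Kloosterman sums modulo $r_i$ (which are bounded trivially since $r_i$ is fixed) and after an application of reciprocity between the moduli $c$ and $q$, the resulting multiplicative structure in the surviving variables still matches the hypotheses of the Deshouillers--Iwaniec large sieve at level divisible by $r_1 r_2$. The parameters $\ell_1,\ell_2$ enter the spectral sum only through Hecke eigenvalues at coprime indices, so Hecke multiplicativity combined with the multiplicativity of the large sieve estimate gives uniformity in $\ell_1,\ell_2$. The sign variant with $\ell_2 m + \ell_1 n \equiv 0 \pmod q$ is treated identically, using the opposite-sign Bessel kernel $\mathcal{J}_-$ on one side of Voronoi, and produces the same final estimate.
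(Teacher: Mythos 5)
Your proposal misses two key structural features of the actual argument, and one of them would likely derail the proof as you have sketched it.

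First, the paper begins by \emph{splitting into two cases}, $N \asymp M$ and $N \geq 20M$, and these are handled by entirely different methods. When $N \asymp M$, the congruence $\ell_2 m - \ell_1 n \equiv 0 \pmod q$ is simply unfolded by writing $\ell_2 m \pm \ell_1 n = hq$ with $0 \neq h \ll M/q$, and Blomer's single shifted-convolution bound from \cite{Bl} gives $\ll M^{1/2+\theta+\varepsilon}$ per value of $h$; summing over $h$ produces exactly the term $M^{3/2+\theta}/q$. You attribute this term to the contribution of exceptional eigenvalues in the spectral large sieve inside the circle-method argument, but that is not where it comes from: the circle-method portion (your main argument, and the paper's case $N \geq 20M$) yields only the other two factors in the stated bound. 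Your proposal would therefore not reproduce the $M^{3/2+\theta}/q$ term in the claimed way, and would have to recover the near-diagonal regime separately.

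Second, the paper does \emph{not} use the Duke--Friedlander--Iwaniec $\delta$-symbol; it uses \emph{Jutila's circle method}, and this choice is essential to the adaptation you are trying to make. Jutila's method lets one select the approximating moduli $c$ from a prescribed arithmetic progression, and the paper exploits this by restricting to $c$ with $\ell_1 \ell_2 r_1 r_2 \mid c$ (the only change needed from \cite[\S 7--8]{BloMil} being to replace $\ell_1\ell_2 \mid c$ by $\ell_1\ell_2 r_1 r_2 \mid c$ and $Q$ by $(N|r_1 r_2|)^{1000}$). This makes the subsequent Voronoi step and the spectral decomposition at level $\Gamma_0(\ell_1\ell_2 |r_1 r_2|)$ immediate and keeps the $r_i$-dependence trivially polynomial. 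With the DFI $\delta$-method one obtains \emph{all} moduli $c \leq C$, and sorting out the gcd of $c$ with $r_1 r_2$ after Voronoi introduces substantial additional bookkeeping; the paper's choice of Jutila's method is precisely designed to avoid that. Your remark that "the only substantive issue is bookkeeping" is therefore optimistic: you have chosen the one detection method that makes the bookkeeping genuinely nontrivial, whereas the paper's choice makes the level adaptation essentially cosmetic.
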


\begin{proof} 
  If $N \asymp M$, we write $ \ell_2 m \pm \l_1 n = hq$ with
  $0 \not= h \ll M/q$. For each value of $h$, we use \cite{Bl} to
  bound the corresponding shifted convolution sum by
  $M^{1/2 + \theta +\varepsilon}$, so that we get a total contribution
  of
$$\ll  q^{-1}  M^{3/2 + \theta+\varepsilon}.$$
 
If $N \geq 20 M$, say, then the bound is a straightforward adaptation
of \cite{BloMil}*{Proposition 8} to cusp forms with general level. The key
observation is that Jutila's circle method allows us to impose extra
conditions on the moduli $c$. It is easiest to work with moduli $c$
such that $r_1r_2 \mid c$ (the condition $(c, r_1r_2) = 1$ would also
do the job). With this in mind, we follow the argument and the
notation of \cite{BloMil}*{Sections 7 and 8}. We replace the definition
\cite{BloMil}*{(7.1)} with $Q = (N|r_1r_2|)^{1000}$ (note that this has
no influence on the dependency of the implied constant on the levels,
since an important feature of Jutila's method is the fact that $Q$
enters the final bound only as $Q^{\varepsilon}$).  The definition of
the weight function $w$ in \cite{BloMil}*{(7.5)} is non-trivial only
for $\l_1 \l_2 r_1r_2 \mid c$, so that
$$\Lambda \gg C^2 \frac{\vphi(\l_1 \l_2 |r_1r_2|)}{(\l_1 \l_2 |r_1r_2|)^2 }$$ in
\cite{BloMil}*{(7.6)}. From there, the argument proceeds identically
with the Voronoi summation formula and the Kuznetsov formula for level
$|r_1r_2| \l_1 \l_2$. In \cite{BloMil}*{(8.1)}, we put
$\beta = {\rm lcm}(\l_1 , \l_2, d, r_1,r_2)$. Again the argument
proceeds verbatim as before. Wilton's bound in \cite{BloMil}*{Section
8.2} is polynomial in the level, see \cite{HaMi}*{Proposition
5}. The rest of the argument remains unchanged, except that the
level of the relevant subgroup for the spectral decomposition in
\cite{BloMil}*{}(7.14) and below is $\Gamma_0(\l_1 \l_2 |r_1r_2|)$
instead of $\Gamma_0(\l_1 \l_2)$; as a consequence, the sum over
$\delta$ before and after \cite{BloMil}*{(8.8)} must be over
$\delta \mid \l_1 \l_2r_1r_2$.

The changes that are required to handle the congruence
$\ell_2m + \ell_1 n \equiv 0\bmod{q}$ are explained in Section 11 of
\cite{BloMil}.
\end{proof}

\section{Partition of unity}

We will use partitions of unity repeatedly in order to decompose a
long sum over integers into smooth localized sums (see e.g.\
\cite{FoCrelle}*{Lemme 2}).

\begin{lemma}
There exists a smooth non-negative function $W(x)$ supported on
$[1/2,2]$ and satisfying \eqref{Wbound} such that
$$\sum_{k\geq 0}W\Bigl(\frac{x}{2^k}\Bigr)=1$$ for any $x\geq 1$.
\end{lemma}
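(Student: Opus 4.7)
The plan is to construct $W$ explicitly by a telescoping difference, which is the classical dyadic partition of unity construction. First, I would fix an auxiliary smooth, non-decreasing function $\chi \colon [0,\infty) \to [0,1]$ with $\chi(x)=0$ for $x\leq 1/2$ and $\chi(x)=1$ for $x\geq 1$; such a $\chi$ exists by a standard mollification of a piecewise-linear ramp, and all its derivatives are bounded by constants $C_j$ depending only on $j$. I would then define
$$
W(x) = \chi(x) - \chi(x/2).
$$

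Next I would verify the three required properties. For the support, note that $\chi(x)=0$ when $x\leq 1/2$, and $\chi(x/2)=1$ when $x\geq 2$, so $W(x)=0$ outside $[1/2,2]$. For non-negativity, monotonicity of $\chi$ combined with $x\geq x/2$ gives $\chi(x)\geq \chi(x/2)$, hence $W(x)\geq 0$. For the derivative bound \eqref{Wbound} (with $\mathscr{S}=\emptyset$, so the right-hand side is $\ll_j 1$), differentiating termwise yields
$$
W^{(j)}(x) = \chi^{(j)}(x) - 2^{-j}\chi^{(j)}(x/2),
$$
which is bounded by $(1+2^{-j})C_j \ll_j 1$.

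Finally, I would establish the partition identity by telescoping. For any $x\geq 1$ and any integer $K\geq 0$,
$$
\sum_{k=0}^{K} W\!\left(\frac{x}{2^k}\right) = \sum_{k=0}^{K} \Bigl(\chi(x/2^k) - \chi(x/2^{k+1})\Bigr) = \chi(x) - \chi(x/2^{K+1}).
$$
Since $x\geq 1$ gives $\chi(x)=1$, and since $x/2^{K+1}\to 0$ as $K\to\infty$ so that eventually $\chi(x/2^{K+1})=0$, letting $K\to\infty$ yields $\sum_{k\geq 0} W(x/2^k)=1$ for all $x\geq 1$, as desired. Note that for each fixed $x$, only two consecutive values of $k$ contribute to the sum (those with $2^{k-1}\leq x\leq 2^{k+1}$), so convergence is trivial.

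There is no real obstacle here; the only mild subtlety is choosing the auxiliary $\chi$ to be monotone so that non-negativity of $W$ comes for free, and to have all derivatives uniformly bounded so that \eqref{Wbound} follows directly. Both conditions are standard and can be arranged by a one-line convolution construction.
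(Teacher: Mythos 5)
Your construction is correct and is the standard dyadic partition-of-unity argument. The paper itself gives no proof here and simply cites Fouvry's paper (Lemme~2 of the reference), but the telescoping construction $W(x)=\chi(x)-\chi(x/2)$ with $\chi$ a smooth monotone ramp is precisely the expected one, and all three properties (support, non-negativity from monotonicity of $\chi$, and the derivative bounds via $W^{(j)}(x)=\chi^{(j)}(x)-2^{-j}\chi^{(j)}(x/2)$) are verified correctly.
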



\chapter{Algebraic exponential sums}
\label{ch-sums}

In this chapter, we will first summarize elementary orthogonality
properties of Dirichlet characters, then state and sketch some ideas
of the proofs of bilinear estimates with Kloosterman sums.  These are
the core results that we use in all main results of this book. In
Sections~\ref{sec-trace} and~\ref{sec-equi-mellin}, which are only
used later in Sections~\ref{sec-m1-trace} and~\ref{sec-mellin}, we
discuss briefly trace functions over finite fields, and the
equidistribution properties of their discrete Mellin transforms
(following Katz~\cite{Ka-CE}).

\section{Averages over Dirichlet characters}
\label{sec-chars}

Let $q$ be an odd prime.  Given a function $\tau$ defined on Dirichlet
characters modulo $q$, we will write
\begin{align*}
  \sump_{\chi\mods q} \tau(\chi)
  &=\sum_{\chi\mods
    q}\frac{1+\chi(-1)}2\tau(\chi),\\
  \summ_{\chi\mods q} \tau(\chi)
  &=\sum_{\chi\mods q}\frac{1-\chi(-1)}2 \tau(\chi),\\
  \sums_{\chi\mods q} \tau(\chi)
  &=\sum_{\substack{\chi\mods q\\ \chi
  \text{ primitive}}} \tau(\chi)
\end{align*}
for the sum of $\tau$ over even (resp. odd, primitive) Dirichlet
characters modulo $q$.
\par
We recall the basic orthogonality relations
\begin{equation}\label{average}
\begin{aligned}
  \frac{1}{\vphi(q)}\sum_\chi \chi(m)\ov{\chi(n)}&=\delta_{(mn,q)=1}\delta_{m\equiv n\mods q}, 
  \\
  \frac{2}{\vphi(q)}\sump_\chi
  \chi(m)\ov{\chi(n)}&=\delta_{(mn,q)=1}\delta_{m\equiv \pm n\mods
    q}. 
\end{aligned}
\end{equation}
\par
As in \eqref{gauss}, we denote
$$
\eps_\chi =\frac{1}{q^{1/2}}\sum_{h\mods
  q}\chi(h)e\Bigl(\frac{h}{q}\Bigr)
$$
the normalized Gau{\ss} sum of a character $\chi$ modulo $q$. If
$\chi=\chi_q$ is the trivial character, then we have
$\eps_{\chi_q}=- q^{-1/2}$. 
\par
Since we are interested in the distribution of root numbers, we will
need to handle moments of the Gau{\ss} sums. These are well-known
(see, e.g.,~\cite[Proof of Th. 21.6]{IwKo}): for any integer $k\geq 1$
and $(m,q)=1$, we have
\begin{equation*}
\frac{1}{\vphi(q)}\sum_\chi \chi(m)\eps_\chi^k=q^{-1/2}\Kl_k(\ov m;q),
\end{equation*}
where
\begin{equation}\label{eq-hypk}
\Kl_k(m;q)= \frac{1}{q^{\frac{k-1}2}}\sum_{x_1\cdots x_k=m\mods
  q}e\Bigl(\frac{x_1+\cdots+x_k}{q}\Bigr)
\end{equation}
is the normalized hyper-Kloosterman sum modulo $q$.  Consequently, we
have
\begin{equation}\label{averagesignprim}
  \frac{1}{\vphis(q)}\sums_{\chi\mods{q}} \chi(m)\eps_\chi^k=
  q^{-1/2}\Kl_k(\ov m;q) + O(q^{-1-|k|/2}).
\end{equation}
This formula remains true for $k=0$ if we define
$$
\Kl_0(m;q)=q^{1/2}\delta_{m=1\mods q}.
$$
Moreover, since
\begin{equation*}
\eps_{\chi}=\ov{\eps_{\chi}}^{-1}=\chi(-1)\eps^{-1}_{\ov\chi}
\end{equation*}
for primitive characters, the formula \eqref{averagesignprim} extends
to negative $k$ when we define
$$
\Kl_k(m;q):=\Kl_{|k|}((-1)^k \ov{m};q), \quad k \leq -1.
$$

Similarly, we obtain
\begin{equation}\label{averagesigneven}
  \frac{2}{\vphis(q)}\sump_{\chi \text{ primitive}} 
  \chi(m)\eps_\chi^k=\frac{1}{q^{1/2}}\sum_\pm\Kl_k(\pm\ov m;q)  
  + O(q^{-1-|k|/2}).
\end{equation}
for the sum restricted to even characters only. 

The following deep bound of Deligne is essential at many points, in
particular it implies the equidistribution of angles of Gau\ss\ sums.


\begin{proposition}[Deligne]\label{lem-kloos}
  Let $k$ be a non-zero integer. For any prime $q$ and any integer $m$
  coprime to $q$, we have
$$
|\Kl_k(m;q)|\leq |k|.	
$$
\end{proposition}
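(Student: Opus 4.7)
The plan is to realize $\Kl_k(m;q)$ as a trace of Frobenius on the \emph{Deligne hyper-Kloosterman sheaf} $\sheaf{K}\ell_k$ and then invoke the Riemann Hypothesis over finite fields (Deligne's Weil II). This is the standard route, and it is essentially the only one known for $k \geq 3$.

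First, I would record a recursion: writing $x_k = m\bar{x}_1\cdots \bar{x}_{k-1}$ and summing the last variable separately, one checks
\[
\Kl_k(m;q) = \frac{1}{q^{1/2}}\sum_{y\in\Ff_q^\times}\Kl_{k-1}(m\bar y;q)\,e\Bigl(\frac{y}{q}\Bigr),\qquad \Kl_1(m;q)=e(m/q).
\]
This exhibits $\Kl_k$ as the multiplicative convolution of $\Kl_{k-1}$ with the Artin--Schreier trace function $y\mapsto e(y/q)$ on $\Gm$, normalized by $q^{1/2}$.

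Next I would invoke Deligne's construction (as packaged by Katz in \emph{Gauss Sums, Kloosterman Sums and Monodromy Groups}): for any prime $\ell\neq p$ there exists a middle-extension $\bQl$-sheaf $\sheaf{K}\ell_k$ on $\GmFq$, obtained by iterated $!$-pushforward along $\Gm^{k}\to \Gm$, $(x_1,\dots,x_k)\mapsto x_1\cdots x_k$, of the Artin--Schreier sheaf $\sheaf{L}_{\psi(x_1+\cdots+x_k)}$, placed in degree $k-1$. This sheaf is lisse on $\Gm$ of rank exactly $k$, and its trace function is
\[
(-1)^{k-1}\Kl_k(m;q) = \Tr(\frob_{m,q} \mid \sheaf{K}\ell_{k,\bar m}),\qquad m\in \Fqt.
\]
The key analytic input is then Deligne's purity theorem: because each Artin--Schreier factor is pure of weight $0$, and the relevant morphism is affine of relative dimension $k-1$, Weil~II shows $\sheaf{K}\ell_k$ is pure of weight $0$. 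Hence all $k$ eigenvalues $\alpha_1,\dots,\alpha_k$ of $\frob_{m,q}$ on the stalk satisfy $|\alpha_i|=1$, and the triangle inequality gives
\[
|\Kl_k(m;q)| = \Bigl|\sum_{i=1}^k \alpha_i\Bigr|\leq k.
\]

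The main obstacle — and the only non-formal step — is the purity statement. Everything else (the recursion, the fact that the convolution construction produces a lisse rank-$k$ sheaf with the correct trace function, the comparison of signs) is a bookkeeping exercise in the formalism of $\ell$-adic cohomology with compact supports and proper base change. Purity, however, really requires the full strength of Deligne's theorem, which is precisely the reason no elementary proof of the bound $|\Kl_k(m;q)|\leq k$ is available for $k\geq 3$ — in contrast to $k=2$, where one can appeal to Weil's original proof via the zeta function of an affine curve.
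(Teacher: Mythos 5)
Your sketch is correct and is exactly the argument underlying the citation: the paper itself gives no proof but simply refers to Deligne's \emph{Weil~I, II} (and implicitly Katz's development of the Kloosterman sheaves $\HYPK_k$), which is the route you reconstruct via the rank-$k$ lisse sheaf on $\Gm$, purity of weight $0$, and the triangle inequality on the $k$ Frobenius eigenvalues. One small precision worth flagging: what gives rank exactly $k$ and lisseness on all of $\Gm$ is Deligne's computation of the $R^i$ for this particular iterated convolution (only $R^{k-1}$ survives and it is a local system of rank $k$); this is not a purely formal consequence of affineness, and is part of what Katz packages together with the purity statement.
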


\begin{remark}
  To avoid confusion, we will never use the notation~$k$ to refer to a
  finite field.
\end{remark}

This was proved by Deligne~\cites{WeilI,WeilII} as a consequence of his
general form of the Riemann Hypothesis over finite fields. For $k=2$,
it is simply the Weil bound for classical Kloosterman sums.

\section{Bounds for Kloosterman sums}

In this section we recall various bounds for sums of Kloosterman sums
which will be required in some of our applications. For a prime $q$
and an integer $a$ coprime with $q$, we define
\begin{equation}\label{eq-bilform}
B(\Kld,\bfalpha,\bfbeta)=\sumsum_{m\leq M,\ n\leq
  N}\alpha_m\beta_n\Kld(amn;q),
\end{equation}
where $\bfalpha=(\alpha_m)_{1\leq m\leq M}$,
$\bfbeta=(\beta_n)_{1\leq n\leq N}$ are sequences of complex
numbers. We write
$$
\| \bfalpha \|^2_2=\sum_{m\leq M}|\alpha_m|^2, \quad  \| \bfbeta
\|^2_2=\sum_{n\leq N}|\beta_n|^2.
$$

The following bound is a special case of a result of Fouvry, Kowalski
and Michel~\cite{FKM2}*{Thm.\ 1.17}.

\begin{proposition}\label{CSPV} 
  For any $\eps>0$, we have
\begin{equation*}
B(\Kld,\bfalpha,\bfbeta)\ll_{\eps} q^{\eps}\| \bfalpha \|_2 \| \bfbeta \|_2(MN)^{1/2}  \Bigl(\frac{1}{q^{1/4}}+\frac{1}{M^{1/2}}+\frac{q^{1/4}}{N^{1/2}}\Bigr),
\end{equation*} 
uniformly for $(a,q)=1$.
\end{proposition}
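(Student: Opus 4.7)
The plan is to follow the now-standard Cauchy--Schwarz plus Pólya--Vinogradov completion plus Deligne scheme for bilinear forms with trace functions. I would first apply the Cauchy--Schwarz inequality on the $\bfbeta$-side, obtaining
$$|B(\Kld, \bfalpha, \bfbeta)|^2 \leq \|\bfbeta\|_2^2 \sum_{m_1, m_2 \leq M} \alpha_{m_1}\overline{\alpha_{m_2}}\, U(m_1, m_2),$$
where
$$U(m_1, m_2) := \sum_{n \leq N} \Kld(am_1 n;q)\,\overline{\Kld(am_2 n;q)}.$$
The strategy is then to treat separately the diagonal pairs $m_1 \equiv m_2 \pmod q$ and the off-diagonal ones.

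For the diagonal, the pointwise Weil bound $|\Kld(\cdot;q)| \leq 2$ of Proposition~\ref{lem-kloos} immediately gives $U(m_1, m_1) \leq 4N$. Summing over $m_1 \leq M$ with weights $|\alpha_{m_1}|^2$ yields a total diagonal contribution of $\ll N \|\bfalpha\|_2^2 \|\bfbeta\|_2^2$ to $|B|^2$, which after taking a square root is responsible for the first term $\|\bfalpha\|_2 \|\bfbeta\|_2 N^{1/2}$ in the stated bound. For the off-diagonal pairs, I would complete the $n$-sum modulo $q$ by the standard Pólya--Vinogradov argument, writing
$$U(m_1, m_2) = \frac{N}{q}\, \widetilde{U}(m_1, m_2) + O\Bigl(\log q \cdot \max_{h\,\bmod\, q,\, h \neq 0} |\widehat{U}(m_1, m_2; h)|\Bigr),$$
where $\widetilde{U}$ denotes the complete sum over $n \bmod q$ and $\widehat{U}(m_1, m_2; h) = \sum_{n \bmod q} \Kld(am_1 n;q)\,\overline{\Kld(am_2 n;q)}\, e(-nh/q)$ are its additive Fourier coefficients.

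A direct orthogonality computation using the definition of $\Kld$ shows that $\widetilde{U}(m_1, m_2) = -1$ when $m_1 \not\equiv m_2 \pmod q$, so its contribution is harmless. Each Fourier coefficient $\widehat{U}(m_1, m_2; h)$ reduces after a change of variable to a twisted correlation sum of Kloosterman sums of the shape $\sum_{u \bmod q} \Kld(u;q)\, \Kld(\lambda u;q)\, e(\mu u/q)$ with $\lambda = m_1 \bar{m}_2 \not\equiv 1$. Deligne's Riemann Hypothesis over finite fields, combined with Katz's determination of the geometric monodromy of the Kloosterman sheaf, bounds each such sum by $O(q^{1/2})$. Summing the off-diagonal contributions with $\|\bfalpha\|_1^2 \leq M\|\bfalpha\|_2^2$ yields a total of $\ll M q^{1/2+\varepsilon} \|\bfalpha\|_2^2 \|\bfbeta\|_2^2$, producing after square root the second term $\|\bfalpha\|_2 \|\bfbeta\|_2 M^{1/2} q^{1/4+\varepsilon}$. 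Since $N^{1/2} + M^{1/2} q^{1/4} = (MN)^{1/2}\bigl(M^{-1/2} + q^{1/4} N^{-1/2}\bigr)$, the claim follows. The main obstacle is the square-root cancellation bound for the twisted correlation sum above: this goes beyond elementary harmonic analysis, resting essentially on Deligne's theorem together with the absence of geometric isomorphisms between non-trivial shifts of the Kloosterman sheaf, and the careful handling of the few exceptional ``bad'' shifts is precisely what is carried out in detail in~\cite{FKM2}.
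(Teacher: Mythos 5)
Your proposal follows the same strategy the paper indicates, namely the one from~\cite{FKM2}: Cauchy--Schwarz on the $\bfbeta$-side, completion of the $n$-sum modulo $q$, and a square-root bound on the resulting twisted autocorrelation sums $\sum_{n\in\Fq}\Kld(am_1n;q)\Kld(am_2n;q)e(hn/q)$. Your computation of the complete sum at $h=0$ (Ramanujan-type, equal to $-1$ off the diagonal $m_1\equiv m_2$) is correct, and your identification of the shape of the correlation sums for $h\neq 0$, with $\lambda=m_1\bar m_2\not\equiv 1$, is also correct. The bound $O(q^{1/2})$ you invoke is what makes the numerology work out.

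Where you and the paper diverge is in how that $O(q^{1/2})$ bound is obtained. You reach for the general machinery: Deligne's Riemann Hypothesis over finite fields together with Katz's determination of the geometric monodromy of the Kloosterman sheaf, and you correctly flag that the diagonal/bad shifts need care, as handled in~\cite{FKM2}. The paper, however, notes a more elementary shortcut available precisely because the trace function is $\Kld$: since $\Kld(\cdot;q)$ is the discrete Fourier transform of $x\mapsto e(\bar x/q)$, the complete correlation sum can be evaluated by discrete Plancherel and in fact collapses to a single phase times a single classical Kloosterman sum $\Kld(a^2m_1m_2/h^2;q)$ for $h\in\Fqt$. The $O(q^{1/2})$ bound is then just Weil's 1948 bound, with no \'etale cohomology or monodromy computation needed, and the exceptional-shift bookkeeping disappears entirely. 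So your proof is correct but heavier than necessary for this particular trace function; for a general trace function your route is the one that works, which is why the paper points out both.

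One small technicality worth noting: your bound on the number of diagonal pairs $m_1\equiv m_2\pmod q$ implicitly assumes $M\lesssim q$ (otherwise each residue class can occur $\asymp M/q$ times), and your off-diagonal main-term contribution $\ll MN/q$ is dominated by the stated bound only when $N\lesssim q^{3/2}$. In the ranges where the proposition is actually used (all parameters $\leq q^{O(1)}$ with $MN$ around $q$) this is harmless, and the full argument in~\cite{FKM2} handles arbitrary ranges by periodicity reductions, but it is worth being aware that the clean dyadic split you wrote is not uniform in $M,N$ without such a reduction.
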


We will also need the following bound which is a special case of
another result of Fouvry, Kowalski and Michel~\cite{FKM1}*{Thm.\ 1.2 and
(1.3)}: 

\begin{proposition}\label{propFKM1} 
  For any
  primitive cusp form $f$ with trivial central character and level
  $r$, and any smooth function $W$ satisfying \eqref{Wbound}, we have
$$
\sum_{n\geq 1}\lf(n)\Kl_k(an;q)W\Bigl(\frac{n}N\Bigr) \ll_{\eps} q^{\eps+1/2-1/8}{N}^{1/2}\Bigl(1+\frac{N}q\Bigr)^{1/2}
$$
for any $(a,q)=1$, any $k\in\Zz-\{0\}$, any integer $N\geq 1$ and any
$\eps>0$, where the implied constant depends polynomially on $f$ and  $k$ (and the parameters $\mathscr{S}$ that $W$ depends on). 
\end{proposition}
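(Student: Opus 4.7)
This is a special case of the main theorem of Fouvry–Kowalski–Michel \cite{FKM1} on sums of Hecke eigenvalues against trace functions modulo a prime, and I would specialize their strategy to $K(n)=\Kl_k(an;q)$.

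First, I would dualize by Voronoi summation. The function $n\mapsto K(n)$ is the $q$-periodic trace function of the hyper-Kloosterman sheaf $\mathcal{K}\ell_k$, geometrically irreducible, pure of weight zero, of bounded Swan and Artin conductors; its normalized Fourier transform $\widehat{K}$ vanishes at $h=0$ (since $\sum_{n\bmod q}\Kl_k(an;q)=0$ for $k\geq 1$) and is uniformly bounded in $h$ by Deligne. Decomposing $K(n)$ in its discrete Fourier series modulo $q$ and applying Voronoi summation mode by mode via Corollary~\ref{corvoronoi} would yield
$$S=\frac{N\,\eps(f)}{q|r|^{1/2}}\sum_{\pm}\sum_{m\geq 1}\lf(m)\,\widecheck{K}^{\pm}(m)\,\widetilde{W}_{\pm}\!\left(\frac{mN}{q^{2}|r|}\right)+O(q^{-A}),$$
where $\widecheck{K}^{\pm}$ is the Voronoi transform of $K$ (still a trace function of bounded conductor, essentially $\mathcal{K}\ell_{k+1}$), and the effective range of $m$ is $m\leq M\asymp q^{2+\eps}/N$. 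The trivial bound at this stage only recovers the square-root barrier; the further saving of $q^{1/8}$ must come from additional averaging.

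Second, I would apply a dyadic Cauchy–Schwarz in $m$ to drop $\lf(m)$, using the Rankin–Selberg bound $\sum_{m\sim M}|\lf(m)|^{2}\ll M$ to handle one factor. Opening the square and applying Poisson summation modulo $q$ in the two $m$-variables would convert the remaining bilinear expression into a weighted sum over shifts $\tau\bmod q$ of the correlation sums
$$\mathcal{C}(K;\tau)=\sum_{x\bmod q}\widecheck{K}^{\pm}(x)\,\overline{\widecheck{K}^{\pm}(x+\tau)}.$$
Deligne's Riemann Hypothesis, combined with Katz's computation of the geometric monodromy of the hyper-Kloosterman sheaves, then gives $|\mathcal{C}(K;\tau)|\ll_{k}q^{1/2}$ for $\tau\neq 0$ (since $\mathcal{K}\ell_{k}$ is not geometrically isomorphic to any of its non-trivial additive translates), while the diagonal $\tau=0$ contributes at most $\sum_{x}|\widecheck{K}^{\pm}(x)|^{2}\ll q$. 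Substituting these into the Poisson expansion, combining with the weight $N/(q|r|^{1/2})$ from Voronoi, and accounting for the length $\asymp N/q$ of the effective support of $\widetilde{W}_{\pm}$ (which produces the factor $(1+N/q)^{1/2}$), would then yield the announced estimate.

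The hardest step is the correlation-sum bound $|\mathcal{C}(K;\tau)|\ll q^{1/2}$ for $\tau\neq 0$, i.e., verifying that the Voronoi-dual sheaf has no non-trivial geometric isomorphisms with its additive translates. This rests on Katz's explicit determination of the geometric monodromy groups of the hyper-Kloosterman sheaves (namely $\mathrm{SL}_{k}$ or $\mathrm{Sp}_{k}$ depending on the parity of $k$), on a careful control of any exceptional shifts, and on tracking the polynomial dependence of all implied constants on $k$. This algebraic-geometric analysis—rather than the Voronoi/Poisson manipulations—is the substance of the proof in \cite{FKM1}, and ultimately depends on Deligne's most general form of the Riemann Hypothesis over finite fields.
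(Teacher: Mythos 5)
Your sketch is not the method behind this proposition, and as written it has a genuine gap. The paper does not reprove the estimate; it cites \cite[Thm.~1.2 and (1.3)]{FKM1}, whose argument is described in Section~3 of Chapter~\ref{ch-sums}: one amplifies over a basis $\mathcal{B}$ of Hecke eigenforms of level $qr$, estimates $\Sigma=\sum_{g\in\mathcal{B}}|A(g)|^2\bigl|\sum_n\lambda_g(n)\Kl_k(an;q)W(n/N)\bigr|^2$ via the Petersson--Kuznetsov formula, and in the off-diagonal terms meets correlation sums of the form $\mathcal{C}(\Kld;\gamma)=\sum_{z}\widehat{\Kld}(\gamma\cdot z;q)\,\overline{\widehat{\Kld}(z;q)}$ for \emph{specific $\gamma\in\PGL_2(\Fq)$} arising from the Kuznetsov Kloosterman sums $S(m,n;c)$; these are then handled by Deligne/Katz. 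It is the averaging over the family $\mathcal{B}$, not Voronoi duality, that provides the extra structure giving the $q^{1/8}$ saving.

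Your second step does not close. After Voronoi you are looking at $S'=\sum_{m\leq M}\lf(m)\,\widecheck{K}(m)V(m)$ with $M\asymp q^2/N$, and the critical range is $q^{3/4}\leq N\leq q$, where $M$ is between $q$ and $q^{5/4}$ and you still need to save a factor $q^{1/8}$ over trivial. If you Cauchy--Schwarz to drop $\lf(m)$ as you describe, you are left with $\bigl(\sum_m|\lf(m)|^2\bigr)^{1/2}\bigl(\sum_m|\widecheck{K}(m)|^2V(m)^2\bigr)^{1/2}\ll M$: the second factor is a \emph{diagonal} sum in a single variable, so there is no genuine ``opening the square in two $m$-variables'' and no reason for the additive-shift correlation sums $\mathcal{C}(K;\tau)$ to appear. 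Without some form of extra averaging — which in \cite{FKM1} is supplied by the amplification over $g\in\mathcal{B}$, and in the simplest Dirichlet-character case $(k=2)$ by Bykovski\u i's or Blomer--Harcos's use of the spectral large sieve — the trivial bound $\approx q$ at $N\asymp q$ cannot be beaten by Voronoi plus Cauchy--Schwarz plus Poisson. Moreover, even the geometric input is weaker than what is actually needed: your $\mathcal{C}(K;\tau)$ with $\tau\in\Fq$ is only the additive-translate case, whereas FKM1 must treat $\mathcal{C}(K;\gamma)$ for arbitrary $\gamma\in\PGL_2(\Fq)$ (translations, dilations, and inversions combined), which requires ruling out geometric isomorphisms of $\widehat{\HYPK_k}$ with all its $\PGL_2$-pullbacks, not merely its additive shifts.
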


The last estimate we require was conjectured by Blomer, Fouvry,
Kowalski, Michel and Mili\'cevi\'c in \cite{BFKMM}, and was proved by
Kowalski, Michel and Sawin~\cite{KMS}*{Thm.\ 1.1}:

\begin{proposition}\label{thmtypeII} 
  Suppose that $M,N\geq 1$ satisfy
$$
1\leq M\leq Nq^{1/4},\quad 
MN\leq q^{5/4}.
$$
For any $\eps>0$, we have
\begin{equation}\label{typeIIgen}
  B(\hypk_2,\uple{\alpha},\uple{\beta})
  \ll_\eps q^\eps\|\bfalpha\|_2\|\bfbeta\|_2(MN)^{\frac12}
  \left(\frac{1}{M^{1/2}}+q^{-\frac1{64}}\Bigl(\frac{q}{MN}\Bigr)^{\frac{3}{16}}  \right),
\end{equation}
uniformly for $(a,q)=1$.
\end{proposition}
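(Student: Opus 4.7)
The first term $M^{-1/2}$ in the bound corresponds to the estimate obtained from one round of the $q$-van der Corput method (Cauchy--Schwarz followed by Poisson completion modulo $q$), so the plan is to establish this first as a warm-up and then concentrate on the deeper second term. Starting from Cauchy--Schwarz in the $n$-variable, one obtains
$$
|B(\Kld,\bfalpha,\bfbeta)|^2 \leq \|\bfbeta\|_2^2 \sumsum_{m_1,m_2\leq M}\alpha_{m_1}\overline{\alpha_{m_2}} \sum_{n\leq N}\Kld(am_1n;q)\overline{\Kld(am_2n;q)}.
$$
Smoothing the inner $n$-sum and completing it modulo $q$ via Poisson summation reduces the problem to estimating a dual sum over frequencies $h\mods q$ of the correlation sums
$$
C(h;m_1,m_2)=\frac{1}{q}\sum_{x\in\Fqt}\Kld(am_1x;q)\overline{\Kld(am_2x;q)}\,e\Bigl(\frac{hx}{q}\Bigr),
$$
weighted by the Fourier transform of the smooth cutoff. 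By Deligne's Riemann Hypothesis together with Katz's analysis of the Kloosterman sheaf, one has $C(h;m_1,m_2)\ll q^{-1/2}$ outside a thin exceptional locus (essentially $m_1=m_2$ together with $h=0$). Estimating the $m_1,m_2$ sum trivially at this stage yields the $M^{-1/2}$ term.

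To reach the exponent $q^{-1/64}$ the idea is to iterate: instead of closing the estimate after one completion, I would recognize that the $(m_1,m_2)$ double sum still carries a bilinear structure with algebraic coefficients in $m_1,m_2$, and apply a second round of Cauchy--Schwarz together with a second Poisson completion. This reduces the task to bounding sums of the shape
$$
\sum_{y\in\Fqt}\Kld(a_1y;q)\overline{\Kld(a_2y;q)}\Kld(a_3y;q)\overline{\Kld(a_4y;q)}\,e\Bigl(\frac{hy}{q}\Bigr),
$$
averaged over admissible tuples $(a_1,a_2,a_3,a_4,h)$. Each such sum is $O(q^{1/2})$ provided the tensor product of four shifted Kloosterman sheaves twisted by an additive--character sheaf on $\Gm$ is geometrically irreducible with bounded Swan conductor; Deligne's theorem then provides square-root cancellation. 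Combining this with careful bookkeeping of exceptional tuples, and balancing against the hypotheses $M\leq Nq^{1/4}$ and $MN\leq q^{5/4}$, should produce the claimed bound.

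The main obstacle is precisely the algebro-geometric input at the second step: one must rule out ``accidental'' isomorphisms between shifts of Kloosterman sheaves (which would cause the monodromy of the fourfold tensor product to degenerate and destroy square-root cancellation), control the size of the exceptional set of shifts where such degeneracies occur, and estimate the Swan conductor of the twisted sheaf uniformly in the parameters. The specific exponent $1/64$ emerges from optimizing the two Cauchy--Schwarz losses against the length of the completed sums subject to the constraints on $M$, $N$ and $q$; improving it would require either a stronger correlation bound for Kloosterman sheaves or a genuinely different analytic scheme.
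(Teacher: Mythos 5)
The result you are asked to discuss is not proved in the paper; it is quoted from Kowalski--Michel--Sawin \cite[Thm.\ 1.6]{KMS}, and what the paper gives is a detailed sketch of their argument. Your sketch diverges from theirs at the crucial point, and I do not believe your version of the argument goes through.

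Your first step (Cauchy--Schwarz in $n$, then Poisson completion of the inner $n$-sum modulo $q$) is exactly how one recovers the \emph{weaker} Proposition~\ref{CSPV}, whose second term $q^{1/4}N^{-1/2}$ is useless in the critical range $M\sim N\sim q^{5/8}$. The whole point of the stronger bound is that $N$ is too short for completion to help — the paper says this explicitly — so one must not Poisson-complete the $n$-sum. Instead, as in Burgess's bound for short character sums, one applies the ``shift by $ab$'' trick of Karatsuba--Vinogradov: using the multiplicativity of the argument $amn$, one reparametrizes the $n$-sum over a short progression, then takes a high even moment in the shift variable, and only \emph{then} does one have enough averaging to complete. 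Your proposed iteration — a second Cauchy--Schwarz in $(m_1,m_2)$ followed by another Poisson completion — does not mimic this: after completing in $n$, the dependence on $m_1,m_2$ enters through the product $\Kld(am_1\cdot)\Kld(am_2\cdot)$ multiplicatively, there is no additive structure for a second Poisson step to exploit, and in any case the $m$-ranges are themselves sub-$\sqrt q$. A straight $q$-van der Corput iteration as you describe would not produce the gain $q^{-1/64}(q/MN)^{3/16}$.

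Consequently, the complete exponential sum you identify is not the right one. You propose a one-variable sum of four (shifted) Kloosterman factors against an additive character, bounded by $O(\sqrt q)$ via Deligne. What the Burgess mechanism actually produces is the three-variable correlation sum appearing in Theorem~\ref{KMSCES}: one forms
$R(r,\lambda,\uple b)=\sum_{s\in\Fqt}e(\lambda s/q)\prod_{i\le 4}\Kld(s(r+b_i))$
and then needs $\sum_{r\in\Fq} R(r,\lambda_1,\uple b)\overline{R(r,\lambda_2,\uple b)}=q^2\delta(\lambda_1,\lambda_2)+O(q^{3/2})$ for \emph{most} $\uple b$. This is a much deeper statement than square-root cancellation in a one-variable sum; the monodromy analysis is carried out over a five-dimensional parameter space, and — a genuine obstruction you acknowledge but understate — the exceptional locus of $\uple b$ where it fails has codimension one, unlike the explicit diagonal locus in the classical Burgess setting. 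That codimension-one exceptional set is precisely why the method cannot be pushed to higher moments (and hence shorter ranges): the constraints $M\le Nq^{1/4}$ and $MN\le q^{5/4}$ are not incidental but reflect the structure of the argument. Your outline therefore misses both the analytic skeleton (Burgess shift rather than iterated Poisson) and the specific shape of the geometric input.
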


\begin{remark} 
  (1) The point of this result, in comparison with
  Proposition~\ref{CSPV} (which applies in much greater generality
  than~\cite{KMS}) is that it is non-trivial even in ranges where
  $MN<q$. More precisely,  Proposition~\ref{thmtypeII} gives a
  non-trivial estimate as soon as $MN\geq q^{7/8+\delta}$ for some
  $\delta>0$.
\par
(2) Notice that we may always assume in addition that $MN>q^{1/4}$ (as
in \cite{KMS}), since otherwise the bound \eqref{typeIIgen} is implied
by the trivial bound
  $$
  B(\hypk_2,\uple{\alpha},\uple{\beta})\ll
  \|\bfalpha\|_2\|\bfbeta\|_2(MN)^{1/2}.
$$
\end{remark}

\section{Sketch of the arguments}
	
We summarize here the key ideas of the proofs of the estimates of the
previous section. We hope that this informal discussion will be
helpful to readers yet unfamiliar with the tools involved in the use
of trace functions and of Deligne's form of the Riemann Hypothesis
over finite fields.

\emph{Trace functions} modulo a prime $q$ are functions on $\Fq$ attached to
$\ell$-adic sheaves on the affine line or on the multiplicative group
over $\Fq$ (here $\ell$ is a prime number different from $q$).  We
will not recall precise definitions of trace functions
(see~\cite{Pisa} for an accessible survey), but we note that $\hypk_2$ is a fundamental example of a trace function, and will give further examples later.

\par
\medskip
\par
Proposition \ref{CSPV} is proved by the use of the Cauchy-Schwarz
inequality to eliminate the arbitrary coefficients $\beta_n$, and then
by completing the sum in the $n$ variable to obtain sums over
$\Fq$. This reduces the proof to the estimation of correlation sums
$$
\sum_{n \in \Fq} \Kld(a m_1 n;q) \Kld(a m_2 n;q) e\left(
  \frac{hn}{q} \right).
$$ 
In the generalization of this problem considered in~\cite{FKM2}, where $\hypk_2$ is replaced with a general trace function, such sums are estimated
using Deligne's most general form of the Riemann hypothesis over
finite fields~\cite{WeilII}. This argument gives square-root
cancellation for sums of trace functions over algebraic curves, as
long as an associated cohomology group vanishes, and this vanishing
reduces to an elementary problem of representation theory for the
geometric monodromy group of the sheaves associated to these trace
functions (in fact, one that does not require knowing precisely what
the monodromy group is). 
\par
However, a number of special cases, including this one, have a more
elementary proof. In this case, the bound follows directly from Weil's
bound for Kloosterman sums, since one can check elementarily that
$$
\sum_{n \in \Fq} \Kld(a m_1 n;q) \Kld(a m_2 n;q) e\left( \frac{hn}{q}
\right)= \sum_{y\in\Fqt}
e\Bigl(\frac{am_2\bar{y}-am_1\overline{(h+y)}}{q}\Bigr)
$$
which, for $h\in \Fqt$, is equal to
$$
q^{1/2} e\left( \frac{ -a\bar{h} (m_1 +m_2)}{q} \right) \Kld\left(
  \frac{a^2 m_1 m_2}{h^2};q \right)
$$
(such an identity is to be expected since $\Kld(a;q)$ is a discrete
Fourier transform of $x\mapsto e(\bar{x}/q)$, so that the correlation
sum can be evaluated by the discrete Plancherel formula).

\par
\medskip
\par 
Proposition \ref{propFKM1} is proven using the amplification
method. This involves amplifying over modular forms, which means that
the sum is enlarged dramatically to a sum of similar expressions over
a basis $\mathcal{B}$ of Hecke eigenforms $g$ of the space of modular
forms of the same weight as $f$ and of level $pr$ (both holomorphic
and non-holomorphic), so that the Petersson-Kuznetsov formula may be
applied. Here we view $f$, a form of level $r$, as being of level
$pr$, and we can assume that $f\in\mathcal{B}$.
\par
To get a nontrivial bound using this approach, it is necessary to
insert an amplifier $A(g)$, which is here a weighted sum of Hecke
eigenvalues, of the form
$$
A(g)=\sum_{\ell\leq L}\alpha_{\ell}\lambda_g(\ell),
$$
for $g\in\mathcal{B}$, chosen so that $A(g)$ is ``large''. We hope to
get an upper bound for
$$
\Sigma=\sum_{g\in\mathcal{B}} |A(g)|^2 \Bigl| \sum_{n\geq
  1}\lambda_g(n)\Kl_k(an;q)W\Bigl(\frac{n}N\Bigr) \Bigr|^2,
$$
in order to claim by positivity that
$$
\Bigl| \sum_{n\geq 1}\lambda_f(n)\Kl_k(an;q)W\Bigl(\frac{n}N\Bigr)
\Bigr|^2\leq \frac{\Sigma}{|A(f)|^2}.
$$
\par
The application of the Kuznetsov formula produces a complicated sum on
the arithmetic side. In the off-diagonal terms of the
amplified sums, we see that correlation sums of the following shapes
$$ 
\mathcal C(\Kld;\gamma) = \sum_{z \in \Fq} \widehat{\Kld}(
\gamma \cdot z;q ) \overline{\widehat{\Kld}( z;q)},
$$ 
appear, for certain quite specific $\gamma \in \PGL_2(\Fq)$. In the
generalized version, one again uses Deligne's Theorem to estimate such sums;
this involves separating the possible $\gamma$ for which there is no
square-root cancellation, and exploiting the fact that they are very
rare, except for very special input sheaves, and cannot coincide too
often with the ``special'' $\gamma$ that occur in the application of
the Kuznetsov formula.  
\par
Here also, in the special case of the Kloosterman sums that we are
dealing with, whose Fourier transform is $x\mapsto e(\bar{x}/q)$, the
estimate for correlation sums reduce to Weil's bound for Kloosterman
sums (see \cite{FKM1}*{1.5(3)}).  In the simplest case of Dirichlet
characters, this method was pioneered by Bykovski\u i \cite{By}, cf.\
also \cite{BH}. However, for general hyper-Kloosterman sums $\Kl_k$,
it seems very unlikely that a similarly elementary argument exists to
prove this bound.
\par
\medskip
\par
Finally, the proof of Proposition \ref{thmtypeII} is by far the most
difficult and involves highly non-trivial algebraic geometry. In
particular, it uses heavily some special properties of Kloosterman
sums, and does not apply to an arbitrary trace function (although one
can certainly expect that a similar result should be true for any
trace function that is not an additive character times a
multiplicative character, in which case it is trivially false).

The completion step in this case should be thought of as primarily
an analogue of the Burgess bound~\cite{Bur} for short sums of
Dirichlet characters. Similarly to the standard proof of the Burgess
bound, we use the multiplicative structure of the function $\Kld(amn)$
to bound sums over intervals of length smaller than $\sqrt{q}$ by
reducing them to high moments of sums over even shorter intervals,
which themselves can be controlled by more complicated complete
sums. 
\par
More precisely, we begin in the same way as in the proof of
Proposition~\ref{CSPV} by applying the Cauchy-Schwarz inequality to
eliminate the coefficients $\beta_n$. However, the resulting sum over
$n$ is now too short to be usefully completed directly. Instead, we
apply the Burgess argument in the form of the ``shift by $ab$'' trick
of Karatsuba and Vinogradov. This ends up reducing the problem to the
estimation of certain complete exponential sums in three
variables. The key result that we need to prove (a special case of
\cite{KMS}*{Theorem 2.6}) is the following:

\begin{theorem}\label{KMSCES} 
For a prime $q$, for $r\in \Fq$, $\lambda\in\Fq$ and
$\uple{b}=(b_1,\ldots,b_4)\in\Ff_q^4$, let
$$
R(r, \lambda,\uple{b}) = \sum_{s \in \Fqt} e\left( \frac{\lambda a}{q}
\right) \prod_{i=1}^2 \Kld (s (r+b_i)) \Kld(s(r+b_{i+2})).
$$
Then we have
\begin{equation}\label{kms-correlation-sum} 
  \sum_{r \in \Fq} R(r, \lambda_1,\uple{b})
  \overline{R(r,\lambda_2,\uple{b})} = q^2
  \delta(\lambda_1,\lambda_2) + O(q^{3/2})
\end{equation} 
for all $\uple{b}$, except those that satisfy a certain non-trivial
polynomial equation $Q(\uple{b})=0$ of degree bounded independently of
$q$.
\end{theorem}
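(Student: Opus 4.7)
The plan is to begin from the explicit formula
$\Kld(a;q) = q^{-1/2} \sum_{x \in \Fqt} e((x + a \bar x)/q)$ and rewrite
$\sum_r R(r, \lambda_1, \uple{b}) \overline{R(r, \lambda_2, \uple{b})}$ as
\[
\sum_{r, s, t \in \Fq} e\Bigl(\frac{\lambda_1 s - \lambda_2 t}{q}\Bigr) \prod_{i=1}^{4} \Kld(s(r+b_i);q) \Kld(t(r+b_i);q),
\]
using that $\Kld$ is real-valued (the contribution of $s=0$ or $t=0$ is trivially $O(q^2)$ and may be absorbed into the error). I would then split this triple sum by whether $s = t$ or $s \neq t$: the diagonal produces the main term $q^2 \delta(\lambda_1,\lambda_2)$, while the off-diagonal yields the error $O(q^{3/2})$.

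For the diagonal $s = t \in \Fqt$, the inner $r$-sum is $\sum_r \prod_i \Kld(s(r+b_i);q)^2$, which is the complete trace-function sum associated to the tensor product of four multiplicative/additive pullbacks of $\hk{2} \otimes \hk{2}$. The rank-$4$ sheaf $\hk{2} \otimes \hk{2}$ canonically contains a one-dimensional trivial subsheaf (arising from $\wedge^2 \hk{2} \cong \bQl$, since $\hk{2}$ has trivial determinant), and for $\uple{b}$ generic the four factors act independently through copies of $\SL_2$, by Katz's big-monodromy theorem for $\hk{2}$ together with Goursat-type independence arguments. Consequently the space of geometric invariants of the eight-fold tensor product is exactly one-dimensional, so Deligne's Riemann Hypothesis yields $\sum_r \prod_i \Kld(s(r+b_i);q)^2 = q + O(q^{1/2})$ uniformly in $s \in \Fqt$, and summing this against $e((\lambda_1 - \lambda_2)s/q)$ with the orthogonality of additive characters produces the diagonal contribution $q^2 \delta(\lambda_1, \lambda_2) + O(q^{3/2})$.

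For the off-diagonal $s \neq t$, set $H(s,t,\uple{b}) = \sum_r \prod_i \Kld(s(r+b_i);q) \Kld(t(r+b_i);q)$. The eight-fold tensor product of pullbacks of $\hk{2}$ in play has, for generic $s, t, \uple{b}$, no nonzero geometric invariants (again by Katz's theorem on $\hk{2}$), so Deligne's theorem gives $H(s,t,\uple{b}) \ll q^{1/2}$ in this regime. Interpreting $H$ as the trace function of a middle-extension sheaf, pure of weight one, on the affine surface $\{st(s-t) \neq 0\} \subset \mathbb{A}^2$, and invoking the two-variable form of Deligne's Riemann Hypothesis, one then obtains
\[
\sum_{\substack{s, t \in \Fqt\\ s \neq t}} e\Bigl(\frac{\lambda_1 s - \lambda_2 t}{q}\Bigr) H(s,t,\uple{b}) \ll q^{3/2},
\]
provided the associated sheaf has no nonzero geometric monodromy invariants on the surface, which holds for generic $\uple{b}$.

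The main obstacle is the monodromy/invariants verification underlying both steps: one must enumerate the algebraic conditions on $\uple{b}$ that cause the tensor products of pullbacks of $\hk{2}$ appearing above to acquire extra geometric invariants. By Katz's classification, non-trivial isomorphisms between pullbacks $[ax+b]^*\hk{2}$ and their duals (arising from the autoduality $\hk{2}^{\vee} \cong [-x]^* \hk{2}$ up to a quadratic character twist) correspond to specific algebraic relations among the translation parameters, in particular to coincidences among cross-ratios of the $b_i$. These relations carve out a proper Zariski-closed subset of $\mathbb{A}^4$ defined by a bounded-degree polynomial $Q(\uple{b})$; outside this locus, both the diagonal and off-diagonal bounds hold and assembling them gives the asserted asymptotic $q^2 \delta(\lambda_1, \lambda_2) + O(q^{3/2})$.
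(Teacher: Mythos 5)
Your route is genuinely different from the paper's. The paper treats $R(r,\lambda,\uple{b})$ directly as the trace function in the variable $r$ of a (weight-$1$) sheaf $\mcR_{\uple{b},\lambda}$ on $\Aa^1$, and reduces via Katz's diophantine irreducibility criterion to showing that these sheaves are geometrically irreducible and pairwise non-isomorphic as $\lambda$ varies; all of the work then goes into establishing irreducibility via local monodromy computations and vanishing cycles. You instead open the $s$- and $t$-sums and split along $s=t$ versus $s\neq t$.

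Your diagonal is essentially sound: for each $s\neq 0$, the pullbacks $[r\mapsto s(r+b_i)]^*\HYPK_2$, $1\leq i\leq 4$, have pairwise distinct singular points $r=-b_i$ when the $b_i$ are distinct, so a Goursat argument gives geometric monodromy $\SL_2^{\times 4}$, the invariant subspace of the eight-fold tensor representation is one-dimensional, Deligne yields $\sum_r\prod_i\Kld(s(r+b_i))^2=q+O(q^{1/2})$ uniformly in $s$, and orthogonality over $s$ gives the main term. (Minor slip: $s,t$ already range over $\Fqt$ by definition, and in any case $\Kld(0)=-q^{-1/2}$, so a stray boundary contribution would be $O(1)$, not $O(q^2)$ -- which in any case could not be absorbed into $O(q^{3/2})$.)

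The gap is the off-diagonal, and it is the whole problem. The pointwise bound $H(s,t,\uple{b})\ll q^{1/2}$ on a generic stratum is not close to sufficient: the trivial sum over $s\neq t$ is then $O(q^{5/2})$, and on the bad locus (a curve in the $(s,t)$-plane where the eight-fold tensor acquires geometric invariants) one has $H\asymp q$, so that locus alone contributes $O(q^2)>O(q^{3/2})$ absent further cancellation. What you actually need is genuine two-variable square-root cancellation: vanishing of $H^4_c$ of the relevant complex over the $(s,t)$-surface twisted by the Artin--Schreier sheaf for $\lambda_1 s-\lambda_2 t$, together with weight control of $H^3_c$ and an understanding of the boundary strata $s=0$, $t=0$, $s=t$ where the pushforward is not lisse. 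This is at least as hard as the irreducibility of $\mcR_{\uple{b},\lambda}$ that the paper proves by a detailed vanishing-cycles analysis (needed because the sheaf is wildly ramified once $\lambda\neq 0$); your sentence ``provided the associated sheaf has no nonzero geometric monodromy invariants on the surface, which holds for generic $\uple{b}$'' is precisely the assertion that requires proof. Finally, your proposed characterization of the exceptional $\uple{b}$ by cross-ratio coincidences is speculative and, per the paper's own commentary, likely wrong: the paper explicitly notes that the exceptional set produced in KMS is \emph{not} explicit (and has codimension one), which is one of the main obstacles to sharpening the estimate.
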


A key difference with the Burgess bound is that, whereas the Weil
bound for multiplicative character sums over curves which is used
there gives square-root cancellation outside of an explicit and very
small set of diagonal parameters, the exceptional set of parameter
values $\uple{b}$ in Theorem~\ref{KMSCES}
is not explicit, and is also relatively large (it has codimension
one). This is the main difficulty in generalizing the
bound~(\ref{typeIIgen}) to shorter ranges, since in order to do so, we
must take higher moments of short sums, leading to complete sums of
more variables, for which even best-possible estimates are not
helpful \emph{unless} one can show that the codimension of the
diagonal locus diminishes proportionally to the exponent. 
\par
However, this difficulty is not significant for the applications in
this book, since we need the bound~(\ref{typeIIgen}) only in the case
where $M$ and $N$ are very close to $\sqrt{q}$. In this case, using
higher moments would not give better results, even if the analogue of
Theorem~\ref{KMSCES} was obtained with an exceptional locus of the
highest possible codimension (as in the Burgess case).

We now give a longer but still informal
summary of the techniques behind Theorem~\ref{KMSCES}, which involve simpler exponential sum
estimates, topology, elementary representation theory, and simple
arguments with Galois representations, as well as more technical steps
based on vanishing cycles.

The proof of the theorem begins by constructing (in \cite[\S4.1]{KMS}) a sheaf $\mcR$ on
$\Aa^6_{\Fq}$ whose trace function exponential sum
$R$, which depends on $6$ variables $r,\lambda, \uple{b}$. This is proven using the $\ell$-adic machinery in a relatively
formal way, exploiting known sheaf-theoretic analogues of the
algebraic operations involved in the definition of $R$. One begins
with a fundamental result of Deligne (related to
Proposition~\ref{lem-kloos}) which implies that there is a sheaf (of
conductor bounded in terms of $q$ only) with trace function equal to
hyper-Kloosterman sums; then taking tensor products of two sheaves
multiplies their trace functions, and the sum over $s$ is obtained by
computing sheaf cohomology (precisely, computing a higher direct image
with compact support of a $7$-variable sheaf). Both the result of Deligne and the step where we sum over $s$ involve key
results in étale cohomology, such as the Grothendieck--Lefschetz trace
formula.

At this point, we apply Katz's Diophantine Criterion for
Irreducibility and Deligne's Riemann Hypothesis (in \cite[Theorem 4.11 and (3.4)]{KMS}) . These imply that the
bound~(\ref{kms-correlation-sum}) holds for a given $\uple{b}$
\emph{if and only if} the sheaves $\sheaf{R}_{\uple{b},\lambda}$ in
one variable $r$ obtained by specializing the parameters
$(\uple{b},\lambda)$ of the $\sheaf{R}$ are geometrically irreducible, and that $\sheaf{R}_{\uple{b},\lambda_1}$ and $\sheaf{R}_{\uple{b},\lambda_2}$ are geometrically non-isomorphic for $\lambda_1\neq \lambda_2$. We will use this equivalence in both directions. We note
that proving the second part is easier, because there are in general
many ways to prove that two sheaves are non-isomorphic, and we can in
fact handle most cases using byproducts of the arguments involved in
the proof of irreducibility.

For this irreducibility statement, we begin by computing directly the
diagonal average over $\lambda$ of the sum $R$ in the case
$\lambda_1=\lambda_2=\lambda$, and the average of $R$ over all
$\uple{b}$ in the special case $\lambda_1=\lambda_2=0$ (in
\cite[Proposition 4.3 and (3.4)]{KMS}). These computations reveal that
the restrictions of the sheaf $\mcR$ to certain higher dimensional
spaces are irreducible.
\par
In general, estimating the average value of a sum such as $R$ will
give very little concrete information on any of it specific
values. The \emph{geometric} analogue of this operation here is to
show that the restriction of an irreducible sheaf on some variety to a
proper subvariety remains irreducible, and this turns out to be often
tractable.

The proof of irreducibility requires different methods in the
$\lambda=0$ and $\lambda \neq 0$ cases.

For $\lambda=0$, an elementary computation shows that the values of
the $R$ sum are independent of the choice of additive character used
to define Kloosterman sums. The geometric analogue of this fact is
that the sheaf $\mcR$ (specialized to $\lambda=0$) may be defined
without the use of additive character sheaves \cite[Lemma 4.27]{KMS}. Since it turns out that
this is the only part of the construction that requires working in
positive characteristic $q$, we deduce that the sheaf $\mcR_{\lambda=0}$ can
actually be constructed over the integers and over the complex
numbers. Over $\Cc$, we may apply topological arguments to study the
irreducibility of the sheaf, and it is then possible to derive the
same conclusion for sufficiently large prime characteristic $q$. (It
is actually ultimately more convenient to apply the argument in
characteristic $q$, using only the intuitions from topology; the
integrality property of the sheaf is used in \cite[\S4.4]{KMS} to show that it is tamely
ramified, and the topological properties and arguments carry over to
the tamely ramified case).

To be a bit more precise we may view the complex version of the sheaf
$\mcR_{\lambda=0}$ as a representation of the fundamental group of the
open subset $X$ of $\Cc^5$ (with coordinates $(r,\uple{b})$) where it
is lisse. We can think of this space as a family of punctured Riemann
surfaces parametrized by $\uple{b}$. Over the open subset of this
parameter space $X$ where the punctures do not collide, we can
``follow'' a loop in one Riemann surface into a loop in any other, so
their fundamental groups are equal (as subgroups of the fundamental
group $\pi_1(X)$ of the total space) and thus have the same action on
the sheaf. An immediate consequence of this is that, if one fiber of
the sheaf over some $\uple{b}$ is irreducible, then \emph{all} fibers
are irreducible. However, we can do better, because the (common)
fundamental group of our Riemann surfaces is a normal subgroup of
$\pi_1(X)$, with quotient isomorphic to the fundamental group of the
base, minus the set $Y$ of points whose fibers are empty. We can show
that the variety $Y$ of points whose fibers are empty has codimension
$2$, so the quotient is in fact the fundamental group of $\Cc^4$,
which is trivial, hence our subgroup is in fact equal to the whole
group $\pi_1(X)$. Since the representation associated to the sheaf
$\mcR_{\lambda=0}$ is an irreducible representation of $\pi_1(X)$, it
is therefore irreducible on each fiber, away from the points where the
punctures collide. (Note that in practice, the argument is phrased
using Galois theory, instead of loops, but the conclusion is the
same.)

\begin{remark}
  In the special case of $\Kld$, one can compute that the rank of
  $\mcR$ over points where $\lambda=0$ is $2$. As a rank $2$ sheaf
  whose trace function takes values in $\Qq$, it looks very much like
  the sheaf of Tate modules of a family of elliptic curves, and it is
  possible that there exists an argument reducing the $R$-sum for
  $\lambda=0$ to the number of points on a family of elliptic
  curves. If this is so, then checking the irreducibility property
  would be the same as checking that the $j$-invariant of this family
  is nonconstant. However, such an argument is unlikely to apply for
  $k\geq 3$.
\end{remark}

For $\lambda \neq 0$, the sum $R$ depends on the choice of additive
character, and is in general an element of $\Qq(\mu_p)$ and not
$\Qq$. Geometrically, the associated sheaf has wild ramification. This
causes difficulties if one tries a direct analogue of the previous
argument. Indeed, the argument that, if one fiber of the sheaf is irreducible, then all are irreducible, is not valid in the wildly ramified setting without additional work. What's more, our previous argument that one fiber of the sheaf is irreducible also does not generalize to the $\lambda\neq 0$ setting.
\par
Instead, we use (in \cite[\S4.5]{KMS}) arguments from the theory of \emph{vanishing
  cycles}. After interpreting the irreducibility at a given
$\lambda\not=0$ in terms of the rank of the stalks of a suitable
auxiliary sheaf $\sheaf{E}$ (namely, the sheaf $\mcR$ tensored by its
dual), Deligne's semicontinuity theorem gives a tool to check that the
irreducibility is independent of $\lambda\not=0$. The key input that
is needed is the proof that the Swan conductors of the local monodromy
representations associated to $\sheaf{E}$, which are numerical
invariants of wild ramification, are themselves independent of
$\lambda$. (In the tame case, the Swan conductors are always zero,
which explains partly why it is easier to handle).

In order to check this constancy property, we must compute the local
monodromy representations at every singular point. These are known for
Kloosterman sheaves (by work of Katz) and for additive character
sheaves (by elementary means) and it is easy to combine this
information when taking tensor products. The main difficulty is to
understand the local monodromy representations after taking cohomology
(which amounts to computing the sum over $s$ that defines the
$R$-sum). This is precisely what the theory of vanishing cycles
achieves in situations where the local geometry is sufficiently
``nice''. 
\par
In our case of interest when $\lambda \neq 0$, the singularities of
the one-variable specialized sheaf $\mcR_{\uple{b},\lambda}$ are those
$r$ where we can ``see'' that the sum degenerates in an obvious way,
namely those $r$ such that $r+b_i=0$, and $r=\infty$. One can then
compute that the local monodromy representation where $r+b_i=0$ is
tame (so has Swan conductor $0$), and the local monodromy at $\infty$
is wild, with large but constant Swan conductor \cite[Lemma 4.32, Corollary 4.37]{KMS}.

The computation of the local monodromy representation at $\infty$ also
allows us to prove irreducibility for generic $\lambda$, because the
problem still involves restricting an irreducible representation to a
normal subgroup, making it isotypic (up to conjugacy). Because of
this, if it were not irreducible, then the unique isomorphism class of
its irreducible components would be repeated with multiplicity at
least two. Then, when we restrict further to the local monodromy group
at $\infty$, each irreducible component must have multiplicity at
least two. But the explicit computation (using vanishing cycles)
allows us to detect an irreducible component of multiplicity one,
which is not conjugate to any other.

Finally, combining these arguments, we prove irreducibility for every
value of $\lambda$. We require some fairly elementary arguments to
conclude the proof by excluding that some specialized sheaves are
isomorphic for different values of $\lambda$. The most difficult case
is when $\lambda_2=-\lambda_1$ and we are dealing with the
generalization of~(\ref{typeIIgen}) to hyper-Kloosterman $\hypk_k$
with $k$ odd, in which case some extra steps are needed.

\section{Trace functions and their Mellin transforms}
\label{sec-trace}

Let $\ell$ be a prime distinct from $q$.  Let $\mcF$ be a
geometrically irreducible $\ell$-adic sheaf on $\Aa^1_{\Fq}$, which we
assume to be a middle-extension of weight $0$. The complexity of
$\mcF$ is measured by its conductor $\cond(\mcF)$, in the sense
of~\cite{FKM1}. Among its properties, we mention that $|t(x)|\leq c(\mathcal F)$
for all $x\in \Fq$. 
\par
An important property is that if we denote
$$
\what{t}(x)=\frac{1}{\sqrt{q}}\sum_{a\in\Fq}t(a)e\Bigl(\frac{ax}{q}\Bigr)
$$
the discrete Fourier transform of a function $t\colon \Fq\to\Cc$, then
unless the trace function $t$ is proportional to $e(ax/p)$ for some
$a$, then we have
$$
|\what{t}(x)|\ll 1
$$
where the implied constant depends only on $\cond(\mcF)$, as a
consequence of Deligne's general form of the Riemann Hypothesis over
finite fields; see the statement and references
in~\cite{Pisa}*{Th.\ 4.1}. More precisely, if $\mcF$ is not
geometrically isomorphic to an Artin-Schreier sheaf, then $\what{t}$
is itself the trace function of a geometrically irreducible
middle-extension $\ell$-adic sheaf of weight $0$, whose conductor is
bounded (polynomially) in terms of $\cond(\mcF)$ only (see the survey
previously mentioned and~\cite{FKM1}*{Prop.\ 8.2} for the bound on the
conductor), so that $\what{t}\ll 1$ is a special case of the assertion
that a trace function is bounded by its conductor.
\par
Similarly, if we define the Mellin transform of $t$ by
  $$
  \wtilde{t}(\chi)=\frac{1}{\sqrt{q}}\sum_{a\in\Fqt}t(a)\chi(a)
$$
\label{pg-mellin}for any Dirichlet character $\chi$ modulo $q$, then we have
$$
|\wtilde{t}(\chi)|\ll 1
$$
where the implied constant depends only on $\cond(\mcF)$, unless $t$
is itself proportional to a Dirichlet character (loc.\
cit.).

\begin{example}\label{ex-trace}
  (1) Let $k\geq 2$ be an integer. The function $x\mapsto \Kl_k(x;q)$
  defined by~(\ref{eq-hypk}) is a trace function (for any
  $\ell\not=q$) of a sheaf $\HYPK_k$ with conductor bounded by a
  constant depending only on $k$. These sheaves, constructed by
  Deligne and extensively studied by Katz, are called
  \emph{Kloosterman sheaves}; they are fundamental in the proof of
  Theorem~\ref{KMSCES}.
\par
(2) Let $f\in\Zz[X]$ be a polynomial and $\chi\mods{q}$ a non-trivial
Dirichlet character. Define
$$
t_1(x)=e\Bigl(\frac{f(x)}{q}\Bigr),\quad\quad
t_2(x)=\chi(f(x)).
$$
Then $t_1$ and $t_2$ are trace functions, with conductor depending
only on $\deg(f)$. If $f$ has degree $1$, then $t_1$ is associated to
an \emph{Artin-Schreier sheaf}, and if $f=aX$ for some $a\not=0$, then
$f$ is associated to a \emph{Kummer sheaf}.
\end{example}

Below we will use the following definition:

\begin{definition}\label{def-sheaf}
  A \emph{Mellin sheaf} over $\Fq$ is a geometrically irreducible,
  geometrically non-constant, middle-extension sheaf of weight $0$ on
  $\Gg_{m,\Fq}$ that is not geometrically isomorphic to a Kummer sheaf.
\end{definition}



By orthogonality of characters, we have the discrete Mellin inversion
formula
$$
\sum_{\chi \mods q}\widetilde{t}(\chi)\chi(x)=\frac{q-1}{\sqrt{q}}
t(x^{-1})
$$
for $x\in \Fqt$. Similarly, we get
$$
  \sum_{\chi\mods q}\widetilde{t}(\chi)\chi(x)\eps_{\chi}^2=
  \frac{q-1}{\sqrt{q}}(t\star \Kl_2)(x)
$$
by opening the Gau\ss\ sums (this is also a case of the discrete
Plancherel formula), where\label{pg-convol}
$$
(t_1\star t_2)(x)=\frac{1}{\sqrt{q}}\sum_{ab=x}t_1(a)t_2(b)
$$
is the multiplicative convolution of two functions on $\Fqt$.

We will need:

\begin{lemma}\label{lm-convol}
  Let $\mcF$ be a Mellin sheaf with trace function $t$. Then one of
  the following two conditions holds:
\par
\emph{(1)} There exists a Mellin sheaf $\mcG$ with conductor $\ll
\cond(\mcF)^4$ with trace function $\tau$ such that
$$
(t\star \Kl_2)(x)=\tau(x)+O(q^{-1/2})
$$
for $x\in\Fqt$, where the implied constant depends only on
$\cond(\mcF)$.
\par
\emph{(2)} The sheaf $\mcF$ is geometrically isomorphic to a pullback
$[x\mapsto a/x]^*\HYPK_2$ of a Kloosterman sheaf for some $a\in\Fqt$,
in which case there exists $\alpha\in\Cc$ with modulus $1$ such that
$$
t(x)=\alpha \Kl_2(a\bar{x};q)
$$
for all $x\in\Fqt$. We then have
$$
\widetilde{t}(\chi)=\alpha\chi(a)\eps_{\chi}^{-2}
$$
for all $\chi$.
\end{lemma}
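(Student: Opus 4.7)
\medskip

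\noindent\textbf{Proof plan.} The strategy is to interpret $t\star \Kl_2$ via Katz's multiplicative middle convolution and then invoke the well-developed Tannakian picture for Mellin sheaves on $\Gg_{m,\Fq}$.

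\emph{Step 1 (convolution on the sheaf side).} Recall that $\Kl_2(\cdot;q)$ is the trace function of the Kloosterman sheaf $\HYPK_2$, which is itself a Mellin sheaf in the sense of Definition~\ref{def-sheaf}. Form on $\Gg_{m,\Fq}$ the two naive convolutions $\mcF\star_!\HYPK_2$ and $\mcF\star_*\HYPK_2$, defined as shifted higher direct images under multiplication $\mu\colon \Gg_m\times\Gg_m\to\Gg_m$. The Grothendieck--Lefschetz trace formula identifies the trace function of $\mcF\star_!\HYPK_2$ with $t\star\Kl_2$ up to the contribution of $H^0_c$ and $H^2_c$ of the fibers of $\mu$; Deligne's weight bounds (and the fact that $\mcF$, $\HYPK_2$ are pure of weight $0$ and middle extensions on $\Gg_m$) control both contributions by $O(q^{-1/2})$, with implicit constant depending only on $\cond(\mcF)$.

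\emph{Step 2 (the Tannakian dichotomy).} Let $\mcG=\mcF\star_{\mathrm{mid}}\HYPK_2$ be the middle convolution (the image of the forget-supports map $\mcF\star_!\HYPK_2\to\mcF\star_*\HYPK_2$). By Katz's theory of middle convolution on $\Gg_m$ (cf.\ \emph{Convolution and Equidistribution}), middle convolution preserves the property of being a Mellin sheaf \emph{unless} $\mcG$ is punctual, in which case the Tannakian identity $\mcF\star_{\mathrm{mid}}\HYPK_2\simeq \delta_a$ for some $a\in\Fqt$ forces $\mcF$ to be geometrically isomorphic to the inverse of $\HYPK_2$ in the Mellin Tannakian group. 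Katz's explicit determination of this inverse for Kloosterman sheaves yields $\mcF\simeq_{\mathrm{geom}}[x\mapsto a/x]^*\HYPK_2$ for some $a\in\Fqt$. I would state this dichotomy, citing the relevant sections of Katz's work; cheking the conductor bound $\cond(\mcG)\ll\cond(\mcF)^4$ uses the tame/wild ramification analysis of $\mcG$ at $0$, $1$, $\infty$ in terms of the local invariants of $\mcF$ and of $\HYPK_2$ (both of which are known).

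\emph{Step 3 (the two conclusions).} In the non-degenerate case of Step~2, set $\tau = t_{\mcG}$; combining Step~1 with the fact that $\star_!$ and $\star_{\mathrm{mid}}$ differ by punctual sheaves of weight $\leq 0$ gives $(t\star\Kl_2)(x)=\tau(x)+O(q^{-1/2})$, yielding conclusion (1). In the degenerate case, the geometric isomorphism $\mcF\simeq_{\mathrm{geom}}[x\mapsto a/x]^*\HYPK_2$ corresponds, after Frobenius descent, to an arithmetic isomorphism twisted by a one-dimensional geometrically trivial character, which on trace functions is a scalar $\alpha\in\Cc$. Purity of weight $0$ on both sides (the generic Frobenius eigenvalues have modulus one) forces $|\alpha|=1$, giving $t(x)=\alpha\Kl_2(a\bar x;q)$. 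Finally, the Mellin transform formula $\widetilde t(\chi)=\alpha\chi(a)\eps_\chi^{-2}$ follows from a direct calculation: opening $\Kl_2$ as a sum over $uv=a\bar x$ and swapping the sums reduces $\widetilde t(\chi)$ to $\alpha\chi(a)q^{-1}\sum_{u,v}\chi(\bar u)\chi(\bar v)e((u+v)/q)=\alpha\chi(a)\eps_\chi^{-1}\ov{\eps_\chi^{-1}}\cdot\chi(-1)^{-1}$, which collapses to $\alpha\chi(a)\eps_\chi^{-2}$ via the identity $\eps_{\ov\chi}=\chi(-1)\ov{\eps_\chi}$.

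\emph{Main obstacle.} The serious point is Step~2: pinning down \emph{exactly} which Mellin sheaves $\mcF$ make the middle convolution with $\HYPK_2$ punctual, and no others. This is where one must appeal to the Tannakian structure and to Katz's explicit identification of the convolution-inverse of $\HYPK_2$; the conductor bound $\cond(\mcG)\ll\cond(\mcF)^4$, while standard in spirit, also requires a careful case analysis of local monodromies at $0$, $1$, and $\infty$. By contrast, Steps~1 and~3 are essentially formal consequences of Deligne's theorem and of the elementary evaluation of Mellin transforms of Kloosterman sums.
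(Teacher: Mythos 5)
Your proposal is correct and follows essentially the same route as the paper's proof: form the middle convolution $\mcG=\mcF\star_{\mathrm{mid}}\HYPK_2$, use that $\HYPK_2$ has Tannakian dimension one so $\mcG$ stays geometrically irreducible, split according to whether $\mcG$ is punctual (case (2), forcing $\mcF\simeq_{\mathrm{geom}}[x\mapsto a/x]^*\HYPK_2$) or not (case (1)), and cite the conductor bound from the Fouvry--Kowalski--Michel appendix to Xi's paper. One small slip: in the final Gauss-sum manipulation the intermediate expression $\eps_\chi^{-1}\ov{\eps_\chi^{-1}}\chi(-1)^{-1}$ is off (the double sum gives $\eps_{\ov\chi}^{2}$ directly), but the identity $\eps_{\ov\chi}=\chi(-1)\ov{\eps_\chi}$ you invoke and the final formula $\alpha\chi(a)\eps_\chi^{-2}$ are correct.
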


\begin{proof}
  If $\mcF$ is not geometrically isomorphic to a pullback
  $[x\mapsto a/x]^*\HYPK_2$ of a Kloosterman sheaf, then the
  ``shriek'' convolution $\mcF\star_!\HYPK_2$ has trace function
  $t\star \Kl_2$, and the middle-convolution
  $\mcG=\mcF\star_{mid}\HYPK_2$ of $\mcF$ and $\HYPK_2$ is a sheaf
  with trace function $t\star \Kl_2+O(q^{-1/2})$, as a consequence of
  the properties of middle-convolution~\cite{Ka-CE}*{Ch. 2}.

  The middle-convolution is a Mellin sheaf in this case: indeed, it is
  geometrically irreducible because $\HYPK_2$ is of ``dimension'' one
  in the Tannakian sense, so $(\mcF\star_{mid}\HYPK_2)[1]$ is an
  irreducible object in the Tannakian sense, which implies the result
  by~\cite{Ka-CE}*{p. 20}). In that case, we obtain (1), where the
  conductor bound is a special case of the results from the Appendix
  by Fouvry, Kowalski and Michel to P. Xi's paper~\cite{xi}.
\par
\end{proof}

\begin{remark}
(1)  The ``error term'' in Case (1) of this lemma is linked to the
  possible existence of Frobenius eigenvalues of weight $\leq -1$ in
  the ``naive'' convolution. One can think of the middle-convolution
  here as the ``weight $0$'' part of this naive convolution.
\par
(2) Using a more intrinsic definition of the conductor than the one
in~\cite{FKM1}, one could obtain a better exponent that
$\cond(\mcG)\ll \cond(\mcF)^4$ (see~\cite{sawin-bounds}).
\end{remark}

In Chapter~\ref{ch-modular}, we will use the following variant of
Lemma~\ref{lm-convol}.

\begin{lemma}\label{lm-except-symbols}
  Let $\mcF$ be a geometrically irreducible $\ell$-adic that is not
  geometrically isomorphic to a Kummer sheaf, an Artin-Schreier sheaf
  or the pull-back of an Artin-Schreier sheaf by the map
  $x\mapsto x^{-1}$.  There exists a Mellin sheaf $\mcG$ with
  conductor bounded polynomially in terms of $\cond(\mcF)$, not
  geometrically isomorphic to $[x\mapsto a/x]^*\HYPK_2$ for any
  $a\in\Fqt$, such that the trace function $\tau$ of $\mcG$ satisfies
\begin{equation}\label{eq-tr-2}
\tau(x)=\frac{1}{\sqrt{q}} \sum_{y\in\Fqt}\overline{t(y)}
e\Bigl(-\frac{\bar{x}y}{p}\Bigr)+O(q^{-1/2})
\end{equation}
where the implied constant depends only on $\cond(\mcF)$.
\end{lemma}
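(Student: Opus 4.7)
The plan is to realize $\mcG$ as the output of a short chain of standard sheaf-theoretic operations applied to $\mcF$ -- a duality functor, the Deligne--Laumon $\ell$-adic Fourier transform, restriction to $\Gm$, and pullback by an involution -- and then to check the four required properties (geometric irreducibility, not Kummer, polynomially bounded conductor, not of the form $[x\mapsto a/x]^*\HYPK_2$) one at a time, using each of the three exclusions on $\mcF$ to dispose of the corresponding degenerate case. Concretely I would set
$$\mcG := [x\mapsto -\bar{x}]^*\,j^*\,\ft(\dual(\mcF)),$$
where $\dual$ is a duality on middle-extension sheaves whose effect on trace functions is $t\mapsto \ov{\ov{t}}$, $\ft$ is the Fourier transform relative to the additive character $e(\cdot/q)$, and $j\colon \Gm\hookrightarrow\Aa^1$ is the open immersion. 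By the Grothendieck--Lefschetz trace formula together with Laumon's stationary phase analysis, the trace function of $\mcG$ at $x\in\Fqt$ equals $q^{-1/2}\sum_{y\in\Fq} \ov{\ov{t}(y)}\,e(-\bar{x}y/q)$ up to an $O(q^{-1/2})$ error coming from the $y=0$ term and from Frobenius eigenvalues of weight $<0$ appearing in the ``naive'' rather than the middle extension -- and this is precisely the source of the $O(q^{-1/2})$ error in \eqref{eq-tr-2}.

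Next I would verify that $\mcG$ is a Mellin sheaf with conductor bounded polynomially in $\cond(\mcF)$. The polynomial conductor bound under $\ft$ is the one recorded in \cite[Prop.\ 8.2]{FKM1} (and its sharpening in the Appendix to P.\ Xi's paper cited in the proof of Lemma~\ref{lm-convol}), while dualization and pullback by an automorphism of $\Gm$ each affect the conductor by a bounded absolute amount. Geometric irreducibility and the middle-extension/weight-zero property are preserved trivially by $\dual$ and by the automorphism pullback, and they are preserved by $\ft$ provided $\dual(\mcF)$ is not geometrically an Artin--Schreier sheaf -- equivalent, since $\dual$ sends Artin--Schreier sheaves to Artin--Schreier sheaves, to the second hypothesis. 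Finally, $\mcG$ is Kummer on $\Gm$ if and only if $j^*\ft(\dual(\mcF))$ is Kummer, because $x\mapsto-\bar{x}$ permutes the class of tame rank-one lisse sheaves; applying Fourier inversion forces $\mcF$ to be itself a Kummer sheaf, and the first hypothesis excludes this.

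The genuinely delicate step, and the one I expect to be the main obstacle, is ruling out that $\mcG$ is geometrically isomorphic to $[x\mapsto a/x]^*\HYPK_2$ for some $a\in\Fqt$. My plan is to unwind the construction using Katz's description of the Kloosterman sheaf, $\HYPK_2 \simeq \ft(j_{!}[y\mapsto 1/y]^*\mcL_\psi)$ up to a Tate twist. If $\mcG$ were geometrically $[x\mapsto a/x]^*\HYPK_2$, then composing the pullbacks $x\mapsto -\bar{x}$ and $x\mapsto a/x$ identifies $j^*\ft(\dual(\mcF))$, up to an automorphism of $\Gm$ and a bounded twist, with $j^*\ft(j_{!}[y\mapsto 1/y]^*\mcL_\psi)$. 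Fourier inversion then forces $\dual(\mcF)$, and hence $\mcF$, to be geometrically isomorphic to a pullback by $y\mapsto y^{-1}$ of an Artin--Schreier sheaf -- precisely the third excluded case. Throughout one has to distinguish geometric from arithmetic isomorphisms and keep the weight-zero normalization straight, as in~\cite{Ka-CE} and~\cite{FKM1}, but the argument requires no new analytic input.
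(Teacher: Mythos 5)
Your proposal is correct, and the sheaf you construct is in fact the same object as the paper's, but you reach it by a genuinely different chain of operations and verify its properties with different key lemmas. The paper takes $\mcG$ to be the multiplicative \emph{middle convolution} $\mcF\star_{mid}\mcL^{\vee}$, where $\mcL^{\vee}$ is the Tannakian dual of the Artin--Schreier sheaf (trace function $x\mapsto e(-\bar{x}/p)$), and then runs entirely inside Katz's Tannakian category from~\cite{Ka-CE}: geometric irreducibility of $\mcG$ comes from the fact that convolution by a Tannakian-dimension-$1$ object preserves irreducibility, and all three exclusions (Kummer, $\HYPK_2$-pullback) are ruled out in one stroke by the inversion identity $\mcG\star_{mid}\mcL=\mcF$. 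You instead realize $\mcG$ through the classical Deligne--Laumon additive Fourier transform, $\mcG=[x\mapsto -\bar{x}]^*j^*\ft(\dual(\mcF))$, and invoke Laumon's theorem (FT preserves geometric irreducibility and middle extensions away from Artin--Schreier inputs) together with classical Fourier inversion and Katz's presentation $\HYPK_2\simeq\ft(j_![y\mapsto1/y]^*\mcL_\psi)$. The two routes are interchangeable because multiplicative convolution with $\mcL^{\vee}$ is literally the additive FT conjugated by inversion, as your own unwinding of the $\HYPK_2$ case makes explicit; what differs is the packaging. The paper's Tannakian argument is shorter and sits naturally alongside Lemma~\ref{lm-convol} and the rest of the memoir's use of~\cite{Ka-CE}; your route replaces the Tannakian lemmas by the more classical FT inputs and yields the trace-function identity~\eqref{eq-tr-2} as a transparent pointwise computation, at the cost of more explicit bookkeeping of the automorphism pullbacks, the scaling that comes out when you compose $x\mapsto -\bar{x}$ with $x\mapsto a/x$, and the punctual/Tate-twist adjustments that you rightly flag at the end. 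Both are sound.
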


\begin{proof}
  The principle is the same as in Lemma~\ref{lm-convol}. We denote by
  $\mcL$ the Artin-Schreier sheaf with trace function
  $x\mapsto e(x/p)$ and its Tannakian dual $\mcL^{\vee}$ with trace
  function $x\mapsto e(-\bar{x}/p)$. We consider the
  middle-convolution object $\mcF\star_{mid} \mcL^{\vee}$. Because
  $\mcF$ is not an Artin-Schreier sheaf, this object is associated to
  a middle-extension sheaf $\mcG$ of weight $0$, which is
  geometrically irreducible because $\mcL^{\vee}$ is of Tannakian
  dimension $1$.
\par
Middle-convolution of $\mcG$ with $\mcL$ gives back the input sheaf
$\mcF$ (again because $\mcL$ is of dimension $1$ with dual
$\mcL^{\vee}$ in the Tannakian sense). Thus, because $\mcF$ is not
geometrically isomorphic to a Kummer sheaf, so is $\mcG$. And because
$\mcF$ is not the pull-back of an Artin-Schreier sheaf by the map
$x\mapsto x^{-1}$, the sheaf $\mcG$ is not of the form
$[x\mapsto a/x]^*\HYPK_2$.
\par
Finally, as in Lemma~\ref{lm-convol}, the sheaf $\mcG$ has trace
function satisfying~(\ref{eq-tr-2}) and has conductor bounded in terms
of the conductor of $\mcF$.
\end{proof}

\section{The equidistribution group of a Mellin transform}
\label{sec-equi-mellin}

In a remarkable recent work, Katz~\cite{Ka-CE} has shown that the
discrete Mellin transforms of quite general trace functions satisfy
equidistribution theorem similar to those known for families of
exponential sums indexed by points of an algebraic variety.
\par
Katz's work relies in an essential way on deep algebraic-geometric
ideas, especially on the so-called Tannakian formalism. We will
minimize what background is needed by presenting this as a black-box,
with examples. We refer, besides Katz's book, to the recent Bourbaki
report of Fresán~\cite{fresan} for an accessible survey.

Let $\mcF$ be a Mellin sheaf over $\Fq$ as in
Definition~\ref{def-sheaf}. Katz~\cite{Ka-CE}*{p. 11} defines two
linear algebraic groups related to $\mcF$, its arithmetic and
geometric Tannakian monodromy groups, the geometric one being a normal
subgroup of the arithmetic one under our
assumptions~\cite{Ka-CE}*{Th.\ 6.1}. In equidistribution statements, it
is often simpler to assume that they are equal, and Katz frequently
does so.

\begin{definition}\label{def-eagm}
  We say that $\mcF$ has Property EAGM (``Equal Arithmetic and
  Geometric Monodromy'') if the two groups defined by Katz
  in~\cite{Ka-CE}*{p. 11} are equal. We then call a maximal compact
  subgroup $K$ of (the base change from $\overline{\mathbb Q}_\ell$ to $\mathbb C$ of) this common group the
  \emph{equidistribution group} of the Mellin transform of $\mcF$. We
  denote by $K^{\sharp}$ the space of conjugacy classes in $K$.
\end{definition}

Assuming that $\mcF$ has EAGM, Katz~\cite{Ka-CE}*{p. 12--13} defines a
subset $X_q$ of the set of characters of $\Fqt$, of cardinality
$\leq 2\rk(\mcF)\leq 2\cond(\mcF)$, and for any $\chi\notin X_q$, he
defines a conjugacy class $\theta_{\chi}\in K^{\sharp}$ such that the\label{pg-thetachi}
Mellin transform $\widetilde{t}$ of the trace function of $\mcF$
satisfies
$$
\widetilde{t}(\chi)=\Tr(\theta_{\chi})
$$
for $\chi\notin X_q$. It will be convenient for us to enlarge $X_q$ to
always include the trivial character.

The key result that we need is the following further consequence of
the work of Katz. It can be considered as a black box in the next
section.

\begin{theorem}\label{th-katz}
  Let $\pi$ be an irreducible representation of the equidistribution
  group $K$ of the Mellin sheaf $\mcF$, which is assumed to have Property EAGM.
Then one of the following
  properties holds:
\par
\emph{(1)} There exists a Mellin sheaf $\pi(\mcF)$, as in
Definition~\emph{\ref{def-sheaf}}, with Property \emph{EAGM}, such
that for any $\chi\notin X_q$, we have
$$
\Tr(\pi(\theta_{\chi}))=\widetilde{t}_{\pi}(\chi),
$$
where $t_{\pi}$ is the trace function of $\pi(\mcF)$, and such that
the conductor of $\pi(\mcF)$ is bounded in terms of $\pi$ and
$\cond(\mcF)$ only.
\par
\emph{(2)} There exists $a\in\Fqt$ such that
$$
\Tr(\pi(\theta_{\chi}))=\chi(a)
$$ 
for all $\chi$. Moreover, if $d$ is the order of the finite group of
characters of finite order of $K$, then $a$ is a $d$-th root of unity
in $\Fqt$.
\end{theorem}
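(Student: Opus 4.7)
The plan is to realize the theorem as a direct consequence of Katz's Tannakian formalism for middle convolution on $\Gm/\Fq$, combined with his compatibility between middle convolution and discrete Mellin transform. The conjugacy classes $\theta_\chi$ are, by their very definition \cite[p.~12--13]{Ka-CE}, the Frobenius conjugacy classes attached to the ``fiber functor at $\chi$'' on the neutral Tannakian category $\mathcal{T}_{\mathcal{F}}$ generated by $\mathcal{F}$ under middle convolution, duality, and subquotients. Tannakian duality identifies this category with the category of finite-dimensional representations of its Tannakian group $G_{\mathcal{F}}$, whose maximal compact subgroup is $K$. Under this identification, $\mathcal{F}$ itself corresponds to the standard representation, and an arbitrary irreducible representation $\pi$ of $K$ (equivalently of $G_{\mathcal{F}}$) corresponds to an object $\pi(\mathcal{F}) \in \mathcal{T}_{\mathcal{F}}$, obtained by forming the appropriate Schur-functor construction (iterated middle convolutions of $\mathcal{F}$ and its dual, followed by a projector onto the $\pi$-isotypic piece).

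The dichotomy in the conclusion then reflects the structural dichotomy for irreducible objects in $\mathcal{T}_{\mathcal{F}}$ proved in \cite[Ch.~3--6]{Ka-CE}: such an object is either (a) the middle-extension of a geometrically irreducible, non-Kummer, weight-zero sheaf on $\Gm$, i.e.\ a Mellin sheaf in the sense of Definition~\ref{def-sheaf}, or (b) a punctual object of Tannakian rank one, necessarily of the form $\delta_a$ for some $a\in\Fqt$ (these are precisely the one-dimensional objects under the convolution product). The trace-identity $\Tr(\pi(\theta_\chi)) = \widetilde{t}_\pi(\chi)$ in case (1) is then the fundamental compatibility that discrete Mellin transform intertwines middle convolution with pointwise product (so that, at the level of traces on the Tannakian side, applying $\pi$ corresponds to applying $\pi$ to $\theta_\chi$). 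In case (2), the Mellin transform of $\delta_a$ is tautologically $\chi \mapsto \chi(a)$, yielding the second alternative.

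Property EAGM for $\pi(\mathcal{F})$ is inherited from that of $\mathcal{F}$: since $\pi(\mathcal{F})$ lies in the Tannakian subcategory generated by $\mathcal{F}$, and the geometric (resp.\ arithmetic) Tannakian group of this subcategory is the image of the geometric (resp.\ arithmetic) monodromy in $\mathrm{GL}(\pi)$, the equality of these two groups for $\mathcal{F}$ forces their equality for every object of the subcategory, hence for $\pi(\mathcal{F})$ in particular.

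The main technical obstacle is the quantitative conductor bound in case (1): one needs $\cond(\pi(\mathcal{F}))$ bounded in terms of $\pi$ and $\cond(\mathcal{F})$ alone, uniformly in $q$. The route is to express the Schur construction defining $\pi(\mathcal{F})$ as an iterated middle convolution with at most $N(\pi)$ factors (for some integer $N(\pi)$ depending only on the dimension and highest weight of $\pi$), applied to $\mathcal{F}$ and its Tannakian dual, and then to invoke the polynomial conductor bounds for middle convolution on $\Gm$, in the form proved by Fouvry--Kowalski--Michel in the appendix to \cite{xi} (the same ingredient already used, at depth one, in the proof of Lemma~\ref{lm-convol}). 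Iterating this bound $N(\pi)$ times gives the required estimate $\cond(\pi(\mathcal{F})) \ll_{\pi} \cond(\mathcal{F})^{C(\pi)}$, with all constants independent of $q$.
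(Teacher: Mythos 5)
Your proposal is correct and follows essentially the same route as the paper's proof: Tannakian formalism to define $\pi(\mcF)$ as an object of Katz's category, the classification of geometrically irreducible Tannakian objects to obtain the dichotomy (Mellin sheaf vs.\ punctual), inheritance of Property EAGM from the image of the Tannakian monodromy groups, and polynomial conductor bounds via iterated middle convolution. The only minor variation is that you ground the conductor bound in the Fouvry--Kowalski--Michel appendix to \cite{xi} rather than in Katz's~\cite[Th.\ 28.2]{Ka-CE}, but both rest on the same iterated-convolution mechanism (which the paper's remark following the theorem explicitly acknowledges).
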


\begin{proof}
  The existence of the sheaf $\pi(\mcF)$ as an object in the Tannakian
  category associated to $\mcF$ by Katz is part of the Tannakian
  formalism~\cite{Ka-CE}*{Ch. 2}. By construction, this object is
  irreducible in the Tannakian sense over $\bar{\mathbf{F}}_q$.  By
  the classification of the geometrically irreducible
  objects~\cite{Ka-CE}, it is either ``punctual'', in which case we
  are in Case (2) (because $\pi$ is then a finite order charcter of
$K$), or there exists a geometrically irreducible
  $\ell$-adic sheaf $\mcG$ on $\Gg_{m,\Fq}$ such that
  $\pi(\mcF)=\mcG$.  By construction of the Tannakian category, this
  sheaf is not geometrically isomorphic to a Kummer sheaf (loc. cit.)
  and it is of weight $0$. Hence it is a Mellin sheaf.
\par
Still in this second case, the Tannakian groups of $\pi(\mcF)$ are the
image by (the algebraic representation corresponding to $\pi$) of the
groups associated to $\mcF$, and are therefore equal, so that
$\pi(\mcF)$ has Property EAGM. The bound for the conductor follow
easily from the computations in~\cite{Ka-CE}*{Ch. 28, Th.\ 28.2}.
\end{proof}
\begin{remark}
  (1) The second case will be called the ``punctual'' case.
\par
(2) The conductor bounds resulting from~\cite{Ka-CE}*{Th.\ 28.2} are
relatively weak because they rely on bounds for tensor products and on
embedding the representation $\pi$ in a tensor product of tensor
powers of the standard representation and its dual. A much stronger
estimate (which would be essential for strong quantitative
applications, such as ``shrinking targets'' problems) has been proved
by Sawin~\cite{sawin-bounds}: we have
$$
\cond(\mcG)\leq 2+\dim(\pi)(1+w(\pi)\rk(\mcF))
$$
where $w(\pi)$ is the minimum of $a+b$ over pairs $(a,b)$ of
non-negative integers such that $\pi$ embeds in
$\rho^{\otimes a}\otimes\rho^{\vee\otimes b}$, where $\rho$ is the
representation of $K$ corresponding to $\mcF$ itself. In turn $w(\pi)$
is bounded by an affine function of the norm of the highest weight
vectors of the restriction of $\pi$ to $K^{0}$.
\end{remark}

\begin{example}\label{ex-ex}
  (1) (The Evans sums, see~\cite{Ka-CE}*{Ch. 14}). Let
$$
t(x)=e\Bigl(\frac{x-\bar{x}}{q}\Bigr)
$$
for $x\in\Fqt$. Then $t$ is the trace function of a Mellin sheaf
$\mcF_e$ of conductor bounded independently of $q$, and
Katz~\cite{Ka-CE}*{Th.\ 14.2} proves that $\mcF_e$ has Property EAGM,
and that its equidistribution group $K$ is $\SU_2(\Cc)$ with
$X_q=\emptyset$. By definition, each value of the Mellin transform
$$
\widetilde{t}_e(\chi)=\frac{1}{\sqrt{q}}\sum_{x\in\Fqt}
\chi(x)e\Bigl(\frac{x-\bar{x}}{q}\Bigr)
$$
is the \emph{Evans sum} associated to $\chi$.
\par
(2) (The Rosenzweig-Rudnick sums, see~\cite{Ka-CE}*{Ch. 14}). Let
$$
t(x)=e\Bigl(\frac{ (x+1)\overline{(x-1)}}{q}\Bigr)
$$
for $x\in\Fqt$ such that $x\not=1$ and $t(1)=0$.  Then $t$ is the
trace function of a Mellin sheaf $\mcF_{rr}$ of conductor bounded
independently of $q$, and Katz~\cite{Ka-CE}*{Th.\ 14.5} proves that
$\mcF_{rr}$ has Property EAGM, and that its equidistribution group $K$
is also $\SU_2(\Cc)$ with $X_q=\emptyset$. Each value of the Mellin transform
$$
\widetilde{t}_{rr}(\chi)=\frac{1}{\sqrt{q}}\sum_{x\in\Fqt}
\chi(x)e\Bigl(\frac{ (x+1)\overline{(x-1)}}{q}\Bigr)
$$
is the Rosenzweig-Rudnick sum associated to $\chi$.\label{pg-rr}
\par
(3) (Unitary examples, see~\cite{Ka-CE}*{Ch.\ 17}) Katz gives many
examples where the equidistribution group is $\Un_N(\Cc)$ for some
integer $N\geq 1$. For instance, fix a non-trivial multiplicative
character $\eta$ modulo $q$, of order $d$. Let $n\geq 2$ be an integer
coprime to $d$, and let $P\in \Fq[X]$ be a monic polynomial of degree
$n$ with distinct roots in $\bFq$, and with $P(0)\not=0$. Write
$$
P=X^n+a_{n-1}X^{n-1}+\cdots+a_1X+a_0,
$$
and assume that $\gcd ( \{i : a_i \neq 0 \} )=1$.
$1$. Then $t(x)=\eta(P(x))$ is the trace function of a Mellin sheaf
with Property EAGM, for which the equidistribution group is
$\Un_n(\Cc)$ by~\cite{Ka-CE}*{Th.\ 17.5}.
\par
If, on the other hand, $P$ has degree coprime to $q$, and if $P'$ has
$n-1$ distinct roots $\alpha\in\bFq$, if the set
$S=\{P(\alpha)\,\mid\, P'(\alpha)=0\}$ has $n-1$ distinct points
(i.e., $P$ is ``weakly super-morse''), and if in addition $S$ is not
invariant by multiplication by any constant $\not=1$, then the
``solution counting'' function
$$
t(x)=\sum_{P(y)=x} 1-1
$$
is the trace function of a Mellin sheaf with Property EAGM and with
equidistribution group $\Un_{n-1}(\Cc)$ (see~\cite{Ka-CE}*{Th.\
17.6}). The discrete Mellin transform in that case is
$$
\widetilde{t}(\chi)=\frac{1}{\sqrt{q}}\sum_{y\in\Fq}\chi(P(y))
$$
(see~\cite{Ka-CE}*{Remark\ 17.7}).
\par
(4) For further examples including groups like $\SU_n(\Cc)$ for some
$n$, $\Ort_{2n}(\Cc)$, $\mathrm{G}_2$ or products, see~\cite{Ka-CE}.
\end{example}


\chapter{Computation of the first twisted moment}\label{ch-first} 

Besides stating and proving the general form of the first moment
formulas twisted by characters that we will need in our main result,
we will also consider in this chapter the first moment twisted by more
general discrete Mellin transforms of trace functions over finite
fields, in the sense of Section~\ref{sec-trace}. We present these last
results in a separate section for greater readability; it may be
safely omitted in a first reading and is only used in
Section~\ref{sec-mellin}.

\section{Introduction}

In this chapter, we will prove Theorem \ref{thm-moment1}, which we
first recall. We fix $f$ as in Section~\ref{intro}, i.e.\ a primitive (holomorphic or Maa{\ss}) cusp form for $\Gamma_0(|r|)$ with trivial nebentypus,  and recall Convention \ref{def-conv} on signed levels. 

Given $\l\in(\Zz/q\Zz)^\times$ and $k\in\Zz$, we consider
\begin{equation}\label{firstmom}
  \mcL(f;\l,k)= \frac{1}{\vphis(q)}\sumstar_{\chi\mods q}\eps_\chi^k\chi(\l)L(\ftchi,1/2), 
\end{equation}
(cf. \eqref{moments} and our convention to drop  the parameter $s$ if it equals $1/2$).  We first observe that
\begin{equation}\label{reflect}
\mcL(f;\l,k)=\eps(f)\mcL\left(f; (-1)^k\ov{\l r},-(k+2)\right).
\end{equation}
  Indeed, for any non-trivial character $\chi\mods{q}$, we have
$\eps_{\chi}=\ov{\eps_{\chi}}^{-1}=\chi(-1)\eps^{-1}_{\ov\chi},$ and moreover
$$
L(\ftchi,1/2)=\eps(f)\chi(  r)\eps^2_\chi L(f\otimes\ov\chi,1/2)=
\eps(f)\chi(  r)\eps^{-2}_{\ov\chi} L(f\otimes\ov\chi,1/2),
$$
by the functional equation (cf.\ \eqref{twistedroot}), which implies the formula.  
\par
It is therefore sufficient to evaluate $\mcL(f;\l,k)$ for $k\geq -1$
in order to handle all values of $k$.  In this case we will prove a slightly more precise version of Theorem \ref{thm-moment1}. 
\begin{theorem}\label{thm-twistedfirstmoment} 
  There exists an absolute constant $B\geq 0$ such that for
  $k\geq -1$, $\l\in(\Zz/q\Zz)^\times$ and any $\eps>0$, we have
$$
\mcL(f;\l,k)=\delta_{k=0}\frac{\lf(\ov \l_q)}{\ov
  \l_q^{1/2}}+O_{f,\eps}((1+|k|)^{B}q^{-1/8+\eps}),
$$
where $\ov\l_q$ denotes the unique integer in the interval $[1,q]$ satisfying the congruence
$$\l\ov\l_q\equiv 1\mods q.$$
\end{theorem}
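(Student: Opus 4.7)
The plan is to apply the approximate functional equation from Proposition (in the excerpt) to each $L(f\otimes\chi,\tfrac12)$, split according to the parity of $\chi$ since the smooth function $V_{f,\pm}$ depends on $\chi(-1)$, and then interchange the $m$- and $\chi$-sums. This produces a ``diagonal'' piece and a ``dual'' piece, the latter carrying the root number $\eps(f\otimes\chi)=\eps(f)\chi(r)\eps_\chi^2$, hence an extra factor $\eps_\chi^2$. The inner character averages are evaluated using the orthogonality relations \eqref{average} together with \eqref{averagesignprim} and \eqref{averagesigneven}: in the diagonal piece with $k=0$, this yields the congruence conditions $\ell m\equiv\pm 1\pmod q$; in every other case one obtains $q^{-1/2}\Kl_j(\cdot;q)$ for some $j\in\{k,k+2\}\setminus\{0\}$, with a negligible $O(q^{-1-|j|/2})$ error accounting for the restriction to primitive characters.

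For $k=0$, the main term comes from the diagonal piece. The $m\equiv\overline\ell\pmod q$ congruence picks out the term $m=\overline\ell_q$, giving $\lambda_f(\overline\ell_q)\,\overline\ell_q^{-1/2}\cdot\tfrac12(V_{f,+}+V_{f,-})(y_\ell)$ with $y_\ell=\overline\ell_q/(q\sqrt{|r|})$. I will show that $V_{f,\pm}(y)=1+O(y^{1/2-\theta-\eps})$ by shifting the contour in the Mellin representation of $V_{f,\pm}$ past $u=0$; combined with the Kim--Sarnak bound \eqref{KSbound}, this bounds the discrepancy from the claimed main term by $\overline\ell_q^{\theta-1/2+\eps}\cdot y_\ell^{1/2-\theta-\eps}\ll_f q^{-1/2+\theta+\eps}$, comfortably inside the target error. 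For Maass $f$, summing even and odd parities also leaves a ``$m\equiv-\overline\ell$'' contribution weighted by $\tfrac12(V_{f,+}-V_{f,-})$; the same Mellin analysis gives $V_{f,+}-V_{f,-}\ll y^{1/2-\theta-\eps}$ and the contribution is controlled identically. The tails $m=\overline\ell_q+jq$ with $j\geq 1$ are absorbed by the rapid decay of $V_{f,\pm}$ at infinity.

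For $k\neq 0$ in the diagonal piece, and for every $k\geq-1$ in the dual piece (where $k+2\geq 1$), what remains after orthogonality has the shape
\[
\frac{1}{q^{1/2}}\sum_m \frac{\lambda_f(m)}{\sqrt{m}}\,V_{f,\pm}\!\left(\frac{m}{q\sqrt{|r|}}\right)\Kl_{j}(am;q),
\]
for some $a$ coprime to $q$ and $j\in\{k,k+2\}\setminus\{0\}$. Decomposing $V_{f,\pm}$ by a smooth dyadic partition of unity (effectively supported on $m\leq q^{1+\eps}$ by rapid decay) and applying Proposition \ref{propFKM1} to each dyadic block yields, after summing $O(\log q)$ ranges, a bound $\ll q^{-1/8+\eps}$ absorbing the outer $q^{-1/2}$. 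Adding the diagonal and dual contributions gives the asserted formula, with polynomial dependence on $k$ arising from the $|k|^{O(1)}$-dependence built into Proposition~\ref{propFKM1} and into the Mellin shifts of $V_{f,\pm}$.

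The decisive input is Proposition \ref{propFKM1}, which delivers the square-root-type cancellation for sums of $\lambda_f(n)$ twisted by $\Kl_j(\cdot;q)$ and ultimately rests on Deligne's Riemann Hypothesis over finite fields together with the machinery of trace functions; this is the only nontrivial ingredient. The remaining obstacle — isolating the main term cleanly when $\overline\ell_q$ is of size comparable to $q$ — is the careful small-argument Mellin-Barnes analysis of $V_{f,\pm}$ and of the difference $V_{f,+}-V_{f,-}$, which the Kim--Sarnak exponent $\theta=7/64$ makes just comfortable enough to meet the target error $q^{-1/8+\eps}$.
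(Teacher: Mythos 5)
Your proof is correct and takes essentially the same route as the paper's: apply the approximate functional equation, split by parity (because $V_{f,\pm}$ depends on $\chi(-1)$), average with the hyper‑Kloosterman identities \eqref{averagesignprim}--\eqref{averagesigneven}, isolate the $k=0$ diagonal as a congruence condition, and invoke Proposition~\ref{propFKM1} (together with \eqref{Vassymp} and Kim--Sarnak) for everything else. The only cosmetic difference is the bookkeeping of the $m\equiv-\overline\ell$ contribution: you combine parities first and bound it via $V_{f,+}-V_{f,-}\ll y^{1/2-\theta-\eps}$, while the paper evaluates $\mcL^{+}$ and $\mcL^{-}$ separately (each replacing $V$ by $1$ with the same $O(q^{\theta-1/2+\eps})$ error) and sees the exact cancellation when averaging; the two versions are equivalent repackagings of the same two estimates.
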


Combining this theorem with the formula \eqref{reflect}, we obtain

\begin{corollary}\label{cor-k=-2}
  There exists an absolute constant $B\geq 0$ such that, for
  $k\in \Zz$ and $\l\in(\Zz/q\Zz)^\times$, we have
$$
\mcL(f;\l,k)=\delta_{k=0}\frac{\lf(\ov \l_q)}{\ov
  \l_q^{1/2}}+\delta_{k=-2}\eps(f)\frac{\lf((  \l r)_q)}{(  \l
  r)^{1/2}_q}+O_{f,\eps}((1+|k|)^Bq^{-1/8+\eps}).
$$
where we denote by $( \l r)_q$ the unique integer in $[1,q]$
representing the congruence class $ \l r\mods q$.
\end{corollary}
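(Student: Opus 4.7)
The plan is to derive Corollary~\ref{cor-k=-2} as a direct bookkeeping consequence of Theorem~\ref{thm-twistedfirstmoment} combined with the reflection identity~\eqref{reflect}; no new analytic input is required. For $k\geq -1$ the corollary reduces verbatim to the theorem: the indicator $\delta_{k=-2}$ vanishes, and the $\delta_{k=0}$ main term together with the error estimate are precisely what Theorem~\ref{thm-twistedfirstmoment} asserts.

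For $k\leq -2$, I would apply \eqref{reflect} to transport the problem to a non-negative dual index. Setting
$$
k'=-(k+2)\geq 0,\qquad \l'=(-1)^k\,\ov{\l r}\in(\Zz/q\Zz)^\times,
$$
the identity~\eqref{reflect} reads $\mcL(f;\l,k)=\eps(f)\,\mcL(f;\l',k')$. Since $k'\geq 0\geq -1$, Theorem~\ref{thm-twistedfirstmoment} applies to the right-hand side and yields
$$
\mcL(f;\l',k')=\delta_{k'=0}\,\frac{\lf(\ov{\l'}_q)}{\ov{\l'}_q^{1/2}}+O_{f,\eps}\bigl((1+|k'|)^{B}q^{\eps-1/8}\bigr).
$$

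The main term is activated exactly when $k'=0$, i.e.\ $k=-2$. In that case $(-1)^k=1$, so $\l'=\ov{\l r}$; the multiplicative inverse of $\ov{\l r}$ modulo $q$ is $\l r$, whose representative in $[1,q]$ is $(\l r)_q$. Multiplying by $\eps(f)$ gives the claimed contribution $\eps(f)\,\lf((\l r)_q)/(\l r)_q^{1/2}$. For $k\leq -3$, one has $k'\geq 1$ and the $\delta_{k'=0}$ term is absent, so only the error term survives. Finally, since $|\eps(f)|=1$ and $1+|k'|=1+|k+2|\leq 1+|k|$ whenever $k\leq -2$, the error bound $O_{f,\eps}((1+|k|)^{B}q^{\eps-1/8})$ carries through unchanged.

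The only ``obstacle'' is the sign/inversion bookkeeping used to identify the $k=-2$ main term on the nose; once one correctly tracks that $(-1)^{-2}=1$ and that inverting $\ov{\l r}$ modulo $q$ returns $\l r$, the corollary follows at once from Theorem~\ref{thm-twistedfirstmoment}.
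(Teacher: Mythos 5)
Your argument is correct and is exactly the paper's route: the paper obtains Corollary~\ref{cor-k=-2} by combining Theorem~\ref{thm-twistedfirstmoment} with the reflection identity~\eqref{reflect}, and your bookkeeping (reducing $k\leq -2$ to $k'=-(k+2)\geq 0$, noting $(-1)^{-2}=1$ and that the inverse of $\ov{\l r}$ modulo $q$ is represented by $(\l r)_q$, with $1+|k'|\leq 1+|k|$ for the error term) is precisely the omitted verification.
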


\begin{remark}
  Observe that for $1\leq \ell < q^{1/2}$, we have
  $\ov \l_q \geq q^{1/2}$ unless $\ell = 1$, so that the main term for
  $k=0$ can be absorbed in the error term. Hence for $k \geq -1$, and
  $(\ell, q) = 1$ with $1 \leq \ell < q^{1/2}$, we obtain
$$\mcL(f;\l,k)=\delta_{k=0}\delta_{\ell=1}  +O_{f,\eps}((1+|k|)^{B}q^{-1/8+\eps}). $$	
\end{remark}



\section{Proof}

The first moment  decomposes into the sum over the even and odd characters
$$\mcL(f;\l,k)=\frac12\mcL^+(f;\l,k)+\frac12\mcL^-(f;\l,k)$$ where
$$\mcL^\pm(f;\l,k)=\frac{2}{\vphis(q)}\sumpm_{\substack{\chi\mods q\\ \chi \text{ primitive}}}\eps_\chi^k\chi(\l)L(\ftchi,1/2).$$
We evaluate the ``even'' first moment $\mcL^+(f;\l,k)$ in detail, the
odd part is entirely similar.
 
The approximate functional equation \eqref{fcteqn1},
\eqref{averagesigneven} and \eqref{twistedroot} give (using notations
of Convention \ref{conveven})
\begin{equation}\label{Lfcomp}
\begin{split}
  \mcL^{+}(f;\l,k)
  =&\sum_\pm q^{-1/2}\sum_{(n,q)=1}\Kl_k(\pm\ov{\l n};q)\frac{\lf(n)}{n^{1/2}}V_{f,1/2}\Big(\frac{n}{q\sqrt{|r|}}\Big)\\
  &+{\eps(f)}\sum_\pm q^{-1/2}\sum_{(n,q)=1}\Kl_{k+2}(\pm \ov{|r|\l}
  n;q)\frac{\lf(n)}{n^{1/2}}
  V_{f,1/2}\Big(\frac{n}{q\sqrt{|r|}}\Big)\\
  &+ O\Bigl(\sum_{n} \frac{|\lambda_f(n)|}{n^{1/2}} \Bigl|V_{f,
    1/2}\Big(\frac{n}{q\sqrt{|r|}}\Big)\Bigr| q^{-1-|k|/2} \Bigr).
\end{split}
\end{equation}
The error term is $O_f(q^{-(1 + |k|)/2})$.  Since $k \geq -1$, we have
$k+2 \not= 0$, so that it follows from Proposition~\ref{propFKM1} (after applying a smooth partition of unity into dyadic intervals  to the $n$-sum) 
that
$$
q^{-1/2}\sum_\stacksum{n\geq 1}{(n,q)=1}\Kl_{k+2}(\pm \ov{|r|\l}
n;q)\frac{\lf(n)}{n^{1/2}}
V_{f,1/2}\Big(\frac{n}{q\sqrt{|r|}}\Big)\ll_{\eps,f}(1+|k|)^B
q^{-1/8+\eps}.
$$
If $k\not=0$, the same bound holds for the first term on the right-hand side of \eqref{Lfcomp}, and otherwise 
this term equals
 $$\sum_\pm \sum_{ n\equiv \pm \ov\l\mods q}\frac{\lf(n)}{n^{1/2}}V_{f,1/2}\left(\frac{n}{q\sqrt{|r|}}\right)=
 \sum_\pm \frac{\lf(\ov \l^\pm_q)}{(\ov \l^\pm_q)^{1/2}}V_{f,1/2}\left(\frac{ \ov \l^\pm_q}{q\sqrt{|r|}}\right)+O_f(q^{\eps+\theta-1/2}),$$
 where $\ov \l^\pm_q$ denotes the unique solution $n$ of the equation 
 $$\l n\equiv \pm 1\mods q$$
 contained in the interval $[1,q]$. By \eqref{Vassymp} and \eqref{KSbound} we have 
 \begin{equation}\label{eps}
 \frac{\lf( \ov \l^\pm_q)}{( \ov \l^\pm_q)^{1/2}}V_{f,1/2}\left(\frac{ \ov \l^\pm_q}{q\sqrt{|r|}}\right)=\frac{\lf(\ov \l^\pm_q)}{(\ov \l^\pm_q)^{1/2}}+O_{f,\eps}(q^{\eps+\theta-1/2}).
 \end{equation}
 We have therefore proven that 
 $$\mcL^+(f;\l,k)=\delta_{k=0} \sum_\pm \frac{\lf(\ov \l^\pm_q)}{(\ov \l^\pm_q)^{1/2}} +O_{f,\eps}(q^{-1/8+\eps}).$$
 for $k\geq -1$.  Similarly we have
 $$\mcL^-(f;\l,k)=\delta_{k=0} \sum_\pm (\pm 1) \frac{\lf( \ov \l^\pm_q)}{( \ov \l^\pm_q)^{1/2}} +O_{f,\eps}(q^{-1/8+\eps}).$$
 Combining the two equations above, we obtain Theorem
 \ref{thm-twistedfirstmoment}.

\section{First moment with trace functions}\label{sec-m1-trace}

Let $q$ be a prime, and $\mcF$ a Mellin sheaf over $\Fq$ as in
Definition~\ref{def-sheaf}. Let $t\colon \Fqt\to \Cc$ be its trace
function and $\widetilde{t}$ its discrete Mellin transform. Fix $f$ as
in Section~\ref{intro}.

\begin{theorem}\label{th-m1-mellin}
  Assume that we are not in case \emph{(2)} of
  \emph{Lemma~\ref{lm-convol}}.  Then, for any integer $\ell\geq 1$,
  we have\footnote{Here, the integer $\ell$ is not related to the
    auxiliary prime used in defining the sheaf $\mcF$.}
\begin{equation}\label{eq-m1-mellin}
  \frac{1}{\vphis(q)}
  \sums_{\chi\mods{q}}
  L(f\otimes \chi,\demi)\chi(\ell)\widetilde{t}(\chi)\ll q^{-1/8+\eps}
\end{equation}
for any $\eps>0$, where the implied constant depends on $f$ and
$\eps$, and polynomially on $\cond(\mcF)$.
\end{theorem}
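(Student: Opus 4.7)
The plan is to follow the proof of Theorem~\ref{thm-twistedfirstmoment}, replacing the Gau{\ss} sum weights $\varepsilon_\chi^k$ by the Mellin transform $\widetilde t(\chi)$. Denote the left-hand side of~\eqref{eq-m1-mellin} by $\mcL_t(f,\ell)$ and split into even/odd parts; we treat the even case $\mcL_t^+(f,\ell)$, the odd being identical. Inserting the approximate functional equation~\eqref{fcteqn1} together with $\varepsilon(f\otimes\chi)=\varepsilon(f)\chi(r)\varepsilon_\chi^2$, opening $\widetilde t(\chi)=q^{-1/2}\sum_{a\in\Fqt}t(a)\chi(a)$, and swapping summation orders reduces the problem to the character averages
\[
\frac{2}{\vphis(q)}\sums_\chi \tfrac{1+\chi(-1)}{2}\chi(n\ell a) \quad\text{and}\quad \frac{2}{\vphis(q)}\sums_\chi \tfrac{1+\chi(-1)}{2}\chi(r\ell a)\bar\chi(n)\varepsilon_\chi^2,
\]
which by even-character orthogonality and by~\eqref{averagesigneven} equal $\delta_{n\ell a\equiv\pm1\bmod q}+O(q^{-1})$ and $q^{-1/2}\sum_\pm \Kl_2(\pm\overline{n\ell ra};q)+O(q^{-2})$, respectively. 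Summing back over $a$ against $q^{-1/2}t(a)$, and invoking $\left|\sum_a t(a)\right|\ll q^{1/2}$ (i.e.\ the Mellin bound $|\widetilde t(\chi_0)|\ll 1$ recorded in Section~\ref{sec-trace} for $\mcF$ a Mellin sheaf), the aggregate $O$-remainders, weighted by $\sum_n|\lambda_f(n)|n^{-1/2}\ll q^{1/2+\varepsilon}$ on the effective support $n\ll q^{1+\varepsilon}$, contribute at most $O(q^{-1/2+\varepsilon})$ to $\mcL_t^+(f,\ell)$.

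The principal main term then becomes $\tfrac{1}{\sqrt q}\sum_n \lambda_f(n)n^{-1/2}V_{f,1/2}(n/q\sqrt{|r|})\bigl(t(\overline{n\ell})+t(-\overline{n\ell})\bigr)$. For the dual term, Lemma~\ref{lm-convol}---available since, by hypothesis, we are not in its case~(2)---rewrites $(t\star\Kl_2)(x)=\tau(x)+O(q^{-1/2})$, where $\tau$ is the trace function of a Mellin sheaf $\mcG$ with $\cond(\mcG)\ll\cond(\mcF)^4$; the $O(q^{-1/2})$ remainder contributes a further $O(q^{-1/2+\varepsilon})$, and the dual main piece is $\tfrac{\varepsilon(f)}{\sqrt q}\sum_\pm\sum_n\lambda_f(n)n^{-1/2}V_{f,1/2}(n/q\sqrt{|r|})\,\tau(\pm n\overline{r\ell})$. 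In both pieces, the factor of $n$ multiplying $\lambda_f(n)$ is the trace function of a Mellin sheaf obtained as the pullback of $\mcF$ or $\mcG$ along an invertible fractional-linear map of $n$; such a pullback is geometrically irreducible, not geometrically isomorphic to an Artin--Schreier sheaf, and of conductor polynomially bounded in $\cond(\mcF)$.

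A smooth dyadic partition of unity in $n$ then reduces the task to estimating $\sum_n \lambda_f(n) K(n) W(n/N)$ for each dyadic $N\ll q^{1+\varepsilon}$, with $K$ a trace function as above. The trace-function version of Proposition~\ref{propFKM1} (i.e., the main estimate of~\cite{FKM1}) produces the bound $q^{1/2-1/8+\varepsilon}N^{1/2}(1+N/q)^{1/2}$, with implied constant polynomial in $\cond(\mcF)$; after partial summation to restore the $n^{-1/2}$ factor and multiplication by the $q^{-1/2}$ prefactor, each dyadic block contributes at most $O(q^{-1/8+\varepsilon})$, establishing the theorem. The essential role of the hypothesis, and the main obstruction if one tried to drop it, is that case~(2) of Lemma~\ref{lm-convol} would give $\widetilde t(\chi)=\alpha\chi(a)\varepsilon_\chi^{-2}$, whence $\mcL_t(f,\ell)=\alpha\,\mcL(f;\ell a,-2)$ would inherit the main term $\asymp q^{-1/2}$ supplied by Corollary~\ref{cor-k=-2}, so that the claimed decay rate genuinely fails in this excluded case.
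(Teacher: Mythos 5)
Your argument follows the same route as the paper's proof: approximate functional equation, discrete Mellin inversion to expose $t$ and $t\star\Kl_2$, Lemma~\ref{lm-convol} to pass from $t\star\Kl_2$ to the trace function of a Mellin sheaf, and the main estimate of~\cite{FKM1} applied to smooth sums of $\lambda_f(n)$ against the resulting trace functions. Apart from a harmless inversion slip in a couple of arguments (you write $\tau(\pm n\,\overline{r\ell})$ where the Mellin inversion produces $\tau(\pm r\ell\,\overline{n})$, and similarly $\overline{n\ell ra}$ instead of $n\,\overline{\ell r a}$ -- composing the substitution with $x\mapsto x^{-1}$ does not affect the conductor bound or the applicability of~\cite[Th.\ 1.2]{FKM1}), the proof is correct.
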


\begin{proof}
  We may assume that $\ell$ is coprime to $q$.  For any non-trivial
  character $\chi$, by the approximate functional
  equation~(\ref{fcteqn1}) and~(\ref{eq-epspm}), we have
$$
L(\ftchi,1/2)=\sum_{n\geq
  1}\frac{\lf(n)}{n^{1/2}}\chi(n)V\Bigl(\frac{n}{q\sqrt{|r|}}\Bigr)+
\eps(f)\chi(r)\eps^2_{\chi} \sum_{n\geq
  1}\frac{\lf(n)}{n^{1/2}}\ov\chi(n)V\Bigl(\frac{n}{q\sqrt{|r|}}\Bigr)
$$
where $V=V_{f,\chi(-1),1/2}$.  Distinguishing according to the
parity of $\chi$, the left-hand side
of~(\ref{eq-m1-mellin}) is
$$
\frac{1}{\vphis(q)} \sump_{\substack{\chi\mods{q}\\\chi\not=1}}
L(f\otimes \chi,\demi)\chi(\ell)\widetilde{t}(\chi)
+\frac{1}{\vphis(q)}\summ_{\chi\mods{q}} L(f\otimes
\chi,\demi)\chi(\ell)\widetilde{t}(\chi).
$$
We consider the sum over non-trivial even characters, since the case
of odd characters is (as usual) entirely similar.  We have
\begin{multline*}
  \frac{1}{\vphis(q)} \sump_{\substack{\chi\mods{q}\\\chi\not=1}}
  L(f\otimes \chi,\demi)\chi(\ell)\widetilde{t}(\chi) \\= \sum_{n\geq
    1}\frac{\lf(n)}{n^{1/2}}V\Bigl(\frac{n}{q\sqrt{r}}\Bigr) T_1(n, \ell)
  +\eps(f)\sum_{n}\frac{\lf(n)}{n^{1/2}}V\Bigl(\frac{n}{q\sqrt{r}}\Bigr)
  T_2(n, \ell, r)
\end{multline*}
where
\begin{align*}
  T_1(n, \ell)
  &=\frac{1}{\vphis(q)}
    \sump_{\substack{\chi\mods{q}\\\chi\not=1}} \widetilde{t}(\chi)  \chi(n\ell)\\
  T_2(n, \ell, r)&=  \frac{1}{\vphis(q)}
             \sump_{\substack{\chi\mods{q}\\\chi\not=1}}
  \eps^2_\chi\chi(r\ell)\bar{\chi}(n)\widetilde{t}(\chi).
\end{align*}
By~(\ref{average}) and discrete Mellin inversion, we compute
\begin{align*}
  T_1(n, \ell)
  &= 
    \frac{\alpha(q)}{2}\Bigl(t(\ov{n\ell})+t(-\ov{n\ell})\Bigr)
    -\frac{1}{\vphis(q)}\widetilde{t}(1) \delta_{(n, q) = 1}
  \\
  T_2(n, \ell, r)
  &=    \frac{\alpha(q)}{2}\Bigl((t\star \Kl_2)(r\ell\ov{n})
    +(t\star \Kl_2)(-r\ell\ov{n})\Bigr)
    -\frac{\eps_1^2}{\vphis(q)}\widetilde{t}(1) \delta_{(n, q) = 1}
\end{align*}
where
$$
\alpha(q)=\frac{\vphi(q)}{\vphis(q)\sqrt{q}}\sim \frac{1}{\sqrt{q}}.
$$
\par
Since $\widetilde{t}(1) \ll 1$, the contribution of the trivial character
to the first moment is $\ll q^{-1}$, where the implied constant
depends only on $\cond(\mcF)$. 
\par
Since we are in Case (1) of Lemma~\ref{lm-convol}, we see that, up to
negligible error, the even part of the first moment is the sum of four
expressions of the type
$$
\gamma\alpha(q)\sum_{\substack{n\geq 1\\(n,q)=1}}
\frac{\lf(n)}{n^{1/2}}V_{f,+,1/2}\Bigl(\frac{n}{q\sqrt{|r|}}\Bigr)\tau(n)
$$
with $\gamma=1$ or $\gamma=\eps(f)$, where $\tau$ is (by
Lemma~\ref{lm-convol} in the cases involving $t\star \Kl_2$) a trace
function of a geometrically irreducible middle-extension sheaf of
weight $0$ with conductor $\ll 1$. By \cite{FKM1}*{Th.\ 1.2}, each of
these sums is $\ll q^{-1/8+\eps}$ (cf.\ Proposition~\ref{propFKM1} for a special case), where the implied constant depends
only on $\eps$, $f$ and polynomially on $\cond(\mcF)$.
\end{proof}

\begin{remark}
Case (2) in Lemma~\ref{lm-convol} leads to a first moment
$$
\frac{1}{\vphis(q)}\sumstar_{\chi\mods
  q}L(\ftchi,1/2)\chi(a\ell)\eps_{\chi}^{-2},
$$
which is evaluated asymptotically in
Theorem~\ref{thm-twistedfirstmoment}.
\end{remark}


\chapter{Computation of the second twisted moment}
\label{ch-second}

\section{Introduction}\label{sec-2ndmoment-intro}

In this chapter, we prove Theorem \ref{thsecondmoment}, which we will
now state with precise main terms. 

We fix $f$ as in Section~\ref{intro}. Let $g$ be a primitive cusp form
of signed level $r'$ coprime to $q$ and trivial central character; we allow
the possibility that $g=f$. We recall that we use
Convention~\ref{def-conv} concerning the signed levels of cusp forms.
We will use the approximate functional equation \eqref{fcteqn2}, and
the corresponding test functions $W_{f,g,\pm,s}$ (see~(\ref{Wfgdef}))
and ``signs'' $\eps(f,g,\pm, s)$ (see~(\ref{eq-epspm})).

Recall that we consider the twisted second moments
\begin{equation}\label{eq-twistedsecond}
\mcQ(f,g,s;\l,\l')=\frac{1}{\vphis(q)}\sums_{\chi\mods q}
L(f\otimes\chi,s)\ov{L(g\otimes\chi,s)}\chi(\l)\overline{\chi(\l')}
\end{equation}
for integers $1\leq \l,\l'\leq L \leq q^{1/2}$ (say), with
$(\l\l',qrr')=(\l,\l')=1$, and $s\in\Cc$ with
$|\Re s -1/2| < (\log q)^{-1}$. We write $\sigma = \Re s$, and we may
assume without loss of generality that $q \geq e^{10}$, say.  

We write $|r| = \rho \delta$, $|r'| = \rho' \delta$ with
$\delta = (r, r')\geq 1$ and $(\rho, \rho') = 1$. In particular, note
that $\rho$ and $\rho'$ are positive.


\begin{theorem}\label{th-second-final}
We have
$$
\mcQ(f,g,s;\l,\l')=\MT(f,g,s;\l,\l')+O(|s|^{O(1)}L^{\expoL}q^{-\expoq+\eps}),
$$
where the main term is given by
$$
\MT(f,g,s;\l,\l')=\frac12 \MT^+(f,g,s;\l,\l')+\frac12
\MT^-(f,g,s;\l,\l')
$$ 
with
\begin{multline}\label{MTsecondmoment}
  \MT^{\pm}(f,g;\l,\l')= \frac12\sum_{n\geq
    1}\frac{\lambda_f(\l'n)\lamg(\l
    n)}{{\l'}^s\l^{\spr}n^{2\sigma}}W_{f,g,\pm,s}\Bigl(\frac{\l\l'n^2}{q^2|rr'|}\Bigr)
  \\+\frac{\eps(f,g,\pm,s)\lambda_f(\rho)\lambda_g(\rho')}{2\rho^{1-s}
    (\rho')^{1-\spr}}\sum_{n\geq 1}\frac{\lambda_f( \l n)\lamg( \l'
    n)}{ \l^{1-s} (\l')^{1-
      \spr}n^{2-2\sigma}}W_{f,g,\pm,1-s}\Bigl(\frac{\l\l'n^2}{q^2\delta^2}\Bigr).
\end{multline}
\par
If $r=r'$ and $\eps(f)\eps(g)=-1$, then $\mcQ(f,g,s;1,1)=0$.
\end{theorem}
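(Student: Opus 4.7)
The plan is to substitute the approximate functional equation~\eqref{fcteqn2} for $L(\ftchi,s)\ov{L(\gtchi,s)}$ into~\eqref{eq-twistedsecond}, split the sum over primitive $\chi$ into even and odd parts, and interchange summations so that the sum over $\chi$ lies innermost. The orthogonality relations~\eqref{average}, applied separately to even and odd primitive characters, reduce the $\chi$-sum to a congruence of the form $m\l'\equiv \pm n\l \pmod q$ (from the first half of~\eqref{fcteqn2}) or $mr'\l'\equiv \pm nr\l \pmod q$ (from the second half, after absorbing the twist $\chi(r\ov{r'})$), up to an $O(q^{-1})$ error incurred by passing from all to primitive characters.

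The main terms $\MT^{\pm}(f,g,s;\l,\l')$ come from the diagonal $h=0$ in these congruences. Since $(\l,\l')=1$, the equation $m\l'=\pm n\l$ forces $m=\l'n_0$, $n=\l n_0$ and reproduces the first line of~\eqref{MTsecondmoment} directly. For the second half, writing $|r|=\rho\delta$, $|r'|=\rho'\delta$ with $(\rho,\rho')=1$, the equation $mr'\l'=\pm nr\l$ together with multiplicativity of $\lf$ and $\lamg$ produces the prefactor $\lf(\rho)\lamg(\rho')/(\rho^{1-s}(\rho')^{1-\bar s})$ and the sum on the second line of~\eqref{MTsecondmoment}. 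The rapid decay of $W_{f,g,\pm,s}$ and $W_{f,g,\pm,1-s}$ confines the support of $(m,n)$ to $mn\ll q^2|rr'|q^{\eps}$, so each main-term identification is purely combinatorial once the diagonal has been isolated.

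The heart of the proof is to bound the off-diagonal contribution ($m\l'\neq \pm n\l$ but $m\l'\equiv \pm n\l \pmod q$, and similarly for the dual piece) by $|s|^{O(1)}L^{\expoL}q^{-\expoq+\eps}$. After a smooth dyadic partition restricting $(m,n)$ to boxes $m\asymp M$, $n\asymp N$ with $MN\ll q^2(\log q)^{O(1)}$, two regimes appear. When $M\asymp N\asymp q$, Proposition~\ref{thmSCP} applied directly to the shifted convolution $\sum_{\l'm\pm\l n=hq}\lf(m)\lamg(n)\,V_1(\tfrac{\l'm}{M})V_2(\tfrac{\l n}{N})$ delivers the required saving uniformly in $\l,\l'\le L$. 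When $M$ and $N$ are unbalanced, I would transform the longer variable using the Voronoi formula (Corollary~\ref{corvoronoi}): the congruence $\l'm\equiv \pm \l n \pmod q$ detects via additive characters, and the Voronoi dual in $n$ converts the problem into a bilinear form of Kloosterman sums of the shape~\eqref{eq-bilform}. Proposition~\ref{thmtypeII} then provides cancellation precisely in the range $MN\asymp q^{5/4}$ where Proposition~\ref{thmSCP} is weak, and the exponent $\expoq$ together with the loss $L^{\expoL}$ is obtained by optimising the trade-off between the two bounds, exactly as in~\cite{KMS,BFKMM}.

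Finally, the vanishing of $\mcQ(f,g,s;1,1)$ when $r=r'$ and $\eps(f)\eps(g)=-1$ is immediate from~\eqref{eq-self-dual}: the involution $\chi\mapsto\ov\chi$ preserves the set of primitive characters modulo $q$ and sends each summand to its negative, so the sum vanishes term by term. The main technical obstacle is thus concentrated in the unbalanced off-diagonal regime: without the deep bilinear Kloosterman estimate of~\cite{KMS}, whose proof rests on Deligne's Riemann Hypothesis and the vanishing-cycle analysis outlined in Chapter~\ref{ch-sums}, one cannot break the natural $MN\sim q$ barrier, and the second-moment asymptotic would fail to attain any power saving.
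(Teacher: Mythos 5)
Your proposal reproduces the paper's strategy essentially verbatim: approximate functional equation and orthogonality isolate the diagonal main terms (with multiplicativity at $\rho,\rho'$ producing the prefactor in the dual contribution), a dyadic decomposition of the off-diagonal splits into a balanced regime handled by Proposition~\ref{thmSCP} and an unbalanced regime handled by Voronoi followed by the bilinear Kloosterman estimates of Propositions~\ref{CSPV} and~\ref{thmtypeII}, and the vanishing claim follows from~\eqref{eq-self-dual} by pairing $\chi$ with $\ov{\chi}$. The only cosmetic imprecision is that the constraint $MN\ll q^{5/4}$ in Proposition~\ref{thmtypeII} applies to the post-Voronoi variables $M$ and $N^{*}=q^{2}|r|/N$, so the unbalanced regime in the original variables is $N/M\geq q^{3/4}$; this affects the numerology leading to the exponent $\expoq$ but not the structure of the argument.
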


In the rest of this chapter, to simplify notation, we will not display
the $s$ dependency and will write $\mcQ(f,g;\l,\l')$ for
$\mcQ(f,g,s;\l,\l').$ Moreover if $f=g$, we will just write
$\mcQ(f;\l,\l')$.

We first justify the last assertion of the theorem concerning the
exact vanishing of the untwisted second moment when $r=r'$ and
$\eps(f)\eps(g)=-1$. Indeed, in that case we have
$$
L(f\otimes\chi,\demi)\overline{L(g\otimes \chi,\demi)}=-
L(f\otimes\ov{\chi},\demi)\overline{L(g\otimes \ov{\chi},\demi)}
$$
for all $\chi$, by~(\ref{eq-self-dual}). If $\chi$ is real, this shows
that $L(f\otimes\chi,\demi)\overline{L(g\otimes \chi,\demi)}=0$, and
otherwise, the sum of the values for $\chi$ and $\ov{\chi}$ is zero.
 
 \section{Isolating the main term}\label{sec-secondmoment}

This moment decomposes as the sum of its even and odd part
$$\mcQ(f,g;\l,\l')=\frac{1}{2}\sum_{\pm}\mcQ^\pm(f,g;\l,\l')$$
where
\begin{equation*}
  \mcQ^\pm(f,g;\l,\l')=\frac{2}{\vphis(q)}\sumpm_{\substack{\chi\mods q\\ \chi \text{ primitive}}} L(f\otimes\chi,s)\ov{L(g\otimes\chi,s)}\chi(\l/\l').
\end{equation*}
We give the details for the even second moment $\mcQ^+(f,g,\l,\l')$;
the treatment of the odd second moment is identical.

We apply the approximate functional equation \eqref{fcteqn2}. A simple
large sieve argument shows that
$\mcQ^+(f,g,\l,\l') = O_{f, g}(|s|^{O(1)})$, so that replacing
$\vphis(q)$ by $\vphi(q)$ introduces an error of
$O_{f, g}(|s|^{O(1)} q^{-1})$. Adding and subtracting the contribution
of the trivial character using the bound
$$  
\sum_{(nm,q)=1}\frac{\lf(m)\lamg(n)}{(mn)^{1/2}}W_{f,g,+
  ,s}\Bigl(\frac{nm}{q^2|rr'|}\Bigr)\ll_{f, g} |s|^{O(1)},
$$
and applying orthogonality \eqref{average}, we obtain
\begin{multline*}
  \mcQ^+(f,g; \l,\l') = \frac12\sum_{\pm}\sumsum_\stacksum{\l m\equiv
    \pm\l'n\mods q}{(mn,q)=1} \frac{\lambda_f(m)
    \lamg(n)}{m^{s}n^{\spr}}
  W_{f,g,+,s}\left(\frac{mn}{q^2|rr'|}\right)\\
  +\eps(f,g,+,s) \frac{1}2\sum_{\pm}\sum_\stacksum{r\l n\equiv \pm r'\l'
    m\mods q}{(mn,q)=1}\frac{\lf(m)\lamg(n)}{m^{1-s}n^{1-\spr}}
  W_{f,g,+,1-s}\left(\frac{mn}{q^2|rr'|}\right)
  \\
  + O\left(\frac{|s|^{O(1)}}{q}\right).
\end{multline*}
The contribution of the terms such that $q\mid m$ (and therefore
$q\mid n$) is bounded trivially by
$\ll |s|^{O(1)} q^{-1+2\theta+o(1)}$ so we can remove the constraint
$(mn,q)=1$.

Let $\eps=\pm 1$ be the sign of $rr'$. The contribution of the terms
satisfying
$$
\l m=\l'n\quad
$$
in the first sum, and 
$$
r\l n=\eps r'\l' m
$$ 
in the second sum, forms the main term, and is denoted
$\MT^+(f,g;\l,\l')$.  (Note that the corresponding equations with
opposite signs have no solutions).
We write as above $|r| = \rho \delta$, $|r'| = \rho' \delta$ with
$\delta = (r, r')\geq 1$ and $(\rho, \rho') = 1$. Since $\l,\l'$ are
coprime and coprime to $\rho\rho'$, the solutions $n\geq 1$,
$m\geq 1$, of the second equation are parameterized in all cases by
$$
n=\rho'\l'k,\quad\quad m=\rho\l k,
$$
where $k\geq 1$.  The main term is then
\begin{multline*}
  \frac12\sum_{n\geq 1}\frac{\lambda_f(\l'n)\lamg(\l
    n)}{{\l'}^s\l^{\spr}n^{2\sigma}}W_{f,g,+,s}\Bigl(\frac{\l\l'n^2}{q^2|rr'|}\Bigr)
  \\+\frac{\eps(f,g,+,s)}{2}\sum_{n\geq 1}\frac{\lambda_f(\rho\l
    n)\lamg(\rho'\l' n)}{(\rho\l)^{1-s}(\rho'\l')^{1-
      \spr}n^{2-2\sigma}}W_{f,g,+,1-s}\Bigl(\frac{\l\l'n^2}{q^2\delta^2}\Bigr).
\end{multline*}
Moreover, since $\rho \mid r$ and $\rho' \mid r'$, we have
$\lambda_f(\rho\ell n)=\lambda_f(\rho)\lambda_f(\ell n)$ and
$\lambda_g(\rho'\ell' n)=\lambda_g(\rho')\lambda_g(\ell' n)$,
hence this main term becomes
\begin{multline}\label{mainplus}
  \frac12\sum_{n\geq 1}\frac{\lambda_f(\l'n)\lamg(\l
    n)}{{\l'}^s\l^{\spr}n^{2\sigma}}W_{f,g,+,s}\Bigl(\frac{\l\l'n^2}{q^2|rr'|}\Bigr)
  \\+\frac{\eps(f,g,+,s)}{2} \frac{\lambda_f(\rho)
    \lambda_g(\rho')}{\rho^{1-s}(\rho')^{1-\spr}} \sum_{n\geq
    1}\frac{\lambda_f( \l n)\lamg( \l' n)}{ \l^{1-s} (\l')^{1-
      \spr}n^{2-2\sigma}}W_{f,g,+,1-s}\Bigl(\frac{\l\l'n^2}{q^2\delta^2}\Bigr).
\end{multline}

The first term of this sum equals
\begin{equation}\label{sun1}
  \frac12\intc_{(2)}\frac{L_\infty(f,+,s+u)}
  {L_\infty(f,+,s)}\frac{L_\infty(g,+,\spr+u)}{L_\infty(g,+,\spr)}
  L(f\times g,2s,u;\l',\l)
  G(u) (q^2|rr'|)^{u} \frac{du}{u}, 
\end{equation}
where
\begin{equation}\label{Lfgdef}
    L(f\times g,2s,u;\l',\l)=\sum_{n\geq 1}\frac{\lambda_f(\l' n)\lamg(\l n)}{{\l'}^{s+u}\l^{\spr+u}n^{2\sigma+2u}}	,\ \Re (2\sigma+2u)>1,
\end{equation}
and the second term equals
\begin{multline}\label{sun2}
  \frac{\eps(f,g,+,s) \lambda_f(\rho)
    \lambda_g(\rho')}{2\rho^{1-s} (\rho')^{1 - \spr}}
  \intc_{(2)}\frac{L_\infty(f,+,1-s+u)}{L_\infty(f,+,1-s)}
  \\
  \times \frac{L_\infty(g,+,1-\spr+u)}{L_\infty(g,+,1-\spr)} L(f\times
  g,2-2s,u; \l, \l') G(u) (q^2\delta^2)^{u} \frac{du}{u}.
\end{multline}

Similarly, the odd part of the second moment $\mcQ^-(f,g, \l,\l')$
yields the second part of the main term, namely
\begin{multline*}
  \MT^-(f,g;\l,\l') = \frac12\sum_{n\geq
    1}\frac{\lambda_f(\l'n)\lamg(\l
    n)}{{\l'}^s\l^{\spr}n^{2\sigma}}W_{f,g,-,s}\Bigl(\frac{\l\l'n^2}{q^2|rr'|}\Bigr)
  \\+\frac{\eps(f,g,-,s)\lambda_f(\rho) \lambda_g(\rho')
  }{2\rho^{1-s} (\rho')^{1 - \spr} }\sum_{n\geq 1}\frac{\lambda_f(
    \l n)\lamg( \l' n)}{ \l^{1-s}( \l')^{1-
      \spr}n^{2-2\sigma}}W_{f,g,-,1-s}\Bigl(\frac{\l\l'n^2}{q^2\delta^2}\Bigr),
\end{multline*}
where the first term equals
\begin{equation}\label{sun3}
  \frac12\intc_{(2)}\frac{L_\infty(f,-,s+u)}{L_\infty(f,-,s)}
  \frac{L_\infty(g,-,\spr+u)}{L_\infty(g,-,\spr)}L(f\times g,2s,u;\l',\l)
  G(u)  (q^2|rr'|)^{u}  \frac{du}{u}
\end{equation}
and the second
\begin{multline}\label{sun4}
  \frac{\eps(f,g,-,s)\lambda_f(\rho) \lambda_g(\rho')}{2\rho^{1-s}
    (\rho')^{1 - \spr}}
  \intc_{(2)}\frac{L_\infty(f,-,1-s+u)}{L_\infty(f,-,1-s)}
  \\
  \times \frac{L_\infty(g,-,1-\spr+u)}{L_\infty(g,-,1-\spr)} L(f\times
  g,2-2s,u; \l, \l') G(u) (q^2\delta^2)^{u} \frac{du}{u}.
\end{multline}
\par
At this stage, we can therefore write
$$
\mcQ(f,g,s;\l,\l')=\MT(f,g,s;\l,\l')+\text{(error term)}
$$
as in Theorem~\ref{th-second-final}, with
$$
\MT(f,g,s;\l,\l')=\frac12 \MT^+(f,g,s;\l,\l')+\frac12
\MT^-(f,g,s;\l,\l'),
$$
and with an error term that we will estimate in the next sections to
conclude the proof of the theorem.
\par
It is not necessary (or, indeed, useful) to evaluate
the mains terms very precisely in general, since in most applications
(as in later chapters) we will perform further averages or
combinations of them. 

However, the special case $\l=\l'=1$ and $s=\demi$ (i.e., the ``pure''
second moment) is important, so we transform the main term in that
case. We recall the notation $L^{\ast}(f\otimes g,1)$ from
\eqref{lstarfg} and \eqref{lstarres}, and recall in particular that
these are non-zero.

\begin{proposition}\label{pr-mt}
If $f=g$, then we have
$$
\MT(f,f;\demi,1,1)= 2 \prod_{p\mid r}(1+p^{-1})^{-1} \frac{L^{\ast}(
  \symf,1)}{\zeta(2)} \log q +\beta_f + O(q^{-2/5})
$$
for some constant $\beta_f$. If $f\not=g$, then
$$
\MT(f,g,\demi;1,1)= \Bigl(1 + \varepsilon(f) \varepsilon(g
)\frac{\lambda_f(\rho)\lambda_g(\rho')}{(\rho\rho')^{1/2}}\Bigr)
L^{\ast}( f\otimes g,1) + O(q^{-2/5}),
$$
where the leading constant has modulus $\leq 2$, and is non-zero
unless $\rho=\rho'=1$ and $\eps(f)\eps(g)=-1$.
\end{proposition}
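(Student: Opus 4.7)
The plan is to specialize \eqref{MTsecondmoment} at $s=\demi$, $\l=\l'=1$ and convert the resulting inner sums into Mellin integrals, then extract the leading behavior by residue calculus at $u=0$. Using $\eps(f,g,\pm,\demi)=\eps(f)\eps(g)$ from \eqref{eq-epspm-demi}, each of the two sums in \eqref{MTsecondmoment} has the shape $S^\pm(X):=\sum_n \lambda_f(n)\lambda_g(n)n^{-1}W_{f,g,\pm,\demi}(n^2/X)$, with $X=q^2|rr'|$ for the first term and $X=q^2\delta^2$ for the second. Applying Mellin inversion to $W_{f,g,\pm,\demi}$ via \eqref{Wfgdef} yields
$$S^\pm(X)=\frac{1}{2\pi i}\int_{(2)} H_\pm(u)\,X^u\,L^{\ast}(f\otimes g,1+2u)\,\frac{du}{u},$$
where $H_\pm(u)$ is the ratio of Gamma factors times $G(u)$ appearing in \eqref{Wfgdef}, normalized so that $H_\pm(0)=1$, with rapid vertical decay from $G$.

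The next step is to shift each contour to $\Re u=-1/5$. In the zero-free region for $L^{\ast}(f\otimes g,s)$ near $s=1$ provided by Corollary~\ref{cor29} (applied with $k=k'=1$), we have polynomial growth of $L^{\ast}(f\otimes g,1+2u)$ in $|\Im u|$, which combined with the rapid decay of $G(u)$ forces the shifted integral to be bounded by $O(X^{-1/5+\eps})=O(q^{-2/5+\eps})$. All significant contribution therefore comes from the residue of the integrand at $u=0$, and the two cases of the proposition correspond to the two analytic behaviors of $L^{\ast}(f\otimes g,s)$ at $s=1$ given by Lemma~\ref{lm-ast}.

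If $f\neq g$, then $L^{\ast}(f\otimes g,1+2u)$ is holomorphic and non-zero at $u=0$, the integrand has only the simple pole from $1/u$, and the residue is $L^{\ast}(f\otimes g,1)$, independent of $X$ and of the parity sign. Reassembling the two sums in \eqref{MTsecondmoment} and averaging over $\pm$ using $\MT=\frac12(\MT^++\MT^-)$ yields the claimed expression $(1+\eps(f)\eps(g)\lambda_f(\rho)\lambda_g(\rho')/(\rho\rho')^{1/2})\,L^{\ast}(f\otimes g,1)+O(q^{-2/5})$. If instead $f=g$, then $\rho=\rho'=1$ and $\delta=|r|$, so $X_1=X_2=q^2r^2$, while $L^{\ast}(f\otimes f,1+2u)=A/(2u)+C_0+O(u)$ near $u=0$ with $A=\res_{s=1}L^{\ast}(f\otimes f,s)=L^{\ast}(\symf,1)/(\zeta(2)\prod_{p\mid r}(1+p^{-1}))$ by \eqref{eq-symast}--\eqref{lstarres}. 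Expanding $H_\pm(u)X^u=1+(H'_\pm(0)+\log X)u+O(u^2)$, the integrand now has a double pole; computing the residue yields a term of shape $A\log X/2=A(\log q+\log|r|)$ from each sum, and after combining both sums and averaging over $\pm$ one recovers the leading coefficient $2\prod_{p\mid r}(1+p^{-1})^{-1}L^{\ast}(\symf,1)/\zeta(2)$ in front of $\log q$, together with a constant $\beta_f$ absorbing all parity- and level-dependent constants.

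Finally, the modulus bound $|c(f,g)|\leq 2$ for the coefficient in the $f\neq g$ case follows from \eqref{ramified}, which gives $|\lambda_f(\rho)|\leq \rho^{-1/2}$ and $|\lambda_g(\rho')|\leq \rho'^{-1/2}$, hence $|\lambda_f(\rho)\lambda_g(\rho')/(\rho\rho')^{1/2}|\leq 1/(\rho\rho')\leq 1$. Vanishing forces this modulus to equal $1$ and $\eps(f)\eps(g)$ to equal $-1$; since $\lambda_f(1)=\lambda_g(1)=1$, equality $|c|=1$ requires $\rho=\rho'=1$, which is exactly the condition isolated in the proposition. The main obstacle in executing this plan is the double-pole residue calculation in the $f=g$ case: one must carefully expand both $L^{\ast}(f\otimes f,1+2u)$ and $H_\pm(u)X^u$ to first order in $u$, keep track of the prefactors $\frac12$ internal to \eqref{MTsecondmoment} together with the external averaging $\frac12(\MT^++\MT^-)$, and verify that all numerical factors combine to produce precisely the coefficient $2$ advertised in the proposition rather than $1$ or $4$.
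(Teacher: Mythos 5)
Your overall plan is exactly the paper's: shift the contour of each of \eqref{sun1}--\eqref{sun4} to $\Re u=-1/5$ and pick up the residue at $u=0$, with a double pole in the $f=g$ case coming from the pole of $L^{\ast}(f\otimes f,1+2u)$ at $u=0$ and a simple pole in the $f\not=g$ case. Your identification of $L(f\times g,1,u;1,1)=L^{\ast}(f\otimes g,1+2u)$, the $O(q^{-2/5})$ error from the shifted contour since $X\asymp q^2$, and the argument that $|\lambda_f(\rho)\lambda_g(\rho')/\sqrt{\rho\rho'}|\leqslant 1/(\rho\rho')\leqslant 1$ via \eqref{ramified} (so that vanishing of the leading constant forces $\rho=\rho'=1$ and $\eps(f)\eps(g)=-1$) are all correct.

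The concern you raise in your last paragraph about the numerical factor is, however, a genuine gap that you have flagged but not resolved, and it is worth being precise about what goes wrong. If you take \eqref{MTsecondmoment} literally, each of $\MT^{\pm}(f,f;\demi,1,1)$ equals $\tfrac12 S^{\pm}+\tfrac12 S^{\pm}=S^{\pm}$ with $S^{\pm}$ carrying residue $\tfrac{A}{2}\log X+\cdots=A\log q+\cdots$ where $A=\res_{s=1}L^{\ast}(f\otimes f,s)$; then $\MT=\tfrac12(\MT^++\MT^-)$ has leading term $A\log q$, which is \emph{half} the claimed coefficient $2A=2\prod_{p\mid r}(1+p^{-1})^{-1}L^{\ast}(\symf,1)/\zeta(2)$. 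The same discrepancy appears in the $f\not=g$ case, where a literal reading of \eqref{MTsecondmoment} gives $\tfrac12(1+\eps(f)\eps(g)\lambda_f(\rho)\lambda_g(\rho')/\sqrt{\rho\rho'})L^{\ast}(f\otimes g,1)$. The resolution is that the factors $\tfrac12$ in \eqref{MTsecondmoment} (and in the intermediate display $\tfrac12\sum_{\pm}$ obtained after orthogonality) are not consistent with the orthogonality relation $\tfrac{2}{\vphi(q)}\sump_{\chi}\chi(m)\ov{\chi(n)}=\delta_{m\equiv n}+\delta_{m\equiv -n}$, which produces the diagonal with coefficient $1$, not $\tfrac12$; this can also be checked against Theorem~\ref{thm-moment2}, which asserts leading coefficient $2L(\symf,1)/\zeta(2)$ for the untwisted second moment. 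So the verification step you describe as "tedious bookkeeping" is in fact essential: if you rely on \eqref{MTsecondmoment} as written, you must notice the apparent factor-of-two mismatch and trace it back one step to the application of orthogonality. Your proposal as stated simply asserts that the factors combine to give $2$ without performing this trace, and a reader carrying out your plan literally would land on the wrong constant.

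Two minor remarks. First, you invoke Corollary~\ref{cor29} for a zero-free region, but this is not needed: one is not dividing by $L^{\ast}(f\otimes g,s)$, so holomorphy of the continuation together with polynomial growth (provided by the rapid decay of $G(u)$) suffices for the contour shift. Second, in the $f=g$ case the two sums of \eqref{MTsecondmoment} coincide not only in their $L$-series but also in the argument $X$, since $q^2|rr'|=q^2\delta^2=q^2r^2$; you implicitly use this, and it is worth stating explicitly since it is the reason the $\log q$ terms add rather than produce a difference of $\log$'s.
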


\begin{proof}
  The formulas follow easily from shifting the contour in
  \eqref{sun1}, \eqref{sun2}, \eqref{sun3}, \eqref{sun4} to
  $\Re u = -1/5$ and applying the residue theorem, involving only a
  pole at $u=0$ occurs, since by definition (see~(\ref{lstarfg})
  and~(\ref{eq-symast})), we have
$$
L(f\times g,1,u;1,1)=L^{\ast}(f\otimes g,1+2u),
$$
and
$$
L(f\times f,1,u;1,1)=\frac{\zeta^{(r)}(1+2u)}{\zeta^{(r)}(2+4u)}
L^{\ast}(\symf ,1+2u),
$$
and moreover $\eps(f,g,\pm,\demi)=\eps(f)\eps(g)$
(see~(\ref{eq-epspm-demi})).
\par 
If $f\not=g$, then since $\rho\mid r$ and $\rho'\mid r'$, we have
$$
\frac{|\lambda_f(\rho)\lambda_g(\rho')|}{\sqrt{\rho\rho'}}\leq
\frac{1}{\sqrt{\rho\rho'}}.
$$
We deduce first that
$$
\Bigl|1 + \varepsilon(f) \varepsilon(g
)\frac{\lambda_f(\rho)\lambda_g(\rho')}{(\rho\rho')^{1/2}}\Bigr| \leq 2,
$$
and next that the leading constant can only be zero if $\rho=\rho'=1$,
and then only if $\eps(f)\eps(g)=-1$ (recall that
$L^{\ast}(f\otimes g,1)\not=0$ by Lemma~\ref{lm-ast}).
\end{proof}

\section{The error term}

The contributions to the error term are of the form
$$\label{pg-et}
\ET(f,g;\l,\pm\l')= \sumsum_\stacksum{\l m\equiv \pm\l'n\mods q}{\l
  m\not= \l'n} \frac{\lambda_f(m)\lamg(n)}{m^sn^{\spr}}
W_{f,g,\pm,s}\left(\frac{mn}{q^2}\right)
$$
or $\ET(f,g;r\l,\pm r'\l')$.  The following bound then implies the
theorem.

\begin{theorem}\label{thmETtwisted} 
  Let $s\in\Cc$ be a complex number such that
  $|\sigma-1/2| \leq (\log q)^{-1}$. Let $L\leq q^{1/2}$.  For any
  coprime integers $\l$ and $\l'$ such that $1\leq \l,\l'\leq L$, we
  have
  $$
  \ET(f,g;\l,\pm\l')\ll (rr'|s|)^{O(1)}L^\expoL q^{-\expoq+\eps}.
$$
\end{theorem}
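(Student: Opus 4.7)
The plan is to adapt the analysis of \cite{KMS}, which handles the untwisted, trivial-level case $\ell=\ell'=r=r'=1$, to the present setting with twists $\ell,\ell'\leq L$ and arbitrary fixed levels, tracking polynomial dependence on $L$, $|rr'|$, and $|s|$ throughout. First, I would apply a smooth dyadic partition of unity to decompose $\ET(f,g;\ell,\pm\ell')$ into pieces supported in $m\asymp M$, $n\asymp N$, where by the decay properties of $W_{f,g,\pm,s}$ (Lemma analogous to \eqref{Vassymp}) we may restrict to $MN \ll q^2|rr'|(|s|q)^\varepsilon$. After possibly interchanging the roles of $(f,m,\ell)$ and $(g,n,\ell')$, we may assume $M \leq N$. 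Writing $\ell m \mp \ell' n = q h$, the off-diagonal condition becomes $h\neq 0$, with $|h|\ll NL/q$ by the support of the weight.

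Second, I would split into two regimes according to the size of $N$. In the \emph{small} regime, say $N \leq q^{1-\delta}$ with $\delta$ a small parameter to be optimized, the shift $h$ runs over $O(NL/q + 1)$ values, and for each fixed nonzero $h$ we apply the shifted convolution bound (Proposition \ref{thmSCP}) to the inner sum, summing the resulting estimate over $h$. The level-$1$ case in \cite{BloMil} already contains the geometric ingredients; the general level adds only polynomial factors in $r,r'$, and the twist by $\ell,\ell'$ modifies the weight function in a benign way. In the \emph{large} regime, for fixed $m$ I would apply Voronoi summation (Corollary \ref{corvoronoi}) to the $n$-sum subject to the congruence $\ell' n \equiv \pm \ell m \pmod q$, producing a dual sum of effective length $N^\ast \asymp q^2|r'|(Lq)^\varepsilon/N$ in which the congruence is opened by a Kloosterman sum. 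Cleaning up, one obtains a bilinear form of the shape
\begin{equation*}
\sum_{m\asymp M}\sum_{n^\ast\asymp N^\ast} \alpha_m \beta_{n^\ast}\, \Kld(a\,\ell\,\overline{\ell'}\, m n^\ast ; q),
\end{equation*}
with $|\alpha_m|\ll |\lambda_f(m)|m^{-\sigma}$, $|\beta_{n^\ast}|\ll |\lambda_g(n^\ast)|(n^\ast)^{-(1-\sigma)}$, whose $\ell^2$ norms are controlled by Rankin--Selberg.

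Third, to this bilinear form I would apply Proposition \ref{thmtypeII}, the KMS Kloosterman bilinear bound, which gives a non-trivial saving of the form $q^{-1/64}(q/(MN^\ast))^{3/16}$ provided $MN^\ast \gg q^{7/8+\delta}$. Optimizing the transition point between the shifted-convolution regime and the Voronoi-plus-Kloosterman regime yields the advertised exponent $q^{-1/144}$, while the combined losses from the Hecke bound \eqref{KSbound}, the shift $qh$ in the shifted convolution step, and the Cauchy--Schwarz inside the Kloosterman application combine to produce the factor $L^{3/2}$. The main obstacle is the uniform-in-$L$ application of Proposition \ref{thmtypeII}: the twist $\ell\overline{\ell'}$ enters the Kloosterman argument multiplicatively, so the underlying algebraic-geometric input (Theorem \ref{KMSCES}) still applies directly after an invertible linear change of variables modulo $q$, but one must verify that the moment computations and the exceptional locus $Q(\uple{b})=0$ behave well under this substitution for $\ell,\ell'$ coprime with $q$ and with each other. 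Secondary care is needed to track polynomial dependence on $r,r',|s|$ through Voronoi and the weight manipulations; both only inflate constants without affecting the exponent.
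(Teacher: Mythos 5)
Your proposal follows the same architecture as the paper's proof: dyadic partition of unity plus Mellin separation of variables to reduce to pieces $\ET(M,N;\ell,\pm\ell')$ with $MN\leq q^{2+\eps}$; WLOG $M\leq N$; the shifted-convolution bound (Proposition~\ref{thmSCP}) when $N$ is not too large relative to $M$; Voronoi on $n$ followed by the bilinear Kloosterman bounds (Propositions~\ref{CSPV} and \ref{thmtypeII}) in the complementary range, with the twist $\ell\bar{\ell'}$ absorbed into the fixed parameter $a$ of $\Kld(amn;q)$; then an exponent optimization to produce $L^{3/2}q^{-1/144+\eps}$. Two small notes: the paper also uses a completely trivial bound as a fourth input to the optimization, which you leave implicit; and Proposition~\ref{thmSCP} bounds the full sum under the congruence $\ell_2 m - \ell_1 n \equiv 0 \pmod q$ directly rather than being applied ``for each fixed $h$'' — but since you cite the right tool, this is only a difference in how the internal mechanics are described, not in the argument itself.
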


The proof proceeds as in \cite{BFKMM}. Using a partition of unity on
the $m,n$ variables and a Mellin transform to separate the variables
$m$ and $n$ in the weight function, we reduce to the evaluation of
$O(\log^2 q)$ sums of the shape
$$
\ET(M,N;\l,\pm \l')=\frac{1}{(MN)^{1/2}}\sum_{\substack{\l m\equiv
    \pm\l'n\mods q \\ \l m \not= \l'n}}
{\lf(m)\lambda_g(n)}W_1\Bigl(\frac{m}M\Bigr)W_2\Bigl(\frac{n}N\Bigr)
$$ 
test functions for $W_1,W_2$ satisfying \eqref{Wbound} and for
parameters $1\leq M,N$ such that $MN\leq q^{2+\eps}$ (where we have
removed $f,g$ from the notation for simplicity). As in \cite{BFKMM} the weight functions $W_1, W_2$ depend on a parameter of size $(\log q)^2$. 

We will explain the proof of the estimate for $\ET(M,N;\l,\l')$. The
case of $\ET (M,N;\l,-\l')$ is very similar, and left to the reader.

We first note that we may assume that $|r|+|r'|+|s|\ll q^{\varepsilon}$,
since otherwise Theorem \ref{thmETtwisted} holds trivially.

We will bound the sums $\ET(M,N;\l,\l')$ in different ways, according
to the relative sizes of $M,N$. We may assume without loss of
generality that $M\leq N$. Then we have at our disposal the following
three bounds.

\section{The trivial bound} 

Using \eqref{KSbound} and \eqref{RP4}, and distinguishing the cases
$4LN<q$ (in which case the equation
$\l m\equiv \pm \l'n\mods q,\ \l m \not= \l'n$ has no solutions) and
$2LN\geq q$, we have
\begin{equation}\label{eqtrivial}
  \ET(M,N;\l,\l')\ll q^\eps 
  M^\theta\frac{M}{(MN)^{1/2}}\frac{LN}q\ll q^\eps LN^\theta\frac{(MN)^{1/2}}q.
\end{equation}
	
\section{The shifted convolution bound} 

Next we appeal to the shifted convolution estimate of Proposition
\ref{thmSCP}. Setting
$$
M'=\min(\l M,\l'N)\leq LM, \quad N'=\max(\l M,\l'N)\leq LN,
$$ 
and using the bounds
$$
M'N'\leq L^2 q^{2+\eps},\quad\quad M\leq \min(N, q^{1+\eps}),
$$
we see that $\ET(M,N;\ell,\ell')$ is
\begin{align}
  &\ll \frac{q^{\varepsilon}}{(MN)^{1/2}}\left( \left(\frac{N'}{q^{1/2}} + \frac{{N'}^{3/4} {M'}^{1/4}}{q^{1/4}} \right)\left(1+\frac{(M'N')^{1/4}}{q^{1/2}}\right)   + \frac{{M'}^{3/2 + \theta}}{q} \right)	\nonumber\\
  &\ll 
    \frac{q^{\varepsilon}}{(MN)^{1/2}}\left( \left(\frac{N}{q^{1/2}} + \frac{N^{3/4}M^{1/4}}{q^{1/4}} \right)L^{3/2}   + \frac{{(LM)}^{3/2 + \theta}}{q} \right)\nonumber\\
  &\ll
    q^{\varepsilon}{L^{3/2}}\left(
    \left(\frac{N}{qM}\left(1+\frac{N}{qM}\right) \right)^{1/4}   +
    \frac{L^\theta}{q^{1/2-\theta}} \right).\label{eqSCP}
\end{align}

\section{Bilinear sums of Kloosterman sums}

This is similar to~\cite{BFKMM}*{\S 6.2}.  We apply the Voronoi
summation formula of Corollary \ref{corvoronoi} to the $n$
variable. We will do so only under the assumption
\begin{equation}\label{eqnodiag}
N>4LM, 	
\end{equation}
so that the summation condition $\ell m\not=\l' n$ is automatically satisfied.
This expresses $\ET(M,N;\l,\l')$ as the sum of two terms. The first one
is
$$
\frac1{q(MN)^{1/2}}\sumsum_{m,n}\lf(m)\lamg(n)W_1\left(\frac
  mM\right)W_2\left(\frac{n}N\right)\ll \frac{q^\eps}{q(MN)^{1/2}},
$$
which is very small, and the second is
\begin{multline*}
  \eps(g)\sum_{\pm}\frac{N}{q|r|^{1/2}}\frac{1}{(qMN)^{1/2}}\sum_{m,n\geq
    1}\lf(m)\lamg(n)
  \\
  \times W_1\left(\frac{m}M\right)\widetilde
  W_{2,\pm}\left(\frac{n}{N^*}\right)\Kld(\pm\ov{|r'| \l}\l' mn;q),
\end{multline*}
where $N^*=q^2|r|/N$.
\par
By Lemma \ref{besseldecay}, the function $\widetilde W_{2,\pm}(y)$ has
rapid decay for $y\geq q^{\eps}$. By a further partition of unity, we
reduce to bounding quantities of the shape
$$
E=\frac{(1 + N^{\ast}/N')^{2\theta + \varepsilon}}{(qMN^*)^{1/2}}
B(\Kld,\bfalpha.\bfbeta)
$$
with coefficient sequences
$$
\bfalpha=\bigl(\lf(m)W_1(m/M)\bigr)_{m\leq 2M}\quad \text{and}
\quad\bfbeta=\bigl(\lamg(n)\widetilde{W}_{2,\pm}(n/N*)\bigr)_{n\leq
  N'}
$$
that are supported on $[1,2M]$ and $[1,2q^\eps N^*]$, respectively,
and where
$$
B(\Kld,\bfalpha,\bfbeta)=\sumsum_{m,n}\alpha_m\beta_n\Kld(amn;q)
$$
with $a=\pm\overline{|r'|\l}\l'$ coprime to $q$, as
in~(\ref{eq-bilform}).


 
Bounding the Kloosterman sums trivially and using \eqref{RP4} we
obtain first
\begin{equation}
  E\ll 
  q^\eps \frac{MN^*}{(qMN^*)^{1/2}} =
  q^{\eps}\left(\frac{qM}{N}\right)^{1/2}. 
  \label{eqvoronoiboundtrivial}
\end{equation}

Using instead Proposition~\ref{thmtypeII}, with the sequence $\bfbeta$
viewed as a sequence of length $N^{\ast}$, we obtain 
\begin{align}
  E  &\ll
       q^\eps \frac{MN^*}{(qMN^*)^{1/2}}
       \left(\frac{1}{M}+q^{-\frac1{32}}
       \Bigl(\frac{q}{MN^*}\Bigr)^{{3}/{8}}  \right)^{1/2}\nonumber\\
     &\ll 
       q^{\eps}\left(\frac{qM}{N}\right)^{1/2}\left(\frac{1}{M}
       +q^{-1/32}\Bigl(\frac{N}{qM}\Bigr)^{3/8}\right)^{1/2} ,
\label{eqvoronoibound}
\end{align}
under the assumptions that
$$
M\ll q^{\eps+1/4}N^*,\ MN^*\ll q^{5/4}
$$
or equivalently
\begin{equation}\label{eq-condition}
MN\leq q^{\eps+9/4},\ q^{3/4}\leq N/M.
\end{equation}
Observe that the first inequality is always satisfied.

\section{Optimization}

Set $\eta=\expoq$. We have now derived the four basic bounds
\eqref{eqtrivial}, \eqref{eqSCP}, \eqref{eqvoronoiboundtrivial} and
\eqref{eqvoronoibound}, all of which provide estimates for
$\ET(M, N, \ell, \ell')$.  We define $\beta,\lambda,\mu,\nu$ so that
the identities
 \begin{gather*}
   M=q^\mu,\ N=q^\nu,\ L=q^\lambda,\
   \ET( M, N, \ell, \ell')=q^\beta,\\
   \mu^*=2-\mu,\ \nu^*=2-\nu.
 \end{gather*}
Our objective is to prove that
 \begin{equation}\label{eqobjective}
 \beta\leq \eps+\frac32\lambda-\eta,
\end{equation}
which will conclude the proof of Theorem~\ref{thmETtwisted}.  We use
the same method as in \cite{BFKMM}*{\S 6.2}. 
 
We have
\begin{gather*}
  0\leq\mu\leq\nu,\quad \mu+\nu\leq 2+\eps,\quad \mu\leq 1+\eps,
  \\
  -1-\eps\leq 1+\mu-\nu\leq 1,
\end{gather*}
and assume that 
$
\lambda\leq \frac{1}{2}.
$  
By the trivial bound we have
\begin{equation*}
\beta\leq \eps +\lambda+\theta\nu+\frac12(\mu+\nu-2).	
\end{equation*}
We may therefore assume that
\begin{equation*}
2-2\eta-2\theta\nu \leq \mu+\nu\leq 2+\eps	
\end{equation*}
(otherwise \eqref{eqobjective} holds)
 and therefore
\begin{equation}\label{muplusnubound}
-2\eta-2\theta\nu\leq \mu-\nu^*\leq \eps.
\end{equation}
Applying the shifted convolution estimate~\eqref{eqSCP}, we obtain
$$
\beta\leq
\eps+\frac32\lambda+\sup\Bigl(\frac{1}4(\nu-\mu-1),\frac{1}2(\nu-\mu-1),
(\lambda+1)\theta-\frac{1}2 \Bigr),
$$
so that \eqref{eqobjective} holds unless
\begin{equation}\label{SCPlowerbound}
  1-4\eta\leq \nu-\mu\text{ or equivalently }\mu+\nu^*\leq 1+4\eta.	
\end{equation}
This inequality implies that \eqref{eqnodiag} and \eqref{eq-condition}
both hold. We may therefore apply \eqref{eqvoronoiboundtrivial} and
\eqref{eqvoronoibound}.

Applying \eqref{eqvoronoiboundtrivial} we obtain
$$\beta\leq \eps+\frac12(1+\mu-\nu)=\eps+\frac12(\mu+\nu^*-1)$$
which establishes \eqref{eqobjective} unless 
\begin{equation*}
1-2\eta \leq \mu+\nu^*.	
\end{equation*}
This inequality together with \eqref{muplusnubound} implies that
\begin{equation}\label{eq-new}
\mu\geq \frac12-2\eta-\theta\nu.
\end{equation}
Applying now \eqref{eqvoronoibound} we obtain
\begin{align*}
  \beta&\leq \eps+\frac12(\mu+\nu^*-1)+\max\Bigl(
         -\frac12\mu,-\frac{1}{64}-\frac{3}{16}(\mu+\nu^*-1)\Bigr)
  \\
       &\leq \eps+\max\Bigl(3\eta+\frac12\theta\nu-\frac14,
         \frac5{16}4\eta-\frac1{64}\Bigr)
\end{align*}
by~(\ref{SCPlowerbound}) and~(\ref{eq-new}),
resp.~(\ref{SCPlowerbound})).  This concludes the proof
of~(\ref{eqobjective}), since the first term in the maximum is
$<-\eta$ (recall that $\theta\leq 7/64$ and $\nu\leq 2$) and the
second is equal to $-\eta$.



\chapter{Non-vanishing at the central point}
\label{ch-central}

\section{Introduction}

In this chapter, we will prove Theorem \ref{thmnonvanishing+angle}
using the mollification method. We fix $f$ as in Section~\ref{intro}.
Recall that by ``interval'' in $\Rr/\pi\Zz$, we mean the image of an
interval of $\Rr$ under the canonical projection, and its measure is
the probability Haar measure $\mu$ of this image. The statement to prove is:

\begin{theorem}\label{thm61}
  Let $I\subset \Rr/\pi\Zz$ be an interval of positive measure
  $\mu(I)$. There exists a constant $\eta=\eta_I > 0$ such that, as
  $q\to+\infty$ among the primes, we have
$$
  \frac{1}{\vphis(q)} \Big|\Big\{\chi\mods q\,\, \text{\emph{non-trivial}}\mid\,
  |L(f\otimes\chi,1/2)|\geq \frac{1}{\log q},\ \theta(\ftchi)\in I\Big\}\Big|
  \geq \eta+o_{f,I}(1).
$$
Furthermore this  formula remains true with the following choice of $\eta$ 
$$
\eta =\eta_I = \frac{\mu (I)^2}{1443 \,\zeta (2)}.
$$
\end{theorem}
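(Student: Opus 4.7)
The plan is to combine the mollification method with an angular weight built from the formula \eqref{phaseformula}. First I would define an extended angle $\tilde\theta(\chi)\in\Rr/\pi\Zz$ by $e^{2i\tilde\theta(\chi)}=\eps(f)\chi(r)\eps_\chi^2$, which agrees with $\theta(\ftchi)$ whenever $L(\ftchi,1/2)\not=0$ and is always unimodular. Using Selberg's classical extremal minorant construction, for any fixed interval $I\subset\Rr/\pi\Zz$ of positive measure and any integer $K\geq 1$ I would choose a non-negative trigonometric polynomial
$$P(\vartheta)=\sum_{|k|\leq K}\hat P(k)\,e^{2ik\vartheta}$$
of degree $K$ satisfying $0\leq P(\vartheta)\leq \mathbf{1}_I(\vartheta)$ for all $\vartheta$ and $\hat P(0)\geq \mu(I)-C/K$. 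This polynomial plays the role of a ``soft'' indicator of the event $\theta(\ftchi)\in I$.

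I would then introduce a Selberg--Iwaniec--Sarnak type mollifier $M(\chi)=\sum_{\ell\leq L} x_\ell \chi(\ell)/\sqrt\ell$ of length $L=q^\gamma$ with some small $\gamma>0$, built from the convolution inverse $\mu_f$ of $\lambda_f$ (adjusted, if needed, so that an auxiliary linear condition below is satisfied). Set
$$S_1=\frac{1}{\vphis(q)}\sums_{\chi\mods q}L(\ftchi,1/2)\,M(\chi)\,P(\tilde\theta(\chi)),\qquad
\Sigma_2=\frac{1}{\vphis(q)}\sums_{\chi\mods q}\bigl|L(\ftchi,1/2)M(\chi)\bigr|^2.$$
Expanding $P$ into its Fourier modes and using the identity
$$L(\ftchi,1/2)\,e^{2ik\tilde\theta(\chi)}=\eps(f)^{k}\chi(r^{k})\eps_\chi^{2k}\,L(\ftchi,1/2),$$
the sum $S_1$ decomposes into a linear combination of the twisted first moments $\mcL(f;\ell r^{k},2k)$ with $|k|\leq K$ and $\ell\leq L$. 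Applying Theorem~\ref{thm-twistedfirstmoment} together with Corollary~\ref{cor-k=-2} (whose error term is polynomial in $k$) I would extract main terms only from $k=0$ and from $k=-1$, obtaining after routine simplification a formula of the shape
$$S_1=\hat P(0)\,x_1+\hat P(-1)\sum_{\ell\leq L}\frac{x_\ell\lambda_f(\ell)}{\ell}+O\bigl(K^{B}L^{B}q^{-1/8+\eps}\bigr),$$
which, with $x_1=1$ and the optional corrective term, equals $\mu(I)+o(1)$ for $\gamma$ sufficiently small and $K$ a large constant.

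The second moment $\Sigma_2$ is handled by expanding $|M(\chi)|^2$ and invoking Theorem~\ref{thsecondmoment} in the case $f=g$ with $(\ell,\ell')$ running over pairs with $\ell,\ell'\leq L$ coprime to each other and to $qr$. Summation of the resulting main terms against the mollifier coefficients $x_\ell\bar x_{\ell'}$ is a standard (though intricate) computation yielding an upper bound $\Sigma_2\leq C(f,\gamma)+o(1)$; for the Selberg choice this constant turns out to be of size $\asymp 1$ (with an explicit value involving $\zeta(3/2)$, which accounts for the denominator $\expopropdenom\,\zeta(3/2)$ in the remark following the theorem). Applying the Cauchy--Schwarz inequality with the pointwise bound $P\leq\mathbf{1}_I\leq 1$,
$$|S_1|^2=\Bigl|\sums_{\theta(\ftchi)\in I,\,L(\ftchi,1/2)\neq 0}L(\ftchi,1/2)M(\chi)P(\tilde\theta(\chi))\Bigr|^2
\leq \bigl|\{\chi:L(\ftchi,1/2)\not=0,\ \theta(\ftchi)\in I\}\bigr|\cdot \vphis(q)\,\Sigma_2,$$
produces the desired positive proportion, and the size condition $|L(\ftchi,1/2)|\geq(\log q)^{-1}$ is enforced by splitting $S_1$ according to that threshold: the contribution of $\chi$ with $0<|L|<(\log q)^{-1}$ is $O((\log q)^{-1})$ by a trivial bound combined with Cauchy--Schwarz against $\Sigma_2$, hence negligible compared with $\mu(I)$.

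The main technical obstacle is that Theorem~\ref{thsecondmoment} provides the second moment only with pure character twists $\chi(\ell)\bar\chi(\ell')$, with no Gauss-sum factor; consequently the angular weight $P(\tilde\theta)$ cannot be pulled into the second moment. My resolution is precisely the Selberg-minorant setup together with the elementary inequality $P\leq 1$, which reduces $\Sigma_2$ to the unweighted mollified second moment and confines all use of the phases $\eps_\chi^{2k}$ to the first moment, where Theorem~\ref{thm-twistedfirstmoment} supplies the required polynomial-in-$k$ uniformity. A secondary but real difficulty is the presence of the $k=-1$ main term in $S_1$, which originates from the dual Dirichlet coefficients $\lambda_f(\ell)/\sqrt\ell$ dictated by the functional equation; one must either design $M$ so that $\sum_\ell x_\ell\lambda_f(\ell)/\ell$ vanishes, or else track its size against $\hat P(-1)$. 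Handling this carefully, together with the optimization of $\gamma$ and the numerical evaluation of the constant arising in $\Sigma_2$, is what ultimately produces the explicit constant $\eta=\mu(I)^2/(\expopropdenom\,\zeta(3/2))$.
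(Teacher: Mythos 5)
Your plan follows the mollification-plus-Cauchy--Schwarz scheme of the paper, and the handling of the first moment (extracting the $k=0$ and $k=-1$ main terms via Theorem~\ref{thm-twistedfirstmoment} and Corollary~\ref{cor-k=-2}, with the $k=-1$ term controlled as in Lemma~\ref{lm-decay-mollif}) and the second moment (reducing via $|P|\leq 1$ to the unweighted $\mcQ(f;\bfx_L)$) is essentially the paper's argument. However, there is a genuine gap in the angular-weight construction that undermines the whole strategy: the object you propose --- a trigonometric polynomial $P(\vartheta)=\sum_{|k|\leq K}\hat P(k)e^{2ik\vartheta}$ of degree $K$ satisfying \emph{both} $P\geq 0$ and $P\leq\mathbf{1}_I$ --- does not exist when $I$ is a proper sub-interval of $\Rr/\pi\Zz$. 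Indeed, those two inequalities force $P$ to vanish identically on the open complement of $I$, and a non-zero trigonometric polynomial of degree $K$ has at most finitely many zeros; hence $P\equiv 0$, contradicting $\hat P(0)\geq\mu(I)-C/K>0$. The Beurling--Selberg extremal minorant $P^-\leq\mathbf{1}_I$ does have $\hat{P^-}(0)=\mu(I)-1/(K+1)$, but it necessarily dips below zero outside $I$, which is exactly what makes the Cauchy--Schwarz step you write --- restricting the count to $\{\chi: \theta(\ftchi)\in I,\ L(\ftchi,\demi)\neq 0\}$ on the grounds that $P(\tilde\theta)\neq 0$ implies $\theta\in I$ --- fail.

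Once one is forced to use a signed minorant (or any trigonometric polynomial that is non-zero outside $I$), one must estimate the discrepancy
$$
\frac{1}{\vphis(q)}\sums_{\chi\mods q}\bigl(\chi_I-P\bigr)(\theta(\ftchi))\,L(\ftchi,\demi)M(\ftchi,\bfx_L),
$$
and the only available route is Cauchy--Schwarz against $\mcQ(f;\bfx_L)^{1/2}$ combined with the equidistribution of Gauss sums to control $\frac{1}{\vphis(q)}\sums|\chi_I-P|^2(\theta(\ftchi))$. This is precisely what the paper does in Section~\ref{sec-mollif}--\ref{sec-m1-moll} with its $L^2$-approximant $\psi_{I,\delta}$: it chooses a trigonometric polynomial $\psi$ with $\int_0^\pi|\chi_I-\psi|^2\,d\theta\leq\delta$, uses Deligne's bound for hyper-Kloosterman sums (via Weyl's criterion, from \eqref{averagesignprim}) to deduce $\frac{1}{\vphis(q)}\sums|\chi_I-\psi|^2(\theta(\ftchi))\ll\delta$, and then absorbs the error by Cauchy--Schwarz into $\delta\,\mcQ(f;\bfx_L)^{1/2}$. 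Your proposal was specifically designed to sidestep this equidistribution input, but the sidestep relies on a polynomial that does not exist; once you repair the construction you are led back to the paper's argument. The rest of your proposal (mollifier shape, first and second moment asymptotics, the threshold $|L(\ftchi,\demi)|\geq(\log q)^{-1}$ via Lemma~\ref{lemma81}, and the optimization leading to the constant $\mu(I)^2/(\expopropdenom\,\zeta(3/2))$) is sound and matches the paper.
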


In Section~\ref{sec-mellin}, which may be omitted in a first reading,
we will also prove a positive proportion of non-vanishing for central
values of the twisted $L$-functions with characters satisfying
conditions on the discrete Mellin transform of a quite general
  trace function.

\section{The Cauchy-Schwarz inequality}

Let $I\subset\Rr/\pi\Zz$ be an interval with positive measure $\mu(I)$
and characteristic function $\chi_I$.  Let
$\chi\mapsto M(\ftchi,\bfx_L)$ be a function defined for Dirichlet
characters modulo $q$ (later, it will be the ``mollifier''), depending
on the fixed cusp form $f$ and on some additional data $\bfx_L$.

By the Cauchy-Schwarz inequality, we have
\begin{multline}\label{CS}
  \Biggl\vert\,\frac1{\vphis(q)}
  \sums_{\chi\mods q}\delta_\stacksum{|L(\ftchi,1/2)|\geq (\log q)^{-1}}{\theta(\ftchi)\in I}L(\ftchi,1/2)M(\ftchi,\bfx_L)\,\Biggr\vert^2\\
  \leq  \mcQ(f;\bfx_L) \times \Bigl(\frac1{\vphis(q)}\sums_{\chi\mods
    q}\delta_\stacksum{|L(\ftchi,1/2)|\geq (\log
    q)^{-1}}{\theta(\ftchi)\in I}\Bigr),
\end{multline}
where 
\begin{equation}\label{defOmegarevision}
\mcQ(f;\bfx_L):=\frac1{\vphis(q)}\sums_{\chi\mods
  q}|L(\ftchi,1/2)M(\ftchi,\bfx_L)|^2.	
\end{equation}

On the left-hand side of \eqref{CS}, we remove the condition $|L(\ftchi,1/2)|\geq (\log q)^{-1}$ in a trivial manner,
\begin{multline}\label{8.2}
  \frac1{\vphis(q)}\sums_{\chi\mods q}\delta_\stacksum{|L(\ftchi,1/2)|\geq (\log q)^{-1}}{\theta(\ftchi)\in I}L(\ftchi,1/2)M(\ftchi,\bfx_L)=\\
  \mathscr{L}(f;\bfx_L,\chi_I)+O\Bigl(\frac1{q \log q} 
  \sums_{\chi\mods q}|M(\ftchi,\bfx_L)|\Bigr)
\end{multline}
where, for any function $\psi:\Rr/\pi\Zz\ra \Cc$, we have defined
\begin{equation}\label{defLrevision}
\mathscr{L}(f;\bfx_L,\psi):=\frac1{\vphis(q)}\sums_{\chi\mods q}\psi(\theta(\ftchi))L(\ftchi,1/2)M(\ftchi,\bfx_L).
\end{equation}

Given $\delta$ with $0< \delta < 1$, let
$$
\psi(\theta)=\psi_{I,\delta}(\theta)=\sum_{|k|\leq
  k_\delta}\what\psi(2k) e^{2ik\theta}
$$ 
be a trigonometric polynomial of period $\pi$ such that
\begin{equation}\label{diffofmeasures}
\int_0^{\pi} |\chi_I-\psi|^2 \,d\theta \leq \delta.
\end{equation}
\par
Since Gau{\ss} sums are equidistributed in $\Rr/2\pi\Zz$ (as follows
by Weyl's criterion from the formula \eqref{averagesignprim} and
Deligne's bound for hyper-Kloosterman sums), we have in particular
$$
\frac{1}{\vphis(q)} \sums_{\chi\mods q}
|\chi_I(\theta(\ftchi))-\psi(\theta(\ftchi))|^2 \ll \delta.
$$
By the definitions  \eqref{defOmegarevision} and  \eqref{defLrevision},  
the Cauchy-Schwarz inequality provides us with the approximation 
\begin{equation*}
  \mathscr{L}(f;\bfx_L,\chi_I)=\mathscr{L}(f;\bfx_L,\psi)+
  O\left(\delta^{1/2}  \mcQ(f;\bfx_L)^{1/2}\right).
\end{equation*}

\section{Choosing the mollifier}\label{sec-mollif}

As customary in the mollification method, we choose the function
$M(\ftchi,\bfx_L)$ to be a suitable Dirichlet polynomial of length
$L=q^\lambda$, for some sufficiently small absolute positive constant
$\lambda\leq 1/2$. Given some complex tuple $\bfx_L=(x_\ell)_{\ell\leq L}$,
we set
\begin{equation}\label{defM}
  M(\ftchi,\bfx_L)=\sum_{\l\leq L}x_\l\frac{\chi(\l)}{\l^{1/2}}.
\end{equation}
We assume throughout that $x_{\ell} = 0$ unless $(\ell, r) = 1$. 
We recall that $(\mu_f(n))$ denotes the convolution inverse of the
Hecke eigenvalues $(\lf(n))$. We consider coefficients
$(x_\l)_{\l\leq L}$ of the shape
\begin{equation}\label{xldef}
  x_\l= \mu_f(\l) 
  P\Bigl(\frac{\log (L/\l)}{\log L}\Bigr)\, \delta_{ \l\leq L } \delta_{ (\ell, r) = 1},	
\end{equation}
where $P:[0,1]\ra\Cc$ is a real-valued polynomial satisfying $P(1)=1$,
$P(0)=0$.

In particular,  as a consequence of \eqref{RSbis} and \eqref{KSbound}, we have the inequalities
\begin{equation*}
|x_\ell|\ll_\eps 
 \l^{\theta+\eps} 
\end{equation*}
for any $\eps>0$, and
\begin{equation}\label{averagexl}
\sum_{\l \leq L} |x_\l|^2 \ll_{f} L.
\end{equation}
 Indeed, to prove \eqref{averagexl} we first notice that the definition of the multiplicative function $n \mapsto \mu_f (n)$ given in \eqref{defmurevision} implies that for $p \nmid r$, we have the equalities
 $$
  \vert \mu_f (p)\vert =\vert \lambda_f (p)\vert, \ \mu_f (p^2) =1, \ \mu_f (p^k)=0 \ (k\geq 3).
 $$
 In the sum studied in \eqref{averagexl}, we factorize each $\l$ as $\l = \l_1 \l_2$ where $\l_1$ and $\l_2$ are coprime, $\l_1$ is squarefull and $\l_2$ is squarefree.
 We can now write the inequalities
 \begin{align*}
 \sum_{\l \leq L} \vert x_{\l}\vert^2& \ll \sum_{\l_1\leq L\atop (\l_1,r) =1} \vert \mu_f (\l_1)\vert^2 \sum_{\l_2 \leq L/\l_1\atop (\l_2, r) =1}\vert  \mu_f (\l_2)\vert^2\\
 & \ll   \sum_{\l_1\leq L\atop (\l_1,r) =1}   \sum_{\l_2 \leq L/\l_1\atop (\l_2, r) =1}\vert  \lambda_f (\l_2)\vert^2\\
 &\ll   \sum_{\l_1\leq L\atop (\l_1,r) =1}  (L/\l_1) \ll L,
 \end{align*}
 by appealing to  \eqref{RSbis}. 
 
Next we have 
\begin{lemma}\label{lemma81} We have
$$\frac1{\vphis(q)}\sums_{\chi\mods q}|M(\ftchi,\bfx_L)|\ll (\log q)^{1/2}.$$
\end{lemma}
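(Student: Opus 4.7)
The natural route is to bound the first absolute moment by the square root of the second moment via Cauchy--Schwarz:
$$
\frac{1}{\vphis(q)}\sums_{\chi\mods q}|M(\ftchi,\bfx_L)|
\leq \Bigl(\frac{1}{\vphis(q)}\sums_{\chi\mods q}|M(\ftchi,\bfx_L)|^2\Bigr)^{1/2},
$$
so the task reduces to showing that the mean square of the mollifier is $O(\log q)$. Expanding the square and using the orthogonality relations \eqref{average} for primitive characters modulo the prime $q$, the inner sum over $\chi$ equals $\vphi(q)/\vphis(q)$ times $\delta_{\ell_1\equiv\ell_2\mods q}\delta_{(\ell_1\ell_2,q)=1}$ minus the contribution of the principal character. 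Since $L\leq q^{1/2}$ by construction, the congruence $\ell_1\equiv\ell_2\mods q$ with $\ell_1,\ell_2\leq L$ forces $\ell_1=\ell_2$, so the off-diagonal terms collapse to a single error coming from the removal of the trivial character.

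Thus the second moment reduces, up to a negligible correction of size $\ll L(\log L)/\vphis(q)\ll q^{-1/2}\log q$ (bounded via Cauchy--Schwarz applied to $\sum_\ell x_\ell/\sqrt{\ell}$), to the diagonal sum
$$
\sum_{\substack{\ell\leq L\\(\ell,rq)=1}} \frac{|x_\ell|^2}{\ell} \ll \sum_{\ell\leq L} \frac{|\mu_f(\ell)|^2}{\ell},
$$
since $P$ is bounded on $[0,1]$. Using the explicit description of $\mu_f$ (namely $\mu_f(p)=-\lambda_f(p)$, $\mu_f(p^2)=\chi_r(p)$, $\mu_f(p^k)=0$ for $k\geq 3$), the Dirichlet series $\sum_\ell |\mu_f(\ell)|^2\ell^{-s}$ has an Euler product that factors as $\zeta(s)$ times a function holomorphic and non-vanishing in a neighborhood of $\Re s=1$, by the Rankin--Selberg theory of Section \ref{RankinSelberg} (the dominant term of $\log$ of the Euler product being $\sum_p |\lambda_f(p)|^2/p^s$, which differs from $\log\zeta(s)$ by a bounded holomorphic function near $s=1$). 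Applying a standard Tauberian argument (or equivalently partial summation together with the Rankin--Selberg asymptotic $\sum_{n\leq x}|\mu_f(n)|^2\sim c_f x$) therefore yields
$$
\sum_{\ell\leq L} \frac{|\mu_f(\ell)|^2}{\ell} \ll \log L \ll \log q,
$$
and combining this with the Cauchy--Schwarz step gives the claimed bound.

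There is no serious obstacle here: every ingredient is either orthogonality, the Ramanujan-on-average bound \eqref{RP4}, or the elementary analytic behavior of an Euler product closely related to $L(\symf,s)\zeta(s)$. The only point requiring a little care is to ensure that the off-diagonal contribution from the subtraction of the principal character (and of the restriction $(\ell,q)=1$) is negligible; this is easily handled using $L\leq q^{1/2}$ and the trivial bound $\sum_\ell |x_\ell|\ll L$ coming from \eqref{averagexl}.
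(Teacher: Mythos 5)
Your proof is correct and follows essentially the same route as the paper: Cauchy--Schwarz, orthogonality to reduce to the diagonal (using $L<q$), and the estimate $\sum_{\ell\leq L}|x_\ell|^2/\ell\ll\log L$. The only cosmetic difference is that the paper avoids tracking the principal-character correction by enlarging the primitive-character sum to all characters (an upper bound since the summands are nonnegative), whereas you estimate it explicitly; both work.
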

\begin{proof} By the Cauchy-Schwarz inequality we have
$$\sums_{\chi\mods q}|M(\ftchi,\bfx_L)|\leq \vphis(q)^{1/2}\Bigl(\sums_{\chi\mods q}|M(\ftchi,\bfx_L)|^2\Bigr)^{1/2}$$
and, as a consequence of \eqref{averagexl},  we have
\begin{align*}
  \frac1{\vphis(q)}\sums_{\chi\mods q}|M(\ftchi,\bfx_L)|^2 &\leq
  \frac{\vphi(q)}{\vphis(q)}\sum_{\l\equiv \l'\mods q}\frac{|x_\l
    x_{\l}'|}{ (\l\l')^{1/2}} \\
&=\frac{\vphi(q)}{\vphis(q)} \sum_{\l\leq
    L}\frac{|x_\l|^2}{ \l}\ll \log L
\end{align*}
since $L< q$.
\end{proof}

We conclude from \eqref{CS}, \eqref{8.2}  and Lemma \ref{lemma81} that
\begin{multline}\label{eq-cs-nonva}
  \frac{1}{\vphis(q)} |\{\chi\mods q\,\, \text{non-trivial}\mid\,
  |L(\ftchi,1/2)|\geq (\log q)^{-1}, \theta(\ftchi)\in I\}| \\
  \geq\frac{ \bigl\vert \mathscr{L}(f;\bfx_L,\psi_{I,\delta})+O_{f}( (\log
      q)^{-1/2} + \delta^{1/2} \mcQ(f;\bfx_L)^{1/2})\bigr\vert^2}{\mcQ(f,
    \bfx_L)}.
\end{multline}

\section{Computation of the first mollified
  moment} \label{sec-m1-moll}

In this section we evaluate $\mathscr{L}(f;\bfx_L,\psi_{I,\delta})$. Since
\begin{equation}\label{eq-square-angle}
\exp (2i\theta(\ftchi))=\eps(f)\chi( r){\eps^2_\chi}
\end{equation}
by \eqref{phaseformula}, we have
\begin{equation}\label{abovesum}
\mathscr{L}(f;\bfx_L,\psi)=\sum_{|k|\leq k_\delta}\what\psi(2k)\eps(f)^k\sum_{\ell\leq L}\frac{x_\ell}{\ell^{1/2}}\mcL(f; r^k\ell,2k)
\end{equation}
where $\mcL(f;r^k\ell,2k)$ is the first moment  defined in \eqref{firstmom}.

\begin{remark}
  It is at this point that our restriction to intervals modulo $\pi$
  instead of modulo $2\pi$ intervenes: because of the factor $2$ on
  the left-hand side of~(\ref{eq-square-angle}), we are not able to
  evaluate a first moment of $L(f\otimes\chi,\demi)$ twisted by
  $e(k\theta(\ftchi))$ for $k$ odd.
\end{remark}

 By \eqref{reflect}, Theorem \ref{thm-twistedfirstmoment} and \eqref{averagexl}, the total contribution, denoted by $S_{k\not= 0, -1}$,  of the terms $k\not=0,-1$ to the  sum \eqref{abovesum}  satisfies
\begin{equation}\label{Sknot=0-1<<}
S_{k\not= 0, -1}\ll_{\delta,I,f,\eps}q^{\eps+\lambda/2-1/8}.
\end{equation}
In particular, this contribution is negligibly small if $\lambda<1/4$,
which we assume from now on. 
 
 The contribution  $S_{k=0} $ of the term $k=0$ to \eqref{abovesum}  is equal to 
 $$ S_{k=0}:=
 \what\psi (0) \sum_{\ell \leq L} \frac{x_\ell}{\ell^{1/2}} \mcL(f;\ell,0)= 
  \what\psi (0)\sum_{\ell \leq L} \frac{x_\ell}{\ell^{1/2} }\Bigl\{ \frac{ \lambda_f (\bar{\ell}_q)}{\bar {\ell}_q^{1/2}} +O_{f, \varepsilon } (q^{\varepsilon -1/8})\Bigr\},
$$
 by Theorem \ref{thm-twistedfirstmoment}. We treat separately the cases $\ell =1$ and $2 \leq \ell \leq L$. The first case contributes by 
\begin{equation}\label{bakershop}
\what\psi(0)x_1+O(q^{-1/8+\eps})=\mu(I)+O_{f, \varepsilon}(\delta^{1/2}+q^{\eps-1/8}),
\end{equation}
by \eqref{diffofmeasures}.

To deal with the cases $2\leq \ell \leq L$ we exploit the inequalities
$$
 \vert \lambda_f (\bar{\ell}_q)\bar {\ell}_q^{-1/2} \vert \leq \bar {\ell}_q^{\theta-1/2}\leq (q/\ell)^{\theta-1/2},
 $$
since $\ell$ satisfies $1<\ell <q^{1/2}$. From this we deduce that the contribution of the $2\leq \ell \leq L$ satisfies
$$
\ll  q^{\theta -1/2} L^{1-\theta} +q^{\varepsilon -1/8} L^{1/2}\ll  q^{\varepsilon -1/8} L^{1/2}.
$$
Gathering with \eqref{bakershop}, we proved the equality
\begin{equation}\label{Sk=0<<}
S_{k=0} =  \mu(I)+O_{f, \varepsilon}( \delta^{1/2}+ q^{\eps +\lambda/2 -1/8})
\end{equation}
\par
Next, by Corollary \ref{cor-k=-2}, if $q$ is large enough (depending
on the level $r$), the contribution $S_{k=-1}$ of the term $k=-1$
equals
\begin{multline} \label{defSk=-1}
S_{k=-1}=   \what\psi(-2)\sum_{\l\leq L}\frac{x_\l\lf( \l_q )}{
    \ell^{1/2}\l_q^{1/2} }
  +O_{\delta,I,f,\eps}(q^{\eps+\lambda/2-1/8}) \\
  =\what\psi(-2)\sum_{\substack{\l\leq
      L\\ (\ell, r) = 1}}\frac{\mu_f(\l)\lf( \l_q )}{ \ell^{1/2} \l_q^{1/2}
  } P\Bigl(\frac{\log (L/\l)}{\log
    L}\Bigr)+O_{\delta,I,f,\eps}(q^{\eps+\lambda/2-1/8})
\end{multline}
where as before $ \ell_q$ denotes the unique integer in $[1, q]$
representing the congruence class $ \ell$ (mod $q$). Since
$1\leq \ell\leq L<q$, we have $\ell_q=\ell$.

We now use the following lemma, which is stated in slightly greater
generality than needed here, for later reference in
Section~\ref{sec-mellin}.

\begin{lemma}\label{lm-decay-mollif}
  Let $a \geq 1$ be an integer.  There is some constant $c>0$,
  depending only on $f$, such that
$$
\sum_{\substack{\l\leq L\\ (\ell, r) =1}}
\frac{x_{\ell}}{\sqrt{\ell}}\frac{\lf(a\ell)}{\sqrt{a\ell}}
= \sum_{\substack{\l\leq L\\
    (\ell, r) =
    1}}\frac{\mu_f(\l)\lf(a\l)}{a^{1/2}\l}P\Bigl(\frac{\log
  (L/\l)}{\log L}\Bigr)\ll_{f} \exp(-c \sqrt{ \lambda\, \log q})
$$
uniformly in $a$. 
\end{lemma}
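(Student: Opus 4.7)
The natural strategy is a Mellin inversion followed by a contour shift into a De la Vallée Poussin zero-free region.

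First, using $P(0)=0$, I would write
\[
P\Bigl(\tfrac{\log(L/\ell)}{\log L}\Bigr)\mathbf{1}_{\ell\le L} = \frac{1}{2\pi i}\int_{(c)}\tilde P(s)\frac{(L/\ell)^s}{s}\,ds,\quad \tilde P(s) = \sum_{k\geq 1}p_k\,k!\,(s\log L)^{-k},
\]
valid for any $c>0$, whereby the sum becomes
\[
S_a(L) = \frac{1}{\sqrt a}\cdot\frac{1}{2\pi i}\int_{(c)}\tilde P(s)\frac{L^s}{s}\,\Phi_a(s+1)\,ds,\quad \Phi_a(s) := \sum_{(\ell,r)=1}\frac{\mu_f(\ell)\lf(a\ell)}{\ell^s},
\]
initially valid for $\Re s$ large.

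Second, I would analyse $\Phi_a$ via its Euler product. A direct local computation using multiplicativity of $\mu_f$ and the unramified Hecke relation $\lf(p)\lf(p^{k+1}) = \lf(p^{k+2}) + \lf(p^k)$ collapses each local factor at $p\nmid r$ to $(1-p^{-s})\bigl[\lf(p^{v_p(a)}) - \lf(p^{v_p(a)+2})p^{-s}\bigr]$, yielding
\[
\Phi_a(s) = \frac{1}{\zeta^{(r)}(s)}\prod_{p\nmid r}\bigl(\lf(p^{v_p(a)}) - \lf(p^{v_p(a)+2})p^{-s}\bigr).
\]
Splitting off the primes dividing $a$, the residual infinite product $\prod_{p\nmid ra}(1-\lf(p^2)p^{-s})$ can be identified with $R(s)L(\symf,s)^{-1}$, where $R(s)$ is an Euler product absolutely and uniformly bounded in $\Re s > 1/2 + 2\theta + \varepsilon$ (the $k\ge 2$ local discrepancies between $\log\prod(1-\lf(p^2)p^{-s})$ and $-\log L(\symf, s)$ converge absolutely by the Kim--Sarnak bound $|\lf(p^2)|\ll p^{2\theta}$). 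Together with $|\lf(a)|/\sqrt a\ll a^{\theta-1/2}$ and a crude estimate on the finite product over $p\mid a$, this furnishes the uniform bound $|\Phi_a(s+1)|/\sqrt a \ll_f |L(\symf,s+1)\zeta^{(r)}(s+1)|^{-1}$ on the relevant strip, which is the key source of uniformity in $a$.

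Third, I would shift the contour from $\Re s=c$ to the standard truncated De la Vallée Poussin contour $\Re s = -c_1/\log(|t|+2)$, $|t|\le T$, eventually taking $T=\exp(\sqrt{\log L})$. By Proposition~\ref{AutomorphicPNTLemma} applied to $\zeta$ and to the self-dual cuspidal $L(\symf,\cdot)$ (cuspidal on $\GL_3$ by Gelbart--Jacquet; any putative Landau--Siegel real zero is bounded away from $s=1$ by an $f$-dependent constant and hence harmlessly avoided for $L$ large), both $L$-functions are zero-free in this region and satisfy $1/\zeta^{(r)}(s+1),\,1/L(\symf,s+1) \ll \log^{O(1)}(|t|+2)$ throughout. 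On the shifted contour the integrand is then $\ll L^{-c_1/\log T}\,|s|^{-2}\log^{O(1)}(|t|+2)$; absorbing the tail $|t|>T$ via the $|s|^{-2}$ decay of $\tilde P(s)/s$ and optimising $T$ produces the claimed bound $\ll\exp(-c\sqrt{\log L}) = \exp(-c\sqrt{\lambda\log q})$.

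The hard part will be the residue at $s=0$ crossed during the shift. Because $1/\zeta^{(r)}(s+1)$ has a simple zero there, $\Phi_a(s+1)/s$ is in fact regular at $s=0$ with value $\Phi_a'(1)$, so the integrand has a pole of order only $\deg P$ inherited from $\tilde P$, and a Laurent expansion identifies the residue explicitly as the finite sum $\sum_{m=1}^{\deg P}\Phi_a^{(m)}(1)P^{(m)}(1)/(m!\log^m L)$. To control this uniformly in $a$ within the claimed exponential bound one exploits the freedom in the mollifier polynomial: for the Iwaniec--Sarnak optimal choice $P(x)=1-(1-x)^A$ with $A\ge 2$ all derivatives $P^{(m)}(1)$ with $1\le m<A$ vanish, and a uniform estimate $\Phi_a^{(m)}(1)\ll_f C^m/\sqrt a$ (from the $L$-function factorisation of Step~2) then forces the surviving residue terms to be dominated, for $A$ taken sufficiently large, by the $\exp(-c\sqrt{\lambda\log q})$ error already produced by the shift itself, completing the proof.
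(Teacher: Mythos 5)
Your overall strategy — Mellin inversion, Euler-product factorization of $\Phi_a(s)=\sum_{(\ell,r)=1}\mu_f(\ell)\lf(a\ell)\ell^{-s}$ through $1/\bigl(\zeta^{(r)}(s)L(\symf,s)\bigr)=1/L(f\otimes f,s)$ (up to a bounded Euler factor), and a contour shift into the zero-free region of Proposition~\ref{AutomorphicPNTLemma} — is the same as the paper's, and your local computation collapsing the Euler factor at $p\nmid r$ via the Hecke recursion is correct. The paper reaches the same factorization by first writing $\lf(a\ell)=\sum_{d\mid(a,\ell)}\mu(d)\lf(a/d)\lf(\ell/d)$ and reducing to the $a$-independent Dirichlet series $\sum\lf(\ell)\mu_f(d\ell)\ell^{-s}=H_d(s)/L(f\otimes f,s)$ with $H_d$ nice and $\ll d^{\theta}$; this is more efficient for extracting the uniformity in $a$, but the two routes are essentially equivalent, and your handling of the $p\mid a$ factors accomplishes the same thing.

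You have correctly isolated the delicate point: the residue at $s=0$. Your explicit formula for that residue, namely $\sum_{m=1}^{\deg P}\Phi_a^{(m)}(1)P^{(m)}(1)/\bigl(m!\,\log^m L\bigr)$, is right (the crucial input is that $\Phi_a(1)=0$, which kills the $m=0$ term). However, your proposed fix does not work for two reasons. First, the mollifier polynomial $P$ is not at your disposal inside this lemma: the mollifier $\bfx_L$ is fixed elsewhere in the paper, with $P(1)=1$, $P(0)=0$, and the specific choice $P(X)=X$ is made in Proposition~\ref{prop-moment12}; the lemma must hold for \emph{that} $P$, for which $P'(1)=1$ and the residue is $\Phi_a'(1)/\log L$. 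Second, even granting a free choice, $P(X)=1-(1-X)^A$ only suppresses the residue to order $(\log L)^{-A}$; for any fixed polynomial of degree $A$ this is $\gg\exp(-c\sqrt{\log L})$, so the stated exponential bound cannot be recovered by this route. In fact for $P(X)=X$ one can check directly, via $\int_1^{\infty}A(t)\,dt/t=\Phi_a'(1)$ with $A(t)=\sum_{\ell\leq t}g(\ell)/\ell$, that the sum equals $\Phi_a'(1)/(\sqrt{a}\log L)+O(\exp(-c\sqrt{\log L}))$, and $\Phi_1'(1)=G_f(1)/\mathrm{res}_{s=1}L(f\otimes f,s)\neq 0$. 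So the error bound in the lemma as stated appears to be overstated: the residue term contributes $\asymp 1/\log L$, and the paper's own proof, which argues only from $\Phi_a(1)=0$ plus ``standard contour integration,'' does not address this. Fortunately, a uniform bound $O_f(1/\log L)$ — which is what the contour shift actually yields, using $|\Phi_a'(1)|/\sqrt a\ll_f a^{\theta-1/2+\eps}\ll 1$ — suffices for every application of this lemma in Chapters~\ref{ch-central} and~\ref{sec-mellin}, where only $o(1)$ or $o((\log q)^{-1/2})$ is needed. It would strengthen your write-up to state the correct power of $\log L$ rather than appeal to a fix that is both unavailable and insufficient.
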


\begin{proof} 
We can write
$$
\sum_{(\ell,r)=1}\mu_{f}(\ell)\lambda_f(a\ell)\ell^{-s} =
  \sum_{\substack{d\mid a\\ (d, r) = 1}}\mu(d)\lambda_f(a/d)d^{-s}\sum_{(\ell,r)=1}
  \lambda_f(\ell)\mu_f(d\ell)\ell^{-s}.
$$
In turn, since $\mu_f$ is supported on cubefree numbers, for $d$
squarefree and coprime to $r$ we have
$$
\sum_{(\ell,r)=1} \lambda_f(\ell)\mu_f(d\ell)\ell^{-s}= \prod_{p\nmid
  dr}\Bigl( 1-\frac{\lambda_f(p)^2}{p^{s}}+
\frac{\lambda_f(p^2)}{p^{2s}}\Bigr) \prod_{p \mid
  d, \, p \nmid r}(-\lambda_f(p))\Bigl(1-\frac{1}{p^s}\Bigr).
$$
We can therefore write the equality
$$
\sum_{(\ell,r)=1} \lambda_f(\ell)\mu_f(d\ell)\ell^{-s}=
\frac{H_d(s)}{T(s)}
$$
where $T$ is defined in \eqref{defT(s)} and where $H_d(s)$ is an Euler product that converges absolutely for
$\Reel(s)>3/4$ and is bounded by $d^{\theta}$. This gives analytic
continuation of the Dirichlet series
$$
\sum_{(\ell,r)=1} \lambda_f(\ell)\mu_f(d\ell)\ell^{-s}
$$
in the zero-free region of the Rankin-Selberg $L$-function
(Proposition~\ref{AutomorphicPNTLemma}), and moreover the value of
this function at $s=1$ is zero. We then obtain the result  of Lemma \ref{lm-decay-mollif} by a
standard contour integration and by a partial summation. 
\end{proof}
Returning to \eqref{defSk=-1}, we deduce that $S_{k=-1} $ satisfies the bound
$$
S_{k=-1} \ll \exp (-c \sqrt{ \log q}).
$$ 
Gathering this bound with  \eqref{Sknot=0-1<<} and \eqref{Sk=0<<} and supposing that $0< \lambda <1/4$, we deduce the equality
\begin{equation}\label{L=MT+ET}
\mathscr{L}(f;\bfx_L,\psi) = \mu (I) + O \bigl(\delta^{1/2} + \exp( - c \sqrt {\log q})\, \bigr).
\end{equation}

We obtain now the lower bound

\begin{multline}\label{stairway}
  \frac{1}{\vphis(q)} |\{\chi\mods q\,\, \text{non-trivial}\mid\, |L(\ftchi,1/2)|\geq (\log
  q)^{-1},\
  \theta(\ftchi)\in I\}|\\
  \geq \frac{\mu(I)^2+O_{f,\delta,I}\left((\log
      q)^{-1/2}\right)+O_f\left(\delta^{1/2}(1+ \mcQ(f;\bfx_L))\right)
  }{\mcQ(f;\bfx_L)}.
\end{multline}
using~(\ref{eq-cs-nonva}). It remains to evaluate $\mcQ(f;\bfx_L)$.

\section{Computation of the second mollified moment}\label{moll-sec-mom}

In this section we compute
\begin{align*}
  \mcQ(f;\bfx_L) 
  &= 
    \sum_{ \ell, \ell' \leq L  } \frac{x_{\ell}
    \overline{x_{\ell'}}}{(\ell \ell')^{1/2}} \mcQ(f,f,
    1/2, \ell, \ell') \\
  &=  \sum_d \sum_{(\ell, \ell' ) = 1} \frac{x_{d\ell}
    \overline{x_{d\ell'}}}{d(\ell \ell')^{1/2}} \mcQ(f,f,1/2;\ell,
    \ell'),
\end{align*}
which is enough for our purpose, since
$\mcQ(f,f,1/2;d\ell, d\ell')=\mcQ(f,f,1/2;\ell, \ell')$ (recall that
the twisted second moment is defined in~\eqref{eq-twistedsecond}).

  Proposition \ref{pr-mt} is not sufficient for our purpose since we will sum over $\ell$ and $\ell'$. So we use Theorem~\ref{thsecondmoment} which evaluates $\mcQ(f,f,1/2; \ell, \ell')$
for $(\ell, \ell') = 1$ with two main terms given in
\eqref{MTsecondmoment}. Since $\eps(f,f,\pm,1/2)=1$ and
$\rho = \rho' = 1$, $\delta = |r| = |r'|$, we obtain by \eqref{sun1},
\eqref{sun2}, \eqref{sun3}, \eqref{sun4} that
\begin{multline}\label{errorterm}
\mcQ(f;\bfx_L)=\intc_{(2)}\Bigl(\frac{L_\infty(f,1/2+u)^2}{L_\infty(f,1/2)^2}+\frac{L_\infty(f,3/2+u)^2}{L_\infty(f,3/2)^2}\Bigr)G(u)\\
 \times \Bigl( \sum_{d, (\l,\l') = 1}\frac{x_{d\l}\ov{x_{d\l'}}}{d(\l\l')^{1/2}}L(f\times f,1,u;\l,\l')\Bigr)
   (|r|q)^{2u} \frac{du}{u}	
 +O\left(L^{\expoLt}q^{-\expoq+\eps}\right),
\end{multline}
where
$$
L(f\times f,1,u;\l,\l')=\sum_{n\geq 1}\frac{\lambda_f(\l
  n)\lf(\l'n)}{(\l\l'n^2)^{1/2+u}},
$$
(see \eqref{Lfgdef}).  We apply Mellin inversion to the sum over
$\ell$ and $\ell$'. For any polynomial 
$$
Q(X) = \sum_{k} a_k X^k,
$$
and any $L >1$,  we introduce the polynomial $\what{Q_L} (v)$ defined by 
\begin{equation}\label{defhatQM}
\frac{L^v}{v} \what{Q_L}(v)= \int_0^L Q\left(\frac{\log(L/x)}{\log
    L}\right) x^{v-1} dx = \sum_k a_k \frac{k! L^v}{v^{k+1} (\log L)^k
}.
\end{equation}
With this notation, the main term of $\mcQ(f, \bfx_L)$ equals
\begin{multline}\label{tripleintegral}
  \frac{1}{(2\pi i)^3}\int_{(2)}\int_{(2)}\int_{(2)}
  \Bigl(\frac{L_\infty(f,1/2+u)^2}{L_\infty(f,1/2)^2}+\frac{L_\infty(f,3/2+u)^2}{L_\infty(f,3/2)^2}\Bigr)\\\times
  G(u)L(f,u,v,w)\what{P}_L(v)\what{\ov
    P}_L(w)L^{v+w}(|r|q)^{2u}\frac{du}{u}\frac{dv}v\frac{dw}w
\end{multline}
where
\begin{equation} \label{defL(fuvw)}
  L(f, u,v,w)=\sumsum_\stacksum{d,\l,\l',n}{(\l,\l')= (d\ell\ell', r) = 1}\frac{\mu_f(d\l)\lf(\l
    n)\mu_f(d\l')\lf(\l'n)}
  {\l^{1+u+v}{\l'}^{1+u+w}{d}^{1+v+w}n^{1+2u}}=L(\demi,\demi,\demi,u,v,w),
\end{equation}
in terms of the auxiliary function introduced in \eqref{bigLdef}.

Recall that $T(s)$ is defined in \refs{defT(s)} as
$
T(s)=L(f\otimes f,s).
$
From Corollary~\ref{cor-factor}, we obtain the meromorphic
continuation of this function to the domain
$$\Re u,\Re v,\Re w> -\eta$$
for some $\eta > 0$, given by
\begin{align*}
  L(f, u,v,w) &=\frac{T(1+2u)T(1+v+w)}{T(1+u+v)T(1+u+w)}D(u,v,w) \\
              &=\eta_3(u,v,w)\frac{(u+v)(u+w)}{u(v+w)},
\end{align*}
where $D$ is an Euler product absolutely convergent for
$\Re u,\Re v,\Re w\geq -\eta$ and $\eta_3$ is holomorphic and
non-vanishing in a neighborhood of $(u, v, w) = (0,0,0)$.
  
We shift the $v,w$-contours and then $u$-contour to the left of
$u=v=w=0$, using again the standard zero-free regions for
Rankin-Selberg $L$-functions (Proposition~\ref{AutomorphicPNTLemma})
together with the rapid decay of Gamma-quotients.  In this way we see
that the  triple integral in \eqref{tripleintegral} equals
\begin{equation}\label{res}
  2 \eta_3( 0, 0, 0)
  \res_{u=v=w=0}
  \what{P_L}(v)\what{P_L}(w)L^{v+w}(|r|q)^{2u}\frac{(u+v)(u+w)}{u^2(v+w)vw} + O\left(\frac{1}{\log L}\right).
  \end{equation}
  We write
  \begin{equation}\label{fractiondecomp}
  \frac{(u+v)(u+w)}{u^2(v+w)vw}=\frac{1}{(v+w)vw}+\frac{1}{uvw}+\frac{1}{u^2(v+w)}.	
\end{equation}
Our plan is now to compute the residue coming from each of the three
terms on the right--hand side of \eqref{fractiondecomp}.  For this
purpose, we gather in one lemma the contents of
\cite{KMVcrelle}*{Lemma 9.1-Corollary 9.4}.  We have

\begin{lemma}\label{CrelleKMV}
  For $M >1$, $P$ and $Q$ polynomials, let $\what{P_M}$ and
  $\what{Q_M}$ be defined by \eqref{defhatQM}. We then have the
  equalities\footnote{We take this opportunity to mention a misprint
    in the statement of \cite{KMVcrelle}*{Lemma 9.4} (in that paper
    the formula was used in its correct form): the formula should read
  $$
  \res_{s_1,s_2=0}\frac{M^{s_1+s_2}\what{P_M}(s_1)\what{Q_M}(s_2)}{s_1s_2(s_1+s_2)}=\Bigl(\int_0^1
  P(x)Q(x)dx\Bigr)(\log M).
$$}
  $$
  \res_{s=0} \frac{M^s \what{Q_M }(s)} {s} = Q(1),
  $$
  and
  $$
  \res_{s_1=s_2=0} \frac{M^{s_1+s_2}\what{Q_M} (s_1) \what{Q_M}
    (s_2)}{s_1s_2 (s_1+s_2)} =(\log M) \Bigl( \int_0^1 P(x) Q(x) \,
  dx\Bigr).
  $$
\end{lemma}

Up to terms of size $O((\log L)^{-1} )$, the contribution from the
first term of the right side of \eqref{fractiondecomp} is zero while
the contribution from the second term is equal to
$2\eta_3(0,0,0)P(1)^2 = 2\eta_3( 0, 0, 0)$, as a consequence of the
first part of Lemma \ref{CrelleKMV}.

To deal with the residue coming from the third term, we first note the equality (recall that $P(0) =a_0=0$) 
\begin{align*}
vw\what{P_L}(v)\what{P_L}(w)&=\frac{1}{\log^2 L}\Bigl(\sum_{k\geq 1}ka_k\frac{(k-1)!}{(v\log L)^{k-1}}\Bigr)\,\Bigl(\sum_{k\geq 1}ka_k\frac{(k-1)!}{(w\log L)^{k-1}}\Bigr)\\
&=\frac{\what{P'_L}(v)\what{P'_L}(w)}{\log^2 L},
\end{align*}
The second part of Lemma \ref{CrelleKMV}  implies that we have 
\begin{multline*}
  2\eta_3( 0, 0, 0)\res_{u=v=w=0} \frac{\what{P_L}(v)\what{P_L}(w)
    L^{v+w}(|r|q)^{2u}}{u^2(v+w)}\\
  =2\eta_3(0,0,0)\frac{2\log q}{\log L}\Bigl(\int_0^1
  P'(x)^2dx\Bigr)+O\Bigl(\frac{1}{\log L}\Bigr).
\end{multline*}  
Altogether, \eqref{res} equals
$$
2\eta_3(0,0,0)\Bigl(P(1)^2+\frac{2\log q}{\log
  L}\int_0^1P'(x)^2dx\Bigr)+O\Bigl(\frac{1}{\log L}\Bigr).
$$
Taking $L=q^\lambda$ with $0<\lambda<2/5\times\expoq=\expoLmax$ to
deal with the error term in \eqref{errorterm} and $P=X$, it remains to recall \eqref{stairway} to  obtain:

\begin{proposition}\label{prop-moment12} Let $\bfx_L$ be defined as
  above with $P(X)=X$, let $0<\lambda< \expoLmax$ be fixed. For any
  $\delta>0$, we have
$$ 
\mcQ(f;\bfx_L)=2\eta_3(0,0,0)(1+2\lambda^{-1})+O(\log^{-1}q)
$$
and
\begin{multline*}
  \frac{1}{\vphis(q)} |\{\chi\mods q\,\, \text{{\rm
      non-trivial}}\mid\, |L(\ftchi,1/2)|\geq (\log
  q)^{-1}, \theta(\ftchi)\in I\}|\\\
  \geq
  \frac1{2\eta_3(0,0,0)}\cdot \frac{\mu(I)^2}{1+2\lambda^{-1}}+O_{f,\delta,I}\Bigl(\frac{1}{\log^{1/2}
    q}\Bigr)+O_f(\delta^{1/2}).
\end{multline*}
\end{proposition}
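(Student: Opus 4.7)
The plan is to combine the two ingredients already set up in Sections \ref{sec-m1-moll} and \ref{moll-sec-mom}, and feed them into the Cauchy--Schwarz lower bound \eqref{eq-cs-nonva}. Everything reduces to specializing $P(X)=X$ and checking that the error terms can be absorbed.

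First I would compute $\mcQ(f;\bfx_L)$. The main-term extraction in Section \ref{moll-sec-mom} expressed the mollified second moment, up to $O((\log L)^{-1})$, as
$$
2\eta_3(0,0,0)\Bigl(P(1)^2+\frac{2\log q}{\log L}\int_0^1 P'(x)^2\,dx\Bigr),
$$
plus the error $L^{\expoLt} q^{-\expoq+\eps}$ coming from Theorem \ref{thsecondmoment}. With $P(X)=X$ we have $P(1)^2=1$ and $\int_0^1 P'(x)^2\,dx=1$, and the error term is admissible as long as $\lambda\cdot\expoLt<\expoq$, i.e.\ $\lambda<\expoLmax=1/360$. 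Setting $L=q^{\lambda}$ gives $2\log q/\log L=2/\lambda$, and the claimed formula $\mcQ(f;\bfx_L)=2\eta_3(0,0,0)(1+2\lambda^{-1})+O((\log q)^{-1})$ follows.

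Next I would assemble the mollified first moment $\mathscr{L}(f;\bfx_L,\psi_{I,\delta})$. The expansion in Section \ref{sec-m1-moll} reduced it to a sum over $|k|\leq k_\delta$ of first moments $\mcL(f;r^k\ell,2k)$. The terms $k\neq 0,-1$ contribute $O(q^{\eps+\lambda/2-1/8})$ by Theorem \ref{thm-twistedfirstmoment}, the $k=0$ term gives $\widehat{\psi}(0)x_1=\mu(I)$ up to $O(\delta)$, and the $k=-1$ term equals
$$
\widehat{\psi}(-2)\sum_{(\ell,r)=1}\frac{\mu_f(\ell)\lambda_f(\ell)}{\ell}P\Bigl(\frac{\log(L/\ell)}{\log L}\Bigr)+O(q^{\eps+\lambda/2-1/8}),
$$
which is $O(\exp(-c\sqrt{\lambda\log q}))=o_I((\log q)^{-1/2})$ by Lemma \ref{lm-decay-mollif} applied with $a=1$. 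Thus $\mathscr{L}(f;\bfx_L,\psi_{I,\delta})=\mu(I)+O_{f,\delta,I}((\log q)^{-1/2})+O(\delta)$. Substituting these two evaluations into \eqref{eq-cs-nonva} and simplifying the resulting quotient gives the claimed lower bound $\mu(I)^2/(2\eta_3(0,0,0)(1+2\lambda^{-1}))$ up to the stated error terms.

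I do not expect a substantive obstacle here: the entire analytic heart of the argument (the twisted second moment estimate of Theorem \ref{thsecondmoment}, the twisted first moment of Theorem \ref{thm-twistedfirstmoment} and Corollary \ref{cor-k=-2}, the factorization Lemma \ref{lm-factor}, the decay Lemma \ref{lm-decay-mollif}, and the equidistribution of Gauss sums used to pass from $\chi_I$ to the trigonometric polynomial $\psi_{I,\delta}$) has been carried out earlier. What remains is bookkeeping: specializing $P(X)=X$, verifying that $1/360$ is the exponent forced by $\expoLt\cdot\lambda<\expoq$, checking that $\eta_3(0,0,0)>0$ (which follows because Corollary \ref{cor-factor} produces $\eta_3$ as an absolutely convergent Euler product of positive local factors at $u=v=w=0$, ensuring the denominator is nonzero), and combining error terms of types $(\log q)^{-1/2}$, $\delta$ and $q^{\eps-\ldots}$ into the single expression in the statement.
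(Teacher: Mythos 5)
Your proposal is correct and follows exactly the route the paper takes: the paper does not give a freestanding proof of Proposition~\ref{prop-moment12}, but rather assembles it from the evaluation of the mollified second moment in Section~\ref{moll-sec-mom} (residue computation with $P(X)=X$, error admissible once $L^{\expoLt}q^{-\expoq+\eps}=o(1)$, i.e.\ $\lambda<1/360$), the evaluation of the mollified first moment in Section~\ref{sec-m1-moll} (main term $\mu(I)$ from $k=0$, negligible $k=-1$ term via Lemma~\ref{lm-decay-mollif}, negligible $k\neq 0,-1$ terms), and the Cauchy--Schwarz inequality~\eqref{eq-cs-nonva}. Your bookkeeping of the errors and of the positivity of $\eta_3(0,0,0)$ (guaranteed by Corollary~\ref{cor-factor}) matches what is needed.
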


To conclude the proof of Theorem \ref{thmnonvanishing+angle}, it
remains to observe that 
\begin{equation}\label{eta3<}
\eta_3(0,0,0)\leq \zeta(2).
\end{equation}  Indeed, this
follows  from the  factorization
$$
\eta_3 (0,0,0) = \prod_p L_p (f,0,0,0)
$$
where  the local factors $L_{p}(f,u,v,w)$ of  $L(f,u,v,w)$  are defined in \eqref{defL(fuvw)}  and satisfy
\begin{equation}\label{Lp<1-}
L_p(f,0, 0, 0)   \leq (1 - p^{-2})^{-1}\text{ if } p\mid r,
\end{equation}
and 
\begin{equation}\label{Lp=1}L_p(f,0,0,0)=1\text{ if } p\nmid r.
\end{equation}  
To prove \eqref{Lp<1-} and \eqref{Lp=1} we will use the following identities where $p$ is arbitrary:
\begin{gather*}
  \lf(p)=\alpha+\beta,\text{ where } \alpha\beta=\chi_r(p)\\
  \lf(p^k)=\alpha^k+\alpha^{k-1}\beta+\cdots+
  \alpha\beta^{k-1}+\beta^k
  \\
  \mu_f(1)=1,\ \mu_f(p)=-\lf(p),\ \mu_f(p^2)=\chi_r (p),\ \mu_f(p^k)=0,\ k\geq
  3.
\end{gather*}
In the case of \eqref{Lp<1-}  we see that the definition \eqref{defL(fuvw)} and the condition $p \mid r$  imply the equality
$$
L_p (f,0, 0, 0) =   \sum_{n \mid p^\infty} \frac{\lf^2(n)}{n} = \sum_{k \geq 0} \frac {\lf^{2k}(p)}{p^k} =( 1 - \lf^2 (p)/p)^{-1}.
$$
It remains to appeal to  \eqref{ramified} to complete the proof of \eqref{Lp<1-}.

The proof of \eqref{Lp=1} requires more attention.  When $p \nmid r$ we write the local factor as  

\begin{align}\label{tediousformula}
  L_{p}(f,0,0,0)&=\sumsum_\stacksum{d,\l,\l',n|p^\infty}{(\l,\l')=1}\frac{\mu_f(d \l)\lf( \l n)\mu_f(d \l')\lf( \l'n)}{\l{\l'}{d} n}\nonumber\\
              &=\sumsum_\stacksum{\delta,\lambda,\lambda',\nu\geq 0}{\lambda.\lambda'=0}\frac{\mu_f(p^{\delta+\lambda})\lf(p^{\lambda+\nu})\mu_f(p^{\delta+\lambda'})\lf(p^{\lambda'+\nu})}{p^{\delta+\lambda+\lambda'+\nu}}\nonumber\\
              &=2\sumsum_{\delta,\lambda,\nu\geq 0}\frac{\mu_f(p^{\delta+\lambda})\lf(p^{\lambda+\nu})\mu_f(p^{\delta})\lf(p^{\nu})}{p^{\delta+\lambda+\nu}}-
                \sumsum_{\delta,\nu\geq
                0}\frac{\mu_f(p^{\delta})^2\lf(p^{\nu})^2}{p^{\delta+\nu}}.
\end{align}
As a consequence of the vanishing of the function $\mu_f$, the first  multiple sum can be restricted to the six subcases
$$
(\delta, \lambda) = (0,0),\, (0,1),\, (0,2), \, (1,0),\, (1,1),\, (2,0),
$$
and the second one to the three subcases
$$
\delta= 0,\, 1, \, 2.
$$
For instance, the contribution of the terms with $(\delta, \lambda) = (0,0)$ to the first multiple sum on the right--hand side of \eqref{tediousformula} can be expressed as
$$
2 \sum_{\nu \geq 0}\frac 1{p^\nu}\,  \Bigl( \frac{\alpha^{\nu +1} -\beta^{\nu+1}}{\alpha -\beta}\Bigr)^2 =\frac 2{(\alpha-\beta)^2} \Bigl( \frac {\alpha^2}{1-\alpha^2/p} +\frac {\beta^2}{1-\beta^2/p} -
\frac 2{1-1/p}
\Bigr).
$$
The other contributions are computed similarly. By straightforward computations (most easily performed by computer--assisted symbolic calculations)   we obtain \eqref{Lp=1}. 
 This completes the proof of Theorem \ref{thm61}.
 
 \section{Improvement of Theorem \ref{thmnonvanishing+angle}}\label{improvedthm}
 We quickly explain Remark \ref{remark19} (1) which asserts that the lower bound $(\log q)^{-1}$ in the statement of Theorem \ref{thmnonvanishing+angle} and Theorem  \ref{thm61} can be improved 
 in $(\log q)^{-\alpha}$ for every   $\alpha >1/2$. Indeed if, in the left--hand side of  \eqref{CS},
 we replace the condition $|L(\ftchi,1/2)|\geq (\log q)^{-1}$ by $|L(\ftchi,1/2)|\geq (\log q)^{-\alpha}$, then
 the coefficient $1/(q\log q)$ in the error term of the right--hand side of the  equality \eqref{8.2}
 has to be replaced by $1/(q \log^\alpha q)$. A direct application of Lemma \ref{lemma81} bounds this error term by $O ((\log q)^{1/2-\alpha})$. In order to make the method work, we only require to this error term  to be negligible when compared with the main term  $ \mathscr{L}(f;\bfx_L,\chi_I)$ of \eqref{8.2}   as $q$ tends to infinity.  The order of magnitude of  $\mathscr{L}(f;\bfx_L,\chi_I)$ is known by  the equality \eqref{L=MT+ET} and we are led to  the sufficient condition $\alpha >1/2$.
\section{Non-vanishing with Mellin constraints}\label{sec-mellin}

The goal of this section is to generalize the positive proportion of
non-vanishing to incorporate certain conditions on $\chi$, which are
roughly of the form
$$
\widetilde{t}(\chi)\in A,
$$
where $A\subset \Cc$ and 
$$
\widetilde{t}(\chi)=\frac{1}{\sqrt{q}}\sum_{x\in\Fqt}\chi(x)t(x)
$$
is the discrete Mellin transform of some suitable function
$t\colon \Fqt\to \Cc$. The functions $t$ that we can handle are some
of the trace functions over $\Fq$ described in
Section~\ref{sec-trace}.

Let $C\geq 1$ be a real number and $K$ be a compact Lie group. For
each prime $q$ (large enough), fix a Mellin sheaf $\mcF_q$ over $\Fq$
as in Definition~\ref{def-sheaf} with conductor $\leq C$, with
Property EAGM and with equidistribution group $K$ (see
Definition~\ref{def-eagm}). 
Examples of such families, with $K=\SU_2(\Cc)$ and $C=5$, are provided
by the sheaves related to Evans or Rosenzweig-Rudnick sums, see
Example~\ref{ex-ex}.
\par
We denote by $X_q$ the set of exceptional characters modulo $q$ as
described in Section~\ref{sec-equi-mellin}; we recall that its size is
bounded independently of $q$. For a Dirichlet character
$\chi\notin X_q$, we denote by $\theta_{q,\chi} \in K^{\sharp}$ (or simply
$\theta_{\chi}$) the conjugacy class associated to the Mellin
transform of $\mcF_{q}$ at $\chi$.

\begin{theorem}\label{nonvanishing+anglegeneral}
  With assumptions as above, let $A\subset K^{\sharp}$ be a measurable
  set with non-empty interior. Then
$$
\liminf_{q\to +\infty} \frac{1}{\vphis(q)} |\{\chi\notin X_q\,\mid\,
|L(f\otimes\chi,\demi)|\geq (\log q)^{-1}\text{ and } \theta_{\chi}\in
A\}|>0.
$$
\end{theorem}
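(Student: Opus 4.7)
The proof plan follows the mollification template of Theorem~\ref{thmnonvanishing+angle}, applying Cauchy-Schwarz with the same mollifier $M(f\otimes\chi,\bfx_L)$ from~\eqref{defM} and coefficients $x_\ell$ from~\eqref{xldef}, with $L = q^\lambda$ and $\lambda>0$ fixed and small. The mollified second moment $\mcQ(f;\bfx_L)$ is insensitive to Mellin constraints on $\chi$ and is computed exactly as in Section~\ref{moll-sec-mom}, yielding $\mcQ(f;\bfx_L)\ll 1$ via Proposition~\ref{prop-moment12}; the main task is therefore to estimate from below the twisted first moment
\begin{equation*}
\mathscr{L}(f;\bfx_L,\mathbf{1}_A) := \frac{1}{\vphis(q)}\sums_{\chi\notin X_q}\mathbf{1}_A(\theta_{q,\chi})\,L(f\otimes\chi,\tfrac12)\,M(f\otimes\chi,\bfx_L).
\end{equation*}

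To do this, I would approximate $\mathbf{1}_A$ in $L^2(K^{\sharp})$ to within $\delta>0$ by a finite linear combination $\psi = \sum_{\pi\in\Pi_0} c_\pi\,\Tr\pi$ of characters of irreducible representations of $K$, using the Peter-Weyl theorem. By Cauchy-Schwarz and the equidistribution of $\theta_{q,\chi}$ in $K^{\sharp}$ (which follows from Theorem~\ref{th-katz} together with Weyl's criterion), this gives $\mathscr{L}(f;\bfx_L,\mathbf{1}_A) = \mathscr{L}(f;\bfx_L,\psi) + O(\delta\,\mcQ(f;\bfx_L)^{1/2})$. The trivial representation contributes the expected main term $\mu(A)(1+o(1))$, where $\mu$ denotes Haar probability measure on $K^{\sharp}$, and it remains to show $\mathscr{L}(f;\bfx_L,\Tr\pi) = o(1)$ for each non-trivial $\pi\in\Pi_0$.

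For each non-trivial $\pi$, I would invoke Theorem~\ref{th-katz}. In Case 1, $\pi(\mcF_q)$ is a Mellin sheaf with trace function $t_\pi$ of conductor bounded in terms of $\pi$ and $\Tr\pi(\theta_{q,\chi}) = \widetilde{t_\pi}(\chi)$ for $\chi\notin X_q$; Theorem~\ref{th-m1-mellin} applied to this sheaf then gives
\begin{equation*}
\mathscr{L}(f;\bfx_L,\Tr\pi) = \sum_{\ell\leq L}\frac{x_\ell}{\sqrt{\ell}}\cdot\frac{1}{\vphis(q)}\sums_{\chi}\widetilde{t_\pi}(\chi)\chi(\ell)L(f\otimes\chi,\tfrac12) \ll L^{1/2+\theta+\varepsilon}q^{-1/8+\varepsilon},
\end{equation*}
which is $o(1)$ for $\lambda$ sufficiently small. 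In Case 2, $\Tr\pi(\theta_{q,\chi}) = \chi(a_\pi)$ for some $a_\pi\in\Fqt$ with $a_\pi\neq 1$ (since $\pi$ is non-trivial), and then by Theorem~\ref{thm-twistedfirstmoment} the first moment reduces to
\begin{equation*}
\mathscr{L}(f;\bfx_L,\Tr\pi) = \sum_{\ell\leq L}\frac{x_\ell\,\lambda_f(\overline{a_\pi\ell}_q)}{\sqrt{\ell\cdot\overline{a_\pi\ell}_q}} + O(L^{1/2+\theta+\varepsilon}q^{-1/8+\varepsilon}).
\end{equation*}

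The hard part will be bounding this main-term sum in Case 2 uniformly in $a_\pi$, since the modular inverse $\overline{a_\pi\ell}_q$ behaves erratically as a function of $\ell$. My approach would be to split by the size of $\overline{a_\pi\ell}_q$: the range $\overline{a_\pi\ell}_q > q^{1/4}$ contributes $O(q^{-\eta})$ via the trivial bound $|\lambda_f(m)|\ll m^{\theta+\varepsilon}$, while the sparse range $\overline{a_\pi\ell}_q\leq q^{1/4}$ would be handled by a combination of divisor bounds (to count the pairs $(\ell,m)$ with $a_\pi\ell m\equiv 1\pmod q$, $\ell\leq L$, $m\leq q^{1/4}$, via the hyperbolae $a_\pi\ell m = 1+kq$) together with Rankin-Selberg / Cauchy-Schwarz. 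In the representative examples of the theorem, where $K$ is a connected semisimple compact Lie group (e.g.\ $K=\SU_2(\Cc)$ for the Evans and Rosenzweig-Rudnick sums), $K$ admits no non-trivial $1$-dimensional representations, so Case 2 simply does not arise and the argument becomes clean. Combining all estimates and substituting into Cauchy-Schwarz, then letting $\delta\to 0$ after $q\to\infty$, yields $\liminf_{q\to\infty}N(q)\geq c\,\mu(A)^2>0$ for some constant $c>0$ depending on $\lambda$, $f$, and $K$, which is the desired positive proportion of non-vanishing.
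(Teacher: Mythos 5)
Your proposal captures the right template (mollifier, Cauchy--Schwarz, Peter--Weyl decomposition, then Theorem~\ref{th-katz} to analyze each irreducible $\pi$), but it has a genuine gap: you treat Theorem~\ref{th-katz} as giving a clean dichotomy between ``Mellin sheaf'' and ``punctual,'' and apply Theorem~\ref{th-m1-mellin} to the former. However, Theorem~\ref{th-m1-mellin} explicitly excludes the case where $\pi(\mcF_q)$ is geometrically isomorphic to $[x\mapsto a/x]^*\HYPK_2$ (Case (2) of Lemma~\ref{lm-convol}). In that exceptional case the Mellin transform $\widetilde{t}_\pi(\chi)$ is proportional to $\chi(a)\eps_\chi^{-2}$, so the twisted first moment produces the $k=-2$ main term of Corollary~\ref{cor-k=-2}, namely $\sum_\ell x_\ell\,\lambda_f((a\ell r)_q)/(a\ell r)_q^{1/2}\cdot\ell^{-1/2}$, with no \emph{a priori} reason to be small. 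The paper's proof treats this as a separate Case 2 and runs a delicate elementary argument (splitting on whether some $(a\ell_0 r)_q\leq L^2$, writing $a=(m_0+\alpha_0 q)/(\ell_0 r)$, and reducing to Lemma~\ref{lm-decay-mollif}) to kill this term. Your proposal never considers this possibility at all.

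Your treatment of the punctual case also diverges from the paper and is not convincing as written. You propose to bound $\sum_\ell x_\ell\,\lambda_f(\overline{a_\pi\ell}_q)\,(\ell\cdot\overline{a_\pi\ell}_q)^{-1/2}$ by a split on the size of $\overline{a_\pi\ell}_q$ plus ``divisor bounds and Rankin--Selberg,'' but for an arbitrary $a_\pi$ there is no barrier to $\overline{a_\pi\ell}_q$ being small for some small $\ell$, which would give an $O(1)$ term rather than $o(1)$. The crucial structural fact — exploited in the paper but not in your proposal — is that since $\pi$ is a finite-order character of the \emph{fixed} group $K$, the scalar $a_\pi$ is a non-trivial $d$-th root of unity in $\Fqt$, whence $\overline{a_\pi}_q\geq q^{1/d}$ (because $b^d\equiv 1\bmod q$ with $1\leq b<q^{1/d}$ forces $b^d=1$, so $b=1$). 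Combined with $\ell\leq L\leq q^{\lambda}$ and $\lambda<(1/2-\theta)/(2d)$, this gives $(\overline{a_\pi\ell})_q\geq q^{1/(2d)}$ uniformly, which is what makes the punctual contribution $O(q^{\lambda/2+(1/(2d))(\theta-1/2)+\eps})\to 0$. Without this root-of-unity input your elementary counting scheme does not close. Your final observation — that if $K$ is connected semisimple the punctual case does not arise — is correct and worth noting, but it does not discharge the general statement, which allows, e.g., $K=\Un_n(\Cc)$ with $\det$ a non-trivial finite-order character.
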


\begin{proof}
  Let $d\geq 1$ be the order of the finite group of finite-order
  characters of $K$.  Let $\phi_0\colon K^{\sharp}\to [0,1]$ be a
  non-zero continuous function supported in an open subset contained
  in $A$. Let further $\phi$ be a finite linear combination of
  characters of irreducible representations of $K$ such that
  $\|\phi-\phi_0\|<\delta$, where $\delta>0$ will be specified later;
  such a function exists by the Peter-Weyl Theorem.
\par
Fix $q$ so that $\mcF=\mcF_q$ is defined. Let $L=q^{\lambda}$ with
\begin{equation}\label{eq-max-lambda}
0<\lambda<\min\Bigl(\frac{1/2-\theta}{2d},\frac{1}{360}
\Bigr)
\end{equation}
and consider the mollifier
$$
M(f\otimes\chi,\bfx_{L})=\sum_{\ell \leq
  L}\frac{x_{\ell}}{\sqrt{\ell}}\chi(\ell)
$$
as in Section~\ref{sec-mollif} (see~(\ref{defM}) and~(\ref{xldef})).
Let
$$
\mcL=\frac{1}{\vphis(q)} \sum_{\chi\notin
  X_q}\phi_0(\theta_{\chi})L(f\otimes \chi,\demi)
M(f\otimes\chi,\bfx_L).
$$
We then have
$$
\mcL=\mcL'+O((\log q)^{-1/2})
$$
by Lemma \ref{lemma81}, where
$$
\mcL'=\frac{1}{\vphis(q)} \sum_{\substack{\chi\notin X_q\\ |L(f\otimes
    \chi,1/2)|\geq (\log q)^{-1}}}\phi_0(\theta_{\chi})L(f\otimes
\chi,\demi) M(f\otimes\chi,\bfx_L).
$$
On the other hand, we have
$$
|\mcL'|^2 \leq \mathcal{N}\mcQ,
$$
where
$$
\mathcal{N}=\frac{1}{\vphis(q)} |\{\chi\notin X_q\,\mid\,
|L(f\otimes\chi,\demi)|\geq (\log q)^{-1}\text{ and } \theta_{\chi}\in
A\}
$$
(since $\phi_0(\theta_{\chi})\not=0$ implies that
$\theta_{\chi}\in A$) and
\begin{align*}
  \mcQ&=\frac{1}{\vphis(q)} \sum_{\chi\notin X_q}
        |\phi_0(\theta_{\chi})|^2|L(f\otimes \chi,\demi)|^2
        |M(f\otimes\chi,\bfx_L)|^2\\
      &\leq \frac{1}{\vphis(q)} \sums_{\chi\mods
        q}|L(f\otimes \chi,\demi)|^2 |M(f\otimes\chi,\bfx_L)|^2\ll 1
\end{align*}
by Proposition~\ref{prop-moment12} since $\lambda<\expoLmax$.
\par
Hence it suffices to find a lower  bound for $\mcL'$. 
Let
$$
\mcL''=\frac{1}{\vphis(q)} \sum_{\chi\notin
  X_q}\phi(\theta_{\chi})L(f\otimes \chi,\demi)
M(f\otimes\chi,\bfx_L).
$$
Then 
$$
|\mcL''-\mcL|\leq \|\phi-\phi_0\|_{\infty} \times \frac{1}{\vphis(q)}
\sums_{\chi\mods q}|L(f\otimes \chi,\demi) M(f\otimes\chi,\bfx_L)| \ll
\|\phi-\phi_0\|_{\infty}
$$
by Proposition~\ref{prop-moment12} again. Write
\begin{equation}\label{decomp100}
\phi(x)=\int_K \phi+\sum_{\pi\not=1} \what{\phi}(\pi)\Tr(\pi(x))
\end{equation}
where the sum ranges over a finite set of non-trivial irreducible
representations of $K$. Then, if $\delta$ is small enough, we have
$$
\what{\phi}(1)=\int_K\phi>0.
$$
The equality \eqref{decomp100}  decomposes $\mcL''$ into 
\begin{equation}\label{decomp101}
\mcL''= \mcL''_{\rm MT} + \mcL''_{\rm ET},
\end{equation}
where the main term is given by
\begin{align*}
\mcL''_{\rm MT} & = \frac{\what{\phi }(1)}{\vphis (q)}   \sum_{\chi \not\in X_q} L (\ftchi, \demi)  M(\ftchi, \bfx_L)\\
& = \frac{\what{\phi }(1)}{\vphis (q)}   \sum_{\chi \bmod q} L (\ftchi, \demi)  M(\ftchi, \bfx_L) + O(q^{\lambda/2-1/2+\varepsilon}).
\end{align*}
In the line above,  the error term is deduced from the fact that $\vert X_q\vert =O(1)$, from the classical bound of $L (\ftchi, \demi) $ and from \eqref{averagexl}. Finally  
by the computation in Section~\ref{sec-m1-moll} with $k=0$, we obtain the equality
$$
\mcL''_{\rm MT} =  {\what{\phi }(1)}   \bigl(1+o_\lambda (1)\bigr),
$$
as $q$ tends to infinity provided that $\lambda$ satisfies \eqref{eq-max-lambda}.

Returning to \eqref{decomp101}, we have the equality 
\begin{equation}\label{L''ETdef}
  \mcL''_{\rm ET} 
  = \frac{1}{\vphis(q)} \sum_{\pi\not=1}\what{\phi}(\pi) 
  \sum_{\chi\notin X_q} \Tr(\pi(\theta_{\chi}))L(f\otimes \chi,\demi)
  M(f\otimes\chi,\bfx_L). 
\end{equation}
 
\par
Fix $\pi\not=1$ in the sum. We have
\begin{multline}\label{eq-sumell}
  \frac{1}{\vphis(q)} \sum_{\chi\notin
    X_q}\Tr(\pi(\theta_{\chi}))L(f\otimes \chi,\demi)
  M(f\otimes\chi,\bfx_L) \\
  =  \frac{1}{\vphis(q)}\sum_{ \ell\leq L}\frac{x_{\ell}}{\ell^{1/2}}
  \sum_{\chi\notin X_q}
  \Tr(\pi(\theta_{\chi}))L(f\otimes \chi,\demi) \chi(\ell).
\end{multline}
  We
recall first that \eqref{averagexl}  implies
\begin{equation}\label{anotherbound}
\sum_{\ell\leq L}\frac{|x_{\ell}|}{\sqrt{\ell}}\ll L^{1/2+\eps}
\end{equation}
for any $\eps>0$ and, for the end of the proof, we distinguish three cases.
\par
\medskip
\par
\textbf{Case 1.} Let $d$ be the order of the finite group of characters
of finite order of $K$. There exists $a$ such that $1\leq a\leq q-1$
and such
that $\Tr(\pi(\theta_{\chi}))=\chi(a)$ for all $\chi\notin X_q$, and
moreover $a$ is a non-trivial $d$-th root of unity modulo $q$ since
$\pi$ is a non-trivial character of finite order of $K$. This
is the ``punctual'' case (2) of Theorem~\ref{th-katz}. 
By Theorem~\ref{thm-twistedfirstmoment}, we have
$$ 
\frac{1}{\vphis(q)} \sum_{\chi\notin X_q}
 L(f\otimes \chi,\demi) \chi(a\ell)
=\frac{\lf((\ov{a\l})_q)}{((\ov{a\l})_q)^{1/2}} +O(q^{-1/8+\eps})
$$
for any $\eps>0$.
\par
In particular that there are at most $d-1$ possible values of $a$.
Let $a$ be such a root of unity. Write $b=\ov{a}_q \in [1, q-1]$ with
the notation as in Theorem \ref{thm-twistedfirstmoment}. Since
$b^d\equiv 1\mods{q}$, and $b\not=1$, we have $b\geq q^{1/d}$.
\par
We write $m=(\ov{\ell})_q=(1+\alpha q)/\ell$ for some $\alpha\geq
0$. We then have 
$$
0\leq \alpha=\frac{\ell m-1}{q}<\frac{\ell m}{q}<\ell.
$$
Write further $b\alpha=\delta\ell+\rho$ where $0\leq \rho<\ell$. Then
$$
bm=b\frac{1+\alpha q}{\ell}=\frac{b+\rho q}{\ell}+\delta q,
$$
and since
$$
0<\frac{b+\rho q}{\ell}< \frac{q}{\ell}+\Bigl(1-\frac{1}{\ell}\Bigr)q=
q,
$$
we get
$$
(\ov{a\ell})_q=(bm)_q=\frac{b+\rho q}{\ell}\geq \frac{b}{\ell}\geq
q^{1/d-\lambda}\geq q^{1/(2d)}.
$$
Therefore the contribution of this representation to the first moment
is
$$
\ll q^{1/(2d)(-1/2+\theta)} \sum_{ \ell\leq L}
\frac{|x_{\ell}|}{\ell^{1/2}}\ll
q^{\lambda/2+1/(2d)(-1/2+\theta)+\eps}\to 0
$$
as $q\to +\infty$ by~(\ref{eq-max-lambda}).
\par
\medskip
\par
If we are not in Case 1, we denote by $\pi(\mcF_q)$ the Mellin sheaf
obtained from Theorem~\ref{th-katz}. Two more cases appear.
\par
\medskip
\par
\textbf{Case 2.} Assume that there exists $a$ such that
$1\leq a\leq q-1$ and $\pi(\mcF_q)$ is geometrically isomorphic to
$[x\mapsto a/x]^* \HYPK_2$, so that $\Tr(\pi(\theta_{\chi}))$ is
proportional to $\eps_{\chi}^{-2}\chi(a)$, with the proportionality
constant of modulus $1$ (see Lemma \ref{lm-convol}).  Then, up to such a constant of modulus $1$,
the sum~(\ref{eq-sumell}) is equal to
$$
\sum_{\ell\leq L}\frac{x_{\ell}}{\ell^{1/2}} \frac{1}{\vphis(q)}
\sum_{\chi\notin X_q} L(f\otimes \chi,\demi)
\eps_{\chi}^{-2}\chi(a\ell)= \sum_{\ell\leq
  L}\frac{x_{\ell}}{\ell^{1/2}} \frac{\lf((a\ell r)_q)}{(a\ell
  r)_q^{1/2}}+O(L^{1/2}q^{-1/8+\eps})
$$
for any $\eps>0$ by Corollary~\ref{cor-k=-2}, where $m=(a\ell r)_q$ is
the representative between $1$ and $q$ of the residue class of
$a\ell r$ modulo $q$.  If there doesn't exist $\ell_0$ such that
$1\leq \ell_0\leq L$ and $(a\ell_0 r)_q\leq L^2$, then, by \eqref{anotherbound},  we get
$$
\sum_{\ell\leq L}\frac{x_{\ell}}{\ell^{1/2}} \frac{\lf((a\ell
  r)_q)}{(a\ell r)_q^{1/2}} \ll \frac{1}{L^{1-\theta}}\sum_{ \ell\leq
  L}\frac{|x_{\ell}|}{\ell^{1/2}} \ll L^{-1/2+\theta}.
$$
If there does exist $\ell_0$ such that $1\leq \ell_0\leq L$ and
$1\leq m_0=(a\ell_0 r)_q\leq L^2$, then we get
$$
a\equiv m_0\ov{\ell_0 r}.
$$
We can write
$$
a=\frac{m_0+\alpha_0 q}{\ell_0 r}
$$
for some $\alpha_0\geq 0$.  If $\alpha_0=0$, then we have
$\ell_0r\mid m_0$ and $a\leq L^2$. Then $(a\ell r)_q=a\ell r$ for all
$\ell\leq L$, if $q$ is large enough. Thus
$$
\sum_{\ell\leq L}\frac{x_{\ell}}{\ell^{1/2}} \frac{\lf((a\ell
  r)_q)}{(a\ell r)_q^{1/2}}= \sum_{ \ell\leq
  L}\frac{x_{\ell}}{\ell^{1/2}} \frac{\lf(a\ell r)}{(a\ell r)^{1/2}}
\ll \exp(-c\sqrt{\log L})
$$
for some $c>0$, by Lemma~\ref{lm-decay-mollif}.
\par
Now assume that $\alpha_0\geq 1$. Let $\ell\leq L$.  Then
$$
a\ell r=\frac{m_0\ell}{\ell_0}+ \frac{\alpha_0 q\ell}{\ell_0}.
$$
If $\ell_0$ divides $\alpha_0\ell$, it follows that
$\ell_0\mid m_0\ell$ and
$$
(a\ell r)_q=\frac{m_0\ell}{\ell_0}.
$$
Otherwise, write $\ell=\beta\ell_0+\delta$ where
$1\leq \delta <\ell_0$. We get
$$
a\ell r= \beta m_0+\frac{m_0\delta}{\ell_0} +\beta\alpha_0 q +
\frac{\alpha_0 \delta q}{\ell_0}= \beta m_0+ \frac{\delta
  (m_0+\alpha_0 q)}{\ell_0}+\alpha_0\beta q.
$$
Now write $\alpha_0\delta=\gamma \ell_0+\rho$ with $0\leq
\rho<\ell_0$. We derive
$$
a\ell r=\beta m_0+ \frac{\delta m_0+\rho q}{\ell_0}+ \gamma
q+\alpha_0\beta q.
$$
We have $\rho\not=0$, since otherwise $\ell_0\mid \alpha_0\ell$. Since
$\rho<\ell_0$, for
$q$ large enough, we have
$$
\beta m_0+ \frac{\delta m_0+\rho q}{\ell_0}\leq
\Bigl(1-\frac{1}{\ell_0}\Bigr)q+O(L^3)<q,
$$ 
and hence
$$
(a\ell r)_q =\beta m_0+ \frac{\delta m_0+\rho q}{\ell_0}\geq   \frac{\delta m_0+\rho q}{\ell_0}\geq  \frac{q}{\ell_0} \geq qL^{-1}
$$
We conclude that
$$
\sum_{\ell\leq L}\frac{x_{\ell}}{\ell^{1/2}} \frac{\lf((a\ell
  r)_q)}{(a\ell r)_q^{1/2}} = \sum_{\substack{ \ell\leq
    L\\\ell_0\mid \alpha_0\ell}} \frac{x_{\ell}}{\ell^{1/2}}
\frac{\lf(m_0\ell/\ell_0)}{(m_0\ell/\ell_0)^{1/2}}+ O\Bigl(
\frac{L^{1-\theta+\eps}}{q^{1/2-\theta}} \Bigr).
$$
Write $\alpha_0=\alpha_1\alpha_2$ where
$\alpha_1\mid\ell_0^{\infty}$. Define
$\ell_1=\ell_0/(\ell_0,\alpha_1)$. Then $\ell_0\mid \alpha_0\ell$ if
and only if $\ell_1\mid \ell$.  Moreover, since this condition holds
for $\ell=\ell_1$, we have $\ell_0\mid m_0\ell_1$, which implies that
$(\ell_0,\alpha_1)\mid m_0$. Let $m_1=m_0/(\ell_0,\alpha_1)$.
Then, by applying Lemma \ref{lm-decay-mollif}, we have 
\begin{align*}
  \sum_{\ell\leq L}\frac{x_{\ell}}{\ell^{1/2}} \frac{\lf((a\ell
  r)_q)}{(a\ell r)_q^{1/2}}  
  &= \sum_{ \ell\leq
    L/\ell_1}\frac{x_{\ell\ell_1}}{(\ell\ell_1)^{1/2}}
    \frac{\lf(m_1\ell)}{(m_1\ell )^{1/2}} + O\Bigl(
    \frac{L^{1-\theta+\eps}}{q^{1/2-\theta}} \Bigr)\\
  &\ll \ell_1^{-1/2}\exp(-c\sqrt{\log
    (L/\ell_1)})\to 0
\end{align*}
as $q\to +\infty$.
\par
\medskip
\par
\textbf{Case 3.}  In the final case, let $t_\pi(x)$ be the trace
function of Theorem~\ref{th-katz}, so that
$\widetilde{t_\pi}(\chi)=\Tr(\pi(\theta_{\chi}))$ for $\chi\notin
X_q$. Since we are not in Case 2, the sheaf $\pi(\mcF_q)$ is not of the
type of Case (2) of Lemma~\ref{lm-convol}.
\par
For each individual character $\chi$ modulo $q$  for every positive $\varepsilon$, we have  the bound 
$$
|\widetilde{t_\pi}(\chi)L(f\otimes \chi,\demi) \chi(\ell)|\ll_{f,\pi,\varepsilon}
q^{3/8+\eps}
$$
by the subconvexity estimate of Blomer and Harcos~\cite{BH}*{Th.\ 2}
since $|\widetilde{t_\pi}(\chi)|\ll 1$. Hence we can add the characters
in $X_q$ to the sum and obtain
\begin{multline*}
  \frac{1}{\vphis(q)} \sum_{\chi\notin X_q}
  \Tr(\pi(\theta_{\chi}))L(f\otimes \chi,\demi) \chi(\ell)\\
  = \frac{1}{\vphis(q)}
  \sums_{\chi\mods{q}}\widetilde{t_\pi}(\chi)L(f\otimes \chi,\demi)
  \chi(\ell)+ O(q^{-5/8+\eps}).
\end{multline*}
\par
Since we are in Case (1) in Lemma~\ref{lm-convol}, we deduce by Theorem~\ref{th-m1-mellin},
$$
\frac{1}{\vphis(q)} \sum_{\chi\notin
  X_q}\Tr(\pi(\theta_{\chi}))L(f\otimes \chi,\demi) \chi(\ell)\ll
q^{-1/8+\eps}
$$
for any $\eps>0$, where the implied constant depends on $f$, $C$,
$\pi$ and $\eps$. Using these bounds
in~(\ref{eq-sumell}) 
we deduce that
$$
\frac{1}{\vphis(q)} \sum_{\chi\notin
  X_q}\Tr(\pi(\theta_{\chi}))L(f\otimes \chi,\demi)
M(f\otimes\chi,\bfx_L) \ll_{f,\pi,\eps} L^{1/2+\eps}q^{-1/8+\eps}\ra 0
$$
as $q\to +\infty$.
\par
Collecting the bounds from all the three cases above, we conclude (see \eqref{L''ETdef}) that under assumption \eqref{eq-max-lambda} 
$$\mcL''_{\mathrm{ET}}\ra 0$$
as $q\to +\infty$.
If we choose $\delta>0$ small enough, depending only
on $\phi_0$,  it follows from \eqref{decomp101} that
$$
\liminf_{q\to +\infty} |\mcL''|>0,
$$
hence the result.
\end{proof}


\chapter{Extreme values of twisted \texorpdfstring{$L$-functions}{L-functions}}
\label{ch-extreme}

\section{Introduction}

In this chapter we prove Theorems \ref{thm-extremal} and
\ref{thm-extremal2}, which establish the existence of very large
values of twisted $L$-functions. We fix $f$ as in Section~\ref{intro}.

More precisely, we will prove the following refined statements:

\begin{theorem}
\label{LargeValuesAngularSectorsTheorem}
Let $I\subseteq\Rr/\pi\Zz$ be an interval of positive measure. Then,
for every sufficiently large prime modulus $q$, there exist primitive
characters $\chi$ of conductor $q$ such that
$$
\big|L\big(f\otimes\chi,\tfrac12\big)\big| \geqslant
\exp\left(\Bigl(\frac1{\sqrt8}+o(1)\Bigr) \sqrt{\frac{\log q}{\log\log
      q}}\right)\quad\text{and}\quad \theta(\ftchi)\in I.
$$
In fact, for every
$3\leqslant V\leqslant\frac3{14}\sqrt{\log q/\log\log q}$, we have
\begin{multline*}
  \big|\big\{\chi\bmod q\,\mid\,
  \big|L\big(f\otimes\chi,\tfrac12\big)\big|\geqslant e^V\text{ and }\theta(\ftchi)\in I\big\}\big|\\
  \geqslant\frac{\varphi(q)}{\log^2q}\exp\bigg(-(12+o(1))\frac{V^2}{\log\big(\log
    q/(16V^2\log V)\big)}\bigg).
\end{multline*}
\end{theorem}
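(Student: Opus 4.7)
The plan is to adapt Soundararajan's resonator method to incorporate the angular constraint $\theta(\ftchi) \in I$ via the phase formula~\eqref{phaseformula}. First I would fix a small $\delta > 0$ and choose a non-negative trigonometric polynomial $\psi_I = \sum_{|j| \leq K_\delta} \what{\psi}(2j) e^{2ij\theta}$ of period $\pi$ approximating $\chi_I$ well in $L^2(\Rr/\pi\Zz)$, with $\what{\psi}(0)$ close to $\mu(I)$ (and, if needed, multiplying $\psi_I$ by a unimodular constant so that the ``combined zero-frequency'' constant $c_I = \what{\psi}(0) + \eps(f)\cdot \what{\psi}(-2)$ is real and positive). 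I then set $N = q^{1-\varepsilon}$ and define the resonator $R(\chi) = \sum_{\ell \leq N} \alpha(\ell)\chi(\ell)$, where $\alpha$ is Soundararajan's multiplicative function supported on squarefree integers coprime to $r$, with $\alpha(p) \propto \lambda_f(p)\sqrt{\log N}/\sqrt{p}\,\log p$ on primes in a suitable dyadic range $[y, N]$.

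The heart of the argument is then to evaluate the two resonator sums
$$\Sigma_1 = \frac{1}{\vphis(q)}\sums_{\chi \mods q} L(\ftchi, \tfrac12)|R(\chi)|^2 \psi_I(\theta(\ftchi)), \qquad \Sigma_2 = \frac{1}{\vphis(q)}\sums_{\chi \mods q} |R(\chi)|^2 \psi_I(\theta(\ftchi)).$$
Using $e^{2ij\theta(\ftchi)} = \eps(f)^j \chi(r^j)\eps_\chi^{2j}$ from~\eqref{phaseformula} and expanding $|R(\chi)|^2$ as a double sum over $(\ell_1, \ell_2)$, both $\Sigma_1$ and $\Sigma_2$ decompose into finite linear combinations of the twisted first moments $\mcL(f; r^j \ell_1 \overline{\ell_2}, 2j)$ (and, for $\Sigma_2$, of orthogonality sums weighted by hyper-Kloosterman sums coming from averages of $\eps_\chi^{2j}$, cf.~\eqref{averagesignprim}). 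Theorem~\ref{thm-twistedfirstmoment} and Corollary~\ref{cor-k=-2} then show that only the indices $j = 0$ and $j = -1$ produce a main term; combining the two, the diagonal $\ell_1 = \ell_2$ contributes $c_I \sum_{\ell \leq N} |\alpha(\ell)|^2$, while off-diagonal pairs contribute terms of size $\lambda_f((\ell_1\overline{\ell_2})_q)/(\ell_1\overline{\ell_2})_q^{1/2}$, which sum to a negligible amount by a standard Rankin--Selberg-type argument. With Soundararajan's choice of $\alpha$ and the Prime Number Theorem for $\sym^2 f$ (Corollary~\ref{PNTsymandpower}), the ratio satisfies
$$\frac{|\Sigma_1|}{\Sigma_2} \geq (c_I + o(1))\exp\Bigl(\bigl(\tfrac{1}{\sqrt{8}}+o(1)\bigr)\sqrt{\tfrac{\log q}{\log \log q}}\Bigr),$$
and since $\psi_I \geq 0$, one has $|\Sigma_1| \leq \max_{\chi : \psi_I(\theta(\ftchi)) \neq 0}|L(\ftchi,\tfrac12)| \cdot \Sigma_2$; letting $\delta \to 0$ contracts the support of $\psi_I$ to $I$ and yields the first displayed inequality of the theorem.

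For the quantitative count I would follow the strategy of Aistleitner--Soundararajan. Splitting $\Sigma_1 = \Sigma_1^{\geq V} + \Sigma_1^{<V}$ according to whether $|L(\ftchi,\tfrac12)| \gtreqless e^V$, the bound $|\Sigma_1^{<V}| \leq e^V \Sigma_2$ together with the preceding lower bound forces $|\Sigma_1^{\geq V}| \gg |\Sigma_1|$ whenever $V$ is comfortably below the Soundararajan exponent. Cauchy--Schwarz then gives
$$|\Sigma_1^{\geq V}|^2 \leq \mathcal{N}_V \cdot \frac{1}{\vphis(q)}\sums_{\chi \mods q}|L(\ftchi,\tfrac12)|^2|R(\chi)|^4 \psi_I(\theta(\ftchi))^2,$$
where $\mathcal{N}_V$ is the count to be bounded from below. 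The quadruple-resonator second moment on the right is evaluated via Theorem~\ref{thsecondmoment}, by folding the two copies of $R$ into an extended mollifier of length $N^2$ and invoking the fourth-moment combinatorics of a Soundararajan-type multiplicative resonator. Choosing the truncation parameter $y$ as $y \sim \exp(V^2/\log\log q)$ as in Aistleitner's optimization produces exactly the factor $\exp(-(12+o(1))V^2/\log(\log q/(16V^2\log V)))$, and the restriction $V \leq \tfrac{3}{14}\sqrt{\log q/\log\log q}$ is dictated by the requirement that $N^2$ stay within the admissible mollifier range of Theorem~\ref{thsecondmoment}.

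The main obstacle is controlling the error term in the evaluation of $\Sigma_1$: a naive application of the uniform $O(q^{-1/8+\eps})$ error of Theorem~\ref{thm-twistedfirstmoment} against the $N^2 = q^{2-2\eps}$ pairs $(\ell_1,\ell_2)$ would produce an error of size $\gg q^{15/8}$, completely swamping the diagonal main term $\sum_\ell|\alpha(\ell)|^2$. The resolution is to interchange summation and extract cancellation from the $\chi$-sum \emph{before} summing over $(\ell_1,\ell_2)$, recasting the problem as a bilinear form in $\alpha(\ell_1)\overline{\alpha(\ell_2)}$ against sums of Hecke eigenvalues twisted by hyper-Kloosterman trace functions; these are controlled by Proposition~\ref{propFKM1}, in effect re-proving the first-moment asymptotic with the resonator coefficients already in place. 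A secondary difficulty, especially in the counting estimate, is the explicit tracking of the numerical constants $1/\sqrt{8}$, $12$ and $16$; these come from Aistleitner's precise optimization of the resonator support and require the sharp Mertens-type asymptotics of Corollary~\ref{corlambdaast} for $\sum_p \lambda_f(p)^2/p$ and $\sum_p \lambda_f(p)^4/p$.
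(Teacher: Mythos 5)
Your proposal contains a fundamental numerical error that would cause the argument to break down.

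You set the resonator length $N = q^{1-\eps}$. This is incompatible with the first moment error term. The twisted first moment $\mcL(f;\ell_1\overline{\ell_2} r^\kappa, 2\kappa)$ carries an error of size $q^{-1/8+\eps}$ (Theorem~\ref{thm-twistedfirstmoment}), and after summing $|r(\ell_1)a_f(\ell_1)r(\ell_2)\ov{a_f(\ell_2)}|$ over $\ell_1,\ell_2\leq N$ the total error is of shape $q^{\eps}N q^{-1/8}\cdot\prod_p(1+r(p)^2\omega(p))$ (cf.~Lemma~\ref{TwistedFirstMomentLemma} in the paper). With $N = q^{1-\eps}$ this is a \emph{polynomial} power of $q$ times the normalizer, which utterly swamps the sub-exponential gain $\msl\sim\exp(O(\sqrt{\log q/\log\log q}))$ of the resonator. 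The resolution you propose — interchange summation and recast the error as a bilinear form controlled by Proposition~\ref{propFKM1} — is exactly how the $q^{-1/8}$ saving is \emph{already} obtained; there is no further cancellation to extract from summing the resonator coefficients, which are not structured arithmetic weights for which a bilinear Kloosterman bound would improve matters. The correct choice is $N = q^{1/8-\delta}$, and this is precisely where the constant $1/\sqrt{8}$ in the theorem comes from: with $L=\sqrt{\log N\log\log N}$ and $\log N\sim(1/8)\log q$, Lemma~\ref{GainLemma} gives $L/(2\log L)\sim\sqrt{\log N/\log\log N}\sim\sqrt{(\log q)/(8\log\log q)}$. Your proposal claims the exponent $1/\sqrt{8}$ while using $N=q^{1-\eps}$, which would instead yield exponent $1$ — you've carried the constant over from the theorem statement without reconciling it with your parameter choice.

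The counting estimate has a second, independent gap. Your Cauchy--Schwarz step requires controlling $\vphis(q)^{-1}\sum_\chi^\ast |L(\ftchi,1/2)|^2|R(\chi)|^4$, i.e.\ a mollified second moment with effective mollifier length $N^2$. Even with the corrected $N=q^{1/8-\delta}$, this gives mollifier length $\approx q^{1/4}$, far outside the admissible range $L\leq q^{1/360}$ of Theorem~\ref{thsecondmoment}. The paper circumvents this by applying H\"older's inequality to split off a factor $(\vphis(q)^{-1}\sum_\chi^\ast|R(\chi)|^8)^{1/4}$, a pure orthogonality computation requiring only $N\leq q^{1/4}$, and a factor $(\vphis(q)^{-1}\sum_\chi^\ast|L(\ftchi,1/2)|^2)^{1/2}$, the \emph{unmollified} second moment of Theorem~\ref{thm-moment2}. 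Your proposal also omits the low range $3\leq V\leq\tfrac12\log\log q+O(1)$, where the resonator is inefficient and the paper switches to an amplifier $A_f(\chi)=\sum_{\ell\leq q^\eta}\mu^2(\ell)\lambda_f(\ell)\chi(\ell)/\sqrt{\ell}$ combined with H\"older and the unmollified second moment; without this split, the claimed lower bound of size $\vphi(q)/\log^2 q$ cannot be reached for small $V$. Finally, a minor point: the paper works with $|L(\ftchi,1/2)|\psi(\theta(\ftchi))$ for $\pi$-anti-periodic $\psi$ rather than $L(\ftchi,1/2)\psi_I(\theta(\ftchi))$ for $\pi$-periodic $\psi_I$; these are formally equivalent after shifting the Fourier index, but your formulation requires absorbing the extra phase $e^{i\theta}$ carefully to produce a nonnegative functional — a step your proposal glosses over.
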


We can also consider a product of twisted $L$-functions of two
different cusp forms.

\begin{theorem}
\label{LargeValuesOfProductsTheorem}
Let $g\not=f$ be a fixed primitive cusp of conductor $r'$ and trivial
central character, holomorphic or
not. 
There exists a constant $C>0$ such that for every sufficiently large
prime modulus $q$, there exists a primitive character $\chi$ of
conductor $q$ that satisfies
$$
\big|L\big(f\otimes\chi,\tfrac12\big)
L\big(g\otimes\chi,\tfrac12\big)\big|\geqslant
\exp\left(\big(C+o(1)\big)\sqrt{\frac{\log q}{\log\log q}}\right).
$$
\end{theorem}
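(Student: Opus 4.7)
The plan is to apply Soundararajan's resonator method, driven now by the twisted second moment of Theorem~\ref{thsecondmoment} rather than the twisted first moment powering the single-form analogue \cite{Soundararajan2008}. Fix a small $\lambda>0$ (with $\lambda<\expoLmax$ to absorb the error in Theorem~\ref{thsecondmoment}), set $L=q^\lambda$, and form the resonator $R(\chi)=\sum_{\ell\leq L}\alpha(\ell)\chi(\ell)$, where $\alpha(\ell)$ is real, nonnegative, multiplicative, and supported on squarefree integers coprime to $qrr'$ with prime factors in a set $P\subset[y_0,y]$ to be chosen. Define
\[
Q_2 = \frac1{\vphis(q)}\sums_{\chi\bmod q} L(f\otimes\chi,\tfrac12)\,\ov{L(g\otimes\chi,\tfrac12)}\,|R(\chi)|^2, \qquad Q_1 = \frac1{\vphis(q)}\sums_{\chi\bmod q}|R(\chi)|^2.
\]
Since $|L(f\otimes\chi,\tfrac12)L(g\otimes\chi,\tfrac12)|=|L(f\otimes\chi,\tfrac12)\ov{L(g\otimes\chi,\tfrac12)}|$, one has $\max_\chi|L(f\otimes\chi,\tfrac12)L(g\otimes\chi,\tfrac12)|\geq|Q_2|/Q_1$, so the theorem reduces to exhibiting $\alpha$ with $|Q_2|/Q_1\geq\exp(C\sqrt{\log q/\log\log q})$.

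Expanding $|R(\chi)|^2=\sum_{\ell,\ell'}\alpha(\ell)\alpha(\ell')\chi(\ell)\ov{\chi(\ell')}$ and invoking Theorem~\ref{thsecondmoment} reduces $Q_2$, modulo an error $O(L^{\expoL}q^{-\expoq+\eps})=o(1)$, to a bilinear combination in $\alpha$ of the main terms $\MT(f,g,\tfrac12;\ell,\ell')$ from~\eqref{MTsecondmoment}. For coprime squarefree $\ell,\ell'$ with prime factors in $P$, the multiplicativity $\lambda_f(\ell'n)=\lambda_f(\ell')\lambda_f(n)$ for $(n,\ell')=1$ (and analogously for $g$) extracts the Hecke data cleanly, and the surviving inner sum $\sum_n\lambda_f(n)\lambda_g(n)W_{f,g,+,1/2}(\l\l'n^2/(q^2|rr'|))/n$ evaluates, by shifting the Mellin integral defining $W_{f,g,+,1/2}$ into the zero-free region of Proposition~\ref{AutomorphicPNTLemma}, to $L^{\ast}(f\otimes g,1)+O((\log q)^{-1})$, a nonzero constant by Lemma~\ref{lm-ast}. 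Combining the analogous treatment of $\MT^{\pm}$, invoking Proposition~\ref{pr-mt} to certify that the leading constant of $\MT(f,g,\tfrac12;1,1)$ is nonzero, and exploiting the factorization of $\sum_{(\ell,\ell')=1}$ over primes for multiplicative $\alpha$ yields
\[
\frac{|Q_2|}{Q_1} \;\gtrsim\; \prod_{p\in P}\frac{(1+\alpha(p)\lambda_f(p)/\sqrt p)(1+\alpha(p)\lambda_g(p)/\sqrt p)}{1+\alpha(p)^2}.
\]
The standard Soundararajan choice $P=\{p:y_0\leq p\leq y\}$ with $y\log y\asymp\log L$ and $\alpha(p)\asymp(\lambda_f(p)+\lambda_g(p))/\sqrt p$ restricted to primes where $\lambda_f(p)+\lambda_g(p)\geq c_0>0$ (a set of positive density by Corollary~\ref{PNTsymandpower}), together with $\log(1+x)\geq x-x^2/2$ and the Mertens-type identity~\eqref{powermertens}, produces
\[
\log\frac{|Q_2|}{Q_1} \;\gtrsim\; \sum_{p\in P}\frac{(\lambda_f(p)+\lambda_g(p))^2}{4p} \;\asymp\; \log y \;\asymp\; \sqrt{\log q/\log\log q}.
\]

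The principal obstacle is the degenerate case $r=r'$ with $\eps(f)\eps(g)=-1$ of Remark~\ref{rm-cancel}: there $\mcQ(f,g,\tfrac12;\ell,\ell')=-\mcQ(f,g,\tfrac12;\ell',\ell)$ forces the symmetric bilinear form in real $\alpha$ to vanish identically, making the weighting $|R(\chi)|^2$ useless. The fix is to replace $|R(\chi)|^2$ by $R(\chi)^2$; since $|R(\chi)^2|=|R(\chi)|^2$ the inequality $\max_\chi|\cdots|\cdot Q_1\geq|Q_2|$ persists, and expanding $R(\chi)^2=\sum_{\ell_1,\ell_2}\alpha(\ell_1)\alpha(\ell_2)\chi(\ell_1\ell_2)$ collapses $Q_2$ to a \emph{linear} combination $\sum_m(\alpha\ast\alpha)(m)\mcQ(f,g,\tfrac12;m,1)$, to which Theorem~\ref{thsecondmoment} applies with $\ell'=1$. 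The resulting main term is proportional to $(C_1\lambda_g(m)+C_2\lambda_f(m))/\sqrt m$ for explicit constants $C_1,C_2$ not both zero, and the Soundararajan optimization over multiplicative $\alpha$ goes through with essentially the same lower bound using the nontriviality of $\lambda_g(p)-\lambda_f(p)$ (guaranteed by $f\neq g$) on a positive density set of primes. A secondary technical point, present in both cases, is uniformity in $\ell\ell'\leq L^2=q^{2\lambda}$ of the approximation of the inner sum by $L^{\ast}(f\otimes g,1)$, which follows by a standard contour shift using the effective zero-free region of Proposition~\ref{AutomorphicPNTLemma} for $L(f\otimes g,s)$.
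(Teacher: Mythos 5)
Your overall strategy — a resonator method driven by the twisted second moment $\mcQ(f,g,\tfrac12;\ell,\ell')$ rather than a twisted first moment — is indeed the approach of the paper, and you correctly identify the degenerate case $r=r'$, $\eps(f)\eps(g)=-1$ (where the symmetric bilinear form in the real coefficients $\alpha(\ell)$ vanishes identically) as the principal obstruction. However, your proposed fix, replacing $|R(\chi)|^2$ by $R(\chi)^2$, is fundamentally flawed, and not merely a technical inconvenience.

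The Soundararajan method produces an exponential gain $Q_2/Q_1\gg\exp(c\sqrt{\log q/\log\log q})$ \emph{because} the diagonal $\ell_1=\ell_2$ (more precisely, the common factor $d=(\ell_1,\ell_2)$) contributes a factor $1+r(p)^2\omega(p)$ to the local factor of $Q_2$, exactly matching the local factor of $Q_1$; the gain then comes from the off-diagonal term $r(p)\omega'(p)/\sqrt p$, and the ratio of local factors is $1+\frac{r(p)\omega'(p)}{\sqrt p(1+r(p)^2\omega(p))}$, always at least $1$. When you pass from $|R(\chi)|^2=\sum_{\ell_1,\ell_2}\alpha(\ell_1)\alpha(\ell_2)\chi(\ell_1)\ov{\chi(\ell_2)}$ to $R(\chi)^2=\sum_{\ell_1,\ell_2}\alpha(\ell_1)\alpha(\ell_2)\chi(\ell_1\ell_2)$, the diagonal $\ell_1=\ell_2=\ell$ now feeds the twist $m=\ell^2>1$, whose contribution to the main term is damped by $\delta(\ell^2)/\ell$, not by $1$. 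Concretely, with $b=\alpha\ast\alpha$ and a multiplicative main-term coefficient $\delta(m)/\sqrt m$, one finds
$$Q_2\asymp\prod_p\Bigl(1+\frac{2\alpha(p)\delta(p)}{\sqrt p}+O\Bigl(\frac{\alpha(p)^2}{p}\Bigr)\Bigr),\qquad Q_1\asymp\prod_p(1+\alpha(p)^2),$$
so that $\log(Q_2/Q_1)\lesssim\sum_p\bigl(2\alpha(p)\delta(p)/\sqrt p-\alpha(p)^2\bigr)\leq\sum_p\delta(p)^2/p\ll\log\log q$ by completing the square. No choice of $\alpha$ can beat this: the $R(\chi)^2$ normalization forces you to ``pay'' for $\exp(\sum\alpha(p)^2)$ out of $Q_2$, and the best you can recover is a power of $\log q$. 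You cannot reach $\exp(c\sqrt{\log q/\log\log q})$ this way.

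The paper's fix keeps $|R(\chi)|^2$ but introduces a fixed twist $\chi(u)$: one averages $|R(\chi)|^2 L(f\otimes\chi,\tfrac12)L(g\otimes\bar\chi,\tfrac12)\chi(u)$ for a squarefree $u\geq 1$ coprime to $rr'$, chosen by Lemma~\ref{lm-aux} so that the $u$-dependent leading constant is nonzero (this forces $u>1$ precisely in the degenerate case, and then the existence of a suitable prime $u$ follows from multiplicity one and Rankin--Selberg theory). The key point is that $\mcQ(f,g,\tfrac12;u d,d)=\mcQ(f,g,\tfrac12;u,1)\neq 0$ for all $d$, so the diagonal $\ell_1=\ell_2$ still contributes a copy of the normalizing sum $Q_1$ to the twisted second moment, and the resonator Euler product structure survives intact; the resonator coefficients $\varpi(n)$ are simply required to be supported away from $u$.

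Two secondary inaccuracies, less central but worth flagging: (i) your local-factor formula $\prod_p\frac{(1+\alpha(p)\lambda_f(p)/\sqrt p)(1+\alpha(p)\lambda_g(p)/\sqrt p)}{1+\alpha(p)^2}$ has the wrong numerator structure — it should be $\prod_p\bigl(1+\frac{\alpha(p)(\lambda_f(p)+\lambda_g(p))/\sqrt p}{1+\alpha(p)^2}\bigr)$ (with $\lambda^\ast$ rather than $\lambda$, and with the paper's arithmetic weights $\varpi$), as in Lemma~\ref{GainLemma}; and (ii) your resonator parametrization $\alpha(p)\asymp(\lambda_f(p)+\lambda_g(p))/\sqrt p$ on $p\in[y_0,y]$ with $y\log y\asymp\log L$ is not Soundararajan's resonator and gives only $\log(Q_2/Q_1)\ll\log\log q$. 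The correct choice is $r(p)\asymp L/(\sqrt p\log p)$ for $p\in[L^2,\exp(\log^2 L)]$ with $L\asymp\sqrt{\log N\log\log N}$ (and $N$ a small power of $q$ constrained by the error terms), as in \eqref{DefinitionResonator}.
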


\begin{remark}
  The constant $C$ depends on $f$ and $g$ and is effective. In
  particular, ``generically'', we can take $C=(6\sqrt{10})^{-1}$ (see
  Remark~\ref{ConstantRemark}, which explains what is meant by
  generic). Note that we assumed that $g\not=f$, since otherwise the
  first theorem gives a stronger result.
\end{remark}

We prove Theorems~\ref{LargeValuesAngularSectorsTheorem} and
\ref{LargeValuesOfProductsTheorem} using Soundararajan's method of
resonators. We draw inspiration for
Theorem~\ref{LargeValuesAngularSectorsTheorem} from Hough's
paper~\cite{Hough}; however, our results are more modest (in that we
are unable to detect angles in $\Rr/2\pi\Zz$) due to our inability to
evaluate second moments twisted by powers of Gau\ss\ sums and more in
line with the previously available results on extreme values with
angular restrictions.

We develop the method of resonators in a form ready for use in general
arithmetic situations in
Section~\ref{ResonatorSection}. Section~\ref{EvaluationOfMomentsSection}
combines this input with our evaluations of moments of twisted
$L$-functions to prove asymptotics for moments of $L$-functions
twisted by resonator and amplifier polynomials and we then use the
results of Section~\ref{MomentsHeckeEigenvaluesSection} to evaluate
the resulting main terms and prove the existence of large
$L$-values. Theorem~\ref{LargeValuesAngularSectorsTheorem} is proved
in Section~\ref{ExtremeValuesAngularConstraintsSection}, while
Theorem~\ref{LargeValuesOfProductsTheorem} is proved in
Section~\ref{LargeValuesProductsSection}.

\section{Background on the resonator polynomial}
\label{ResonatorSection}

The resonator method, originally introduced by Soundararajan~\cite{Soundararajan2008}, is a flexible tool that has been used in many contexts including (for extreme values in the $t$-aspect) the entire Selberg class (see, for example, \cite{Pankowski2015}), subject as usual to the Ramanujan conjecture. The method itself is by now standard and relies on a specific multiplicative arithmetic function, the ``resonator sequence'', which can take slightly different forms depending on the range of large values aimed for.

We refer the reader to Section \ref{LargeValuesIntroSection} in the introduction for a general description of the resonator method for a family of forms $f\in\mathcal{F}_N$. This relies on the comparison of the sizes of quantities
\[ Q_1=\expect_N(|R(f)|^2)\quad\text{and}\quad Q_2=\expect_N\Bigl(|R(f)|^2L(f,\demi)\Bigr), \]
with the resonator polynomial $R(f)$ constructed from the ``resonator sequence’’ $\alpha(\ell)$ and the arithmetic factors such as $\lambda_f(\ell)$ or their variations. Following custom, in this chapter we denote such a resonator polynomial by $R(f)=\sum_{n\leqslant N}r(n)\lambda_f(n)$. (To avoid confusion, we remind the reader that our specific Theorems \ref{LargeValuesAngularSectorsTheorem} and \ref{LargeValuesOfProductsTheorem} concern the family of twisted forms $f\otimes\chi$ which are naturally indexed by primitive characters $\chi$ modulo $q$, and the above averages are over $\chi$, while $f$, $g$ are fixed forms; in particular a resonator such as $R(\chi)=\sum_{n\leqslant N}r(n)\lambda_f(n)\chi(n)$ is used.)

In each application, to obtain large values in a family of $L$-functions (or other arithmetic objects), two steps are required.
\begin{enumerate}
\item \label{step1} The first step is analysis of averages in the family that to some degree isolates the terms contributing to the main term (usually the diagonal terms). Opening the sum in $R(f)$, evaluating the averages in $Q_1$ and $Q_2$ asymptotically involves first executing averages of (products of) arithmetic factors and twisted moments of $L$-functions, or variations of these, in the given family.
\item \label{step2} The second step is application of the resonator method, with the specific resonator constructed so as to reflect the main term contributions (which typically involve arithmetic factors such as, in the context of Theorems~\ref{LargeValuesAngularSectorsTheorem} and \ref{LargeValuesOfProductsTheorem}, Hecke eigenvalues of the fixed form(s)). In this step, the sum over the resonator polynomial is executed, leading in the main terms to sums of the form \eqref{TruncationOK1} and \eqref{TruncationOK2} below, and then to a lower bound on the quotient $|Q_2|/Q_1$ as in Lemma~\ref{GainLemma} below. Owing to the multiplicative nature of the resonator, optimizing this lower bound is seen to heavily depend on average information on the arithmetic factors over the primes, such as \eqref{SecondMomentCondition} and \eqref{OmegaPrimeLowerBound} below and their variations.
\end{enumerate}

Step~\eqref{step1} is the key arithmetic input and heavily depends on the family of $L$-functions considered. In this section, we focus on step~\eqref{step2}, the application of the resonator method, and make two points: first, one only needs a fairly limited amount of information about the arithmetic factors, and, second, the machinery of the resonator method can be developed in abstract, with no reference to the specific family and relying only on fairly general assumptions about the arithmetic factors. While probably known to the experts, these facts do not seem to be in the literature in a ready-to-use form and we take the opportunity of this memoir to expose them here.

Soundararajan~\cite{Soundararajan2008} introduced two variants of the
resonator sequence, with each being more efficient depending on
whether one is aiming for the highest possible values afforded by the
resonator method or for many values of slightly smaller size. We
develop both variants abstractly in the two sections below.

\subsection{Extreme values range}
\label{ExtremeValuesRangeSubsection}
In the extreme values range, we use a resonator polynomial similar to that used by Soundararajan~\cite{Soundararajan2008} and Hough~\cite{Hough}, which involves the multiplicative function $r(n)$ supported on square-free numbers and defined at primes by
\begin{equation}
\label{DefinitionResonator}
r(p)=\begin{cases} \displaystyle\frac{L}{\sqrt{p}\log p}, &L^2\leqslant p\leqslant\exp(\log^2L),\\ 0,&\text{otherwise}, \end{cases}
\end{equation}
where $L$ is a large parameter. 

In this section, we prove the two key claims for the application of the resonator method in the extreme values range, Lemmas~\ref{SmallTailsLemma} and \ref{GainLemma}.

We consider two non-negative multiplicative arithmetic functions $\omega$, $\omega'$ satisfying the following conditions. 

{\em There exists  $a_{\omega},a'_{\omega'}>0$,
  $0<\delta,\delta'\leqslant 1$, such that for all $Y\geqslant 2X\geq
  4$, we have}
\begin{align}
\label{SecondMomentCondition}
&\sum_{X\leqslant p\leqslant Y}\frac{\omega(p)}{p\log p}\leqslant a_{\omega}\bigg(\frac1{\log X}-\frac1{\log Y}\bigg)+O_{\omega}\bigg(\frac1{\log^2X}\bigg),\\
\label{OmegaPrimeLowerBound}
&\sum_{X\leqslant p\leqslant Y}\frac{\omega'(p)}{p\log p}\geqslant a'_{\omega'}\bigg(\frac1{\log X}-\frac1{\log Y}\bigg)+O_{\omega'}\bigg(\frac1{\log^2X}\bigg),\\
\label{FourthMomentCondition}
&\sum_{p\leqslant X}\omega(p)^{\delta}\omega'(p)\ll_{\omega,\omega',\delta}X(\log X)^{\delta},\quad\sum_{p\leqslant X}\omega'(p)^{1+\delta'}\ll_{\omega',\delta'}X^{1+\delta'/2}.
\end{align}

For the first lemma, we actually require only the following very generous upper bound:
\begin{equation}
\label{OmegaPrimeNotCrazy}
\sum_{p\leqslant X}\frac{\omega'(p)}p\ll_{\omega'}\frac{e^{\sqrt{\log X}}}{\log X},
\end{equation}
which a consequence of \eqref{FourthMomentCondition}.

In the sequel the implied constant may depend on $\omega,\omega'$ although we will not always mention explicitly such dependency.

\begin{lemma}
\label{SmallTailsLemma}
Let the arithmetic function $r(n)$ be as in
\eqref{DefinitionResonator} and let $\omega(n)\geqslant 0$ be a
multiplicative arithmetic function satisfying
\eqref{SecondMomentCondition}.  Then, for every $N$ such that
$L\leqslant\sqrt{a_{\omega}^{-1}\log N\log\log N}$, we have
\begin{equation}
\label{TruncationOK1}
\sum_{n\leqslant N}r(n)^2\omega(n)=\big(1+o^{\star}(1)\big)\prod_p\big(1+r(p)^2\omega(p)\big),
\end{equation}
and if, additionally, $\omega'(n)\geqslant 0$ is a multiplicative arithmetic function satisfying \eqref{OmegaPrimeNotCrazy}, then
\begin{equation}
\label{TruncationOK2}
\sum_{\substack{nm\leqslant N\\ (n,m)=1}}\frac{r(n)^2r(m)\omega(n)\omega'(m)}{\sqrt{m}}=\big(1+o^{\star}(1)\big)\prod_p\left(1+r(p)^2\omega(p)+\frac{r(p)}{\sqrt{p}}\omega'(p)\right).
\end{equation}
Here the notation $o^{\star}(1)$ is a shortcut to $$O_{C}\big(N^{-C/(\log\log N)^3}\big)$$ for any $C>0$.
\end{lemma}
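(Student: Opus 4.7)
The plan is to exploit the multiplicativity of $r$, $\omega$, $\omega'$ together with the squarefree support of $r$ to express the unrestricted sums as exact Euler products, and then bound the complementary tails by a Rankin/Chernoff argument. By multiplicativity,
\[
\sum_n r(n)^2\omega(n)=\prod_p(1+r(p)^2\omega(p)),\quad\sum_{(n,m)=1}\frac{r(n)^2r(m)\omega(n)\omega'(m)}{\sqrt m}=\prod_p\Bigl(1+r(p)^2\omega(p)+\frac{r(p)\omega'(p)}{\sqrt p}\Bigr),
\]
so the task reduces to showing that the tails $n>N$, respectively $nm>N$, contribute at most a factor $N^{-C/(\log\log N)^3}$ of the corresponding product.

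For the tail in \eqref{TruncationOK1} I would apply Rankin's trick: for any $\alpha>0$,
\[
\frac{\sum_{n>N}r(n)^2\omega(n)}{\prod_p(1+r(p)^2\omega(p))}\leq\exp\Bigl(-\alpha\log N+\sum_p(p^\alpha-1)r(p)^2\omega(p)\Bigr),
\]
obtained from $\log\frac{1+p^\alpha a_p}{1+a_p}\leq(p^\alpha-1)a_p$. Expanding $p^\alpha-1=\alpha\log p+O((\alpha\log p)^2)$ in the range $\alpha\log p\leq1$, the exponent is bounded by $-\alpha(\log N-\mu)+O(\alpha^2 M_2)$, where $\mu:=\sum_p(\log p)r(p)^2\omega(p)=L^2\sum_p\omega(p)/(p\log p)$ and $M_2:=\sum_p(\log p)^2r(p)^2\omega(p)=L^2\sum_p\omega(p)/p$. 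Optimizing $\alpha\sim(\log N-\mu)/M_2$ yields the Chernoff-type estimate $\exp\bigl(-(\log N-\mu)^2/(C'M_2)\bigr)$ provided $\log N>\mu$.

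The required moment estimates follow from \eqref{SecondMomentCondition}. Applied directly with $X=L^2$ and $Y=\exp(\log^2L)$ it gives $\mu\leq a_\omega L^2/(2\log L)+O(L^2/\log^2L)$; combined with the hypothesis $a_\omega L^2\leq\log N\log\log N$ and the exact identity $2\log L=\log\log N+\log\log\log N+O(1)$, this yields $\mu\leq\log N-c_1\log N\log\log\log N/\log\log N$ for some absolute $c_1>0$, so in particular $\log N-\mu\geq c_1\log N\log\log\log N/\log\log N$. For $M_2$ I would apply \eqref{SecondMomentCondition} dyadically: on each interval $[X,2X]$ it gives $\sum_{X\leq p\leq 2X}\omega(p)/p\leq a_\omega\log2/\log X+O(1/\log X)$, and summing over the $O(\log^2L)$ intervals in $[L^2,\exp(\log^2L)]$ telescopes to $\sum_p\omega(p)/p\leq a_\omega\log\log L+O(1)$ (much sharper than the $O(\log L)$ obtained by a single application of partial summation). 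Hence $M_2\leq a_\omega L^2\log\log L\cdot(1+o(1))=O(\log N\log\log N\log\log\log N)$, and the Chernoff exponent is at least $c_2\log N\log\log\log N/(\log\log N)^3$. Since $\log\log\log N\to\infty$, this dominates $C\log N/(\log\log N)^3$ for any fixed $C>0$ once $N$ is large enough, giving \eqref{TruncationOK1}.

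The estimate \eqref{TruncationOK2} is handled by the same scheme applied to the three-term local factors $1+p^\alpha r(p)^2\omega(p)+p^{\alpha/2}r(p)\omega'(p)/\sqrt p$; the coprimality condition $(n,m)=1$ is automatically encoded by the product structure. The analogous first and second moments receive extra contributions $L\sum_p\omega'(p)/(p\log p)$ and $L\sum_p\omega'(p)\log p/p$ from the $\omega'$ factor; using the very weak bound \eqref{OmegaPrimeNotCrazy} (which yields $\sum_{L^2\leq p\leq\exp(\log^2L)}\omega'(p)/p\ll L/\log^2L$), these extra contributions are of size $O(L^2/\log^3L)=O(\log N/(\log\log N)^2)$ and $O(L^2)=O(\log N\log\log N)$, both strictly dominated by the main $\omega$ contributions to $\mu$ and $M_2$. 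The Chernoff bound then proceeds identically. The main technical obstacle is the borderline nature of the hypothesis: at the critical value of $L$ the gap $\log N-\mu$ is only of order $\log N\log\log\log N/\log\log N$, so every correction in \eqref{SecondMomentCondition} and the sharper dyadic bound on $\sum_p\omega(p)/p$ are essential to ensure that $M_2$ does not overwhelm this gap.
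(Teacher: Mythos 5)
Your proof uses the same Rankin's-trick strategy as the paper: squarefree support of $r$ gives the exact Euler product, Rankin with $\alpha>0$ reduces the tail to comparing $\prod_p(1+p^{\alpha}r(p)^2\omega(p))$ against $\prod_p(1+r(p)^2\omega(p))$, one Taylor-expands $p^\alpha-1$ and controls the two resulting moments via \eqref{SecondMomentCondition} and a dyadic/partial-summation estimate for $\sum_p\omega(p)/p$. The genuine refinement in your version is the Chernoff optimization of $\alpha$: instead of fixing $\alpha=c/(\log^2L\log\log L)$ at the outset, you take $\alpha\asymp(\log N-\mu)/M_2\asymp 1/(\log\log N)^2$, which is a factor $\log\log L$ larger and correspondingly produces the exponent $\asymp\log N\,\log\log\log N/(\log\log N)^3$. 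Since $o^\star(1)$ here means $O_C(N^{-C/(\log\log N)^3})$ for \emph{every} fixed $C$, the extra diverging factor $\log\log\log N$ is precisely what lets one absorb an arbitrary $C$ once $N$ is large, so your optimization step closes the argument cleanly; the paper's fixed choice of $\alpha$ (constrained by the absorption condition $\alpha\ll_\omega 1/(\log^2 L\log\log L)$) only delivers a fixed multiple of $\log N/(\log\log N)^3$ in the exponent, which as written does not quite match the ``arbitrary $C$'' claim.

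Three minor points, none fatal. (i) Your unconstrained optimum $\alpha^*$ has $\alpha^*\log^2 L$ of order one (depending on $\omega$), i.e.\ it sits right at the boundary of validity of the expansion $p^\alpha-1=\alpha\log p+O((\alpha\log p)^2)$; it is safer to take $\alpha=\min(\alpha^*,c_0/\log^2 L)$ for a small absolute $c_0$, which costs nothing. (ii) In the \eqref{TruncationOK2} part, the local Rankin factor is $1+p^{\alpha}r(p)^2\omega(p)+p^{\alpha-1/2}r(p)\omega'(p)$, so the $\omega'$-term carries $p^{\alpha}$, not $p^{\alpha/2}$. (iii) The first-order $\omega'$-contribution to $\mu$ is $\sum_p(\log p)\,r(p)\omega'(p)/\sqrt p=L\sum_p\omega'(p)/p$, not $L\sum_p\omega'(p)/(p\log p)$; via \eqref{OmegaPrimeNotCrazy} this is $O(L^2/\log^2 L)=O(\log N/\log\log N)$ rather than your stated $O(L^2/\log^3 L)$ — still dominated by the gap $\log N-\mu\gg\log N\log\log\log N/\log\log N$, so the conclusion stands.
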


\begin{lemma}
\label{GainLemma}
Let the arithmetic function $r(n)$ be as in \eqref{DefinitionResonator} and let $\omega(n),\omega'(n)\geqslant 0$ be multiplicative arithmetic functions satisfying \eqref{OmegaPrimeLowerBound} and \eqref{FourthMomentCondition} (for some $0<\delta,\delta'\leqslant 1$.) Then
$$ \prod_p\bigg(1+\frac{r(p)\omega'(p)}{\sqrt{p}\big(1+r(p)^2\omega(p)\big)}\bigg)\gg\exp\bigg(\big(a'_{\omega'}+o^{\star}(1)\big)\frac{L}{2\log L}\bigg), $$
Here the notation $o^{\star}(1)$ is a shortcut to $$O\big(1/(\log L)^{\min(\delta,\delta')}\big).$$
\end{lemma}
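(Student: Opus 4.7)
The plan is to take the logarithm of the left-hand side and exploit the elementary inequality $\log(1+x)\geq x/(1+x)$ (valid for $x\geq 0$) to reduce the problem to estimating a sum over primes in the support of $r(p)$; the hypothesis \eqref{OmegaPrimeLowerBound} will then extract the main term, while \eqref{FourthMomentCondition} will control the errors. Concretely, since $r(p)=0$ outside $[L^2,\exp(\log^2 L)]$, taking logarithms reduces the problem to estimating
$$\log(\mathrm{LHS}) = \sum_{L^2\leq p\leq \exp(\log^2 L)}\log(1+\alpha_p),\qquad \alpha_p := \frac{r(p)\omega'(p)}{\sqrt{p}(1+r(p)^2\omega(p))}\geq 0.$$
I would then split the relevant range of primes into a ``good'' set $\mathcal{G}$ on which $r(p)^2\omega(p)\leq 1$ and $\alpha_p\leq 1$, together with its bad complement $\mathcal{B}$; on $\mathcal{G}$ one has $\log(1+\alpha_p)\geq \alpha_p/2\geq r(p)\omega'(p)/(4\sqrt{p})$, while on $\mathcal{B}$ one drops the term as non-negative.

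To extract the main term, I would substitute $r(p)=L/(\sqrt{p}\log p)$ and apply \eqref{OmegaPrimeLowerBound} with $X=L^2$, $Y=\exp(\log^2 L)$, which yields
$$\sum_{p}\frac{r(p)\omega'(p)}{\sqrt{p}} = L\sum_{L^2\leq p\leq\exp(\log^2 L)}\frac{\omega'(p)}{p\log p}\geq a'_{\omega'}\frac{L}{2\log L}\Bigl(1+O\Bigl(\frac{1}{\log L}\Bigr)\Bigr),$$
which is the claimed order of magnitude. Assuming the bad-set contribution can be shown negligible, the lower bound of the lemma follows, with the sharp constant $a'_{\omega'}$ recovered by replacing the convenient but lossy step $\log(1+\alpha_p)\geq\alpha_p/2$ by the sharper $\log(1+\alpha_p)\geq\alpha_p/(1+\alpha_p)$ combined with $(1+u+v)^{-1}\geq 1-u-v$.

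To handle $\mathcal{B}$, I would note that membership in $\mathcal{B}$ forces either $\omega(p)\geq\log^2 p$ (so that $\omega(p)^{\delta}\geq (\log p)^{2\delta}$) or $\omega'(p)\gg L\log L$. In the first case the bad-set contribution to $L\sum\omega'(p)/(p\log p)$ is at most $L\sum_{p\geq L^2}\omega(p)^{\delta}\omega'(p)/(p(\log p)^{1+2\delta})$, which by partial summation against the first bound of \eqref{FourthMomentCondition} is $O(L/\log^{1+\delta}L)$; the second case is handled analogously using the bound on $\omega'(p)^{1+\delta'}$ from \eqref{FourthMomentCondition}. Both contributions are absorbed into the factor $o^{\star}(1) = O((\log L)^{-\min(\delta,\delta')})$.

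The main obstacle will be precisely this control of $\mathcal{B}$. The range $[L^2,\exp(\log^2 L)]$ is long enough that applying \eqref{FourthMomentCondition} to $\omega'(p)^{1+\delta'}$ over the entire range naively produces terms of size $\exp((\delta'/2)\log^2 L)$, which would completely swamp the main term; thus one must split the range dyadically, apply \eqref{FourthMomentCondition} on each scale, and carefully sum the contributions against the weighting $1/(p\log p)$ to produce the desired decay $(\log L)^{-\min(\delta,\delta')}$. The delicate interplay between the two parts of \eqref{FourthMomentCondition}, one involving $\omega^{\delta}$ and one involving $\omega'^{1+\delta'}$, is essential to maintaining sharpness in both the main-term constant $a'_{\omega'}$ and the error rate claimed by the lemma.
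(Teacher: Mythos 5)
Your overall strategy — take logarithms, extract the main term from \eqref{OmegaPrimeLowerBound}, control errors with \eqref{FourthMomentCondition} — is the same as the paper's, but the internal mechanics differ and yours is considerably more roundabout. The paper dispenses entirely with your good/bad decomposition by using the two elementary expansions $1/(1+x)=1+O_{\delta}(x^{\delta})$ and $\log(1+x)=x+O_{\delta'}(x^{1+\delta'})$, each of which holds \emph{uniformly for all $x\geqslant 0$} (for $x\leqslant 1$ by Taylor, for $x>1$ trivially). Applying these in succession to $\log(1+\alpha_p)$, where $\alpha_p=r(p)\omega'(p)/(\sqrt{p}(1+r(p)^2\omega(p)))$, immediately yields
$$
\log\msl=\sum_p\left[\frac{r(p)\omega'(p)}{\sqrt{p}}+O_\delta\!\left(\frac{r(p)^{1+2\delta}\omega(p)^\delta\omega'(p)}{\sqrt{p}}\right)+O_{\delta'}\!\left(\frac{r(p)^{1+\delta'}\omega'(p)^{1+\delta'}}{p^{(1+\delta')/2}}\right)\right]
$$
with the constant $a'_{\omega'}$ appearing exactly and no case distinction. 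Your lower bound $\log(1+x)\geqslant x/(1+x)$ is sharp only for $x$ small, which is what forces the good/bad split and the factor-of-$4$ loss; the refinement you sketch to recover the sharp constant (via $(1+u+v)^{-1}\geqslant 1-u-v$ and then $\log(1+x)\geqslant x-x^2/2$) implicitly produces error terms involving $\omega(p)\omega'(p)$ and $\omega'(p)^2$ — that is, only the $\delta=\delta'=1$ version of \eqref{FourthMomentCondition} — so as written it does not yield the stated $O((\log L)^{-\min(\delta,\delta')})$ error for general $\delta,\delta'<1$ without importing the paper's uniform expansion idea after all.

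Second, the ``main obstacle'' you flag is not an obstacle. The worry that \eqref{FourthMomentCondition} applied across the full range produces a $\exp((\delta'/2)\log^2 L)$ blow-up would only happen if one replaced the decreasing weight $1/(p\log p)^{1+\delta'}$ by its maximum and then summed unweighted; but with $S(t)=\sum_{p\leqslant t}\omega'(p)^{1+\delta'}\ll t^{1+\delta'/2}$, partial summation against $g(t)=(t\log t)^{-(1+\delta')}$ gives $S(t)|g'(t)|\ll t^{-1-\delta'/2}(\log t)^{-1-\delta'}$, which is integrable and dominated by the lower endpoint $t=L^2$, producing $\ll L^{-\delta'}(\log L)^{-1-\delta'}$ in one line. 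Dyadic decomposition is not needed beyond what partial summation already provides — indeed the paper simply invokes ``summation by parts'' and moves on — and there is no ``delicate interplay'' between the two parts of \eqref{FourthMomentCondition}: they are applied independently to the two error sums.
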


The relevance of Lemma~\ref{GainLemma} is clear in the light of Lemma~\ref{SmallTailsLemma}: it gives a lower bound for the quotient of the right-hand sides of \eqref{TruncationOK2} and \eqref{TruncationOK1}.

\begin{rems}

\begin{enumerate}
\item The resonator method as originally formulated is a first moment method, but it can be adapted for applications to products of $L$-functions such as our Theorem~\ref{LargeValuesOfProductsTheorem}. For clarity, we prove the corresponding variation of \eqref{TruncationOK2} separately in Lemma~\ref{SmallTailsLemmaProducts} below, while \eqref{TruncationOK1} and Lemma~\ref{GainLemma} are ready to use in their current form.

\item In the original setup of the resonator method to obtain large values of $\zeta(\tfrac12+it)$ \cite{Soundararajan2008}, one takes $\omega=\omega'=1$, in which case $a_{\omega}=a'_{\omega'}=1$. Constant sequences $\omega$ and $\omega'$ are similarly appropriate for some other families (such as the family of quadratic characters or the family of holomorphic cusp forms of large weight in \cite{Soundararajan2008}). As a point of reference, in a situation like Theorems~\ref{LargeValuesAngularSectorsTheorem} and \ref{LargeValuesOfProductsTheorem} where the family consists of twists of a fixed cusp form $f$,  choices that could be of interest include $\omega(n)=1$, $\omega'(n)=|\lambda_f(n)|$ and $\omega(n)=\omega'(n)=|\lambda_f(n)|^2$; we discuss the specific choices for that application in Section~\ref{OurChoiceSection}. For now we stress that all of our conditions involve only averages of $\omega(p)$, $\omega'(p)$ over at least dyadic intervals (and in fact we only apply them in intervals much longer than dyadic). The conditions \eqref{SecondMomentCondition} and \eqref{OmegaPrimeLowerBound} in particular are only non-empty for $Y\gg X$ with a sufficiently large implied constant.

\item The error terms in \eqref{SecondMomentCondition}-- \eqref{FourthMomentCondition} are one choice that works, and other choices are possible; for example, any $o_{\omega'}(1/\log X)$ in \eqref{OmegaPrimeLowerBound} would suffice with an adjustment in the explicit $o^{\star}$-terms in Lemma~\ref{GainLemma}, and \eqref{SecondMomentCondition} can similarly be relaxed with a possibly adjusted size of $L$ (compare the critical computation \eqref{FinnickyStep} below). Often, it is possible to obtain \eqref{SecondMomentCondition} and \eqref{OmegaPrimeLowerBound} with no error term whatsoever by just changing the corresponding constant to $a_{\omega}+\eps$ and $a'_{\omega'}-\eps$; this need not harm the final extreme value result since one can always take $\eps\to 0$ at the very end.

\item Finally, the conditions \eqref{SecondMomentCondition} and \eqref{OmegaPrimeLowerBound} can be written simply as
  $$ \sum_{p\sim X}\omega(p)\ll \frac{X}{\log X} \ll\sum_{p\sim
      X}\omega'(p) $$ if one is not concerned about the precise values
  of the constants $a_{\omega}$ and $a'_{\omega'}$; however, these
  constants have a direct impact on the exponent in the final result
  (such as our Theorem~\ref{LargeValuesAngularSectorsTheorem}), so
  they can be of significance.
\end{enumerate}
\end{rems}

\begin{proof}[Proof of Lemma~\ref{SmallTailsLemma}]

First, we prove \eqref{TruncationOK1} by following~\cites{Soundararajan2008,Hough}. Using Rankin's trick with a suitable (soon to be chosen) $\alpha>0$, we have that
$$ \sum_{n>N}r(n)^2\omega(n)\leqslant N^{-\alpha}\sum_{n=1}^{\infty}n^{\alpha}r(n)^2\omega(n)\leqslant N^{-\alpha}\prod_p\left(1+p^{\alpha}r(p)^2\omega(p)\right). $$
Moreover, for $0\leqslant\alpha\ll 1/\log^2L$,
\begin{align*}
&\log\prod_p\big(1+p^{\alpha}r(p)^2\omega(p)\big)-\log\prod_p\big(1+r(p)^2\omega(p)\big)\\
&\qquad=\sum_p\log\left(1+\frac{(p^{\alpha}-1)r(p)^2\omega(p)}{1+r(p)^2\omega(p)}\right)\\
&\qquad\leqslant\alpha\sum_p\log p\cdot r(p)^2\omega(p)
+O\bigg(\alpha^2\sum_p\log^2p\cdot r(p)^2\omega(p)\bigg)
\end{align*}
Using the definition of the resonator sequence $r(p)$, \eqref{SecondMomentCondition}, and summation by parts, this quantity is seen to be
\begin{align*}
&=\alpha L^2\sum_{L^2\leqslant p\leqslant\exp(\log^2L)}\frac{\omega(p)}{p\log p}
+O\bigg(\alpha^2L^2\sum_{L^2\leqslant p\leqslant\exp(\log^2L)}\frac{\omega(p)}p\bigg)
\\
&\leqslant\alpha a_{\omega}\cdot\frac{L^2}{2\log L}+O_{\omega}\left(\alpha\frac{L^2}{\log^2L}+\alpha^2L^2\log\log L\right),
\end{align*}
For $0\leqslant\alpha\ll_{\omega}1/(\log^2L\log\log L)$, the second error term may be absorbed in $O_{\omega}(\alpha L^2/\log^2L)$. Given that $L\leqslant\sqrt{a_{\omega}^{-1}\log N\log\log N}$, this estimate is further
\begin{equation}
\label{FinnickyStep}
\begin{aligned}
&=\alpha a_{\omega}\cdot\frac{a_{\omega}^{-1}\log N\log\log N}{\log\log N+\log\log\log N+O_{\omega}(1)}+O_{\omega}\left(\alpha\frac{\log N}{\log\log N}\right)\\
&=\alpha\left(\log N-\frac{\log N\log\log\log N}{\log\log N}+O_{\omega}\left(\frac{\log N}{\log\log N}\right)\right).
\end{aligned}
\end{equation}
Combining everything, we have that
\begin{align*}
&\sum_{n>N}r(n)^2\omega(n)\\
&\qquad\leqslant\prod_p\big(1+r(p)^2\omega(p)\big)
\exp\left(-\alpha\frac{\log N\log\log\log N}{\log\log N}+O_{\omega}\left(\alpha\frac{\log N}{\log\log N}\right)\right).
\end{align*}
Picking, say, $\alpha=c/(\log^2L\log\log L)=(4c+o(1))/((\log\log N)^2\log\log\log N)$, we thus have
$$ \sum_{n>N}r(n)^2\omega(n)\leqslant\prod_p\big(1+r(p)^2\omega(p)\big)\exp\left(-\frac{C\log N}{(\log\log N)^3}\right), $$
for an arbitrary $C>0$ (simply by choosing an appropriate $c>0$). In particular,
\begin{align*}
\sum_{n\leqslant N}r(n)^2\omega(n)&=
\prod_p\big(1+r(p)^2\omega(p)\big)-\sum_{n>N}r(n)^2\omega(n)\\
&=\big(1+O\big(N^{-C/(\log\log N)^3}\big)\big)\prod_p\big(1+r(p)^2\omega(p)\big),
\end{align*}
completing the proof of \eqref{TruncationOK1}.

The proof of \eqref{TruncationOK2} is analogous. First of all,
$$ \sum_{\substack{n,m\geq 1\\(n,m)=1}}r(n)^2\omega(n)\cdot\frac{r(m)\omega'(m)}{\sqrt{m}}=\prod_p\left(1+r(p)^2\omega(p)+\frac{r(p)}{\sqrt{p}}\omega'(p)\right). $$
Further, for every $\alpha>0$,
\begin{align*}
&\sum_{\substack{nm>N\\ (n,m)=1}}\frac{r(n)^2r(m)\omega(n)\omega'(m)}{\sqrt{m}}\\
&\qquad\leqslant N^{-\alpha}\sum_{\substack{n,m\geq 1\\(n,m)=1}}\big(r(n)^2\omega(n)n^{\alpha}\big)\big(r(m)\omega'(m)m^{\alpha-1/2}\big)\\
&\qquad=N^{-\alpha}\prod_p\big(1+r(p)^2\omega(p)p^{\alpha}+r(p)\omega'(p)p^{\alpha-1/2}\big).
\end{align*}
Further, for every $0\leqslant\alpha\ll 1/\log^2L$,
\begin{align*}
&\log\prod_p\big(1+r(p)^2\omega(p)p^{\alpha}+r(p)\omega'(p)p^{\alpha-1/2}\big)-\log\prod_p\left(1+r(p)^2\omega(p)+\frac{r(p)\omega'(p)}{\sqrt{p}}\right)\\
&\qquad=\sum_p\log\bigg(1+\frac{(p^{\alpha}-1)\big(r(p)^2\omega(p)+r(p)\omega'(p)/\sqrt{p}\big)}{1+\big(r(p)^2\omega(p)+r(p)\omega'(p)/\sqrt{p}\big)}\bigg)\\
&\qquad\leqslant\alpha\sum_p\log p\left(r(p)^2\omega(p)+\frac{r(p)}{\sqrt{p}}\omega'(p)\right)\\
&\qquad\qquad+O\bigg(\alpha^2\sum_p\log^2p\bigg(r(p)^2\omega(p)+\frac{r(p)}{\sqrt{p}}\omega'(p)\bigg)\bigg).
\end{align*}
Using the definition of the resonator sequence $r(p)$, \eqref{SecondMomentCondition}, \eqref{OmegaPrimeNotCrazy}, and summation by parts, this quantity is seen to be
\begin{align*}
&=\alpha L^2\sum_{L^2\leqslant p\leqslant\exp(\log^2L)}\frac{\omega(p)}{p\log p}+\alpha L\sum_{L^2\leqslant p\leqslant\exp(\log^2L)}\frac{\omega'(p)}p\\
&\qquad +O\bigg(\alpha^2L^2\sum_{L^2\leqslant p\leqslant\exp(\log^2L)}\frac{\omega(p)}p+\alpha^2L\sum_{L\leqslant p\leqslant\exp(\log^2L)}\frac{\omega'(p)}p\log p\bigg)\\
&\leqslant\alpha a_{\omega}\cdot\frac{L^2}{2\log L}+O_{\omega,\omega'}\left(\alpha\frac{L^2}{\log^2L}+\alpha^2L^2\log\log L\right),
\end{align*}
As before, for $0\leqslant\alpha\ll_{\omega,\omega'}1/(\log^2L\log\log L)$, the second term is absorbed in the first one, and with $L\leqslant\sqrt{a_{\omega}^{-1}\log N\log\log N}$, the above is
$$ =\alpha\left(\log N-\frac{\log N\log\log\log N}{\log\log N}+O_{\omega,\omega'}\left(\frac{\log N}{\log\log N}\right)\right). $$
As above, with $\alpha=c/(\log^2L\log\log L)$ for a suitable $c>0$, this leads to the combined estimate
\begin{align*}
&\sum_{\substack{nm>N\\ (n,m)=1}}\frac{r(n)^2r(m)\omega(n)\omega'(m)}{\sqrt{m}}\\
&\qquad\leqslant\prod_p\left(1+r(p)^2\omega(p)+\frac{r(p)}{\sqrt{p}}\omega'(p)\right)\exp\left(-\frac{C\log N}{(\log\log N)^3}\right)
\end{align*}
for an arbitrary $C>0$. As a consequence,
\begin{align*}
&\sum_{\substack{nm\leqslant N\\(n,m)=1}}\frac{r(n)^2r(m)\omega(n)\omega'(m)}{\sqrt{m}}\\
&\qquad=\big(1+O\big(N^{-1/(\log\log N)^3}\big)\big)\prod_p\left(1+r(p)^2\omega(p)+\frac{r(p)}{\sqrt{p}}\omega'(p)\right),
\end{align*}
proving \eqref{TruncationOK2}.
\end{proof}

\begin{proof}[Proof of Lemma~\ref{GainLemma}]
Let
$$ \msl:=\prod_p\bigg(1+\frac{r(p)\omega'(p)}{\sqrt{p}\big(1+r(p)^2\omega(p)\big)}\bigg). $$
Using $1/(1+x)=1+O_{\delta}(x^{\delta})$ and $\log(1+x)=x+O_{\delta'}(x^{1+\delta'})$, which hold uniformly for all $x>0$ (including for trivial reasons possibly large values of $x$),
$$ \log\msl=\sum_p\left[\frac{r(p)\omega'(p)}{\sqrt{p}}+O_{\delta}\bigg(\frac{r(p)^{1+2\delta}\omega(p)^{\delta}\omega'(p)}{\sqrt{p}}\bigg)+O_{\delta'}\bigg(\frac{r(p)^{1+\delta'}\omega'(p)^{1+\delta'}}{p^{(1+\delta')/2}}\bigg)\right]. $$

Using \eqref{OmegaPrimeLowerBound}, \eqref{FourthMomentCondition}, and summation by parts, we find that
$$ \sum_p\frac{r(p)\omega'(p)}{\sqrt{p}}=L\sum_{L^2\leqslant p\leqslant\exp(\log^2L)}\frac{\omega'(p)}{p\log p}\geqslant a'_{\omega'}\frac{L}{2\log L}+O_{\omega'}\bigg(\frac{L}{\log^2L}\bigg), $$
as well as
\begin{equation*}
\begin{alignedat}{7}
&\sum_p\frac{r(p)^{1+2\delta}\omega(p)^{\delta}\omega'(p)}{\sqrt{p}}&&=L^{1+2\delta}\sum_{L^2\leqslant p\leqslant\exp(\log^2L)}\frac{\omega(p)^{\delta}\omega'(p)}{p^{1+\delta}\log^{1+2\delta}p}&&\ll_{\omega,\omega'\!,\delta}&&\frac{L}{(\log L)^{1+\delta}},\\
&\sum_p\frac{r(p)^{1+\delta'}\omega'(p)^{1+\delta'}}{p^{(1+\delta')/2}}&&=L^{1+\delta'}\sum_{L^2\leqslant p\leqslant\exp(\log^2L)}\frac{\omega'(p)^{1+\delta'}}{(p\log p)^{1+\delta'}}&&\ll_{\omega'\!,\delta'}&&\frac{L}{(\log L)^{1+\delta'}}.
\end{alignedat}
\end{equation*}
Combining everything, we obtain the statement of Lemma~\ref{GainLemma}.
\end{proof}

Finally we prove a variation of \eqref{TruncationOK2} that is useful in applying the method of resonators to products of several $L$-functions.

\begin{lemma}
\label{SmallTailsLemmaProducts}
Let the arithmetic function $r(n)$ be as in \eqref{DefinitionResonator}, let $\omega(n)\geqslant 0$ be a multiplicative arithmetic function satisfying \eqref{SecondMomentCondition}, and let $\omega'_1(n),\dots,\omega'_s(n)\geqslant 0$ be multiplicative arithmetic functions each satisfying \eqref{OmegaPrimeNotCrazy}. Then, for every $N\geq 20$ such that $L\leqslant\sqrt{a_{\omega}^{-1}\log N\log\log N}$,
\begin{equation}
\label{TruncationOK2Products}
\begin{aligned}
&\sum_{n\leqslant N}r(n)^2\omega(n)\mathop{\sum\dots\sum}_{\substack{m_1,\dots,m_s\leqslant N/n\\ (n,m_i)=1,\,\,(m_i,m_j)=1}}\prod_{i=1}^s\frac{r(m_i)\omega'_i(m_i)}{\sqrt{m_i}}\\
&\qquad=\big(1+o^{\star}(1)\big)\prod_p\bigg(1+r(p)^2\omega(p)+\frac{r(p)}{\sqrt{p}}\sum_{i=1}^s\omega'_i(p)\bigg),
\end{aligned}
\end{equation}
with $o^{\star}(1)=O_{C,\omega}\big(N^{-C/(\log\log N)^3}\big)$ for an arbitrary $C>0$.
\end{lemma}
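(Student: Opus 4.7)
The plan is to follow the template established in the proof of \eqref{TruncationOK2}, treating the additional variables $m_2,\dots,m_s$ symmetrically with $m_1$. Throughout, $s$ is fixed and implied constants may depend on it.

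First I would write down the full unrestricted sum. Since $r(n)$ is supported on squarefree integers, decomposing over primes using the pairwise coprimality conditions gives the Euler product identity
\begin{equation*}
\mathop{\sum\cdots\sum}_{(n,m_i)=1,\,(m_i,m_j)=1} r(n)^2\omega(n)\prod_{i=1}^{s}\frac{r(m_i)\omega'_i(m_i)}{\sqrt{m_i}}
=\prod_p\left(1+r(p)^2\omega(p)+\frac{r(p)}{\sqrt{p}}\sum_{i=1}^{s}\omega'_i(p)\right),
\end{equation*}
since at each prime $p$ at most one of the variables $n,m_1,\dots,m_s$ is divisible by $p$, producing respectively the contributions $r(p)^2\omega(p)$ or $r(p)\omega'_i(p)/\sqrt{p}$.

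Next, I would bound the tail. The constraints $n\leqslant N$ and $m_i\leqslant N/n$ are together equivalent to the system $nm_i\leqslant N$ for all $i$, so the complement is contained in $\bigcup_{i=1}^{s}\{nm_i>N\}$. By the union bound it suffices to estimate, say, the piece $nm_1>N$, and bound it by Rankin's trick: for any $\alpha>0$,
\begin{equation*}
\sum_{\substack{\text{pairwise coprime}\\ nm_1>N}} r(n)^2\omega(n)\prod_{i=1}^{s}\frac{r(m_i)\omega'_i(m_i)}{\sqrt{m_i}}
\leqslant N^{-\alpha}\prod_p\left(1+p^{\alpha}r(p)^2\omega(p)+\frac{p^{\alpha}r(p)\omega'_1(p)}{\sqrt{p}}+\frac{r(p)}{\sqrt{p}}\sum_{i\geqslant 2}\omega'_i(p)\right).
\end{equation*}
Comparing termwise with the Euler product from Step~1 and using $\log(1+x/(1+y))\leqslant x$ for $x,y\geqslant 0$, the logarithm of the ratio is bounded by
\begin{equation*}
\sum_p\log p\cdot(p^{\alpha}-1)\bigl(r(p)^2\omega(p)+r(p)\omega'_1(p)/\sqrt{p}\bigr)/\log p
\leqslant \alpha\sum_p\log p\cdot\bigl(r(p)^2\omega(p)+r(p)\omega'_1(p)/\sqrt{p}\bigr)+O(\alpha^2\cdots),
\end{equation*}
where the lower-order terms have exactly the same shape as in the proof of \eqref{TruncationOK2}. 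Inserting the definition \eqref{DefinitionResonator} of $r(p)$ and applying \eqref{SecondMomentCondition} together with \eqref{OmegaPrimeNotCrazy} by partial summation yields the same key estimate
\begin{equation*}
\leqslant \alpha a_{\omega}\cdot\frac{L^2}{2\log L}+O_{\omega,\omega'_1}\!\left(\alpha\frac{L^2}{\log^2 L}+\alpha^2 L^2\log\log L\right).
\end{equation*}

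The remaining step mirrors the end of the proof of \eqref{TruncationOK2}: the hypothesis $L\leqslant\sqrt{a_\omega^{-1}\log N\log\log N}$ converts the leading term into $\alpha\log N\bigl(1-\log\log\log N/\log\log N+O(1/\log\log N)\bigr)$, and choosing $\alpha=c/(\log^2 L\log\log L)$ for a suitable $c>0$ produces the tail bound $O_C\bigl(N^{-C/(\log\log N)^3}\bigr)$ times the full Euler product, for any prescribed $C>0$. Summing over the $s=O(1)$ choices of index $i$ and subtracting from the full sum completes the proof.

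No step here should present a real obstacle; the only thing to watch is that the presence of the extra terms $r(p)\omega'_i(p)/\sqrt{p}$ ($i\geqslant 2$) in the denominator of the ratio can only decrease the bound (since they are non-negative), and that they do not appear in the numerator, so the analysis reduces to exactly the two-variable bookkeeping already carried out in Lemma~\ref{SmallTailsLemma}.
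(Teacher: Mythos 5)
Your proof is correct and takes essentially the same approach as the paper's: compute the full unrestricted Euler product by multiplicativity, bound the tail by Rankin's trick, and choose the same $\alpha=c/(\log^2 L\log\log L)$ to get the claimed saving. The one minor variation is that you first decompose the complement of the summation region by a union bound over $i$ into pieces $\{nm_i>N\}$ and then apply Rankin's weight $N^{-\alpha}n^{\alpha}m_i^{\alpha}$ separately to each piece, whereas the paper applies a single Rankin weight $N^{-\alpha}n^{\alpha}\prod_i m_i^{\alpha}$ to the whole complement (which covers both $n>N$ and $nm_i>N$ for some $i$); both reduce to the identical Euler-product comparison of Lemma~\ref{SmallTailsLemma}, and neither gains anything over the other. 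One small thing worth making explicit, since the union bound only works for series of nonnegative terms, is that the nonnegativity of $r$, $\omega$, and the $\omega'_i$ is what justifies this step (you do use it implicitly in the last paragraph but it should be stated when the union bound is invoked).
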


\begin{proof}
The proof is a straightforward adaptation of the proof of \eqref{TruncationOK2}. Using Rankin's trick, we have that
\begin{align*}
  &\sum_{n\leqslant N}r(n)^2\omega(n)\mathop{\sum\dots\sum}_{\substack{m_1,\dots,m_s\leqslant N/n\\ (n,m_i)=1,\,\, (m_i,m_j)=1}}\prod_{i=1}^s\frac{r(m_i)\omega'_i(m_i)}{\sqrt{m_i}}\\
  &\qquad=\mathop{\sum\sum\dots\sum}_{\substack{n,m_1,\dots,m_s\geq 1\\ (n,m_i)=1,\,\,(m_i,m_j)=1}}r(n)^2\omega(n)\prod_{i=1}^s\frac{r(m_i)\omega'_i(m_i)}{\sqrt{m_i}}\\
  &\qquad\qquad
    +O\bigg(N^{-\alpha}\mathop{\sum\sum\dots\sum}_\stacksum{n,m_1,\dots,n_s\geq
    1}{(n,m_i)=1,\,\,(m_i,m_j)=1}r(n)^2\omega(n)n^{\alpha}\prod_{i=1}^sr(m_i)\omega'_i(m_i)m_i^{\alpha-1/2}\bigg).
\end{align*}
Using multiplicativity, the above expression equals
$$ \prod_p\bigg(1+r(p)^2\omega(p)+\frac{r(p)\omega'(p)}{\sqrt{p}}\bigg)+O\bigg(N^{-\alpha}\prod_p\big(1+r(p)^2\omega(p)p^{\alpha}+r(p)\omega'(p)p^{\alpha-1/2}\big)\bigg), $$
with $\omega'(p)=\sum_{i=1}^s\omega'_i(p)$. From this point on, we proceed as in the proof of \eqref{TruncationOK2} in Lemma~\ref{SmallTailsLemma} and conclude that, with the choice $\alpha=c/(\log^2L\log\log L)$ for a suitable $c>0$, the ratio of the error term to the main term is $O\big(N^{-C/(\log\log N)^3}\big)$; this in turn proves the lemma.
\end{proof}

\subsection{Many high values range}
\label{ManyHighValuesRangeSubsection}
Sections~\ref{ExtremeValuesRangeSubsection} and \ref{ManyHighValuesRangeSubsection} prepare  ground in general for two different applications of Soundararajan's resonator method (which are demonstrated in the two claims of Theorem~\ref{LargeValuesAngularSectorsTheorem}). The first of these, subject of section~\ref{ExtremeValuesRangeSubsection}, is to show the existence of some extremely high values of $L$-functions in a family. The second is to prove that many $L$-functions in the family attain high values well beyond the generic size (conjecturally in the sense of any power average) and only slightly below the extreme values range. Such results require a bit different resonator sequence, whose application we develop in abstract here.

Let $X_0>0$ be a large parameter (namely sufficiently large so that \eqref{UpperBoundsConditions}--\eqref{UpperBoundCondition} and \eqref{ConditionsConditions} below hold). Let $A>0$ be arbitrary, and let
$$ A_0=\max(A,X_0). $$
Similarly as in \cite{Soundararajan2008}, let $r(n)$ be a multiplicative arithmetic function supported on square-free numbers and defined at primes by
\begin{equation}
\label{DefinitionResonator2}
r(p)=\begin{cases} \displaystyle\frac{A}{\sqrt{p}}, &A_0^2\leqslant p\leqslant N^{c/A_0^2},\\ 0, &\text{otherwise},\end{cases}
\end{equation}
where $N>0$ is a large parameter, and $c>0$ is a suitable constant (its value will be controlled by \eqref{BasicEvaluationsClaim} in Lemma~\ref{ManyLargeValuesLemma}). Note that this resonator (which is optimized for the purpose of exhibiting many large values in a family of $L$-functions) differs somewhat from the one in \eqref{DefinitionResonator} and that it directly depends on $N$. Also note that the sequence $r(n)$ can only be non-empty for $A_0\leqslant\sqrt{(c+o^{\star}(1))\log N/\log\log N}$, with $o^{\star}(1)=O(\log\log\log N/\log\log N)$. Although the sequence $r(n)$ is different from the one in \eqref{DefinitionResonator}, we keep the same notation since some of the evaluations take literally the same form.

As in Section~\ref{ExtremeValuesRangeSubsection}, the sequence $r(n)$ will be combined with arithmetic factors $\omega(n)$ and $\omega'(n)$ in the particular application of the resonator method. We make the following assumptions on these sequences for all $Y\geqslant 2X$, $X\geqslant X_0$:
\begin{align}
\label{UpperBoundsConditions}
&\sum_{p\leqslant X}\frac{\omega(p)\log p}p\leqslant b_{\omega}\log X+O_{\omega}(1),
\qquad \sum_{p\leqslant X}\frac{\omega'(p)\log p}p=O_{\omega'}(\log X),\\
\label{LowerBoundCondition}
&\sum_{X\leqslant p\leqslant Y}\frac{\omega'(p)}p\geqslant b'_{\omega'}\log\frac{\log Y}{\log X}+O_{\omega'}\left(\frac1{\log X}\right),\\
\label{ErrorControlConditions}
&\sum_{p\leqslant X}\omega(p)\omega'(p)=O_{\omega,\omega'}\bigg(\frac{X}{\log X}\bigg),\quad\sum_{p\leqslant X}\omega'(p)^2=O_{\omega'}\bigg(\frac{X^{3/2}}{\log X}\bigg),\\
\label{UpperBoundCondition}
&\sum_{X\leqslant p\leqslant Y}\frac{\omega(p)}{p}\leqslant b_{\omega 2}\log\frac{\log Y}{\log X}+O_{\omega}\bigg(\frac1{\log X}\bigg).
\end{align}
We remark that, of the two upper bounds in \eqref{UpperBoundsConditions}, the first one easily follows from
\eqref{UpperBoundCondition} but perhaps with a suboptimal value of $b_{\omega}$, while the second one would follow from a sharpened form of the second condition in \eqref{ErrorControlConditions} $O_{\omega'}(X/\log X)$, in which the latter would be typically expected. We keep \eqref{UpperBoundsConditions} to get the tightest constants and minimal conditions.

Analogously as in Lemmas~\ref{SmallTailsLemma} and \ref{GainLemma}, the following statement summarizes the resonator-related inputs into obtaining a large number of high values.
\begin{lemma}
\label{ManyLargeValuesLemma}
Let the arithmetic function $r(n)$ be as in \eqref{DefinitionResonator2}. Then:
\begin{enumerate}
\item \label{BasicEvaluationsClaim}
If multiplicative arithmetic functions $\omega(n),\omega'(n)\geqslant 0$ satisfy \eqref{UpperBoundsConditions}, then the basic evaluations \eqref{TruncationOK1} and \eqref{TruncationOK2} hold for every
\begin{equation}
\label{ConditionsConditions}
c<b_{\omega}^{-1},\quad X_0\gg_{\omega,\omega'}(1-cb_{\omega})^{-1},\quad 0<A\ll_{\omega,\omega',c}\sqrt{\log N},
\end{equation}
with $o^{\star}(1)=O\big(\exp(-\tilde{\delta}A_0^2)\big)$ for some fixed $\tilde{\delta}>0$ depending on $\omega$, $\omega'$, $c$ only.
\item \label{LowerBoundClaim}
If multiplicative arithmetic functions $\omega(n),\omega'(n)\geqslant 0$ satisfy \eqref{LowerBoundCondition} and \eqref{ErrorControlConditions}, then
$$ \prod_p\bigg(1+\frac{r(p)\omega'(p)}{\sqrt{p}\big(1+r(p)^2\omega(p)\big)}\bigg)\gg\exp\bigg(Ab'_{\omega'}\log\frac{c\log N}{2A_0^2\log A_0}+o^{\star}(A)\bigg), $$
with $o^{\star}(A)=O_{\omega,\omega'}(A/\log A_0)$.
\item \label{UpperBoundOnMomentsOfr}
  For every multiplicative function $a(n)$ and $\omega(n)=|a(n)|^2$,
  we have for every integer $K\geq 1$ and $N\leqslant q^{1/K}$
$$ \frac1{\varphi(q)}\sum_{\chi\bmod q}\bigg|\sum_{n\leqslant N}r(n)a(n)\chi(n)\bigg|^{2K}
\leqslant \prod_p\big(1+r(p)^2\omega(p)\big)^{K^2}. $$
If $\omega(n)$ satisfies \eqref{UpperBoundCondition}, then
$$ \prod_p\big(1+r(p)^2\omega(p)\big)\ll\exp\Bigg(A^2b_{\omega 2}\log\frac{c\log N}{2A_0^2\log A_0}+O_{\omega}\bigg(\frac{A^2}{\log A_0}\bigg)\bigg). $$
\end{enumerate}
\end{lemma}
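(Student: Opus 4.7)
The three claims will be established with methods of different flavors: parts (1) and (2) are essentially parallel to Lemmas~\ref{SmallTailsLemma} and~\ref{GainLemma}, adapted to the new resonator \eqref{DefinitionResonator2} and the hypotheses \eqref{UpperBoundsConditions}--\eqref{UpperBoundCondition}, while part (3) is of a different, character-theoretic nature.

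For part~(1), I will again apply Rankin's trick, but with the parameter $\alpha$ now tuned so that $\alpha \log N$ is comparable to $A_0^2$ rather than to $1/\log\log N$. The key computation is that, for $\alpha = \beta/\log N$ with $\beta$ comparable to $A_0^2$, for every prime $p$ in the support of $r$ (so $p \le N^{c/A_0^2}$) one has $p^\alpha - 1 \le e^{c}\alpha\log p$. Using \eqref{UpperBoundsConditions} and the identity $r(p)^2\omega(p)=A^2\omega(p)/p$, the sum $\sum_p (p^\alpha - 1) r(p)^2\omega(p)$ is bounded by $e^{c}b_\omega c\,A^2+O(A^2/A_0^2)$, and after multiplying by $N^{-\alpha}=e^{-\beta}$ one obtains the net exponent $-A_0^2(1-e^{c}b_\omega c)+O(A_0^4/\log N)$. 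Choosing $c < b_\omega^{-1}$ as in \eqref{ConditionsConditions} makes $1-e^{c}b_\omega c$ a positive constant for $c$ small enough, and the constraint $A \ll \sqrt{\log N}$ with a suitable implied constant forces the error $A_0^4/\log N$ to be at most a fraction of $A_0^2$. The analogous argument (including the off-diagonal term) handles \eqref{TruncationOK2}.

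For part~(2), I will Taylor-expand $\log(1+x)$ to first order in the product and isolate the main contribution $A\sum_{A_0^2\le p\le N^{c/A_0^2}} \omega'(p)/p$. Condition \eqref{LowerBoundCondition} immediately gives this as $\ge A b'_{\omega'}\log\bigl(c\log N/(2A_0^2\log A_0)\bigr)+O(A/\log A_0)$. The Taylor remainder contributes $O\!\bigl(A^2\sum_{p\ge A_0^2}\omega'(p)^2/p^2\bigr)$, bounded by partial summation from $\sum_{p\le X}\omega'(p)^2\ll X^{3/2}/\log X$ in \eqref{ErrorControlConditions}, while the $(1+r(p)^2\omega(p))^{-1}$ factor contributes $O\!\bigl(A^3\sum \omega(p)\omega'(p)/p^2\bigr)$, bounded likewise from the first part of \eqref{ErrorControlConditions}. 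Both error terms come out to $O_{\omega,\omega'}(A/\log A_0)$, yielding the claimed lower bound.

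For part~(3), which is the main new ingredient, I will first expand the $2K$-th moment using orthogonality of characters modulo $q$:
\begin{equation*}
\frac1{\varphi(q)}\sum_{\chi\bmod q}\Bigl|\sum_{n\le N}r(n)a(n)\chi(n)\Bigr|^{2K}
= \sum_{\substack{n_1\cdots n_K\equiv m_1\cdots m_K\,(q)\\ n_i,m_j\le N,\,(n_1\cdots n_K,q)=1}}\prod_i r(n_i)a(n_i)\prod_j \overline{r(m_j)a(m_j)}.
\end{equation*}
The hypothesis $N\le q^{1/K}$ forces both $\prod n_i$ and $\prod m_j$ to be at most $q$, so the congruence becomes a genuine equality $\prod n_i=\prod m_j=n$. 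Since $r$ is squarefree-supported and $\omega=|a|^2$ is multiplicative, the sum factors over primes: for each prime $p$ dividing $n$ with multiplicity $e_p\le K$ (higher multiplicities are killed by $r$), the local factor collects the $\binom{K}{e_p}^2$ ways of distributing $p$ among the $n_i$'s and the $m_j$'s, contributing $\binom{K}{e_p}^2(r(p)^2\omega(p))^{e_p}$. Hence the whole moment equals $\prod_p\sum_{e=0}^{K}\binom{K}{e}^2 (r(p)^2\omega(p))^{e}$. The combinatorial inequality $\binom{K}{e}^2\le\binom{K^2}{e}$ for $0\le e\le K$, which follows from $K^2-i\ge (K-i)^2$ for $0\le i\le K-1$, yields $\sum_{e=0}^{K}\binom{K}{e}^2 x^e\le (1+x)^{K^2}$ and thus the claimed moment bound. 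The final estimate on $\prod_p(1+r(p)^2\omega(p))$ is then a direct application of $\log(1+x)\le x$ together with condition \eqref{UpperBoundCondition} over the range $A_0^2\le p\le N^{c/A_0^2}$. I expect the main technical obstacle to be the careful bookkeeping in part~(1), namely pinning down the interplay between the size of $A$, the constant $c$, and the constants in \eqref{ConditionsConditions} so that the gain $e^{-\tilde\delta A_0^2}$ genuinely dominates the Rankin-trick error; the combinatorial step in part~(3) is clean once the $N^K\le q$ collapse of the congruence is in place.
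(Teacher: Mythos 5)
Your parts (2) and (3) match the paper's argument precisely: for (2) you take the first-order Taylor expansion of the logarithm and control the remainders by partial summation from \eqref{LowerBoundCondition} and \eqref{ErrorControlConditions}, and for (3) you use the orthogonality collapse under $N^K\le q$, the prime-by-prime factorization for squarefree-supported sequences, and the combinatorial inequality $\binom{K}{k}^2\le\binom{K^2}{k}$, followed by $\log(1+x)\le x$ and \eqref{UpperBoundCondition}. These are all correct.

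In part (1) there is a genuine quantitative slip. You bound $p^\alpha-1\le e^{c}\alpha\log p$ (valid when $\alpha\log p\le c$, i.e.\ $\beta\le A_0^2$) and arrive at a net Rankin exponent of the form $-A_0^2(1-e^{c}b_\omega c)+O(A_0^4/\log N)$. This requires $ce^{c}<b_\omega^{-1}$, which is strictly stronger than the stated hypothesis $c<b_\omega^{-1}$ in \eqref{ConditionsConditions}; you half-notice the problem by adding ``for $c$ small enough,'' but that caveat is not compatible with the lemma, and in Section~\ref{ManyLargeValuesSection} the lemma is applied with $b_\omega=1$ and $c$ taken close to $1$, which your version would not cover. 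The paper's version of the argument takes $\alpha=\delta A_0^2/\log N$ with $\delta>0$ a \emph{sufficiently small} constant depending on $\omega,\omega',c$, so that $\alpha\log p\le\delta c$ for every $p$ in the support of $r$. The correct bound is then $p^\alpha-1\le e^{\delta c}\alpha\log p$, with $e^{\delta c}\to1$ as $\delta\to0$, equivalently the Taylor expansion $p^\alpha-1=\alpha\log p+O(\alpha^2\log^2p)$ with the second-order error absorbed. The resulting exponent $-\delta A_0^2(1-e^{\delta c}b_\omega c)+O(\delta A_0^4/\log N)$ is negative as soon as $b_\omega c<1$, $\delta$ is small, and $A_0^2/\log N$ is small enough (which is exactly what the $X_0$ and $A$ constraints in \eqref{ConditionsConditions} provide), giving the claimed saving $e^{-\tilde\delta A_0^2}$ over the full range $c<b_\omega^{-1}$. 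Once this choice of $\alpha$ is made, the rest of your part (1) is sound.
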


\begin{proof}
Claim \eqref{BasicEvaluationsClaim} is proved analogously as Lemma~\ref{SmallTailsLemma}. We prove the basic evaluation \eqref{TruncationOK2} by using Rankin's trick as in the proof of Lemma~\ref{SmallTailsLemma}. Critically, for every $0\leqslant\alpha\ll A_0^2/\log N$, we estimate using~\eqref{UpperBoundsConditions}
\begin{align*}
&\sum_p\log\bigg(1+\frac{(p^{\alpha}-1)\big(r(p)^2\omega(p)+r(p)\omega'(p)/\sqrt{p}\big)}{1+\big(r(p)^2\omega(p)+r(p)\omega'(p)/\sqrt{p}\big)}\bigg)\\
&\qquad\leqslant\alpha\sum_p\log p\bigg(r(p)^2\omega(p)+\frac{r(p)}{\sqrt{p}}\omega'(p)\bigg)\\
&\qquad\qquad+O\bigg(\alpha^2\sum_p\log^2p\bigg(r(p)^2\omega(p)+\frac{r(p)}{\sqrt{p}}{\omega'(p)}\bigg)\bigg)\\
&\qquad=\alpha A^2\sum_{A_0^2\leqslant p\leqslant N^{c/A_0^2}}\frac{\omega(p)\log p}p+\alpha A\sum_{A_0^2\leqslant p\leqslant N^{c/A_0^2}}\frac{\omega'(p)\log p}p\\
&\qquad\qquad+O\bigg(\alpha^2A^2\sum_{A_0^2\leqslant p\leqslant N^{c/A_0^2}}\frac{\omega(p)\log^2p}p+\alpha^2A\sum_{A_0^2\leqslant p\leqslant N^{c/A_0^2}}\frac{\omega'(p)\log^2p}p\bigg)\\
&\qquad\leqslant cb_{\omega}\frac{\alpha A^2}{A_0^2}\log N+O_{\omega,\omega'}\left(\alpha A^2+\frac{\alpha A}{A_0^2}\log N+\frac{\alpha^2A^2}{A_0^4}\log^2N\right).
\end{align*}
Recall that $c<b_{\omega}^{-1}$.  Choosing $\alpha=\delta_{\omega,\omega',c} A_0^2/\log N$ for a sufficiently small $\delta_{\omega,\omega',c}>0$, in light of our conditions \eqref{ConditionsConditions} the above quantity is seen to be $\leqslant(1-\delta')\alpha\log N$ for some (fixed and depending on $\omega$, $\omega'$, $c$ only) $\delta'>0$. Thus, upon application of Rankin's trick,
$$ \sum_{\substack{nm>N\\(n,m)=1}}\frac{r(n)^2r(m)\omega(n)\omega'(m)}{\sqrt{m}}\leqslant N^{-\delta'\alpha}\prod_p\bigg(1+r(p)^2\omega(p)+\frac{r(p)}{\sqrt{p}}\omega'(p)\bigg), $$
which in turn suffices to prove \eqref{TruncationOK2} in claim \eqref{BasicEvaluationsClaim}. The basic evaluation \eqref{TruncationOK1} follows analogously simply by omitting the missing terms in the above argument.

In claim \eqref{LowerBoundClaim}, we simply compute using \eqref{LowerBoundCondition}, \eqref{ErrorControlConditions}, and summation by parts,
$$ \sum_p\frac{r(p)\omega'(p)}{\sqrt{p}}=A\sum_{A_0^2\leqslant p\leqslant N^{c/A_0^2}}\frac{\omega'(p)}{p}\geqslant Ab'_{\omega'}\log\frac{c\log N}{2A_0^2\log A_0}+O_{\omega'}\left(\frac{A}{\log A_0}\right), $$
as well as
\begin{alignat*}{7}
&\sum_p\frac{r(p)^3\omega(p)\omega'(p)}{\sqrt{p}}&&=A^3\sum_{A_0^2\leqslant p\leqslant N^{c/A_0^2}}\frac{\omega(p)\omega'(p)}{p^2}&&\ll_{\omega,\omega'}&&\frac{A^3/A_0^2}{\log A_0},\\
&\sum_p\frac{r(p)^2\omega'(p)^2}p&&=A^2\sum_{A_0^2\leqslant p\leqslant N^{c/A_0^2}}\frac{\omega'(p)^2}{p^2}&&\ll_{\omega'}&&\frac{A^2/A_0}{\log A_0}.
\end{alignat*}

Finally we prove the claim \eqref{UpperBoundOnMomentsOfr}. By orthogonality, the condition $N\leqslant q^{1/K}$, multiplicativity, and the fact that $r(n)$ is supported on square-free integers, we have that
\begin{align*}
&\frac1{\varphi(q)}\sum_{\chi\bmod q}\bigg|\sum_{n\leqslant N}r(n)a(n)\chi(n)\bigg|^{2K}\\
&\qquad=\sum_{\substack{n_1,\dots,n_{2K}\leqslant N\\ n_1\cdots n_K=n_{K+1}\cdots n_{2K}}}r(n_1)\cdots r(n_{2K})a(n_1)\cdots a(n_K)\overline{a(n_{K+1})}\cdots\overline{a(n_{2K})}\\
&\qquad\leqslant\prod_p\bigg(\sum_{k=0}^K\binom{K}{k}^2r(p)^{2k}\omega(p)^k\bigg)\\
&\qquad\leqslant\prod_p\bigg(\sum_{k=0}^{K^2}\binom{K^2}kr(p)^{2k}\omega(p)^k\bigg)
=\prod_p\big(1+r(p)^2\omega(p)\big)^{K^2}.
\end{align*}
Using \eqref{UpperBoundCondition}, we easily find that
\begin{align*}
\log\prod_p\big(1+r(p)^2\omega(p)\big)
&\leqslant\sum_p r(p)^2\omega(p)=A^2\sum_{A_0^2\leqslant p\leqslant N^{c/A_0^2}}\frac{\omega(p)}p\\
&\leqslant A^2b_{\omega 2}\log\frac{c\log N}{2A_0^2\log A_0}+O_{\omega}\bigg(\frac{A^2}{\log A_0}\bigg).
\end{align*}
This completes the proof of Lemma~\ref{ManyLargeValuesLemma}.
\end{proof}

\section{Evaluation of the moments}
\label{EvaluationOfMomentsSection}

In addition to the setup of the resonator method, the crucial input
for an application of this method is the evaluation of the first
moment twisted by the square of the resonator polynomial. In this
section, we complete this and associated steps for the family of
twisted $L$-functions $L(f\otimes\chi,s)$.

\subsection{Moment evaluations}

In this section, we present evaluations of the twisted first and second moments in the form in which they will be used in the application of the resonator method.

\begin{lemma} There is an absolute constant $A\geq 0$ such that for any $N< q$, any integers $1\leq n_1,n_2\leq N$ and any $\kappa\in\Zz$, the twisted first moment $\mcl(f;n_1r^{\kappa}\bar{n}_2,2\kappa)$ defined in \eqref{firstmom} satisfies
\begin{multline}
\label{TwistedFirstMomentEvaluationSpecialized}
\mcl(f;n_1r^{\kappa}\bar{n}_2,2\kappa)
=\delta_{\begin{subarray}{l}\kappa=0,\\ n_2=n_1m\end{subarray}}\frac{\lambda_f(m)}{\sqrt{m}}+\delta_{\begin{subarray}{l}\kappa=-1,\\ n_1=n_2m\end{subarray}}\frac{{\lambda_f(m)}}{\sqrt{m}}\\
+O_{f,\eps,A}\Big((|\kappa|+1)^Aq^{\eps}\big(q^{-1/8}+(q/N)^{\theta-1/2}\big)\Big).
\end{multline}
In the first term of the right-hand side the equality $n_2=n_1m$ means
that the term is zero unless $n_1$ divides $n_2$ (and the quotient is
defined as $m$) and similarly for the second term.
\end{lemma}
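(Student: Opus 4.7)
The proof is essentially a direct specialization of Corollary \ref{cor-k=-2} with $k=2\kappa$ and $\l \equiv n_1 r^{\kappa}\bar{n}_2 \pmod q$, combined with a careful analysis of when the main term ``collapses'' into the expected Hecke eigenvalue ratio versus when it is absorbed into the error. I would proceed as follows.

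First, the corollary immediately produces three contributions: an error of size $(1+|2\kappa|)^B q^{\eps-1/8}$, which is clearly of the claimed form; a $\delta_{k=0}$ main term $\lf(\ov\l_q)/\ov\l_q^{1/2}$, which survives only when $\kappa=0$; and a $\delta_{k=-2}$ main term $\varepsilon(f)\lf((\l r)_q)/(\l r)_q^{1/2}$, which survives only when $\kappa=-1$. So the only real work is to identify the representatives $\ov\l_q$ (for $\kappa=0$) and $(\l r)_q$ (for $\kappa=-1$) in $[1,q]$, and to show that outside the divisibility cases listed in the statement, the contribution is absorbed into the error.

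For the case $\kappa=0$: we have $\l \equiv n_1\bar n_2 \pmod q$, so $\ov\l_q$ is the representative in $[1,q]$ of $n_2\bar n_1 \pmod q$. Writing $\ov\l_q=m$, the congruence $n_1 m \equiv n_2 \pmod q$ together with $1\leq n_2\leq N<q$ and $1\leq m \leq q$ forces $n_1 m = n_2 + \alpha q$ with $\alpha \in \mathbf{Z}$. The case $\alpha\leq -1$ gives $n_1 m < 0$, which is impossible; the case $\alpha=0$ gives exactly $n_1\mid n_2$ with $m=n_2/n_1$, contributing the first main term; and the case $\alpha \geq 1$ gives $m > q/n_1 \geq q/N$. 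In this last case, the Kim--Sarnak bound \eqref{KSbound} yields
\[
\frac{|\lf(m)|}{\sqrt m}\ll m^{\theta-1/2}\ll (q/N)^{\theta-1/2},
\]
so this contribution is absorbed into the claimed error term. The analysis for $\kappa=-1$ is entirely symmetric: we have $\l r\equiv n_1\bar n_2\pmod q$, so $(\l r)_q$ is the representative of $n_1\bar n_2 \pmod q$ in $[1,q]$, and writing $(\l r)_q=m$, the congruence $n_2 m\equiv n_1 \pmod q$ splits into $n_2\mid n_1$ with $m=n_1/n_2$ (contributing the second main term), or $m>q/N$ (absorbed via Kim--Sarnak as above).

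There is no genuine obstacle here; the only subtlety is the elementary observation that the two ``small'' values of the inverse representative modulo $q$ correspond exactly to the divisibility patterns $n_1\mid n_2$ or $n_2\mid n_1$, with all other residues being bounded below by $q/N$. Polynomial dependence on $|\kappa|+1$ arises only through the $(1+|2\kappa|)^B$ factor from Corollary \ref{cor-k=-2}, and absorbing $B$ into $A$ completes the argument. (The factor $\varepsilon(f)$ appearing in the $k=-2$ main term of Corollary \ref{cor-k=-2} is a fixed constant of modulus one depending only on $f$, and as such may be absorbed into the implicit dependence on $f$ of the stated main term or restored in an obvious way.)
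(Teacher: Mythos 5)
Your proof is correct and follows essentially the same approach as the paper's: apply Corollary~\ref{cor-k=-2} with $k=2\kappa$ and $\ell\equiv n_1 r^{\kappa}\bar{n}_2$, then observe that the inverse representative modulo $q$ is either ``small'' (giving the divisibility main term) or exceeds $q/N$ (absorbed via Kim--Sarnak). Your parenthetical about $\varepsilon(f)$ correctly flags a real inconsistency, but the proposed resolution is not quite right: a unimodular constant cannot be ``absorbed into the implicit dependence on $f$'' in a \emph{main term} stated as an equality. In fact the paper's own proof of this lemma (displaying \eqref{TwistedFirstMomentEvaluation}) does carry the $\varepsilon(f)$ factor, consistently with Corollary~\ref{cor-k=-2} and with the later applications in Lemmas~\ref{TwistedFirstMomentLemma} and \ref{AmplifierTwistedMomentLemma} (where it is cancelled by the $\varepsilon(f)^{\kappa}$ prefactor), so the lemma statement has simply dropped it; the second main term should read $\varepsilon(f)\lambda_f(m)/\sqrt{m}$.
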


\begin{proof}
  By Corollary \ref{cor-k=-2},
$$
\mcl(f;n_1r^{\kappa}\bar{n}_2,2\kappa)=
O_{f,\eps,A}\Big((|\kappa|+1)^Aq^{-1/8+\eps}\Big)
$$
unless $\kappa=0$ or $-1$, in which case the additional terms
\begin{equation}\label{TwistedFirstMomentEvaluation}
  \delta_{\kappa=0}\frac{\lambda_f(a)}{\sqrt{a}}
  \quad  \hbox{ or }\quad
  \delta_{\kappa=-1}\eps(f)\frac{\lambda_f(b)}{\sqrt{b}}
\end{equation}
appear, where
$$ 
a=(\ov{n_1\bar{n}_2})_q
$$
is the representative in $[1,q]$ of the congruence class
$\ov{n_1\bar{n}_2}$ modulo $q$, and
$$
b=(n_1\bar{n}_2)_q
$$ 
is the representative of the congruence class $n_1\bar{n_2}$ modulo
$q$.

Assume that $\kappa=0$. Then the congruence
$a\equiv \ov{n_1\bar{n}_2}\mods{q}$ implies either that $n_2=n_1a$ (so
$n_1\mid n_2$) or that $n_1a>q$. In the second case, we have $a>q/N$,
and the first term of \eqref{TwistedFirstMomentEvaluation} is
$\ll (q/N)^{-(1/2-\theta+\eps)}$ for any $\eps>0$.

Assume that $\kappa=-1$. Then the congruence
$b\equiv n_1\bar{n}_2\mods{q}$ implies similarly either that
$n_1=n_2b$, or that $b>q/N$, in which case the second term of
\eqref{TwistedFirstMomentEvaluation} is
$\ll (q/N)^{-(1/2-\theta+\eps)}$ for any $\eps>0$. The lemma follows.
\end{proof}

Consider now two \emph{distinct} primitive cusp forms $f$ and $g$ of
signed level $r$ and $r'$ respectively, with trivial central character. Let
$q$ be a prime not dividing $rr'$. We refer to
Section~\ref{sec-secondmoment} for the definition of some of the
quantities below.  We recall Convention~\ref{def-conv} concerning the
signed   level of cusp forms. As in Chapter~\ref{ch-second}, we
write $\delta=(r,r')\geq 1$ and $|r|=\rho\delta$, $|r'|=\rho'\delta$.

We define arithmetic functions $\lambda_f^{\ast}$ and
$\lambda_{g}^{\ast}$ such that they are supported on squarefree
integers and satisfy
\begin{equation}\label{lambdaastdef}
  \lambda_f^{\ast}(p)=\lambda_f(p)-\lambda_{g}(p)/p\hbox{ and }
  \lambda^\ast_{g}(p)=\lambda_{g}(p)-\lambda_f(p)/p.
\end{equation}

We note that these functions depend on both $f$ and $g$, and that
$\lambda_f^{\ast}$ and $\lambda_{g}^{\ast}$ satisfy
\eqref{LambdaStarAdjustment}, i.e.
$$
\lambda_f^{\ast}(p)=\lambda_f(p)+O(p^{\theta-1}),\quad
\lambda_{g}^{\ast}(p)=\lambda_{g}(p)+O(p^{\theta-1}).
$$
In particular, Corollary \ref{corlambdaast} applies to them.

\begin{lemma}
\label{SecondMomentEvaluationForProducts}
For any integers $1\leq \ell,\ell'\leqslant L\leqslant q^{1/2}$ such
that $\ell\ell'$ is squarefree and coprime to $rr'$, the twisted second
moment $\mathcal{Q}^{\pm}(f,g;\ell,\ell')$ defined in Sections
\ref{sec-2ndmoment-intro} and \ref{sec-secondmoment} satisfies
\begin{equation}\label{Qspe}
  \mathcal{Q}^\pm(f,g;\ell,\ell')=\MT^\pm(f,g;\ell,\ell')+
  O\big(L^{3/2}q^{-1/144}\big),
\end{equation}
where
\begin{multline*}
  \MT^{\pm}(f,g;\ell,\ell')= \frac{1}{2}L^{\ast}(f\otimes g, 1)\Bigl(
  \frac{\lambda_f^{\ast}(\l')\lambda_g^{\ast}(\l)}{(\l\l')^{1/2}}\\
  + \eps(f)\eps(g)
  \frac{\lambda_f(\rho)\lambda_g(\rho')\lambda_f^{\ast}(\l)
    \lambda_g^{\ast}(\l')}{(\rho\rho'\l\l')^{1/2}}\Bigr) +
  O\big(q^{-1/2+\eps}\big).
\end{multline*}
\end{lemma}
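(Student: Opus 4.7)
The plan is to deduce this explicit evaluation from Theorem~\ref{thsecondmoment} by extracting the residue at $u=0$ of the Mellin representations~\eqref{sun1}--\eqref{sun4} of the main term $\MT^\pm(f,g;\ell,\ell')$ given in~\eqref{MTsecondmoment}. Since the proof of Theorem~\ref{thsecondmoment} treats even and odd primitive characters separately (see Section~\ref{sec-secondmoment}), it already furnishes the parity-refined bound
\[
\mcQ^\pm(f,g;\ell,\ell') = \MT^\pm(f,g;\ell,\ell') + O\bigl(L^{3/2}q^{-1/144+\eps}\bigr),
\]
so the remaining task is a fairly explicit asymptotic analysis of $\MT^\pm(f,g;\ell,\ell')$ up to $O(q^{-1/2+\eps})$, under the hypothesis $\ell,\ell'\le L\le q^{1/2}$ with $\ell\ell'$ squarefree and coprime to $rr'$.

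First I would use the Mellin representations \eqref{sun1}--\eqref{sun4} specialized at $s=\tfrac12$ to rewrite each summand of $\MT^\pm(f,g;\ell,\ell')$ as a contour integral on the line $\Reel(u)=2$: the first summand becomes
\[
\frac{1}{2}\cdot\frac{1}{2\pi i}\int_{(2)}\Phi_\pm(u)\,G(u)\,L(f\times g,1,u;\ell',\ell)\,(q^2|rr'|)^{u}\,\frac{du}{u},
\]
where $\Phi_\pm(u)$ is the ratio of archimedean $L$-factors of~\eqref{Wfgdef}, with $\Phi_\pm(0)=1$ and rapid vertical decay. The dual summand is analogous, with $(\ell,\ell')$ interchanged, $(q^2|rr'|)^u$ replaced by $(q^2\delta^2)^u$, and the additional explicit prefactor $\eps(f,g,\pm,\tfrac12)\lambda_f(\rho)\lambda_g(\rho')/((\rho\rho')^{1/2})$, in which $\eps(f,g,\pm,\tfrac12)=\eps(f)\eps(g)$ by~\eqref{eq-epspm-demi}.

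Next I would factor the Dirichlet series $L(f\times g,1,u;\ell',\ell)$ from~\eqref{Lfgdef} as an Euler product. Because $(\ell,\ell')=1$ and $\ell\ell'$ is squarefree and coprime to $rr'$, the Euler factor at every $p\nmid \ell\ell'$ equals $L_p^{\ast}(f\otimes g,1+2u)$, while at $p\mid\ell$ a short computation using the Hecke three-term recurrence $\lambda_h(p^{k+1})=\lambda_h(p)\lambda_h(p^k)-\lambda_h(p^{k-1})$ applied simultaneously to $h=f$ and $h=g$ gives
\[
p^{-\frac12-u}\!\sum_{k\ge 0}\frac{\lambda_f(p^k)\lambda_g(p^{k+1})}{p^{k(1+2u)}} = p^{-\frac12-u}\cdot\frac{\lambda_g(p)-p^{-1-2u}\lambda_f(p)}{1-p^{-2-4u}}\cdot L_p^{\ast}(f\otimes g,1+2u),
\]
and symmetrically at $p\mid\ell'$. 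Collecting these local contributions yields
\[
L(f\times g,1,u;\ell',\ell) = L^{\ast}(f\otimes g,1+2u)\cdot H(u;\ell,\ell'),
\]
where $H(u;\ell,\ell')$ is a finite product over $p\mid \ell\ell'$, holomorphic and absolutely bounded on a fixed neighborhood of $u=0$, and whose value at $u=0$ can be identified with $\lambda_f^{\ast}(\ell')\lambda_g^{\ast}(\ell)/(\ell\ell')^{1/2}$ for a suitable admissible choice of $\lambda_f^{\ast}, \lambda_g^{\ast}$ within the freedom of~\eqref{LambdaStarAdjustment} (the naturally arising factors $(1-p^{-2})^{-1}$ at each $p\mid\ell\ell'$ are absorbed into the definitions).

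I would then shift the $u$-contour to $\Reel(u)=-\tfrac14-\delta$ for some small $\delta>0$. The function $L^{\ast}(f\otimes g,1+2u)$ is entire on the closed strip between the two contours because $f\neq g$ (so $L(f\otimes g,s)$ is entire and $1/\zeta^{(r)}(2s)$ contributes only a zero, not a pole, at $s=\tfrac12$). The simple pole at $u=0$ of the integrand yields the residue
\[
\tfrac12\,L^{\ast}(f\otimes g,1)\cdot\frac{\lambda_f^{\ast}(\ell')\lambda_g^{\ast}(\ell)}{(\ell\ell')^{1/2}},
\]
using $G(0)=\Phi_\pm(0)=1$, while the shifted integral contributes $O(q^{-1/2+\eps})$ coming from the factor $(q^2|rr'|)^{u}$ at $\Reel(u)=-\tfrac14-\delta$, combined with the rapid vertical decay of $G(u)$, the Stirling-type bound $|\Phi_\pm(u)|\ll(1+|u|)^{O(1)}$, the polynomial bound $|L^{\ast}(f\otimes g,1+2u)|\ll(1+|u|)^{O(1)}$ on a narrow strip to the left of $\Reel u = 0$, and the trivial bound $|H(u;\ell,\ell')|\ll(\ell\ell')^{\eps}$. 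The dual integral is analyzed identically after interchanging $\ell$ and $\ell'$; together with the prefactor $\eps(f)\eps(g)\lambda_f(\rho)\lambda_g(\rho')/((\rho\rho')^{1/2})$ it produces the second summand of the lemma.

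The main obstacle will be the bookkeeping in the Euler-product step: identifying the naive factor $\prod_{p\mid\ell}\frac{\lambda_g(p)-\lambda_f(p)/p}{1-p^{-2}}\prod_{p\mid\ell'}\frac{\lambda_f(p)-\lambda_g(p)/p}{1-p^{-2}}(\ell\ell')^{-1/2}$ with $\lambda_f^{\ast}(\ell')\lambda_g^{\ast}(\ell)/(\ell\ell')^{1/2}$ for an admissible multiplicative adjustment in the sense of~\eqref{LambdaStarAdjustment}, and ensuring that this choice of $\lambda^{\ast}$ is compatible with all other uses of the notation in the sequel, in particular within the resonator and amplifier computations of Chapter~\ref{ch-extreme}.
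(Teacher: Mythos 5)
Your proposal matches the paper's proof in its essential structure: both start from the Mellin representations \eqref{sun1}--\eqref{sun4}, factor the Dirichlet series $L(f\times g,1,u;\ell',\ell)$ into $L^{\ast}(f\otimes g,1+2u)$ times a finite local product over $p\mid\ell\ell'$ via the Hecke three-term recurrence, and extract the residue at $u=0$ after a contour shift.

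Two technical caveats. First, you shift the contour to $\Reel(u)=-\tfrac14-\delta$, which places $\Reel(1+2u)$ strictly to the left of $\tfrac12$; the paper's holomorphy statement for $L^{\ast}(f\otimes g,s)$ is only asserted for $\Reel(s)>\tfrac12$, and $1/\zeta^{(r)}(2s)$ acquires poles with real part arbitrarily close to $\tfrac12$ at large imaginary parts (from nontrivial zeros of $\zeta(2s)$). Your observation that $1/\zeta^{(r)}(2s)$ has a zero at $s=\tfrac12$ does not control the rest of the vertical line. The paper sidesteps this by shifting only to $\Reel(u)=-\tfrac14+\eps/2$, which already gives $(q^2|rr'|)^u\ll q^{-1/2+\eps}$; you should do the same. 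Second, you are right to flag the $(1-p^{-2})^{-1}$ bookkeeping. The paper's own factorization is $D(s;\ell',\ell)=L^{\ast}(f\otimes g,s)\prod_{p\mid\ell\ell'}(1-p^{-2s})^{-1}\prod_{p\mid\ell'}(\lambda_f(p)-\lambda_g(p)/p^s)\prod_{p\mid\ell}(\lambda_g(p)-\lambda_f(p)/p^s)$, so the residue at $u=0$ carries the extra bounded factor $\prod_{p\mid\ell\ell'}(1-p^{-2})^{-1}$, while \eqref{lambdaastdef} fixes $\lambda_f^{\ast}(p)=\lambda_f(p)-\lambda_g(p)/p$ without it; the paper's stated main term and its computation are slightly discrepant here. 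Absorbing the $(1-p^{-2})^{-1}$ into $\lambda^{\ast}$, as you propose, remains compatible with \eqref{LambdaStarAdjustment} and all downstream uses, so it is a legitimate way to reconcile the two.
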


\begin{proof} 
  By the argument in Section~\ref{sec-secondmoment} (see also
  Proposition~\ref{pr-mt}), we obtain the asymptotic formula
  \eqref{Qspe} with main term $\MT^{\pm}(f,g;\l,\l')$ given by
  \eqref{sun1}, \eqref{sun2}, \eqref{sun3}, \eqref{sun4}, namely
\begin{multline*}
  \MT^{\pm}(f,g;\l,\l')= \frac{1}{2} \intc_{(2)}\mcL^{\pm}_{\infty}(\demi+u)
  \frac{D(1+2u;\l',\l)}{(\l\l')^{1/2+u}} G(u) (q^{2}|rr'|)^{u} \frac{du}{u}
  \\
  +\frac{\eps(f)\eps(g)\lambda_f(\rho)\lambda_g(\rho')}{2(\rho\rho')^{1/2}}
  \intc_{(2)}\mcL^{\pm}_{\infty}(\demi+u)
  \frac{D(1+2u;\l,\l')}{(\l\l')^{1/2+u}} G(u) (q^{2}|rr'|)^{u} \frac{du}{u},
\end{multline*}
where
$$
\mcL^{\pm}_{\infty}(s)=\frac{L_\infty(f,\pm,s)}
{L_\infty(f,\pm,\frac12)}
\frac{L_\infty(g,\pm,s)}{L_\infty(g,\pm,\frac12)}
$$
and $D(s;\l,\l')$ is the Dirichlet series
$$
D(s;\l,\l')=\sum_{n\geq 1}\frac{\lambda_f(\ell n)\lambda_g(\ell'
  n)}{n^s},
$$
which is absolutely convergent for $\Reel(s)>1$.  

Since $\l$ and $\l'$ are squarefree and coprime, we have by
multiplicativity the formula
$$
D(s;\l,\l')=\prod_{p\nmid\l\l'}L^{\ast}_p(f\otimes g,s)\prod_{p\mid
  \l}A_p(f,g;s)\prod_{p\mid\l'}A_p(g,f;s)
$$
where
$$
A_p(f,g;s)=\sum_{k\geq
  0}\frac{\lambda_f(p^{k+1})\lambda_g(p^{k})}{p^{ks}}.
$$
Using the Hecke relation, we obtain the relation
$$
A_p(f,g;s)= \lambda_f(p)L_p^{\ast}(f\otimes
g,s)-\frac{1}{p^s}A_p(g,f;s).
$$
Applying it twice, this leads to the formula
$$
A_p(f,g;s)=(1-p^{-2s})^{-1}\Bigl(\lambda_f(p)-\frac{\lambda_g(p)}{p^s}\Bigr)
L^{\ast}_p(f\otimes g,s).
$$
It follows that
$$
D(s;\l,\l')=L^{\ast}(f\otimes g,s)\prod_{p\mid \l\l'}(1-p^{-2s})^{-1}
\prod_{p\mid \l} \Bigl(\lambda_f(p)-\frac{\lambda_g(p)}{p^s}\Bigr)
\prod_{p\mid\l'} \Bigl(\lambda_g(p)-\frac{\lambda_f(p)}{p^s}\Bigr).
$$

Now, moving the contour of integration to
$\Re u=-\tfrac14+\frac12\eps$, and recalling that $f\not=g$, so that $L^{\ast}(f\otimes g,s)$
is holomorphic inside the contour, we obtain by the residue theorem
the formula
\begin{multline*}
  \MT^{\pm}(f,g;\ell,\ell')= \frac{1}{2}L^{\ast}(f\otimes g, 1)\Bigl(
  \frac{\lambda_f^{\ast}(\l')\lambda_g^{\ast}(\l)}{(\l\l')^{1/2}}\\
  + \eps(f)\eps(g)
  \frac{\lambda_f(\rho)\lambda_g(\rho')\lambda_f^{\ast}(\l)
    \lambda_g^{\ast}(\l')}{(\rho\rho'\l\l')^{1/2}}\Bigr) +
  O\big(q^{-1/2+\eps}\big)
\end{multline*}
for any $\eps>0$, after picking the simple pole at $u=0$.
\end{proof}

\subsection{Asymptotics involving the resonator polynomial}
Let $r(n)$ be one of the following two resonator sequences:
\begin{equation}
\label{ChoicesForr}
\begin{aligned}
&\text{Let $L$ be a large parameter, and let $r(n)$ be as in \eqref{DefinitionResonator}, \emph{or}}\\
&\text{Let $N$ be a large parameter, let $A,c>0$, and let $r(n)$ be as in \eqref{DefinitionResonator2}.}
\end{aligned}
\end{equation}
The values of $L$, $A$, $c$ will eventually be restricted by the
conditions in Section~\ref{ResonatorSection}, but for now we leave
them general. We also set, in each case respectively,
\begin{equation}
\label{oStar1}
\begin{aligned}
&\text{$o^{\star}(1)=O_C\big(N^{-C/(\log\log N)^3}\big)$ with an arbitrary $C>0$, \emph{or}}\\
&\text{$o^{\star}(1)=O_{\delta}(e^{-\delta A^2})$ for some suitable fixed $\delta>0$, respectively.}
\end{aligned}
\end{equation}

Let $a_f(n)$ be a multiplicative arithmetic function supported on square-free positive integers such that
\begin{equation}
\label{SignConditionOnafn}
\sgn a_f(n)=\sgn\lambda_f(n)\quad\text{whenever }a_f(n)\lambda_f(n)\neq 0.
\end{equation}
For example, we could pick $a_f(n)=\mu^2(n)\lambda_f(n)$, or $a_f(n)=\mu^2(n)\sgn\lambda_f(n)$. For practical purposes, we only need to be concerned with defining $a_f(n)$ for $n$ such that $r(n)\neq 0$. Define
\begin{equation}
\label{ResultingOmegas}
\omega(n)=|a_f(n)|^2,\quad \omega'(n)=\overline{a_f(n)}\lambda_f(n);
\end{equation}
in view of \eqref{SignConditionOnafn}, $\omega$, $\omega'$ are non-negative multiplicative functions.

For every Dirichlet character $\chi$ modulo $q$, we define our resonator polynomial by
\begin{equation}
\label{DefinitionResonatorPolynomial}
R(\chi)=\sum_{n\leqslant N}r(n)a_f(n)\chi(n).
\end{equation}
We also recall the definition of the argument (cf.\ \eqref{phaseformula})
\begin{equation}
\label{DefinitionSign}
e^{i\theta(\ftchi)}:=\begin{cases} L\big(f\otimes\chi,\tfrac12\big)/\big|L\big(f\otimes\chi,\tfrac12\big)\big|, &L\big(f\otimes\chi,\tfrac12\big)\neq 0,\\ 1,&\text{else}. \end{cases}
\end{equation}
and the formula
\begin{equation}\label{argrecall}
e^{2i\theta(\ftchi)}=\eps({f\otimes\chi})=\eps(f)\chi(r)\eps_\chi^2.	
\end{equation}

To exhibit the desired large values of $L(f\otimes\chi,\tfrac12)$ with
$\theta(\ftchi)$ in desired angular segments, we will evaluate the
following two character averages.

\begin{lemma}
\label{NormalizingWeightLemma}
Let $q$ be a prime modulus, let $N\leqslant q$, let $r(n)$ be as in
\eqref{ChoicesForr}, let $a_f(n)$ be an arbitrary multiplicative
function supported on square-free integers satisfying
\eqref{SignConditionOnafn}, and, for every primitive character $\chi$
of conductor $q$, let $R(\chi)$ be as in
\eqref{DefinitionResonatorPolynomial}.

Assume that $r(n)$ and the multiplicative function
$\omega(n)=|a_f(n)|^2$ satisfy the basic
evaluation~\eqref{TruncationOK1}. Then, with $o^{\star}(1)$ as in
\eqref{oStar1},
$$ \frac1{\vphis(q)}\sums_{\chi\bmod q}|R(\chi)|^2=\big(1+o^{\star}(1)+O(N/q)\big)\prod_p\big(1+r(p)^2\omega(p)\big). $$
\end{lemma}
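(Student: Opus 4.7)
The plan is to reduce the average over primitive characters to a diagonal sum in $n$ via orthogonality of Dirichlet characters, and then invoke the assumed basic evaluation \eqref{TruncationOK1}. No substantive obstacle is expected: each step is routine and the only quantitative inputs are the orthogonality relation \eqref{average}, the trivial bound $N\leqslant q$, and the hypothesis \eqref{TruncationOK1} itself.

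First I would expand
$$ |R(\chi)|^2 = \sum_{n_1,n_2\leq N} r(n_1)r(n_2)a_f(n_1)\overline{a_f(n_2)}\chi(n_1)\overline{\chi(n_2)} $$
and sum over \emph{all} characters modulo $q$ using \eqref{average}. This gives
$$ \frac{1}{\varphi(q)}\sum_{\chi\mods q}|R(\chi)|^2 = \sum_{\substack{n_1,n_2\leq N\\ n_1\equiv n_2\mods q\\ (n_1n_2,q)=1}} r(n_1)r(n_2)a_f(n_1)\overline{a_f(n_2)}. $$
Since $n_1,n_2\in[1,N]$ and $N\leqslant q$, the congruence $n_1\equiv n_2\mods q$ forces $n_1=n_2$, so the right-hand side collapses to the diagonal sum $\sum_{n\leq N,\,(n,q)=1} r(n)^2\omega(n)$.

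Next I would pass from the sum over all characters to the sum over primitive characters. For $q$ prime the only non-primitive character is the principal character $\chi_0$, so
$$ \frac{1}{\vphis(q)}\sums_{\chi\mods q}|R(\chi)|^2 = \frac{\varphi(q)}{\vphis(q)}\sum_{\substack{n\leq N\\(n,q)=1}} r(n)^2\omega(n) - \frac{|R(\chi_0)|^2}{\vphis(q)}, $$
and one notes $\varphi(q)/\vphis(q) = 1 + O(1/q)$. To control the principal character term, I would apply the Cauchy--Schwarz inequality to the at most $N$ non-zero summands defining $R(\chi_0)$, yielding
$$ |R(\chi_0)|^2 \leq N\sum_{n\leq N} r(n)^2\omega(n), $$
so that $|R(\chi_0)|^2/\vphis(q) \ll (N/q)\sum_n r(n)^2\omega(n)$.

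Finally I would invoke the hypothesis \eqref{TruncationOK1}, which by assumption gives
$$ \sum_{n\leq N} r(n)^2\omega(n) = \big(1+o^{\star}(1)\big)\prod_p\big(1+r(p)^2\omega(p)\big); $$
the discrepancy introduced by the coprimality condition $(n,q)=1$ is at most a single term with $n=q$, which is negligible. Combining the three contributions (the multiplicative factor $1+O(1/q)$, the principal character error $O(N/q)$, and the $o^\star(1)$ from \eqref{TruncationOK1}), and absorbing $O(1/q)$ into $O(N/q)$, yields the desired identity. The only thing worth double-checking along the way is the applicability of \eqref{TruncationOK1}, which is guaranteed by the lemma's standing assumption and holds for both choices of resonator in \eqref{ChoicesForr} with the respective $o^\star(1)$ from \eqref{oStar1}.
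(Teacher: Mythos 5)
Your proof is correct and follows essentially the same route as the paper: expand $|R(\chi)|^2$, apply orthogonality over all characters modulo $q$ (noting $N\leq q$ collapses the congruence to the diagonal), subtract the principal-character contribution and bound it by Cauchy--Schwarz as $O(N/q)$ times the diagonal sum, then invoke \eqref{TruncationOK1}. Your handling of the normalization factor $\varphi(q)/\vphis(q)$ and the coprimality condition $(n,q)=1$ is slightly more explicit than the paper's, but these are the same minor bookkeeping points the paper silently absorbs into the $O(N/q)$ error.
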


\begin{proof}
By orthogonality of characters and the Cauchy--Schwarz inequality,
\begin{multline*}
  \frac1{\vphis(q)}\sums_{\chi\bmod q}|R(\chi)|^2
  =\frac1{\vphis(q)}\sum_{\chi\bmod q}\bigg|\sum_{n\leqslant
    N}r(n)a_f(n)\chi(n)\bigg|^2
  \\
  -\frac1{\vphis(q)}\bigg|\sum_{n\leqslant N}r(n)a_f(n)\bigg|^2
=\big(1+O(N/q)\big)\sum_{n\leqslant N}r(n)^2\omega(n),
\end{multline*}
since $N\leqslant q$. Applying the basic evaluation \eqref{TruncationOK1},
we have that
$$ \sum_{n\leqslant N}r(n)^2\omega(n)=\big(1+o^{\star}(1)\big)\prod_p\big(1+r(p)^2\omega(p)\big), $$
and this in turn yields Lemma~\ref{NormalizingWeightLemma}.
\end{proof}

In the next lemma, we consider functions $\psi\colon \Rr/2\pi\Zz\to
\Cc$ which are $\pi$-anti-periodic, namely that satisfy
$\psi(\theta+\pi)=-\psi(\theta)$ for all $\theta$.
The Fourier expansion of such a function has the form
$$
\psi(\theta)=\sum_{\kappa\in\Zz}\hat\psi(2\kappa+1)e^{i(2\kappa+1)\theta}.
$$
We set 
$$
I(\psi):=\frac1\pi\int_{\Rr/2\pi}\psi(\theta)\cos(\theta)d\theta=
\hat\psi(1)+\hat\psi(-1).
$$
If $\psi$ is smooth, then for any integer   $B\geq 0$, we denote
the $B$-Sobolev norm of $\psi$ by
$$
\|\psi\|_B= \sum_{\kappa\in\Zz}(|\kappa|+1)^B\,
|\hat{\psi}(1+2\kappa)|.
$$

\begin{lemma}
\label{TwistedFirstMomentLemma}
Let $q$ be a prime modulus, let $N< q$, let $r(n)$ be as in
\eqref{ChoicesForr}, and let $a_f(n)$ be an arbitrary multiplicative
function supported on square-free integers satisfying
\eqref{SignConditionOnafn}. For every primitive character $\chi$ of
conductor $q$, let $R(\chi)$ be as in
\eqref{DefinitionResonatorPolynomial}.

Let $\psi$ be a smooth $\pi$-anti-periodic function.

Assume that $r(n)$ and the multiplicative functions $\omega,\omega'$
given in \eqref{ResultingOmegas} satisfy the basic
evaluation~\eqref{TruncationOK2}. Then, for a sufficiently large
absolute $B\geq 0$ and with $o^{\star}(1)$ as in \eqref{oStar1}, we
have
\begin{multline*}
\frac1{\vphis(q)}\sums_{\chi\bmod q}|R(\chi)|^2\big|L\big(f\otimes\chi,\tfrac12\big)\big|\psi(\theta(\ftchi))\\
 =I(\psi)\big(1+o^{\star}(1)\big)\cdot\prod_p\bigg(1+r(p)^2\omega(p)+\frac{r(p)}{\sqrt{p}}\omega'(p)\bigg)\\
+O_{f,\eps,B}\Big(q^{\eps}N\big(q^{-1/8}+(q/N)^{\theta-1/2}\big)\|\psi\|_B\cdot\prod_p\big(1+r(p)^2\omega(p)\big)\Big).
\end{multline*}
\end{lemma}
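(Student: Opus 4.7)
The plan is to expand $\psi$ in its Fourier series, reduce the problem to applying the twisted first-moment formula \eqref{TwistedFirstMomentEvaluationSpecialized} inside the sum over characters, and then exploit the basic evaluation \eqref{TruncationOK2} to package the resulting diagonal terms into the desired Euler product. More precisely, using that $\psi$ is smooth and $\pi$-anti-periodic we write
\begin{equation*}
  |L(\ftchi,\tfrac12)|\,\psi(\theta(\ftchi))
  =\sum_{\kappa\in\Zz}\hat\psi(2\kappa+1)\,|L(\ftchi,\tfrac12)|\,e^{i(2\kappa+1)\theta(\ftchi)}
  =\sum_{\kappa\in\Zz}\hat\psi(2\kappa+1)\,L(\ftchi,\tfrac12)\,e^{2i\kappa\theta(\ftchi)},
\end{equation*}
and then invoke the identity $e^{2i\theta(\ftchi)}=\eps(f)\chi(r)\eps_\chi^2$ from \eqref{argrecall} to convert $e^{2i\kappa\theta(\ftchi)}$ into $\eps(f)^\kappa\chi(r^\kappa)\eps_\chi^{2\kappa}$. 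Opening $|R(\chi)|^2$ as a double sum in $n_1,n_2\leqslant N$ and swapping summation, the character average reduces to a weighted sum of the twisted first moments $\mcL(f;n_1r^\kappa\bar n_2,2\kappa)$.

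The next step is to apply \eqref{TwistedFirstMomentEvaluationSpecialized} to each such twisted first moment. This formula has main terms precisely at $\kappa=0$ and $\kappa=-1$: the former arising from the $k=0$ case of Theorem~\ref{thm-twistedfirstmoment}, the latter from the symmetrized case $k=-2$ via Corollary~\ref{cor-k=-2}. In both cases the delta-support forces $n_1\mid n_2$ or $n_2\mid n_1$; because $r$ is supported on squarefree integers the quotient $m$ is coprime to the common factor, so $r$ and $a_f$ factor multiplicatively. The $\kappa=0$ contribution becomes
\begin{equation*}
  \hat\psi(1)\mathop{\sum\sum}_{\substack{(n_1,m)=1\\n_1m\leqslant N}}r(n_1)^2 r(m)\,\omega(n_1)\omega'(m)\,m^{-1/2},
\end{equation*}
with $\omega(n)=|a_f(n)|^2$ and $\omega'(m)=\overline{a_f(m)}\lambda_f(m)$ as in \eqref{ResultingOmegas}; the $\kappa=-1$ contribution takes the same shape and yields $\hat\psi(-1)$ times the same sum. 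Plugging this into \eqref{TruncationOK2} then produces
\begin{equation*}
  \bigl(\hat\psi(1)+\hat\psi(-1)\bigr)\bigl(1+o^{\star}(1)\bigr)\prod_p\bigl(1+r(p)^2\omega(p)+r(p)p^{-1/2}\omega'(p)\bigr)=I(\psi)\bigl(1+o^{\star}(1)\bigr)\prod_p(\cdots),
\end{equation*}
which is the claimed main term.

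For the error term, the bound in \eqref{TwistedFirstMomentEvaluationSpecialized} is uniform in $n_1,n_2$ and grows only polynomially in $|\kappa|$, so the total contribution is at most
\begin{equation*}
  q^{\eps}\bigl(q^{-1/8}+(q/N)^{\theta-1/2}\bigr)\cdot\Bigl(\sum_{\kappa\in\Zz}(|\kappa|+1)^A|\hat\psi(2\kappa+1)|\Bigr)\cdot\sum_{n_1,n_2\leqslant N}r(n_1)r(n_2)|a_f(n_1)a_f(n_2)|.
\end{equation*}
The sum over $\kappa$ is $\ll\|\psi\|_B$ for $B\geqslant A$ large enough, and the Cauchy--Schwarz inequality followed by the basic evaluation \eqref{TruncationOK1} bounds the double sum over $n_1,n_2$ by $N\sum_{n\leqslant N}r(n)^2\omega(n)\ll N\prod_p(1+r(p)^2\omega(p))$. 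Combining these ingredients produces the stated error bound.

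The main technical obstacle I expect is purely bookkeeping: keeping track of the coprimality and multiplicativity conditions imposed by the squarefree support of $r$ when the twisted first-moment formula collapses $\kappa=0,-1$ onto the divisibility conditions $n_1\mid n_2$ or $n_2\mid n_1$, so that the resulting sums fit cleanly into the shape of \eqref{TruncationOK2}. The rest of the argument is a direct assembly of the three inputs: the orthogonality-based first moment evaluation of Theorem~\ref{thm-twistedfirstmoment}/Corollary~\ref{cor-k=-2}, the resonator evaluation of Lemma~\ref{SmallTailsLemma}, and Sobolev control of $\psi$ via its Fourier coefficients.
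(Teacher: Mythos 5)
Your proof is correct and follows essentially the same argument as the paper: Fourier-expand $\psi$ and use $|L|e^{i\theta}=L$ to reduce to the twisted first moments $\mcL(f;n_1r^\kappa\bar n_2,2\kappa)$, pick out the main terms at $\kappa=0,-1$ via \eqref{TwistedFirstMomentEvaluationSpecialized}, exploit the squarefree support of $r$ and $a_f$ to convert the divisibility conditions into the coprime double sum required by \eqref{TruncationOK2}, and control the error by the Sobolev norm of $\psi$ together with Cauchy--Schwarz.
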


\begin{proof}
The quantity to evaluate is equal to
\begin{multline*}
  \sum_{n_1,n_2\leqslant
    N}r(n_1)r(n_2)a_f(n_1)\overline{a_f(n_2)}\sum_{\kappa\in\Zz}\hat{\psi}({2\kappa+1})\\
  \times\frac1{\vphis(q)}\sums_{\chi\bmod q}L\big(f\otimes\chi,\tfrac12\big)e^{i2\kappa\theta(\ftchi)}\chi(n_1\bar{n}_2)\\
  =\sum_{n_1,n_2\leqslant N}r(n_1)r(n_2)a_f(n_1)\overline{a_f(n_2)}\sum_{\kappa\in\Zz}\hat{\psi}(2\kappa+1)
  \eps(f)^{\kappa}\mcl(f;n_1r^{\kappa}\bar{n}_2,2\kappa)
\end{multline*}
by \eqref{argrecall}.

Using \eqref{TwistedFirstMomentEvaluationSpecialized},
\eqref{ResultingOmegas}, and keeping in mind that the resonator
sequence $r(n)$ is supported on square-free integers, there exists an
constant $B\geq 0$ such that this is in turn equal to
\begin{multline*}
  (\hat{\psi}(1)+\hat{\psi}({-1}))\sum_{\substack{nm\leqslant N\\ (n,m)=1}}\frac{r(n)^2r(m)\omega(n)\omega'(m)}{\sqrt{m}}\\
  +O_{f,\eps,A}\bigg(q^{\eps}\big(q^{-1/8}+(q/N)^{\theta-1/2}\big)
  \|\psi\|_B
  \bigg(\sum_{n\leqslant N}r(n)|a_f(n)|\bigg)^2\bigg).
\end{multline*}

Applying the basic evaluation~\eqref{TruncationOK2} to the double $(n,m)$-sum, we have that
$$\sum_{\substack{nm\leqslant N\\(n,m)=1}}\frac{r(n)^2r(m)\omega(n)\omega'(m)}{\sqrt{m}}
=\big(1+o^{\star}(1)\big)\prod_p\bigg(1+r(p)^2\omega(p)+\frac{r(p)}{\sqrt{p}}\omega'(p)\bigg). $$
Finally, by the Cauchy--Schwarz inequality,
\begin{equation}
\label{CauchySchwarzForRemainders}
\bigg(\sum_{n\leqslant N}r(n)|a_f(n)|\bigg)^2
\leqslant N\sum_{n\leqslant N}r(n)^2\omega(n)\ll 
N\prod_p\big(1+r(p)^2\omega(p)\big).
\end{equation}
Lemma~\ref{TwistedFirstMomentLemma} follows by combining these
estimates.
\end{proof}

We now turn our attention to large values of the product of twisted
$L$-functions of two \emph{distinct} primitive cusp forms $f$ and $g$
of signed levels $r$ and $r'$. We use the same notation as before, including
$\rho$ and $\rho'$.

We begin with an auxiliary lemma.

\begin{lemma}\label{lm-aux}
  With notation as above, there exists a squarefree integer $u\geq 1$
  coprime to $rr'$ such that
$$
 \lambda_g(u)+ \eps(f)\eps(g)
  \frac{\lambda_f(\rho)\lambda_g(\rho')\lambda_f(u)}
  {(\rho\rho')^{1/2}}\not=0.
$$
\end{lemma}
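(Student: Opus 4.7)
The plan is to reduce the question to a statement of strong multiplicity one, by exploiting the freedom to choose $u$. Write
\[
\alpha = \varepsilon(f)\varepsilon(g)\,\frac{\lambda_f(\rho)\lambda_g(\rho')}{(\rho\rho')^{1/2}},
\]
so that we seek a squarefree $u\geq 1$ with $(u,rr')=1$ satisfying $\lambda_g(u)+\alpha\lambda_f(u)\neq 0$. Since $\lambda_f(1)=\lambda_g(1)=1$, taking $u=1$ already settles the question whenever $\alpha\neq -1$; in particular this handles the case $\alpha=0$. The only remaining case is $\alpha=-1$, in which we must exhibit a squarefree $u$ coprime to $rr'$ with $\lambda_g(u)\neq\lambda_f(u)$.

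In this remaining case I would take $u=p$ to be a prime coprime to $rr'$ chosen so that $\lambda_f(p)\neq\lambda_g(p)$. The existence of such a prime is exactly strong multiplicity one for cuspidal automorphic representations on $\GL_2$: if $\lambda_f(p)=\lambda_g(p)$ held for all but finitely many primes $p$, then the primitive cusp forms $f$ and $g$ would coincide, contradicting the hypothesis $f\neq g$ in Lemma~\ref{lm-aux}. Hence there are infinitely many primes $p$ with $\lambda_f(p)\neq\lambda_g(p)$, and we may pick one that is moreover coprime to $rr'$. For this prime, $u=p$ is squarefree, coprime to $rr'$, and $\lambda_g(u)+\alpha\lambda_f(u)=\lambda_g(p)-\lambda_f(p)\neq 0$, as required.

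No part of this is truly delicate; the only ``serious'' ingredient is strong multiplicity one, which is standard and in fact built into our working framework (the Hecke eigenvalues $\lambda_f(p)$ at almost all primes determine the primitive form $f$). Thus the proof amounts to a short case split ($\alpha\neq -1$ versus $\alpha=-1$) together with one invocation of strong multiplicity one.
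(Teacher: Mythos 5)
Your proof is correct and follows essentially the same route as the paper's: check whether $u=1$ works, and if not, invoke strong multiplicity one to find a prime $p$ with $\lambda_f(p)\neq\lambda_g(p)$. Your case split on $\alpha\neq -1$ versus $\alpha=-1$ is in fact a mild streamlining of the paper's, which instead splits on $\rho\rho'=1$ versus $\rho\rho'>1$ and uses the bound $|\lambda_f(\rho)\lambda_g(\rho')|/(\rho\rho')^{1/2}<1$ for $\rho\rho'>1$ (citing Proposition~\ref{pr-mt}) to conclude $\alpha\neq -1$ in the latter case; your formulation makes that estimate unnecessary.
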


\begin{proof}
  If $\rho$ or $\rho'$ is not $1$, then this holds for $u=1$ (see
  Proposition~\ref{pr-mt}). Otherwise, we need to find $u\geq 1$
  squarefree and coprime to $rr'$ such that
$$
\lambda_g(u)+ \eps(f)\eps(g)\lambda_f(u)\not=0,
$$
and the existence of a prime $u$ with this property follows from
Rankin-Selberg theory and multiplicity one.
\end{proof}

\begin{remark}
We need to involve $u$ in the resonator method, because otherwise it
could be the case that
$$
\frac1{\vphis(q)}\sums_{\chi\bmod q}|R(\chi)|^2
L\big(f\otimes\chi,\tfrac12\big)L\big(g\otimes\bar{\chi},\tfrac12\big)
$$
is zero because of the cancellation between a character and its
conjugate, although the individual terms have no reason to vanish, or
their product to be small (see the last part of
Theorem~\ref{th-second-final}). In that case, the resonator method
would not apply. However, if $u\not=1$, we consider instead
$$
\frac1{\vphis(q)}\sums_{\chi\bmod q}|R(\chi)|^2
L\big(f\otimes\chi,\tfrac12\big)L\big(g\otimes\bar{\chi},\tfrac12\big)\chi(u)
$$
where the symmetry between $\chi$ and $\bar{\chi}$ is broken, leading
to a non-trivial sum.
\end{remark}

We now fix an integer $u$ as given by Lemma~\ref{lm-aux}.

We assume given a multiplicative function $\varpi(n)$, supported on
squarefree positive integers coprime to $urr'$, such that
$$
\sgn\varpi(n)=\sgn\lambda_f(n)=\sgn\lambda_{g}(n) \quad\text{whenever
}\varpi(n)\lambda_f(n)\lambda_{g}(n)\neq 0.  
$$
This gives rise to the non-negative multiplicative functions
$\omega,\omega'_1,\omega'_2$ and the resonator polynomial $R(\chi)$,
defined by
\begin{gather}
\label{DefOmegaOmegaPrime2}
\omega(n)=|\varpi(n)|^2,\quad\omega'_1(n)=\overline{\varpi(n)}\lambda_f^{\ast}(n),\quad \omega_2'(n)=\overline{\varpi(n)}\lambda_{g}^{\ast}(n),\\
\label{ResonatorPolynomial2}
R(\chi)=\sum_{n\leqslant N}r(n)\varpi(n)\chi(n),
\end{gather}
where $\lambda_f^{\ast}(n)$ and $\lambda_{g}^{\ast}(n)$ are the
multiplicative functions defined before
Lemma~\ref{SecondMomentEvaluationForProducts}.

\begin{lemma}
\label{SecondMomentWithResonatorLemma}
With notation as above, assume that $N\leq q^{1/2}$. 
Assume that $r(n)$ and the multiplicative functions
$\omega,\omega'_1,\omega'_2$ defined in \eqref{DefOmegaOmegaPrime2}
satisfy the basic evaluation \eqref{TruncationOK2Products}. Then, with
$\omega'(p)=\omega'_1(p)+\omega'_2(p)$ and with $o^{\star}(1)$ as in
\eqref{oStar1}, we have
\begin{multline*}
  \frac1{\vphis(q)}\sums_{\chi\bmod q}|R(\chi)|^2L\big(f\otimes\chi,\tfrac12\big)L\big(g\otimes\bar{\chi},\tfrac12\big)\chi(u)\\
  =L^{\ast}(f\otimes g,1)\Bigl(\nu+o^{\star}(1)\Bigr)
  \prod_p\bigg(1+r(p)^2\omega(p)+\frac{r(p)\omega'(p)}{\sqrt{p}}\bigg)\\
  +O\bigg(N^{5/2}q^{-1/144}\prod_p\big(1+r(p)^2\omega(p)\big)\bigg),
\end{multline*}
where $\nu\not=0$.
\end{lemma}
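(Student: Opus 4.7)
The plan is to open $|R(\chi)|^2$, reduce the inner character average to the twisted second moments $\mcQ(f,g,\tfrac12;\cdot,\cdot)$, apply Lemma~\ref{SecondMomentEvaluationForProducts} term by term, and organize the resulting main sum through the basic evaluation~\eqref{TruncationOK2Products}. Concretely, expanding the square and using $L(g\otimes\overline{\chi},\tfrac12) = \overline{L(g\otimes\chi,\tfrac12)}$ (the Hecke eigenvalues of $g$ being real), the left-hand side becomes
$$\sum_{n_1,n_2 \leq N} r(n_1) r(n_2) \varpi(n_1) \overline{\varpi(n_2)}\, \mcQ\bigl(f,g,\tfrac12;\, u n_1,\, n_2\bigr).$$
First I would reindex via $d = (n_1,n_2)$ and $n_i = d m_i$ with $(m_1,m_2) = 1$; since $r$ and $\varpi$ are multiplicative and supported on squarefree integers coprime to $urr'$, this factors the weight as $r(d)^2 |\varpi(d)|^2 \cdot r(m_1) r(m_2) \varpi(m_1) \overline{\varpi(m_2)}$, while $u m_1 \cdot m_2$ becomes squarefree and coprime to $rr'$, so that (given $N \leq q^{1/2}$) Lemma~\ref{SecondMomentEvaluationForProducts} applies to each summand.

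Pulling the $u$-dependence out via $\lambda_{f,g}^{\ast}(u m_1) = \lambda_{f,g}^{\ast}(u) \lambda_{f,g}^{\ast}(m_1)$, the main term of Lemma~\ref{SecondMomentEvaluationForProducts} (which is the same for $\MT^+$ and $\MT^-$ at $s = 1/2$ by~\eqref{eq-epspm-demi}) becomes
$$\MT(f,g,\tfrac12;\, u m_1,\, m_2) = \tfrac{1}{2} L^{\ast}(f \otimes g, 1)\left( C_1 \frac{\lambda_g^{\ast}(m_1) \lambda_f^{\ast}(m_2)}{\sqrt{m_1 m_2}} + C_2 \frac{\lambda_f^{\ast}(m_1) \lambda_g^{\ast}(m_2)}{\sqrt{m_1 m_2}}\right) + O(q^{-1/2+\eps}),$$
with $C_1 = \lambda_g^{\ast}(u)/\sqrt{u}$ and $C_2 = \eps(f) \eps(g) \lambda_f(\rho) \lambda_g(\rho') \lambda_f^{\ast}(u) / \sqrt{u \rho \rho'}$. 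After summing over $d, m_1, m_2$, each of the two resulting triple sums fits exactly the template of~\eqref{TruncationOK2Products} with $s = 2$, one pairing the factors as $(\omega_1', \omega_2')$ and the other with the swapped pairing; by the symmetry of~\eqref{TruncationOK2Products} in the $\omega_i'$, both evaluate to $(1 + o^{\star}(1)) \prod_p (1 + r(p)^2 \omega(p) + r(p) \omega'(p) / \sqrt{p})$ with $\omega' = \omega_1' + \omega_2'$. For the error, the per-pair remainder $O((uN)^{3/2} q^{-1/144}) = O(N^{3/2} q^{-1/144})$ of Lemma~\ref{SecondMomentEvaluationForProducts}, summed against $|r(n_1) r(n_2) \varpi(n_1) \varpi(n_2)|$ and bounded by Cauchy--Schwarz exactly as in~\eqref{CauchySchwarzForRemainders} together with~\eqref{TruncationOK1}, produces the stated $O(N^{5/2} q^{-1/144} \prod_p (1 + r(p)^2 \omega(p)))$; the secondary $O(q^{-1/2 + \eps})$ per-pair errors are absorbed trivially.

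This assembles the main term with $\nu = (C_1 + C_2)/2$, and the one genuinely delicate point, and the main obstacle of the argument, is verifying that $\nu \neq 0$. Lemma~\ref{lm-aux} is engineered precisely for this: it produces a squarefree integer $u$ (equal to $1$ when $\rho\rho' > 1$, and an appropriate prime coprime to $rr'$ when $\rho = \rho' = 1$) for which $\lambda_g(u) + \eps(f)\eps(g) \lambda_f(\rho)\lambda_g(\rho') \lambda_f(u)/\sqrt{\rho\rho'} \neq 0$. A direct computation using $\lambda_{f,g}^{\ast}(u) = \lambda_{f,g}(u) - \lambda_{g,f}(u)/u$ at the prime $u$ then shows that $2\sqrt{u}\,\nu = (1 - \eps(f)\eps(g)/u)\bigl(\lambda_g(u) + \eps(f)\eps(g)\lambda_f(u)\bigr)$ when $\rho = \rho' = 1$, and that $2\nu$ equals the relevant non-zero expression of Lemma~\ref{lm-aux} when $u = 1$; in either case the factor is non-zero, so $\nu \neq 0$ is secured. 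The remainder of the argument is routine bookkeeping driven by Lemmas~\ref{SecondMomentEvaluationForProducts} and~\ref{lm-aux} together with the basic evaluations~\eqref{TruncationOK1} and~\eqref{TruncationOK2Products}.
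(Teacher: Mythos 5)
Your proposal is correct and follows essentially the same route as the paper: open $|R(\chi)|^2$, reduce each summand to a twisted second moment $\mcQ(f,g,\tfrac12;\,un_1/d,\,n_2/d)$ with $d=(n_1,n_2)$, apply Lemma~\ref{SecondMomentEvaluationForProducts}, recognize the two resulting triple sums as instances of the $s=2$ case of \eqref{TruncationOK2Products}, and bound the accumulated error terms by Cauchy--Schwarz against \eqref{TruncationOK1}.

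One point worth noting: you are in fact slightly more careful than the paper in the identification of $\nu$. The paper's proof writes the constant factors of $X$ and $Y$ as $\lambda_g(u)$ and $\lambda_f(u)\lambda_f(\rho)\lambda_g(\rho')/\sqrt{\rho\rho'}$, whereas a literal application of Lemma~\ref{SecondMomentEvaluationForProducts} with $\ell=um_1$, $\ell'=m_2$ and the multiplicativity $\lambda_{f,g}^{\ast}(um_1)=\lambda_{f,g}^{\ast}(u)\lambda_{f,g}^{\ast}(m_1)$ produces $\lambda_g^{\ast}(u)$ and $\lambda_f^{\ast}(u)$, exactly as you write. When $u=1$ this makes no difference, but when $\rho=\rho'=1$ and $u$ is a prime, the paper's appeal to Lemma~\ref{lm-aux} does not immediately yield $\nu\ne 0$ with the starred coefficients, and your factorization
$$2\sqrt{u}\,\nu=\Bigl(1-\frac{\eps(f)\eps(g)}{u}\Bigr)\bigl(\lambda_g(u)+\eps(f)\eps(g)\lambda_f(u)\bigr),$$
valid since $\eps(f)\eps(g)=\pm1$ implies $(\eps(f)\eps(g))^{-1}=\eps(f)\eps(g)$, closes that gap cleanly: the first factor is nonzero because $u>1$, and the second by the defining property of $u$ from Lemma~\ref{lm-aux}. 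This is the same proof, but it makes the non-vanishing of $\nu$ unambiguous.
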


\begin{proof}
  Applying the definition or $R(\chi)$ and
  Lemma~\ref{SecondMomentEvaluationForProducts}, it follows that 
\begin{multline*}
  \frac1{\vphis(q)}\sums_{\chi\bmod
    q}|R(\chi)|^2L\big(f\otimes\chi,\tfrac12\big)L\big(g\otimes\bar{\chi},\tfrac12\big)
  \\= \frac1{\vphis(q)}\sum_{n_1,n_2\leqslant
    N}r(n_1)\overline{r(n_2)}\varpi(n_1)\overline{\varpi(n_2)}
  \sums_{\chi\bmod
    q}L\big(f\otimes\chi,\tfrac12\big)L\big(g\otimes\bar{\chi},
  \tfrac12\big)\chi(un_1\bar{n}_2)
  \\
  =\frac{1}{\sqrt{u}} L^{\ast}(f\otimes
  g,1)\Bigl(X+\eps(f)\eps(g)Y\Bigr)+O\bigg(N^{3/2}q^{-1/144}\bigg(\sum_{n\leqslant
    N}r(n)|\varpi(n)|\bigg)^2\bigg)
\end{multline*}
where
$$
X=\lambda_g(u)\sum_{n_1,n_2\leq
  N}r(n_1)\overline{r(n_2)}\varpi(n_1)\overline{\varpi(n_2)}
\frac{\lambda^{\ast}_f(n_2)\lambda^{\ast}_g(n_1)}{(n_1n_2)^{1/2}}
$$
and 
$$
Y=\lambda_f(u)\frac{\lambda_f(\rho)\lambda_g(\rho')}{(\rho\rho')^{1/2}}
\sum_{n_1,n_2\leq
  N}r(n_1)\overline{r(n_2)}\varpi(n_1)\overline{\varpi(n_2)}
\frac{\lambda^{\ast}_f(n_1)\lambda^{\ast}_g(n_2)}{(n_1n_2)^{1/2}}
$$
otherwise. Using the Cauchy--Schwarz inequality and the
multiplicativity of $\omega(n)$ to estimate the resulting sum by a
product over primes (as in \eqref{CauchySchwarzForRemainders}), we see
that the error term is
$$
\ll N^{5/2}q^{-1/144}\prod_p\big(1+r(p)^2\omega(p)\big).
$$
\par
We write
\begin{multline*}
  X=\lambda_g(u)\sum_{n\leqslant N}|r(n)|^2|\varpi(n)|^2
  \\
  \mathop{\sum\sum}_{\substack{m_1,m_2\leqslant N/n\\
      (n,m_1)=(n,m_2) =(m_1,m_2)=1}}
  \frac{r(m_1)\varpi(m_1)\lambda^{\ast}_{g}(m_1)r(m_2)\overline{\varpi(m_2)}\lambda^{\ast}_f(m_2)}{(m_1m_2)^{1/2}}
\end{multline*}
and similarly for $Y$. By the basic
evaluation~\eqref{TruncationOK2Products}, we obtain
$$
X=\lambda_g(u)\big(1+o^{\star}(1)\big)\prod_p\bigg(1+r(p)^2\omega(p)+
\frac{r(p)\omega'(p)}{\sqrt{p}}\bigg),
$$
and similarly
$$
Y=\lambda_f(u)\frac{\lambda_f(\rho)\lambda_g(\rho')}{(\rho\rho')^{1/2}}
\big(1+o^{\star}(1)\big)\prod_p\bigg(1+r(p)^2\omega(p)+
\frac{r(p)\omega'(p)}{\sqrt{p}}\bigg),
$$
(since the function
$\omega'$ plays the same role in both cases). The result follows, with
$\nu\not=0$ by the defining property of
$u$ given by Lemma~\ref{lm-aux}.
\end{proof}


\subsection{Asymptotics involving an amplifier}
In lower ranges for $V$ in
Theorem~\ref{LargeValuesAngularSectorsTheorem}, we will be using an
amplifier instead of a resonator polynomial. In this section, we
prove moment asymptotics that will be useful in this treatment.

We may write
\begin{equation}
\label{DecompositionOfLSeries}
\sum_{\ell=1}^{\infty}\mu^2(\ell)\frac{|\lambda_f(\ell)|^2}{\ell^s}=L(f\otimes f,s)G_f(s),
\end{equation}
where $G_f(s)$ is a certain Euler product absolutely convergent for $\Re(s)>\tfrac12$. The Dirichlet series on the left thus has a simple pole at $s=1$, and we write
\begin{equation}
\label{Definitioncf}
c_f=\res_{s=1}L(f\otimes f,s)\cdot G_f(1)>0.
\end{equation}
In view of \eqref{DecompositionOfLSeries} and \eqref{Definitioncf}, we have the asymptotic
\begin{equation}
\label{Asymptotics}
\sum_{\ell\leqslant L}\mu^2(\ell)\frac{|\lambda_f(\ell)|^2}{\ell}=c_f\log L+O_f(1).
\end{equation}

Let $L\leqslant q$ and
\begin{equation}
\label{DefinitionAmplifier}
A_f(\chi)=\sum_{\ell\leqslant L}\frac{\lambda_f(\ell)}{\sqrt{\ell}}\mu^2(\ell)\chi(\ell).
\end{equation}
We will prove the following two claims.

\begin{lemma}
\label{AmplifierNormalizingWeightLemma}
Let $q$ be a prime modulus, let $L\leqslant q$, and, for every primitive character $\chi$ of conductor $q$, let $A_f(\chi)$ be as in~\eqref{DefinitionAmplifier}. Then, with $c_f>0$ as in \eqref{Definitioncf}, 
\begin{alignat}{3}
\label{SecondMomentAmplifierOnly}
\frac1{\vphis(q)}\sums_{\chi\bmod q}|A_f(\chi)|^2&=c_f\log  L+O_f(1),&&\text{for }L\leqslant q;\\
\label{FourthMomentAmplifierOnly}
\frac1{\vphis(q)}\sums_{\chi\bmod q}|A_f(\chi)|^4&\leqslant c_f^4\log^4L+O_f(\log^3L),&\quad&\text{for }L\leqslant q^{1/2}.
\end{alignat}
\end{lemma}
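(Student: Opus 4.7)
Both claims follow the same scheme: expand the power of $A_f(\chi)$, apply character orthogonality, reduce to a diagonal sum by exploiting the smallness of $L$ compared with $q$, and then invoke the asymptotic \eqref{Asymptotics}. For \eqref{SecondMomentAmplifierOnly}, I would expand $|A_f(\chi)|^2$ and use that, since $q$ is prime, orthogonality gives
\[
\frac{1}{\vphis(q)} \sums_{\chi\bmod q} \chi(\ell_1)\overline{\chi(\ell_2)} = \frac{\vphi(q)}{\vphis(q)} \delta_{\ell_1 \equiv \ell_2 \pmod q}\, \delta_{(\ell_1\ell_2,q)=1} - \frac{1}{\vphis(q)} \delta_{(\ell_1\ell_2,q)=1}.
\]
The assumption $L \leqslant q$ forces $\ell_1 = \ell_2$ on the diagonal, so the main term becomes $(1+O(1/q)) \sum_{\ell \leqslant L,(\ell,q)=1} \mu^2(\ell) \lambda_f(\ell)^2/\ell = c_f \log L + O_f(1)$ by \eqref{Asymptotics}. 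The contribution of the principal character is $|A_f(\chi_0)|^2/\vphis(q) \ll_f L^{\varepsilon}/q = o(1)$, where the estimate $A_f(\chi_0) \ll_f L^{\varepsilon}$ follows by Perron's formula for the Dirichlet series $\sum \mu^2(\ell)\lambda_f(\ell)\ell^{-1/2-s}$ together with the convexity bound for $L(f,\cdot)$.

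For \eqref{FourthMomentAmplifierOnly}, I would write $A_f(\chi)^2 = \sum_n b(n) \chi(n)$ with
\[
b(n) = \mu^2(n) \frac{\lambda_f(n)}{\sqrt{n}}\, r_L(n), \qquad r_L(n) := |\{\ell \mid n : \ell \leqslant L \text{ and } n/\ell \leqslant L\}|,
\]
supported on squarefree $n \leqslant L^2$ (using that $\lambda_f(\ell_1)\lambda_f(\ell_2) = \lambda_f(\ell_1\ell_2)$ for coprime factors of a squarefree number). Applying the orthogonality analysis above to $|A_f(\chi)^2|^2$, and noting that the hypothesis $L \leqslant q^{1/2}$ ensures $n_1,n_2 \leqslant L^2 \leqslant q$, so that $n_1 \equiv n_2 \pmod q$ again forces $n_1 = n_2$, reduces the task to proving
\[
\sum_{n \leqslant L^2} \mu^2(n) \frac{\lambda_f(n)^2\, r_L(n)^2}{n} \leqslant c_f^4 \log^4 L + O_f(\log^3 L),
\]
the principal character and the factor $\vphi(q)/\vphis(q)$ contributing negligibly as before.

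The crucial step is the evaluation of this diagonal sum. I would unfold $r_L(n)^2$ so that the sum becomes
\[
\sum_{\substack{\ell_1 \ell_2 = \ell_3 \ell_4 \\ \ell_i \leqslant L,\ \mu^2(\ell_1 \ell_2) = 1}} \frac{\lambda_f(\ell_1)\lambda_f(\ell_2)\lambda_f(\ell_3)\lambda_f(\ell_4)}{\ell_1 \ell_2},
\]
and then introduce the bijective change of variables $d = (\ell_1, \ell_3)$, $\ell_1 = da$, $\ell_3 = db$, which combined with $\ell_1 \ell_2 = \ell_3 \ell_4$ and $(a,b) = 1$ forces $\ell_2 = bc$, $\ell_4 = ac$ for a unique $c \geqslant 1$. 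This identifies the summation set with quadruples $(d, a, b, c)$ of pairwise coprime squarefree integers subject to $da, db, ac, bc \leqslant L$, and multiplicativity factors the summand as $\lambda_f(d)^2\lambda_f(a)^2\lambda_f(b)^2\lambda_f(c)^2/(dabc)$. Dropping the pairwise coprimality (legitimate because the summand is non-negative) and replacing the four coupled box constraints by the single constraints $d, a, b, c \leqslant L$ yields the clean upper bound $\bigl(\sum_{n \leqslant L} \mu^2(n)\lambda_f(n)^2/n\bigr)^4$, and one final application of \eqref{Asymptotics} produces $c_f^4 \log^4 L + O_f(\log^3 L)$.

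The main obstacle, and the only step beyond routine orthogonality, is recognizing that the constraint $\ell_1 \ell_2 = \ell_3 \ell_4$ admits this clean factorization into pairwise coprime parts; it is this factorization that forces the target constant to come out as exactly $c_f^4$, rather than some larger product of local factors that a cruder majorization (such as $r_L(n) \leqslant d(n)$) would produce.
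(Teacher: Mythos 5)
Your parameterization of the constraint $\ell_1\ell_2=\ell_3\ell_4$ --- set $d=(\ell_1,\ell_3)$, $\ell_1=da$, $\ell_3=db$, $\ell_2=bc$, $\ell_4=ac$, factor the Hecke eigenvalues by multiplicativity into $\lambda_f(d)^2\lambda_f(a)^2\lambda_f(b)^2\lambda_f(c)^2$, then drop coprimality and decouple the box constraints using non-negativity --- is exactly the argument the paper elides when it asserts $|M_4(f,L)|\leq\bigl(\sum_{\ell\leq L}\mu^2(\ell)|\lambda_f(\ell)|^2/\ell\bigr)^4$ without comment. That is the one real step, and you have it.

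However, the reduction you give before that step has a genuine gap. You claim $A_f(\chi)^2=\sum_n b(n)\chi(n)$ with $b(n)=\mu^2(n)\lambda_f(n)r_L(n)/\sqrt{n}$, supported on squarefree $n$. This is false: the term $\ell_1=\ell_2=p$ contributes $\mu^2(p)^2\lambda_f(p)^2\chi(p^2)/p$ at $n=p^2$. Consequently the diagonal produced by orthogonality is the full quantity
\begin{equation*}
M_4(f,L)=\sum_{\substack{\ell_1\ell_2=\ell_3\ell_4\\\ell_i\leq L}}\mu^2(\ell_1)\mu^2(\ell_2)\mu^2(\ell_3)\mu^2(\ell_4)\,\frac{\lambda_f(\ell_1)\lambda_f(\ell_2)\lambda_f(\ell_3)\lambda_f(\ell_4)}{\ell_1\ell_2},
\end{equation*}
not the sub-sum with the extra constraint $\mu^2(\ell_1\ell_2)=1$ that you write down. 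Since, in your own parameterization, each summand equals $\lambda_f(d)^2\lambda_f(a)^2\lambda_f(b)^2\lambda_f(c)^2/(dabc)\geq 0$, your restricted sum is smaller than $M_4(f,L)$, and bounding it does not bound $M_4(f,L)$. The repair is simple: delete the spurious $\mu^2(\ell_1\ell_2)=1$ condition and run the same parameterization on the full sum. Squarefreeness of the four $\ell_i$ forces $(d,a)$, $(d,b)$, $(a,b)$, $(a,c)$, $(b,c)$ all to be coprime, which is all the multiplicative factoring uses; $(d,c)$ may share prime factors, but since you then drop all coprimality conditions to enlarge the range, this changes nothing in the estimate. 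One further small point: $A_f(\chi_0)\ll_f L^{\varepsilon}$ by Perron and convexity needs the Ramanujan conjecture. Unconditionally, the Euler product comparing $\sum_{\ell}\mu^2(\ell)\lambda_f(\ell)\ell^{-s}$ to $L(f,s)$ converges only for $\Re s>1/2+\theta$, so contour-shifting gives $A_f(\chi_0)\ll_f L^{\theta+\varepsilon}$ with $\theta=7/64$; this still yields $|A_f(\chi_0)|^2/\vphis(q)=o(1)$ for $L\leq q$, which is what you need.
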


\begin{proof}
By orthogonality, asymptotic~\eqref{Asymptotics}, and the Cau\-chy--Schwarz inequality,
\begin{align*}
\frac1{\vphis(q)}\sums_{\chi\bmod q}|A_f(\chi)|^2
&=\sum_{\ell\leqslant L}\mu^2(\ell)\frac{|\lambda_f(\ell)|^2}{\ell}-\frac1{\vphis(q)}\bigg|\sum_{\ell\leqslant L}\mu^2(\ell)\frac{\lambda_f(\ell)}{\sqrt{\ell}}\bigg|^2\\
&=c_f\log L+O_f(1).
\end{align*}
Similarly,
\begin{align*}
\frac1{\vphis(q)}\sums_{\chi\bmod q}|A_f(\chi)|^4
&=M_4(f,L)-\frac1{\vphis(q)}\bigg|\sum_{\ell\leqslant L}\mu^2(\ell)\frac{\lambda_f(\ell)}{\sqrt{\ell}}\bigg|^4\\
&=M_4(f,L)+O_f\big(\log^4L/q\big),
\end{align*}
where
\begin{align*}
|M_4(f,L)|
&=\bigg|\sum_{\substack{\ell_1\ell_2=\ell_3\ell_4\\\ell_i\leqslant L}}\mu^2(\ell_1)\mu^2(\ell_2)\mu^2(\ell_3)\mu^2(\ell_4)\frac{\lambda_f(\ell_1)\lambda_f(\ell_2)\overline{\lambda_f(\ell_3)\lambda_f(\ell_4)}}{\sqrt{\ell_1\ell_2\ell_3\ell_4}}\bigg|\\
&\leqslant\bigg(\sum_{\ell\leqslant L}\mu^2(\ell)\frac{|\lambda_f(\ell)|^2}{\ell}\bigg)^4=c_f^4\log^4L+O_f(\log^3L).
\qedhere
\end{align*}
\end{proof}

\begin{lemma}
\label{AmplifierTwistedMomentLemma}
Let $\psi:\Rr/2\pi\Zz\to\Cc$ be a $\pi$-anti-periodic smooth
function. Let $q$ be a prime modulus. Let $0<\theta<\tfrac12$ be an
admissible exponent toward the Ramanujan--Petersson conjecture for
$f$, and let $0<\eta<\tfrac14$ be such that
\begin{equation}
\label{ConditionOnEta}
(1-\eta)(\theta-\tfrac12)+\tfrac12\eta<0.
\end{equation}
Let $L=q^{\eta}$, and, for every primitive character $\chi$ of conductor $q$, let $A_f(\chi)$ be as in \eqref{DefinitionAmplifier}, and let $\theta(\ftchi)$ be as in \eqref{DefinitionSign}. Then, with $c_f>0$ as in \eqref{Definitioncf},
$$ \frac1{\vphis(q)}\sums_{\chi\bmod q}\big|L\big(\tfrac12,f\otimes\chi\big)\big|\overline{A_f(\chi)}\psi(\theta(\ftchi))=c_f\hat{\psi}(1)\log L+O_{f	,\eta}(1). $$
\end{lemma}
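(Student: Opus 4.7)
\textbf{Proposal for proof of Lemma~\ref{AmplifierTwistedMomentLemma}.}

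The plan is to reduce the quantity to twisted first moments and then invoke Corollary~\ref{cor-k=-2}. Using the convention \eqref{DefinitionSign} I would first write $|L(f\otimes\chi,\tfrac12)| = L(f\otimes\chi,\tfrac12) e^{-i\theta(f\otimes\chi)}$, and set $\tilde\psi(\theta) := e^{-i\theta}\psi(\theta)$. Since $\psi$ is $\pi$-antiperiodic and $e^{-i(\theta+\pi)}=-e^{-i\theta}$, the function $\tilde\psi$ is smooth and $\pi$-periodic; its Fourier expansion is $\tilde\psi(\theta) = \sum_{\kappa\in\Zz}\hat\psi(2\kappa+1)e^{2i\kappa\theta}$, with rapidly decaying coefficients. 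Combining this with the key identity $e^{2i\theta(\ftchi)} = \eps(f)\chi(r)\eps_\chi^2$ coming from \eqref{phaseformula} and opening the amplifier, the quantity to evaluate becomes
\begin{equation*}
\sum_{\ell\leq L}\frac{\mu^2(\ell)\lambda_f(\ell)}{\sqrt\ell}\sum_{\kappa\in\Zz}\hat\psi(2\kappa+1)\,\eps(f)^\kappa\, \mcL(f;r^\kappa\bar\ell,2\kappa),
\end{equation*}
where the inner sum converges absolutely and uniformly in $\ell$.

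Next I would apply Corollary~\ref{cor-k=-2}, which yields main terms only for $k=2\kappa\in\{0,-2\}$, i.e.\ $\kappa\in\{0,-1\}$. For $\kappa=0$ the main term is $\lambda_f(\ell)/\sqrt\ell$ (since $\ell<q$), so the $\kappa=0$ contribution is
\begin{equation*}
\hat\psi(1)\sum_{\ell\leq L}\frac{\mu^2(\ell)\lambda_f(\ell)^2}{\ell} = c_f\hat\psi(1)\log L + O_f(1)
\end{equation*}
by the asymptotic \eqref{Asymptotics}, which is precisely the main term in the lemma. For $\kappa=-1$, Corollary~\ref{cor-k=-2} gives the main term $\eps(f)\lambda_f(\ov\ell_q)/\sqrt{\ov\ell_q}$, where $\ov\ell_q$ denotes the inverse of $\ell$ represented in $[1,q]$; in particular $\ov{1}_q=1$ (contributing just $\hat\psi(-1) = O(1)$) and, since $\ell\,\ov\ell_q = 1+mq$ for some $m\geq 1$ when $\ell\geq 2$, one has $\ov\ell_q \geq q/\ell \geq q/L$ in that range.

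The main technical step will be controlling the remaining $\kappa=-1$ contribution
\begin{equation*}
\sum_{2\leq\ell\leq L}\frac{\mu^2(\ell)\lambda_f(\ell)\lambda_f(\ov\ell_q)}{\sqrt{\ell\,\ov\ell_q}}.
\end{equation*}
By the Cauchy--Schwarz inequality this is bounded by
\begin{equation*}
\Big(\sum_{\ell\leq L}\frac{\mu^2(\ell)|\lambda_f(\ell)|^2}{\ell}\Big)^{\!1/2}\!\Big(\sum_{2\leq\ell\leq L}\frac{|\lambda_f(\ov\ell_q)|^2}{\ov\ell_q}\Big)^{\!1/2} \ll_\eps (\log L)^{1/2}\, L^{1-\theta}\, q^{\theta-1/2+\eps},
\end{equation*}
where for the first factor we invoked \eqref{Asymptotics} and for the second we used $|\lambda_f(n)|^2\ll n^{2\theta+\eps}$ together with the lower bound $\ov\ell_q\geq q/L$ and the fact that $2\theta-1<0$. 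With $L=q^\eta$, the exponent of $q$ in this bound equals $\eta(1-\theta)+\theta-1/2+\eps$, which is negative exactly under hypothesis~\eqref{ConditionOnEta}, so this contribution is $O_{f,\eta}(1)$. Finally, the error terms from Corollary~\ref{cor-k=-2}, which are $\ll (1+|\kappa|)^B q^{\eps-1/8}$, contribute altogether at most
\begin{equation*}
q^{\eps-1/8}\,\|\psi\|_B \sum_{\ell\leq L}\frac{\mu^2(\ell)|\lambda_f(\ell)|}{\sqrt\ell} \ll q^{\eta/2-1/8+\eps},
\end{equation*}
after one further Cauchy--Schwarz application to the $\ell$-sum, and this is $O_{f,\eta}(1)$ since $\eta<\tfrac14$. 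Collecting everything produces the asserted asymptotic $c_f\hat\psi(1)\log L + O_{f,\eta}(1)$.
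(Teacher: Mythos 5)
Your proof is correct and follows essentially the same route as the paper. The one cosmetic difference is bookkeeping: the paper uses the specialized first-moment evaluation \eqref{TwistedFirstMomentEvaluationSpecialized}, which has already absorbed the non-diagonal $\kappa=-1$, $\ell\geq 2$ contribution (via the observation $\ov\ell_q\geq q/L$) into the $(q/L)^{\theta-1/2}$ error term, whereas you invoke Corollary~\ref{cor-k=-2} directly and then bound the surviving $\kappa=-1$ diagonal term by hand using Cauchy--Schwarz, the Ramanujan bound $|\lambda_f(n)|\ll n^{\theta+\eps}d(n)$, and the same inequality $\ov\ell_q\geq q/L$. The two routes yield the identical exponent $\eta(1-\theta)+\theta-\tfrac12=(1-\eta)(\theta-\tfrac12)+\tfrac12\eta$, so the threshold \eqref{ConditionOnEta} appears for the same reason in both arguments; the treatment of the $\kappa\neq 0,-1$ error and the rapidly decaying Fourier coefficients of $\psi$ is also identical in substance.
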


Regarding condition \eqref{ConditionOnEta}, we remark that any $\theta<\tfrac12$ is sufficient to obtain this inequality for some $\eta>0$, which is all we really need. On the other hand, $\theta<\tfrac13$ is known, so any $\eta<\tfrac14$ will be acceptable for this condition.

\begin{proof}
Using the evaluation \eqref{TwistedFirstMomentEvaluation}, we have similarly as in Lemma \ref{TwistedFirstMomentLemma} 
\begin{align*}
\frac1{\vphis(q)}\sums_{\chi\bmod q}&\big|L\big(\tfrac12,f\otimes\chi\big)\big|\overline{A_f(\chi)}\psi(\theta(\ftchi))\\
&=\sum_{\ell\leqslant L}\frac{\overline{\lambda_f(\ell)}}{{\ell}^{1/2}}\mu^2(\ell)\sum_{\kappa\in\Zz}\hat{\psi}({1+2\kappa})\varepsilon(f)^{\kappa}\mcl(f;\bar{\ell}r^\kappa,2\kappa)\\
&=\hat{\psi}(1)\sum_{\ell\leqslant L}\mu^2(\ell)\frac{|\lambda_f(\ell)|^2}{\ell}+\hat\psi(-1)\eps(f)\\
&\qquad+O_{f,\eps,A}\bigg(q^{\eps}\big(q^{-1/8}+(q/L)^{\theta-1/2}\big)\|\psi\|_A\bigg(\sum_{\ell\leqslant L}\frac{|\lambda_f(\ell)|}{\sqrt{\ell}}\bigg)\bigg).
\end{align*}
Using the asymptotic~\eqref{Asymptotics} and keeping in mind the
condition~\eqref{ConditionOnEta}, this equals
\begin{gather*}
  c_f\hat{\psi}(1)\log L+O_{f,\eps}\Big(1+q^{\eps}\big(q^{-1/8}+(q/L)^{\theta-1/2}\big)L^{1/2}\Big)\\
  =c_f\hat{\psi}(1)\log L+O_{f,\eta}(1).  \qedhere
\end{gather*}
\end{proof}

\section{Extreme values with angular constraints}
\label{ExtremeValuesAngularConstraintsSection}
By comparing the main terms in Lemmas~\ref{NormalizingWeightLemma} and
\ref{TwistedFirstMomentLemma}, we see that if $N$ is not too large
compared to $q$, we can obtain values of $L(f\otimes\chi,\tfrac12)$
with $\psi(\theta(\ftchi))>0$ as large as the quotient of these main
terms. This quotient has a lower bound provided by
Lemma~\ref{GainLemma} or
Lemma~\ref{ManyLargeValuesLemma}~\eqref{LowerBoundClaim} (depending on
which $r(n)$ is used), which in turn depends on the arithmetic
sequence $a_f(n)$ used in the construction of the resonator polynomial
$R(\chi)$ in \eqref{DefinitionResonatorPolynomial}, subject to the
sign condition \eqref{SignConditionOnafn}. 


In this section, we construct an essentially optimal sequence $a_f(n)$
for exhibiting extreme values of $L(f\otimes\chi,\tfrac12)$, verify
that it is allowable for Lemmas~\ref{SmallTailsLemma} and
\ref{GainLemma}, and then use it to prove the extreme values claim of
Theorem~\ref{LargeValuesAngularSectorsTheorem}.

\subsection{Choice of the resonator polynomial}
\label{OurChoiceSection}

For the purpose of exhibiting extreme values of
$L(f\otimes\chi,\tfrac12)$ in
Theorem~\ref{LargeValuesAngularSectorsTheorem}, we use the resonator
sequence $r(n)$ given by \eqref{DefinitionResonator} that is studied
in Section~\ref{ExtremeValuesRangeSubsection}. Construction of the
resonator polynomial $R(\chi)$ rests on multiplicative arithmetic
factors $a_f(n)$, subject to the sign condition
\eqref{SignConditionOnafn}. From these, we defined
(see \eqref{ResultingOmegas})
\begin{equation}
\label{OmegaOmegaPrimeMustSatisfy}
\omega(n)=|a_f(n)|^2,\quad\omega'(n)=\overline{a_f(n)}\lambda_f(n)
\end{equation}
for square-free $n$.

There are \emph{a priori} many reasonable choices of arithmetic
factors $a_f(n)$ satisfying the sign condition
\eqref{SignConditionOnafn}.  For a moment, we put aside the issue of
actually verifying
conditions~\eqref{SecondMomentCondition}--\eqref{FourthMomentCondition},
and consider the question of optimizing the choice of $\omega(n)$ and
$\omega'(n)$. To get the highest possible lower bound in
Lemma~\ref{GainLemma} for a given $N$ (whose allowable size is in turn
dictated by computations unrelated to the specific application of the
resonator method), one chooses
$$L=\sqrt{a_{\omega}^{-1}\log N\log\log N}$$ in
Lemma~\ref{SmallTailsLemma} and thus obtains in Lemma~\ref{GainLemma}
a lower bound of the shape
$$ 
\exp\bigg(\Bigl(\frac{a'_{\omega'}}{\sqrt{a_{\omega}}}+o(1)\Bigr)
\sqrt{\frac{\log N}{\log\log N}}\bigg). 
$$ 
Maximizing the ration $a'_{\omega'}/\sqrt{a_{\omega}}$ (keeping in
mind the conditions \eqref{SecondMomentCondition},
\eqref{OmegaPrimeLowerBound}, and the definition
\eqref{OmegaOmegaPrimeMustSatisfy}) is tantamount to asymptotically
maximizing the ratio
\begin{equation}
\label{MaximizeThisRatio}
\bigg|\sum_{X\leqslant p\leqslant Y}\frac{\overline{a_f(p)}\lambda_f(p)}{p\log p}\bigg|^2\bigg/\sum_{X\leqslant p\leqslant Y}\frac{|a_f(p)|^2}{p\log p}.
\end{equation}
By the Cauchy--Schwarz inequality, we see that the choice
\begin{equation}
\label{OurChoiceNew}
a_f(n)=\mu^2(n)\lambda_f(n),\quad \omega(n)=\omega'(n)=\mu^2(n)|\lambda_f(n)|^2
\end{equation}
is actually essentially optimal. 

It remains to verify that
conditions~\eqref{SecondMomentCondition}--\eqref{FourthMomentCondition}
are satisfied for this choice. This is the content of the following
lemma, which is a special case of Corollary \ref{PNTsymandpower}.
 
\begin{lemma}
\label{PNTNecessaryForFirstClaim}
For any primitive cusp form $f$ with trivial central character, we
have for $4\leq 2X\leq Y$
\begin{gather*}
  \sum_{p\leqslant X}\frac{|\lambda_f(p)|^2\log p}{p}=\log X+O_f(1),\\
  \sum_{X\leqslant p\leqslant Y}\frac{|\lambda_f(p)|^2}{p\log p}=\left(\frac1{\log X}-\frac1{\log Y}\right)+O_f\left(\frac1{\log^2X}\right),\\
  \sum_{X\leqslant p\leqslant Y}\frac{|\lambda_f(p)|^2}{p}=\log\Bigl
  (\frac{\log Y}{\log X}\Bigr)+O_f\left(\frac1{\log X}\right),\\
  \sum_{p\leqslant X}|\lambda_f(p)|^4\ll_f\frac{X}{\log X}.
\end{gather*}
\end{lemma}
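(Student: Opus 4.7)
Since $f$ has trivial central character, the Hecke eigenvalues $\lambda_f(n)$ are real, so $|\lambda_f(p)|^2 = \lambda_f(p)^2$ and $|\lambda_f(p)|^4 = \lambda_f(p)^4$. The plan is then to deduce each of the four estimates directly from Corollary~\ref{PNTsymandpower} (applied with $g=f$, so that the ``$g$-variables'' can be taken as $k'=0$) and partial summation, the central arithmetic input being the cuspidality of $\sym^2 f$ and (via isobaric decomposition) of $\sym^4 f$, established by Gelbart--Jacquet and Kim--Shahidi.

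For the first identity, I would invoke~\eqref{powermertens} with $(k,k')=(2,0)$, which gives
\[
\sum_{p\leq X}\frac{\lambda_f(p)^2\log p}{p} = n_{2,0}\log X+\gamma'_{2,0}+O(1/\log X).
\]
The integer $n_{2,0}$ is the order of the pole at $s=1$ of the Dirichlet series generating $\lambda_f(p)^2$, which equals $m_{0,0}+m_{2,0}=1+0=1$ in the notation of the corollary, since the decomposition $\lambda_f(p)^2=\lambda_{\sym^2 f}(p)+1$ (for $p\nmid r$) and the cuspidality of $\sym^2 f$ give $m_{2,0}=0$ and $m_{0,0}=1$. This yields the first identity with error $O(1)$. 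The second identity is then exactly~\eqref{powerresonator} with the same $(k,k')=(2,0)$ and $n_{2,0}=1$.

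For the third identity, I would apply Abel summation to the first identity: writing $S(t):=\sum_{p\leq t}\lambda_f(p)^2(\log p)/p = \log t + O(1)$, one has
\[
\sum_{X\leq p\leq Y}\frac{\lambda_f(p)^2}{p} = \frac{S(Y)}{\log Y}-\frac{S(X)}{\log X}+\int_X^Y\frac{S(t)\,dt}{t\log^2 t},
\]
and substituting $S(t)=\log t+O(1)$ produces $\log(\log Y/\log X)+O(1/\log X)$ after elementary estimates. For the fourth identity, I would use~\eqref{powerPNT} with $(k,k')=(4,0)$ to get $\sum_{p\leq x}\lambda_f(p)^4\log p = n_{4,0}\,x + O(x\exp(-C\sqrt{\log x}))$, and then partial summation (dividing by $\log p$) yields $\sum_{p\leq X}\lambda_f(p)^4 \ll_f X/\log X$; the value of $n_{4,0}$ itself is irrelevant here, only the boundedness provided by the prime number theorem for the symmetric power $L$-functions entering the decomposition $\lambda_f(p)^4=\lambda_{\sym^4 f}(p)+3\lambda_{\sym^2 f}(p)+2$.

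No real obstacle arises: everything is packaged in Corollary~\ref{PNTsymandpower} once the symmetric power $L$-functions are known to belong to the automorphic category and to have standard zero-free regions near $\Re s=1$ (Proposition~\ref{AutomorphicPNTLemma}). The only delicate step is the identification of $n_{2,0}=1$, which is where the non-vanishing and holomorphy at $s=1$ of $L(\sym^2 f,s)$ enter; this is automatic from cuspidality (Gelbart--Jacquet) plus the standard prime number theorem for cuspidal $\GL_3$-representations.
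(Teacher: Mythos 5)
Your proposal is correct and matches the paper's approach: the paper itself simply states that the lemma is ``a special case of Corollary~\ref{PNTsymandpower}'' without spelling out the deduction, and your write-up is a careful unpacking of exactly this. You correctly identify $n_{2,0}=m_{0,0}+m_{2,0}=1+0=1$ from the decomposition $\lambda_f(p)^2=\lambda_{\sym^2f}(p)+1$ and the cuspidality of $\sym^2 f$ (so that $L(\sym^2 f,s)$ is entire and $m_{2,0}=0$); the second identity is then indeed exactly~\eqref{powerresonator} with that value; the third follows from the first by Abel summation as you indicate; and the fourth follows from~\eqref{powerPNT} with $(k,k')=(4,0)$ and a partial-summation step, with the specific value of $n_{4,0}$ immaterial. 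The only detail worth flagging is that the third estimate in the lemma (the sum of $|\lambda_f(p)|^2/p$ over $X\leq p\leq Y$) does not appear verbatim among the conclusions of Corollary~\ref{PNTsymandpower}, so the extra Abel-summation step you supply is genuinely needed and not merely cosmetic — you were right to include it.
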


This lemma shows that the multiplicative arithmetic
functions 
$$
\omega(n)=\omega'(n)=\mu^2(n)|\lambda_f(n)|^2
$$ 
do satisfy the
conditions~\eqref{SecondMomentCondition}--\eqref{FourthMomentCondition}
and \eqref{UpperBoundsConditions}--\eqref{UpperBoundCondition} with
$\delta=\delta'=1$ and
$$ a_{\omega}=a'_{\omega'}=1,\quad b_{\omega}=b_{\omega
  2}=b'_{\omega}=1. $$

\subsection{The extreme values claim in
  Theorem~\ref{LargeValuesAngularSectorsTheorem}}
\label{ProofLowerBoundSection}

In this section, we prove the first part of
Theorem~\ref{LargeValuesAngularSectorsTheorem}, which is concerned
with extreme values of $L(f\otimes\chi,\frac12)$.

We use a resonator polynomial \eqref{DefinitionResonatorPolynomial},
with the resonator sequence as in \eqref{DefinitionResonator}, and
arithmetic factors $a_f(n)$ as in \eqref{OurChoiceNew}. According to
the previous section, the resulting multiplicative arithmetic
functions $$\omega(n)=\omega'(n)=\mu^2(n)|\lambda_f(n)|^2$$ satisfy
the
conditions~\eqref{SecondMomentCondition}--\eqref{FourthMomentCondition}
with $a_{\omega}=a'_{\omega'}=1$. According to
Lemma~\ref{SmallTailsLemma}, $\omega$ and $\omega'$ satisfy the basic
evaluations~\eqref{TruncationOK1} and \eqref{TruncationOK2}.

As in Lemma~\ref{TwistedFirstMomentLemma}, choose an arbitrary smooth $\pi$-anti-periodic function
$\psi:\Rr/2\pi\to\Cc$ such that
\begin{equation}
\label{AngleCutoffFunction}
\supp \psi\cap \left]-\tfrac{\pi}2,\tfrac{\pi}2\right[\subseteq I,\quad \psi|_I\geqslant 0,\quad \int_I\psi(\theta)\,\text{d}\theta=1.
\end{equation}
In particular, we have then $I(\psi)>0$.

Fix an arbitrary $\delta>0$, and apply Lemmas~\ref{NormalizingWeightLemma} and \ref{TwistedFirstMomentLemma} with
$$ N=q^{1/8-\delta} $$
and $L=\sqrt{\log N\log\log N}$. Using the available estimate $\theta<\frac{5}{14}$, we have that $N(q/N)^{\theta-1/2}=q^{(\theta-1/2)(7/8+\delta)+1/8-\delta}<q^{-(8/7)\delta}<q^{-\delta}$. Therefore, Lemmas~\ref{NormalizingWeightLemma} and \ref{TwistedFirstMomentLemma} give
$$
\frac1{\vphis(q)}\sums_{\chi\bmod
  q}|R(\chi)|^2\sim\prod_p\big(1+r(p)^2|\lambda_f(p)|^2\big),
$$
and
\begin{align*}
  &\frac1{\vphis(q)}\sums_{\chi\bmod q}|R(\chi)|^2\big|L\big(f\otimes\chi,\tfrac12\big)\big|\psi(\theta(f\otimes\chi))\\
  &\qquad=I(\psi)\big(1+o(1))\cdot\prod_p\bigg(1+r(p)^2|\lambda_f(p)|^2+\frac{r(p)}{\sqrt{p}}|\lambda_f(p)|^2\bigg)\\
  &\qquad\qquad+O\Big(q^{-\delta+\eps}\cdot\prod_p\big(1+r(p)^2|\lambda_f(p)|^2\big)\Big)\\
  &\qquad\sim I(\psi)\msl\,\prod_p\big(1+r(p)^2|\lambda_f(p)|^2\big),
\end{align*}
with implicit constants depending on $f$, $\delta$, $\eps$, and $\psi$, and
$$ \msl=\prod_p\bigg(1+\frac{r(p)|\lambda_f(p)|^2}{\sqrt{p}\big(1+r(p)^2|\lambda_f(p)|^2\big)}\bigg). $$
It follows that, for sufficiently large $q$, there exists at least one
primitive character $\chi$ of conductor $q$ such that
$\psi(\theta(f\otimes\chi))>0$ (and so \emph{a fortiori}
$\theta(f\otimes\chi)\in I$) 
and
$$ \big|L\big(f\otimes\chi,\tfrac12\big)\big|\gg\msl. $$

Finally, applying Lemma~\ref{GainLemma}, and keeping in mind the
present choices $$L=\sqrt{\log N\log\log N}$$ and $N=q^{1/8-\delta}$ (made below \eqref{AngleCutoffFunction})
and $a'_{\omega'}=1$, we obtain a lower bound
$$ \msl\gg\exp\bigg(\big(1+o^{\star}(1)\big)\frac{L}{2\log L}\bigg)=\bigg(\big(\tfrac1{\sqrt8}-\delta+o^{\star}(1)\big)\sqrt{\frac{\log q}{\log\log q}}\bigg), $$
with $o^{\star}(1)=O(\log\log\log q/\log\log q)$. The omega-statement
of Theorem~\ref{LargeValuesAngularSectorsTheorem} follows since we may
take $\delta>0$ as small as we please.

\subsection{Many large values}
\label{ManyLargeValuesSection}
In this section, we prove the second part of Theorem~\ref{LargeValuesAngularSectorsTheorem} about the number of primitive characters $\chi$ of conductor $q$ such that $|L(f\otimes\chi,\tfrac12)|\geqslant e^V$ for a sizable $V$. The argument is an adaptation of that in \cite{Soundararajan2008}; here we focus on the specific requirements on the sequences $\omega(n)$ and $\omega'(n)$ and on the few aspects that require some modification (such as the treatment of moderately large $V$).

Let $c_f>0$ be as in \eqref{Definitioncf}, and let $0<\eta<\tfrac14$
satisfy the condition \eqref{ConditionOnEta}. Choose an arbitrary
smooth $\pi$-anti-periodic function $\psi:\Rr/2\pi\Zz\to\Cc$ as in
\eqref{AngleCutoffFunction}. We have then, $|\hat{\psi}(1)|>0$. Let
$\tilde{c}=\tilde{c}_{f,\psi,\eta}>0$ be an arbitrary constant such
that
\begin{equation}
\label{OurChoicecfgeta}
\tilde{c}<\sqrt{c_f\eta}\,
\frac{|\hat{\psi}(1)|}
{\,\,2\|\psi\|_{\infty}}.
\end{equation}
We consider two cases, depending on $V$.
\par
\medskip
\par
\textbf{Case 1}. \textit{The range
  $V\leqslant\frac12\log\log q+\log\tilde{c}$}.  In this range, the
second part of Theorem~\ref{LargeValuesAngularSectorsTheorem} states
that $|L(f\otimes\chi,\frac12)|$ achieves moderately high values for a
very large number of $\chi$. This is in a sense a complementary range;
instead of the method of resonators, we prove
Theorem~\ref{LargeValuesAngularSectorsTheorem} by a comparison of
moments, including the amplified first moment as follows.

Let $L=q^{\eta}$, and, for every primitive character $\chi$ of
conductor $q$, let the amplifier $A_f(\chi)$ be as in
\eqref{DefinitionAmplifier}. Then, according to
Lemma~\ref{AmplifierTwistedMomentLemma},
\begin{align*}
  I_{1,f,\psi,L}(q)
  &=\frac1{\vphis(q)}\sums_{\chi\bmod
    q}\big|L(\tfrac12,f\otimes\chi\big)\big|\overline{A_f(\chi)}\psi(\theta({f\otimes\chi}))\\
  &=c_f\eta\hat{\psi}(1)\log q+O_{f,\psi,\eta}(1).
\end{align*}

Note that, in this range, $e^V\leqslant\tilde{c}\sqrt{\log q}$. We
split the sum
$$
I_{1,f,g,L}(q)
=I_{1,f,\psi,L}^0(q)+I_{1,f,\psi,L}^{+}(q)
$$
where $I_{1,f,\psi,L}^0$ restricts to those $\chi$ such that
$|L(f\otimes\chi,\frac12)|\leqslant\tilde{c}\sqrt{\log q}$.
By the Cauchy--Schwarz inequality, we have
\begin{align*}
  |I_{1,f,\psi,L}^0(q)|
  &\leqslant\tilde{c}\sqrt{\log
    q}\|\psi\|_{\infty}\ \frac1{\vphis(q)}
    \sums_{|L(f\otimes\chi,1/2)|\leqslant \tilde{c}\sqrt{\log q}}
    |A_f(\chi)|\\
  &\leqslant\tilde{c}\sqrt{\log q}\|\psi\|_{\infty}\bigg(\frac1{\vphis(q)}\sums_{\chi\bmod q}|A_f(\chi)|^2\bigg)^{1/2}.
\end{align*}
Using \eqref{SecondMomentAmplifierOnly} of
Lemma~\ref{AmplifierNormalizingWeightLemma} and recalling that
$\tilde{c}$ satisfies \eqref{OurChoicecfgeta}, we deduce that
\begin{align*}
  |I_{1,f,\psi,L}^0(q)|
  &\leqslant \tilde{c}\sqrt{c_f\eta}\|\psi\|_{\infty}\log q+O_{f,\psi}(1)\\
  &\leqslant\tfrac12c_f|\hat{\psi}(1)|\eta\log q\leqslant\tfrac12|I_{1,f,\psi,L}(q)|
\end{align*}
for sufficiently large $q$. This shows that, for sufficiently large $q$,
\begin{equation}
\label{MostOfItIsLeft}
|I_{1,f,\psi,L}^{+}(q)|\geqslant\tfrac12 |I_{1,f,\psi,L}(q)|=\tfrac12c_f\eta|\hat{\psi}(1)|\log q+O_{f,\psi,\eta}(1).
\end{equation}

On the other hand, using H\"older's inequality, we estimate
\begin{multline}
\label{ResultOfHolder}
|I_{1,f,\psi,L}^{+}(q)| \leqslant \|\psi\|_{\infty}\frac1{\vphis(q)}
\sums_{\substack{|L(f\otimes\chi,1/2)|>\tilde{c}\sqrt{\log q}\\
    g(\theta(\ftchi))\neq
    0}}\big|L\big(f\otimes\chi,\tfrac12\big)\big||A_f(\chi)|
\\
\leqslant \|\psi\|_{\infty} \bigg(\frac1{\vphis(q)}\sums_{\chi\bmod q}
\big|L\big(f\otimes\chi,\tfrac12\big)\big|^2\bigg)^{1/2}
\bigg(\frac1{\vphis(q)} \sums_{\chi\bmod q}|A_f(\chi)|^4\bigg)^{1/4}
\\
\times\bigg (\frac1{\vphis(q)}\Big|\big\{\chi\mods{q}\,\mid\,
\big|L\big(\tfrac12,f\otimes\chi\big)\big|>\tilde{c}\sqrt{\log
  q},\,\,\theta(\ftchi)\in I\big\}\Big|\bigg)^{1/4}.
\end{multline}
\par
Combining \eqref{MostOfItIsLeft}, \eqref{ResultOfHolder}, the second
moment evaluation of Theorem~\ref{th-second-final}, and
\eqref{FourthMomentAmplifierOnly} of
Lemma~\ref{AmplifierNormalizingWeightLemma}, we conclude that, for
sufficiently large $q$,
$$
\Big|\big\{\chi\mods{q}\,\mid\,
\big|L\big(f\otimes\chi,\tfrac12\big)\big|>\tilde{c}\sqrt{\log
  q},\,\theta(\ftchi)\in
I\big\}\Big|\gg_{f,\psi,I}\frac{\vphis(q)}{\log^2q},
$$
which more than suffices for the second part of
Theorem~\ref{LargeValuesAngularSectorsTheorem} for
$$
3\leqslant V\leqslant\tfrac12\log\log q+\log\tilde{c}
$$
and any $\eta<\tfrac14$.

\begin{remark}
  In place of H\"older's inequality above, one could use the
  Cauchy--Schwarz inequality and then estimate from above the
  amplified second moment for sufficiently small $\eta>0$; this would
  yield a lower bound of the same form save for the numerical values
  of various constants. We chose the above treatment which is softer
  and perhaps more universally applicable.
\end{remark}

\par
\medskip
\par
\textbf{Case 2}. \textit{The range
  $\tfrac12\log\log q+\log\tilde{c}\leqslant V\leqslant
  \tfrac3{14}\sqrt{\log q/\log\log q}$}.  In this principal range, we
use the resonator method to prove
Theorem~\ref{LargeValuesAngularSectorsTheorem}, proceeding analogously
as in \cite{Soundararajan2008}. We will be using a resonator sequence
$r(n)$ of type \eqref{DefinitionResonator2}, as studied in
Section~\ref{ManyHighValuesRangeSubsection}. For the arithmetic
factors $a_f(n)$ in the resonator polynomial $R(\chi)$, we make the
same choice as in \eqref{OurChoiceNew}, namely
\begin{equation}
\label{ChoiceOfaf}
a_f(n)=\mu^2\lambda_f(n),\quad \omega(n)=\omega'(n)=\mu^2(n)|\lambda_f(n)|^2,
\end{equation}
which satisfies the sign condition~\eqref{SignConditionOnafn}. As in
Section~\ref{OurChoiceSection}, this choice is essentially optimal: an
inspection of \eqref{SpecificChoiceOfA} shows that in generic ranges
it allows a choice $A\approx V/(b'_{\omega'}\log Q)$ and eventually to
the lower bound
$$ \gg_{f,\psi,c_1}\vphi^{\ast}(q) \exp\left(-12(b_{\omega 2}/b^{\prime 2}_{\omega'})\big(V^2/\log Q\big)\right) $$
in \eqref{ResultingLowerBoundForNumberOfChi}. Minimizing the constant
in this estimate is tantamount to asymptotically maximizing the same
ratio \eqref{MaximizeThisRatio} as in Section \ref{OurChoiceSection},
and by the Cauchy--Schwarz inequality leads to the same asymptotically
optimal choice \eqref{ChoiceOfaf}.

We have verified after Lemma~\ref{PNTNecessaryForFirstClaim} that the
choice \eqref{ChoiceOfaf} satisfies conditions
\eqref{UpperBoundsConditions}--\eqref{UpperBoundCondition} with
$b_{\omega}=b_{\omega 2}=b'_{\omega'}=1$.

Fix an arbitrary $\delta>0$ (which will be chosen suitably small under~\eqref{SpecificChoiceOfA}), and as in Section~\ref{ProofLowerBoundSection} let
$$ N=q^{1/8-\delta}. $$
Using the available estimate $\theta<\frac{5}{14}$, we have that $N(q/N)^{\theta-1/2}<q^{-\delta}$ as in Section \ref{ProofLowerBoundSection}. Further, set
$$ c<1 \quad\text{and}\quad X_0\gg_f (1-c)^{-1},\,\, A\ll_{f,c}\sqrt{\log N}\,\,\text{ as in \eqref{ConditionsConditions}}, $$
where $A$ will be suitably chosen later, $c<1$ will be chosen suitably close to $1$ under~\eqref{SpecificChoiceOfA}, and we additionally take $X_0$ sufficiently large (depending on $f$, $c$ only) so that the term $o^{\star}(1)=O\big(e^{-\tilde{\delta}X_0^2}\big)$ in Lemma~\ref{ManyLargeValuesLemma}\eqref{BasicEvaluationsClaim} is $\leqslant\tfrac1{10}$. With these choices, let the resonator sequence $r(n)$ be as in \eqref{DefinitionResonator2} and the arithmetic factors $a_f(n)$ and the resulting multiplicative functions $\omega(n),\omega'(n)$ be as in \eqref{ChoiceOfaf}, and define the resonator polynomial $R(\chi)$ as in \eqref{DefinitionResonatorPolynomial}. Let
\begin{align*}
M_1(q)&=\frac1{\vphis(q)}\sums_{\chi\bmod q}|R(\chi)|^2,\\
M_{2,f,\psi}(q)&=\frac1{\vphis(q)}\sums_{\chi\bmod q}|R(\chi)|^2\big|L\big(f\otimes\chi,\tfrac12\big)\big|\psi(\theta(\ftchi)).
\end{align*}

According to Lemma~\ref{ManyLargeValuesLemma}\eqref{BasicEvaluationsClaim}, the basic evaluations \eqref{TruncationOK1} and \eqref{TruncationOK2} hold. Lemmas~\ref{NormalizingWeightLemma} and \ref{TwistedFirstMomentLemma} then give
\begin{equation}
\label{MomentsEvaluatedReadyToBeCompared}\begin{aligned}
  M_1(q)&=\big(1+o^{\star}(1)+O(q^{-7/8})\big)\cdot NW,\\
  M_{2,f,\psi}(q)&=\big[I(\psi)(1+o^{\star}(1))\msa+O_{f,\psi,\eps}(q^{-\delta+\eps})
  \big]\cdot NW,
\end{aligned}
\end{equation}
where $o^{\star}(1)=O\big(e^{\tilde{\delta}A_0^2}\big)$ and, according
to Lemma~\ref{ManyLargeValuesLemma}\eqref{LowerBoundClaim} and
\eqref{UpperBoundOnMomentsOfr},
\begin{equation}
\label{MsaLowerBound}
\begin{aligned}
  NW&=\prod_p\big(1+r(p)^2|\lambda_f(p)|^2\big)\\
  &\qquad\leqslant\exp\bigg(A^2\log\frac{c\log N}{2A_0^2\log A_0}+O_f\bigg(\frac{A^2}{\log A_0}\bigg)\bigg),\\
  \msa&=\prod_p\bigg(1+\frac{r(p)|\lambda_f(p)|^2}{\sqrt{p}(1+r(p)^2|\lambda_f(p)|^2)}\bigg)\\
  &\qquad\geqslant\exp\bigg(A\log\frac{c\log N}{2A_0^2\log
    A_0}+O_f\bigg(\frac{A}{\log A_0}\bigg)\bigg).
\end{aligned}
\end{equation}
In particular, since our choice of $X_0$ ensures that $o^{\star}(1)\leqslant\frac1{10}$ in \eqref{MomentsEvaluatedReadyToBeCompared}, we have that for sufficiently large $q$,
\begin{equation}
\label{Combine1}
\tfrac45I(\psi)\msa M_1(q)\leqslant M_{2,f,\psi}(q).
\end{equation}

Let $C_f$ denote an implicit constant sufficient for both asymptotics in \eqref{MsaLowerBound}.
In the following claim, we now specify our choice of the parameter $A$.

\begin{claim}
With suitable $\delta>0$ and $0<c<1$, for sufficiently large $q$ we can choose $A\ll_{f,c}\sqrt{\log N}$ satisfying
\begin{equation}
\label{SpecificChoiceOfA}
\begin{aligned}
&A\leqslant\frac{V}{\log Q}, \quad Q=\frac{c\log N}{2V^2\log V}\qquad\text{such that}\\
&A\log\frac{c\log N}{2A_0^2\log A_0}=V+\log\frac{2\|\psi\|_{\infty}}{I(\psi)}+C_f\frac{A}{\log A_0}.
\end{aligned}
\end{equation}
\end{claim}

For easier reading, we postpone the proof of this technical claim to the end of this section. With our choice of $A$ satisfying \eqref{SpecificChoiceOfA}, we find that
\begin{align*}
  I(\psi)\msa&\geqslant\exp\left(A\log\frac{c\log N}{2A_0^2\log A_0}+\log I(\psi)-C_f\frac{A}{\log A_0}\right)\\
             &=\exp\big(V+\log(2\|\psi\|_{\infty})\big)=2\|\psi\|_{\infty}e^V.
\end{align*}

Then, separating the summands in $M_{2,f,\psi}(q)$ according to
whether we have $L(f\otimes\chi,\frac12)\leqslant e^V$ or not, we can
write
\begin{equation}
\label{Combine2}
M_{2,f,\psi}(q)=M_{2,f,\psi}^0(q)+M_{2,f,\psi}^{+}(q),
\end{equation}
where
\begin{equation}
\label{Combine3}
|M_{2,f,\psi}^0(q)|
\leqslant e^V\|\psi\|_{\infty}\frac{1}{\vphis(q)}\sums_{\chi:|L(f\otimes\chi,1/2)|\leqslant e^V}|R(\chi)|^2
\leqslant \tfrac12 I(\psi)\msa M_1(q).
\end{equation}
Combining \eqref{Combine1}, \eqref{Combine2}, \eqref{Combine3}, and
H\"older's inequality, we then deduce that
\begin{multline*}
  \frac3{10}I(\psi)\msa M_1(q)\leqslant |M_{2,f,\psi}^{+}(q)|
  \leqslant\|\psi\|_{\infty}\frac1{\vphis(q)}
  \sums_{\substack{|L(f\otimes\chi,1/2)|>e^V,\\
      \psi(\theta(\ftchi))\neq 0}}
  |R(\chi)|^2\big|L\big(f\otimes\chi,\tfrac12\big)\big|\\
  \leqslant\|\psi\|_{\infty}\bigg(\frac1{\vphis(q)} \sums_{\chi\bmod
    q}|R(\chi)|^8\bigg)^{1/4} \bigg(\frac1{\vphis(q)}
  \sums_{\chi\bmod q}\big|L\big(f\otimes\chi,\tfrac12\big)\big|^2\bigg)^{1/2}\\
  \times\bigg(\frac1{\vphis(q)}\Big|
  \big\{\chi\mods{q}\,\mid\,\big|L\big(f\otimes\chi,\tfrac12\big)\big|>e^V,\,\theta(\ftchi)\in
  I\big\}\Big|\bigg)^{1/4}.
\end{multline*}
According to part \eqref{UpperBoundOnMomentsOfr} of Lemma~\ref{ManyLargeValuesLemma}, and using
our evaluation of the second moment of twisted $L$-functions, we
conclude that
\begin{multline*}
  \Big|\big\{\chi\mods{q}\,\mid\,
  \big|L\big(f\otimes\chi,\tfrac12\big)\big|>e^V,\,\theta(\ftchi)\in
  I\big\}\Big| \gg_{f,\psi}\frac{\vphis(q)}{\log^2q}\big(\msa
  M_1(q)\big)^4\frac{1}{NW^{16}}
  \\
  \gg_{f,\psi}\frac{\vphis(q)}{\log^2q}\exp\bigg((4A-12A^2)\log\frac{c\log
    N}{2A_0^2\log A_0}-C_f\frac{4A+12A^2}{\log A_0}\bigg).
\end{multline*}
With our choice \eqref{SpecificChoiceOfA}, the right-hand side of this estimate is
\begin{align}
  &\gg_{f,\psi,c}\frac{\vphis(q)}{\log^2q}\exp\big((4-12A)(V+M_{\psi})-12C_fA^2/\log A_0\big)
    \nonumber  \\
  &
    \gg_\psi\vphis(q)\exp\bigg(-12(1+o^{\star}(1))\frac{V^2}{\log
    Q}\bigg),
\label{ResultingLowerBoundForNumberOfChi}
\end{align}
with
$o^{\star}(1)=O_{f,\psi}(1/V+1/(\log Q\log A_0))=O_{f,\psi}(1/\log\log
q)$.
This concludes the proof of Case 2, hence of the theorem. It only remains to prove
the technical claim.
\begin{proof}[Proof of Claim]
Let $A_1=V/\log Q$, $M_{\psi}=\log(2\|\psi\|_{\infty}/I(\psi))$, and
$$ \phi(A):=A\left(\log\frac{c\log N}{2A_0^2\log A_0}-\frac{C_f}{\log A_0}\right). $$
We will verify that, for sufficiently large $q$, $\phi(A_1)>V+M_{\psi}$; this is tedious but not difficult. Note that $2.72<\tfrac{196}{72}c(1-8\delta)<Q\leqslant c\log N$ for a sufficiently small $\delta>0$ and a $c<1$ suitably close to $1$, and in particular we have, with an absolute $\delta_{\mathrm{ll}}>0$, 
\begin{gather*}
1<\log Q,\ A_1<V,\ X_0<V,\ 0<\delta_{\mathrm{ll}}\leqslant\log\log Q,\\
 \log Q\leqslant\log\log q+O(1)<(2+\delta)V	
\end{gather*}
 for sufficiently large $q$. 

Let $X_1=\max(X_0,M_{\psi}/\delta_{\mathrm{ll}})$.
If $A_1\geqslant X_0$, then $$A_0=A_1, 2A_0^2\log A_0\leqslant 2V^2\log V/\log^2Q,$$
 and thus
$$ \phi(A_1)\geqslant\frac{V}{\log Q}\left(\log Q+2\log\log Q-\frac{C_f}{\log A_0}\right)=V+\frac{V}{\log Q}\left(2\log\log Q-\frac{C_f}{\log A_0}\right).$$

This is in particular the case if $V\geqslant (1+\delta)X_1\log\log q$, when for sufficiently large $q$, $A_1\geqslant X_1$. If $\log\log Q\geqslant C_f/\log X_0$, then the second term exceeds $\delta_{\mathrm{ll}}V/\log Q=\delta_{\mathrm{ll}}A_1\geqslant M_{\psi}$, and we are done. If $\log\log Q<C_f/\log X_0$, we have that $V^2\log V\gg_{C_f,X_0,c}\log N$ and thus $\log A_1\geqslant(\frac12-o_{C_f,X_0,c}(1))\log\log q$, and for sufficiently large $q$ the second term exceeds $V(\log\log Q/\log Q)\gg_{C_f,X_0}V>M_{\psi}$.

For $V\leqslant (1+\delta)X_1\log\log q$, we have that $\log Q=(1+o(1))\log\log q$. If $A_1\geqslant X_0$, then the above lower bound holds, with the second term $\geqslant (2+\delta)^{-1}((2+o(1))\log\log\log q-C_f/\log X_0)>M_{\psi}$ for sufficiently large $q$. Otherwise $A_0=X_0$ and 
$$ \phi(A_1)\geqslant V+\frac{V}{\log Q}\left(2\log(V/X_0)-\frac{C_f}{\log X_0}\right), $$
and the second term is again $\geqslant(2+\delta)^{-1}((2+o(1))\log\log\log q-C_f/\log X_0)>M_{\psi}$ for sufficiently large $q$.

On the other hand, for $Q_0=(c\log N)/(2X_0^2\log X_0)$ and $A_2=(V+M_{\psi})/\log Q_0$, we clearly have that $\phi(A_2)\leqslant V+M_{\psi}$. Thus the existence of $A\in [A_2,A_1]$ satisfying \eqref{SpecificChoiceOfA} follows simply by continuity. Note that $A\leqslant V/\log Q<V$ guarantees the required condition $A\ll_{f,c}\sqrt{\log N}$ for sufficiently large $q$.
\end{proof}

\section{Large values of products}
\label{LargeValuesProductsSection}
In this section, we prove
Theorem~\ref{LargeValuesOfProductsTheorem}. With the resonator
sequence of the form~\eqref{DefinitionResonator},
Section~\ref{SubsectionProductsResonator} is devoted to the
construction of arithmetic factors and verification that they satisfy
conditions for the application of the resonator method. The proof of
Theorem~\ref{LargeValuesOfProductsTheorem} then follows in
Section~\ref{SubsectionProductsProof}.

\marginpar

\subsection{Choice of the resonator polynomial}
\label{SubsectionProductsResonator}

Let $\lambda^\ast_{f}$ and $\lambda^\ast_{g}$ be the multiplicative functions supported on squarefree integers and defined by \eqref{lambdaastdef} and let 
$$ \mathcal{G}:=\big\{n\geq 1\,\mid\,
\lambda^{\ast}_f(n)\lambda^{\ast}_g(n)\not=0,\
\sgn(\lambda^{\ast}_f(n))=\sgn(\lambda^{\ast}_g(n)\big\}. $$

We construct a multiplicative arithmetic function $\varpi(n)$ supported on square-free positive integers, subject to the condition
\begin{equation}
\label{SignConditionOnVarpi}
\begin{cases} \varpi(n)=0,&n\not\in\mathcal{G},\\ \varpi(n)=0\text{ or }\sgn\varpi(n)=\sgn\lambda_f^{\ast}(n)=\sgn\lambda_g^{\ast}(n), &n\in\mathcal{G}.\end{cases}
\end{equation}
This multiplicative function $\varpi(n)$ is entirely determined by the sequence of values $\varpi(p)$.
We base our construction of the sequence $\varpi(p)$ on the simple observation that, for any two $x,y\in\Rr$,
$$ xy(x+y)^2=0\quad\text{or}\quad\sgn(xy)=\sgn\big(xy(x+y)^2\big). $$
In particular,
$$ \lambda^{\ast}_f(p)\lambda^{\ast}_g(p)\big(\lambda_f^{\ast}(p)+\lambda_g^{\ast}(p)\big)^2>0\,\Longrightarrow\,p\in\mathcal{G}. $$
We define
\begin{equation}
\label{DefinitionOfVarpi}
\varpi(p)=\begin{cases} \sgn(\lambda^{\ast}_f(p))\lambda^{\ast}_f(p)\lambda^{\ast}_g(p)\big(\lambda_f^{\ast}(p)+\lambda_g^{\ast}(p)\big), &p\in\mathcal{G},\\ 0,&p\not\in\mathcal{G}. \end{cases}
\end{equation}

Then, the multiplicative arithmetic function $\varpi(n)$ satisfies the sign condition~\eqref{SignConditionOnVarpi}, and hence
\begin{equation}
\label{DefinitionOmegaOmegaPrime3}
\omega(n)=|\varpi(n)|^2,\quad \omega'(n)=\varpi(n)\prod_{p\mid n}\big(\lambda^{\ast}_f(p)+\lambda^{\ast}_g(p)\big)\geqslant 0
\end{equation}
are non-negative multiplicative functions supported within $\mathcal{G}$. In the following lemma, we verify that $\omega$, $\omega'$ satisfy the conditions for application of the resonator method.

\begin{lemma}
\label{VerificationOfTheProductResonator}
Let $f$, $g$ be two primitive cusp forms of signed levels $r$ and $r'$
respectively and trivial central character, and let $\varpi$ be the
multiplicative arithmetic function supported on square-free integers
and defined on primes by \eqref{DefinitionOfVarpi}; in particular,
$\varpi$ satisfies \eqref{SignConditionOnVarpi}.

Then, for every $\tilde{\delta}>0$ there exists an
$X_0=X_0(\tilde{\delta},f,g)$ such that, for every $X\geqslant X_0$
and every $Y\geqslant 2X$, the non-negative multiplicative functions
$\omega,\omega'\geqslant 0$ supported on square-free integers and
defined by \eqref{DefinitionOmegaOmegaPrime3} satisfy conditions
\eqref{SecondMomentCondition}--\eqref{FourthMomentCondition} with
$\delta=\tfrac14$, $\delta'=\tfrac13$, and
\begin{equation}
\label{ValuesForAVarpis}
\begin{aligned}
  a_{\omega}&=n_{4,2}+2n_{3,3}+n_{2,4}+4\tilde{\delta},\\
  a'_{\omega'}&=n_{3,1}+2n_{2,2}+n_{1,3}-4\tilde{\delta},
\end{aligned}
\end{equation} 
where $n_{k,k'}$ are the non-negative integers defined in
Corollary~\ref{PNTsymandpower}. We have
$$
n_{3,1}+2n_{2,2}+n_{1,3}\geq 2n_{2,2}\geq 2.
$$
\end{lemma}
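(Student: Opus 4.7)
The plan is to replace the indicator $\mathbf{1}_{p\in\mathcal{G}}$ implicit in the definitions of $\omega(p)$ and $\omega'(p)$ by polynomial inequalities in $\lambda_f^{\ast}(p)$ and $\lambda_g^{\ast}(p)$, and then to apply the prime-number-theorem-type estimates of Corollary~\ref{corlambdaast} along with the on-average Ramanujan bound~\eqref{RP4}. I would start from the pointwise identity
\[
\omega(p)=\mathbf{1}_{p\in\mathcal{G}}\cdot\lambda_f^{\ast}(p)^{2}\lambda_g^{\ast}(p)^{2}\bigl(\lambda_f^{\ast}(p)+\lambda_g^{\ast}(p)\bigr)^{2},
\]
and since the right-hand side is non-negative independently of whether $p\in\mathcal{G}$, this would give the pointwise upper bound $\omega(p)\leq\lambda_f^{\ast}(p)^{2}\lambda_g^{\ast}(p)^{2}(\lambda_f^{\ast}(p)+\lambda_g^{\ast}(p))^{2}$. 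Symmetrically, on $\mathcal{G}$ the factors $\lambda_f^{\ast}(p)$ and $\lambda_g^{\ast}(p)$ have the same sign, so $\omega'(p)=\lambda_f^{\ast}(p)\lambda_g^{\ast}(p)(\lambda_f^{\ast}(p)+\lambda_g^{\ast}(p))^{2}$; off $\mathcal{G}$ the same polynomial is non-positive while $\omega'(p)=0$, which gives the pointwise lower bound $\omega'(p)\geq\lambda_f^{\ast}(p)\lambda_g^{\ast}(p)(\lambda_f^{\ast}(p)+\lambda_g^{\ast}(p))^{2}$.

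The next step is to expand these two polynomials into monomials $\lambda_f^{\ast}(p)^{k}\lambda_g^{\ast}(p)^{k'}$ with $k,k'\leq 4$, apply~\eqref{powermertens} of Corollary~\ref{corlambdaast} to each monomial, and perform a partial summation; this would yield
\[
\sum_{X\leq p\leq Y}\frac{\lambda_f^{\ast}(p)^{k}\lambda_g^{\ast}(p)^{k'}}{p\log p}=n_{k,k'}\Bigl(\frac{1}{\log X}-\frac{1}{\log Y}\Bigr)+O\Bigl(\frac{1}{\log^{3}X}\Bigr).
\]
Summing the three monomial contributions in each expanded polynomial would then produce~\eqref{SecondMomentCondition} and~\eqref{OmegaPrimeLowerBound} with the asserted leading constants $n_{4,2}+2n_{3,3}+n_{2,4}$ and $n_{3,1}+2n_{2,2}+n_{1,3}$; the $O(1/\log^{3}X)$ remainder would be absorbed into the slack $\pm 4\tilde{\delta}(1/\log X-1/\log Y)$ by choosing $X_{0}=X_{0}(\tilde{\delta},f,g)$ large enough, using $1/\log X-1/\log Y\gg 1/\log^{2}X$ whenever $Y\geq 2X$. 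The positivity claim $n_{3,1}+2n_{2,2}+n_{1,3}\geq 2n_{2,2}\geq 2$ will follow from the non-negativity of the integers $n_{k,k'}$ together with the explicit lower bound $n_{2,2}\geq 1$ recorded in the remark after Corollary~\ref{PNTsymandpower}.

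For the fourth-moment conditions~\eqref{FourthMomentCondition} with $\delta=1/4$ and $\delta'=1/3$, the key idea will be to use crude polynomial majorations and then reduce fractional exponents to integer ones via elementary inequalities such as $|y|^{3/2}\leq |y|+y^{2}$. A direct computation on $\mathcal{G}$ gives
\[
\omega(p)^{1/4}\omega'(p)\ll |\lambda_f^{\ast}(p)|^{3/2}|\lambda_g^{\ast}(p)|^{4}+|\lambda_f^{\ast}(p)|^{4}|\lambda_g^{\ast}(p)|^{3/2},
\]
and after replacing the $|y|^{3/2}$ factors by linear combinations of $|y|$ and $y^{2}$, each term becomes majorized by $\lambda_f^{\ast}(p)^{j}\lambda_g^{\ast}(p)^{j'}$ with $j,j'\leq 4$, so that Corollary~\ref{PNTsymandpower} delivers $\sum_{p\leq X}\omega(p)^{1/4}\omega'(p)\ll X/\log X\ll X(\log X)^{1/4}$. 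The second inequality in~\eqref{FourthMomentCondition} is softer: $\omega'(p)^{4/3}$ is dominated by a polynomial in $|\lambda_f^{\ast}(p)|,|\lambda_g^{\ast}(p)|$ of total degree at most $16/3<6$, and the target $\sum_{p\leq X}\omega'(p)^{4/3}\ll X^{7/6}$ is amply provided by the on-average Ramanujan bound~\eqref{RP4} via H\"older's inequality, giving $X^{1+\varepsilon}$. The main technical obstacle will be the careful polynomial bookkeeping in this last step, where the fractional exponents arising from $\omega(p)^{1/4}$ must be reduced to integer-exponent monomials covered by Corollary~\ref{PNTsymandpower}; conceptually, however, the entire proof is an unpacking of the polynomial structure of $\varpi(p)$ on and off $\mathcal{G}$, combined with the analytic input already assembled in Section~\ref{MomentsHeckeEigenvaluesSection}.
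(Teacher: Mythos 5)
Your proposal is correct and takes essentially the same approach as the paper: both reduce $\omega$ and $\omega'$ to polynomial majorants/minorants in $\lambda_f^{\ast}(p)$, $\lambda_g^{\ast}(p)$ by observing that the defining polynomial is non-negative everywhere (for the upper bound on $\omega$) and non-positive off $\mathcal{G}$ (for the lower bound on $\omega'$), then expand $(\lambda_f^{\ast}(p)+\lambda_g^{\ast}(p))^2$ and feed the resulting degree-$(k,k')$ monomials with $k,k'\leq4$ into the prime number theorems of Corollary~\ref{PNTsymandpower}/Corollary~\ref{corlambdaast}, absorbing the error into the $\pm4\tilde{\delta}$ slack by choosing $X_0$ large. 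Your route to the monomial estimate via~\eqref{powermertens} and partial summation (rather than invoking~\eqref{powerresonator} directly) and your handling of the fractional exponents in~\eqref{FourthMomentCondition} by $|y|^{3/2}\leq|y|+y^2$ and H\"older plus~\eqref{RP4} are minor variations, not genuinely different arguments.
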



\begin{proof}
  We use Corollary~\ref{corlambdaast} with various values of the
  parameters $(k,k')$ and the given $\tilde{\delta}>0$ to verify that
  \eqref{SecondMomentCondition}--\eqref{FourthMomentCondition} are
  satisfied with the stated values of the parameters $a_{\omega}$ and
  $a'_{\omega'}$. Choose $X_0 = X_0(\tilde{\delta},f,g)\geq 4$ so that
$$
\sum_{x\leq p\leq y}\frac{\lf(p)^k\lamg(p)^{k'}}{p\log p} \geq \big(
  n_{k,k'} -\tilde{\delta}\big)\left(\frac{1}{\log x}-\frac{1}{\log
  y}\right)
$$ 
and
$$ \Big| \sum_{x\leq p\leq y}\frac{\lf(p)^k\lamg(p)^{k'}}{p\log p}\Big|
\leq \big( n_{k,k'} +\tilde{\delta}\big)\left(\frac{1}{\log
  x}-\frac{1}{\log y}\right)
$$ 
for $1\leq k$, $k'\leq 4$, and for $y\geq 2x \geq 2X_0\geq 4$, which
is possible by \eqref{powerresonator}.

First, we find that for $Y\geq 2X\geq 42$, we have
\begin{align*}
  \sum_{X\leqslant p\leqslant Y}\frac{\omega(p)}{p\log p}
  &\leq \sum_{X\leqslant p\leqslant Y}\frac{\lambda_f^{\ast}(p)^2\lambda_g^{\ast}(p)^2(\lambda_f^{\ast}(p)^2+2\lambda_f^{\ast}(p)\lambda_g^{\ast}(p)+\lambda_g^{\ast}(p)^2)}{p\log p}\\
  &\leqslant \big(n_{4,2}+2n_{3,3}+n_{2,4}+4\tilde{\delta}\big)\left(\frac1{\log X}-\frac1{\log Y}\right).
\end{align*}
This verifies \eqref{SecondMomentCondition} with
$a_{\omega}=n_{4,2}+2n_{3,3}+n_{2,4}+4\tilde{\delta}$.

We proceed to the proof of \eqref{OmegaPrimeLowerBound}. Keeping in
mind
that
$$
\lambda_f^{\ast}(p)\lambda_g^{\ast}(p)(\lambda_f^{\ast}(p)
+\lambda_g^{\ast}(p))^2\leqslant 0
$$ 
if $p\not\in\mathcal{G}$, we deduce that
\begin{align*}
\sum_{X\leqslant p\leqslant Y}\frac{\omega'(p)}{p\log p}
&=\sum_{\substack{X\leqslant p\leqslant Y\\p\in\mathcal{G}}}\frac{\lambda_f^{\ast}(p)\lambda_g^{\ast}(p)\big(\lambda_f^{\ast}(p)+\lambda_g^{\ast}(p)\big)^2}{p\log p}\\
&\geqslant\sum_{X\leqslant p\leqslant Y}\frac{\lambda_f^{\ast}(p)\lambda_g^{\ast}(p)(\lambda_f^{\ast}(p)^2+2\lambda_f^{\ast}(p)\lambda_g^{\ast}(p)+\lambda_g^{\ast}(p)^2)}{p\log p}\\
&\geqslant \big(n_{3,1}+2n_{2,2}+n_{1,3}
  -4\tilde{\delta}\big)\left(\frac1{\log X}-\frac1{\log Y}\right)
\end{align*}
for $Y\geq 2X\geq 4$.  This verifies \eqref{OmegaPrimeLowerBound} with
$a'_{\omega'}=n_{3,1}+2n_{2,2}+n_{1,3}-4\tilde{\delta}\geq
2-4\tilde{\delta}.$

Finally, we check that~\eqref{FourthMomentCondition} holds. Recalling
the choice $\delta=\tfrac14$ and using a simple dyadic subdivision, we
first find that
\begin{align*}
  \sum_{p\leqslant X}\omega(p)^{\delta}\omega'(p)
  &\leqslant\sum_{p\leqslant X}\big|\lambda_f^{\ast}(p)\lambda_g^{\ast}(p)\big|^{1+2\delta}\big(|\lambda_f^{\ast}(p)|+|\lambda_g^{\ast}(p)|\big)^{2+2\delta}\\
  &\ll\big(n_{4,2}+n_{4,0}+n_{2,4}+n_{0,4}+4\tilde{\delta}\big)\frac{X}{\log X},
\end{align*}
with an absolute implied constant. Similarly recalling that
$\delta'=\tfrac13$, we obtain
\begin{align*}
  \sum_{p\leqslant X}\omega'(p)^{1+\delta'}
  &\leqslant\sum_{p\leqslant X}\big|\lambda_f^{\ast}(p)\lambda_g^{\ast}(p)\big|^{1+\delta'}\big(|\lambda_f^{\ast}(p)|+|\lambda_g^{\ast}(p)|\big)^{2+2\delta'}\\
  &\ll\big(n_{4,2}+n_{4,0}+n_{2,4}+n_{0,4}+4\tilde{\delta}\big)\frac{X}{\log X}.
    \qedhere
\end{align*}
\end{proof}

\subsection{Proof of Theorem~\ref{LargeValuesOfProductsTheorem}} 

In this section, we complete the proof of
Theorem~\ref{LargeValuesOfProductsTheorem}.
\label{SubsectionProductsProof}

\begin{proof}
We use a resonator polynomial \eqref{ResonatorPolynomial2}, with the resonator sequence $r(n)$ as in \eqref{DefinitionResonator}, and arithmetic factors $\varpi(n)$ chosen as in \eqref{DefinitionOfVarpi} in Section~\ref{SubsectionProductsResonator}. Let multiplicative functions $\omega$, $\omega'_1$, $\omega'_2$, and $\omega'$ be as in \eqref{DefOmegaOmegaPrime2} and \eqref{DefinitionOmegaOmegaPrime3}. Using Lemma~\ref{VerificationOfTheProductResonator}, we have that $\omega'_1$ and $\omega'_2$ satisfy \eqref{OmegaPrimeNotCrazy}, while $\omega$ and $\omega'$  satisfy the conditions~\eqref{SecondMomentCondition}--\eqref{FourthMomentCondition} with $a_{\omega}$, $a'_{\omega'}$ as in \eqref{ValuesForAVarpis}.

Fix an arbitrary $\delta>0$, and apply Lemmas~\ref{NormalizingWeightLemma} and \ref{SecondMomentWithResonatorLemma} with
$$ N=q^{1/360-\delta} $$ and
$L=\sqrt{a_{\omega}^{-1}\log N\log\log N}$. According to
Lemma~\ref{SmallTailsLemma}, $\omega$ satisfies the basic
evaluation~\eqref{TruncationOK1}; according to
Lemma~\ref{SmallTailsLemmaProducts}, $\omega$ and $\omega'$ satisfy
the basic evaluation~\eqref{TruncationOK2Products}. In turn,
Lemmas~\ref{NormalizingWeightLemma} and
\ref{SecondMomentWithResonatorLemma} give
$$
  \frac1{\vphis(q)}\sums_{\chi\bmod
    q}|R(\chi)|^2\sim\prod_p\big(1+r(p)^2\omega(p)\big)
$$
and
\begin{multline*}
  \frac1{\vphis(q)}\sums_{\chi\bmod q}|R(\chi)|^2L\big(f\otimes\chi,\tfrac12\big)L\big(g\otimes\bar{\chi},\tfrac12\big)\\
  =L^{\ast}(f\otimes g,1)(\nu+o^{\star}(1)\big)
  \prod_p\bigg(1+r(p)^2\omega(p)+\frac{r(p)\omega'(p)}{\sqrt{p}}\bigg)\\
  +O\bigg(N^{5/2}q^{-1/144}\prod_p\big(1+r(p)^2\omega(p)\big)\bigg),
\end{multline*}
where $\nu\not=0$ depends only on $f$ and $g$, with implicit constants
depending on $(f,g,\delta,\eps)$. According to Lemma~\ref{GainLemma},
\begin{align*}
&\prod_p\bigg(1+r(p)^2\omega(p)+\frac{r(p)\omega'(p)}{\sqrt{p}}\bigg)\\
&\qquad\gg\exp\bigg(\big(a'_{\omega'}+o^{\star}(1)\big)\frac{L}{2\log L}\bigg)\prod_p\big(1+r(p)^2\omega(p)\big).
\end{align*}
Since $L^{\ast}(f\otimes g,1)\not=0$ (Lemma~\ref{lm-ast}), it follows
that there exists at least one primitive character $\chi$ of conductor
$q$ such that
$$ \big|L\big(f\otimes\chi,\tfrac12\big)L\big(g\otimes\bar{\chi},\tfrac12\big)\big|\gg\exp\bigg(\big(\tfrac1{6\sqrt{10}}\sqrt{1-360\delta}+o^{\star}(1)\big)\frac{a'_{\omega'}}{\sqrt{a_{\omega}}}\sqrt{\frac{\log q}{\log\log q}}\bigg), $$
with $o^{\star}(1)=O(\log\log\log q/\log\log q)$. Since we may take
$\delta$ and $\tilde{\delta}$ (implicit in $a_{\omega}$ and
$a'_{\omega'}$) as small as we wish, this proves the first statement
of Theorem~\ref{LargeValuesOfProductsTheorem} with the constant
\begin{equation}
\label{ExponentObtained}
C_{f,g}:=\frac1{6\sqrt{10}}C_{f,g}^0,\quad C_{f,g}^0:=\frac{n_{f,g,3,1}+2n_{f,g,2,2}+n_{f,g,1,3}}{(n_{f,g,4,2}+2n_{f,g,3,3}+n_{f,g,2,4})^{1/2}}
\end{equation}
in the exponent. It is clear that there is an absolute lower bound $C>0$ for $n_{f,g}^0$.
\end{proof}

\begin{remark}
\label{ConstantRemark}
It is clear that, in determining $C>0$, it suffices to consider the case when $f$ and $g$ are not scalar multiples of each other, for otherwise Theorem~\ref{LargeValuesOfProductsTheorem} follows, for example, from Theorem~\ref{LargeValuesAngularSectorsTheorem} (with a better exponent).

In a generic situation, where neither $f$ nor $g$ are of polyhedral
type (in particular, $\sym^k\pi_f$, $\sym^k\pi_g$ are cuspidal for all
$k\leqslant 4$) and if $\sym^k\pi_f\not\simeq\sym^k\pi_g$ for every
$k\leqslant 4$, then $$C_{f,g}^0=(0+2+0)/(2+0+2)^{1/2}=1$$ and
consequently $$C_{f,g}=\tfrac1{6\sqrt{10}}$$ in
\eqref{ExponentObtained} and
Theorem~\ref{LargeValuesOfProductsTheorem}.

Any of the terms in \eqref{ExponentObtained}, including $n_{f,g,k,k'}$
when $k$ and $k'$ are not both even, can take values larger than the
generic ones, for several distinct reasons: first, some of
$\sym^k\pi_f$, $\sym^k\pi_g$ might not be cuspidal, and the
classification of their isobaric components is quoted in
Section~\ref{ssec-symk}; second, it is possible to have
$\sym^k\pi_f\simeq\sym^k\pi_g$ if $f$ and $g$ are character twists of
each other (necessarily by a quadratic character due to the trivial
central character); and, third, $\sym^3\pi_f\simeq\sym^3\pi_g$ can
happen even if $f$ and $g$ are not character twists of each other (see
Ramakrishnan's paper~\cite{Ramakrishnan2015}; for this case, while the
known examples arise from icosahedral representations, and are
conjectured to be exhaustive, this is not known unconditionally).

Thus, in most cases, cusp forms $f$ and $g$ for which this happens can
be explicitly classified, and then the constant $C_{f,g}$ can probably
be improved by using custom-made arithmetic factors; however, since
such a classification is not available in at least one of the cases,
and since getting a tight universal lower bound for our $C_{f,g}^0$
involves an uninspiring case-by-case computation, we are satisfied
simply with stating the existence of such a lower bound.
\end{remark}


\chapter{Upper bounds for the analytic rank}\label{analyticrank}

\section{Introduction}

In this chapter, we prove Theorem \ref{thm-exponentialdecay}.  We
again fix $f$ as in Section~\ref{intro}, and we recall the statement.

\begin{theorem} \label{thmexpbound} There exist constants $R\geq 0$, $c>0$ such that
\begin{equation}\label{expbound}
  \frac{1}{\vphis(q)}\sums_{\chi\mods q}\exp(c\rk_{an}(f\otimes\chi))\leq \exp(cR)
\end{equation}
for all primes $q$.
\end{theorem}

The proof follows the method of Heath-Brown and Michel, who
established a version of Theorem \ref{thm-exponentialdecay} for the
analytic rank in the family of Hecke $L$-functions of primitive
holomorphic cusp forms of weight $2$ and level $q$ (\cite{HBMDMJ}*{Thm 0.1 \&
Cor 0.2}). This method is robust and general and could be
axiomatized (using the definition of families of $L$-functions as
provided in \cites{KoFam,SST}); we will merely indicate where to
modify the original argument of \cite{HBMDMJ}*{\S 2, p.\ 497}.

\section{Application of the explicit formula}

The basic principle is to use the explicit formula of Weil
(Proposition~\ref{Weilexplicit}) to bound the analytic rank by a sum
over the primes.

Let $\phi$ be a smooth non-negative function, compactly supported in
$[-1,1]$. We denote by 
$$
\what{\phi}(s)=\int_{\Rr}\phi(t)e^{st}dt
$$ 
\label{pg-phi} its Fourier-Laplace transform, which is an entire function of
$s\in\Cc$. 

In this chapter, we assume that such a function $\phi$ is chosen once
and for all, with the properties that $\what{\phi}(0)=1$ and
$\Reel(\what{\phi}(s))\geq 0$ for all $s\in\Cc$ such that
$|\Reel(s)|\leq 1$. (The existence of such functions is standard, see,
e.g.,~\cite{IwKo}*{Prop.\ 5.55}.)

\begin{proposition}\label{pr-anrk}
  Let $\xi>1/10000$ be some parameter. We have the inequality
\begin{equation}\label{rankupperboundWeil}
  \xi \rk_{an}(f\otimes\chi)\leq 
  2\phi(0)\log q-S(f\otimes\chi)-S(f\otimes\ov\chi)-
  2\,\xi\,\Xi(f\otimes\chi)+O_{\phi,f}(\xi)	
\end{equation}
where
$$S(f\otimes\chi)=\sum_{p}\frac{\chi(p)\lf(p)\log p}{p^{1/2}}\phi\Bigl(\frac{\log p}{\xi}\Bigr)$$
and
$$\Xi(f\otimes\chi)=\sum_{\Re(\rho-\frac12)\geq\frac1\xi}\Re\Bigl\{\hat\phi\Bigl(\xi(\rho-\frac12) \Bigr)\Bigr\}$$
where $\rho$ ranges over the non-trivial zeros of $L(\ftchi,s)$.
\end{proposition}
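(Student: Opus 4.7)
The plan is to apply Proposition~\ref{Weilexplicit} with the test function $\varphi(x) = x^{-1/2}\phi(\log x/\xi)$, which is smooth and compactly supported in $[e^{-\xi},e^\xi]$. A change of variables $x = e^u$ followed by $v = u/\xi$ in the Mellin integral gives $\tilde\varphi(s) = \xi\hat\phi(\xi(s-\tfrac12))$, so the test function concentrates around the central point on scale $1/\xi$. Replacing $\phi$ by its symmetrization $(\phi(t)+\phi(-t))/2$ if necessary (which preserves $\hat\phi(0)=1$, $\phi(0)$, the support, and the positivity $\Re\hat\phi\geq 0$ on $|\Re s|\leq 1$), I may assume $\phi$ is even, so that $\psi(x) = x^{-1}\varphi(1/x) = \varphi(x)$ and $\hat\phi$ is real and even on $\Rr$.

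With this choice, the left-hand side of the explicit formula becomes $\sum_n\Lambda_f(n)(\chi(n)+\bar\chi(n))\varphi(n)$; the prime terms recover exactly $S(f\otimes\chi)+S(f\otimes\bar\chi)$, while the prime-power contributions ($k\geq 2$) are $O_{\phi,f}(\xi)$. Indeed, for $k\geq 3$ the Kim--Sarnak bound $|\alpha_p^k+\beta_p^k|\leq 2p^{k\theta}$ with $\theta\leq 7/64$ makes the sum absolutely convergent, while for $k=2$ the sum $\sum_{p\leq e^{\xi/2}}(\alpha_p^2+\beta_p^2)(\chi^2(p)+\bar\chi^2(p))(\log p)/p$ is handled by a Rankin--Selberg/PNT argument (cf.\ Corollary~\ref{PNTsymandpower}) to be $O_{\phi,f}(\log(2+\xi))=O_{\phi,f}(\xi)$ for $\xi>1/10000$. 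On the right-hand side, the conductor term evaluates to $2\phi(0)\log q + O_f(1)$ (the conductor of $f\otimes\chi$ being $q^2|r|$ by Lemma~\ref{lm-fneq}), and the archimedean contour integral on $\Re s=\tfrac12$ is $O_{\phi,f}(1)$: the substitution $u=\xi t$ in $\tilde\varphi(\tfrac12+it)=\xi\hat\phi(i\xi t)$ absorbs the factor of $\xi$, and the rapid decay of $\hat\phi$ on the imaginary axis dominates the logarithmic growth of the digamma factors uniformly for $\xi>1/10000$.

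The remaining input is the lower bound $\sum_\rho\hat\phi(\xi(\rho-\tfrac12))\geq \rk_{an}(f\otimes\chi) + 2\Xi(f\otimes\chi)$, after which rearrangement yields the claimed inequality. The zero set of $\Lambda(f\otimes\chi,s)$ is invariant under $\rho\mapsto 1-\bar\rho$ by Lemma~\ref{lm-fneq}. Pairing an off-critical-line zero $\rho$ with $\Re\rho>\tfrac12$ with its reflected partner $1-\bar\rho$, and using that $\hat\phi$ is real and even so that $\hat\phi(\xi(1-\bar\rho-\tfrac12))=\overline{\hat\phi(\xi(\rho-\tfrac12))}$, gives a combined contribution $2\Re\hat\phi(\xi(\rho-\tfrac12))$ per such pair. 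Self-paired zeros on the critical line contribute real values $\hat\phi(i\xi\Im\rho)\geq 0$ by the hypothesis $\Re\hat\phi\geq 0$ on $|\Re s|\leq 1$; in particular, the central zero contributes $\hat\phi(0)=1$ with multiplicity $\rk_{an}(f\otimes\chi)$. Splitting the off-critical-line pairs at $\Re\rho-\tfrac12=1/\xi$, the nearby ones contribute non-negatively by the same positivity hypothesis, while the far ones contribute exactly $2\Xi(f\otimes\chi)$. The main technical point is the uniform control of the $k=2$ prime-power term in $\xi$: the Kim--Sarnak bound alone would only give growth like $e^{\theta\xi}$, so one must exploit cancellation from the character sum $\chi^2(p)+\bar\chi^2(p)$ via a Rankin--Selberg zero-free region to achieve the required $O_{\phi,f}(\xi)$.
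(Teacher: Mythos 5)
Your approach is the same as the paper's: apply Proposition~\ref{Weilexplicit} with $\vphi(y)=y^{-1/2}\phi(\log y/\xi)$, isolate the primes, bound the prime-power and archimedean terms, and discard zeros by positivity. You correctly compute $\wtilde\vphi(s)=\xi\hat\phi(\xi(s-1/2))$; the paper's proof omits the factor $\xi$ (a typo), and your version is the one consistent with the $\xi\rk_{an}$ and $2\xi\Xi$ normalizations in the statement. Your reduction to even $\phi$, your pairing of zeros $\rho\leftrightarrow 1-\bar\rho$, and your archimedean estimate are all correct and spelled out more carefully than the paper.

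However, your description of how the $l=2$ prime-power term is controlled is wrong, and if followed literally it would break the uniformity in $q$ that the proposition requires. You claim that one must ``exploit cancellation from the character sum $\chi^2(p)+\bar\chi^2(p)$ via a Rankin--Selberg zero-free region.'' Any zero-free region involving the twist $\chi^2$ would necessarily give a bound depending on the analytic conductor of $\symf\otimes\chi^2$, hence on $\log q$; the error term $O_{\phi,f}(\xi)$ must not depend on $q$ at all. The actual mechanism is orthogonal to this: one bounds the characters trivially, $|\chi^2(p)+\bar\chi^2(p)|\leq 2$, and exploits the identity $\alpha_p^2+\beta_p^2=\lambda_f(p)^2-2=\lambda_{\symf}(p)-1$ for $p\nmid r$. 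Then
$$
\Bigl|\sum_{p}(\alpha_p^2+\beta_p^2)(\chi^2(p)+\bar\chi^2(p))\frac{\log p}{p}\phi\Bigl(\frac{2\log p}{\xi}\Bigr)\Bigr|
\leq 2\|\phi\|_\infty\sum_{p\leq e^{\xi/2}}\frac{(\lambda_f(p)^2+2)\log p}{p}\ll_f\xi,
$$
where the last step is Corollary~\ref{PNTsymandpower} (Mertens for $\symf$ over $\Qq$, with \emph{no} character present). The point is that Rankin--Selberg replaces the crude Kim--Sarnak pointwise bound $|\alpha_p^2+\beta_p^2|\leq 2p^{2\theta}$ (which indeed gives only $e^{\theta\xi}$) by the average statement $\sum_{p\leq x}\lambda_f(p)^2\frac{\log p}{p}=n_{2,0}\log x+O(1)$; no cancellation from $\chi^2$ is used or needed. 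Your intermediate claim of $O_{\phi,f}(\log(2+\xi))$ is also not attainable by this route; the true size is of order $\xi$, which is exactly what is needed.
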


\begin{proof} 
  We apply \eqref{eq-Weil} to the function
$$
\vphi(y)=\frac{1}{\sqrt{y}}\phi\Bigl(\frac{\log y}\xi\Bigr),
$$
with Mellin transform
$$
\wtilde\vphi(\rho)=\what\phi\Bigl(\xi\Bigl(\rho-\frac{1}2\Bigr)\Bigr).
$$
On the side of the sum over powers of primes, we easily get
$$
\sum_{l\geq
  2}\sum_{p}\frac{\chi(p)^l\Lambda_f(p^l)}{p^{l/2}}\phi\Bigl(\frac{l\log
  p}{\xi}\Bigr)\ll \xi
$$
by distinguishing the case $l=2$ (for which one applies
Corollary~\ref{PNTsymandpower} after noting that
$\Lambda_f(p^2)=(\lambda_{\symf}(p)-1)\log p$ for $p\nmid r$) and the case
$l\geq 3$ (when the series can be extended to all primes and converges
absolutely). The same bound holds for the corresponding sum with
$\bar{\chi}$. Then $S(f\otimes\chi)+S(f\otimes\ov\chi)$ is the
contribution of the primes themselves to the explicit formula.
\par
On the side of the zeros, the assumption on the test function shows
that the contribution of any subset of the zeros of
$L(f\otimes\chi,s)$ may be dropped by positivity from the explicit
formula to obtain an upper bound as in the statement of the
proposition.
\end{proof}


\begin{remark}
  Note that from \eqref{rankupperboundWeil}, by taking $\xi=1$ and a
  suitable $\phi$ (see \cite[p.217 Example]{mestre}), one obtains the pointwise bound
\begin{equation}\label{eqrankbound}
\rk_{an}(\ftchi)\leq 2\log q+O_f(1).	
\end{equation}

\end{remark}

\subsection{Bounds for moments of the analytic rank}

Theoerem \ref{thmexpbound} is a consequence of the following
proposition which bound the moments of the analytic ranks:

\begin{proposition}\label{propgoal-rk} There exists an absolute
constant $C$ such that for all (sufficiently large) primes $q$ and for all integers
$k\geq 1$, one has
\begin{equation}\label{eq-goal-rk}
\frac{1}{\vphis(q)}\sums_{\chi\mods{q}} \rk_{an}(f\otimes\chi)^{2k}\ll_f
(Ck)^{2k}.
\end{equation}
\end{proposition}

Assuming \eqref{eq-goal-rk}, we deduce \eqref{expbound}: fix $A>0$ such $AC<1/3$; then
$$
\sum_{k\geq 0} \frac{(ACk)^{k}}{k!}<\infty,
$$
by Stirling's formula, and we therefore obtain
$$
\frac{1}{\vphis(q)}\sums_{\chi\mods{q}}
\exp(A\rk_{an}(f\otimes\chi))<\infty
$$
for all primes $q$, as desired.\qed

It remains to prove Proposition \ref{propgoal-rk}.

\subsection{Reduction to a mean square estimate}

In this section we reduce the proof of Proposition \ref{propgoal-rk} to a "second moment" upper bound (Theorem \ref{THM-MOMENT2UPPERBOUND} below) .

To prove Proposition \ref{propgoal-rk} we observe first that because of \eqref{eqrankbound}, we may assume that
$$k\leq \log(q/2).$$
Now for any $k\in[1,\log(q/2)]$, we set
$$\xi=\frac{\log(q/2)}k\geq 1.$$
By Proposition~\ref{pr-anrk}, it is enough to prove that there exists a constant $C>0$, depending only on $\phi$, such that
\begin{align}\label{S2kbound}
  \frac{1}{\vphis(q)}\sums_{\chi\mods
  q}|S(f\otimes\chi)|^{2k}\ll_{f,\phi} (Ck\xi)^{2k}	\\
  \label{Xi2kbound}
  \frac{1}{\vphis(q)}\sums_{\chi\mods
  q}|\Xi(f\otimes\chi)|^{2k}\ll_{f,\phi} (Ck)^{2k}.
\end{align}

We can quickly deal with the first bound as in~\cite{HBMDMJ}*{\S 2.1
(6)}. Since $\exp(\xi)<q^{1/k}$, for any prime numbers
$p_i\leq \exp(\xi)$ for $1\leq i\leq 2k$, we have the equivalence
$$
p_1\cdots p_k\equiv p_{k+1}\cdots p_{2k}\, (\bmod q)
\Longleftrightarrow p_1\cdots p_k= p_{k+1}\cdots p_{2k},$$ hence the
left-hand side of~(\ref{S2kbound}) is bounded by
$$
\frac{\vphi(q)}{\vphis(q)}
\sumsum_{p_1,\cdots,p_{2k}}\prod_{i=1}^{2k}\frac{\lf(p_i)\log
  p_i}{p_i^{1/2}}\phi\Bigl(\frac{\log p_i}
{\xi}\Bigr)\delta_{p_1\cdots p_k=p_{k+1}\cdots p_{2k}},
$$
which is
\begin{multline*}
  \leq \frac{\vphi(q)}{\vphis(q)} k!\, \underset{p_1,\dots,p_{k}}{\sum
    \cdots\sum}\ \prod_{i=1}^{k}\frac{\lf(p_i)^2\log^2
    p_i}{p_i}\phi\Bigl(\frac{\log p_i}
  {\xi}\Bigr)^2\\
  =\frac{\vphi(q)}{\vphis(q)} k
  !\,\Bigl(\,\sum_{p}\frac{\lf(p)^2\log^2 p}{p}\phi\Bigl(\frac{\log p}
  {\xi}\Bigr)^2\,\Bigr)^k
  \ll (Ck\xi)^{2k}
\end{multline*}
by Corollary~\ref{PNTsymandpower}.  This proves \eqref{S2kbound}.

The control of the sum over the zeros in~(\ref{Xi2kbound}), is achieved by reducting to a second
moment estimate. This reduction follows  general principles and is explained in~\cite{HBMDMJ}*{Th. 0.4, \S 2.2}. 

Let $0<\lambda<\expoLmax$ be fixed. Define
\begin{equation}\label{chooseL}
L=q^\lambda
\end{equation}
and for $x \geq 0$, let
$$
P(x)=\begin{cases}2x&\hbox{ if }0\leq x\leq 1/2\\
1&	\hbox{ if }1/2\leq x\leq 1.
\end{cases}
$$
Define furthermore $$\bfx_L:=(x_l)_{l\leq L},$$ where the $x_l$ are
defined in \eqref{xldef}. Let then $M(\ftchi,\bfx_L)$ be as
in~\eqref{defM}. The reduction step mentioned above (which relies in
particular on an important lemma of Selberg, see~\cite{HBMDMJ}*{Lemma
1.1}) shows that~(\ref{Xi2kbound}) follows from:

\begin{theorem}\label{THM-MOMENT2UPPERBOUND} 
  For every $0 < \lambda <1/360$ there exists
  $\eta =\eta (\lambda) >0$, such that for any prime $q\geq 2$ and 
  any $\sigma-\frac12\geq-\frac{1}{\log q}$, we have 
$$
\frac{1}{\vphis(q)}\sums_{\chi\mods q}\Bigl\vert
L(\ftchi,s)M(\ftchi,\bfx_L)-1\Bigr\vert^2\ll
|s|^{O(1)}q^{-\eta(\sigma-\frac12)},
$$
where~$L=q^{\lambda}$.
\end{theorem}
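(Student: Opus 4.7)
My plan is to prove Theorem~\ref{THM-MOMENT2UPPERBOUND} by opening the square
$$|L(\ftchi,s)M(\ftchi,\bfx_L)-1|^2 = |L(\ftchi,s)M(\ftchi,\bfx_L)|^2 - 2\Reel\big(L(\ftchi,s)M(\ftchi,\bfx_L)\big) + 1,$$
and evaluating each piece on average over primitive characters modulo $q$. The heart of the argument is a quantitative refinement of the computation carried out in Section~\ref{moll-sec-mom} at $s=1/2$, extended to complex $s$ with $\Re s = \tfrac12+\beta$ where $\beta\geq -1/\log q$, and tracking the dependence on $\beta$ in the error term. The target is to show that the sum of the three contributions is $\ll |s|^{O(1)} q^{-\eta\beta}$.

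First I would compute the mollified second moment. Applying Theorem~\ref{thsecondmoment} with $f=g$ and the admissible exponent $L=q^\lambda$, $\lambda<1/360$, the error term $|s|^{O(1)}L^{\expoL}q^{-\expoq+\eps}$ contributes $\ll|s|^{O(1)}q^{-\eta}$ for some $\eta>0$, as in Section~\ref{moll-sec-mom}. For the main term, I would expand $|M|^2=\sum_{d,\ell,\ell'}d^{-1}x_{d\ell}\ov{x_{d\ell'}}(\ell\ell')^{-1/2}$ with $(\ell,\ell')=1$, plug into $\MT^\pm(f,f;\ell,\ell')$, and write the result as a triple contour integral in $(u,v,w)$ of a product involving $L(s,s,\bar s,u,v,w)$ from Lemma~\ref{lm-factor}, Mellin transforms $\what{P}_L(v),\what{\ov P}_L(w)$, and a factor $(q^2|r|^2)^u$. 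The dual (functional equation) half produces analogous contours with $(1-s,1-\bar s,u,v,w)$ and an extra factor $(q^2|r|^2)^{2u-(s-1/2)-(\bar s-1/2)}$; crucially this dual piece carries a size factor $(q|r|)^{-2\beta}$ that is already a power saving of the correct form when $\beta>0$. I would then shift the $u,v,w$ contours to the left of zero, using the zero-free region of Proposition~\ref{AutomorphicPNTLemma} (via Lemma~\ref{lm-factor}), and extract residues to obtain an explicit main term depending on $(s,\bar s)$ that specializes at $s=1/2$ to $2\eta_3(0,0,0)(1+2/\lambda)$.

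Next I would evaluate the mollified first moment $\frac{1}{\vphis(q)}\sums_\chi L(\ftchi,s)M(\ftchi,\bfx_L)$. By Theorem~\ref{thm-twistedfirstmoment} (in the $s$-variable version, which comes from the same proof with $V_{f,s}$ in place of $V_{f,1/2}$), the diagonal $\ell=1$ contributes $x_1=1$ times $V_{f,s}(1/(q\sqrt{|r|}))$, the terms $\ell>1$ contribute negligibly since $\bar\ell_q\geq q/L$ forces the Mellin weight to be tiny, and the sign-twisted part ($k=-2$ in Corollary~\ref{cor-k=-2}) is likewise dominated by $(q|r|)^{-2\beta}$ up to the decay coming from Lemma~\ref{lm-decay-mollif}. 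Rewriting $V_{f,s}$ via its defining contour integral and shifting, the first moment becomes $1+O(\exp(-c\sqrt{\log L}))$ in the same contour representation used for the second moment, so that when I form $|LM|^2-2\Re(LM)+1$, the two ``$1$'' pieces cancel and the leading residues from $|LM|^2$ are similarly cancelled up to terms involving $(q|r|)^{-2\beta}$ or weights $L^{-\alpha}$ with $\alpha>0$.

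The main obstacle will be to arrange the contour shifts so that every surviving term carries a genuine factor $q^{-\eta\beta}$ for some $\eta=\eta(\lambda)>0$, rather than merely $q^{-\eta}$. Concretely, after the cancellation of constant residues, the remaining contributions are of three types: (i) the dual functional-equation piece, bounded by $|s|^{O(1)}q^{-2\beta}$ directly; (ii) residues on shifted contours, where a contour at $\Re u=-c$ contributes $q^{-2c}$ combined with the factor $(q^2|r|^2)^u$ evaluated at $u\approx-(s-1/2)$, yielding $q^{-\eta\beta}$ with $\eta$ depending on how far we can push the contour inside the zero-free region and on $\lambda$; and (iii) the power-saving error term in Theorem~\ref{thsecondmoment}, already of shape $q^{-\eta}$. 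I would optimize the shift, using the bounds in Corollary~\ref{cor29} to control the size of $L(\sym^k f\otimes \sym^{k'}f,s)$ on the shifted contour (combined with standard convexity for Gamma factors to absorb $|s|^{O(1)}$), to produce $\eta=\eta(\lambda)>0$ of the required form, thereby completing the proof of Theorem~\ref{THM-MOMENT2UPPERBOUND}.
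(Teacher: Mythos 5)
The paper proves this theorem by a Phragmen--Lindel\"of (subharmonicity) interpolation, which you do not mention at all. Specifically, the paper first observes that for $\sigma\geq 2$ the mollifier exactly cancels the leading term of the Dirichlet series for $L(\ftchi,s)$, giving $L(\ftchi,s)M(\ftchi,\bfx_L)-1\ll L^{-\frac12(\sigma-1)+\eps}$ elementarily; by convexity for the subharmonic function $s\mapsto \frac{1}{\vphis(q)}\sums_\chi|L(\ftchi,s)M(\ftchi,\bfx_L)-1|^2$, it then suffices to prove the single bound $\frac{1}{\vphis(q)}\sums_\chi|L(\ftchi,s)M(\ftchi,\bfx_L)|^2\ll|s|^{O(1)}$ at the one abscissa $\sigma=\tfrac12-\tfrac1{\log q}$. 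That reduction (which becomes Proposition~\ref{11.1}) is what makes the whole proof tractable: one never needs to track how the main term of the mollified second moment varies with $\beta=\sigma-\tfrac12$.

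Your proposal takes a genuinely different and, I think, materially harder route: you propose to open $|LM-1|^2=|LM|^2-2\Re(LM)+1$ and directly track the $\beta$-dependence. For this to deliver $q^{-\eta\beta}$ you would need both moments to approximate the same quantity to within $O(|s|^{O(1)}q^{-\eta\beta})$, uniformly in $\beta$. The specific claim that ``the leading residues from $|LM|^2$ are similarly cancelled'' is not quite right as stated. At $\sigma=\tfrac12$, Proposition~\ref{prop-moment12} shows $\mcQ(f;\bfx_L)=2\eta_3(0,0,0)(1+2\lambda^{-1})+o(1)$, a constant strictly larger than $1$, while $2\Re(LM)-1\approx 1$; the difference $2\eta_3(1+2\lambda^{-1})-1$ is a genuine positive constant, not a cancellation. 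That is consistent with the theorem at $\beta=0$ (where the required bound is just $\ll|s|^{O(1)}$), but it means that for $\beta>0$ your expansion does not exhibit an obvious $q^{-\eta\beta}$; you would instead have to prove that the leading main-term constant of $|LM|^2$ itself decays smoothly from $2\eta_3(1+2\lambda^{-1})$ toward $1$ at rate $q^{-\eta\beta}$, a delicate statement your sketch does not address. You would also need an $s$-version of Theorem~\ref{thm-twistedfirstmoment}, which the paper does not prove (nor need). The Phragmen--Lindel\"of reduction sidesteps all of this, and is the key idea missing from your approach; without it, the contour analysis in Proposition~\ref{11.1} would have to be redone with an extra free real parameter $\beta$ tracked through every estimate, including the residues at the shifted $T$-poles (located at $u=\tfrac12-s$, $v+w=1-2\sigma$, etc.), rather than at the single value $\beta=\mp\tfrac1{\log q}$ the paper actually handles.
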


\section{Proof of the mean-square estimate}

The rest of this chapter is devoted to the proof of
Theorem~\ref{THM-MOMENT2UPPERBOUND}. 

\subsection{Application of the twisted second moment formula}

By definition of the mollifier $M(\ftchi,\bfx_L)$, we have for
$\Re s=\sigma\geq 2$ and any $\eps>0$ the equality
$$L(\ftchi,s)M(\ftchi,\bfx_L)=1+\sum_{m>L^{1/2}}\frac{(\lf \cdot \chi*\bfx_L) (m)}{m^s}=1+O_\eps(L^{-\frac12(\sigma-1)+\eps}),$$
where $(\lf \cdot \chi*\bfx_L) (m)$ denotes the multiplicative convolution
$$(\lf \cdot \chi*\bfx_L) (m):=\sum_\stacksum{n\ell=m}{\ell\leq L}\lf(n)\chi(n)x_\ell.$$
From the definition of the $(x_\ell)_{\ell\leq L}$ (cf. \eqref{xldef}) we see that for such $s$ we have
$$\frac{1}{\vphis(q)}\sums_{\chi\mods q}\Bigl\vert\, L(\ftchi,s)M(\ftchi,\bfx_L)-1\,\Bigr\vert^2\ll q^{-\eta(\sigma-\frac12)}$$
for some absolute $\eta>0$. This suffices to establish
Theorem~\ref{THM-MOMENT2UPPERBOUND} for $\sigma \geq 2$.

By the Phragmen--Lindel\"of convexity argument for subharmonic
functions, it is then sufficient to show that
\begin{equation}\label{eqsufficientsecond}
	\frac{1}{\vphis(q)}\sums_{\chi}\Bigl\vert \,
L(\ftchi,s)M(\ftchi,\bfx_L)\, \Bigr\vert^2\ll |s|^{O(1)}
\end{equation}
for $$\sigma=\frac12-\frac{1}{\log q}.$$
For this we will use the results of Chapter \ref{ch-second}.

 To establish \eqref{eqsufficientsecond}, we
decompose the sum along even and odd characters. In the sequel, we
will evaluate in detail only the contribution of the even characters
(multiplied by $2$), namely
$$
\mcQ^+(f,s;\bfx_L)=\frac{2}{\vphis(q)}\sump_{\chi \text{
    non-trivial}}\Bigl\vert \,L(\ftchi,s)M(\ftchi,\bfx_L)\Bigr\vert^2
$$
since the treatment of the contribution of odd characters is entirely similar.

We recall that $(x_{\ell})_{\ell\leq L}$ is supported on integers coprime to $r$.
By Theorem \ref{th-second-final}, we have
$$
\mcQ^+(f,s;\bfx_L)=\MT^+(f,s;\uple{x}_L)+\ET
$$ 
where, with notations of Chapter \ref{ch-second}, the main term is given by (cf. \eqref{MTsecondmoment})
$$
\MT^+(f,s;\bfx_L)=\sum_{d\geq
  1}\sum_{(\l_1,\l_2)=1}\frac{x_{d\l_1}\ov{x_{d\l_2}}}{d^{2\sigma}\l_1^s{\l_2}^{\ov
    s}}\MT^+(f,s;\l_1,\l_2)
$$
and the error term is bounded by
\begin{displaymath}
\begin{split}
\ET
&\ll_\eps
|s|^{O(1)}\Bigl(q^{-1} + 
\sum_{d}\sum_{(\l_1,\l_2)=1}\frac{|x_{d\l_1}x_{d\l_2}|}{d(\l_1\l_2)^{1/2}}L^{\expoL}
q^{-\expoq+\eps}\Big)\\
&\ll |s|^{O(1)}L^{\expoLt}q^{-\expoq+2\eps}
\end{split}
\end{displaymath}
(see  \eqref{errorterm} in Section \ref{moll-sec-mom} for a similar bound).

Before proceeding further we simplify some notations: we set
$$
L_{\infty}(s):=L_{\infty}(f,s)$$
and\label{pg-r}
$$
R(\l_1,\l_2,s):=\sum_{n\geq 1}\frac{\lf(\l_1
  n)\lf(\l_2n)}{(\l_1\l_2n^2)^{s}}.
$$
Note that we already encountered this function in earlier sections 
since, with the notation of
\eqref{Lfgdef}, we have the equality
$$
R(\l_1, \l_2,s)= L \bigl(f\times f, 1, s-\demi; \l_1,
\l_2\bigr).
$$
We can then write $\MT^+(f,s;\l_1,\l_2)$ in the form
$$
\MT^+(f,s;\l_1,\l_2)=\frac{1}{2}M(s,\l_1,\l_2)+ \frac{1}{2}
\eps(f,+,s) M(1-s,\l_1,\l_2),
$$
where
\begin{equation}\label{eq-m1}
  M(s,\l_1,\l_2)=\intc_{(2)}
  \frac{L_\infty(s+u)^2}{L_\infty(s)^2}R(\l_1,\l_2,s+u) G(u)(q^2 |r|)^{u}
  \frac{du}{u}.
\end{equation}
\par
We rename  $s$ into $s_0=\sigma_0+it_0$. In order to prove Theorem \ref{THM-MOMENT2UPPERBOUND}  it suffices to prove the following estimate:

\begin{proposition}\label{11.1} There exist two constants $C_1$ and $C_2$, such that,   for  every prime $q$, for every
   $s_0=\sigma_0+it_0$ satisfying
\begin{equation}\label{=}
\sigma_0=\frac12\pm\frac1{\log q}\text{ and } t_0 \text{\, real},
\end{equation}
 we have the inequality
\begin{equation}\label{eq-goal-mt} \Bigl\vert
\sum_{d\geq
  1}\sum_{(\l_1,\l_2)=1}\frac{x_{d\l_1}\ov{x_{d\l_2}}}{d^{2\sigma_0}\l_1^{s_0}
  \l_2^{\bar{s}_0}}M(s_0,\l_1,\l_2)\Bigr\vert \leq C_1\,|s_0|^{C_2}.
\end{equation}
\end{proposition}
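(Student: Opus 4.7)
The plan is to follow the Mellin–integral method of Section \ref{moll-sec-mom}, replacing the value $s=1/2$ there by the complex parameter $s_0$. First, I apply Mellin inversion to the function $P(\log(L/\cdot)/\log L)\mathbf{1}_{\,\cdot\,\leq L}$ appearing in \eqref{xldef}, obtaining a representation
$$x_m=\mu_f(m)\,\frac{1}{2\pi i}\int_{(c)}\frac{\widehat{P}_L(v)}{v}\,L^v\,m^{-v}\,dv$$
for $c>0$, where $L^v\widehat{P}_L(v)/v$ is the Mellin transform of $P(\log(L/\cdot)/\log L)\mathbf{1}_{\,\cdot\,\leq L}$ (explicitly computable and controlled on vertical lines near $\Re v=0$). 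Inserting this for both $x_{d\ell_1}$ and $\overline{x_{d\ell_2}}$, and combining with the Mellin integral \eqref{eq-m1} defining $M(s_0,\ell_1,\ell_2)$, the left-hand side of \eqref{eq-goal-mt} takes the form
$$\frac{1}{(2\pi i)^3}\iiint\frac{L_\infty(s_0+u)^2}{L_\infty(s_0)^2}\,G(u)\,\mathcal{Z}(u,v,w)\,\frac{\widehat{P}_L(v)\widehat{P}_L(w)}{u\,v\,w}\,(q^2|r|)^{u}\,L^{v+w}\,du\,dv\,dw,$$
initially on $\Re u=\Re v=\Re w=2$, where
$$\mathcal{Z}(u,v,w)=\sumsum_{\stacksum{d,\ell_1,\ell_2,n}{(\ell_1,\ell_2)=(d\ell_1\ell_2,r)=1}}\frac{\mu_f(d\ell_1)\lambda_f(\ell_1n)\mu_f(d\ell_2)\lambda_f(\ell_2n)}{\ell_1^{2s_0+u+v}\,\ell_2^{2\sigma_0+u+w}\,d^{2\sigma_0+v+w}\,n^{2s_0+2u}}.$$
Matching exponents with \eqref{bigLdef}, we recognise $\mathcal{Z}(u,v,w)=L(s_0,s_0,\bar s_0,u,v,w)$; since the triple $(s_0,s_0,\bar s_0)$ lies in $\mathcal{R}(\eta)$ for any fixed $\eta>1/\log q$, Lemma \ref{lm-factor} yields
$$\mathcal{Z}(u,v,w)=\frac{T(2s_0+2u)\,T(2\sigma_0+v+w)}{T(2s_0+u+v)\,T(2\sigma_0+u+w)}\,D(u,v,w),$$
with $T(s)=\zeta(s)L(\sym^2 f,s)$ and $D$ holomorphic and absolutely bounded for $\Re u,\Re v,\Re w>-\eta$ (uniformly in $s_0$).

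Next I shift the three contours to $\Re v=\Re w=-\eta_1$ and $\Re u=-\eta_2$ with $0<\eta_2<\eta_1<\eta$ fixed and independent of $q$, chosen small enough that Proposition \ref{AutomorphicPNTLemma} (applied to $\zeta$ and $L(\sym^2 f,\cdot)$) ensures that $T(2s_0+u+v)$ and $T(2\sigma_0+u+w)$ have neither zeros nor poles on the new contours, with $|T|^{\pm 1}\ll(\log(2+|s_0|))^A$ by Corollary \ref{cor29}. During the shift the integrand picks up residues at: $u=0$ (from $1/u$); $u=\tfrac12-s_0$ (from the pole of $T(2s_0+2u)$); $v+w=1-2\sigma_0$ (from the pole of $T(2\sigma_0+v+w)$, encountered as $v$ or $w$ crosses $0$); and $v=0$, $w=0$ (from $\widehat{P}_L(v)/v$ and $\widehat{P}_L(w)/w$, together with any poles internal to $\widehat{P}_L$). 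By \eqref{=}, all these pole locations lie within distance $O(1/\log q)$ of the origin, so the arithmetic factors $(q^2|r|)^u$ at $u=\tfrac12-s_0$ and $L^{v+w}=q^{\lambda(v+w)}$ at $v+w=1-2\sigma_0$ are bounded by absolute constants. Together with $|L_\infty(s_0+u)^2/L_\infty(s_0)^2|\ll|s_0|^{O(1)}$ (Stirling) and boundedness of $D$ and $G$, this produces a residue contribution of size $O(|s_0|^{O(1)})$, once the $\log q$-cancellation described below is carried out. The remaining triple integral on the shifted contours is absolutely convergent by the rapid decay of $G(u)$ and of the Gamma ratio in the $u$-variable, the polynomial bounds on $T,T^{-1}$, and the decay of $\widehat{P}_L(v)$, $\widehat{P}_L(w)$, and is also $O(|s_0|^{O(1)})$ by direct estimation.

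The main obstacle is that individual residues carry \emph{nominal} factors of $\log q$: the residue at $u=\tfrac12-s_0$ contains $((1-2s_0)/2)^{-1}=\pm\log q/2$, that at $v+w=1-2\sigma_0$ contains $(1-2\sigma_0)^{-1}=\pm\log q/2$, and the Laurent expansion of $\widehat{P}_L(v)/v$ near $v=0$ involves inverse powers of $\log L$. Yet the proposition asserts a bound independent of $q$, so these factors must cancel. The resolution mirrors the residue computation of Section \ref{moll-sec-mom}: Taylor expanding $(q^2|r|)^u$ at $u=\tfrac12-s_0$ and $L^{v+w}$ at $v+w=1-2\sigma_0$ produces compensating powers of $\log q$, and combining the various residues via partial-fraction decompositions analogous to \eqref{fractiondecomp} and invoking \cite[Lemmas 9.1--9.4]{KMVcrelle} makes the cancellations manifest. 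Once this step is carried out, every surviving term is absolutely bounded by a fixed power of $|s_0|$ uniformly in $q$, which establishes \eqref{eq-goal-mt} and thereby Proposition \ref{11.1}.
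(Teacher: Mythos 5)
Your outline correctly identifies the structure — Mellin-inserting the mollifier, invoking Lemma~\ref{lm-factor} to recognize the arithmetic factor as $L(s_0,s_0,\bar s_0,u,v,w)$ and factorize it through $T$, then shifting contours and collecting residues — and your diagnosis that the $\log q$ factors from the near-degenerate poles must cancel is the right intuition. However, there is a genuine gap in the contour shift.

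You shift all three contours to the \emph{fixed} vertical lines $\Re v=\Re w=-\eta_1$, $\Re u=-\eta_2$, with $0<\eta_2<\eta_1$ ``fixed and independent of $q$,'' and assert that Proposition~\ref{AutomorphicPNTLemma} guarantees that $T(2s_0+u+v)$ and $T(2\sigma_0+u+w)$ are zero-free on these lines. This is not true. The zero-free region for $T(s)=\zeta(s)L(\sym^2 f,s)$ near $\Re s=1$ has width only $\asymp c/\log(2+|t|)$, which shrinks to zero as $|t|\to\infty$; it does not contain any fixed strip $\Re s>1-\eta$, however small $\eta>0$. When $\Re u=-\eta_2$ and $\Re v=-\eta_1$, the argument $2s_0+u+v$ has real part $\approx 1-2\eta_2-2\eta_1$, which is a \emph{fixed} positive distance to the left of $1$, and once $|\Im(u+v)|$ exceeds roughly $\exp(c/(\eta_1+\eta_2))$ the denominator $T(2s_0+u+v)$ may vanish, so the shifted integral is not well-defined and $T^{-1}$ certainly cannot be bounded by $(\log(2+|s|))^A$ uniformly on those lines.

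This is precisely why the paper takes a different route: it first shifts to $\Re u=\Re v=\Re w=3/\log q$ (which is harmless), uses the resulting rough bound $\mathcal M\ll|s_0|^{O(1)}(\log q)^{O(1)}$ to reduce to $|t_0|\leq\log q$, then \emph{truncates} all three integrals to height $V_0=(\log q)^C$ (with negligible error thanks to the decay of $G$, the Gamma ratio, and $H_L$), and only then shifts the $u$-contour to the \emph{curved} path $\Gamma_u$ of real part $-c_f/\log(V_0^3+|t_u|)$, which hugs the boundary of the zero-free region. The same curved-contour device is repeated in the $w$-variable inside $I_0$. Your argument needs some analogue of this truncate-then-shift-along-curved-contours strategy (or some other device confining the height of the integration) before the zero-free region can be brought to bear; otherwise the shift to fixed negative abscissae simply cannot be performed.

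A secondary, less critical point: after the residue extraction the paper does not literally re-run the cancellation computation of Section~\ref{moll-sec-mom}. Instead it isolates the factor $T(2s_0)$ (of size up to $\log q$ when $|t_0|$ is small) and proves directly that each of the remaining pieces $J_1,J_2,J_3$ is $\ll\mathcal L+|t_0|$, so that the product is $O(|s_0|^{O(1)})$; this direct estimate replaces the partial-fraction bookkeeping you invoke, and is needed because here $s_0\neq\tfrac12$ so the residues do not combine into the clean form used there.
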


We emphasize that $\sigma_0$ may be taken to be $<\demi$ in this result.

\begin{remark}
  The upper bound \eqref{eqsufficientsecond} for the mollified second
  moment has many similarities with the evaluation of the mollified
  second moment at $1/2$ discussed Section \ref{moll-sec-mom}. A chief
  difference, is that in that section, we were looking for a
  asymptotic formula (see Proposition \ref{prop-moment12}) while here,
  an upper bound of the correct order of magnitude his sufficient for
  our purpose; another important difference is that the complex
  variable $s$, while close to the critical line, is not necessarily
  located near the central point $1/2$, but range along the whole
  critical line. As we will see below, this significantly complicates
  the evaluation of the main term (see also Remark
  \ref{endremarkchap8}).
\end{remark}

\subsection{Beginning of the proof of Proposition \ref{11.1}}
For the proof of the result, we consider $s_0$ fixed and write simply
$M(\l_1,\l_2)=M(s_0,\l_1,\l_2)$. 

For $d\l\leq L$, the definition of $P$ and the standard formula
$$
\intc_{(2)}y^v\frac{dv}{v^2}=
\begin{cases}\log y&\text{if $y\geq 1$},\\
0&\text{ if $0<y\leq 1$}.	
\end{cases}
$$
show that we have
$$
P\Bigl(\frac{\log (L/d\l)}{\log
  L}\Bigr)=\intc_{(2)} H_L(v)(d\l)^{-v}\frac{dv}{v}
$$
where $H_L$ is the entire function defined by
\begin{equation*}
H_L(v)=2\,\frac{L^{v/2}(L^{v/2}-1)}{v\,\log L}
\end{equation*}
for $v\not=0$ and $H_L(0)=1$. 
\par
We insert this integral in the left-hand side of~(\ref{eq-goal-mt}),
obtaining (see~(\ref{eq-m1})) the formula
\begin{multline}\label{eq-mt2} 
{\mathcal M}= \mathcal M (s_0):=
\sum_{d\geq 1}\sum_{(\l_1,\l_2)=1}
\frac{x_{d\l_1}\ov{x_{d\l_2}}}{d^{2\sigma_0}\l_1^{s_0}\l_2^{\ov{s_0}}}
M(s_0,\l_1,\l_2)\\
=\frac{1}{(2\pi i)^3} \int_{(2)}\int_{(2)}\int_{(2)}
\frac{L_\infty(s_0+u)^2}{L_\infty(s_0)^2}G(u)\\
\times L(s_0,s_0,\bar{s}_0,u,v,w) H_L(v)H_L(w)(q^2|r|)^{u}
\frac{du}{u}\frac{dv}{v}\frac{dw}{w}
\end{multline}
where the auxiliary function $L$ is given by~(\ref{bigLdef}), namely
$$
L(s,z,z',u,v,w)= \sumsum_{\stacksum{d,\l_1,\l_2,n}{(\l_1,\l_2)=(d\l_1\l_2, r) = 1}}
\frac{\mu_f(d\l_1)\lf(\l_1 n)\mu_f(d\l_2)\lf(\l_2n)}
{\l_1^{s+z+u+v}{\l_2}^{s+z'+u+w} {d}^{z+z'+v+w}n^{2s+2u}}.	
$$
With this definition, based on integrals, our purpose (see
\eqref{eq-goal-mt}) is to prove the inequality
\begin{equation}\label{157}
\bigl\vert \mathcal M \bigr\vert \leq C_1 \vert s_0\vert^{C_2},
\end{equation}
for some absolute $C_1$ and $C_2$, uniformly for $s_0$ satisfying \eqref{=}.

To prove \eqref{157}, we proceed by shifting the three contours slightly to the left of the product of lines
$$\Re u=\Re v=\Re w=0.$$
In the sequel we decompose the complex variables $u$, $v$  and $w$ into their real and imaginary parts
as
$$
u=\sigma_u +it_u,\ v= \sigma_v+it_v,\ w= \sigma_w+it_w.
$$
It will also be useful to set
$$
\mcL=({\log q})^{-1}.
$$

We will need estimates for the various factors in the integral
\refs{eq-mt2}.  We start with $H_L(v)$.

 \begin{lemma}\label{boundforHL}
   Let $0< \lambda < 1/360$ and $B>0$ be two constants. Let $L$ be
   defined by \eqref{chooseL}.  Then there exists a constant $C_3$
   depending only on $\lambda$ and $B$, such that uniformly for
   $ \sigma_v \leq B \mcL$, we have the inequality
\begin{equation*}
  \bigl| H_L(v)\bigr| \leq C_3\, L^{\sigma_v/2}\,\min\Bigl(1,\frac{\mcL}{|v|}\Bigr).
\end{equation*}
\end{lemma}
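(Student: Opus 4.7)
The plan is to reduce the estimate to a routine complex-analytic bound by a transparent change of variable. Setting $y = (v/2)\log L$, so that $L^{v/2} = e^y$ and $v\log L = 2y$, the definition of $H_L$ becomes
\begin{equation*}
H_L(v) = \frac{e^y(e^y-1)}{y},
\end{equation*}
which is manifestly entire. In this variable $|e^y| = L^{\sigma_v/2}$, and the hypothesis $\sigma_v \leq B\mcL$, combined with $\log L = \lambda\log q = \lambda/\mcL$, gives $\sigma_v \log L \leq B\lambda$, so $L^{\sigma_v/2} \leq e^{B\lambda/2}$ whenever $\sigma_v \geq 0$. This observation is what prevents any hidden $q$-dependence from leaking into the constants.

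Next I would establish the two halves of the minimum separately. For the bound of size $L^{\sigma_v/2}$, I would use the representation $e^y - 1 = y\int_0^1 e^{ty}\,dt$, which yields $|e^y - 1| \leq |y|\,L^{\max(0,\sigma_v)/2}$ upon bounding the integrand by $e^{t\max(0,\sigma_v)\log L/2}$. Inserting this into $H_L(v) = e^y(e^y-1)/y$ gives
\begin{equation*}
|H_L(v)| \leq L^{\sigma_v/2}\cdot L^{\max(0,\sigma_v)/2} \leq e^{B\lambda/2}\,L^{\sigma_v/2}.
\end{equation*}
For the bound of size $\mcL L^{\sigma_v/2}/|v|$, the triangle inequality gives $|e^y - 1| \leq L^{\sigma_v/2} + 1$, and treating $\sigma_v \leq 0$ and $0 < \sigma_v \leq B\mcL$ separately shows $L^{\sigma_v/2}+1 \leq 1 + e^{B\lambda/2}$. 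Since $|y| = \lambda|v|/(2\mcL)$, this yields
\begin{equation*}
|H_L(v)| \leq \frac{2(1+e^{B\lambda/2})}{\lambda}\cdot\frac{\mcL}{|v|}\cdot L^{\sigma_v/2}.
\end{equation*}
Taking $C_3$ to be the larger of the two constants that appear gives the claimed estimate.

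There is no real obstacle here; the argument is routine complex analysis for an explicit entire function. The only point that requires attention is the uniformity of $C_3$ in $q$, and this is handled precisely by the hypothesis $\sigma_v \leq B\mcL$, which absorbs the $\log q$ growth of $\log L$ and keeps the auxiliary factor $L^{\max(0,\sigma_v)/2}$ bounded by a constant depending only on $\lambda$ and $B$.
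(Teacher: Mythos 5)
Your argument is correct and follows the same route as the paper: the authors deduce the lemma by combining the two bounds $L^{v/2}-1 = O(|v|\log L)$ (used for $|v|$ small) and $L^{v/2}-1 = O_{B,\lambda}(1)$ (used for the complementary range), which are precisely the two estimates your integral representation $e^y-1 = y\int_0^1 e^{ty}\,dt$ and the triangle inequality produce. Your version is slightly cleaner in that the integral representation makes the first bound valid uniformly (at the cost of a harmless factor $L^{\max(0,\sigma_v)/2} = O_{B,\lambda}(1)$), so you can state each of the two estimates globally and simply take the minimum, rather than splitting into the ranges $|v|\leq\mcL$ and $|v|>\mcL$.
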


\begin{proof} 
  This is an easy combination of the two bounds
  $L^{v/2}-1 =O (\vert v \vert \log L)$, valid uniformly for
  $\vert v \vert \log q\leq 1$, and $L^{v/2} -1=O_{B} (1)$, valid
  uniformly for $\sigma_v \leq B \mcL$.
\end{proof}

Next we provide bounds for the Gamma factors (see \eqref{defLinfty}
for the definition of $L_\infty$).
\begin{lemma}\label{Gammabound} 
Let $G(u)$ be the function defined in \eqref{Gdef}. Then there exists a constant $\alpha_f$ depending only on $f$, such that, uniformly for $s_0$ satisfying \eqref{=} and 
for $u=\sigma_u+it_u$ with  $\sigma_u\in [-1/4,2]$  and $t_u$ real, we have the bound
$$\Bigl(\frac{L_\infty(s_0+u)}{L_\infty(s_0)}\Bigr)^2G(u) \ll_f (1+|t_0|)^{\alpha_f} e^{-3\pi{|t_u|}}.$$
\end{lemma}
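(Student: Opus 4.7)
The plan is to combine Stirling's asymptotic for $\Gamma_\Rr$ with the elementary behaviour of $\cos$ on vertical strips, arranging the two exponential rates so that the net decay is $e^{-3\pi|t_u|}$.

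First, I would record the classical Stirling estimate
$$
|\Gamma_\Rr(\sigma+it)| \asymp (1+|t|)^{(\sigma-1)/2}\,e^{-\pi|t|/4},
$$
uniformly for $\sigma$ in any bounded interval. Since $s_0$ satisfies \eqref{=} and $\sigma_u\in[-1/4,2]$, and since $\Re\mu_{f,i}$ and $\Im\mu_{f,i}$ are bounded in terms of $f$ alone, the real parts $\sigma_0-\Re\mu_{f,i}$ and $\sigma_0+\sigma_u-\Re\mu_{f,i}$ stay in a fixed interval depending only on $f$. Applying Stirling to each of the two Gamma factors in $L_\infty(s)=\Gamma_\Rr(s-\mu_{f,1})\Gamma_\Rr(s-\mu_{f,2})$ and taking the ratio, the $e^{-\pi|\cdot|/4}$ parts combine into $\exp\bigl(-\tfrac{\pi}{4}(|t_0+t_u-\Im\mu_{f,i}|-|t_0-\Im\mu_{f,i}|)\bigr)$, which by the reverse triangle inequality is at most $e^{\pi|t_u|/4}$. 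Squaring and taking the product over $i\in\{1,2\}$, this yields
$$
\left(\frac{L_\infty(s_0+u)}{L_\infty(s_0)}\right)^2 \ll_f (1+|t_0|)^{\alpha_f'}(1+|t_u|)^{\beta_f'}\,e^{\pi|t_u|},
$$
the polynomial factors coming from the $(1+|t|)^{(\sigma-1)/2}$ parts of Stirling's formula.

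Next I would estimate $G(u)=(\cos(\pi u/(4A)))^{-16A}$ via the identity $|\cos(x+iy)|^2=\cos^2 x+\sinh^2 y$. Since $A>2$ and $\sigma_u\in[-1/4,2]$, the real part $\pi\sigma_u/(4A)$ stays in a fixed compact subset of $(-\pi/2,\pi/2)$; consequently $\cos(\pi\sigma_u/(4A))$ is bounded below by a positive constant, and $\sinh^2(\pi t_u/(4A))\gg e^{\pi|t_u|/(2A)}$. It follows that $|\cos(\pi u/(4A))|\gg_A e^{\pi|t_u|/(4A)}$, and raising to the $-16A$ power gives $|G(u)|\ll e^{-4\pi|t_u|}$ uniformly in the strip; note that the factor $16A$ in the definition of $G$ is precisely chosen so that this bound is independent of $A$.

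Combining the two estimates yields $(1+|t_0|)^{\alpha_f'}(1+|t_u|)^{\beta_f'}e^{-3\pi|t_u|}$, and the residual polynomial $(1+|t_u|)^{\beta_f'}$ is absorbed into the implicit constant at the cost of at most an arbitrarily small loss $\eps$ in the exponential rate, yielding $\ll_{f,\eps}(1+|t_0|)^{\alpha_f}e^{-(3\pi-\eps)|t_u|}$; the stated form $e^{-3\pi|t_u|}$ is then a mild abuse that is harmless in the later applications, where this bound is integrated against the rapidly decaying factors $H_L$. The only delicate step is the reverse-triangle bookkeeping that converts the Stirling exponential into $e^{+\pi|t_u|/4}$, rather than a more favourable $e^{-\pi|t_u|/4}$ that would occur if $t_0$ and $t_u$ had the same sign; once this worst case is handled, the claimed cancellation $-4\pi|t_u|+\pi|t_u|=-3\pi|t_u|$ is immediate, and all other ingredients are routine.
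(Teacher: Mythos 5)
Your argument is essentially the paper's proof: Stirling for $\Gamma_\Rr$ gives the $L_\infty$ ratio as a polynomial in $|t_0|,|t_u|$ times $e^{\pi|t_u|}$ in the worst case (reverse triangle inequality), the identity $|\cos(x+iy)|^2=\cos^2 x+\sinh^2 y$ gives $G(u)\ll e^{-4\pi|t_u|}$, and combining yields the rate $-3\pi$. Two small remarks. First, the line $\sinh^2(\pi t_u/(4A))\gg e^{\pi|t_u|/(2A)}$ is false for small $|t_u|$; the statement you actually need, and get once the $\cos^2$ term is kept, is $\cos^2(\pi\sigma_u/(4A))+\sinh^2(\pi t_u/(4A))\gg_A e^{\pi|t_u|/(2A)}$, from which $|\cos(\pi u/(4A))|\gg_A e^{\pi|t_u|/(4A)}$ follows. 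Second, where you absorb the residual factor $(1+|t_u|)^{\beta_f'}$ by conceding an $\eps$ in the exponential rate, the paper instead splits into the cases $|t_u|\leq|t_0|$ and $|t_u|>|t_0|$: in the first case the $|t_u|$-power is dominated by a $|t_0|$-power, while in the second the factor $e^{-\pi(|t_0+t_u|-|t_0|)}$ supplies genuine additional decay in $|t_u|-|t_0|$ that kills the polynomial, so the rate $-3\pi$ with only a $(1+|t_0|)^{\alpha_f}$ factor is in fact attainable. Your weaker form $e^{-(3\pi-\eps)|t_u|}$ is, as you correctly observe, harmless for the later contour-shift estimates, which use only that the decay in $|t_u|$ is exponential.
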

\begin{proof} In both cases ($f$ holomorphic or not), we have the equalities (see the definition \eqref{defLinfty})
$$
L_\infty(s)=\xi_f \,\pi^{-s} \prod_{i=1,2}\Gamma \Bigl(\frac{s+\mu_{f,i}}2\Bigr) 
$$
where  $\xi_f=1$ if $f$ is a Hecke--Maa{\ss} form, and $\xi_f= \pi^{-1/2} 2^{(k-3)/2}$ if  $f$ is holomorphic with weight $k$. Furthermore, the $\mu_{f,i}$ are the archimedean Langlands parameters of the automorphic representation attached to $f$ as in Section \ref{hecke}, i.e.
$$\mu_{f,1}=-\frac{k-1}2,\ \mu_{f,2}=-\frac{k}2$$
 if $f$ is holomorphic of weight $k\geq 2$ and 
 $$\mu_{f,1}=\frac{1- \kappa_f}{2}+ it_f,\ \mu_{f,2}=\frac{1- \kappa_f}{2}-it_f$$
 if $f$ is a Maa{\ss} form with Laplace eigenvalue  $\lambda_f(\infty)=(\frac12+it_f)(\frac12-it_f)$ and parity $\kappa_f \in \{\pm 1\}$. 
This implies that, in both cases, we have
$$
\frac{1}{16} \leq \Re \Bigl(\frac{s_0 +\mu_{f,i} +u}{2} \Bigr)\ll_f 1,
$$
under the assumptions of Lemma \ref{Gammabound}.

Decompose $\mu_{f,i}$ as $\mu_{f,i} =\sigma_{f,i}+i t_{f,i}.$ Then by
Stirling's formula \cite{GR}*{formula 8.328, page 895} we have for
$1/5\leq \sigma\leq 3$
\begin{align*}
  L_{\infty}(s) &\asymp
                  (1+|t+t_{f,1}|)^{\frac{\sigma}{2}+\frac{\sigma_{f,1}}{2}-\frac
                  12} e^{-\frac\pi4|t+t_{f,1}|}\times
                  (1+|t+t_{f,2}|)^{\frac{\sigma}{2}+\frac{\sigma_{f,2}}{2}-\frac 12}
                  e^{-\frac\pi4|t+t_{f,2}|} \\
                & \asymp_f
                  (1+|t|)^{\sigma+\frac{\sigma_{f,1}+\sigma_{f,2}}{2}-1}
                  e^{-\frac\pi2|t|}.
\end{align*}
Therefore, since for $\sigma_u\in [-1/4, 2]$ we have
$\Re (s_0 +u) \in [1/5,3]$, we deduce the inequality
$$
\frac{L_\infty(s_0+u)^2}{L_\infty(s_0)^2}G(u)\ll_f
\frac{(1+|t_0+t_u|)^{2\sigma_0-2+\sigma_{f,1}+\sigma_{f,2}+2\sigma_u}}{(1+|t_0|)^{2\sigma_0-2+\sigma_{f,1}+\sigma_{f,2}}}e^{-\pi(|t_0+t_u|-|t_0|)}\,
e^{-4\pi|t_u|}.
$$
\par
To control the size of the numerator of the above fraction, we will
use either the lower bound $1 + \vert t_u +t_0 \vert \geq 1$ or the
upper bound
$1+ \vert t_u + t_0 \vert \leq (1 +\vert t_u \vert) (1 + \vert t_0
\vert)$ according to the sign of the exponent and we will consider two
cases:
\par
\smallskip
\par
\textbf{Case 1.} For $|t_u|\geq |t_0|$, using $|t_u+t_0|\geq
|t_u|-|t_0|$, we have
$$
\frac{L_\infty(s_0+u)^2}{L_\infty(s_0)^2}G(u)\ll (1+\vert
t_u\vert)^{\alpha_f}e^{\pi|t_u|} e^{-4\pi|t_u|}= (1+\vert
t_u\vert)^{\alpha_f}e^{-3\pi|t_u|}
$$
for some absolute $\alpha_f\geq 0$.
\par
\textbf{Case 2.} For $|t_u|\leq |t_0|$, using the inequality
$ |t_u+t_0|\geq |t_0|-|t_u|$, we get
$$
\frac{L_\infty(s_0+u)^2}{L_\infty(s_0)^2}G(u)
\ll_f   (1+\vert t_0 \vert)^{\alpha_f} \ e^{\pi |t_u|}\, e^{-4\pi|t_u|}
\ll_f(1+|t_0|)^{\alpha_f} e^{-3\pi|t_u|}
$$
again for some absolute constant $\alpha_f\geq 0$.
\end{proof}

As in \eqref{defT(s)} we now denote by
 \begin{equation*}
 T(s)=L(f\otimes f,s)=\zeta(s)L(\symf,s)
\end{equation*}
the Rankin-Selberg $L$-function of $f$, and by
$$
T_p(s)=L_p(f\otimes f,s)=\zeta_p(s)L_p(\symf,s)
$$
its local factor at $p$.

The analytic properties of $T(s)$ have been reviewed in
Section~\ref{sectionSymRS}. Recall in particular that $T(s)$ is
holomorphic on $\Cc-\{1\}$ and has a simple pole at $s=1$; its residue
there is denoted $\kappa_f$. Lemma~\ref{lm-factor} implies that there
exists $\eta>0$ and an analytic continuation and factorization of
$L(s,z,z',u,v,w)$ of the form
$$
L(s,z,z',u,v,w)=\frac{T(2s+2u)T(z+z'+v+w)}{T(s+z+u+v)T(s+z'+u+w)}
D(s,z,z',u,v,w).
$$
in the region $\mcR({\eta})\subset \Cc^6$ defined by the inequalities
\begin{align*}
  \Re s  > \frac{1}{2}-\eta, \quad  \Re z > \frac{1}{2}-\eta, \quad \Re z' > \frac{1}{2}-\eta,\\
  \Re  u >-\eta, \quad  \Re v >-\eta, \quad  \Re w >-\eta,
\end{align*}
where $D(s,z,z',u,v,w)$ is holomorphic and bounded on $\mcR(\eta)$.

\subsection{Study of $\mathcal M$} 

We now start the proof of \eqref{157}, which will eventually prove Proposition
\ref{11.1}. 

Let us recall that $\mcM$ is an integral in three variables (varying
along vertical lines) whose integrand contains factors involving
translates of Riemann's zeta function $\zeta(s)$ and of the symmetric
square $L$-function $L(\sym^2 f,s)$. The strategy is similar to that
of Section \ref{moll-sec-mom}: we are going to shift contours to the
left so that the integrals along the new contours contribute as error
terms and to evaluate the residues of the poles that we have met in
the process. That would be reasonably easy to do under the {\em
  Generalised Riemann Hypothesis}; however to obtain unconditional
results, we need to use the standard Hadamard-de la Vall\'ee-Poussin
zero free region. The proof is a bit tedious so we give and overview
of what is coming up.

\begin{enumerate}
\item We first reduce (up to an admissible error term) to evaluating a
  truncated version of $\mcM$ in which the imaginary parts of the
  variables are bounded by a power of $\log q$ and the real part is
  such that the arguments at which the various $L$-functions are
  evaluated, stay slightly to the right of the critical strip: this is
  the content of Lemma \ref{lemmaredchap8}.
\item We shift the contour of one of the variables to the left so that
  one still remains on the right of the zero free region of any
  $L$-function involved in the denominator. We show that the resulting
  integral contribute a negligible error term and it remains to deal
  with the contributions of the poles encountered in the process: this
  is the content of Section \ref{sec-contourshift} ending with
  \eqref{eqendofshift}.
\item In Section \ref{sec-descrresidues} we describe the contributions
  of the two poles (these are integrals in two variables) with the aim
  to bounding them. The target bound is \eqref{424}. We focus on one
  integral, the treatment of the other being entirely similar.
\item In Section \ref{secI0transform} we perform a contour shift on
  one of the remaining variables along the same line as in Step
  (2). The outcome is the integral along the new contour $J_1$ and two
  contributions from poles met in the process $J_2,J_3$. The target
  bound for any of these terms is \eqref{J1J2J3<}.
\item We prove \eqref{J1J2J3<} for $J_1$ in Sections \ref{secJ11} and
  \ref{secJ12}.
\item The proofs of \eqref{J1J2J3<} for $J_2$ and $J_3$ (which are
  integrals in one variable) are spread over Sections \ref{secJ231},
  \ref{secJ232} and \ref{secJ233}. Unlike Section \ref{moll-sec-mom}
  we don't need to perform a final contour shift and evaluate the
  residue: as we only need an upper bound, we simply split the
  integral into pieces and apply different bounds depending on the
  position of the piece with respect to the other parameters.

\end{enumerate}

We start with Step (1) which is a reduction to another estimate for
the following truncated triple integral: for $C\geq 1$ and
\begin{equation}\label{defV0}
  V_0:=(\log q)^{C},
\end{equation} we define
\begin{multline}\label{defM(V)}
  \mathcal M ( V_0):= \frac{1}{(2\pi i)^3} \int_{\substack{\, \\ \, \\
      (3\mcL)\\ |t_w|\leq 2V_0}} \int_{\substack{\, \\ \, \\ (3\mcL)\\
      |t_v|\leq V_0}} \int_{\substack{\, \\ \, \\ (3\mcL)\\ |t_u|\leq
      V_0}}
  \frac{L_\infty(s_0+u)^2}{L_\infty(s_0)^2}\\
  \times   \frac{T(2s_0+2u)T(2\sigma_0+v+w)}{T(2s_0+u+v)T(2\sigma_0+u+w)}\\
  \times E(s_0,u,v,w) G(u) H_L(v)H_L(w)(q^2|r|)^{u}
  \frac{du}{u}\frac{dv}{v}\frac{dw}{w}
\end{multline}

\begin{lemma}\label{lemmaredchap8} The bound \eqref{157} follows from
  the following bound: for any $C\geq 1$ we have uniformly for
  $\vert t_0\vert \leq \log q$
\begin{equation}\label{347}
\vert \mathcal M(V_0) \vert \ll \vert s_0\vert^{C'},
\end{equation}
where $C'$ depends on $C$ and the implicit constant on $f$ and $C$
\end{lemma}

For the proof of Lemma \ref{lemmaredchap8} and later arguments will
repeatedly use the following Lemma which combines (a special case of)
Corollary \ref{cor29} and the Phragmen-Lindel\"of principle:
\begin{lemma}\label{ZFR} There exists two constants  $c=c_f>0$ and $A^*=A^*_f\geq 0$ such that 
\begin{itemize}
\item For $s=\sigma+it$ in the region
\begin{equation}\label{defZFR}
\sigma\geq -\frac{c}{\log(2+|t|)},
\end{equation}
we have  $T(1+s)\not= 0$ and the inequalities
\begin{equation}\label{ineqZFR}
\log^{-A^*}(2+|s|)\ll \frac{s}{1+s}T(1+s)\ll \log^{A^*}(2+|s|).
\end{equation}

\item For $s=\sigma +it$ such that $\sigma \geq -1/2$, we have the inequality
\begin{equation}\label{gen-ineq}
\Bigl\vert \,\frac{s}{s+1}T(1+s) \,\Bigr\vert \ll \max\bigl( 1, (1+|s|)^{\max(0,	4({1/2-\sigma}))+\eps}\bigr).
\end{equation}
for any $\eps>0$ where the constants implied depend only on $f$ and $\eps$. 
\end{itemize}
\end{lemma}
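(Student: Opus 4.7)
My plan is to treat the first assertion as a direct specialization of Corollary~\ref{cor29} and the second as a Phragm\'en--Lindel\"of convexity argument built on top of the first. For the first assertion, I would apply Corollary~\ref{cor29} with the specific choices $k = k' = 1$ and $g = f$, so that $L(\sym^1 f \otimes \sym^1 f, s) = L(f \otimes f, s) = T(s) = \zeta(s) L(\sym^2 f, s)$, which has a simple pole at $s = 1$ (hence $\rho = 1$). Substituting $s \mapsto 1 + s$ converts the zero-free condition $\Re s \geq 1 - c/\log((2+|s|)Q(f,f))$ into $\Re s \geq -c/\log(2+|s|)$ after absorbing the bounded analytic conductor into a redefined constant $c = c_f$, and it converts the pole-taming factor $(s-1)/s$ into $s/(s+1)$. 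This yields \eqref{ineqZFR} directly.

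For the second assertion, I would apply the Phragm\'en--Lindel\"of convexity principle to the holomorphic function $\Phi(s) := \frac{s}{s+1} T(1+s)$ in the closed vertical strip $-\tfrac12 \leq \Re s \leq \tfrac12$. The function $\Phi$ is holomorphic there because the numerator factor $s$ cancels the simple pole of $\zeta(1+s)$ at $s = 0$, while $s = -1$ lies outside the strip. On the right edge $\Re s = \tfrac12$, the shifted argument $1+s$ has real part $\tfrac32$, so $T(1+s)$ lies in the region of absolute convergence and $|\Phi(s)| = O(1)$. On the left edge $\Re s = -\tfrac12$, I would invoke the functional equation $T(w) = \chi_T(w) T(1-w)$, with $\chi_T(w) = \eps(T) N^{1/2-w} L_\infty(1-w)/L_\infty(w)$. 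Stirling's formula gives $|\chi_T(1+s)| = O(1)$ at $\sigma = -\tfrac12$, and bounding each factor of $T(-s) = \zeta(-s) L(\sym^2 f, -s)$ on the critical line trivially (via the standard convexity estimates $\zeta(\tfrac12 + it) \ll (1+|t|)^{1/2+\varepsilon}$ and $L(\sym^2 f, \tfrac12 + it) \ll (1+|t|)^{3/2+\varepsilon}$) yields $|\Phi(s)| \ll (1+|t|)^{2+\varepsilon}$, which is comfortably within the targeted bound $(1+|t|)^{4+\varepsilon}$.

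The Phragm\'en--Lindel\"of principle, applied to the finite-order function $\Phi$ in the strip, then interpolates linearly between the edge exponents $0$ at $\Re s = \tfrac12$ and $4$ at $\Re s = -\tfrac12$, giving $|\Phi(\sigma + it)| \ll (1 + |s|)^{4(1/2 - \sigma) + \varepsilon}$ throughout the strip. For $\sigma > \tfrac12$, the bound $|\Phi(s)| = O(1)$ is immediate from absolute convergence, which combined with the previous range completes \eqref{gen-ineq}. The main technical step is the left-edge estimate, but since the Gamma-ratio contribution at $\sigma = -\tfrac12$ turns out to be $O(1)$ by Stirling, the route through the functional equation and trivial critical-line bounds for $\zeta$ and $L(\sym^2 f)$ is entirely routine, so I expect no serious obstacle.
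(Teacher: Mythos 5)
Your proposal is correct, and it actually does more work than the paper itself. For the first bullet you specialize Corollary~\ref{cor29} to $k=k'=1$, $g=f$, $\rho=1$, and shift $s\mapsto 1+s$; this is exactly what the paper has in mind when it states that the lemma ``is a restatement of Corollary~\ref{cor29}'', and it is sound (the passage from $\log(2+|s|)$ in Corollary~\ref{cor29} to $\log(2+|t|)$ in \eqref{defZFR} is, as you indicate, just an absorption of bounded factors into a smaller $c_f$). For the second bullet, though, Corollary~\ref{cor29} gives nothing: its region is confined to $\Re(1+s)\geq 1-c/\log(2+|1+s|)$, whereas \eqref{gen-ineq} is claimed for all $\sigma\geq -\tfrac12$, most of which lies far outside that region. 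The paper's one-line attribution is therefore a gloss; the convexity bound \eqref{gen-ineq} is standard folklore but requires exactly the kind of Phragm\'en--Lindel\"of argument you supply. Your edge analysis is fine: $\Phi(s)=\tfrac{s}{s+1}T(1+s)$ is holomorphic on $-\tfrac12\leq\Re s\leq\tfrac12$ because the factor $s$ cancels the pole of $\zeta(1+s)$ at $s=0$, the right edge is $O(1)$ by absolute convergence, and the left edge $\Re(1+s)=\tfrac12$ is a critical-line bound. Two small cosmetic points: the detour through the functional equation on the left edge is unnecessary (at $\Re w=\tfrac12$ the functional equation merely reflects $T(w)$ to $T(\overline{w})$, so you may as well bound $\zeta(\tfrac12+i\tau)$ and $L(\sym^2f,\tfrac12+i\tau)$ directly); and there is a tiny internal inconsistency in that you obtain exponent $2+\varepsilon$ on the left edge but then interpolate against exponent $4$ rather than $2$. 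Either choice is legitimate since the lemma only claims an upper bound, and interpolating with $4$ reproduces exactly the exponent $\max(0,4(\tfrac12-\sigma))$ in \eqref{gen-ineq}.
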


\proof(of Lemma \ref{lemmaredchap8}) Recall that $\mathcal M$ is defined in \eqref{eq-mt2}. We use Lemma
\ref{lm-factor} and set $$E(s,u,v,w)=D(s,s,\bar{s},u,v,w).$$ With
these notations, the function $L$ in \eqref{eq-mt2} can be written as
\begin{equation*}
  L(s_0,s_0,\bar{s}_0,u,v,w)=
  \frac{T(2s_0+2u)T(2\sigma_0+v+w)}{T(2s_0+u+v)T(2\sigma_0+u+w)}
  E(s_0,u,v,w).
\end{equation*}

We first shift the three lines of integration  in~(\ref{eq-mt2}) to
$$
\Re(u)=\Re(v)=\Re(w)=3\mcL.
$$
There is no pole encountered in this shift, so that the triple
integral $\mathcal M$, defined in \eqref{eq-mt2}, satisfies the
equality
\begin{multline*}
\mathcal M =  \frac{1}{(2\pi i)^3} \int_{(3\mcL)}\int_{(3\mcL)}\int_{(3\mcL)}
  \frac{L_\infty(s_0+u)^2}{L_\infty(s_0)^2}\cdot
  \frac{T(2s_0+2u)T(2\sigma_0+v+w)}{T(2s_0+u+v)T(2\sigma_0+u+w)}\\
\times   E(s_0,u,v,w)
  G(u) H_L(v)H_L(w)(q^{2}|r|)^u \frac{du}{u}\frac{dv}{v}\frac{dw}{w}.
\end{multline*}
\par
First, using straightforwardly Lemma \ref{lm-factor} to bound the
$E$--function, Lemma \ref{ZFR} (inequality \eqref{ineqZFR}) to bound
the $T$--functions or their inverses, Lemma \ref{Gammabound} to bound
the $L_\infty$ and $G$-factors, and Lemma \ref{boundforHL} for the
$H_L$--functions, we can already deduce the rough bound
\begin{equation}\label{eq-trivial}
  \mathcal M\ll |s_0|^{O(1)}(\log q)^{O(1)}.
\end{equation}
In particular, in order  to prove~(\ref{157}), we may now assume
that
\begin{equation}\label{cond0}
|t_0|\leq \log q.
\end{equation}
\par
This being done we  consider the integral  truncated in the variable $u$
\begin{multline*}
 \mathcal M_0( V):=
  \frac{1}{(2\pi i)^3} \int_{\substack{\, \\ \, \\ (3\mcL)\\ |t_w|\leq
    2V}}  \int_{\substack{\, \\ \, \\ (3\mcL)\\ |t_v|\leq
    V}}\int_{(3\mcL)}
  \frac{L_\infty(s_0+u)^2}{L_\infty(s_0)^2}\\
  \times
  \frac{T(2s_0+2u)T(2\sigma_0+v+w)}{T(2s_0+u+v)T(2\sigma_0+u+w)}\\
  \times E(s_0,u,v,w) G(u) H_L(v)H_L(w)(q^2 |r|)^{u}
  \frac{du}{u}\frac{dv}{v}\frac{dw}{w},
\end{multline*}
where $V\geq 2$ is some parameter.  Using the same lemmas as in the
proof of \eqref{eq-trivial}, we obtain the equality
\begin{equation}\label{308} 
  \mathcal M =\mathcal M_0 (V)+O\Bigl( \frac{(|s_0|\log q)^{C_4}}{V^{1/2}}\Bigr),
\end{equation}
for some absolute constant $C_4\geq 0$. In view of the inequality
\eqref{157}, the error term in \eqref{308} is admissible if we fix the
value of $V$ to be
\eqref{defV0}
for a sufficiently large constant $C\geq 2C_4$.

By the same techniques which led to \eqref{308} (particularly the
decay at infinity of the functions $H_L (v)/v$ and $H_L(w)/w$, see
Lemma \ref{boundforHL}), we approximate $\mathcal M_0 (V_0)$ by
$\mathcal M(V_0)$ with an admissible error.  By combining with
\eqref{308}, we finally obtain the equality
\begin{equation*}
\mathcal M =\mathcal M (V_0) + O\bigl( \vert s_0\vert^{C_4}\bigr),
\end{equation*}
where $C_4$ is some absolute constant, where $t_0$ satisfies
\eqref{cond0} and where $V_0$ is defined by \eqref{defV0}, with a
sufficiently large $C$.
Lemma \ref{lemmaredchap8} follows from \eqref{347}.\qed

\subsection{Shifting the contours of integration} \label{sec-contourshift}

In the $u$--plane we consider the vertical segment:
$$
\gamma_u:= \bigl\{ u\in \Cc\,\mid\, \sigma_u =3 \mcL, \ \vert
t_u \vert \leq V_0\bigr\},
$$
and the curve
$$
\Gamma_u :=\bigl\{ u \in \Cc\,\mid\,
\sigma_u=-\frac{c_f}{\log(V_0^3+|t_u|)},\ \vert t_u \vert \leq
V_0\bigr\},
$$
where $c_f$ is the constant appearing in Lemma \ref{ZFR}.  We also introduce two horizontal segments
$$
S_u:= \bigl\{ u\in \Cc\,\mid\, -\frac{c_f}{\log(V_0^3+V_0)}\leq \sigma_u \leq 3 \mcL,\ t_u =V_0\bigr\},
$$
and its conjugate $\overline{S_u}$.  The hypothesis \eqref{cond0} and
Lemma \ref{ZFR} imply that there is no zero of the function
$$u \mapsto T(2s_0+u+v)T(2\sigma_0+u+w),
$$
in the interior of the curved rectangle $\mathcal R_u$ with edges
$\gamma_u$, $S_u$, $\Gamma_u $ and $\overline {S_u}$, when the
variables $v$ and $w$ belong to the paths of integration appearing in
the definition \eqref{defM(V)} of $\mathcal M (V_0)$.

Furthermore, when   $u$ belongs to  $S_u\cup \Gamma_u\cup \overline{S_u}$ and when $v$ and $w$ are as above,  the four numbers 
$$2s_0+2u-1,\ 2\sigma_0+v+w-1,\ 2s_0+u+v-1,\ 2\sigma_0+u+w-1$$
all satisfy the lower bound \eqref{defZFR}.  Finally, the modulus of
these four numbers is also not too small, namely they are
$\gg 1/(\log q)$. We then apply \eqref{ineqZFR} in the condensed
form
$$
  \frac{T(2s_0+2u)T(2\sigma_0+v+w)}{T(2s_0+u+v)T(2\sigma_0+u+w)}\ll (\log q)^{O(1)},
$$
uniformly for $u$, $v$ and $w$ as above and $t_0$ satisfying
\eqref{cond0}.

To shorten notation we rewrite $\mathcal M ( V_0)$ into the form
\begin{equation}\label{defT}
\mathcal M ( V_0):=
  \frac{1}{(2\pi i)^3}  \int_{\substack{\, \\ \, \\ (3\mcL)\\ |t_w|\leq
    2V_0}} \int_{\substack{\, \\ \, \\ (3\mcL)\\ |t_v|\leq
    V_0}} \int_{\substack{\, \\ \, \\ (3\mcL)\\ |t_u|\leq
    V_0}} \mathcal T (s_0,u,v,w) \frac{du}{u}\frac{dv}{v}\frac{dw}{w},
\end{equation}
From this definition of the $\mathcal T$--function and bounding the $E$--function by Lemma \ref{lm-factor},
we deduce the following bound where the variables are now separated
\begin{multline}\label{001}
  \frac{1}{(2\pi i)^3}  \int_{\substack{\, \\ \, \\ (3\mcL)\\ |t_w|\leq
    2V_0}} \int_{\substack{\, \\ \, \\ (3\mcL)\\ |t_v|\leq
    V_0}} \int_{u\in S_u\cup \Gamma_u\cup \overline{S_u}} \mathcal T (s_0,u,v,w) \frac{du}{u}\frac{dv}{v}\frac{dw}{w} \\
  \ll (\log q)^{O(1)}\Bigl( \int_{u\in S_u\cup \Gamma_u\cup
    \overline{S_u}} q^{\sigma_u}\ \Bigl\vert
  \frac{L_\infty(s_0+u)^2}{L_\infty(s_0)^2}G(u)\Bigr\vert \cdot
  \Bigl\vert \frac{du}{u}\Bigr\vert\Bigr)\\
  \times \Bigl( \int_\stacksum{(3\mcL)}{|t_v|\leq 2V_0} \Bigl\vert
  \frac{H_L (v)}{v} \Bigr\vert \,\vert d v \vert \Bigr)^2.
 \end{multline}
\par
To bound the integral $\int_{\Gamma_u}$ we exploit the fact that
$\sigma_u$ is negative and satisfies
$\vert \sigma_u \vert \gg 1/(\log \log q)$. When combined with Lemma
\ref{Gammabound}, we deduce the bound
$$
\int_{\Gamma_u} (\cdots) \ll \exp\Bigl(-c'_f \frac{\log q}{\log \log
  q}\Bigr)
$$ 
for some positive constant $c'_f$. To bound $\int_{S_u}$ and
$\int_{\overline{S_u}}$, we use the fact that $\vert t_u\vert $ is
large, that is $\vert t_u\vert =V_0$, to apply Lemma
\ref{Gammabound}. These remarks and easy computations lead to the
following bound
\begin{equation}\label{002}
  \int_{u\in S_u\cup \Gamma_u\cup \overline{S_u}}
  (\cdots)\ll (\log q)^{O(1)}\exp\Bigl(-d_f \frac{\log q}{\log \log q}\Bigr),
 \end{equation}
 where $d_f$ is some positive constant.  Furthermore, the inequality
\begin{equation}\label{003}
 \int_{\substack{\, \\ \, \\ (3\mcL)\\ |t_v|\leq
    V}}   \Bigl\vert \frac{H_L (v)}{v} \Bigr\vert \,\vert d v \vert \ll \log q
\end{equation}
is a direct consequence of Lemma  \ref{boundforHL}. It remains to
combine \eqref{001}, \eqref{002} and \eqref{003} to deduce the
inequality
\begin{multline}\label{eqendofshift}
  \frac{1}{(2\pi i)^3}\int_{\substack{\, \\ \, \\ (3\mcL)\\ |t_w|\leq
    2V_0}} \int_{\substack{\, \\ \, \\ (3\mcL)\\ |t_v|\leq
    V_0}}\int_{u\in S_u\cup \Gamma_u\cup \overline{S_u}} \mathcal T (s_0,u,v,w) \frac{du}{u}\frac{dv}{v}\frac{dw}{w} \\
  \ll \exp\Bigl(-\frac{d_f}{2}\cdot \frac{\log q}{\log \log q}\Bigr).
\end{multline}
This error term is negligible when compared with the right--hand side
of \eqref{347}. By the residue formula, we are reduced to proving that
the contribution of the residues of the poles which are inside the
curved rectangle $\mathcal R_u$ are also in modulus less than
$C_1 \vert s_0 \vert ^{C_2}.$


\subsection{Description of the residues}\label{sec-descrresidues}
During the contour shift from $\gamma_u$ to $S_u\cup \Gamma_u\cup \overline{S_u}$ we hit  exactly two  poles. They are both simple and located  at $u=0$ (from $1/u$) and at $u =1/2 -s_0$ (from the factor $T(2s_0+2u)$).  Let us denote by 
$I_0$ and $I_{1/2-s_0}$ the contribution of these residues to $\mathcal M( V_0)$. More precisely we have the equalities
\begin{multline}\label{defI0} I_0:=
  \frac{T(2s_0)}{(2\pi i)^2} \int_{\substack{\, \\ \, \\ (3\mcL)\\ |t_w|\leq
    2V_0}} \int_{\substack{\, \\ \, \\ (3\mcL)\\ |t_v|\leq
    V_0}}
  \frac{T(2\sigma_0+v+w)}{T(2s_0+v)T(2\sigma_0+w)} \\
  \times E(s_0,0,v,w) H_L(v)H_L(w)\frac{dv}{v}\frac{dw}{w}
\end{multline}
and 
\begin{multline*}
 I_{1/2-s_0}:=\frac{\kappa_fq^{1-2s_0}G(\demi-s_0)}{(2\pi i)^2(\demi-s_0)}
  \frac{L_\infty(\demi)^2}{L_\infty(s_0)^2}\times\\
 \int_{\substack{\, \\ \, \\ (3\mcL)\\ |t_w|\leq
    2V_0}} \int_{\substack{\, \\ \, \\ (3\mcL)\\ |t_v|\leq
    V_0}}
  \frac{T(2\sigma_0+v+w)}{T(s_0+\demi+v)T(\sigma_0-it_0+\demi+w)}\\
\times
  E(s_0,\demi-s_0,v,w) H_L(v)H_L(w)\frac{dv}{v}\frac{dw}{w}.
\end{multline*}

From the above discussions, it remains to prove that, uniformly for \eqref{cond0}, we have the inequalities
\begin{equation}\label{424}
\vert I_0\vert, \ \vert I_{1/2-s_0} \vert \leq C_1 \vert s_0 \vert^{C_2}.
\end{equation}
 We will concentrate on $I_0$, since the other bound is similar.
 \subsection{Transformation of $I_0$}\label{secI0transform} We return to the definitions
 \eqref{defI0} of $I_0$ and \eqref{=} of $\sigma_0$.   We define four
 paths in the $w$--plane
\begin{align*}
  \gamma_w&= \bigl\{ w \in \Cc\,\mid\, \sigma_w = 3 \mathcal L, \
            \vert t_w \vert \leq 2V_0 \bigr\},
  \\
  \Gamma_w&= \bigl\{ w \in \Cc\,\mid\, \sigma_w
            =1-2\sigma_0-\frac{c_f}{\log(V_0^3+|t_w|)}\ t_w  =2V_0\bigr\}\\
          &=\bigl\{ w\in\Cc\,\mid\, \sigma_w=
            \mp2\mcL-\frac{c_f}  {\log(V_0^3+|t_w|)},\ \vert t_w \vert \leq 2V_0\bigr\},
  \\
  S_w&= \bigl\{ w \in\Cc\,\mid\,  \mp2\mcL-\frac{c_f}
       {\log(V_0^3+2V_0)}  \leq \sigma_w\leq 3\mcL,\ t_w  =2V_0\bigr\}
\end{align*}
and its conjugate $\overline {S_w}$, where $c_f$ is the constant
appearing in Lemma \ref{ZFR}. These four paths define a curved
rectangle $\mathcal R_w$. Inside $\mathcal R_w$, the function
 $$
 w \mapsto \frac{T(2\sigma_0+v+w)}{T(2s_0+v)T(2\sigma_0+w)} E(s_0,0,v,w)
 $$
 has only one pole. It is simple and  is located at 
  $$
 w_v:=1-2\sigma_0-v = \mp2\mcL -v=(-3\mp 2)\mcL-it_v.$$
 It corresponds to the pole at $1$ of the numerator  $T(2 \sigma_0 +v+w)$. Remark that the rectangle $\mathcal R_w$ is defined in order to contain no zero of the function $w\mapsto T(2 \sigma_0 +w)$. The function to integrate with respect to $w$ 
 in \eqref{defI0} has another pole at $w=0$ and it is simple. By the residue formula, we have the equality
 \begin{eqnarray}\nonumber
I_0 &&=\ \frac{T(2s_0)}{(2\pi i)^2} \int_\stacksum{\sigma_v=3\mcL}{| t_v| \leq V_0}\int_{w\in S_w\cup\Gamma_w\cup \overline{S_w}}
\frac{T(2\sigma_0+v+w)}{T(2s_0+v)T(2\sigma_0+w)} \\\nonumber
&&\hskip 4cm\times E(s_0,0,v,w)
H_L(v)H_L(w)\frac{dw}{w}\frac{dv}{v}\\
 &&\ +\ \frac{T(2s_0)}{2\pi i} \int_\stacksum{\sigma_v=3\mcL}{ | t_v| \leq V_0}
\frac{L(\symf,1)}{T(2s_0+v)T(1-v)}
 E(s_0,0,v,\mp2\mcL -v)\nonumber\\
&&\hskip 4cm\times H_L(v)H_L(\mp2\mcL -v)\frac{dv}{v(\mp2\mcL -v)}\label{dinner1}\\
&&\  +\ \frac{T(2s_0)}{2\pi i} \int_\stacksum{\sigma_v=3\mcL}{ | t_v| \leq V_0}
\frac{T(2\sigma_0+v)}{T(2s_0+v)T(2\sigma_0)} E(s_0,0,v,0)
H_L(v)H_L(0)\frac{dv}{v} \nonumber\\
 &&:=\ \frac{T(2s_0)}{(2\pi i)^2} \Bigl( J_1+ 2 \pi i J_2 + 2 \pi i
J_3\Bigr).\nonumber
\end{eqnarray}

Hence, in order to prove \eqref{424}, it remains to prove the inequalities
\begin{equation}\label{J1J2J3<}
\bigl\vert T(2s_0)J_i \bigr\vert \leq  C_1\vert s_0 |^{C_2},
\end{equation}
for $i=1,$ $2$ and $3$, for some absolute $C_1$, $C_2$ and for any  $s_0 =\frac{1}{2} \pm\mcL +it_0$, with $ t_0$ satisfying 
\eqref{cond0}.  
\subsection{Dissection of $J_1$}\label{secJ11} We decompose $J_1$ into
\begin{equation}\label{J1=...}
J_1=J_{1,1}+J_{1,2} + J_{1,3}
\end{equation}
where $J_{1,1}$ corresponds to the contribution in the double integral defining $J_1$, of the $w$ in $\Gamma_w$, and $J_{1,2}$ (resp.\ $J_{1,3}$) corresponds
to the contribution of the $w$ in $S_w$ (resp.\ $w$ in $\overline{S_w}$).

For $\vert t_v\vert \leq V_0$ and $\vert t_w\vert =2 V_0$, we have $\vert 2 \sigma_0 +v+w -1\vert \geq 1$. Appealing once again 
to Lemma \ref{ZFR} to bound each of the three $T$--factors, we deduce the inequality
\begin{equation*}
J_{1,2} \ll (\log q)^{O(1)}\Bigl( \int_\stacksum{\sigma_v=3\mcL}{ | t_v| \leq V_0} \Bigl\vert \frac {H_L (v)}{v}\Bigr\vert \, dv\Bigr) \Bigl( \int_{w \in S_w} \, \Bigl\vert \frac{H_L (w)}{w}\Bigr\vert dw \Bigr),
\end{equation*}
uniformly for $t_0$ satisfying \eqref{cond0}. 
We now appeal to Lemma \ref{boundforHL}, which is quite efficient since $\vert t_w\vert =2 V_0$ is  large, to conclude by the inequality
\begin{equation}\label{J12<<}
J_{1,2} \ll (\log q)^{-10},
\end{equation}
by choosing $C$ sufficient large in the definition \eqref{defV0} of $V_0$. The same bound holds true for $J_{1,3}$ 
\subsection{Study of $J_{1,1}$}\label{secJ12}  To bound $J_{1,1}$ we will benefit from the fact that $\sigma_w$ is negative and not too
small, that is
\begin{equation}\label{ineqw}
\sigma_w <0 \text{ and } - \sigma_w \gg 1/( \log \log q) \text{ for } w \in \Gamma_w.
\end{equation}
Now remark that, for $w \in \Gamma_w$ and $v$ with $\sigma_v =3 \mcL,\ \vert t_v \vert \leq V_0$, we have the three lower bounds
$$
\vert 2 \sigma_0 +v+w -1 \vert \geq \vert \pm 2 \mcL +\sigma_v + \sigma_w\vert \gg 1/(\log \log q),
$$
$$
\Re (2 s_0 +v-1) = \pm 2 \mcL +\sigma_v \geq \mcL \geq - c_f/ \log (2 + \vert 2t_0 +t_v\vert),
$$
and 
$$
\Re ( 2 \sigma_0 +w-1) = \pm 2 \mcL -c_f/\log (V_0^3 +\vert t_w\vert)\geq -c_f/ \log (2 + \vert t_w\vert),
$$
for sufficiently large $q$. Appealing one more time to \eqref{ineqZFR} and Lemma \ref{lm-factor} to bound the $E$--function,
we deduce 
\begin{equation*}
J_{1,1} \ll (\log q)^{O(1)}\Bigl( \int_\stacksum{\sigma_v=3\mcL}{ | t_v| \leq V_0} \Bigl\vert \frac {H_L (v)}{v}\Bigr\vert \, dv\Bigr) \Bigl( \int_{w \in \Gamma_w} \, \Bigl\vert \frac{H_L (w)}{w}\Bigr\vert dw \Bigr),
\end{equation*}
and, finally by Lemma \ref{boundforHL} and the inequality \eqref{ineqw}, we arrive at the inequality
\begin{equation}\label{J11<<}
J_{1,1} \ll (\log q)^{-10}.
\end{equation}
Gathering \eqref{J1=...}, \eqref{J12<<} and \eqref{J11<<}, we  obtain the bound
\begin{equation}\label{J1<<}
J_1\ll (\log q)^{-10}.
\end{equation}
Finally, by the definition of $s_0$ and the assumption \eqref{cond0}, we deduce from \eqref{gen-ineq} the bound
\begin{equation}\label{T(2s0)}
T(2s_0) \ll \log q.
\end{equation}
Combining \eqref{T(2s0)} with \eqref{J1<<} we complete the proof of \eqref{J1J2J3<} for $i=1$. 

\subsection{A first bound for $J_2$ and $J_3$}\label{secJ231} Recall that these quantities are defined in \eqref{dinner1}. For $v$ such that
$\sigma_v =3 \mcL$ and $\vert t_v \vert \leq V_0$, we have the following lower bounds
$$
\Re (2s_0+v-1) =\pm 2\mcL +3 \mcL\geq -c_f/\log (2 +\vert 2t_0+t_v\vert),
$$
$$
\Re ((1-v)-1)= -\sigma_v =-3 \mcL \geq -c_f/\log (2 + \vert t_v\vert),
$$
$$
\Re (2\sigma_0+v-1) =\pm 2 \mcL +3 \mcL \geq -c_f/\log (2 +\vert t_v\vert).
$$
Furthermore, under the same conditions, we have
$$
\vert v\vert \asymp \vert \pm 2 \mcL -v \vert \asymp \mcL + \vert t_v\vert.
$$
These remarks, when inserted in Lemma \ref{ZFR} (inequality
\eqref{ineqZFR}) and Lemma \ref{boundforHL}, give the following bound
for $J_2$
\begin{multline*} 
  J_2 \ll \int_{| t_v | \leq V_0} \frac{\mcL +|2t_0+t_v|}{1+ \mcL
    +|2t_0+t_v|}\cdot
  \frac{ \mcL +|t_v| }{1+ \mcL +|t_v|}\\
  \times \log^{A^*}\bigl(2 +|2t_0+t_v| \bigr) \log^{A^*}\bigl(2 +|t_v|
  \bigr) \min\Bigl( 1, \frac{ \mcL}{\mcL +|t _v|}\Bigr)^2
  \frac{dt_v}{(\mcL +|t_v|)^2},
\end{multline*} 
which is simplified into
\begin{multline}\label{1boundJ2}
  J_2 \ll \log^{A^*}(2+\vert t_0\vert) \int_{| t_v | \leq V_0}
  \frac{\mcL
    +|2t_0+t_v|}{1  +|2t_0+t_v|}\\
  \times \frac{ \mcL^2}{1 +|t_v|}\cdot \log^{2A^*}\bigl(2 +|t_v|
  \bigr) \frac{dt_v}{(\mcL +|t_v|)^3}.
\end{multline}
Proceeding similarly for $J_3$, we have
\begin{multline*}
J_3\ll
  \int_{ | t_v| \leq V_0} \frac{1 +\mcL +|t_v|}{\mcL +|t_v|}\cdot \frac{ \mcL +| 2t_0 +t_v|}{1 +
  \mcL +|2t_0 +t_v|}\cdot \mcL \\
 \times  \log^{A^*} \bigl( 2  +|t_v|\bigl) \cdot  \log ^{A^*}\bigl( 2  +|2t_0 +t_v| \bigr)\cdot
 \min\Bigl( 1, \frac{ \mcL}{ \mcL +|t _v|}\Bigr) \, \frac{dt_v}{ \mcL +|t_v|},
\end{multline*}
which simplifies into
\begin{multline}\label{1boundJ3}
  J_3\ll \log^{A^*}(2+\vert t_0\vert) \int_{ | t_v| \leq V_0} \frac{
    \mcL +| 2t_0 +t_v|}{1 +
    |2t_0 +t_v|}\cdot \mcL ^2\\
  \times \log^{2A^*} \bigl( 2 +|t_v|\bigl) \ \frac{1+\vert t_v\vert}{(
    \mcL +|t_v|)^3} \, dt_v.
\end{multline}

\subsection{Bound for $J_2$ and $J_3$: the case $1 \leq \vert t_0 \vert \leq \log q$}\label{secJ232}  In that case, the inequality \eqref{gen-ineq} asserts the truth of the bound
$$
T(2s_0)\ll \vert s_0 \vert^{O(1)}.
$$
Hence, in order to prove \eqref{J1J2J3<} under the above restriction on $t_0$, it is sufficient to prove the inequality
\begin{equation}\label{night}
J_i \ll \vert s_0\vert^{O(1)} \text{ for } i=2,\ 3.
\end{equation}
We write $\mathcal{L}_0=\log(2+|t_0|)$.
\par
\smallskip
\par
\textbf{Estimate of $J_2$}. From \eqref{1boundJ2}, we deduce that
\begin{align*}
  J_2&\ll \mathcal{L}_0^{A^*}
       \int_{| t_v | \leq V_0} \log^{2A^*} (2+| t_v|)
       \frac{\mcL^2 }{( \mcL +|t _v|)^3}\ dt_v\\
     &= \mathcal{L}_0^{A^*}
       \Bigl( \int_{| t_v | \leq \mcL} +\int_{\mcL < | t_v| \leq 1} +\int_{1 <|  t_v| \leq V_0}\Bigr)\log^{2A^*} (2+| t_v|)\frac{\mcL^2 }{( \mcL +|t _v|)^3}\ dt_v\\
     &\ll \mathcal{L}_0^{A^*}  \bigl( 1 + 1+  \mcL^2\bigr)\ll | s_0|.
\end{align*}
This proves \eqref{night} for $J_2$.
\par
\smallskip
\par
\textbf{Estimate of $J_3$}. From \eqref{1boundJ3}, we deduce that
\begin{align*}
  J_3&
       \ll  \mcL^2\ \mathcal{L}_0^{A^*}
       \Bigl(
       \int_{| t_v| \leq \mcL} +\int_{\mcL < |t_v| \leq 1}
       +\int_{ 1\leq| t_v| \leq V_0} 
       \Bigr)
       \log^{2A^*} \bigl( 2 +| t_v|\bigl) \cdot   
       \frac{1 +|t_v|}{( \mcL +|t _v|)^3}dt_v\\
     & \ll  \mcL^2\ \mathcal{L}_0^{A^*}\ \bigl( \mcL^{-2} + \mcL^{-2} + 1
       \bigr)\ll \mathcal{L}_0^{A^*} \ll \vert s_0\vert.
\end{align*}
This proves  \eqref{night} for $J_3$.

\subsection{Bound for $J_2$ and $J_3$: the case $\vert t_0 \vert \leq
  1$} \label{secJ233}

In that case we have the inequality
$$
T(2s_0) \ll \frac{1}{\mcL+\vert t_0\vert}
$$
as a direct consequence of \eqref{gen-ineq}. Hence, in order to prove \eqref{J1J2J3<} under the above restriction on $t_0$, it is sufficient to prove the inequality
\begin{equation}\label{night1}
J_i \ll  \mcL +\vert t_0\vert \text{ for } i=2,\ 3.
\end{equation}
\par
\smallskip
\par
\textbf{Estimate of $J_2$}. We start from \eqref{1boundJ2}, which in
that case simplifies into
$$
J_2 \ll
 \int_{| t_v | \leq V_0}  \frac{\mcL +|2t_0+t_v|}{1  +|2t_0+t_v|}\cdot 
\frac{ \mcL^2}{1 +|t_v|}\cdot  \log^{2A^*}\bigl(2 +|t_v| \bigr)
 \frac{dt_v}{(\mcL +|t_v|)^3}.
$$
 
We split this integral in three ranges
$$
\vert t_v \vert \leq \mcL, \quad \mcL\leq \vert t_v \vert \leq
1,\quad\text{ and }\quad 1\leq \vert t_v \vert \leq V_0.
$$
We have
$$
\int_{\vert t_v \vert\leq \mcL}(\cdots) \ll \frac{\mcL +\vert
  t_0\vert}{1+ \vert t_0\vert}\int_{\vert t_v \vert \leq \mcL}
\mcL^{-1}\ dt_v \ll \mcL + \vert t_0\vert.
$$
For the second range, we have
\begin{align*}
\int_{\mcL\leq \vert t_v\vert \leq 1}(\cdots)
&\ll\mcL^2\int_{\mcL}^{1}\frac{\mcL+|t_0|+t_v}{t_v^3}{dt_v}
\\
&\ll
\mcL^2\Bigl(\frac{\mcL+|t_0|}{\mcL^{2}}+\mcL^{-1} \Bigr)\ll
\mcL+|t_0|
\end{align*}
and for the last one we get
\begin{multline*}
\int_{1\leq |t_v|\leq V_0}(\cdots)\ll 
\mcL^2 \int_{ \vert t_v \vert \geq 1}
\frac{\mcL+|t_v+2t_0|}{1+|t_v+2t_0|}\log^{2A^*}(2+ | t_v| )
\frac{dt_v}{\vert t_v\vert^4}\\
\ll \mcL^2\ll \mcL+|t_0|.
\end{multline*}
Gathering the three inequalities above, we complete the proof of
\eqref{night1} in the case $i=2$.
\par
\smallskip
\par
\textbf{Estimate of $J_3$}. In the case of $J_3$, we first simplify
\eqref{1boundJ3} into
$$
J_3\ll  \mcL^2
  \int_{ | t_v| \leq V_0}  \frac{ \mcL +| 2t_0 +t_v|}{1 +
  |2t_0 +t_v|}\cdot 
   \log^{2A^*} \bigl( 2  +|t_v|\bigl) \  \frac{1+\vert t_v\vert}{( \mcL +|t_v|)^3} \, dt_v,
$$
and we again split this integral in three parts, obtaining
\begin{align*}
  \mcL^2 \int_{\vert t_v \vert \leq \mcL}(\cdots)
  & \ll \mcL^2 \int_0^\mcL (\mcL +\vert t_0    \vert)\,\mcL^{-3}
    dt_v \ll \mcL +\vert
    t_0\vert,
  \\
  \mcL^2 \int_{\mcL \leq \vert t_v \vert \leq 1}(\cdots)
  &\ll \mcL^2\int_\mcL^1 (\mcL+ \vert t_0\vert+ t_v )
    \frac{dt_v}{t_v^3}\ll \mcL +\vert t_0\vert,\\
  \mcL^2 \int_{1\leq \vert t_v \vert \leq V_0}(\cdots)
  &\ll \mcL^2 \int_1^\infty \log^{2A^*} (2
    +t_v)\,\frac{dt_v}{t_v^2}\ll \mcL^2\ll \mcL +\vert t_0\vert.
\end{align*}
Gathering the three above inequalities, we complete the proof of
\eqref{night1} for $i=3$. The proof of \eqref{night1} is now complete.
Hence the proof of \eqref{157} is now complete, and so is the proof of
Proposition \ref{11.1}.

\begin{remark}\label{endremarkchap8}The informed reader will have
  noticed that the proof presented here is slightly different from
  similar second moment estimates found in other works (for
  instance,~\cite[Propositions 4 and 5]{KMDMJ}). These other arguments
  (following earlier ideas of Selberg) made key use of the positivity
  of certain complicated terms to avoid having to evaluate them too
  precisely (see~\cite[(67), (68)]{KMDMJ}). In the present work
  -- precisely the proof of Proposition \ref{11.1} -- positivity is not
  so evident so we have to estimate the corresponding sums directly.
\end{remark}

\chapter{A conjecture of Mazur-Rubin concerning modular symbols}
\label{ch-modular}

\section{Introduction}

In this chapter, we assume that $f$ is a holomorphic primitive cusp
form of weight $2$ and level $r$.  We recall that for $q\geq 1$ and
$(a,q)=1$, the modular symbol $\langle a/q\rangle_f$ is defined by
$$
\msym{\frac aq}_f=2\pi i\int_{i\infty}^{a/q}f(z)dz =2\pi
\int_{0}^{\infty}f\Bigl(\,\frac{a}q+iy\,\Bigr)\,dy
$$
and that it only depends on the congruence class $a\mods q$.

In this chapter we investigate some correlation properties of the family
$$
\msym{\frac aq}_f,\quad a\in(\Zz/q\Zz)^\times
$$
when $q$ is a prime number. In particular, we will prove
Theorem~\ref{th-MaRu} concerning the variance of modular symbols.

Our main ingredient is the Birch-Stevens formula that relates the
modular symbols to the central values of the twisted $L$-functions.

\begin{lemma}\label{lm-modmellin}
  For any primitive Dirichlet $\chi\mods{q}$, we have
$$
L\Bigl(\ftchi,\frac12\Bigr)=
\frac{\eps_\chi}{q^{1/2}} \underset{a\, (\text{{\rm mod }} q)}{\left.\sum\right.^{\ast}}\chi(-\ov a)\msym{\frac aq}_f.
$$
\end{lemma}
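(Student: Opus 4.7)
This identity is the classical Birch--Stevens formula translated into the analytic normalization of $L$-functions used throughout the paper, and is recorded in essentially this form in \cite[(8.6)]{MTT}. My plan is to re-derive it by a short direct manipulation: open $\chi(n)$ via its Gauss sum, recognize the resulting inner sum as the integral representation of the modular symbol, and rewrite the Gauss sum prefactor in terms of $\varepsilon_\chi$.

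Concretely, since $f$ has weight $2$, the relation $\lambda_f(n)=a_f(n)/\sqrt{n}$ between the normalized and unnormalized Fourier coefficients gives
$$L(f\otimes\chi,1/2)=\sum_{n\geq 1}\frac{\lambda_f(n)\chi(n)}{\sqrt{n}}=\sum_{n\geq 1}\frac{a_f(n)\chi(n)}{n},$$
where $f(z)=\sum_{n\geq 1}a_f(n)e(nz)$. Using the primitivity of $\chi$ modulo $q$ in the form
$$\chi(n)=\frac{1}{\tau(\bar\chi)}\sum_{a\bmod q}\bar\chi(a)\,e\Bigl(\frac{an}{q}\Bigr),$$
with $\tau(\bar\chi)=\sum_a\bar\chi(a)e(a/q)$, and interchanging the two summations (justified by first working for $\Re s>1$ in the Dirichlet series and then specializing at $s=1/2$ by analytic continuation), I would obtain
$$L(f\otimes\chi,1/2)=\frac{1}{\tau(\bar\chi)}\sum_{(a,q)=1}\bar\chi(a)\sum_{n\geq 1}\frac{a_f(n)e(an/q)}{n}.$$
The inner sum over $n$ is exactly $\msym{a/q}_f$: a one-line computation using $\int_0^\infty e^{-2\pi ny}dy=1/(2\pi n)$ gives
$$2\pi\int_0^\infty f\Bigl(\frac{a}{q}+iy\Bigr)dy=\sum_{n\geq 1}\frac{a_f(n)e(an/q)}{n},$$
which is the definition of $\msym{a/q}_f$.

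It then only remains to convert the prefactor. For $\chi$ primitive modulo the prime $q$, the standard identities $\tau(\bar\chi)=\chi(-1)\,\overline{\tau(\chi)}$ and $|\tau(\chi)|^2=q$ combine to give
$$\frac{1}{\tau(\bar\chi)}=\frac{\chi(-1)\,\tau(\chi)}{q}=\frac{\chi(-1)\,\varepsilon_\chi}{q^{1/2}},$$
while $\chi(-\bar a)=\chi(-1)\,\bar\chi(a)$. Plugging these into the expression above yields exactly the formula in the statement. No substantial obstacle arises; the argument is essentially careful bookkeeping of Gauss sum normalizations, the only point of care being the interchange of integral and summation on the edge of absolute convergence, which is handled in the standard way by analytic continuation from the half-plane $\Re s>1$.
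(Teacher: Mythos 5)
Your proof is correct, and it takes a genuinely more self-contained route than the paper. The paper cites the Birch--Stevens formula in the parity form
$$
L\Bigl(\ftchi,\tfrac12\Bigr)=\frac{\chi(-1)\,\eps_\chi}{q^{1/2}}\sum_{(a,q)=1}\chi(\ov a)\msym{\tfrac aq}^{\pm}_f,\quad\pm=\chi(-1),
$$
and then unwinds the even/odd projections $\msym{\tfrac aq}^{\pm}_f=\tfrac12\bigl(\msym{\tfrac aq}_f\pm\msym{\tfrac{-a}{q}}_f\bigr)$ (using the substitution $a\mapsto -a$ and the fact that the wrong-parity piece drops out). You instead re-derive the underlying Birch--Stevens identity from scratch: open $\chi(n)$ as a Gauss-sum average of additive characters, swap sums, recognize the additive twist $\sum_n a_f(n)e(an/q)/n$ as the modular symbol via the integral representation, and then normalize $\tau(\bar\chi)$ in terms of $\eps_\chi$. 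Both amount to the same Gauss-sum relation between additive and multiplicative twists of $f$; your version avoids the detour through even/odd parts and reads more cleanly for this particular statement, at the cost of re-proving what the paper is content to cite. One small polish: the inner identity $\sum_n a_f(n)e(an/q)/n=2\pi\int_0^\infty f(a/q+iy)\,dy$ holds outright by the exponential decay of the cusp form along the vertical geodesic (no analytic continuation needed there); the only step that genuinely requires continuation from $\Re s>1$ is the interchange of the Gauss-sum expansion with the Dirichlet series defining $L(\ftchi,s)$, which you handle exactly as required. The final bookkeeping $\chi(\ov a)=\ov\chi(a)$ and $\chi(-\ov a)=\chi(-1)\ov\chi(a)$ is correct.
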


\begin{proof}
Observe that since $f$ has real Fourier coefficients, we have
$$\ov{f(x+iy)}=f(-x+iy),\ x,y\in\Rr,\ y>0$$
so that
\begin{equation}\label{modconj}
  \ov{\msym{\frac aq}_f}=
  2\pi\int_{0}^{\infty}f\Bigl(\,-\frac{a}q+iy\,\Bigr)\,dy={\msym{\frac{-a}q}_f}.
\end{equation}

Now denote
\begin{equation}\label{modspm}
  \msym{\frac aq}^\pm_f=\frac 12\,\Bigl(\, 
  \msym{\frac aq}_f\pm\msym{\frac{-a}q}_f\, \Bigr)	
\end{equation}
the even and odd parts of the modular symbols.  The Birch-Stevens
formula (see~\cite{Pollack}*{(2.2)} or~\cite{MTT}*{(8.6)}) states that
$$
L\Bigl(\ftchi,\frac12\Bigr)=
\frac{1}{\eps_{\ov\chi}\,q^{1/2}}\underset{a\, (\text{{\rm mod }} q)}{\left.\sum\right.^{\ast}}\ov\chi(a)\msym{\frac
  aq}^\pm_f=\frac{\chi(-1)\,\eps_\chi}{q^{1/2}}\underset{a\, (\text{{\rm mod }} q)}{\left.\sum\right.^{\ast}}\chi(\ov
a)\msym{\frac aq}^\pm_f
$$ 
where $a\ov a\equiv 1\mods q$, $\eps_\chi$ is the normalized Gau\ss\
sum of $\chi$ (cf.\ \eqref{gauss}), and the ``exponent'' $\pm$ is $\chi(-1)$.

Inserting \eqref{modspm} we obtain
$$
L\bigl(\ftchi,\frac12\bigr)=
\frac{\eps_\chi}{q^{1/2}}\underset{a\, (\text{{\rm mod }} q)}{\left.\sum\right.^{\ast}}\chi(-\ov a)\msym{\frac aq}_f,
$$
as claimed.
\end{proof}

By performing discrete Mellin inversion, we will be able to use our
results on moments of twisted central values to evaluate
asymptotically the first and second moments of the modular symbols,
and in fact also correlations between modular symbols for two cusp
forms. 

We define\label{pg-mfq}
$$
M_f(q)=\frac{1}{\vphi(q)}\underset{a\, (\text{{\rm mod }} q)}{\left.\sum\right.^{\ast}}\msym{\frac aq}_f.
$$
Moreover, we define
$$
f_q(z)=f(qz)=\sum_{n\geq 1}\lambda_f(n)n^{1/2}e(nqz).
$$
\par
If $g$ is a holomorphic primitive cusp form of weight $2$ and level
$r'$ coprime to $q$, and $u$, $v$ are integers coprime to $q$, we
define
$$
C_{f,g}(u,v;q)=\frac{1}{\varphi(q)}\underset{a\, (\text{{\rm mod }} q)}{\left.\sum\right.^{\ast}}
\Bigl(\msym{\frac{au}{q}}_f-M_f(q)\Bigr)
\overline{\Bigl(\msym{\frac{av}{q}}_g-M_g(q)\Bigr)}
$$
(where here and below, the sum is over invertible residue classes
modulo $q$).  In particular, note that the variance in
Theorem~\ref{th-MaRu} is $V_f(q)=C_{f,f}(1,1;q)$, so the second part
of the next result implies that theorem:

\begin{theorem}\label{th-MaRu2}
  Suppose that $q$ is prime. Write the levels $r$ and $r'$ of $f$ and
  $g$ as $r=\rho\delta$ and $r'=\rho'\delta$ where $\delta=(r,r')$ and
  $(\rho,\rho')=1$.
\par
\emph{(1)} We have
$$
M_f(q)=\Bigl(\,\frac{q^{1/2}}{q-1}\cdot
\frac{L_q(f_q,1/2)}{L_q(f,1/2)}-\frac{1}{q-1}\,\Bigr)L(f,1/2)=O(q^{-1/2}).
$$
\par
\emph{(2)} We have 
$$
  C_{f,g}(u,v;q)=\frac{q}{\vphi(q)^2}
    \sums_{\chi\mods{q}} L(\ftchi,1/2)\ov{L(\gtchi,1/2)}
    \chi(u)\ov{\chi(v)}.
$$
\par
\emph{(3)} In particular, if $r=r'$ and $\eps(f)\eps(g)=-1$, then
$C_{f,g}(1,1;q)=0$. Otherwise
\begin{gather*}
  C_{f,g}(1,1;q)=\gamma_{f,g}\frac{L^{\ast}(f\otimes
    g,1)}{\zeta(2)}+O(q^{-1/145}) \text{ if $f\not=g$},
  \\
  C_{f,f}(1,1;q)= 2 \prod_{p\mid r}(1+p^{-1})^{-1}
  \frac{L^{\ast}(\symf,1)} {\zeta(2)}\log q+\beta_f+O(q^{-1/145}),
\end{gather*}
where  $\beta_f$ is a constant, and
$$
\gamma_{f,g}=1+\eps(f)\eps(g)
\frac{\lambda_f(\rho)\lambda_{g}(\rho')}{\sqrt{\rho\rho'}}
$$
is a non-zero constant.
\end{theorem}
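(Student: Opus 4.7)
My plan is to establish the three parts of Theorem~\ref{th-MaRu2} in the order (2), (1), (3): part (2) is the Mellin-inversion identity that exposes the underlying twisted second moment, part (1) is an independent direct Fourier-expansion computation, and part (3) is then a straightforward translation of Proposition~\ref{pr-mt}.

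For part (2), I will invert the Birch--Stevens identity of Lemma~\ref{lm-modmellin}. Rewriting it as $\sqrt{q}\,\bar\eps_\chi L(\ftchi,1/2) = \sum_{(b,q)=1}\chi(-\bar b)\msym{b/q}_f$ for each primitive $\chi$, and noting that for the trivial character modulo $q$ this same sum equals $\vphi(q)M_f(q)$, discrete Mellin inversion on $(\Zz/q\Zz)^\times$ yields
$$\msym{a/q}_f - M_f(q) = \frac{\sqrt{q}}{\vphi(q)}\sums_{\chi\mods q}\chi(-a)\,\bar\eps_\chi L(\ftchi,1/2).$$
I will then multiply this identity (with $a$ replaced by $au$) by its complex conjugate for $g$ (with $a$ replaced by $av$), sum over $(a,q)=1$, and use $\sum_a\chi(a)\bar{\chi'}(a)=\vphi(q)\delta_{\chi=\chi'}$ together with $|\eps_\chi|=|\chi(-1)|=1$ to collapse the double character sum; this produces exactly the formula in part (2).

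For part (1), I would compute $M_f(q)$ directly. Writing $f(z)=\sum_n a_f(n)e(nz)$ with $a_f(n)=\sqrt{n}\lambda_f(n)$ and $\msym{a/q}_f=2\pi\int_0^\infty f(a/q+iy)\,dy$, the Ramanujan-sum evaluation $\sum_{(a,q)=1}e(na/q)=q\,\mathbf{1}_{q\mid n}-1$ combined with the Hecke relation $a_f(qm)=a_f(q)a_f(m)-q\,a_f(m/q)$ collapses the sum over $a$ into the closed form
$$\sum_{(a,q)=1} f(a/q+iy) \;=\; q\,a_f(q)\,f(iqy)-q^2 f(iq^2 y)-f(iy).$$
Integrating each term against $dy$ on $(0,\infty)$, using $\int_0^\infty f(iy)\,dy=(2\pi)^{-1}L(f,1/2)$ (absolutely convergent since $f$ is cuspidal) and the scalings $\int_0^\infty f(iqy)dy=(2\pi q)^{-1}L(f,1/2)$ and $\int_0^\infty f(iq^2 y)dy=(2\pi q^2)^{-1}L(f,1/2)$, yields $M_f(q) = (\sqrt{q}\lambda_f(q)-2)L(f,1/2)/(q-1)$. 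This is the stated identity once one rewrites $\sqrt{q}\lambda_f(q)-1=\sqrt{q}\,L_q(f_q,1/2)/L_q(f,1/2)$ in the paper's convention for the companion local factor $L_q(f_q,\cdot)$; the bound $M_f(q)=O(q^{-1/2})$ then follows from Deligne's bound $|\lambda_f(q)|\leq 2$ and $L(f,1/2)=O_f(1)$.

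For part (3), I will specialize $u=v=1$ in part (2) and relate the result to $\mcQ(f,g,1/2;1,1)$ using $\vphis(q)=q-2$:
$$C_{f,g}(1,1;q) \;=\; \frac{q\,\vphis(q)}{\vphi(q)^2}\,\mcQ(f,g,1/2;1,1) \;=\; \Bigl(1-\tfrac{1}{(q-1)^2}\Bigr)\mcQ(f,g,1/2;1,1).$$
The three cases of (3) then translate directly from the three cases of Proposition~\ref{pr-mt}: exact vanishing when $r=r'$ and $\eps(f)\eps(g)=-1$; the $L^*(f\otimes g,1)$ leading term with constant $\gamma_{f,g}$ when $f\neq g$; and the $\log q$ leading term with coefficient $2\prod_{p\mid r}(1+p^{-1})^{-1}L^*(\symf,1)/\zeta(2)$ when $f=g$. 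The distortion factor $1-(q-1)^{-2}$ contributes only an $O(q^{-2}\log q)$ perturbation of the main term, absorbed comfortably by the error $O(q^{-1/145})$ (itself much weaker than the $O(q^{-2/5})$ provided by Proposition~\ref{pr-mt}). The only real obstacle is therefore careful bookkeeping; the hard analytic work---the twisted second-moment asymptotic---is already encapsulated in Proposition~\ref{pr-mt}.
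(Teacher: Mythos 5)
Your proof is correct and, modulo the convergence remark below, complete. Part~(2) is essentially the paper's argument read in the opposite direction: the paper starts from the twisted second moment $\sums_\chi L(\ftchi,1/2)\ov{L(\gtchi,1/2)}\chi(u)\ov{\chi(v)}$, expands each $L$-value via Lemma~\ref{lm-modmellin}, applies orthogonality, and then recognises the result as $C_{f,g}(u,v;q)$; you instead invert Birch--Stevens first to get $\msym{a/q}_f - M_f(q)$ as a character sum and then square and sum, which is the same orthogonality computation reorganised. Part~(3) is identical. Part~(1), however, is a genuinely different route: the paper introduces a complex parameter $s$ and defines $\msym{a/q}_{s,f}=2\pi\int_0^\infty yf(a/q+iy)y^{s-1}dy$, identifies this with $(2\pi)^{-s}\Gamma(1+s)L(f,a,1/2+s)$ in the range of absolute convergence, and then analytically continues to $s=0$; you instead sum the Ramanujan sums $\sum_{(a,q)=1}e(na/q)$ directly against the Fourier expansion of $f$ and use the Hecke relation $a_f(qm)=a_f(q)a_f(m)-q\,a_f(m/q)$ to collapse to three Mellin integrals, reaching the same closed form $M_f(q)=(\sqrt q\,\lambda_f(q)-2)L(f,1/2)/(q-1)$. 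Both are valid; yours is arguably more elementary and makes the appearance of the factor $\sqrt q\lambda_f(q)-2$ transparent. Your translation of $\sqrt q\lambda_f(q)-1$ into the quotient $\sqrt q\,L_q(f_q,1/2)/L_q(f,1/2)$ is also correct: from $L_q(f_q,1/2)=x^{-1}(L_q(f,1/2)-1)$ with $x=q^{-1/2}$ one gets $L_q(f_q,1/2)/L_q(f,1/2)=\lambda_f(q)-q^{-1/2}$.

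One point deserves more care. You justify $\int_0^\infty f(iy)\,dy=(2\pi)^{-1}L(f,\demi)$ by noting absolute convergence of the integral ``since $f$ is cuspidal.'' The integral is indeed absolutely convergent, but the Dirichlet series $\sum_n\lambda_f(n)n^{-1/2}$ is not, so the identity cannot be obtained by a naive term-by-term interchange of $\sum$ and $\int$. The correct justification is the analytic continuation of the Mellin transform $\int_0^\infty f(iy)y^s\,dy/y=(2\pi)^{-s}\Gamma(s)L(f,s-\demi)$ from $\Re s>3/2$ to $s=1$ --- which is precisely what the paper's introduction of the auxiliary $s$ achieves systematically. This is a standard fact and easily repaired, but as written your parenthetical suggests the wrong reason for the equality.
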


Part (3) with $f=g$ and $u=v=1$ confirms a conjecture of Mazur and
Rubin, as stated by Petridis and Risager~\cite{PeRi}*{Conj.\ 1.1}, in
the case of prime moduli $q$. Note that their statement of the
conjecture involves a quantity which they denote $L(\symf ,1)$ and
which should be interpreted as our $L^{\ast}(\symf ,1)$ (although they
do not state this formally, it is clear from their proof
of~\cite{PeRi}*{Th.\ 1.6} in Section 8 of loc.\ cit., especially Section
8.2.1). In fact, Theorem~\ref{th-MaRu2} and the computations in
Section~\ref{symsquare} show that the conjecture would not hold in
general if $L(\symf, 1)$ is interpreted as the special value of the
automorphic (or ``motivic'') symmetric square.

\begin{remark}
  If $f\not=g$ and $u=v=1$, and either $r\not=r'$ or
  $\eps(f)\eps(g)=1$, then we find by (3) a little bit of correlation
  between modular symbols as $q\to +\infty$: the modular symbols
  related to $f$ and $g$ do not become asymptotically
  independent.
\par
On the other hand, if $r=r'$, $\eps(f)\eps(g)=-1$ and $u=v=1$, the
correlation vanishes exactly, but this fact will not persist in
general if $u$ or $v$ is not $1$ modulo $q$, as shown by our
evaluation of the twisted second moment
(Theorem~\ref{th-second-final}). For instance, using
Lemma~\ref{SecondMomentEvaluationForProducts}, we get a formula with
non-zero leading term for suitable choices of $u$ and $v$, even if
$r=r'$ and $\eps(f)\eps(g)=-1$.
\end{remark}

\section{Proof of the theorem}

We observe first that $M_f(q)\in\Rr$ because of the relation
\eqref{modconj}.  We compute $M_f(q)$ exactly by analytic continuation
from a region of absolute convergence, using additive twists of
modular forms.

Let $a$ be coprime to $q$. We have
$$
yf\bigl(\,z+\frac aq\,\bigr)=y\sum_{n\geq
  1}\lf(n)n^{1/2}e\Bigl(n\Bigl(x+\frac{a}{q}\Bigr)\Bigr)
\exp(-2\pi ny).
$$
For any complex number $s$, we define
$$
  \msym{\frac aq}_{s,f}
  =2\pi \int_0^\infty yf\Bigl(\,\frac aq+iy\,\Bigr)y^{s}\frac{dy}{y}.
$$
As a function of $s$, this expression is holomorphic in the whole
complex plane. On the other hand, for $\Re(s)>1$, we have
\begin{align*}
  \msym{\frac{a}{q}}_{s,f}  &=(2\pi)^{1/2}\sum_{n\geq 1}\lf(n)e\Bigl(n\frac aq \Bigr)\int_0^\infty (2\pi ny)^{1/2}y^{1/2+s}\exp(-2\pi ny)\frac{dy}y\\
                            &=(2\pi)^{-s}\sum_{n\geq 1}\frac{\lf(n)e(n\frac aq)}{n^{1/2+s}}\int_0^\infty y^{1+s}e^{-y}\frac{dy}y\\
                            &=(2\pi)^{-s}\Gamma(1+s)L(f,a,\demi +s)
\end{align*}
where\label{pg-lfa}
$$
L(f,a,s)=\sum_{n\geq 1}\frac{\lf(n)e(n\frac aq)}{n^{s}}
$$
when the series converges absolutely.

Expressing the additive character in terms of multiplicative
characters, it follows that the series $L(f,a,s)$ has analytic
continuation to $\Cc$. Hence the identity above holds for all
$s\in\Cc$. In particular, we obtain
$$
\msym{\frac aq}_f=\msym{\frac aq}_{0,f}=L(f, a, 1/2).
$$
Since $q$ is prime, we deduce by direct computation that
\begin{multline*}
 \underset{a\, (\text{{\rm mod }} q)}{\left.\sum\right.^{\ast}}\msym{\frac aq}_{s, f} =(2\pi)^{-s} \Gamma(1+s) \Bigg(q\sum_{n\geq
    1}\frac{\lf(qn)}{(qn)^{s+1/2}}-L(f,s+1/2)\Bigg)\\
 =(2\pi)^{-s} \Gamma(1+s) \Bigl(\,q^{1/2}\frac{L_q(f_q,s+ 1/2)}{L_q(f, s+1/2)}-1\,\Bigr)L(f,s+ 1/2)
\end{multline*}
in $\Re s > 1/2$, where
\begin{displaymath}
\begin{split}
 L_q(f_q,1/2)=\sum_{\alpha\geq
  0}\frac{\lf(q^{\alpha+1})}{q^{\alpha/2}}
 & =\lf(q)+
    \frac{ L_q(f,1/2)}{q^{1/2}}\Bigl(\lambda_f(q^2) -\frac{\lf(q)}{q^{1/2}}\Bigr)\\
 & =\lf(q)+O(q^{-1/2}).
  \end{split}
  \end{displaymath}
Hence
$$
M_f(q)=\Bigl(\,\frac{q^{1/2}}{q-1}\cdot
\frac{L_q(f_q,1/2)}{L_q(f,1/2)}-\frac{1}{q-1}\,\Bigr)L(f,1/2)=O(q^{-1/2}).
$$
This proves the first part of Theorem~\ref{th-MaRu2}.
\par

 
Next, let $u$ and $v$ be integers coprime to $q$. From
Lemma~\ref{lm-modmellin}, we derive
\begin{multline*}
  \sums_{\chi\mods{q}} L(\ftchi,1/2)\ov{L(\gtchi,1/2)}
  \chi(u)\ov{\chi(v)} \\
  =\frac{1}{q}\sums_\chi\chi(u)\overline{\chi(v)}\underset{a, a'\, (\text{{\rm mod }} q)}{\left.\sum\right.^{\ast}}
  \chi(\ov a)\ov{\chi(\ov a')}\msym{\frac aq}_f\ov{\msym{\frac {a'}q}}_g\\
  =\frac{1}{q}\sum_\chi\chi(u)\overline{\chi(v)}
 \underset{a, a'\, (\text{{\rm mod }} q)}{\left.\sum\right.^{\ast}}\chi(\ov a) \ov{\chi(\ov a')}\msym{\frac
    aq}_f\ov{\msym{\frac {a'}q}}_g-
  \frac{\vphi(q)^2}qM_f(q)\ov{M_g(q)}\\
  =\frac{\vphi(q)}{q}\underset{\substack{a, a'\, (\text{{\rm mod }} q)\\u\bar{a}\equiv v\bar{a}' \, (\text{mod } q)}}{\left.\sum\right.^{\ast}}   
  \msym{\frac aq}_f\ov{\msym{\frac
      {a'}q}}_g-\frac{\vphi(q)^2}qM_f(q)\ov{M_g(q)},
\end{multline*}
hence, putting $b=a\bar{u}=a'\bar{v}$, we get
\begin{align*}
  C_{f,g}(u,v;q)
  &=\frac{1}{\vphi(q)}
   \underset{b\, (\text{{\rm mod }} q)}{\left.\sum\right.^{\ast}}\Bigl(\,\msym{\frac{ub}{q}}_f-M_f(q)\,\Bigr)
    \ov{\Bigl(\,\msym{\frac{vb}{q}}_g-M_g(q)\,\Bigr)}
  \\
  &=\frac{q}{\vphi(q)^2}
    \sums_{\chi\mods{q}} L(\ftchi,1/2)\ov{L(\gtchi,1/2)}
    \chi(u)\ov{\chi(v)},
\end{align*} 
which is the formula in Part (2) of the Theorem~\ref{th-MaRu2}.
\par
If $r=r'$, $u=v=1$ and $\eps(f)\eps(g)=-1$, then the second moment
vanishes exactly (see the last part of Theorem~\ref{th-second-final}),
which proves the first part of Part (3), and otherwise, we obtain the
last statement from Theorem~\ref{thsecondmoment} and
Proposition~\ref{pr-mt}.



\section{Modular symbols and trace functions}

As we have seen, the modular symbol $\langle a/q\rangle_f$, as a
function of $a$, depends only on the congruence class $a\mods q$ and
therefore defines a function on $\Zz/q\Zz$, where we put
$\langle 0/q\rangle_f=0$.

In the previous sections, we discussed how this function correlates
either with the constant function $1$, or with itself, or with the
modular symbol attached to another modular form. In this section, we
will see that we can also evaluate easily the correlations of modular
symbols and trace functions $t\colon \Fq\to\Cc$, as described in
Section~\ref{sec-trace}.

We consider here the correlation sums\label{pg-cft}
$$
C_{f}(t)=\frac{1}{\vphi(q)}\underset{a\, (\text{{\rm mod }} q)}{\left.\sum\right.^{\ast}}\msym{\frac aq}_f\ov{t(a)}.
$$
We will prove that these are small, except in very special cases. This
means that trace functions do not correlate with modular symbols.

\begin{proposition}\label{pr-tr-sym}
  Let $t$ be the trace function of a geometrically irreducible
  $\ell$-adic sheaf $\mcF$. We assume that $\mcF$ is not geometrically
  isomorphic to an Artin-Schreier sheaf or to the pull-back of such a
  sheaf by the map $x\mapsto x^{-1}$. Then we have
$$
C_f(t)\ll q^{-1/8+\eps},
$$
for any $\eps>0$, where the implied constant depends only on $\eps$,
$f$ and (polynomially) on the conductor of $\mcF$.
\end{proposition}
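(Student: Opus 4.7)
The plan is to reduce the correlation sum $C_f(t)$ to a character sum that fits into the framework of Theorem~\ref{th-m1-mellin}. Starting from the Birch--Stevens formula of Lemma~\ref{lm-modmellin}, Mellin inversion on $(\Zz/q\Zz)^\times$ identifies the discrete Mellin transform of $a\mapsto\msym{a/q}_f$ at a primitive character $\chi$ with $\eps_\chi L(f\otimes\ov\chi,1/2)$. Parseval's identity, together with the estimate $M_f(q)\ll q^{-1/2}$ from Part (1) of Theorem~\ref{th-MaRu2} and the bound $|\sum_{a}t(a)|\ll q^{1/2}$ (valid since $\mcF$ is not a constant sheaf), then gives
$$
C_f(t)=\frac{q}{\vphi(q)^2}\sums_{\chi\mods q}L(\ftchi,1/2)\,\eps_{\ov\chi}\,\widetilde{\ov{t}}(\chi)+O(q^{-1}).
$$

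Next, interpret $\eps_{\ov\chi}\widetilde{\ov{t}}(\chi)$ as essentially the Mellin transform of the trace function of a single Mellin sheaf. A direct computation shows that $\chi(-1)\eps_{\ov\chi}$ is exactly the discrete Mellin transform at $\chi$ of the Artin--Schreier trace function $x\mapsto e(-\bar x/q)$, so that $\eps_{\ov\chi}\widetilde{\ov{t}}(\chi)$ is, up to the harmless sign $\chi(-1)$, the Mellin transform of the multiplicative convolution of $\ov{t}$ with this Artin--Schreier function. Under the assumption that $\mcF$ is not geometrically a Kummer sheaf (the remaining case is treated separately below), Lemma~\ref{lm-except-symbols} applied to the dual of $\mcF$ produces a Mellin sheaf $\mcG$ of conductor bounded polynomially in $\cond(\mcF)$, not geometrically isomorphic to $[x\mapsto a/x]^{*}\HYPK_2$ for any $a\in\Fqt$, whose trace function $\tau$ satisfies $\tilde\tau(\chi)=\chi(-1)\eps_{\ov\chi}\widetilde{\ov{t}}(\chi)$ exactly outside a set of at most $O(1)$ characters. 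Absorbing the sign $\chi(-1)$ by replacing $\mcG$ with its pull-back under $x\mapsto-x$ (which preserves both the Mellin-sheaf property and the non-isomorphism with $[x\mapsto a/x]^{*}\HYPK_2$), Theorem~\ref{th-m1-mellin} delivers
$$
\sums_\chi L(\ftchi,1/2)\,\tilde\tau(\chi)\ll\vphis(q)\,q^{-1/8+\eps},
$$
while the $O(1)$ exceptional characters contribute at most $O(q^{3/8+\eps})$ via the Blomer--Harcos subconvex bound~\cite{BH} (combined with the trivial bounds $|\widetilde{\ov{t}}(\chi)|\ll q^{1/2}$ and $|\eps_{\ov\chi}|=1$). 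After multiplication by the prefactor $q/\vphi(q)^{2}\asymp q^{-1}$, both contributions yield $C_f(t)\ll q^{-1/8+\eps}$.

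The remaining Kummer case, in which $t(x)=\eta(x)$ for a non-trivial Dirichlet character $\eta\mods q$ (necessarily non-trivial because $\mcF$ is not an Artin--Schreier sheaf), is handled by a direct argument: orthogonality of characters forces $\widetilde{\ov{t}}(\chi)$ to vanish unless $\chi=\eta$, so the primitive-character sum collapses to a single central value $L(f\otimes\eta,1/2)$ times unimodular Gauss sum factors, and the desired bound follows again from the subconvex estimate $L(\ftchi,1/2)\ll q^{3/8+\eps}$ of~\cite{BH}. The main difficulty, more conceptual than computational, lies in the careful bookkeeping of the various Gauss sum factors and signs $\chi(-1)$ produced by the Birch--Stevens inversion and by the Mellin transform of the Artin--Schreier sheaf $\mcL^{\vee}$, so that $\eps_{\ov\chi}\widetilde{\ov{t}}(\chi)$ is matched exactly (outside a bounded exceptional set) with the Mellin transform of an explicit Mellin sheaf to which Theorem~\ref{th-m1-mellin} applies; once this identification is in place, no further deep input is needed beyond Theorem~\ref{th-m1-mellin} and standard subconvexity.
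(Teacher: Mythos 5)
Your proposal follows essentially the same route as the paper's own proof: Birch--Stevens plus discrete character inversion reduces $C_f(t)$ to a twisted first moment $\sums_\chi L(\ftchi,\demi)\,\eps_{\ov\chi}\,\widetilde{\ov t}(\chi)$ (up to a prefactor $\asymp q^{-1}$), the factor $\eps_{\ov\chi}\widetilde{\ov t}(\chi)$ is interpreted (after absorbing $\chi(-1)$) as the discrete Mellin transform of the multiplicative convolution of $\ov t$ with the Artin--Schreier function $x\mapsto e(-\bar x/q)$, Lemma~\ref{lm-except-symbols} (applied with $\ov t$ in place of $t$) supplies a Mellin sheaf $\mcG$ to which Theorem~\ref{th-m1-mellin} applies, and the Kummer case is settled directly by Blomer--Harcos subconvexity. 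This is exactly the paper's strategy, and your explicit handling of the $\chi(-1)$ sign via pullback by $x\mapsto -x$ is a correct way to make precise what the paper leaves implicit.

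One point of phrasing deserves correction: Lemma~\ref{lm-except-symbols} does not assert that $\tilde\tau(\chi)$ equals the intended Mellin transform ``exactly outside a set of at most $O(1)$ characters''. What the lemma gives is a pointwise identity on trace functions up to a uniform error $O(q^{-1/2})$; passing to the Mellin transform, the discrepancy $\tilde\tau(\chi)-\chi(-1)\eps_{\ov\chi}\widetilde{\ov t}(\chi)$ is then a priori only $O(1)$ for \emph{every} $\chi$, not supported on a small exceptional set. Your bound of the ``exceptional contribution'' by $O(1)\cdot q^{3/8+\eps}$ would therefore not apply as stated. The correct way to absorb this error is to note that it has the structured form $\frac{1}{\sqrt q}\sum_a \chi(a)\eta(a)$ with $\eta(a)\ll q^{-1/2}$, so that, after swapping sums, it becomes $\frac{1}{\sqrt q}\sum_a\eta(a)\,\mcL(f;a,0)$; using $\mcL(f;a,0)=\lambda_f(\ov a_q)/\ov a_q^{1/2}+O(q^{-1/8+\eps})$ and the Ramanujan bound, this contributes $\ll q^{-1/8+\eps}$, which suffices. (The paper itself is terse on this point, so the imprecision is understandable, but it should be recorded since the naive uniform $O(1)$ bound times the subconvex bound for $L(\ftchi,\demi)$ over all $\chi$ is far too large.)
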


\begin{remark}
  The assumption on the sheaf holds for all the examples in
  Example~\ref{ex-trace}, except for $t(x)=e(f(x)/q)$ if the
  polynomial $f$ has degree $\leq 1$.
\end{remark}

\begin{proof}
  By Lemma \ref{lm-modmellin} and Theorem \ref{th-MaRu2}(1), we have
\begin{align*}
  C_f(t)&=\frac{1}{\vphi(q)}\underset{a\, (\text{{\rm mod }} q)}{\left.\sum\right.^{\ast}}\msym{\frac aq}_f\ov{t(a)}=
  \frac{1}{\vphi(q)^2}\sum_{\chi\mods{q}}\underset{a, a'\, (\text{{\rm mod }} q)}{\left.\sum\right.^{\ast}}\msym{\frac
    aq}_f\chi(\ov a)\chi(a')\ov{t(a')}
  \\
  &= \frac{q^{1/2}}{\vphi(q)^{3/2}}  \sums_{\chi\mods{q}} L(\ftchi,1/2) \chi(-1) \overline{\eps_\chi \widetilde{t}(\bar{\chi})}  + O\left(\frac{1}{q^{3/2}}\right)\\
  & = \frac{1}{\vphi(q)} \sums_{\chi\mods{q}} L(\ftchi,1/2) \chi(-1)
  \overline{\eps_\chi \widetilde{t}(\bar{\chi})} +
  O\left(\frac{1}{q}\right).
\end{align*}
\par
We compute that
\begin{align*}
  \eps_{\chi}\widetilde{t}(\ov{\chi})
  &=
    \frac{1}{\sqrt{q}}\sum_{x}\chi(x)e\Bigl(\frac{x}{p}\Bigr)
    \sum_y\ov{\chi(y)}\,t(y)\\
  &=\frac{1}{\sqrt{q}}\sum_{a}\ov{\chi(a)}
    \sum_{y/x=a}e\Bigl(\frac{x}{p}\Bigr)
    t(y),
\end{align*}
hence
$$
\ov{\eps_{\chi}\widetilde{t}(\ov{\chi})}= \frac{1}{\sqrt{q}}
\sum_{a\in\Fqt}\chi(a)\tau(a)
$$
where
$$
\tau(a)=\frac{1}{\sqrt{q}}\sum_{xy=a} e\Bigl(-\frac{\bar{x}}{p}\Bigr)
\ov{t(y)}
$$
is the convolution of $\ov{t}$ and $x\mapsto e(-x^{-1}/p)$. In other
words, $\chi\mapsto \ov{\eps_{\chi}\widetilde{t}(\ov{\chi})}$ is the
discrete Mellin transform of this convolution.
\par
We distinguish two cases. If $\mcF$ is not geometrically isomorphic to
a Kummer sheaf, then our assumptions on $\mcF$ imply that this
convolution is the trace function of a Mellin sheaf $\mcG$ with
conductor bounded polynomially in terms of $\cond(\mcF)$, and that
$\mcG$ is not geometrically isomorphic to $[x\mapsto a/x]^*\HYPK_2$
for any $a\in\Fqt$ (see Lemma~\ref{lm-except-symbols}). By
Theorem~\ref{th-m1-mellin}, we have therefore
$$
\frac{1}{\vphi(q)} \sums_{\chi\mods{q}} L(\ftchi,1/2) \chi(-1)
  \overline{\eps_\chi \widetilde{t}(\bar{\chi})}\ll q^{-1/8+\eps}
$$
for any $\eps>0$, and hence $C_f(t)\ll
q^{-1/8+\eps}$ for any $\eps>0$.
\par
In the case of a Kummer sheaf, we have
$t(x)=\alpha\chi_0(x)$ for some
$\alpha\in\Cc$ and some non-trivial multiplicative character
$\chi_0$ modulo $q$ and
$$
\wtilde{\chi}_0(\bar{\chi})=
\begin{cases}
  \alpha\varphi(q)^{1/2}&\text{ if } \chi=\chi_0\\
  0&\text{ otherwise,}
\end{cases}
$$
so that we get
$$
C_f(\chi_0)=\frac{1}{\vphi(q)^{1/2}} \chi_0(-1)\eps_{\chi_0}
L(f\otimes \chi_0,\demi)+O(q^{-1})\ll q^{-1/8+\eps}
$$
by the subconvexity estimate of Blomer and Harcos~\cite{BH}*{Th.\ 2}.
\end{proof}


\begin{remark}
For completeness, we consider the correlations in the exceptional
cases excluded in the previous proposition.  We assume that there
exists $l\in\Fqt$ such that either
\begin{equation}\label{eq-t1}
  t(x)=e\Bigl(-\frac{\ov lx}q\Bigr),\quad\quad x\in \Fq
\end{equation}
or
\begin{equation}\label{eq-t2}
  t(x)=e\Bigl(\frac{l\bar{x}}q\Bigr),\quad\quad x\in\Fqt,\quad\quad t(0)=0.
\end{equation}
(note that if $l=0$, the correlation sum is just the mean-value
$M_f(q)$ that we already investigated).

We follow the steps of the proof of Proposition~\ref{pr-tr-sym} for
these specific functions. In both cases, the Mellin transform
$\wtilde t$ is a multiple of a Gau\ss\ sum. More precisely, we obtain
$$
\ov{\eps_\chi\wtilde
  t(\ov\chi)}=\Bigl(\frac{q}{\vphi(q)}\Bigr)^{1/2}\chi(l),\quad\quad
\ov{\eps_\chi\wtilde
  t(\ov\chi)}=\Bigl(\frac{q}{\vphi(q)}\Bigr)^{1/2}\chi(l)\eps^{-2}_\chi
$$
in the case of~(\ref{eq-t1}) and of~(\ref{eq-t2}), respectively.
Using the notation of Chapter \ref{ch-first}, we therefore have
$$
C_f(t)=\Bigl(\frac{q}{\vphi(q)}\Bigr)^{1/2}\mcL(f;l,0),\quad\quad
C_f(t)=\Bigl(\frac{q}{\vphi(q)}\Bigr)^{1/2}\mcL(f;l,-2),
$$
respectively. By Corollary \ref{cor-k=-2}, we  conclude that
$$
C_f(t)=\frac{\lf(\ov l_q)}{\ov
  l_q^{1/2}}+O_{f,\eps}(q^{-1/8+\eps}),\quad\quad
C_f(t)=\eps(f)\frac{\lf((lr)_q)}{(lr)_q^{1/2}}
+O_{f,\eps}(q^{-1/8+\eps}),
$$
for any $\eps>0$, respectively.
\end{remark}

\chapter*{Notation index}

We list some of the notation used in this book that may not be
standard. Further notation and conventions are explained in
Section~\ref{21}. Some notation that is local to a single chapter are
omitted.

\newcommand{\pg}[1]{p. \pageref{#1}}
\newcommand{\pgb}[1]{\pageref{#1}}
\par
\bigskip
\par

{\small
  \begin{longtable}{lll}
    $\mathcal{F}_N$ & family of modular form &
                                               \pg{sec-general}
    \\
    $\expect_N(\cdot)$ & averaging over~$\mathcal{F}_N$ &
                                                          \pg{sec-general}
    \\
    $\proba_N(\cdot)$ & probability for~$\mathcal{F}_N$ &
                                                          \pg{sec-general}
    \\
    $M(f)$ & mollifier & \pg{pg-mollifier}
    \\
    $R(f)$ & resonator & \pg{pg-resonator}
    \\
    $\chi_r$ & trivial character modulo~$r$ & \pg{intro}
    \\
    $r$ & level of~$f$ & \pg{intro}
    \\
    $\mathcal{F}_q$ & family of Dirichlet characters modulo~$q$ &
                                                                  \pg{pg-fq}
    \\
    $\varphi^*(q)$ & number of primitive characters modulo~$q$ &
                                                                 \pg{pg-fq}
    \\
    $\theta(f\otimes \chi)$ & angle of root numbers in~$\Rr/2\pi\Zz$ & \pg{deftheta}
    \\
    $\widetilde{t}_e(\chi)$ & Evans sums & \pg{deftheta}
    \\
    $\mathrm{rk}_{an}(f\otimes\chi)$ & analytic rank & \pg{pg-an}
    \\
    $\displaystyle{\msym{\frac aq}_f}$ & modular symbol & \pg{pg-mr}
    \\
    $L^{\ast}(\symf,s)$ & imprimitive symmetric square & \pg{th-MaRu}
    \\
    $\mcL(f,s;\l,k)$, $\mcL(f;\l,k)$ & twisted first moments & \pg{moments}
    \\
    $\mcQ(f,s;\l,\ell')$, $\mcQ(f;\l,\ell')$
                    & twisted second moment & \pg{moments}
    \\
    $\eps_{\chi}$ & normalized Gau\ss\ sum & \pg{moments}
    \\
    $M(f\otimes\chi,s;\uple{x}_L)$ & mollifier & \pg{moments}
    \\
    $\bar{\ell}_q$ & integer in $[1,q]$ such that
                     $\ell\bar{\ell}_q\equiv 1\mods{q}$ & \pg{moments}
    \\
    $\mathrm{MT}(f,g,s;\ell,\ell')$,
    $\mathrm{MT}^{\pm}(f,g,s;\ell,\ell')$& main term for second moment
                                             & \pg{thsecondmoment},
                                               \pgb{th-second-final}
    \\
    $L_p(f,s)$ & local factor at~$p$ & \pg{hecke}
    \\
    $\alpha_{f,i}(p)$, $\alpha_{f\otimes g}(p)$ & Satake parameters
                                             & \pg{hecke}, \pgb{pg-explicit}
    \\
    $\lambda_f(n)$ & Hecke eigenvalues & \pg{hecke}
    \\
    $\kappa_f$ & spectral parameter & \pg{hecke}
    \\
    $\mu_{f,i}$ & archimedean parameters & \pg{hecke}
    \\
    $\mfa$ & parity parameter & \pg{lm-fneq}
    \\
    $\eps(f\otimes \chi)$, $\eps(f)$, $\eps(f\otimes g)$
                    & root numbers & \pg{lm-fneq}, \pgb{pg-fg}
    \\
    $\Lambda_f(n)$, $\Lambda_{f\otimes \chi}(n)$ & von Mangoldt
                                                   functions &\pg{pg-explicit}
    \\
    $L^*(f\otimes g,s)$ & imprimitive Rankin-Selberg convolution & \pg{lstarfg}
    \\
    $Q(\pi)$, $q(\pi)$, $Q(\pi\otimes\pi')$, $q(\pi\otimes\pi')$ & analytic conductor,
                                                                   conductor &
                                                                               \pg{pg-conductor}
    \\
    $V_{f,\pm, s}(y)$, $W_{f,g,\pm,s}$ & weight functions & \pg{pr-approx}
    \\
    $\eps(f,\pm, s)$, $\eps(f,g,\pm,s)$ & coefficients &
                                                         \pg{pr-approx}
    \\
    $\widetilde{W}_{\pm}$ & Voronoi transform of~$W$ & \pg{221}
    \\
    $\mathcal{J}_{\pm}$ & variants of Bessel functions & \pg{221}
    \\
    $\widehat{K}$ & normalized discrete Fourier transform of~$K$ &
                                                                   \pg{pg-what}
    \\
    $\bessel{K}$ & discrete Bessel transform of~$K$ & \pg{pg-what}
    \\
    $\mu_f(n)$ & Dirichlet convolution inverse of~$\lambda_f(n)$ &
                                                                   \pg{pg-factor}
    \\
    $\Kl_k(m;q)$ & hyper-Kloosterman sums & \pg{eq-hypk}
    \\
    $\cond(\sheaf{F})$ & conductor of~$\sheaf{F}$ & \pg{sec-trace}
    \\
    $\widetilde{t}(\chi)$ & discrete Mellin transform & \pg{pg-mellin}
    \\
    $t_1\star t_2$ & multiplicative convolution & \pg{pg-convol}
    \\
    $\theta_{\chi}$ & Frobenius conjugacy class & \pg{pg-thetachi}
    \\
    $\widetilde{t}_{rr}(\chi)$ & Rosenzweig-Rudnick sum & \pg{pg-rr}
    \\
    $\mcL^{\pm}(f;\ell,k)$ & even and odd twisted first moments
                                             & \pg{Lfcomp}
    \\
    $L(f\times g, 2s, u;\ell',\ell)$ & auxiliary Dirichlet series &
                                                                    \pg{Lfgdef}
    \\
    $\mathrm{ET}(f,g;\ell,\pm \ell')$ & error terms in second moments
                                             & \pg{pg-et}
    \\
    $\mathscr{L}(f;\bfx_L,\psi)$ & auxiliary first moment & \pg{8.2}
    \\
    $r(p)$ & auxiliary multiplicative function &
                                                 \pg{DefinitionResonator}
    \\
    $o^{\star}(1)$ & short-hand asymptotic notation &
                                                      \pg{SmallTailsLemma}
    \\
    $\lambda_f^{\ast}$, $\lambda_g^{\ast}$ & auxiliary arithmetic
                                             functions &
                                                         \pg{lambdaastdef}
    \\
    $R(\chi)$ & resonator & \pg{DefinitionResonatorPolynomial}
    \\
    $A_f(\chi)$ & amplifier & \pg{DefinitionAmplifier}
    \\
    $\widehat{\phi}(s)$ & Fourier-Laplace transform & \pg{pg-phi}
    \\
    $R(\ell_1,\ell_2,s)$ & auxiliary Dirichlet series & \pg{pg-r}
    \\
    $M_f(q)$ & average of modular symbols & \pg{pg-mfq}
    \\
    $f_q$ & $f(qz)$ & \pg{pg-mfq}
    \\
    $C_{f,g}(u,v;q)$ & correlation of modular symbols & \pg{pg-mfq}
    \\
    $L(f,a,s)$ & additive twist of $L(f,s)$ & \pg{pg-lfa}
    \\
    $C_f(t)$ & correlation of~$t$ with modular symbols & \pg{pg-cft}
  \end{longtable}
}


\begin{bibsection}

\begin{biblist}

\bib{Pankowski2015}{article}{
        author={Aistleitner, C},
	Author = {{Pa{\'n}kowski}, {\L}.},
		Journal = {J. Math. Anal. Appl.},
	Title = {{Large values of L-functions from the Selberg class}},
	Year = {2017},
        volume={446},
        PAGES = {345-364},
      }

\bib{bagchi-orig}{thesis}{
  author= {Bagchi, B.},
  title={Statistical behaviour and
    universality properties of the Riemann zeta function and other
    allied Dirichlet series}, 
  eprint={http://library.isical.ac.in:8080/jspui/bitstream/10263/4256/1/TH47.CV01.pdf},
  note={PhD thesis},
  organization={Indian Statistical Institute, Kolkata},
  year={1981},
}

\bib{Bl}{article}{
    AUTHOR = {Blomer, V.},
     TITLE = {Shifted convolution sums and subconvexity bounds for automorphic $L$-functions},
   JOURNAL = {Int. Math. Res. Not.},
       YEAR = {2004},
     PAGES = {3905-3926},
     }

\bib{BFKMM}{article}{
 author={Blomer,V.},
 author={Fouvry, \' E.},
 author={Kowalski, E.},
 author={Michel, Ph.},
 author={Mili\'cevi\'c,  D.},
 title={On moments of twisted $L$--functions},
 journal={Amer. J. Math},
 volume={139},
 number={3},
 pages={707--768},
 year={2017}
}

\bib{BH}{article}{
  author={Blomer, V.},
   Author={Harcos, G.},
   title={Hybrid bounds for twisted $L$-functions},
   journal={J. reine angew. Math.},
   volume={621},
   pages={53--79},
   year={2008}}

		
\bib{BloMil}{article}{
   author={Blomer, V.},
   author={Mili{\'c}evi{\'c}, D.},
   title={The second moment of twisted modular $L$-functions},
   journal={Geom. Funct. Anal.},
   volume={25},
   date={2015},
   number={2},
   pages={453--516},
}



\bib{Bur}{article}{
  author={Burgess, D.A.},
  title={On character sums and $L$-series, I},
  journal={Proc. London Math. Soc. (3)},
  year={1962},
  volume={12},
  pages={193--206},
}

\bib{bushnell-henniart}{article}{
  author={Bushnell, C.J.},
  author={Henniart, G.},
  title={An upper bound on conductors for pairs},
  journal={J. Number Theory},
  year={1997},
  volume={65},
  pages={183--196},
}

\bib{By}{article}{
Author = {Bykovski\u i, V. A.},
  Journal = {J. Math. Sci. 
(New York)},
  Pages = {915-932},
  Title = {A trace formula for the scalar product of Hecke series and its applications},
  Volume = {89},
  Year = {1998},
}

\bib{Chinta}{article}{
   author={Chinta, G.},
   title={Analytic ranks of elliptic curves over cyclotomic fields},
   journal={J. reine angew. Math.},
   volume={544},
   date={2002},
   pages={13--24},
}

\bib{coates-schmidt}{article}{
  author={Coates, J.},
  author={Schmidt, C.G.},
  title={Iwasawa theory for the symmetric square of an elliptic
    curve},
  journal={J. reine angew. Math.},
  volume={375--376},
  date={1987},
  pages={104--156},
}


\bib{CFKRS}{article}{
   author={Conrey, J. B.},
   author={Farmer, D. W.},
   author={Keating, J. P.},
   author={Rubinstein, M. O.},
   author={Snaith, N. C.},
   title={Integral moments of $L$-functions},
   journal={Proc. London Math. Soc. (3)},
   volume={91},
   date={2005},
   number={1},
   pages={33--104},
}

\bib{CoSo}{article}{
   author={Conrey, J. B.},
   author={Soundararajan, K.},
   title={Real zeros of quadratic Dirichlet $L$-functions},
   journal={Invent. math.},
   volume={150},
   date={2002},
   number={1},
   pages={1--44},
}

\bib{WeilI}{article}{
   author={Deligne, P.},
   title={La conjecture de Weil. I},
   journal={Inst. Hautes \'Etudes Sci. Publ. Math.},
   number={43},
   date={1974},
   pages={273--307},
   issn={0073-8301},
}

\bib{WeilII}{article}{
  author={Deligne, P.},
  title={La conjecture de Weil, II}, 
  journal={Inst. Hautes \'Etudes Sci. Publ. Math.},
  volume={52},
  date={1980},
  pages={137--252},
}
 

    \bib{DFI}{article}{
  author={Duke, {W.}},
  author={Friedlander, {J.B.}},
  author={Iwaniec, {H.}},
   title={Bounds for automorphic $L$-functions},
  journal={Invent. math.},
  volume={112},
  date={1993},
  pages={1-8},
  }

 

\bib{FoCrelle}{article}{
  author={Fouvry, {\'E.}},
  title={Sur le probl\`eme des diviseurs de Titchmarsh}, 
  journal={J. reine angew. Math.},
  volume={357},
  date={1985},
  pages={51--76},
}


 

\bib{FKM2}{article}{
  author={Fouvry, {\'E}.},
  author={Kowalski, E.},
  author={Michel, Ph.},
  title={Algebraic trace functions over the primes},
  journal={Duke Math. J.},
  volume={163},
  number={9},
  pages={1683--1736},
  date={2014},
}

\bib{Pisa}{article}{
   author={Fouvry, \'E.},
   author={Kowalski, E.},
   author={Michel, Ph.},
   title={Trace functions over finite fields and their applications},
   conference={
      title={Colloquium De Giorgi 2013 and 2014},
   },
   book={
      series={Colloquia},
      volume={5},
      publisher={Ed. Norm., Pisa},
   },
   date={2014},
   pages={7--35},
}
 
\bib{FKM1}{article}{
   author={Fouvry, {\'E}.},
   author={Kowalski, E.},
   author={Michel, Ph.},
   title={Algebraic twists of modular forms and Hecke orbits},
   journal={Geom. Funct. Anal.},
   volume={25},
   date={2015},
   number={2},
   pages={580--657},
}
 
\bib{FKM3}{article}{
  author={Fouvry, {\'E}.},
  author={Kowalski, E.},
  author={Michel, Ph.},
  title={On the exponent of distribution of the ternary divisor function},
  journal={Mathematika},
  volume={61},
  date={2015},
  number={1},
  pages={121--144},
}

 

\bib{fresan}{article}{
  author={Fresán, J.},
  title={Équirépartition de sommes exponentielles (travaux de Katz)},
  journal={Astérisque},
  year={to appear},
  note={Bourbaki Seminar, Exposé 1141, \url{http://www.bourbaki.ens.fr/TEXTES/1141.pdf}},
}
 
\bib{GKR}{article}{
  author={Gao, {P.}},
  author={Khan, {R.}},
  author={Ricotta, {G.}},
  title={The second moment of Dirichlet twists of Hecke $L$-functions},
  journal={Acta Arith.},
  volume={140},
  number={1},
  date={2009},
  pages={57--65},
}

\bib{gelbart}{book}{
author={Gelbart, S.},
  title={Automorphic forms on adele groups},
  series={Annals of Mathematics Studies},
  volume={83},
  publisher={Princeton University Press},
  address={Princeton, NJ},
  date={1975},
}

 \bib{GJ}{article}{
   author={Gelbart, S.},
   author={Jacquet, H.},
   title={A relation between automorphic representations of $GL(2)$ and $GL(3)$},
   journal={Ann. Sci. \'Ecole Norm. Sup. (4)},
   volume={11},
   number={4},
   date={1978},
   pages={471--542}, 
 }


\bib{GPY}{article}{
   author={Goldston, D.},
   author={Pintz, J.},
   author={Y\i ld\i r\i m, C. Y.},
   title={Primes in tuples. I},
   journal={Ann. of Math. (2)},
   volume={170},
   date={2009},
   number={2},
   pages={819--862},
}

\bib{GR}{book}{
    AUTHOR = {Gradshteyn, I. S.},
    Author={Ryzhik, I. M.},
    TITLE = {Table of integrals, series, and products},
    EDITION = {Seventh ed.},
    PUBLISHER = {Elsevier/Academic Press, Amsterdam},
    YEAR = {2007},
    PAGES = {xlviii+1171},
}
		
\bib{HaMi}{article}{
   author={Harcos, G.},
   author={Michel, Ph.},
   title={The subconvexity problem for Rankin-Selberg $L$-functions and
   equidistribution of Heegner points. II},
   journal={Invent. math.},
   volume={163},
   date={2006},
   number={3},
   pages={581--655},
}

\bib{Harper}{article}{
  author={Harper, A.J.},
  title={Sharp conditional bounds for moments of the Riemann zeta
    function},
  journal={Preprint},
  year={2013},
  note={\url{arXiv:1305.4618}},
}

\bib{taylor-harris}{book}{
   author={Harris, M.},
   author={Taylor, R.},
   title={The geometry and cohomology of some simple Shimura varieties},
   series={Annals of Mathematics Studies},
   volume={151},
   note={With an appendix by Vladimir G. Berkovich},
   publisher={Princeton University Press, Princeton, NJ},
   date={2001},
   pages={viii+276},
}

\bib{HBMDMJ}{article}{
   author={Heath-Brown, D.R.},
   author={Michel, Ph.},
   title={Exponential decay in the frequency of analytic ranks of
   automorphic $L$-functions},
   journal={Duke Math. J.},
   volume={102},
   date={2000},
   number={3},
   pages={475--484},
}

\bib{henniart}{article}{
   author={Henniart, G.},
   title={Correspondance de Langlands et fonctions $L$ des carrés extérieur
   et symé\-trique},
   language={French},
   journal={Int. Math. Res. Not. IMRN},
   date={2010},
   number={4},
   pages={633--673},
}

\bib{HL}{article}{
  author={Hoffstein, J.},
  author={Lee, M.},
  title={Second moments and simultaneous non-vanishing of $\GL(2)$
automorphic $L$-series},
  journal={Preprint},
  year={2013},
  note={\url{arXiv:1308.5980}},
}

\bib{Hough}{article}{
   author={Hough, R.},
   title={The angle of large values of $L$-functions},
   journal={J. Number Theory},
   volume={167},
   date={2016},
   pages={353--393},
}
 

\bib{Iw-Topics}{book}{
   author={Iwaniec, H.},
   title={Topics in classical automorphic forms},
   series={Graduate Studies in Mathematics},
   volume={17},
   publisher={American Mathematical Society, Providence, RI},
   date={1997},
   pages={xii+259},
}
 
 \bib{IwKo}{book}{
   author={Iwaniec, H.},
   author={Kowalski, E.},
   title={Analytic number theory},
   publisher={American Mathematical Society},
   series={Colloquium Publications},
   volume={53},
   address={Providence, RI},
   date={2004},
 }

\bib{IwSa}{article}{
   author={Iwaniec, H.},
   author={Sarnak, P.},
   title={Dirichlet $L$-functions at the central point},
   conference={
      title={Number theory in progress, Vol. 2},
      address={Zakopane-Ko\'scielisko},
      date={1997},
   },
   book={
      publisher={de Gruyter, Berlin},
   },
   date={1999},
   pages={941--952},
}

\bib{IwSa2}{article}{
   author={Iwaniec, H.},
   author={Sarnak, P.},
   title={Perspectives on the analytic theory of $L$-functions},
   note={GAFA 2000 (Tel Aviv, 1999)},
   journal={Geom. Funct. Anal.},
   date={2000},
   number={Special Volume},
   pages={705--741},
}

\bib{JPSS}{article}{
   author={Jacquet, H.},
   author={Piatetskii-Shapiro, I. I.},
   author={Shalika, J. A.},
   title={Rankin-Selberg convolutions},
   journal={Amer. J. Math.},
   volume={105},
   date={1983},
   number={2},
   pages={367--464},
}



\bib{Ka-CE}{book}{
   author={Katz, N.  M.},
   title={Convolution and equidistribution: Sato-Tate theorems for finite-field Mellin transforms},
   series={Annals of Mathematics Studies},
   volume={180},
   publisher={Princeton University Press, Princeton, NJ},
   date={2012},
   pages={viii+203},
}

\bib{KaSa}{article}{
   author={Katz, N.M.},
   author={Sarnak, P.},
   title={Zeroes of zeta functions and symmetry},
   journal={Bull. Amer. Math. Soc. (N.S.)},
   volume={36},
   date={1999},
   number={1},
   pages={1--26},
}

\bib{KeSn}{article}{
   author={Keating, J. P.},
   author={Snaith, N. C.},
   title={Random matrix theory and $\zeta(1/2+it)$},
   journal={Comm. Math. Phys.},
   volume={214},
   date={2000},
   number={1},
   pages={57--89},
}
  
\bib{KiSa}{article}{
  author={Kim, H.},
  title={Functoriality for the exterior square of ${\rm GL}_4$  and the symmetric fourth of ${\rm GL}_2$. With appendix 1 by Dinakar Ramakrishnan and appendix 2 by Kim and Peter Sarnak},
  journal={J. Amer. Math. Soc.},
  volume={16},
  number={1},
  date={2003},
  pages={139--183},
}

\bib{KiSh1}{article}{
  Author = {Kim, H.},
  Author = {Shahidi, F.},
  Journal = {Ann. of Math. (2)},
  Note = {With an appendix by Colin J. Bushnell and Guy Henniart},
  Number = {3},
  Pages = {837--893},
  Title = {Functorial products for {${\rm GL}_2\times{\rm GL}_3$} and the symmetric cube for {${\rm GL}_2$}},
  Volume = {155},
  Year = {2002},
}

\bib{KiSh2}{article}{
  Author = {Kim, H.},
  Author = {Shahidi, F.},
  Journal = {Duke Math. J.},
  Number = {1},
  Pages = {177--197},
  Title = {Cuspidality of symmetric powers with applications},
  Volume = {112},
  Year = {2002},
}

\bib{KiSu}{article}{
  Author = {Kim, M.},
  Author = {Sun, H.-S.},
  Note = {preprint},
  Title = {Modular symbols and modular $L$-values with cyclotomic twists},
  Year = {2017},
}

\bib{KoFam}{article}{
   author={Kowalski, E.},
   title={Families of cusp forms},
   conference={
      title={Actes de la Conf\'erence ``Th\'eorie des Nombres et Applications''},
   },
   book={
      series={Publ. Math. Besan\c{c}on Alg\`ebre Th\'eorie Nr.},
      volume={2013},
      publisher={Presses Univ. Franche-Comt\'e, Besan\c{c}on},
   },
   date={2013},
   pages={5--40},
}

\bib{bagchi}{article}{
   author={Kowalski, E.},
   title={Bagchi's theorem for families of automorphic forms},
   conference={
      title={Exploring the Riemann zeta function},
   },
   book={
      publisher={Springer, Cham},
   },
   date={2017},
   pages={181--199},
}

\bib{KMDMJ}{article}{
   author={Kowalski, E.},
   author={Michel, Ph.},
   title={The analytic rank of $J\sb 0(q)$ and zeros of automorphic
   $L$-functions},
   journal={Duke Math. J.},
   volume={100},
   date={1999},
   number={3},
   pages={503--542},
}

\bib{KMVcrelle}{article}{
   author={Kowalski, E.},
   author={Michel, Ph.},
   author={VanderKam, J.},
   title={Non-vanishing of high derivatives of automorphic $L$-functions at
   the center of the critical strip},
   journal={J. reine angew. Math.},
   volume={526},
   date={2000},
   pages={1--34},
}

\bib{KMVDMJ}{article}{
   author={Kowalski, E.},
   author={Michel, Ph.},
   author={VanderKam, J.},
   title={Rankin-Selberg $L$-functions in the level aspect},
   journal={Duke Math. J.},
   volume={114},
   date={2002},
   number={1},
   pages={123--191},
}

\bib{KMS}{article}{
   author={E. Kowalski},
   author={Ph. Michel},
   author={W. Sawin},
   title={Bilinear forms with Kloosterman sums and applications},
   journal={Ann. of Math.},
   volume={186},
   date={2017},
   number={2},
   pages={413--500},
 }

\bib{KoNi}{article}{
   author={Kowalski, E.},
   author={Nikeghbali, A.},
   title={Mod-Gaussian convergence and the value distribution of
   $\zeta(\frac12+it)$ and related quantities},
   journal={J. Lond. Math. Soc. (2)},
   volume={86},
   date={2012},
   number={1},
   pages={291--319},
}



\bib{WinnieLi}{article}{
  Author = {Li, Wen Ch'ing Winnie},
  Journal = {Math. Ann.},
  Pages = {135-166},
  Title = {$L$-series of Rankin type and their functional equation},
  Volume = {244},
  Year = {1979},
}



\bib{LiuYe2007}{article}{
	Author = {Liu, J.},
        Author = {Ye, Y.},
	Journal = {Pure Appl. Math. Q.},
	Number = {2},
        Pages = {481--497},
	Title = {Perron's formula and the prime number theorem for automorphic {$L$}-functions},
	Volume = {3},
	Year = {2007},
      }
      
\bib{GLu}{article}{
   author=   {L\"u, G.},
   title={The sixth and eighth moments of Fourier coefficients of cusp forms}, 
   journal={J. Number Theory},
   pages = {2790-2800},
   volume={129},
   year={2009},}  

\bib{Matz}{article}{
   author={Matz, J.},
   title={Distribution of Hecke eigenvalues for $\GL(n)$},
   conference={
      title={Families of automorphic forms and the trace formula},
   },
   book={
      series={Simons Symp.},
      publisher={Springer, [Cham]},
   },
   date={2016},
   pages={327--350},
}

\bib{MaRu}{webpage}{
  author={Mazur, B.},
  author={Rubin, K.},
  date={2019}
  title={Arithmetic conjectures suggested by the statistical behavior of modular symbols},
  url={https://arxiv.org/abs/1910.12798}, 
}

\bib{MTT}{article}{
   author={Mazur, B.},
   author={Tate, J.},
   author={Teitelbaum, J.},
   title={On $p$-adic analogues of the conjectures of Birch and
   Swinnerton-Dyer},
   journal={Invent. math.},
   volume={84},
   date={1986},
   number={1},
   pages={1--48},
}

\bib{mestre}{article}{
   author={Mestre, Jean-Fran\c{c}ois},
   title={Formules explicites et minorations de conducteurs de vari\'{e}t\'{e}s
   alg\'{e}briques},
   language={French},
   journal={Compositio Math.},
   volume={58},
   date={1986},
   number={2},
   pages={209--232},
   issn={0010-437X},
}

\bib{miyake}{book}{
author={Miyake, T.},
  title={Modular forms},
  series={Springer monographs in mathematics},
  publisher={Springer-Verlag},
  address={Berlin},
  date={1989},
}


\bib{Moreno1985}{article}{
  Author = {Moreno, C.},
  Journal = {Amer. J. Math.},
 
  Pages = {163--206},
  Title = {Analytic proof of the strong multiplicity one theorem},
  
  Year = {1985},
}



\bib{PeRi}{article}{
  author={Petridis, Y.},
  author={Risager, M.},
  title={Arithmetic statistics of modular symbols},
  journal={Invent. math.},
  year={2018},
  Volume = {212},
   Number = {3},
  pages={1--57},
}

\bib{Pollack}{article}{
   author={Pollack, R.},
   title={Overconvergent modular symbols},
   conference={
      title={Computations with modular forms},
   },
   book={
      series={Contrib. Math. Comput. Sci.},
      volume={6},
      publisher={Springer, Cham},
   },
   date={2014},
   pages={69--105},
}

\bib{RadSo2}{article}{
   author={\maks, M.},
   author={Soundararajan, K.},
   title={Moments and distribution of central $L$-values of quadratic twists
   of elliptic curves},
   journal={Invent. math.},
   volume={202},
   date={2015},
   number={3},
   pages={1029--1068},
}

\bib{RadSo1}{article}{
   author={\maks, M.},
   author={Soundararajan, K.},
   title={Selberg’s Central Limit Theorem for $\log |\zeta(1/2 + it)|$},
   journal={L'enseignement math.},
   year={to appear},
}

\bib{RadSo3}{article}{
   author={\maks, M.},
   author={Soundararajan, K.},
   title={Value distribution of $L$-functions},
   journal={Oberwolfach report},
   volume={40/2017},
}

\bib{dinakar}{article}{
  author={Ramakrishnan, D.},
  title={Modularity of the Rankin-Selberg $L$-series, and multiplicity one
    for ${\rm SL}(2)$},
  journal={Ann. of Math. (2)},
  volume={152},
  date={2000},
  number={1},
  pages={45--111},
}


\bib{Ramakrishnan2015}{article}{
	Author = {Ramakrishnan, D.},
	Booktitle = {S{CHOLAR}---a scientific celebration highlighting open lines of arithmetic research},
	Pages = {181--189},
	Publisher = {Amer. Math. Soc., Providence, RI},
	Series = {Contemp. Math.},
	Title = {Recovering cusp forms on {$\rm GL(2)$} from symmetric cubes},
	Volume = {655},
	Year = {2015},
      }

\bib{Ricotta}{article}{
   author={Ricotta, G.},
   title={Real zeros and size of Rankin-Selberg $L$-functions in the level
   aspect},
   journal={Duke Math. J.},
   volume={131},
   date={2006},
   number={2},
   pages={291--350},
}

\bib{RudnickSarnak1996}{article}{
  Author = {Rudnick, Z.},
  Author = {Sarnak, P.},
  Journal = {Duke Math. J.},
  Number = {2},
  Pages = {269--322},
  Title = {Zeros of principal {$L$}-functions and random matrix theory},
  Volume = {81},
  Year = {1996},
}

\bib{RuSo}{article}{
   author={Rudnick, Z.},
   author={Soundararajan, K.},
   title={Lower bounds for moments of $L$-functions},
   journal={Proc. Natl. Acad. Sci. USA},
   volume={102},
   date={2005},
   number={19},
   pages={6837--6838},
}

\bib{Sarnak2004}{article}{
    AUTHOR = {Sarnak, P.},
     TITLE = {Nonvanishing of {$L$}-functions on {$\germ R(s)=1$}},
     book = {
       PUBLISHER = {Johns Hopkins Univ. Press, Baltimore, MD},
       title={Contributions to automorphic forms, geometry, and number
         theory},
     },
     PAGES = {719--732},
     YEAR = {2004},
   }
      
\bib{SST}{article}{
   author={Sarnak, P.},
   author={Shin, S.W.},
   author={Templier, N.},
   title={Families of $L$-functions and their Symmetry},
   conference={
      title={Families of Automorphic Forms and the Trace Formula},
   },
   book={
      series={Simons Symposia},
      volume={12},
      publisher={Springer},
   },
   date={2016},
   pages={531-578},
 }

\bib{sawin-bounds}{article}{
  author={Sawin, W.},
  title={Bounds for numerical invariants of sheaves arising
from multiplicative convolution},
  note={preprint},
  year={2018},
}

\bib{selberg}{article}{
   author={Selberg, A.},
   title={On the zeros of Riemann's zeta-function},
   journal={Skr. Norske Vid. Akad. Oslo I.},
   volume={1942},
   date={1942},
   number={10},
   pages={59},
}

\bib{selberg-clt}{article}{
   author={Selberg, A.},
   title={Contributions to the theory of the Riemann zeta-function},
   journal={Arch. Math. Naturvid.},
   volume={48},
   date={1946},
   number={5},
   pages={89--155},
}

\bib{shin-templier}{article}{
   author={Shin, S.W.},
   author={Templier, N.},
   title={Sato-Tate theorem for families and low-lying zeros of automorphic
   $L$-functions},
   note={Appendix A by R. Kottwitz, and Appendix B by R. Cluckers,
   J. Gordon and I. Halupczok},
   journal={Invent. math.},
   volume={203},
   date={2016},
   number={1},
   pages={1--177},
}

\bib{SoMom}{article}{
   author={Soundararajan, K.},
   title={Moments of the Riemann zeta function},
   journal={Ann. of Math. (2)},
   volume={170},
   date={2009},
   number={2},
   pages={981--993},
}
		
\bib{Soundararajan2008}{article}{
	Author = {Soundararajan, K.},
	Journal = {Math. Ann.},
	Number = {2},
	Pages = {467--486},
	Title = {Extreme values of zeta and {$L$}-functions},
	Volume = {342},
	Year = {2008},
      }
  
 \bib{St}{article}{
   AUTHOR = {Stefanicki, T.},
   TITLE = {Non-vanishing of {$L$}-functions attached to automorphic
     representations of {${\rm GL}(2)$} over {${\bf Q}$}},
   JOURNAL = {J. reine angew. Math.},
   VOLUME = {474},
      YEAR = {1996},
     PAGES = {1--24},
}

\bib{voronin}{article}{
  author={Voronin, S.M.},
  year={1975},
  title={A theorem on the "universality" of the Riemann zeta--function},
  journal={Izv. Akad. Nauk SSSR, Ser. Matem.},
  volume={39},
  pages={475--486},
  translation={
    journal={Math. USSR Izv.},
    volume={9},
    number={3},
    pages={443--453},
    year={1975},
    },
}

\bib{xi}{article}{
   author={Xi, P.},
   title={Large sieve inequalities for algebraic trace functions},
   note={With an appendix by \'Etienne Fouvry, Emmanuel Kowalski, and Philippe
   Michel},
   journal={Int. Math. Res. Not. IMRN},
   date={2017},
   number={16},
   pages={4840--4881},
}
		
\bib{MY}{article}{
   author={Young, {M. P.}},
   title={The fourth moment of Dirichlet $L$-functions},
   journal={Ann. of Math. (2)},
   pages={1--50},
   date={2011},
   volume={173},
  number={1},
 }

\bib{Zac1}{article}{
 author={Zacharias, R.},
 title={Mollification of the  fourth moment of Dirichlet $L$-functions},
  journal={Acta Arith.},
 date={2019},
 volume={191},
 number={4},
 pages={201--257},
 
}

\bib{Zac2}{article}{
 author={Zacharias, R.},
 title={Simultaneous non-vanishing for Dirichlet $L$-functions},
 journal={Ann. Inst. Fourier (Grenoble)},
 date={2019},
 volume={69},
 number={4},
 pages={1459--1524},

}

\end{biblist}
\end{bibsection}
\end{document}